\title{Algebraic K-theory of reductive $p$-adic groups}
\author{Bartels, A.}
\address{WWU M\"unster\\
  Mathematisches Institut\\
  Einsteinstr.~62, D-48149 M\"unster, Germany} \email{a.bartels@wwu.de}
\urladdr{http://www.math.uni-muenster.de/u/bartelsa} \author{L\"uck, W.}
\address{Mathematisches Institut der Universit\"at Bonn\\
  Endenicher Allee 60\\
  53115 Bonn, Germany} \email{wolfgang.lueck@him.uni-bonn.de}
\urladdr{http://him-lueck.uni-bonn.de} \date{December 2023}
\keywords{Hecke algebras, algebraic K-theory, Farrell--Jones Conjecture, reductive $p$-adic groups}
\subjclass[2020]{20C08,19D50}
  \DeclareMathAlphabet{\matheurm}{U}{eur}{m}{n}
\DeclareMathAlphabet{\matheurm}{U}{eur}{m}{n}
\newcommand{\contstrCtilde}{\widetilde{\mfC}}
\newcommand{\contrCatUcoef}[3]{#3_{#1}(#2)}
\newcommand{\contrCatUcoefover}[3]{\overline{#3}_{#1}(#2)}
\newcommand{\contrCatNUcoef}[3]{#3^{\nounit}_{#1}(#2)}
\newcommand{\Hecke}[1]{\calh_{#1}}
\newcommand{\Int}{\matheurm{Int}}
\newcommand{\MODcat}[1]{#1\text{-}\matheurm{Mod}}
\newcommand{\Or}{\matheurm{Or}}
\newcommand{\OrGF}[2]{\matheurm{Or}_{#2}(#1)}
\newcommand{\Rep}{\matheurm{Rep}}
\newcommand{\Sets}{\matheurm{Sets}}
\newcommand{\Spaces}{\matheurm{Spaces}}
\newcommand{\SpacesC}[1]{#1\text{-}{\matheurm{Spaces}}}
\newcommand{\Spectra}{\matheurm{Spectra}}
\newcommand{\Sub}{\matheurm{Sub}}
\newcommand{\underlinetupel}[1]{{\underline{#1}}}
\newcommand{\ZerodimR}{\bfM}
\newcommand{\ZerodimRunderlyingSet}{M}
\newcommand{\ZerodimRunderlyingSetElement}{m}
\newcommand{\contstraPW}{\mfE^{\text{pw}}}
\newcommand{\contc}{\EC}
\newcommand{\contC}{\widehat{\EC}}
\newcommand{\contstrc}{\mfC}
\newcommand{\contstrC}{\widehat{\mfC}}
\newcommand{\contd}{\ED}
\newcommand{\contstrd}{\mfD}
\newcommand{\regularPOrGSC}{\calr}
\newcommand{\regularPOrGSCzero}{\calr^0}
\newcommand{\regularPOrGSCzerocalf}[1]{\calr^0_{#1}}
\newcommand{\twovec}[2]{{\begin{psmallmatrix}{#1}\\{#2}\end{psmallmatrix}}}
\newcommand{\bbone}[1]{\mathbbm{1}\hspace{-.5ex}_{#1}}
\DeclareMathOperator{\aut}{aut}
\DeclareMathOperator{\bary}{bary}
\DeclareMathOperator{\cent}{cent}
\DeclareMathOperator{\ch}{ch}
\DeclareMathOperator*{\colimunder}{colim}
\DeclareMathOperator{\comp}{comp}
\DeclareMathOperator{\diam}{diam}
\DeclareMathOperator{\dg}{dg}
\DeclareMathOperator{\homendo}{end}
\DeclareMathOperator{\GL}{GL}
\DeclareMathOperator*{\hocolimunder}{hocolim}
\DeclareMathOperator{\holim}{holim}
\DeclareMathOperator{\id}{id}
\DeclareMathOperator{\Image}{im}
\DeclareMathOperator{\invlim}{invlim}
\DeclareMathOperator{\ind}{ind}
\DeclareMathOperator{\Idem}{Idem}
\DeclareMathOperator{\Kgroup}{K}
\DeclareMathOperator{\mor}{mor}
\DeclareMathOperator{\ob}{ob}
\DeclareMathOperator{\PGL}{PGL}
\DeclareMathOperator{\pr}{pr}
\DeclareMathOperator{\res}{res}
\DeclareMathOperator{\sh}{sh}
\DeclareMathOperator{\simp}{simp}
\DeclareMathOperator{\SL}{SL}
\DeclareMathOperator{\supp}{supp}
\DeclareMathOperator{\vertices}{vert}
\newcommand{\fold}[1]{d_{#1\text{-}\mathrm{fol}}}
\newcommand{\All}{{\cala\hspace{-.3pt}\mathrm{ll}}}
\newcommand{\COM}{{\calc\hspace{-1pt}\mathrm{om}}}
\newcommand{\COP}{{\calc\hspace{-1pt}\mathrm{op}}}
\newcommand{\CVCYC}{{\calc\hspace{-1pt}\mathrm{vcy}}}
\newcommand{\FIN}{{{\mathcal F}\mathrm{in}}}
\newcommand{\OPEN}{{\calo\mathrm{p}}}
\newcommand{\VCYC}{{{\mathcal V}\mathrm{cyc}}}
  \newcommand{\IC}{\mathbb{C}}
  \newcommand{\IN}{\mathbb{N}}
  \newcommand{\IQ}{\mathbb{Q}}
  \newcommand{\IR}{\mathbb{R}}
  \newcommand{\IZ}{\mathbb{Z}}
  \newcommand{\EC}{\matheurm{C}}  
  \newcommand{\ED}{\matheurm{D}}
  \newcommand{\EP}{\matheurm{P}}
  \newcommand{\cala}{\mathcal{A}}
  \newcommand{\calb}{\mathcal{B}}
  \newcommand{\calc}{\mathcal{C}}
  \newcommand{\cald}{\mathcal{D}}
  \newcommand{\cale}{\mathcal{E}}
  \newcommand{\calf}{\mathcal{F}}
  \newcommand{\calh}{\mathcal{H}}
  \newcommand{\calk}{\mathcal{K}}
    \newcommand{\call}{\mathcal{L}}
  \newcommand{\calo}{\mathcal{O}}
  \newcommand{\calp}{\mathcal{P}}
  \newcommand{\calq}{\mathcal{Q}}
  \newcommand{\calr}{\mathcal{R}}
  \newcommand{\cals}{\mathcal{S}}
    \newcommand{\calu}{\mathcal{U}}
  \newcommand{\calv}{\mathcal{V}}
  \newcommand{\calw}{\mathcal{W}}
  \newcommand{\caly}{\mathcal{Y}}
  \newcommand{\bfA}{\mathbf{A}}
  \newcommand{\bfB}{\mathbf{B}}
   \newcommand{\bfC}{\mathbf{C}}
  \newcommand{\bfD}{\mathbf{D}}
  \newcommand{\bfDelta}{{\bf \Delta}}
  \newcommand{\bfE}{\mathbf{E}}
  \newcommand{\bff}{\mathbf{f}}
  \newcommand{\bfH}{\mathbf{H}}
  \newcommand{\bfi}{\mathbf{i}}
  \newcommand{\bfJ}{\mathbf{J}}
  \newcommand{\bfK}{\mathbf{K}}
  \newcommand{\bfL}{\mathbf{L}}
  \newcommand{\bfM}{\mathbf{M}}
  \newcommand{\bfp}{\mathbf{p}}
  \newcommand{\bfS}{\mathbf{S}}
  \newcommand{\bfSigma}{{\bf \Sigma}}
  \newcommand{\bftr}{\mathbf{tr}}
  \newcommand{\bfU}{\mathbf{U}}
  \newcommand{\bfV}{\mathbf{V}}
  \newcommand{\bfX}{\mathbf{X}}
 \newcommand{\bfKinfty}{\mathbf{K}^{\infty}}
\newcommand{\mfC}{\mathfrak{C}}
\newcommand{\mfD}{\mathfrak{D}}
\newcommand{\mfE}{\mathfrak{E}}
\newcommand{\dis}{{\mathrm{dis}}}
\newcommand{\fin}{{\mathrm{fin}}}
\newcommand{\fol}{{\mathrm{fol}}}
\newcommand{\op}{{\mathrm{op}}}
 \newcommand{\sw}{{\mathrm{sw}}}
\newcommand{\EPplus}{\EP\hspace{-2.5pt}_+\hspace{-1.2pt}}
\newcommand{\EGF}[2]{E_{#2}(#1)}
\newcommand{\prodprimedisplay}{\sideset{}{'}\prod}
\newcommand{\prodprimeinline}{{\sideset{}{'}{\textstyle\prod}}}
\newcommand{\suppX}{\supp_2}
\newcommand{\suppG}{\supp_G}
\newcommand{\suppobj}{\supp_1}
\newcounter{commentcounter}
\renewcommand{\thesubsection}{\arabic{section}.{\sc\alph{subsection}}} 
\theoremstyle{plain}
\newtheorem{theorem}{Theorem}[section]
\newtheorem{lemma}[theorem]{Lemma}
\newtheorem{corollary}[theorem]{Corollary}
\newtheorem{proposition}[theorem]{Proposition}
\newtheorem{conjecture}[theorem]{Conjecture}
\newtheorem{assumption}[theorem]{Assumption}
\newtheorem*{theorem*}{Theorem}
\newtheorem*{theoremA*}{Theorem A}
\newtheorem*{theoremB*}{Theorem B}
\theoremstyle{definition}
\newtheorem{definition}[theorem]{Definition}
\newtheorem{example}[theorem]{Example}
\newtheorem{remark}[theorem]{Remark}
\newtheorem*{definition*}{Definition}
\theoremstyle{remark}
\let\c@equation=\c@theorem\makeatother
\theoremstyle{definition}
\newtheorem{construction}[theorem]{Construction} 
\newcounter{othercommentcounter}
\newcommand{\ball}{D} %our ball used to be B, but because we use now \calb throughout this no longer good
\newcommand{\newD}{L^+} %this is a constant that used to be called D, but needs a new name
\newcommand{\nounit}{\text{\tiny nu}}
\newcommand{\contcover}{\overline{\EC}}
\newcommand{\allG}{+}
\newcommand{\nowedge}{\sharp}
\newcommand{\dd}{{\partial}}
\newcommand{\e}{{\varepsilon}}
\newcommand{\CAT}{\operatorname{CAT}}
\newcommand{\FS}{\mathit{FS}}
\newcommand{\version}[1]              %marks the date of last editing and compilation
{\begin{center} last edited on #1\\
last compiled on \today\\
name of tex-file: \jobname
\end{center}
}
\begin{document}

\begin{abstract}
  Motivated by the Farrell--Jones Conjecture for group rings, we formulate the
  $\COP$-Farrell--Jones Conjecture for the K-theory of Hecke algebras of td-groups.  We
  prove this conjecture for (closed subgroups of) reductive p-adic  groups $G$.  In
  particular, the projective class group $\Kgroup_0(\calh(G))$ for a (closed subgroup) of
  a reductive p-adic  group $G$ can be computed as a colimit of projective class groups
  $\Kgroup_0(\calh(U))$ where $U$ varies over the compact open subgroups of $G$.
   This implies that all finitely generated smooth complex representations of a reductive p-adic  $G$ admit
   finite projective resolutions by compactly induced representations.  For $\SL_n(F)$
   we translate the colimit formula for $\Kgroup_0(\calh(G))$ to a more
  concrete cokernel description in terms of stabilizers for the action on the Bruhat-Tits
  building.
	
  For negative K-theory we obtain vanishing results, while we identify the higher
  K-groups $\Kgroup_n(\calh(G))$ with the value of $G$-homology theory on the extended
  Bruhat-Tits building.  Our considerations apply to general Hecke algebras of the form
  $\calh(G;R,\rho,\omega)$, where we allow a central character $\omega$ and a twist by an
  action $\rho$ of $G$ on $R$.  For the $\COP$-Farrell--Jones Conjecture we need to assume
  $\IQ \subseteq R$ and a regularity assumption.  As a key intermediate step we introduce
  the $\CVCYC$-Farrell--Jones conjecture.  For the latter no regularity assumptions on $R$
  are needed. 
\end{abstract}

\maketitle

%%%%%%%%%%%%%%%%%%%%%%%%%%%%%%%%%%%%%%%%%%%%%%%%%%%%%%%%%%%%%%%%%%%%

\section{Introduction}

The Farrell--Jones conjecture~\cite{Farrell-Jones(1993a)} originated in surgery theory and has applications to the classification of manifolds, notably it implies (in dimension $\geq 5$) Borel's conjecture on the topological rigidity of aspherical manifolds.
  The conjecture concerns the K- and L-groups of group rings and expresses these in terms of an equivariant homology theory.
  It can be viewed as reducing computations to the case of group rings for virtually cyclic groups. 
  Under regularity assumptions there are often further reductions, typically to group rings of finite groups. 
  Further information on the conjecture can be found for instance in~\cite{Lueck(2022book),Lueck-Reich(2005)}.  
  Farrell and Jones used the geodesic flow on non-positively curved manifolds as a tool to confirm their conjecture for fundamental groups of such manifolds~\cite{Farrell-Jones(1993a)}.
  
 In this paper we study the K-theory of Hecke algebras of td-groups and  transfer the Farrell--Jones conjecture and the geodesic flow method to smooth representation theory.
 We obtain formulas for the K-theory of Hecke algebras $\calh(G;R)$ where $G$ is a closed subgroup of a reductive p-adic group and $R$ is a field of characteristic $0$\footnote{It suffices for $R$ to contain $\IQ$ and to satisfy a regularity assumption.}. These express the K-theory of $\calh(G;R)$ as $G$-homology groups of the associated Bruhat-Tits building, see Corollary~\ref{cor:K_n-no-fuzz}.
 On the level of $\Kgroup_0$ this yields isomorphisms 
 \begin{equation*}
  \colimunder_{U \in \Sub_\COP(G)} \Kgroup_0 (\calh(U;R)) \xrightarrow{\cong} \Kgroup_0 (\calh(G;R))
 \end{equation*}
 where the colimit is taken over a category of compact open subgroups of $G$, see Corollary~\ref{cor:K_0-no-fuzz}.
 This confirms in particular a conjecture of Dat~\cite[Conj.~1.11]{Dat(2003)}.
 For finitely generated smooth representations it implies the existence of finite length resolutions by compactly induced representations, generalizing a result of Schneider--Stuhler for admissable representations, see Subsection~\ref{subsec:Resolutions_of_smooth_representations}.  
  A long standing conjecture in smooth representation theory asks whether all irreducible cuspidal representations of reductive $p$-adic groups $G$ are compactly induced\footnote{If $G$ has non-trivial center, then one needs to consider open subgroups that are compact modulo center. There are versions of our results in this situation as well, see Corollary~\ref{cor:main-for-twisted}.}.
  Our results imply that the $\Kgroup_0$-classes of finitely generated representations can be expressed in terms of compact induction. 
  
  We proceed to explain our results in more detail.

%-----------------------------------------------------------------

\subsection{Hecke algebras and $p$-adic groups}
Let $G$ be a \emph{td-group}, i.e., a locally compact second countable totally disconnected
topological Hausdorff group.  In such a group the neutral element $e$ has a countable
neighborhood basis consisting of compact open subgroups.  Let $R$ be a not necessarily
commutative ring with unit containing $\IQ$.  The \emph{Hecke algebra} $\calh(G;R)$ of $G$
over $R$ is the algebra of locally constant compactly supported $R$-valued functions on
$G$.  Its multiplication is given by
convolution\footnote{$\varphi \ast \varphi' (g) = \int_G \varphi(gx) \varphi'(x^{-1}) dx$.}
relative to a $\IQ$-valued left-invariant Haar
measure on $G$\footnote{If $\mu$ an $\IR$-valued Haar measure and $K$ is compact open in
  $G$, then $\frac{\mu}{\mu(K)}$ is $\IQ$-valued; the choice of Haar measure changes the
  Hecke algebra only by canonical isomorphism.}.  There are more general Hecke algebras
$\calh(G;R,\rho,\omega)$ allowing for twists $\rho$ by an action of $G$ on $R$ and a
central character $\omega$.  Hecke algebras are in general not unital.  A module $M$ over
the Hecke algebra is \emph{non-degenerate}, if $\calh(G;R) \cdot M = M$.  A representation of $G$
on an $R$-module $V$ is said to be \emph{smooth}, if all isotropy groups of the action of $G$ on
$V$ are open.  The category of non-degenerated $\calh(G;R)$-modules is equivalent to the
category of smooth representations on $R$-modules, see~\cite[Sec.~9]{Garrett(2012)}.  By a
\emph{reductive $p$-adic group} we will mean the $F$-points of an algebraic group over $F$,
whose component of the identity is reductive, where $F$ is a non-Archimedian local field,
i.e., a finite extension of the field of $p$-adic numbers or the field of formal Laurent
series $k((t))$ over a finite field $k$.  Reductive $p$-adic groups are td-groups.

Associated to a reductive $p$-adic group is its extended \emph{Bruhat-Tits building}
$X$~\cite{Bruhat-Tits(1972), Bruhat-Tits(1984), Tits(1979)}.  This is a $\CAT(0)$-space
with a cocompact proper isometric $G$-action.  The
building can also be given the structure of a simplicial complex such that the action of
$X$ is simplicial and smooth.  For a short review of the Bruhat-Tits building, emphasizing
the aspects we need, see~\cite[Appendix~A]{Bartels-Lueck(2023almost)}.

%-----------------------------------------------------------------

\subsection{Compact induction}
The \emph{compact induction} of a smooth representation $V$ of a compact open subgroup $U$ of $G$
is the $G$-representation consisting of compactly supported $U$-equivariant maps
$G \to V$\footnote{The formula for the $G$-action is $(gf)(x) := f(xg)$ for $f \colon G \to V$, $g \in G$.}.  On the level of Hecke algebras compact induction is induced by the inclusion
$\calh(U;R) \subseteq \calh(G;R)$.  This inclusion exists for open subgroups $U$ of $G$;
locally constant functions on open subgroups can be extended by zero.  Smooth
representations of a reductive $p$-adic group $G$ are often studied through compact
induction.  For example, type theory, introduced by
Bushnell-Kutzko~\cite{Bushnell-Kutzko(1998smooth_reps)}, aims at describing Bernstein
blocks in the representation category as modules over endomorphism rings of
representations that are compactly induced.  Conjecturally, all irreducible cuspidal
representations are induced from compact modulo center open subgroups.  See
Fintzen~\cite{Fintzen(2021types)} for recent far reaching results concerning these
conjectures.

Following Dat~\cite{Dat(2000)} we study the K-theory of Hecke algebras (equivalently, of
smooth representations) via compact induction.  While this leads to less explicit results
about smooth representations, it allows for very general results.  Ultimately we will
describe the K-theory of Hecke algebras of reductive $p$-adic groups  in terms
of the K-theory of Hecke algebras of compact open subgroups.  We hope that the connection
to the above mentioned conjectures can be explored in the future.

%-----------------------------------------------------------------

\subsection{$\Kgroup_0$ of Hecke algebras}
Let $\calh(G;R)$ be the Hecke algebra of a td-group $G$ with coefficients in $R$.  The
projective class group $\Kgroup_0 (\calh(G;R))$ is the abelian group with a generator
$[P]$ for each finitely generated projective $\calh(G;R)$-module subject to
the relation $[P \oplus P'] = [P] \oplus [P']$.  Compact induction preserves finitely
generated projective modules and induces a map on $\Kgroup_0$.  Combining these maps for
all compact open subgroups of $G$ we obtain
\begin{equation}\label{eq:K_0-assembly-hecke}
  \colimunder_{U \in \Sub_\COP(G)} \Kgroup_0 (\calh(U;R)) \to \Kgroup_0 (\calh(G;R))
\end{equation}
where $\Sub_\COP(G)$ is the following category.  Objects are compact open subgroups of
$G$.  Morphisms $U \to U'$ are equivalence classes of group homomorphisms of the form
$x \mapsto gxg^{-1}$ with $g \in G$.  Two such group homomorphisms are identified if they
differ by an inner automorphism of $U'$\footnote{In other words, $\mor_{\Sub_\COP}(U,U')$
  is the double coset $U' \backslash \{ g \in G \mid gUg^{-1} \subseteq U' \} / C_G(U)$
  where $C_G(U)$ is the centralizer of $U$ in $G$.}.  To study surjectivity the colimit
in~\eqref{eq:K_0-assembly-hecke} can of course be replaced with the sum of the groups
$\Kgroup_0 (\calh(U;R))$.  Dat~\cite{Dat(2000)} has shown
that~\eqref{eq:K_0-assembly-hecke} is rational surjective for $G$ a reductive $p$-adic
group and $R=\IC$.  In particular, the cokernel of~\eqref{eq:K_0-assembly-hecke} is a
torsion group.  Dat~\cite[Conj.~1.11]{Dat(2003)} conjectured that this cokernel is
$\widetilde w_G$-torsion.  Here $\widetilde w_G$ is a certain multiple of the order of the
Weyl group of $G$.  Dat proved this conjecture for
$G = \GL_n(F)$~\cite[Prop.~1.13]{Dat(2003)} and asked about integral surjectivity, see the
comment following~\cite[Prop.~1.10]{Dat(2003)}.  The following will be a consequence of
our main result.

\begin{corollary}\label{cor:K_0-no-fuzz}
  Assume that $G $ is a modulo a compact subgroup isomorphic to a closed subgroup of a
  reductive $p$-adic group. Let $R$ be a ring containing $\IQ$.  Assume that $R$ is uniformly 
    regular, i.e., $R$ is noetherian and there
    is $l$ such that every $R$-module admits a projective resolution
     of length at most $l$.
    Then~\eqref{eq:K_0-assembly-hecke} is an isomorphism.
\end{corollary}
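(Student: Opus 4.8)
The plan is to deduce Corollary~\ref{cor:K_0-no-fuzz} from our main result, the $\COP$-Farrell--Jones Conjecture: for a group $G$ as in the statement and a ring $R$ with $\IQ \subseteq R$ satisfying the stated regularity hypothesis, the assembly map induces isomorphisms
\[
  H^G_n\bigl(\EGF{G}{\COP}; \bfKinftycalh\bigr) \xrightarrow{\ \cong\ } H^G_n\bigl(\pt; \bfKinftycalh\bigr) = \Kgroup_n(\calh(G;R)) \qquad (n \in \IZ),
\]
where $\EGF{G}{\COP}$ is a model for the classifying space for the family of compact open subgroups and $H^G_*(-;\bfKinftycalh)$ is the associated $G$-homology theory, with $H^G_n(G/U;\bfKinftycalh) = \Kgroup_n(\calh(U;R))$ on orbits. (We may assume $G$ is itself a closed subgroup of a reductive $p$-adic group; the extra generality ``modulo a compact subgroup'' is handled by the main theorem directly.) It therefore only remains to identify $H^G_0(\EGF{G}{\COP};\bfKinftycalh)$ with the colimit in~\eqref{eq:K_0-assembly-hecke}, compatibly with the map appearing there.

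To do this I would run the equivariant Atiyah--Hirzebruch spectral sequence
\[
  E^2_{p,q} = H_p^{\OrGF{G}{\COP}}\bigl(\EGF{G}{\COP}; \{G/U \mapsto \Kgroup_q(\calh(U;R))\}\bigr) \Longrightarrow H^G_{p+q}\bigl(\EGF{G}{\COP}; \bfKinftycalh\bigr),
\]
whose $E^2$-page is the Bredon homology of $\EGF{G}{\COP}$ with coefficients in the covariant coefficient system $G/U \mapsto \Kgroup_q(\calh(U;R))$ over the restricted orbit category. The crucial input is the vanishing $\Kgroup_q(\calh(U;R)) = 0$ for all $q < 0$ and all compact open $U \leq G$; this is (a special case of) our negative $K$-theory results, and it is the point where uniform regularity of $R$ together with $\IQ \subseteq R$ is used: one writes $\calh(U;R)$ as a filtered colimit of the group rings $R[U/V]$ over the compact open normal subgroups $V \trianglelefteq U$, uses that $K$-theory commutes with filtered colimits, and that each $R[U/V]$ has vanishing negative $K$-theory because $U/V$ is finite of order invertible in the uniformly regular (hence regular noetherian of finite global dimension) ring $R$. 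Granting this, $E^2_{p,q} = 0$ whenever $p+q \leq 0$ and $(p,q)\neq (0,0)$; for degree reasons no differential enters or leaves the $(0,0)$-spot, and standard convergence considerations (a finite-dimensional model for $\EGF{G}{\COP}$ is available) give $H^G_0(\EGF{G}{\COP};\bfKinftycalh) \cong E^2_{0,0} = H_0^{\OrGF{G}{\COP}}(\EGF{G}{\COP}; \{G/U \mapsto \Kgroup_0(\calh(U;R))\})$.

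Finally I would identify this zero-dimensional Bredon homology group with the colimit. It is standard that $H_0^{\OrGF{G}{\COP}}(\EGF{G}{\COP}; \underline{M})$ is the colimit of $M$ over $\OrGF{G}{\COP}$. Since conjugation by $u \in U$ induces an inner automorphism of $\calh(U;R)$ (up to an invertible multiplier), hence the identity on $\Kgroup_0(\calh(U;R))$, the coefficient system factors through $\Sub_\COP(G)$; and the canonical projection $\OrGF{G}{\COP} \to \Sub_\COP(G)$ induces an isomorphism on colimits of functors that factor through it. This yields
\[
  H^G_0\bigl(\EGF{G}{\COP}; \bfKinftycalh\bigr) \cong \colimunder_{U \in \Sub_\COP(G)} \Kgroup_0(\calh(U;R)).
\]
Chasing through these identifications, the edge homomorphism composed with the assembly isomorphism restricts on each orbit $G/U$ to the map on $\Kgroup_0$ induced by compact induction, i.e.\ by the inclusion $\calh(U;R) \subseteq \calh(G;R)$; hence the composite isomorphism is precisely~\eqref{eq:K_0-assembly-hecke}. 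The genuinely substantial point beyond quoting the main theorem is expected to be the negative $K$-theory vanishing over the family $\COP$; the remaining steps are the standard spectral-sequence collapse and the $\OrGF{G}{\COP}$-versus-$\Sub_\COP(G)$ bookkeeping, the only real care being the verification that the natural map in~\eqref{eq:K_0-assembly-hecke} coincides with the assembly/edge map.
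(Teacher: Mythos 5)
Your proposal is correct and follows essentially the route the paper takes: the paper deduces the corollary from the $\COP$-Farrell--Jones isomorphism (Theorem~\ref{thm:main-COP-FJ-for-reductive} together with the inheritance Theorem~\ref{the:inheritance}) and then identifies~\eqref{eq:K_0-assembly-hecke} with~\eqref{eq:COP-assembly-homolgy-theory-R} in degree zero exactly via the first-quadrant collapse of the equivariant Atiyah--Hirzebruch spectral sequence (using vanishing of negative $K$-theory of $\calh(U;R)$ for $U$ compact open, as $\calh(U;R)$ is a filtered colimit of group rings $R[U/V]$ with $R$ regular and $\IQ\subseteq R$) and the $\Or_{\COP}(G)$-versus-$\Sub_{\COP}(G)$ bookkeeping, the details being outsourced to \cite[Thm.~1.1]{Bartels-Lueck(2023recipes)}. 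Your write-up fills in precisely those outsourced steps and contains no gaps.
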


This is a special case of Corollary~\ref{cor:main-for-twisted}~\ref{cor:main-for-twisted:K_0},
where we consider more general Hecke
algebras, allowing for twists by actions of $G$ on $R$ and central characters.

%-----------------------------------------------------------------

\subsection{Resolutions of smooth representations}\label{subsec:Resolutions_of_smooth_representations}

Let $G$ be a reductive p-adic group $G$. Bernstein~\cite{Bernstein(1992)} showed that the
category of smooth complex representations is noetherian and has finite cohomological
dimension.  Consequently, any finitely generated smooth complex representation has a
finite resolution
\begin{equation}\label{eq:resolution-of-V}
  P_n \to P_{n-1} \to \dots \to P_0 \to V 
\end{equation}
where the $P_i$ are finitely generated projective.  A smooth $G$-representation is said to be 
\emph{admissible}, if for every compact open subgroup $U$ of $G$ the subspace $V^U$ of
$U$-fixed vectors is finite dimensional.  
  It is called \emph{compactly induced},
  if it is for some compact open subgroup $U \subseteq G$
  the compact induction of a finitely generated projective $U$-representation.

  Schneider and
Stuhler~\cite{Schneider-Stuhler(1997)} showed that for finitely generated
admissible $V$ the $P_i$ in the
above resolution can be chosen to be finite direct sums of compactly induced representations.
From Corollary~\ref{cor:K_0-no-fuzz} we obtain a generalization to arbitrary
finitely generated $V$.

\begin{corollary}\label{cor:resolutions}
  Every finitely generated smooth complex representation $V$ of $G$ admits a finite
  resolution~\eqref{eq:resolution-of-V} where the $P_i$ are direct sums of compactly
  induced representations.
\end{corollary}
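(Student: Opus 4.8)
The plan is to deduce Corollary~\ref{cor:resolutions} from Corollary~\ref{cor:K_0-no-fuzz} applied with $R = \IC$, which is uniformly regular (indeed a field). First I would recall the standard equivalence between the category of non-degenerate $\calh(G;\IC)$-modules and the category of smooth complex $G$-representations, under which finitely generated projective modules correspond exactly to those representations that appear as direct summands of finite direct sums of compactly induced representations of the form $\cind_U^G(Q)$ with $U$ a compact open subgroup and $Q$ a finitely generated projective (equivalently, finite-dimensional) $\calh(U;\IC)$-module. The point is to upgrade ``direct summand of a sum of compactly induced'' to ``finite direct sum of compactly induced''; this is where the $\Kgroup_0$ statement enters.

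Next I would make precise the translation of Corollary~\ref{cor:K_0-no-fuzz}. Since~\eqref{eq:K_0-assembly-hecke} is an isomorphism, every class in $\Kgroup_0(\calh(G;\IC))$ is a $\IZ$-linear combination of classes $[\cind_U^G(Q)]$; in particular, given a finitely generated projective $\calh(G;\IC)$-module $P$, there exist compact open subgroups $U_i$, $V_j$ and finitely generated projective $\calh(U_i;\IC)$-, respectively $\calh(V_j;\IC)$-modules $Q_i$, $R_j$ with
\begin{equation*}
  [P] + \sum_j [\cind_{V_j}^G(R_j)] = \sum_i [\cind_{U_i}^G(Q_i)]
\end{equation*}
in $\Kgroup_0(\calh(G;\IC))$. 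By the standard fact that equality in $\Kgroup_0$ of a (not necessarily unital but idempotent-complete, or after adjoining a unit) ring implies stable isomorphism of the associated projective modules, there is a finitely generated projective $N$ such that
\begin{equation*}
  P \oplus \bigoplus_j \cind_{V_j}^G(R_j) \oplus N \;\cong\; \bigoplus_i \cind_{U_i}^G(Q_i) \oplus N.
\end{equation*}
Adding $N$ back to the list of $V_j$'s (note $N$ is itself a direct summand of a finite sum of compactly induced modules, and one may enlarge each summand so that the whole thing becomes a finite direct sum of compactly induced pieces, using that $\cind_U^G$ of a finitely generated projective is a summand of $\cind_U^G$ of a free module which is a finite sum of $\cind_U^G(\calh(U;\IC))$'s) we conclude: every finitely generated projective smooth $G$-representation $P$ admits a finitely generated projective $P'$, which is a finite direct sum of compactly induced representations, such that $P \oplus P'$ is again a finite direct sum of compactly induced representations.

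Finally I would assemble the resolution. Given a finitely generated smooth $\IC$-representation $V$, Bernstein's noetherianity and finite cohomological dimension provide a finite resolution~\eqref{eq:resolution-of-V} by finitely generated projectives $P_i$, $0 \le i \le n$. Now I would modify this resolution from the top down: replace the last term $P_n$ by $P_n \oplus (P_{n-1}' \oplus \cdots)$ — more carefully, one uses the classical trick that for a finite projective resolution one may add to $P_i$ and $P_{i-1}$ a common finitely generated projective summand (with the identity map between the new summands) without changing exactness, so by choosing the $P_i'$ from the previous paragraph and working inductively from $i=0$ upward one arranges that $P_0 \oplus P_0'$, then $P_1 \oplus P_0' \oplus P_1'$, etc., become finite direct sums of compactly induced representations; the top term absorbs the remaining correction and is a summand of a finite sum of compactly induced representations, hence — after one more such stabilization, which does not increase the length since we may append a split short exact sequence $0 \to P_n' \xrightarrow{\id} P_n' \to 0$ at the end — itself a finite direct sum of compactly induced representations. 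The main obstacle, and the step requiring genuine care, is this bookkeeping: ensuring that the iterated "add a common summand" operations terminate with \emph{all} terms, including the top one, being honest finite direct sums of compactly induced representations rather than merely direct summands thereof; this is exactly what the $\Kgroup_0$-isomorphism of Corollary~\ref{cor:K_0-no-fuzz} buys us over the a priori weaker statement that compactly induced representations generate $\Kgroup_0$ rationally.
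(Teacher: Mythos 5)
Your proposal is correct and follows essentially the same route as the paper: use the surjectivity part of Corollary~\ref{cor:K_0-no-fuzz} to write $[P]$ as a difference of classes of finite sums of compactly induced modules, upgrade the resulting stable isomorphism to an honest isomorphism $P \oplus W \cong W'$ by absorbing the stabilizing summand (using that every finitely generated projective is a summand of a compactly induced module), and then add elementary two-term complexes to Bernstein's finite projective resolution. The only (harmless) slip is the parenthetical claim that the final stabilization at the top of the resolution does not increase its length — appending an elementary complex in degrees $n$ and $n{+}1$ does lengthen it by one, but the resolution of course remains finite, which is all the corollary asserts.
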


\begin{proof}
  Under the equivalence of categories between smooth representations and (non-degenerated)
  Hecke modules the compactly induced representations correspond to the modules in the
  image of the induction map 
  \begin{equation*}
    \MODcat{\calh(U;\IC)} \to \MODcat{\calh(G;\IC)}, \quad M \mapsto \calh(G;\IC) \otimes_{\calh(U;\IC)} M
  \end{equation*}
  for some compact open subgroup $U \subseteq G$.
  Let $P$ be a finitely generated
  projective $\calh(G;\IC)$-module.  Corollary~\ref{cor:K_0-no-fuzz} implies
  that in $\Kgroup_0$ we have $[P] = [W] - [W']$ where both $W$ and $W'$ are sums of
  compactly induced modules.  This means that there is an isomorphism
  $P \oplus W \oplus Q \cong W' \oplus Q$ for some finitely generated projective
  $\calh(G;\IC)$-module $Q$.  As any finitely generated projective module is a direct
  summand of a compactly induced modules,\footnote{If $v_1,\dots,v_n$ generates $P$ and $U$
    fixes the $v_i$, then $P$ is a direct summand of
    $\calh(G;\IC) \otimes_{\calh(U;\IC)} \IC^n$.} we can stabilize further and then absorb
  $Q$ into $W$ and $W'$, i.e., we obtain $P \oplus W \cong W'$ with $W$ and $W'$ finite direct sums of
  compactly induced modules.

  We can applying this to the $P_i$ in~\eqref{eq:resolution-of-V}.  Thus by adding
  appropriate elementary chain complexes on compactly induced modules
  to~\eqref{eq:resolution-of-V} we obtain the desired resolution of $V$.
   \end{proof}

%-----------------------------------------------------------------

\subsection{Smooth $G$-homology theories}%
\label{subsec:Smooth_G-homology_theories}

The \emph{orbit category} has as objects homogeneous $G$-sets $G/V$ with $V$ closed in $G$
and as morphisms $G$-maps.  The \emph{smooth orbit category} $\Or_{\OPEN}(G)$ is the full
subcategory on all $G/U$ with $U$ open in $G$.  Let $\Spectra$ be the category of (not
necessarily connective) spectra.  Associated to a covariant functor
$\bfE \colon \OrGF{G}{\OPEN} \to \Spectra$ there is a smooth $G$-homology theory
\begin{equation}
  H_*^G(-;\bfE)
  \label{H_(ast)_upper_G(-,bfE)}
\end{equation}
such that $H_n^G(G/H;\bfE) = \pi_n(\bfE(G/H))$ for $n \in \IZ$.  Here a \emph{smooth
  $G$-homology theory} is to be understood in the obvious way: It digests (pairs of)
smooth $G$-$CW$-complexes, yields an abelian group $H_n(X;\bfE)$ for every $n \in \IZ$,
and satisfies the expected axioms, namely, functoriality in $G$-maps, $G$-homotopy
invariance, the long exact sequence of a smooth $G$-$CW$-pair, and $G$-excision.  All this
is explained in~\cite{Davis-Lueck(1998)}.  The point here is that smooth $G$-CW-complexes
are contravariant free $\calc$-CW-complexes in the sense
of~\cite[Def.~3.2]{Davis-Lueck(1998)} for $\calc = \Or_{\OPEN}(G)$.  See also the discussions
in~\cite[Sec.~2.C]{Bartels-Lueck(2023foundations)}.

%-----------------------------------------------------------------
   
\subsection{The K-theory spectrum of Hecke algebras}\label{subsec:K-theory-spectrum-Hecke-algebras}
To generalize~\eqref{eq:K_0-assembly-hecke} to the K-theory spectrum, we introduce some
notation.  A \emph{category with $G$-support} is a $\IZ$-linear category $\calb$ together with
maps $\supp_G$ that associate to objects and morphisms compact subsets of $G$ subject to a
natural list of axioms, see Definition~\ref{def:category_with_G-support}.  Given a $G$-set $X$
and such a $\calb$, we naturally obtain a $\IZ$-linear category $\calb[X]$, see
Definition~\ref{def:calbX}.   The key example associated to
the Hecke algebra $\calh(G;R)$ is the category $\calb(G;R)$, see
Example~\ref{ex:calb(G;R)}.  
%Its objects are idempotents $e_U$ in $\calh(G;R)$
%associated to compact open subgroups of $G$.  
%Morphisms $e_U \to e_{U'}$ are elements $f$
%of $\calh(G;R)$ satisfying $f = e_{U'} f e_U$.  
 Its objects are compact open subgroups $U \subseteq G$. Morphisms $U \to U'$ are elements $f$
  of $\calh(G;R)$ satisfying $f = e_{U'} f e_U$, where $e_U$ is the idempotent
  in $\calh(G;R)$
associated to the compact open subgroup $U$.
Here $\supp_G(f) = \{g \in G \mid f(g) \not= 0\}$,
which is automatically compact as $f$ is compactly supported and locally constant. 
We define
\begin{equation}\label{eq:bfK_R}
  \bfK_R \colon \Or_\OPEN(G) \to \Spectra, \quad G/U \mapsto \bfK \big(\calb({G};{R})[G/U]\big),
\end{equation} 
where $\bfK$ is the K-theory functor for $\IZ$-linear categories, see
Subsection~\ref{subsec:K-theory}.  The homotopy groups of $\bfK_R (G/U)$ are the K-groups
of the Hecke algebra $\calh(U;R)$, see~\cite[(6.8)]{Bartels-Lueck(2023foundations)}.
As discussed in Subsection~\ref{subsec:Smooth_G-homology_theories} we can
apply~\cite{Davis-Lueck(1998)} and obtain a smooth $G$-homology theory $H_n^G(\--;\bfK_R)$
with $H_n^G(G/U;\bfK_R) \cong \Kgroup_n(\calh(U;R))$.

Associated to the family $\COP$ of compact open subgroups, there is a $G$-$CW$-complex
$\EGF{G}{\COP}$ that  is uniquely determined up to $G$-homotopy by the property that all
its isotropy groups belong to $\COP$ and $\EGF{G}{\COP}^H$ is weakly contractible for
$H \in \COP$. In particular $\EGF{G}{\COP}$ is a proper smooth $G$-$CW$-complex.  Every
$G$-$CW$-complex $X$, whose isotropy belongs to $\COP$, has up to $G$-homotopy precisely
one $G$-map to $\EGF{G}{\COP}$, see~\cite[Subsec.~1.2]{Lueck(2005s)}.  Analogously one can
define for the family $\COM$ of all compact subgroups its classifying space
$\EGF{G}{\COM}$.  It turns out that the canonical $G$-map
$\EGF{G}{\COP} \to \EGF{G}{\COM}$ is a $G$-homotopy equivalence for a td-group $G$,
see~\cite[Lemma~3.5]{Lueck(2005s)}.

The projection $\EGF{G}{\COP} \to G/G$ induces a map
\begin{equation}\label{eq:COP-assembly-homolgy-theory-R}
  H_n^G(\EGF{G}{\COP};\bfK_R) \to H_n^G(G/G;\bfK_R) = K_n \big(\calh(G;R)\big).
\end{equation} 
If $R$ is a regular ring containing $\IQ$, then there is an isomorphism
$H_0^G(\EGF{G}{\COP};\bfK_R) \cong \colimunder_{U \in \Sub_\COP(G)} \Kgroup_0
(\calh(U;R))$.  Using this isomorphism~\eqref{eq:K_0-assembly-hecke} can be identified
with~\eqref{eq:COP-assembly-homolgy-theory-R} for $n=0$,
see~\cite[Thm~1.1~(iii))]{Bartels-Lueck(2023recipes)}.

We note that if $G$ is a reductive $p$-adic group, then we can take for $\EGF{G}{\COP}$ the
extended Bruhat-Tits building associated to $G$~\cite[Thm.~4.13]{Lueck(2005s)}\footnote{More
  general, if $G$ is a closed subgroup of a reductive $p$-adic group $\widehat G$, then we
  can use the extended Bruhat-Tits building associated to $\widehat G$ with the restricted
  action.}.  The following will be a consequence of our main result.

\begin{corollary}\label{cor:K_n-no-fuzz}
  Assume that $G $ is a modulo a compact subgroup isomorphic to a closed subgroup of a
  reductive $p$-adic group. Let $R$ be a ring containing $\IQ$.  Assume that $R$ is uniformly\footnote{It is
    plausible that the result is also true if $R$ is only assumed to be regular, but our
    proof certainly uses uniform regularity.} regular, i.e., $R$ is noetherian and there
  is $l$ such that every $R$-module admits a projective resolution of length at most $l$.
  Then~\eqref{eq:COP-assembly-homolgy-theory-R} is an isomorphism.
\end{corollary}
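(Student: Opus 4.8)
The plan is to derive this from our main result, i.e., the $\CVCYC$-Farrell--Jones Conjecture for (closed subgroups of) reductive $p$-adic groups with coefficients in an arbitrary ring $R \supseteq \IQ$, combined with two reductions: the passage from the family $\CVCYC$ to the family $\COP$, which is where uniform regularity of $R$ is used, and the passage from a group that is only \emph{modulo a compact subgroup} a closed subgroup of a reductive $p$-adic group to one that genuinely is.

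First I would reduce to the case that $G$ is literally a closed subgroup of a reductive $p$-adic group. Write $K \trianglelefteq G$ for a compact normal subgroup such that $G/K$ is isomorphic to a closed subgroup of a reductive $p$-adic group (this is the content of the hypothesis). Since $K$ is compact, the preimage in $G$ of every compact open subgroup of $G/K$ is again compact and open, and conversely every compact open subgroup of $G$ has compact open image in $G/K$; thus the isotropy condition defining $\EGF{G}{\COP}$ and $\EGF{G/K}{\COP}$ match up under the projection. Using the inheritance properties of the $\COP$-Farrell--Jones setup established in~\cite{Bartels-Lueck(2023foundations)} --- transitivity and behaviour under group extensions with compact kernel --- the assembly map~\eqref{eq:COP-assembly-homolgy-theory-R} for $G$ is an isomorphism once it is an isomorphism for $G/K$. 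Here one must be a little careful, since the quotient map does not simply replace $\calh(G;R)$ by $\calh(G/K;R)$; the comparison is carried out at the level of the functor $\bfK_R$ on the smooth orbit categories, using that the data $\calb(G;R)$ restricted along $\Or_\OPEN(G/K) \to \Or_\OPEN(G)$ reproduces the corresponding data for $G/K$.

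Next, for $G$ a closed subgroup of a reductive $p$-adic group, the main theorem provides that the $\CVCYC$-assembly map $H_n^G(\EGF{G}{\CVCYC};\bfK_R) \to \Kgroup_n(\calh(G;R))$ is an isomorphism for all $n$, with no regularity assumption on $R$. To descend to the family $\COP$ I would compare $\EGF{G}{\COP}$ with $\EGF{G}{\CVCYC}$ and show that the relative term $H_n^G\big(\EGF{G}{\COP} \to \EGF{G}{\CVCYC};\bfK_R\big)$ vanishes. This term is assembled from the contributions of cells of orbit type $G/V$ with $V$ infinite virtually cyclic; for a td-group such a $V$ is, modulo its maximal compact open subgroup, infinite cyclic or infinite dihedral, and a Bass--Heller--Swan, respectively Waldhausen, decomposition expresses the obstruction in terms of (twisted) $\Nil$- and $\operatorname{UNil}$-groups of the Hecke algebras of the compact open subgroups of $V$. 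Uniform regularity of $R$ is precisely what forces these to vanish --- ordinary regularity of $R$ does not obviously bound the global dimension of $\calh(U;R)$ for $U$ compact open --- whereupon~\eqref{eq:COP-assembly-homolgy-theory-R} is identified with the $\CVCYC$-assembly map and is an isomorphism. Finally, when $G$ is a reductive $p$-adic group one may take for $\EGF{G}{\COP}$ the extended Bruhat--Tits building, which is in addition cocompact; for a closed subgroup $G \subseteq \widehat{G}$ one restricts the action on the building of $\widehat{G}$.

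I expect the main obstacle to be the $\CVCYC \to \COP$ step: the vanishing of the twisted $\Nil$- and $\operatorname{UNil}$-terms of Hecke algebras of compact-by-(cyclic or dihedral) groups under the uniform regularity hypothesis, which is where the genuine K-theoretic input enters via the Bass--Heller--Swan machinery for Hecke algebras. By comparison the compact-subgroup reduction of the first step and the identification with the Bruhat--Tits building are formal, though the former still requires the care indicated above because passing to the quotient by $K$ is not transparent on the level of Hecke algebras themselves.
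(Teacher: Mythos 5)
There is a genuine gap, and it sits in both halves of your plan. First, your order of reductions is not available: Theorem~\ref{thm:Farrell-Jones-Conjecture-for-reductive-p-adic} proves the $\CVCYC$-assembly statement only for reductive $p$-adic groups themselves, and the paper explicitly records (end of Subsection~\ref{subsec:comparison-discrete}) that passage of the $\CVCYC$-Farrell--Jones Conjecture to closed subgroups is \emph{open}. So your second step, ``for $G$ a closed subgroup of a reductive $p$-adic group, the main theorem provides that the $\CVCYC$-assembly map is an isomorphism,'' has no support. The actual route is the other way around: one first establishes the $\COP$-statement for the reductive group itself (Theorem~\ref{thm:Farrell-Jones-Conjecture-for-reductive-p-adic} plus the Reduction Theorem~\ref{thm:reduction}, the latter needing (Reg)), and only then invokes the inheritance Theorem~\ref{the:inheritance} for the $\COP$-conjecture, which handles the closed-subgroup and compact-normal-kernel passages in one stroke. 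That inheritance is proved for Hecke \emph{categories} with $G$-support as coefficients --- this flexibility of coefficients is precisely why they are introduced --- and an ad hoc inheritance argument for the bare functor $\bfK_R$ as sketched in your first paragraph is exactly the kind of statement that does not come for free.

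Second, the $\CVCYC\to\COP$ step cannot be run as you describe. The relative term $H_n^G\big(\EGF{G}{\COP}\to\EGF{G}{\CVCYC};\bfK_R\big)$ is not even defined, because $\bfK_R$ lives only on $\Or_{\OPEN}(G)$: for $V\in\CVCYC$ closed but not open there is no inclusion $\calh(V;R)\subseteq\calh(G;R)$, no induction map, and hence no ``Hecke algebras of the compact open subgroups of $V$'' sitting inside $\calh(G;R)$ to feed into a Bass--Heller--Swan analysis; this is the first of the two problems the paper isolates and which forces the whole apparatus of $\contc_G(P)$ over formal products of orbits. Moreover $\CVCYC$ as defined consists only of compact and compact-by-$\IZ$ groups, so no infinite dihedral groups occur and no UNil/Waldhausen Nil contributions arise. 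The paper's Reduction Theorem~\ref{thm:reduction} instead rewrites the $\CVCYC$-pieces via the categories $\contcover^{\allG,0}_G(M)[\Gamma]$ with $\Gamma\cong\IZ^n$ over limit categories of nested sequence categories and quotes the assembly result of~\cite{Bartels-Lueck(2020additive)}; uniform regularity enters only through hypothesis (Reg), i.e.\ uniform regular coherence of $\calb[G/U]_{\oplus}[\IZ^d]$, and a genuine difficulty --- which your heuristic ``regularity kills the Nil-terms of $\calh$ of compact-by-cyclic groups'' glosses over --- is that regularity of the coefficients does not pass to the controlled categories $\contc_G(P)$; the limit-category argument of Section~\ref{sec:reduction} exists to circumvent exactly this. (One further point where the paper is more careful than your sketch: the relative comparison is only established \emph{assuming} the $\CVCYC$-assembly map is an equivalence, which your argument implicitly also does, so that part is consistent.)
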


This is a special case of
Corollary~\ref{cor:main-for-twisted}~\ref{cor:main-for-twisted:ass}, where we consider
more general Hecke algebras, allowing for twists by actions of $G$ on $R$ and central
characters.  Conjecture~\ref{cor:K_n-no-fuzz} was stated in~\cite[Conjecture~119 on
page~773]{Lueck-Reich(2005)} for $R = \IC$.

%-----------------------------------------------------------------

\subsection{Vanishing of negative K-theory}
Bernstein's results from~\cite{Bernstein(1992)} which we briefly recalled in
Subsection~\ref{subsec:Resolutions_of_smooth_representations}, also imply for a reductive
$p$-adic group $G$ that $\Kgroup_n(\calh(G,\IC)) = 0$ holds for $n \le -1$.  Under the more general 
assumptions on $G$ and $R$ from Corollary~\ref{cor:K_n-no-fuzz}
we get $\Kgroup_n(\calh(G,R)) = 0$ for $n \le -1$, see
Corollary~\ref{cor:main-for-twisted}~\ref{cor:main-for-twisted:negative}.

%-----------------------------------------------------------------

\subsection{The $\COP$-Farrell--Jones Conjecture}

To formulate our main result we generalize coefficients. 
For a category $\calb$ with $G$-support we obtain
\begin{equation*}
  \bfK_\calb \colon \Or_\OPEN(G) \to \Spectra, \quad G/U \mapsto \bfK \big(\calb[G/U]\big).
\end{equation*} 
As discussed in Subsection~\ref{subsec:Smooth_G-homology_theories} we can
apply~\cite{Davis-Lueck(1998)} and obtain a smooth $G$-homology theory
$H_n^G(\--;\bfK_\calb)$.   
The projection
$\EGF{G}{\COP} \to G/G$ induces the $\COP$-assembly map 
\begin{equation}\label{eq:COP-assembly-homolgy-theory-cala}
  H_n^G(\EGF{G}{\COP};\bfK_\calb) \to  H_n^G(G/G;\bfK_\calb) = \Kgroup_n (\calb).
\end{equation}
We define \emph{Hecke categories with $G$-support} in Definition~\ref{def:Hecke-category}.
Essentially, these are categories with $G$-support satisfying axioms that are modeled on
$\calb(G;R)$, i.e., on Hecke algebras.  In particular, $\calb[G/U]$ is then equivalent to
the subcategory $\res_G^U\calb$ of $\calb$ on objects and morphisms with support in $U$.

\begin{conjecture}[$\COP$-Farrell--Jones Conjecture]\label{conj:COP-FJ-for-Hecke-Cat}
  Let $G$ be a td-group and let $\calb$ be a Hecke category with $G$-support.
  Assume that $\calb$ satisfies (Reg) from Definition~\ref{def:regularity-for-COP}.
  Then~\eqref{eq:COP-assembly-homolgy-theory-cala} is an isomorphism for all $n$.
\end{conjecture}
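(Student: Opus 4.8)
The plan is to follow the by-now-standard two-step strategy for Farrell--Jones type statements and to carry it out in the case relevant to the corollaries, where $G$ is, modulo a compact subgroup, a closed subgroup of a reductive $p$-adic group $\widehat G$ (for a general td-group there is no Bruhat--Tits building to feed the argument). First I would establish the $\CVCYC$-version of~\eqref{eq:COP-assembly-homolgy-theory-cala}, for which no regularity hypothesis is needed, and then deduce the $\COP$-version from it by a transitivity principle, using the regularity assumption only to kill the relative terms appearing in the passage from $\COP$ to $\CVCYC$. Concretely, by the transitivity principle for assembly maps over the families $\COP \subseteq \CVCYC$, the $\COP$-assembly map is an isomorphism once (a) the $\CVCYC$-assembly map $H_n^G(\EGF{G}{\CVCYC};\bfK_\calb) \to \Kgroup_n(\calb)$ is an isomorphism for all $n$, and (b) for every $V$ in $\CVCYC$ the relative assembly map assembled from the compact open subgroups of $V$ is an isomorphism for the restricted coefficients $\res^V_G\calb$. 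I treat (a) and (b) separately.

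For (b): a group $V$ in $\CVCYC$ contains a compact open subgroup $U$ with $V/U$ virtually cyclic, so the category governing $\res^V_G\calb$ is a finite twisted crossed product of the one governing $\res^U_G\calb$; the model case is the twisted Laurent extension $\calh(U;R) \rtimes \IZ$. The relative term is therefore governed by Bass--Heller--Swan: it is built from Nil-type groups of $\res^U_G\calb$ together with $\Wh$- and $\HNil$-type contributions of the finite pieces of $V/U$. Since $\IQ \subseteq R$ and $(\mathrm{Reg})$ holds, the rings attached to the compact open subgroups $U$ are uniformly regular — here one invokes Maschke's theorem for the finite quotients of $U$ together with the axioms on $\calb$ — so all these Nil-terms vanish and (b) follows. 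The uniformity built into $(\mathrm{Reg})$ is exactly what permits running this argument simultaneously over the infinitely many compact open subgroups.

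For (a), the geometric input is the extended Bruhat--Tits building $X$ of $\widehat G$ with the restricted $G$-action: it is a finite-dimensional $\CAT(0)$-space carrying a cocompact, proper, smooth, simplicial $G$-action, and it is a model for $\EGF{G}{\COP} \simeq \EGF{G}{\COM}$. I would then invoke the controlled-algebra criterion from~\cite{Bartels-Lueck(2023foundations), Bartels-Lueck(2023recipes)}, the td-group analogue of the Bartels--Lück machinery for $\CAT(0)$-groups, which reduces the $\CVCYC$-assembly isomorphism to two inputs: (i) the existence of chain-level transfers for the Hecke category $\calb$ supported on $X$, which is where finite-dimensionality and contractibility of $X$ enter, and the surjectivity/injectivity ``squeezing'' argument built on them; and (ii) a dynamic covering condition: for every $\beta$ there is a $G$-invariant open cover of the flow space associated to $X$, of dimension bounded independently of $\beta$, whose members are $\beta$-long in the flow direction and have isotropy groups in $\CVCYC$. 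Condition (ii) is proved, as in the $\CAT(0)$ case, from the geodesic flow on $X$ and the contracting behaviour of $\CAT(0)$-geodesics, carried out $G$-equivariantly with the open-isotropy (``smooth'') bookkeeping that the td-setting demands; the simplicial structure of the building gives convenient local control here.

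With (a) and (b) in hand, the transitivity principle yields that~\eqref{eq:COP-assembly-homolgy-theory-cala} is an isomorphism for all $n$, which is the assertion. The main obstacle I expect is the interface in (a) between the $\CAT(0)$-dynamics and the Hecke-algebraic controlled algebra: one must produce the long $\CVCYC$-covers of the flow space of the Bruhat--Tits building while tracking the smooth structure throughout, and simultaneously set up transfers for $\calb$ that land in the correct controlled category over $X$, all compatibly with the non-unital, axiomatically-defined notion of a Hecke category with $G$-support. A secondary but genuinely nontrivial point is the regularity bookkeeping in (b): one must check that $(\mathrm{Reg})$ forces uniform regularity of \emph{all} the rings attached to compact open subgroups, so that the Bass--Heller--Swan Nil-terms vanish uniformly and the relative assembly maps are isomorphisms integrally rather than only rationally.
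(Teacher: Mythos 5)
First, note that the statement you are proving is stated in the paper only as a \emph{conjecture} for general td-groups; the paper proves it only for reductive $p$-adic groups (Theorem~\ref{thm:main-COP-FJ-for-reductive}), and your restriction to that case is therefore appropriate. Your two-step architecture — first the $\CVCYC$-statement without regularity hypotheses via transfers and long thin covers of the flow space of the Bruhat--Tits building, then a reduction from $\COP$ to $\CVCYC$ using (Reg) — is exactly the paper's (Theorem~\ref{thm:Farrell-Jones-Conjecture-for-reductive-p-adic} plus the Reduction Theorem~\ref{thm:reduction}). Your sketch of step (a) is consistent in spirit with the paper's proof, although the actual implementation (the categories $\contc_G(P)$ indexed by formal products of orbits, the functors $\bfD_G$ and $\bfD_G^0$, foliated control, and the almost-equivariant maps $X \to |\bfJ_{\CVCYC}(G)|^\wedge$) is far from a routine transcription of the discrete $\CAT(0)$ argument.

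The genuine gap is in step (b). For $V \in \CVCYC$ closed but not open, its maximal compact subgroup $K$ is compact but \emph{not open} in $G$, so there is no Hecke algebra $\calh(K;R)$ inside $\calh(G;R)$, no restriction $\res^V_G\calb$ in the sense your argument needs, and no description of ``the category governing $V$'' as a finite twisted crossed product or twisted Laurent extension $\calh(U;R)\rtimes\IZ$ of a compact open subgroup. This is precisely the obstruction the paper flags in Subsection~\ref{subsec:comparison-discrete}, and it is why the $\CVCYC$-theory is not a homology theory over the orbit category at all, so the standard transitivity principle you invoke is not available; the paper replaces it by an explicit diagram chase, and — importantly — the resulting Reduction Theorem~\ref{thm:reduction} \emph{assumes} the $\CVCYC$-isomorphism as a hypothesis rather than being independent of it, contrary to your treatment of (a) and (b) as separate inputs. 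Finally, your Bass--Heller--Swan/Nil-vanishing step does not engage with the real difficulty the paper identifies: regularity of the coefficients does not obviously pass to the controlled categories $\contc_G(P)$, which are ``too big''. The paper's substitute is to approximate $K$ by a decreasing chain of compact open subgroups $U_{r,i}$ with $\bigcap_i U_{r,i}=K$, recast $\contc^{\allG,0,\nowedge}_G(M)$ as a limit category $\call(\calc_*)$, and apply the $\IZ^d$-assembly theorem of~\cite{Bartels-Lueck(2020additive)}; (Reg) enters exactly to verify the uniform regular coherence and exactness hypotheses of that theorem (Lemma~\ref{lem:nested-is-reg_and_exact}), not through a Maschke-type argument. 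Without some replacement for this analysis, your step (b) does not go through.
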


The following is our main result.

\begin{theorem}\label{thm:main-COP-FJ-for-reductive} 
  Conjecture~\ref{conj:COP-FJ-for-Hecke-Cat} holds for reductive $p$-adic groups.
\end{theorem}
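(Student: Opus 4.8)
The plan is to deduce Theorem~\ref{thm:main-COP-FJ-for-reductive} from a more robust intermediate conjecture, the $\CVCYC$-Farrell--Jones Conjecture, which the abstract promises requires \emph{no} regularity assumptions on the coefficients. The overall strategy follows the familiar two-step pattern from the Farrell--Jones machinery: first establish the $\CVCYC$-version (an isomorphism for the assembly map relativized to the family of ``virtually cyclic'' subgroups, suitably interpreted for td-groups, sitting over $\EGF{G}{\CVCYC}$), and then pass from $\CVCYC$ down to $\COP$ using the regularity hypothesis (Reg) to kill the Nil- and UNil-type contributions that measure the difference between the two families. The passage $\CVCYC \to \COP$ is a transitivity/inheritance argument: one must show that for every ``virtually cyclic'' (in the td-sense, i.e.\ compact-by-cyclic) subgroup $V$ of $G$, the relative assembly map $H_*^V(\EGF{V}{\COP};\bfK_\calb) \to H_*^V(\pt;\bfK_\calb)$ is an isomorphism, and here the Bass--Heller--Swan decomposition for the Hecke categories together with regularity of $\calb$ makes the relevant Nil-terms vanish; the long exact sequence comparing $\EGF{G}{\COP}$ and $\EGF{G}{\CVCYC}$ then finishes this half.

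First I would set up the geometry: for a reductive $p$-adic group $G$, the extended Bruhat--Tits building $X$ is a finite-dimensional $\CAT(0)$-space with cocompact proper isometric smooth $G$-action, and by~\cite[Thm.~4.13]{Lueck(2005s)} it serves as a model for $\EGF{G}{\COP}$ (equivalently $\EGF{G}{\COM}$). The heart of the proof is the $\CVCYC$-Farrell--Jones Conjecture for $\calb$ over $G$, and this is proved by the standard ``controlled algebra'' inputs of the Bartels--L\"uck--Reich program: one needs (i) a finite-dimensional model for $\EGF{G}{\CVCYC}$ (obtained from $X$ by the usual flow-space and $N$-dominated constructions, adapting the CAT(0)-group arguments to the td-setting), and (ii) the key dynamical/covering input, namely long thin covers of the relevant flow space with controlled dimension, isotropy in $\CVCYC$, and small ``width'' transverse to flow lines. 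The extended building provides exactly the CAT(0)-geometry needed to run the Bartels--L\"uck flow-space argument; the td-nature of $G$ is accommodated because the point stabilizers are compact open and one works throughout with the smooth orbit category $\Or_\OPEN(G)$ and categories with $G$-support rather than with bare group rings, so the ``controlled $\calb$-modules over $X$'' formalism of Definitions~\ref{def:category_with_G-support}--\ref{def:Hecke-category} replaces the classical geometric modules.

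The key steps, in order: (1) Reduce to the $\CVCYC$-conjecture: show $\CVCYC$-FJ for $\calb$ plus (Reg) implies $\COP$-FJ, via a transitivity principle over the classifying spaces $\EGF{G}{\COP} \to \EGF{G}{\CVCYC}$ and vanishing of Nil-terms — here one cites the Bass--Heller--Swan / Nil-vanishing results for regular Hecke categories. (2) Construct a finite-dimensional flow space $FS(X)$ associated to the extended Bruhat--Tits building and identify $\EGF{G}{\CVCYC}$-relevant data with it. (3) Prove the covering theorem: $G$ acts on $FS(X)$ with long $\calvcyc$-covers of uniformly bounded dimension (this is where the $\CAT(0)$-geometry of the building, cocompactness, and the structure of its isometries — semisimple vs.\ parabolic translations — are used, generalizing the CAT(0)-group case to reductive $p$-adic $G$). (4) Feed (2) and (3) into the general ``controlled algebra implies FJ'' mechanism for categories with $G$-support to conclude the $\CVCYC$-assembly map~\eqref{eq:COP-assembly-homolgy-theory-cala} (with $\COP$ replaced by $\CVCYC$) is an isomorphism. (5) Combine (1) and (4). (6) Finally, apply Theorem~\ref{thm:main-COP-FJ-for-reductive} to the specific Hecke category $\calb(G;R)$ (Example~\ref{ex:calb(G;R)}) under the uniform-regularity hypothesis to read off Corollaries~\ref{cor:K_0-no-fuzz}, \ref{cor:resolutions}, and~\ref{cor:K_n-no-fuzz}.

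I expect the main obstacle to be step (3), the long-cover / flow-space estimate: transporting the Bartels--L\"uck dynamical argument for $\CAT(0)$-groups to the extended Bruhat--Tits building of a reductive $p$-adic group requires controlling the flow near the ``thin'' and parabolic parts of the building (the Euclidean factors and the non-proper behaviour of unipotent translations near the boundary), and ensuring the covers can be chosen $G$-equivariantly with isotropy in the td-version of $\CVCYC$ — i.e.\ compact-open-by-cyclic subgroups rather than virtually cyclic groups. A secondary difficulty, more bookkeeping than conceptual, is verifying throughout that the ``category with $G$-support'' formalism interacts correctly with the flow-space constructions (that support conditions transform as needed under the flow and under the foliated control introduced by the covers), and that the Hecke-category axioms (Definition~\ref{def:Hecke-category}) are preserved by all the intermediate constructions so that the inheritance arguments in step (1) genuinely apply to the subgroups $V \in \CVCYC$.
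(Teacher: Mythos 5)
Your overall architecture --- first prove a $\CVCYC$-version of the conjecture requiring no regularity hypothesis, then use (Reg) to descend from $\CVCYC$ to $\COP$ --- is exactly the paper's (Theorem~\ref{thm:Farrell-Jones-Conjecture-for-reductive-p-adic} combined with the Reduction Theorem~\ref{thm:reduction}), and your steps (2)--(4) correctly identify the main ingredients for the $\CVCYC$-part: the extended Bruhat--Tits building as a $\CAT(0)$-space, a flow space, long thin covers with isotropy in the compact-by-cyclic family, and controlled algebra over categories with $G$-support.

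However, your step (1) as described would fail. You propose the classical transitivity route: show for each $V \in \CVCYC$ that the relative assembly map $H_*^V(\EGF{V}{\COP};\bfK_\calb) \to H_*^V(\pt;\bfK_\calb)$ is an isomorphism via a Bass--Heller--Swan decomposition and Nil-vanishing for regular coefficients. This presupposes that $\bfK_\calb$ extends from $\Or_\OPEN(G)$ to $\Or_\CVCYC(G)$ by restricting coefficients to $V$; but for $V$ closed and not open there is no inclusion $\calh(V;R) \subseteq \calh(G;R)$ (extension by zero of locally constant functions on $V$ does not produce locally constant functions on $G$), so there is no induction functor and no classical candidate for the value at $G/V$. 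The paper's substitute values at $\CVCYC$-orbits are the controlled categories $\contc_G(P)$ built from foliated control, and these do \emph{not} inherit regularity from $\calb$ --- they are too big. Consequently the Reduction Theorem~\ref{thm:reduction} is not a self-contained relative-assembly statement: it is proved \emph{assuming} the $\CVCYC$-assembly map is already an equivalence (a diagram chase through the enlarged categories $\contc^\allG_G(P)$), and it exploits (Reg) not through Nil-vanishing for compact-by-cyclic $V$ but through a reduction to limit categories of nested sequences of compact open subgroups shrinking to the compact kernel $K \subseteq V$, to which the $\IZ^d$-assembly computation of~\cite{Bartels-Lueck(2020additive)} is applied. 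So the logical order is the reverse of what you propose: the $\CVCYC$-statement must be proved first, and only then can the reduction be carried out. A secondary point: the discrete-case template also breaks because products of orbits $G/V \times G/V'$ need not decompose into orbits for topological groups, which is why the paper replaces $\Or_\CVCYC(G)$ by the product category $\EP\CVCYC(G)$ throughout.
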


Theorem~\ref{thm:main-COP-FJ-for-reductive} is a direct consequence of the
$\CVCYC$-Farrell--Jones Conjecture~\ref{conj:Farrell-Jones-Conj-for-td} for reductive
$p$-adic groups from Theorem~\ref{thm:Farrell-Jones-Conjecture-for-reductive-p-adic} and
the Reduction Theorem~\ref{thm:reduction} that reduces the $\COP$-Farrell--Jones
Conjecture~\ref{conj:COP-FJ-for-Hecke-Cat} to the $\CVCYC$-Farrell--Jones
Conjecture~\ref{conj:Farrell-Jones-Conj-for-td}.  The proof of
Theorem~\ref{thm:main-COP-FJ-for-reductive} seems not to simplify if we only consider
$\Kgroup_0$; it uses for example localization sequences that combine all $\Kgroup_n$.

\begin{remark}\label{rem:uses-mostly-CAT0}
  The proof of the $\CVCYC$-Farrell--Jones Conjecture~\ref{conj:Farrell-Jones-Conj-for-td}
  for reductive $p$-adic groups uses only their action on its extended Bruhat-Tits
  building.
	
  Let $M$ be a Coxeter matrix over a finite set $I$.  Let $C$ be a building of type $M$, in the sense of~\cite[\S 3]{Davis(1998buildings)}. 
  Its realization $|C|$ is a $\CAT(0)$-space, see~\cite[Thm.~11.1]{Davis(1998buildings)}.  Let $G$ be
  a td-group with a cofinite smooth proper action
  on $C$.  We obtain an induced cocompact smooth proper isometric action on $|C|$.  It
  seems to be reasonable to expect that the $\CVCYC$-Farrell--Jones
  Conjecture~\ref{conj:Farrell-Jones-Conj-for-td} (and therefore the $\COP$-Farrell--Jones
  Conjecture~\ref{conj:COP-FJ-for-Hecke-Cat}) holds also in this situation.  The only
  input to the proof of the $\CVCYC$-Farrell--Jones
  Conjecture~\ref{conj:Farrell-Jones-Conj-for-td} for reductive p-adic groups that does
  not directly generalize to this situation is Theorem~\ref{thm:X-to-J}.  This result
  relies on a technical assumption for the action of $G$ on the flow space associated to
  $|C|$, this is~\cite[Assumption~2.7]{Bartels-Lueck(2023almost)}.  Under this assumption
  the proof of the $\CVCYC$-Farrell--Jones Conjecture~\ref{conj:Farrell-Jones-Conj-for-td}
  (and therefore also of the $\COP$-Farrell--Jones
  Conjecture~\ref{conj:COP-FJ-for-Hecke-Cat}) for reductive $p$-adic groups generalizes
  directly to $G$.
\end{remark}

  \begin{remark}[Novikov Conjecture]\label{rem:Novikov_Conjecture}
    The  Novikov Conjecture about the homotopy invariance of higher signatures of
    closed oriented manifolds with fundamental group $\Gamma$ is equivalent to the rational injectivity of the $L$-theoretic
    assembly map $H_n(B\Gamma;\bfL_{\IZ}) \to L_n(\IZ \Gamma)$.     
    B\"okstedt-Hsiang-Madsen~\cite{Boekstedt-Hsiang-Madsen(1993)} proved using cyclotomic
    traces that the $K$-theoretic analogue $H_n(B\Gamma;\bfK_{\IZ}) \to K_n(\IZ \Gamma)$ is
    rationally split injective, if $\Gamma$ satisfies some homological finiteness conditions,
    which are automatically satisfied, provided that $B\Gamma$ has a model of finite
    type. 
    Mostad shows in his PhD-thesis~\cite{Mostad(1991)} using the descent method of
    Carlsson-Pedersen~\cite{Carlsson-Pedersen(1995a)} that the assembly map
    $H_n(B\Gamma;\bfK_{R}) \to K_n(R\Gamma)$ is split injective, if $R$ is a ring and $\Gamma$ is a
    torsionfree cocompact discrete subgroup of $\SL_n(\IQ_p)$. 
    Moreover, the descent method of
    Carlsson-Pedersen~\cite{Carlsson-Pedersen(1995a)} has been used to show the split
    injectivity of the assembly map $H_n^\Gamma(\EGF{\Gamma}{\FIN};\bfK_{R}) \to K_n(R\Gamma)$ for a
    large class of groups and any ring $R$, see for instance~\cite{Kasprowski(2015findeccom), Kasprowski(2016linear), Ramras-Tessera-Yu(2014)}, and also~\cite[Section~15.6]{Lueck(2022book)}, whereas the rational injectivity of the assembly
    map $H_n^\Gamma(\EGF{\Gamma}{\FIN};\bfK_{\IZ}) \to K_n(\IZ \Gamma)$ has been studied using cyclotomic
    traces in~\cite{Lueck-Reich-Rognes-Varisco(2017)}.  
    It would be interesting to see whether
  the descent method of Carlsson-Pedersen~\cite{Carlsson-Pedersen(1995a)} leads to proofs
    of the split injectivity of the assembly~\eqref{eq:COP-assembly-homolgy-theory-R} for classes of td-groups.  
\end{remark}

\begin{remark}
   The Baum--Connes Conjecture for reductive $p$-adic groups	 has been proven by Lafforgue~\cite{Lafforgue(1998)}.
   See also Baum--Higson--Plymen~\cite{Baum-Higson-Plymen(1997)} for $p$-adic $\GL_n$.
   This yields an Atiyah-Hirzebruch spectral sequence that computes the topological K-theory of the reduced group C$^*$-algebra, compare Subsection~\ref{subsec:AHSS} below.
   We note that the spectral sequence in this case is not a first quadrant spectral sequence (because the negative topological K-theory does in general not vanish by Bott periodicity).
   Hence for topological K-theory one does not get formulas such as~\eqref{eq:K_0-is-H_0} or Corollary~\ref{cor:K_0-no-fuzz}, where the relevant $\Kgroup_0$-group is expressed in terms of $\Kgroup_0$-group of compact open subgroups.
\end{remark}

% -----------------------------------------------------------------

\subsection{Inheritance}%
\label{subsec:Inheritance}
An advantage of the generalization from $\calb(G;R)$ to Hecke categories with $G$-support
is the following result, proven in~\cite[Theorem~1.5]{Bartels-Lueck(2023foundations)}.

\begin{theorem}\label{the:inheritance}
  If Conjecture~\ref{conj:COP-FJ-for-Hecke-Cat} holds for a td-group $G$, then it also
  holds for all td-groups $G'$ which are modulo a normal compact subgroup isomorph to a
  closed subgroup of $G$.
\end{theorem}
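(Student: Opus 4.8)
The plan is to decompose the passage $G \rightsquigarrow G'$ into two elementary steps and to handle each by comparing the $\COP$-assembly maps for $G'$ and for an auxiliary group through a restriction/induction operation on coefficient Hecke categories, matched with an explicit identification of the relevant models of $\EGF{-}{\COP}$. Write $N \trianglelefteq G'$ for the compact normal subgroup with $G'/N \cong H$, where $H \le G$ is closed. Both $H$ and $G'$ are again td-groups (a closed subgroup of a td-group is one, and an extension of a td-group by a compact group is totally disconnected, so van Dantzig applies). It therefore suffices to prove: (A) if Conjecture~\ref{conj:COP-FJ-for-Hecke-Cat} holds for $G$, it holds for every closed subgroup $H \le G$; and (B) if it holds for a td-group $\bar G$, it holds for every td-group $G'$ admitting a compact normal subgroup $N$ with $G'/N \cong \bar G$. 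Composing (A) for $G \rightsquigarrow H$ with (B) for $\bar G := H \rightsquigarrow G'$ yields the theorem.

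For (A): given a Hecke category $\calc$ with $H$-support satisfying (Reg), I would build an induced Hecke category $\ind_H^G\calc$ with $G$-support, characterised by a natural equivalence $(\ind_H^G\calc)[G/V] \simeq \calc[\res_G^H(G/V)]$ for $G/V \in \Or_\OPEN(G)$; on a compact open orbit $G/U$ this unwinds, via the Mackey restriction formula, into a product over $H\backslash G/U$ of categories $\res_H^{H\cap gUg^{-1}}\calc$. On $\bfK$-theory this identifies $\bfK_{\ind_H^G\calc}$ with the induction of $\bfK_\calc$ along $\Or_\OPEN(H) \to \Or_\OPEN(G)$, so the machinery of~\cite{Davis-Lueck(1998)} supplies an induction isomorphism $H_n^G(X;\bfK_{\ind_H^G\calc}) \cong H_n^H(\res_G^H X;\bfK_\calc)$, natural in the $G$-$CW$-complex $X$; for $X = G/G$ this gives $\Kgroup_n(\ind_H^G\calc) \cong \Kgroup_n(\calc)$. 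Finally $\res_G^H\EGF{G}{\COP}$ is a model for $\EGF{H}{\COP}$: its isotropy groups are $\operatorname{Stab}_G(x) \cap H \in \COP(H)$, and for $V \in \COP(H)$, which is in particular a compact subgroup of $G$, one has $\bigl(\res_G^H\EGF{G}{\COP}\bigr)^V = \EGF{G}{\COP}^V \simeq \EGF{G}{\COM}^V \simeq \ast$, using that $\EGF{G}{\COP} \to \EGF{G}{\COM}$ is a $G$-homotopy equivalence for td-groups~\cite[Lemma~3.5]{Lueck(2005s)} and hence a homotopy equivalence on every fixed-point set. Naturality of the assembly map now turns Conjecture~\ref{conj:COP-FJ-for-Hecke-Cat} for $(G,\ind_H^G\calc)$ into Conjecture~\ref{conj:COP-FJ-for-Hecke-Cat} for $(H,\calc)$.

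For (B): let $p \colon G' \to \bar G = G'/N$. Using that $N$ is compact (and $\IQ \subseteq R$), I would push $\calb'$ forward to a Hecke category $\bar\calb$ with $\bar G$-support with $\bar\calb[\bar G/\bar V] \simeq \calb'[p^*(\bar G/\bar V)] = \res_{G'}^{p^{-1}(\bar V)}\calb'$; concretely $\bar\calb$ is $\calb'$ equipped with the support map $p \circ \suppG$, its idempotents obtained by averaging the relevant idempotents of $\calb'$ over $N$. Again~\cite{Davis-Lueck(1998)} yields an inflation isomorphism $H_n^{G'}(p^*\bar X;\bfK_{\calb'}) \cong H_n^{\bar G}(\bar X;\bfK_{\bar\calb})$ for $\bar G$-$CW$-complexes $\bar X$, in particular $\Kgroup_n(\calb') \cong \Kgroup_n(\bar\calb)$ at $\bar X = \bar G/\bar G$. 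Moreover $p^*\EGF{\bar G}{\COP}$ is a model for $\EGF{G'}{\COP}$: its isotropy groups are the $p^{-1}(\bar U) \supseteq N$ with $\bar U \in \COP(\bar G)$, which are compact open in $G'$, and $\bigl(p^*\EGF{\bar G}{\COP}\bigr)^{U'} = \EGF{\bar G}{\COP}^{p(U')} \simeq \ast$ because $p(U') = U'N/N$ is compact open in $\bar G$. Hence Conjecture~\ref{conj:COP-FJ-for-Hecke-Cat} for $(\bar G,\bar\calb)$ implies it for $(G',\calb')$.

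The main obstacle is not the homotopy bookkeeping above but showing that the coefficient operations $\calc \mapsto \ind_H^G\calc$ and $\calb' \mapsto \bar\calb$ stay within the class of Hecke categories with $G$- (resp. $\bar G$-) support and, crucially, preserve the regularity hypothesis (Reg) of Definition~\ref{def:regularity-for-COP} --- this is exactly what licenses invoking Conjecture~\ref{conj:COP-FJ-for-Hecke-Cat} for $G$ (resp. $\bar G$). For the induction functor one must verify that the Hecke-category axioms survive the substitution $G/V \mapsto \res_G^H(G/V)$ and that (Reg), being a uniform finite-resolution condition, is stable under the resulting twisted product over $H\backslash G$; for the pushdown one must cope with the fact that $N$ need not be open in $G'$, so the idempotents of $\bar\calb$ have to be manufactured by integration over $N$ rather than read off from $\calb'$, and (Reg) then has to be checked for this corner-type category. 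A subsidiary point deserving care is the identification of the restricted (resp. inflated) classifying spaces, where the detour through $\EGF{-}{\COM}$ is essential precisely because a compact open subgroup of the closed subgroup $H \le G$ need not be open in $G$.
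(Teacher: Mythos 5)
You should first be aware that this paper does not actually prove Theorem~\ref{the:inheritance}: it is stated as a result imported from the companion paper, cited as~\cite[Theorem~1.5]{Bartels-Lueck(2023foundations)}, so there is no in-paper argument to compare against line by line. Your two-step decomposition (closed subgroup of $G$, then extension by a compact normal subgroup), the use of a Davis--L\"uck induction isomorphism $H_n^G(X;\bfK_{\ind_H^G\calc}) \cong H_n^H(\res_G^H X;\bfK_\calc)$, and the identifications $\res_G^H\EGF{G}{\COP} \simeq \EGF{H}{\COP}$ and $p^*\EGF{\bar G}{\COP} \simeq \EGF{G'}{\COP}$ are exactly the expected strategy, and your detour through $\EGF{G}{\COM}$ to handle fixed points of subgroups that are compact open in $H$ but not open in $G$ is the right observation. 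The homotopy-theoretic bookkeeping in your proposal is sound.

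The gap is that the two coefficient-category constructions, which are the actual mathematical content of the theorem, are only named, not carried out. For step (A) the issue is sharper than you present it: you cannot simply take $\ind_H^G\calc = \calc$ with supports read in $G$, because the translation axiom~\ref{def:Hecke-category:trans} demands isomorphic objects with support $g(\supp B)g^{-1}$ for \emph{every} $g\in G$, so the induced category must be genuinely enlarged (objects indexed over $G/H$, morphisms twisted by coset representatives), and one must then re-verify all four Hecke axioms and, crucially, prove that $\ind_H^G\calc[G/U]_\oplus[\IZ^d]$ is uniformly regular coherent; the Mackey decomposition of $\res_G^H(G/U)$ over $H\backslash G/U$ is an \emph{infinite} product of categories, so the uniformity in (Reg) is exactly what saves the argument, and this needs to be said and checked rather than asserted. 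For step (B) your description contains a confusion: the pushforward $\bar\calb$ is just $\calb'$ with support function $p\circ\suppG$ (as the paper itself does in Subsection~\ref{subsec:central_characters_and_actions}); no "averaging of idempotents over $N$" is needed, and indeed such an averaging is not even available for a general Hecke category with $G'$-support, which carries no integration structure over the non-open compact subgroup $N$. What does need verification there is that $\bar\calb[\bar G/\bar U] \simeq \calb'[G'/p^{-1}(\bar U)]$ and that (Reg) for $\calb'$ over the groups $p^{-1}(\bar U)$ yields (Reg) for $\bar\calb$. As it stands your text is a correct plan whose essential steps coincide with the content of the cited companion paper, but it is not yet a proof.
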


Thus Conjecture~\ref{conj:COP-FJ-for-Hecke-Cat} holds for groups that are modulo a normal
compact subgroup isomorphic to a closed subgroup of a reductive $p$-adic group.  This
applies in particular to parabolic subgroups that appear for example in parabolic
induction and restriction.  

%-----------------------------------------------------------------

\subsection{The Atiyah-Hirzebruch spectral sequence}\label{subsec:AHSS}

Given any smooth $G$-homo\-lo\-gy theory there is a (strongly convergent)
\emph{equivariant Atiyah-Hirzebruch spectral sequence}, see~\cite[Thm~4.7 and
Sec.~7]{Davis-Lueck(1998)} and~\cite[Thm~2.1]{Bartels-Lueck(2023recipes)}.  For
$H_n^G(-;\bfK_R)$ it takes the form
\begin{equation}
  E_{p,q}^2 = B\!H^G_{p}(X;H_q^G(G/-;\bfK_R)) \implies H^G_{p+q}(X;\bfK_R).
 \label{equi-AHSS}
\end{equation}
The $E^2$-page is given by Borel homology.  If $R$ is regular and contains $\IQ$, then the
spectral sequence~\eqref{equi-AHSS} is a first quadrant spectral sequence.  In particular,
$H_0^G(X;\bfK_R) = B\!H^G_{0}(X;H_0^G(G/-;\bfK_R))$.  
Thus, if $R$ is uniformly regular and if $G$
satisfies the $\COP$-Farrell--Jones conjecture, then we obtain 
\begin{equation} \label{eq:K_0-is-H_0}
	K_0(\calh(G;R)) = B\!H^G_{0}(\EGF{G}{\COP};H_0^G(G/-;\bfK_R)).
\end{equation}   
This homology group can then be
described as the cokernel of a map between sums of $\Kgroup_0$ of Hecke algebras of
compact open subgroups of $G$.  For a non-Archimedian local field $F$ and
$G = \SL_n(F)$, $\PGL_n(F)$ or $\GL_n(F)$ this is worked out
in~\cite[Section~6]{Bartels-Lueck(2023recipes)}.

%-----------------------------------------------------------------

\subsection{Central characters and actions on the coefficients}%
\label{subsec:central_characters_and_actions}

Consider an exact sequence of td-groups $1 \to N \to G \xrightarrow{p} Q \to 1$, a unital
ring $R$ with $\IQ \subseteq R$, a locally constant group homomorphism
$\rho \colon Q \to \aut(R)$ to the group of ring automorphisms of $R$ and a so called
\emph{normal character} $\omega \colon N \to \cent(R)^{\times}$, which is a locally
constant group homomorphism to the multiplicative group of units of the center of $R$.  We
assume that that $N$ is locally central\footnote{I.e., the centralizer of $N$ in $G$ is an
open subgroup of $G$.} and that $\omega$ is $G$-conjugation invariant\footnote{I.e.,
$\omega(gng^{-1})=\omega(n)$ for all $g \in G$, $n \in N$.}.  For example, $N$ could be
a closed subgroup of the center of $G$.  We also assume that the $Q$-action on $R$ fixes
the image of $\omega$.  For example, $Q$ could fix the center of $R$.  In this situation
we obtain a Hecke algebra $\calh(G;R,\rho,\omega)$,
see~\cite[Sec.~2.B]{Bartels-Lueck(2022KtheoryHecke)}.  
%\commentw{Diese Arbeit ist jetzt  wohl online erschienen,   taucht aber noch nicht im MathSciNet auf. Im online-file w\"are die richtige Referenz  jetzt Sec.~2.2.}
Its elements are locally constant
functions $s \colon G \to R$ with support compact modulo $N$ satisfying
$s(ng) = \omega(n) \cdot s(g)$ for all $n \in N$ and $g \in G$.  In the special case that
$\rho$ is trivial and $\omega \colon N \to R^{\times}$ is a central character, i.e.,
$N \subseteq G$ is central and $\omega \colon N \to \cent(R)^{\times}$ is a locally
constant homomorphism, this is the usual Hecke algebra of $G$ with coefficients in $R$
associated to the central character $\omega$.

Similar to $\calb({G};{R})$ we obtain a category $\calb(G;R,\rho,\omega)$, see~\cite[Section~6.C.]{Bartels-Lueck(2023foundations)}.
The support of elements of $\calh(G;R,\rho,\omega)$ in $G$ is compact modulo $N$.  Projecting we
obtain compact subsets of $Q$.  In this way $\calb(G;R,\rho,\omega)$ can be viewed as
a category with $Q$-support and we obtain
\begin{equation*}
  \bfK_{R,\rho,\omega} \colon \Or_\OPEN(Q) \to \Spectra, \quad Q/U
  \mapsto \bfK \big(\calb(G;R,\rho,\omega)[Q/U]\big).
\end{equation*}
For $U$ open in $Q$ the homotopy groups of $\bfK_{R,\rho,\omega}(Q/U)$ are the K-groups of
the Hecke algebra associated to $p^{-1}(U)$, i.e., of
$\calh(p^{-1}(U);R,\rho,\omega)$\footnote{Strictly speaking we should write
  $ \calh(p^{-1}(U);R,\rho_{p^{-1}(U)},\omega)$.}.   
We can again
apply~\cite{Davis-Lueck(1998)}, see Subsection~\ref{subsec:Smooth_G-homology_theories},
and obtain a smooth $Q$-homology theory $H_n^Q(\--;\bfK_{R,\rho,\omega})$ with
$H_n^G(Q/U;\bfK_{R,\rho,\omega}) \cong \Kgroup_n (\calh(p^{-1}(U);R,\rho,\omega))$.

The formulation of the $\COP$-Farrell--Jones conjecture can be applied in this situation as
well and we have the following corollary to Theorem~\ref{thm:main-COP-FJ-for-reductive}.

\begin{corollary}\label{cor:main-for-twisted}
  Assume that $Q$ is a modulo a compact subgroup isomorphic to a closed subgroup of a
  reductive $p$-adic group and that $R$ is a uniformly regular ring containing $\IQ$.
  Then
  \begin{enumerate}
  \item\label{cor:main-for-twisted:ass} The assembly map induced by the projection
    $\EGF{Q}{\COP} \to Q/Q$
    \begin{equation*}
      H_n^{Q}(\EGF{Q}{\COP}; \bfK_{R,\rho,\omega})
      \to  H_n^{Q}(Q/Q; \bfK_{R,\rho,\omega}) = K_n(\calh(G;R,\rho,\omega))
    \end{equation*}
    is an isomorphism for all $n$;
  \item\label{cor:main-for-twisted:K_0} The various inclusions $U \subseteq Q$ induce an
    isomorphism
    \begin{equation*}
      \colimunder_{U \in \Sub_\COP(Q)} \Kgroup_0 (\calh(p^{-1}(U);R,\rho, \omega))
      \xrightarrow{\cong}  \Kgroup_0 (\calh(G;R,\rho,\omega));
    \end{equation*}
  \item\label{cor:main-for-twisted:negative} We have $K_{n}(\calh(G;R,\rho,\omega)) = 0$
    for $n \le -1$.
  \end{enumerate}
\end{corollary}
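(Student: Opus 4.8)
The plan is to derive all three assertions from Theorem~\ref{thm:main-COP-FJ-for-reductive}, the inheritance Theorem~\ref{the:inheritance}, and the equivariant Atiyah--Hirzebruch spectral sequence of Subsection~\ref{subsec:AHSS}, applied to the right coefficient category. First I would note that the category $\calb(G;R,\rho,\omega)$ of Subsection~\ref{subsec:central_characters_and_actions}, regarded via $p\colon G\to Q$ as a category with $Q$-support, is a Hecke category with $Q$-support in the sense of Definition~\ref{def:Hecke-category}; this is essentially~\cite[Section~6.C]{Bartels-Lueck(2023foundations)}. Next I would check that the hypotheses on $R$ --- uniform regularity and $\IQ\subseteq R$ --- ensure that $\calb(G;R,\rho,\omega)$ satisfies condition (Reg) of Definition~\ref{def:regularity-for-COP}. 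Granting this, and using that $Q$ is modulo a compact normal subgroup a closed subgroup of a reductive $p$-adic group, Theorem~\ref{thm:main-COP-FJ-for-reductive} together with Theorem~\ref{the:inheritance} yields Conjecture~\ref{conj:COP-FJ-for-Hecke-Cat} for $Q$ and $\calb = \calb(G;R,\rho,\omega)$. Unravelling the identifications of Subsection~\ref{subsec:central_characters_and_actions} --- in particular $\Kgroup_n(\calb(G;R,\rho,\omega)) \cong \Kgroup_n(\calh(G;R,\rho,\omega))$, which is the case $U = Q$ of the identification of $\pi_n\bfK_{R,\rho,\omega}(Q/U)$ with $\Kgroup_n(\calh(p^{-1}(U);R,\rho,\omega))$ --- the resulting instance of the assembly map~\eqref{eq:COP-assembly-homolgy-theory-cala} is precisely assertion~\ref{cor:main-for-twisted:ass}.

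Assertions~\ref{cor:main-for-twisted:negative} and~\ref{cor:main-for-twisted:K_0} then follow formally. Feed assertion~\ref{cor:main-for-twisted:ass} into the equivariant Atiyah--Hirzebruch spectral sequence for the smooth $Q$-homology theory $H_*^Q(-;\bfK_{R,\rho,\omega})$. Since $R$ is regular and contains $\IQ$, this is a first-quadrant spectral sequence, exactly as for~\eqref{equi-AHSS}: the only input is that $\Kgroup_q(\calh(p^{-1}(U);R,\rho,\omega))$ vanishes for $q\le -1$ and $U\subseteq Q$ compact open, which holds because such a Hecke algebra is a filtered colimit of algebras that are Morita equivalent to (twisted) group rings $R[F]$ of finite groups $F$, and these are regular as $|F|$ is invertible in $R$. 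First-quadrantness gives $H_n^Q(\EGF{Q}{\COP};\bfK_{R,\rho,\omega}) = 0$ for $n\le -1$, which with~\ref{cor:main-for-twisted:ass} is~\ref{cor:main-for-twisted:negative}. In degree $0$ the edge homomorphism identifies $H_0^Q(\EGF{Q}{\COP};\bfK_{R,\rho,\omega})$ with $B\!H^Q_0(\EGF{Q}{\COP};H_0^Q(Q/-;\bfK_{R,\rho,\omega}))$, and the bookkeeping of~\cite[Thm~1.1~(iii)]{Bartels-Lueck(2023recipes)} rewrites this as $\colimunder_{U\in\Sub_\COP(Q)}\Kgroup_0(\calh(p^{-1}(U);R,\rho,\omega))$; composing with the assembly isomorphism of~\ref{cor:main-for-twisted:ass} gives~\ref{cor:main-for-twisted:K_0}.

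The main obstacle is the second step above: extracting condition (Reg) for $\calb(G;R,\rho,\omega)$ from the uniform regularity of $R$ --- this is also where uniform rather than plain regularity is genuinely used, cf.\ the footnote to Corollary~\ref{cor:K_n-no-fuzz}. Everything after that is either quoted from Theorem~\ref{thm:main-COP-FJ-for-reductive} and Theorem~\ref{the:inheritance} or is a routine run of the spectral sequence. One small point to be careful about: ``modulo a compact subgroup'' in the hypothesis is to be read as ``modulo a compact \emph{normal} subgroup'', which is the form required by Theorem~\ref{the:inheritance}.
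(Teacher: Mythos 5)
Your proposal is correct and follows the intended route: the paper's own ``proof'' of this corollary is a one-line citation of \cite[Theorem~1.1]{Bartels-Lueck(2023recipes)}, which carries out exactly the derivation you sketch --- view $\calb(G;R,\rho,\omega)$ as a Hecke category with $Q$-support, apply Theorem~\ref{thm:main-COP-FJ-for-reductive} together with the inheritance Theorem~\ref{the:inheritance} (your reading of ``modulo a compact subgroup'' as ``modulo a compact \emph{normal} subgroup'' is the right one), and then run the equivariant Atiyah--Hirzebruch spectral sequence to extract \ref{cor:main-for-twisted:K_0} and \ref{cor:main-for-twisted:negative}. The one step you flag but do not carry out --- verifying condition (Reg) for $\calb(G;R,\rho,\omega)$ from the uniform regularity of $R$ --- is precisely part of what the cited reference supplies, so your outline contains no error, though at that point it is no more detailed than the paper itself.
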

\begin{proof}
  See~\cite[Theorem~1.1]{Bartels-Lueck(2023recipes)}.
\end{proof}

%-----------------------------------------------------------------

\subsection{Homotopy colimits}\label{subsec:homotopy-colimit-in-intro}
We write $\Or_\COP(G)$ for the full subcategory of $\Or(G)$ on the $G/U$ with $U$ compact
open.  The projections $G/U \to G/G$ for $U$ compact open in $G$ induce a map
\begin{equation}\label{eq:COP-assembly-map-hocolim-cala}
  \hocolimunder_{G/U \in \Or_\COP(G)} \bfK_\calb(G/U) \to \bfK_\calb(G/G) \simeq \bfK(\calb).
\end{equation}
This map can be identified with the map
$\bfH^G(\EGF{G}{\COP};\bfK_\calb) \to \bfH^G(G/G;\bfK_\calb)$, see~\cite[Section~6]{Davis-Lueck(1998)}.  
Applying $\pi_n$ to~\eqref{eq:COP-assembly-map-hocolim-cala}
therefore recovers~\eqref{eq:COP-assembly-homolgy-theory-cala}.

Often the homotopy colimit in~\eqref{eq:COP-assembly-map-hocolim-cala} can be replaced
with a homotopy colimit over a smaller category than $\Or_\COP(G)$.  Let $X$ be a
simplicial complex with a smooth proper cellular simplicial  action of $G$.  \emph{Cellular}
means that, if $g \in G$ sends a simplex to itself, then $g$ fixes the simplex
pointwise.  We also assume that $X$ is a model for $\EGF{G}{\COP}$, i.e., for $U \subseteq G$
compact open $X^U$ is contractible.  For example, if $G$ is a $p$-adic group, then we can
take for $X$ (a subdivision of) the associated extended
Bruhat-Tits building.

Let $C$ be a collection of simplices of $X$ that contains at least one simplex from each
orbit of the action of $G$ on the set of simplices of $X$.  Define a category $\calc(C)$
as follows.  Its objects are the simplices from $C$.    
A morphism
$gG_{\sigma} \colon \sigma \to \tau$ is an element $gG_{\sigma} \in G/G_{\sigma}$
satisfying $g\sigma \subseteq \tau$, where we view a simplex of $X$ as subspace of $X$ in
the obvious way.  The composite of $gG_{\sigma} \colon \sigma \to \tau$ with
$hG_{\tau} \colon \tau \to \rho$ is $hgG_{\sigma} \colon \sigma \to \rho$. Define a
functor $\iota_C \colon \calc(C)^{\op} \to \Or_\COP(G)$ by sending an object
$\sigma$ to $G/G_\sigma$ and a morphism $gG_{\sigma} \colon \sigma \to \tau$ to
$G/G_{\tau} \to G/G_{\sigma}, \; g'G_{\tau}\mapsto g'gG_{\sigma}$. 
\begin{lemma}\label{lem:iota_C-is-cofinal}
  The functor $\iota_C \colon \calc(C) \to \Or_\COP(G)$ is cofinal.
\end{lemma}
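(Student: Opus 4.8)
The plan is to verify the homotopy-theoretic content of cofinality: for every object $G/U$ of $\Or_\COP(G)$ I want the comma category $(G/U) \downarrow \iota_C$ --- whose objects are the pairs $(\sigma,\phi)$ consisting of a simplex $\sigma \in C$ together with a $G$-map $\phi\colon G/U \to \iota_C(\sigma) = G/G_\sigma$, and whose morphisms are the morphisms of $\calc(C)^{\op}$ compatible with the structure maps down to $G/U$ --- to have weakly contractible nerve. Granting this, $\iota_C$ is cofinal, so in particular the homotopy colimit in~\eqref{eq:COP-assembly-map-hocolim-cala} may equivalently be formed over $\calc(C)^{\op}$. Since $X$ is a model for $\EGF{G}{\COP}$ and every such $U$ is compact open, the fixed-point set $X^U$ is a nonempty, weakly contractible subcomplex of $X$; the strategy is to identify $(G/U) \downarrow \iota_C$ with the opposite of the poset $\calp$ of simplices of $X^U$ ordered by inclusion, whose nerve is the barycentric subdivision $\sd(X^U)$.

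The first step is the object-level translation. A $G$-map $\phi\colon G/U \to G/G_\sigma$ amounts to a coset $g_0 G_\sigma$ with $g_0^{-1} U g_0 \subseteq G_\sigma$; since $G_\sigma$ is the stabilizer of $\sigma$, this says exactly that $U$ stabilizes the simplex $g_0\sigma$, which by cellularity of the action means that $g_0\sigma$ is a simplex of $X^U$. Hence $(\sigma,\phi) \mapsto \tau(\sigma,\phi) := g_0\sigma$ is a well-defined map from objects of $(G/U) \downarrow \iota_C$ to simplices of $X^U$ lying in the $G$-orbit of $\sigma$; and conversely, given $\sigma \in C$ and a simplex $\tau$ of $X^U$ in the $G$-orbit of $\sigma$, there is a unique such $\phi$. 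The second step is the morphism-level translation: unwinding the definition of $\iota_C$ on morphisms and chasing the resulting coset identities, one finds that there is a morphism $(\sigma,\phi) \to (\sigma',\phi')$ precisely when $\tau(\sigma',\phi') \subseteq \tau(\sigma,\phi)$, and that it is then unique.

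It follows that $(G/U) \downarrow \iota_C$ is a preorder and that $(\sigma,\phi) \mapsto \tau(\sigma,\phi)$ defines a functor to $\calp^{\op}$. This functor is fully faithful, because on each morphism set both sides have at most one element and they are nonempty under the same condition; and it is essentially surjective, because $C$ contains a simplex from every $G$-orbit and hence a representative for the orbit of any given simplex of $X^U$. Therefore $(G/U) \downarrow \iota_C$ is equivalent to $\calp^{\op}$, whose nerve is homeomorphic to $\sd(X^U) \cong X^U$ and therefore weakly contractible. This establishes the lemma. The only inputs beyond formal category theory are the contractibility of $X^U$ (from $X$ being a model for $\EGF{G}{\COP}$) and the cellularity of the action; the step most prone to error is the coset bookkeeping identifying $\iota_C$ on morphisms with the face relation, since it is easy to reverse the direction of one of the $G$-maps involved.
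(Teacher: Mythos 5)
Your proof is correct and follows essentially the same route as the paper's: the paper first reduces to the case that $C$ contains exactly one simplex per orbit and then identifies $G/U \downarrow \iota_C$ with the poset of simplices of $X^U$, whereas you absorb that reduction into showing the comparison functor is an equivalence (fully faithful and essentially surjective) rather than an isomorphism. The coset bookkeeping and the appeal to contractibility of $X^U$ are exactly as intended.
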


\begin{proof}
  For $C \subseteq C'$ it is not difficult to check that the inclusion
  $\calc(C) \to \calc(C')$ is an equivalence.  Thus we can assume that $C$ contains
  exactly one simplex from each orbit of the $G$-action.  For $G/U \in \Or_{\COP}(G)$ the
  category $G/U \downarrow \calc$ can then be identified with the poset of simplices in
  $X^U$.  By assumption $X^U$ is contractible.
\end{proof}

Lemma~\ref{lem:iota_C-is-cofinal} in combination with the cofinality
Lemma~\ref{lem:cofinality} for homotopy colimits imply that the canonical map
\begin{equation}\label{eq:hocolim-calc-not-Or}
  \hocolimunder_{c \in \calc(C)} \bfK_\calb(G/G_c) \xrightarrow{\sim} 
  \hocolimunder_{G/U \in \Or_\COP(G)} \bfK_\calb(G/U) 
\end{equation}
is an equivalence.

If $X$ admits a strict fundamental domain $X_0$, i.e., a subcomplex $X_0$ that contains
exactly one simplex from each orbit for the $G$-action on the set of simplices of $X$,
then we can take for $C$ the simplices from $X_0$.  In this case $\calc(C)$ can be
identified with the poset (viewed as a category) $\simp(X_0)$ of simplices of $X_0$.  
If $\calb$ is a Hecke category with $G$-support, then the
inclusions $\res_G^{G_\sigma}\calb \to \calb[G/G_\sigma]$ are
equivalences and induce an equivalence between $\bfK_\calb \circ \iota_C$ and
\begin{equation*}
  \simp(X_0)  \to  \Spectra, \qquad
  \sigma  \mapsto  \bfK \big(\res_G^{G_\sigma}\calb\big).
\end{equation*}
Thus, in this situation,~\eqref{eq:hocolim-calc-not-Or} can be simplified further
and~\eqref{eq:COP-assembly-map-hocolim-cala} can be identified with the canonical map
\begin{equation}\label{eq:COP-assembly-map-hocolim-cala-Delta}
  \hocolimunder_{\sigma \in \simp(X_0)} \bfK \big(\res_G^{G_\sigma}\calb\big)
  \to \bfK (\calb).
\end{equation}
By Theorem~\ref{thm:main-COP-FJ-for-reductive}, this map is an equivalence, if $G$ is a
reductive $p$-adic group and $\calb$ satisfies (Reg) from Definition~\ref{def:regularity-for-COP}.
In particular, for $R$ uniformly regular with $\IQ \subseteq R$, the canonical map
\begin{equation}\label{eq:COP-assembly-map-hocolim-hecke-alg-Delta}
  \hocolimunder_{\sigma \in \simp(X_0)} \bfK \big(\calh(G_\sigma;R)\big) \to \bfK \big(\calh(G;R)\big)
\end{equation}
is an equivalence, see Corollary~\ref{cor:K_n-no-fuzz}.
 
\begin{example}[$\SL_n(F)$]\label{ex:SL}
  Let $R$ be  a uniformly regular ring containing $\IQ$.
  Any chamber of the Bruhat-Tits building for $\SL_n(F)$ is a strict fundamental domain
  and we obtain a homotopy pushout diagram
  from~\eqref{eq:COP-assembly-map-hocolim-hecke-alg-Delta}.  We will illustrate this for
  $n=2,3$.  Let $v \colon F \to \IZ \cup \{\infty\}$ the valuation of $F$.  Let
  $\calo = \{ v \geq 0 \}$ be the ring of integers in $F$.  Choose $\mu \in \calo$ with
  $v(\mu) = 1$.  Put
  \begin{equation*}
    h := \left(
      \begin{array}{cccc}  & 1 & &  
        \\
                           &  & \ddots  \\ & & & 1 \\ \mu
      \end{array} \right) \in \GL_n(F).
  \end{equation*}
  For $n=2$ the homotopy pushout diagram is
  \begin{equation*}
    \xymatrix{\bfK(\calh(I;R)) \ar[r] \ar[d] & \bfK(\calh(U_{1};R)) \ar[d]
      \\ 
      \bfK(\calh(U_{0};R)) \ar[r] & \bfK(\calh(\SL_2(F);R)).}
  \end{equation*}  
  Here $U_0 = \SL_2(\calo)$, $U_1 = hU_0h^{-1}$, and $I = U_0 \cap U_1$ is the Iwahori
  subgroup.
  For the K-groups this yields a Mayer-Vietoris sequence, infinite to the left,
  \begin{multline*}
    \cdots \to K_n(\calh(I;R))  \to K_n(\calh(U_{1};R)) \oplus  K_n(\calh(U_{0};R))  \to
    K_n(\calh(\SL_2(F);R))
    \\
    \to  K_{n-1}(\calh(I;R))  \to K_{n-1}(\calh(U_{1};R)) \oplus  K_{n-1}(\calh(U_{0};R))  \to \cdots
    \\
    \cdots \to K_0(\calh(I;R))  \to K_0(\calh(U_{1};R)) \oplus  K_0(\calh(U_{0};R))  \to
    K_0(\calh(\SL_2(F);R)) \to 0,
      \end{multline*}
  and  $K_n(\calh(\SL_2(F);R)) = 0$ for $n \le -1$.
  
  For $n=3$ we obtain the homotopy pushout diagram
  \begin{equation*}
    \xymatrix@1@=8pt{&  \bfK(\calh(U_{12};R)) \ar[rr] \ar[dd] |!{[d]}\hole
      & & \bfK(\calh(U_{2};R)) \ar[dd]
      \\
      \bfK(\calh(I;R))  \ar[ur]\ar[rr]\ar[dd]
      & & \bfK(\calh(U_{02};R)) \ar[ur]\ar[dd]
      \\
      & \bfK(\calh(U_{1};R)) \ar[rr] |!{[r]}\hole
      & & \bfK(\calh(\SL_3(F);R)) \\
      \bfK(\calh(U_{01};R)) \ar[rr]\ar[ur]
      & & \bfK(\calh(U_{0};R)) \ar[ur]
    }
  \end{equation*} 
  where $U_0 = \SL_2(\calo)$, $U_1 = h\SL_2(\calo)h^{-1}$, $U_2 = h^2\SL_2(\calo)h^{-2}$,
  $U_{ij} = U_i \cap U_j$ and $I = U_0 \cap U_1 \cap U_2$ is the Iwahori subgroup.
  
  In general, for $\SL_n(F)$ we obtain a homotopy pushout diagram whose shape is an
  n-cube.
\end{example}
  
  %-----------------------------------------------------------------

\subsection{Comparison with the discrete case}\label{subsec:comparison-discrete}

Let $\Gamma$ be a discrete group.  Let $\FIN$ be the collection of finite groups of
$\Gamma$ and $\VCYC$ be the collection of the virtually cyclic subgroups of
$\Gamma$\footnote{Alternatively one can work with the family of subgroups that are finite
  or admit a surjection onto $\IZ$ with finite kernel.}.  We write $\Or_\FIN(\Gamma)$ and
$\Or_\VCYC(\Gamma)$ for the corresponding subcategories of the orbit category
$\Or(\Gamma)$ of $\Gamma$.  For a ring $R$ there is a functor $\Or(\Gamma) \to \Spectra$
whose value on $\Gamma/H$ is equivalent to the K-theory spectrum of $R[H]$,
see~\cite[Sec.~2]{Davis-Lueck(1998)}.  To distinguish it from~\eqref{eq:bfK_R} we will
denote it here as $\bfK_R^{\text{dis}}$.  We obtain a commutative diagram
\begin{equation}\label{eq:discret-assembly-maps}
  \xymatrix@C=-5ex{\displaystyle \hocolimunder_{\Gamma/F \in \Or_\FIN(\Gamma)}
    \bfK^{\text{dis}}_R(\Gamma/F) \ar[rr]^{\alpha^{\text{dis}}_{\FIN}} \ar[dr]_{\alpha^{\text{dis}}_{\text{rel}}} 
    & & \bfK^{\text{dis}}_R(\Gamma/\Gamma) \simeq \bfK ( R[\Gamma] ) \\
    & \displaystyle \hocolimunder_{\Gamma/V \in \Or_\VCYC(\Gamma)} \bfK^{\text{dis}}_R(\Gamma/V)
    \,.\ar[ur]_{\alpha^{\text{dis}}_{\VCYC}}	  
  }  
\end{equation} 
The map $\alpha^{\text{dis}}_\FIN$ is the analog
of~\eqref{eq:COP-assembly-homolgy-theory-cala} in the
formulation~\eqref{eq:COP-assembly-map-hocolim-cala}.  The (K-theoretic) Farrell--Jones
Conjecture for discrete groups asserts that $\alpha^{\text{dis}}_{\VCYC}$ is an
equivalence.  If $R$ is regular and contains $\IQ$, then the relative assembly map
$\alpha^{\text{dis}}_{\text{rel}}$ is an equivalence (for all $\Gamma$),
see~\cite[Prop.~2.14]{Lueck-Reich(2005)}.  Therefore, for such $R$,
$\alpha^{\text{dis}}_{\FIN}$ is an equivalence if and only if
$\alpha^{\text{dis}}_{\VCYC}$ is an equivalence.  The Farrell--Jones Conjecture holds for a
large class of groups, including all groups $\Gamma$ that admit a cocompact isometric
action on a finite-dimensional $\CAT(0)$-space $X$,
see~\cite[Thm.~B]{Bartels-Lueck(2012annals)} and~\cite[Thm.~1.1]{Wegner(2012)}.  The proof
uses geodesic flows as pioneered by Farrell and Jones, e.g.~\cite{Farrell-Jones(1986a),
  Farrell-Jones(1993a)}.  Here virtually cyclic subgroups appear as follows.  For a
bi-infinite geodesic $c \colon \IR \to X$ let $V_c$ be the subgroup of $\Gamma$ consisting
of all $g \in \Gamma$ for which there is $t_g \in \IR$ such that $gc(t) = c(t+t_g)$ for
all $t$.  This is a virtually cyclic subgroup.  More precisely, the homomorphism
$V_c \to \IR$, $g \mapsto t_g$ has discrete and therefore infinite cyclic or trivial
image.  The kernel of this homomorphism is finite as the action of $\Gamma$ on $X$ is
proper.  Thus $V_c$ is either finite or admits a surjection onto $\IZ$ with finite kernel.

To prove Theorem~\ref{thm:main-COP-FJ-for-reductive} we will use the action of a reductive
$p$-adic group $G$ on its associated extended Bruhat-Tits building $X$.  The building $X$ is a
$\CAT(0)$-space and our general strategy is to apply the geodesic flow method and argue
along a variation of the diagram~\eqref{eq:discret-assembly-maps}.
For bi-infinite geodesics $c \colon \colon \IR \to X$ we obtain the subgroups $V_c$ of $G$
as above.  The $V_c$ are now either compact (as pointwise stabilizers of bi-infinite
geodesics) or admit a surjection onto $\IZ$ with compact kernel.

\begin{definition}
  For a td-group $G$ we write $\CVCYC$ for the family of all closed subgroups $V$ that are
  either compact or admit a surjection onto the infinite cyclic group with compact
  kernel\footnote{As $\IZ$ is discrete the kernel is automatically open in $V$.}.
  We write $\Or_{\CVCYC}(G)$ for the full subcategory of $\Or(G)$ on all $G/V$ with $V \in \CVCYC$.
\end{definition}

Our general strategy will be to replace $\Gamma$ with $G$, $\FIN$ with $\COP$, and $\VCYC$
with $\CVCYC$ in~\eqref{eq:discret-assembly-maps}.  
However, two problems arise because
the $V \in \CVCYC$ are in general not open in $G$.  
The first problem is that, if $V$
not open in $G$, then $\calh(V;R)$ is not a subalgebra of $\calh(G;R)$; extending by zero
does not produce locally compact functions on $G$ from locally constant functions on $V$.
Thus there is no induction map from $\bfK(\calh(V;R))$ to $\bfK(\calh(G;R))$ and it is not
clear how $\bfK_{R}$ can be extended from $\Or_{\COP}(G)$ to $\Or_{\CVCYC}(G)$.  The
second problem is less clear at this point, but it comes from the fact that (unlike the
discrete case) a product of orbits $G/V \times G/V'$ cannot be written as a coproducts of
orbits\footnote{The precise place where this comes up is
  Theorem~\ref{thm:FS-to-J}. Locally there are maps on the flow space of the form
  $U \to G/V$, but if one patches them together over the flow space one ends up with maps
  to products of orbits.}.  For this reason orbits are not necessarily the correct
building blocks for topological groups and we will work with a category of formal products
of orbits instead of the orbit category.  As an added bonus this will allow us to
disregard many morphisms in $\Or(G)$ and we arrive at the category $\EP\All(G)$, see
Subsection~\ref{subsec:product-cats}.  A technical point is that $G/G$ is no longer
terminal in $\EP\All(G)$, but this can be remedied by allowing the empty product $\ast$
and we obtain the category $\EPplus\All(G)$ as our replacement for $\Or(G)$.  For a
\emph{family}\footnote{A family of closed subgroups is always assumed to be closed under
  conjugation and taking finite intersections.}
$\calf$ of closed subgroups of $G$ we obtain the subcategory $\EP\calf(G)$ of
$\EPplus\All(G)$ on all (non-empty) formal products of the form
$G/F_1 \times \dots \times G/F_n$ with $F_i \in \calf$. 
 We will construct a functor
$\EPplus\All(G) \to \Spectra$, $P \mapsto \bfK (\contc_G(P))$ in
Subsection~\ref{subsec:category-contc}\footnote{There is a functor for each
  category with $G$-support $\calb$.  To simplify the discussion here we tacitly assume
  $\calb = \calb(G;R)$ for a ring $R$ containing $\IQ$.}.  On orbits $G/U$ with $U$ open
in $G$, the K-theory of $\contc_G(P)$ will (up to a degree shift) be the K-theory of
$\calh(G;R)$, compare Proposition~\ref{prop:c-is-b-for-smooth-P}.  Our replacement
for~\eqref{eq:discret-assembly-maps} is then
\begin{equation}\label{eq:discret-assembly-maps-now-td}
  \xymatrix@C=-5ex{\displaystyle \hocolimunder_{P \in \EP\COP(G)}
    \bfK (\contc_G(P))\ar[rr]^{\alpha_{\COP}} \ar[dr]_{\alpha_{\text{rel}}} 
    & & \bfK (\contc_G(\ast)) \simeq \Sigma \bfK ( \calh(G,R) )
    \\
    & \displaystyle \hocolimunder_{P \in \EP\CVCYC(G)} \bfK (\contc_G(P)) \,.\ar[ur]_{\alpha_{\CVCYC}}	  
  }  
\end{equation} 
It is not difficult to identify $\alpha_{\COP}$
with~\eqref{eq:COP-assembly-map-hocolim-cala}, compare
Proposition~\ref{prop:cop-iso-via-PSub}.  
Thus the task is to show that $\alpha_{\CVCYC}$
and $\alpha_{\text{rel}}$ are equivalences.  For $\alpha_{\CVCYC}$ this means that $G$
satisfies the $\CVCYC$-Farrell--Jones Conjecture~\ref{conj:Farrell-Jones-Conj-for-td}, see
Theorem~\ref{thm:Farrell-Jones-Conjecture-for-reductive-p-adic}.  
As in the discrete case
Theorem~\ref{thm:Farrell-Jones-Conjecture-for-reductive-p-adic} does not depend on any
assumptions on the coefficients (here $R$).  For $\alpha_{\text{rel}}$ it is the content
of the Reduction Theorem~\ref{thm:reduction}; this depends on a regularity assumptions for
the coefficients (here $R$).

The functor $\EPplus\All(G) \to \Spectra$, $P \mapsto \bfK (\contc_G(P))$ is not
determined by its restriction to $\EPplus\OPEN(G)$.  There are many variations of the
category $\contc_G(P)$ such that the K-theory is unchanged for $P \in \EPplus\OPEN(G)$.
The specific choices from Subsection~\ref{subsec:category-contc} may seem overly
complicated at first, but are made in order for the proof of
Theorem~\ref{thm:Farrell-Jones-Conjecture-for-reductive-p-adic} to work.  For example, the
foliated distance from Subsections~\ref{subsec:foliated-distance-V}
and~\ref{subsec:foliated-distance-P} is modeled on foliated distance on flow spaces, see
Subsection~\ref{subsec:V-fol-and-FS}.  This in turn makes the proof of the reduction
theorem more complicated than its discrete counterpart.  In fact, we only know that
$\alpha_\text{rel}$ is an equivalence under the assumption that $\alpha_{\CVCYC}$ is an
equivalence.  The reason is that we are not able to prove the reduction theorem directly
for our functor $P \mapsto \bfK (\contc_G(P))$, but only for a variation
$P \mapsto \bfK (\contC_G(P))$ thereof, see Subsection~\ref{subsec:contC}.  There is a map
$\bfK (\contc_G(P)) \to \bfK (\contC_G(P))$ and, under the assumption that
$\alpha_{\CVCYC}$ is an equivalence, a simple diagram chase proves then the reduction
theorem for $\bfK (\contc_G(P))$.

While the construction of $\contc_G(P)$ is in many ways the key ingredient to the proof of
Theorem~\ref{thm:main-COP-FJ-for-reductive}, a better understanding of it would still be
desirable.  The value of the functor $\EPplus\All(G) \to \Spectra$,
$P \mapsto \bfK (\contc_G(P))$ does not only depend on the groups $H_i$ occurring in
$P = (G/H_1, \ldots , G/H_n)$ but also on how $H_i$ sits in $G$ unless each $H_i$ is
open. This is illustrated in Remark~\ref{rem:value_on_compact_subgroups}. For discrete
groups the Farrell--Jones Conjecture (with appropriate coefficients) passes to subgroups.
Similarly, the $\COP$-Farrell-Jones Conjecture~\ref{conj:COP-FJ-for-Hecke-Cat} passes to
closed subgroups.  It is natural to expect the same for the $\CVCYC$-Farrell-Jones
Conjecture~\ref{conj:Farrell-Jones-Conj-for-td}, but this remains open and seems to
require a better understanding of $\contc_G(P)$.  This would imply the (K-theoretic)
Farrell--Jones Conjecture for all discrete subgroups of reductive $p$-adic groups, because
the $\CVCYC$-Farrell--Jones Conjecture~\ref{conj:Farrell-Jones-Conj-for-td} reduces for
discrete groups to the original K-theoretic Farrell--Jones Conjecture, see
Remark~\ref{rem:CVCYC-FJ-implies-discrete-FJ}.

%-----------------------------------------------------------------

\subsection{Open problems}\label{subsec:Open_Problems}
There is an interesting instance where the $\CVCYC$-Farrell--Jones
Conjecture~\ref{conj:Farrell-Jones-Conj-for-td} applies, but we do not know to what extend
the $\COP$-Farrell--Jones Conjecture~\ref{conj:COP-FJ-for-Hecke-Cat} applies.  This arises
as follows.  Let $G$ be a reductive $p$-adic group and $R$ be a ring containing $1/p$.  In
this case $G$ admits a pro-p-group $U_0$ as a compact open subgroup $U_0$.  There is still
a Hecke algebra $\calh(G;R)$, see for example~\cite[Sec.I.3]{Vigneras(1996)}, which can be
used to define a variant $\calb'(G;R)$ of $\calb(G;R)$, see
Example~\ref{ex:calbprime(G;R)}.  Now
Theorem~\ref{thm:Farrell-Jones-Conjecture-for-reductive-p-adic} applies and we obtain a
homotopy colimit description of the K-theory of $\calh(G;R)$
from~\eqref{eq:FJ-assembly-map}.  In general\footnote{If $[U:U_0]$ is invertible for all
  compact open subgroups $U$ of $G$ containing the pro-p-group $U_0$, then we expect that
  $\calb'(G;R)$ satisfies (Reg) from Definition~\ref{def:regularity-for-COP}.  Thus, under
  this additional assumption the Reduction Theorem~\ref{thm:reduction} should apply and
  lead for example to a version of Corollary~\ref{cor:K_n-no-fuzz} for the K-theory of
  $\calh(G;R)$.}, for example if $R=\IZ[1/p]$, we do not expect that $\calb'(G;R)$
satisfies (Reg) from Definition~\ref{def:regularity-for-COP}.  In particular, we do not expect that~\eqref{eq:COP-assembly-homolgy-theory-R} is an isomorphism in this
situation.  Nevertheless, it seems interesting to evaluate this homotopy colimit
from~\eqref{eq:FJ-assembly-map} in this situation further.
For example, it is conceivable that~\eqref{eq:COP-assembly-homolgy-theory-R} is an isomorphism modulo $p$-torsion. 

For a reductive p-adic group $G$ Bernstein decomposed the category of finitely generated
non-degenerated $\calh(G;\IC)$-modules as a direct sum of subcategories, now called
Bernstein blocks, see~\cite{Bernstein(1984),Bernstein(1992)}. In particular,
there is a corresponding direct sum decomposition of $K_n(\calh(G;\IC))$.  By
Corollary~\ref{cor:K_n-no-fuzz} there must then exist a corresponding decomposition of
$H_n^G(\EGF{G}{\COP};\bfK_\IC)$.  It seems interesting to give a direct description of the
summands in $H_n^G(\EGF{G}{\COP};\bfK_\IC)$ corresponding to the Bernstein blocks in
$K_n(\calh(G;\IC))$.  Let $G = \GL_n(F)$ and let $I$ be the Iwahori subgroup.  The
Iwahori-Hecke algebra $\calh(G,I)$ is the (unital) subalgebra of $I$-bi-invariant
functions of $\calh(G;\IC)$.  The Iwahori block in the category of finitely generated
non-degenerated $\calh(G;\IC)$-modules can be identified with the category of finitely
generated $\calh(G,I)$-modules.  Even for this block it is not quite clear what the
correct analog of the assembly map~\eqref{eq:COP-assembly-homolgy-theory-R} should be.

%-----------------------------------------------------------------

\subsection{Overview}
Section~\ref{sec:prelim} fixes some conventions and notations.

Section~\ref{subsec:COP-FJ-Conj} contains the details of the formulation of the
$\COP$-Farrell--Jones Conjecture and a reformulation using products of orbits as
building blocks.

Controlled algebra is a key tool for proofs of the Farrell--Jones Conjecture for discrete
groups  sets up a variant of this theory also
suitable for td-groups.  In the usual theory controlled objects over a space $X$ have as
support a subset of $X$, while morphisms have as support subsets of $X \times X$.  In our
version of the theory objects also have a support in $X \times X$\footnote{If one thinks
  of objects as idempotents this is quite natural.}.    One can think of the controlled categories, that we  introduce in Section~\ref{sec:controlled-algebra}, as
  generalizations of Hecke algebras. Thus it is quite natural that these categories also
come with a notion of support in $G$.

Section~\ref{sec:Farrell-Jones-Conj-for-td} contains the formulation of the
$\CVCYC$-Farrell--Jones Conjecture.  Central is the construction of the categories
$\contc_G(P)$ already discussed in Subsection~\ref{subsec:comparison-discrete}.  Their
construction uses the language of controlled algebra.  (This in contrast to the discrete
case, where controlled algebra only enters proofs of the Farrell--Jones Conjecture but not
its formulation.)

Section~\ref{sec:formal-framwork} contains the formal framework of the proof of the
$\CVCYC$-Farrell--Jones Conjecture of reductive $p$-adic groups.  This framework is formally
different from the one used for example in~\cite{Bartels-Lueck(2012annals)} for discrete
groups, but also centers around a $G$-homology theory (here $\bfD_G$) and a transfer map,
see Theorem~\ref{thm:transfer-bfD0}.  For technical reasons we also introduce a variant
$\bfD^0_G$ of $\bfD_G$.  The domain of the $G$-homology theory $\bfD_G$ is a category
$\regularPOrGSC$ of combinatorial $G$-space, whose building blocks are products of orbits
and simplices.  This category contains an analog $\bfJ_\CVCYC(G)$ of the numerable
classifying space for $\CVCYC$.  The transfer map realizes the functor
$\EPplus \All(G) \to \Spectra$, $P \mapsto \bfK(\contc_G(P))$ as a retract of
$P \mapsto \bfD_G(\bfJ_\CVCYC(G) \times P)$\footnote{Really, the transfer uses the close
  relative $\bfD^0_G$ of $\bfD_G$, but this is a technical point.}.  This allows then the
application of excision and homotopy invariance results of $\bfD_G$ in the variable
$\bfJ_\CVCYC(G)$.

Section~\ref{sec:cats-D_and_D0} contains the construction of the functors $\bfD_G$ and
$\bfD^0_G$ as the K-theory of certain categories.  These categories are constructed
using controlled algebra.  Their construction builds on that of $\contc_G(P)$ by adding a
second control direction for what is called an $\epsilon$-control condition.  The precise
formulation is tailored in order for $\bfD_G$ and $\bfD^0_G$ to satisfy the properties
formulated in Section~\ref{sec:formal-framwork}.  With the exception of the transfer these
properties are then verified in Section~\ref{sec:properties-cats-D_upper_D0}, following
similar results in the discrete case.
   
The construction of the transfer is outlined in Section~\ref{sec:outline-transfer} and
carried out in
Sections~\ref{sec:cals},~\ref{sec:tensor-products},~\ref{sec:support-estimates}
and~\ref{sec:constr-transfer}.  This depends on the construction of certain \emph{almost
  equivariant} maps from the building $X$ to a space $|\bfJ_\CVCYC(G)|^\wedge$ associated
to $\bfJ_\CVCYC(G)$.  This is the point where the dynamics of the geodesic flow on a flow
space associated to $X$ is exploited.  The details of this construction is outsourced
to~\cite{Bartels-Lueck(2023almost)}, but we give an overview in Appendix~\ref{app:X-to-J}.

Section~\ref{sec:reduction} contains the proof of the reduction theorem.  The difficulty
here is that it is not clear that the regularity of the coefficients induces a regularity
property for $\contc_G(P)$.  Roughly, the controlled algebra nature of $\contc_G(P)$ makes
it too big to satisfy a regularity property.  In a number of steps we reduce the
problem to certain categories associated to infinite product categories (the limit
category from Subsection~\ref{subsec:sequence-cat}) and use a K-theory computation
from~\cite{Bartels-Lueck(2020additive)}.

Appendix~\ref{app:homotopy-colimits} reviews some results on homotopy colimits that are
used throughout the paper.  Appendix~\ref{app:K-theory-dg-cat} reviews K-theory for
dg-categories.  This formalism is applied in Appendix~\ref{app:homotopy-coherent} to
homotopy coherent functors and ultimately used in the construction of the transfer in
Section~\ref{sec:tensor-products}.

 %-----------------------------------------------------------------

\subsection*{Acknowledgements}  
We thank Tyrone Crisp, Jessica Fintzen, Eugen Hellmann, Nigel Higson,
Linus Kramer, Achim Krause, Thomas Nikolaus, Peter Schneider, Peter
Scholze, and Stefan Witzel for many helpful comments and discussions.

This paper is funded by the ERC Advanced Grant ``KL2MG-interactions'' (no.  662400) of the
second author granted by the European Research Council, by the Deutsche
Forschungsgemeinschaft (DFG, German Research Foundation) \-– Project-ID 427320536 \-- SFB
1442, as well as under Germany's Excellence Strategy \-- GZ 2047/1, Projekt-ID 390685813,
Hausdorff Center for Mathematics at Bonn, and EXC 2044 \-- 390685587, Mathematics
M\"unster: Dynamics \-- Geometry \-- Structure.

\bigskip

The paper is organized as follows:\\[2mm]

\tableofcontents

%%%%%%%%%%%%%%%%%%%%%%%%%%%%%%%%%%%%%%%%%%%%%%%%%%%%%%%%%%%%%%%%%%%%

\section{Preliminaries}\label{sec:prelim}

%-----------------------------------------------------------------

\subsection{Convention on units}\label{subsec:conv-units}
Categories and rings will always be assumed to be unital, unless we explicitly allow
non-unital categories or rings.  Of course, Hecke algebras are typically not unital.

%-----------------------------------------------------------------

\subsection{Formally adding finite sums}\label{subsec:adding-sums}
For a $\IZ$-linear category $\cala$ we obtain an additive category $\cala_\oplus$ by
formally adding finite sums.  A concrete model for $\cala_\oplus$ has as objects finite
sequences $(A_1,\dots,A_n)$ of objects in $\cala$ and as morphisms
$\varphi \colon (A_1,\dots,A_n) \to (A'_1,\dots,A'_{n'})$ matrices
$\varphi = (\varphi_{i}^{i'} \colon A_i \to A'_{i'})_{i,i'}$ of morphisms in $\cala$, see
for example~\cite[p.214]{Davis-Lueck(1998)}.

%-----------------------------------------------------------------

\subsection{Idempotent completion}\label{subsec:idem}
The idempotent completion $\Idem \cala$ of a category $\cala$ has as objects pairs $(A,p)$,
where $p$ is an idempotent on $A$ in $\cala$.  Morphisms
$\varphi \colon (A,p) \to (A',p')$ are morphisms in $\cala$ satisfying
$\varphi = p' \circ \varphi \circ p$.  For a category without units the idempotent
completion makes still sense and produces a category with units: $\id_{(A,p)} = p$.

%-----------------------------------------------------------------

\subsection{K-theory}\label{subsec:K-theory}
 A construction of the \emph{non-connective K-theory spectrum} $\bfKinfty(\cala)$ of a
 unital additive category $\cala$ can be found for instance
 in~\cite{Lueck-Steimle(2014delooping)} or~\cite{Pedersen-Weibel(1985)}.  The K-theory of
 a $\IZ$-linear category $\cala$ is defined as the K-theory of the additive category
 $\cala_\oplus$.  The canonical embedding $\cala \to \Idem \cala$ induces an equivalence
 in K-theory, see for instance~\cite[Lemma~3.3~(ii)]{Bartels-Lueck(2020additive)}.

 A key tool for us will be a fiber sequence in K-theory that goes back to Karoubi~\cite{Karoubi(1970)} and
 Carlsson-Pedersen~\cite{Carlsson-Pedersen(1995a)}.  
 To state it we need a definition.

 \begin{definition}\label{def:U-filtered} Let $\calu$ be a full additive subcategory of
   an additive category $\cala$.
   \begin{enumerate}[
                 label=(\thetheorem\alph*),
                 align=parleft, 
                 leftmargin=*,
                 itemsep=1pt
                 ] 
   \item\label{def:U-filtered:quotient} The \emph{quotient category} $\cala/\calu$ has
     the same objects as $\cala$.  Morphisms in $\cala / \calu$ are equivalence classes of
     morphisms in $\cala$, where morphisms from $\cala$ are identified in $\cala/\calu$
     whenever their difference factors over an object from $\calu$.
   \item\label{def:U-filtered:filtered} We say that $\cala$ is \emph{$\calu$-filtered} if
     the following condition is satisfied.  Let $A \in \cala$, $U_-, U_+ \in \calu$ and
     let $U_- \xrightarrow{\varphi_-} A \xrightarrow{\varphi_+} U_+$ be morphisms in
     $\cala$.  We require that there is a direct summand $U$ of $A$ with $U \in \calu$
     such that $\varphi_-$ and $\varphi_+$ factor over $U$, i.e., if we write
     $p \colon A \to A$ for the projection associated to the direct summand $U$, then
     $\varphi_- = p \circ \varphi_-$ and $\varphi_+ = \varphi_+ \circ p$.
   \end{enumerate}
 \end{definition}

 Definition~\ref{def:U-filtered} is originally due to Karoubi~\cite{Karoubi(1970)}.
 In~\ref{def:U-filtered:filtered} we used Kasprowski's reformulation~\cite[Def.~5.4,
 Rem.~5.7~(1)]{Kasprowski(2015findeccom)}.

 \begin{theorem}[Karoubi sequence]\label{thm:Karoubi-filtration} Let $\calu$ be a Karoubi
   filtration of $\cala$.  Write $i \colon \calu \to \cala$ and
   $p \colon \cala \to \cala / \calu$ for the associated inclusion and projection.  Then
   \begin{equation*}
     \bfK (\calu) \xrightarrow{i_*} \bfK (\cala) \xrightarrow{p_*} \bfK (\cala / \calu)
   \end{equation*}
   is a fibration sequence of spectra\footnote{The precise statement is as follows. The
     composition $p_* \circ i_0$ has a canonical null homotopy as all objects in $\calu$
     are isomorphic to $0$ in $\cala / \calu$. The induced map from $\bfK (\calu)$ to the
     homotopy fiber of $p_*$ is an equivalence. }.
 \end{theorem}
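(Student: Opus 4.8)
The plan is to prove Theorem~\ref{thm:Karoubi-filtration} by reducing it to the Karoubi--Pedersen--Weibel localization sequence via an identification of the relevant categories up to K-theory equivalence. First I would observe that the statement is really about the idempotent completions: since $\bfK$ is insensitive to idempotent completion (as recalled in Subsection~\ref{subsec:K-theory}), I may replace $\calu \subseteq \cala$ and the quotient $\cala/\calu$ by their idempotent completions without changing the spectra in the claimed fibration sequence. After idempotent completion, the hypothesis that $\calu$ is a Karoubi filtration of $\cala$ (Definition~\ref{def:U-filtered}) is exactly the setup in which Cardenas--Pedersen and Carlsson--Pedersen work, so the heart of the matter is a localization theorem for such pairs.

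Next I would set up the three relevant pieces precisely. The inclusion $i \colon \calu \to \cala$ is exact (fully faithful, closed under the relevant extensions), and the projection $p \colon \cala \to \cala/\calu$ is essentially surjective and, crucially, the kernel of $p$ up to retracts is exactly $\calu$: an object $A$ of $\cala$ becomes zero in $\cala/\calu$ iff $\id_A$ factors through an object of $\calu$, which by the filtration condition (applied to $\id_A \colon A \to A$ composed on both sides with factorizations through $\calu$) forces $A$ to be a direct summand of an object of $\calu$, hence isomorphic in $\Idem\calu$ to an object of $\calu$. The composite $p_* \circ i_*$ is canonically null since every object of $\calu$ maps to a zero object, which gives the canonical nullhomotopy mentioned in the footnote. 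So the content is that the induced map from $\bfK(\calu)$ to the homotopy fiber of $p_*$ is an equivalence.

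For that equivalence I would invoke the Karoubi filtration localization theorem in the form available in the literature this paper is built on, e.g. the cofinality-type and localization arguments of Carlsson--Pedersen~\cite{Carlsson-Pedersen(1995a)} together with Kasprowski's reformulation~\cite{Kasprowski(2015findeccom)} whose Definition~5.4 and Remark~5.7~(1) are precisely the notion of $\calu$-filtered used here. Concretely: the filtration condition guarantees that every morphism in $\cala/\calu$ can be represented, after passing to a direct summand, by a morphism in $\cala$ compatible with the filtration, and that the pair behaves like an exact sequence of Waldhausen categories after equipping $\cala$ with the weak equivalences that become isomorphisms in $\cala/\calu$; the fibration sequence is then the associated long exact localization sequence, delooped non-connectively as in~\cite{Lueck-Steimle(2014delooping)} or~\cite{Pedersen-Weibel(1985)} so that negative K-groups are included. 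I would carry this out by: (i) identifying $\cala/\calu$ with a category of fractions and checking the calculus-of-fractions/approximation hypotheses from the filtration axiom; (ii) applying the localization theorem to get the connective statement; (iii) delooping to the non-connective spectra, using that all three constructions $\bfK(\calu)$, $\bfK(\cala)$, $\bfK(\cala/\calu)$ are formed by the same non-connective machine so the fiber sequence is preserved.

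The main obstacle I expect is step (i): verifying that the quotient $\cala/\calu$, defined crudely by killing morphisms factoring through $\calu$, actually has the structural properties (a well-behaved calculus of fractions, or the relevant Waldhausen approximation property) needed to feed into the localization machine, and doing so using \emph{only} the single filtration axiom of Definition~\ref{def:U-filtered}~\ref{def:U-filtered:filtered} rather than stronger hypotheses. The key technical lemma is that given any finite diagram of morphisms in $\cala/\calu$, one can choose, compatibly, a direct summand of the objects involved over which everything is controlled; iterating the filtration axiom to get such uniform summands (and checking it is functorial enough) is the delicate point. Everything else — the nullhomotopy, idempotent-completion invariance, and the passage from connective to non-connective spectra — is formal given the results already cited in the excerpt.
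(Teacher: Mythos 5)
Your proposal is correct and matches the paper's approach: the paper's entire proof of Theorem~\ref{thm:Karoubi-filtration} is the single citation to~\cite[Thm.~1.28]{Carlsson-Pedersen(1995a)}, which is exactly the Karoubi filtration localization theorem you invoke at the decisive step. The additional scaffolding you supply (idempotent-completion invariance, the canonical nullhomotopy, identification of the kernel of $p$ up to retracts, non-connective delooping) is a reasonable sketch of what lies behind that citation, but the paper itself does none of this and simply defers to the reference.
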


\begin{proof}
  This is~\cite[Thm.~1.28]{Carlsson-Pedersen(1995a)}.
\end{proof}

% -----------------------------------------------------------------

\subsection{Small compact open subgroups}

\begin{lemma}\label{lem:small-compact-open}
   Let $G$ be a td-group, $K$ be a compact subgroup of $G$, and $W$ be an open neighborhood of $K$ in $G$.
   Then there exists a compact open subgroup $U$ with $K \subseteq U \subseteq W$.	
\end{lemma}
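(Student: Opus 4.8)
The plan is to reduce everything to van Dantzig's theorem, recalled in the introduction, that the identity $e$ of a td-group has a neighborhood basis consisting of compact open subgroups. The first step is a uniformity statement: there is an open neighborhood $V$ of $e$ with $KV \subseteq W$. To obtain it, consider $W' := \{g \in G \mid Kg \subseteq W\} = \bigcap_{k \in K} k^{-1}W$. Since $K \subseteq W$ we have $e \in W'$. To see that $W'$ is open, note that $\{(k,g) \in K \times G \mid kg \notin W\}$ is closed in $K \times G$, and its image under the projection $K \times G \to G$ — which is a closed map because $K$ is compact — is exactly $G \setminus W'$. Now choose, using van Dantzig's theorem, a compact open subgroup $V$ with $V \subseteq W'$; then $KV \subseteq W$.

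Next I would replace $V$ by a compact open subgroup that is in addition normalized by $K$, by intersecting its $K$-conjugates: set $U_0 := \bigcap_{k \in K} kVk^{-1}$. Each $kVk^{-1}$ is a compact open subgroup, so $U_0$ is a subgroup; taking $k = e$ shows $U_0 \subseteq V$, whence $U_0$ is compact; and reindexing the intersection shows $k_0 U_0 k_0^{-1} = U_0$ for all $k_0 \in K$. The only point again needing the compactness of $K$ is that $U_0$ is open: the set $\{(k,g) \in K \times G \mid kgk^{-1} \notin V\}$ is closed, and its image under the closed projection $K \times G \to G$ is precisely $G \setminus U_0$.

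Finally, put $U := K U_0$. Since the subgroup $K$ normalizes the subgroup $U_0$, the product $K U_0$ is again a subgroup of $G$. It is open, being the union $\bigcup_{k \in K} k U_0$ of open cosets; it is compact, being the image of $K \times U_0$ under the multiplication map; it contains $K$ (take the factor $e \in U_0$); and $U = K U_0 \subseteq K V \subseteq W$. Hence $U$ is a compact open subgroup with $K \subseteq U \subseteq W$, as required.

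I do not expect a genuine obstacle here, as this is a standard structural fact about td-groups. The only subtlety worth some care is the two appeals to the closedness of the projection $K \times G \to G$ (valid precisely because $K$ is compact), which is what guarantees that the auxiliary set $W'$ and the intersection $U_0$ of the $K$-conjugates of $V$ are open; everything else is formal manipulation with subgroups.
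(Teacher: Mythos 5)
Your proof is correct and follows essentially the same outline as the paper's: first produce a compact open subgroup $V$ with $KV \subseteq W$, then intersect the $K$-conjugates of $V$ to get a compact open subgroup normalized by $K$, and finally take $U = K \cdot (\text{that intersection})$. The only difference is in how compactness of $K$ enters: where the paper extracts a finite subcover of $K$ and uses that $K \cap V$ has finite index in $K$ to see that the intersection of conjugates is open, you twice invoke closedness of the projection $K \times G \to G$; both are standard and equally valid.
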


\begin{proof}
	For each $k$ there is an open subgroup $V_k$ with $kV_k \subseteq W$.
	As $K$ is compact there is $S \subset K$ finite with $K \subseteq \bigcup_{s \in S} sV_s$.
	Then $V := \bigcap_{s \in S} V_s$ is a compact open subgroup for which $kV \subseteq W$ for all $k \in K$.
	As $K \cap V$ has finite index in $K$, $N := \bigcap_{g \in K} gVg^{-1}$ is still compact open.
	Now $K$ normalizes $N$ and so $U := KN$ is a compact open subgroup containing $K$.
	Also $U = KN \subseteq KV \subseteq W$. 
\end{proof}

%%%%%%%%%%%%%%%%%%%%%%%%%%%%%%%%%%%%%%%%%%%%%%%%%%%%%%%%%%%%%%%%%%%%

\section{The $\COP$-Farrell--Jones Conjecture}\label{subsec:COP-FJ-Conj}

%-----------------------------------------------------------------

\subsection{Categories with $G$-support}

\begin{definition}\label{def:category_with_G-support} Let $G$ be a td-group.  A \emph{category with
  $G$-support} is a $\IZ$-linear category $\calb$ together with a map that assigns to every
  morphism $\varphi$ in $\calb$ a compact subset $\supp \varphi$ of $G$.  
  We require the following
  \begin{enumerate}[
                 label=(\thetheorem\alph*),
                 align=parleft, 
                 leftmargin=*,
                 %labelindent=1pt,
                 %labelwidth=60pt,
                 %labelsep=8pt,
                 %itemindent=-3pt
                 %itemsep=1pt
                 ] 
  \item\label{def:category_with_G-support:(emptyset)}
    $\supp \varphi = \emptyset \Longleftrightarrow \varphi =
    0$;
  \item\label{def:category_with_G-support:(v_circ_u)}
    $\supp(\varphi' \circ \varphi) \subseteq \suppG \varphi' \cdot \suppG \varphi$;
  \item\label{def:category_with_G-support:(plus)}
    $\supp(\varphi+\varphi') \subseteq \supp \varphi \cup \supp \varphi'$,
    $\supp(-\varphi) = \supp \varphi$.
  \end{enumerate}
  We abbreviate $\supp B := \supp \id_B$. 
\end{definition}

\begin{definition}\label{def:Hecke-category}
	A \emph{Hecke category with $G$-support} is a category $\calb$ with $G$-support such that the following holds.			
	\begin{enumerate}[
                 label=(\thetheorem\alph*),
                 align=parleft, 
                 leftmargin=*,
                 %labelindent=1pt,
                 %labelwidth=60pt,
                 %labelsep=8pt,
                 %itemindent=-3pt
                 itemsep=2pt
                 ] 
               \item\label{def:Hecke-category:subgr} \emph{Subgroups}\\
                 $\supp B$ is a
                 compact subgroup of $G$ for all objects $B$. For morphisms
                 $\varphi \colon B \to B'$ we have
                 $\supp \varphi = \supp B' \cdot \supp \varphi \cdot \supp B'$. Moreover,
                 the sets $\supp B' \backslash \supp \varphi$ and
                 $\supp \varphi / \supp B$ are both finite;
               \item\label{def:Hecke-category:trans} \emph{Translations}\\ %\vspace{-1mm} \\ 
                 For every $B \in \calb$ and $g \in G$ there is an isomorphism
                 $\varphi \colon B \xrightarrow{\cong} B'$ satisfying  $\supp B' = g \supp B g^{-1}$,
                 $\supp \varphi = g \supp B$, and $\supp \varphi^{-1} = g \supp B$;
               \item\label{def:Hecke-category:add}\emph{Morphism additivity}\\
                 Let
                 $\varphi \colon B \to  B'$ be a  morphism.  Suppose $\supp \varphi = L_1 \sqcup L_2$ is
                 a disjoint union,  where $\supp (B') \cdot L_i \cdot \supp (B) = L_i$.
                 We require the existence of morphisms
                 $\varphi_i \colon B \to  B'$ for  $i = 1,2$ satisfying 
                 $\varphi = \varphi_1 + \varphi_2$ and $\supp \varphi_i = L_i$                   
                   for $i =1,2$;
               \item\label{def:Hecke-category:cofinal}\emph{Support cofinality}\\
                 For every
                 object $B \in \calb$ and every subgroup $L$ of finite index in $\supp B$
                 there are morphisms
                 \begin{equation*} B \xrightarrow{i_{B,L}} B|_L \xrightarrow{r_{B,L}}
                   B \end{equation*} such that $\supp B|_L = L$,
                 $\supp i_{B,L} = \supp r_{B,L} = \supp B$ and
                 $r_{B,L} \circ i_{B,L} = \id_{B}$.  Moreover, for $L'$ a subgroup of
                 finite index in $L$ we require $B|_{L'} = (B|_L)|_{L'}$,
                 $i_{B,L'} = i_{B|_L,L'} \circ i_{B,L}$ and
                 $r_{B,L'} = r_{B,L} \circ r_{B|_L,L'}$.
               \end{enumerate}
             \end{definition}

             We note that~\ref{def:Hecke-category:cofinal} means that $\calb$ is equipped
             with a choice of $B|_L,i_{B,L},r_{B,L}$ for all $B$ and $L$.

\begin{example}\label{ex:calb(G;R)} 
  Let $R$ be a ring containing $\IQ$ and $G$ be a td-group.  Consider the Hecke algebra
  $\calh(G;R)$ associated to a $\IQ$-valued (left-invariant) Haar measure $\mu$ on $G$.
  Associated to $\calh(G;R)$ is the category with $G$-supports $\calb(G;R)$.  Objects of
  $\calb(G;R)$ are compact open subgroups of $G$.  Morphisms $\varphi \colon U \to U'$ are
  elements of $\calh(G;R)$ satisfying
  \begin{equation*} %\label{eq:U'-U-invariant}
    \varphi(u'gu) = \varphi(g) \quad \text{for all $u' \in U'$, $u \in U$.}
  \end{equation*}
  The support of $\varphi$ is
  $\supp \varphi = \{ g \in G \mid \varphi(g) \neq 0 \}$\footnote{As $\varphi$ is locally
    constant and compactly supported this is a compact subset of $G$.}.  The identity of
  $U$ is the idempotent $\frac{\chi_U}{\mu(U)} \in \calh(G;R)$ where $\chi_U$ is the
  characteristic function of $U$.  The category $\Idem(\calb(G;R)_\oplus)$ is equivalent
  to the category of finitely generated projective  $\calh(G;R)$-modules,
  compare~\cite[Lem.~6.6]{Bartels-Lueck(2023foundations)}
  In particular $\bfK \calb(G;R) \simeq \bfK \calh(G;R)$.
  
  It is not difficult to check that $\calb(G;R)$ is a Hecke category with $G$-support.
  The subgroup property and morphism additivity are clear from the definitions.  For $U$
  compact open in $G$ and $g \in G$ we have an isomorphism
  $\frac{\chi_{gU}}{\mu(gU)} \colon U \to gUg^{-1}$ in $\calb(G;R)$; its inverse is
  $\frac{\chi_{Ug^{-1}}}{\mu(Ug^{-1})}$.  This proves the translation property.  For support
  cofinality we can set $U|_L := L$, $i_{U,L} = r_{U,L} := \frac{\chi_U}{\mu(U)}$.  For
  more details see~\cite[Sec.~6.C]{Bartels-Lueck(2023foundations)},
  where also more general Hecke algebras are discussed.
\end{example}

\begin{example}\label{ex:calbprime(G;R)}
	Let $R$ be a ring and $G$ be a td-group.
	Assume that $G$ has at least one  compact open subgroup $U$ with the property\footnote{Such a subgroup exists for example if $G$ is reductive $p$-adic and $1/p \in R$, see~\cite[Lemma~1.1]{Meyer-Solleveld(2010)}.} that
	\begin{enumerate}[
                 label=(\thetheorem\alph*),
                 align=parleft, 
                 leftmargin=*,
                 %labelindent=1pt,
                 %labelwidth=60pt,
                 %labelsep=8pt,
                 %itemindent=-3pt
                 itemsep=2pt
                 ] 
    \item\label{ex:calbprime(G;R):U_0} for all open subgroups $V$ of $U$ the index $[U:V]$ is invertible in $R$.
    \end{enumerate}   
    We can fix one such group $U_0$.  If one normalizes a (left-invariant) Haar measure
    $\mu$ such that $\mu(U_0)=1$, then $\mu$ it takes values in $\IZ[1/n \mid \text{$1/n$
      is invertible in $R$}]$ and one obtains a Hecke algebra $\calh(G;R)$.  In this
    situation we obtain a variant $\calb'(G;R)$ of the category from
    Example~\ref{ex:calb(G;R)}.  Its objects are compact open subgroups
    satisfying~\ref{ex:calbprime(G;R):U_0}.  Morphisms $U \to U'$ are elements of
    $\calh(G;R)$ satisfying
    \begin{equation*} %\label{eq:U'-U-invariant}
      \varphi(u'gu) = \varphi(g) \quad \text{for all $u' \in U'$, $u \in U$}
    \end{equation*}
    as before.  The point is that for such subgroups the measures $\mu(gU)$, $\mu(Ug)$ are
    invertible in $R$ for all $g \in G$\footnote{Indeed
      $\mu(gU) = \mu(U) = [U:U \cap U_0] \cdot \mu(U \cap U_0) = [U:U \cap U_0] \cdot
      [U_0:U \cap U_0]^{-1}$ and
      $\mu(Ug) = \mu(g^{-1}Ug) = [g^{-1}Ug:g^{-1} Ug \cap U_0] \cdot \mu(g^{-1}Ug \cap U_0) = [U:
      U \cap gU_0{g^{-1}}] \cdot [U_0:g^{-1}Ug \cap U_0]^{-1}$.}.  
    Thus formulas from
    Example~\ref{ex:calb(G;R)} still work and $\calb'(G;R)$ is a Hecke category with
    $G$-support.
  \end{example}

\begin{definition}[The category $\calb{[X]}$]\label{def:calbX} Given a category $\calb$
  with $G$-support and a smooth $G$-set $X$ we define the category $\calb[X]$ as follows.
  Objects are pairs $(B,x)$ with $B \in \calb$ and $x \in X$. 
  A morphism $(B,x) \to (B',x')$ is a morphism
  $\varphi \colon B \to B'$ in $\calb$ such that
  $\supp \varphi \subseteq G_{x,x'} = \{ g \in G \mid gx=x' \}$.
\end{definition}

The construction of $\calb[X]$ is natural in $X$ and compatible with disjoint union, i.e.,
if $X = \coprod_{i \in I} X_i$ as $G$-sets, then the canonical functor
\begin{equation}\label{eq:contb-and-coproducts} \coprod_{i \in I} \calb[X_i] \;
  \xrightarrow{\sim} \; \calb[X]
\end{equation}   
is an equivalence of $\IZ$-linear categories.  This reduces the computation of $\calb[X]$
to the case of orbits $G/U$.  Here $U$ is open in $G$, since we are only allowing smooth
$G$-sets.  We write $\calb|_U$ for the subcategory on objects and morphisms with support
in $U$.  If $\calb$ is a Hecke category with $G$-support then,
using~\ref{def:Hecke-category:trans}, there is an equivalence of $\IZ$-linear categories
\begin{equation}\label{eq:calb-on-orbits}
	\calb|_{U} \to \calb[G/U],
\end{equation} 
see~\cite[Lem.~5.5]{Bartels-Lueck(2023foundations)}.

\begin{definition}[$\COP$-assembly map]\label{def:assembly-cop-calb} Let $G$ be a
  td-group and $\calb$ be a category with $G$-support.  The projections $G/U \to G/G$
  induce a map
  \begin{equation}\label{eq:assembly-cop-calb}
    \hocolimunder_{G/U \in \Or_{\COP}(G)} \bfK \big( \calb[G/U] \big)
    \to \bfK \big( \calb[G/G] \big) \simeq \bfK \calb.
  \end{equation}
  We call this the \emph{$\COP$-assembly map for $\calb$}. 
\end{definition}

\begin{remark}
  The notion of categories with $G$-support is very general and allows also for
  pathological examples.  For this reason it is not sensible to conjecture
  that~\eqref{eq:assembly-cop-calb} is in general an equivalence.  However, a significant
  part of our considerations work in the generality of categories with $G$-support.
\end{remark}

%-----------------------------------------------------------------

\subsection{$l$-uniformly regular coherence and exactness}\label{subsec:regular_and_exact}

Let $\cala$ be an additive category.  The Yoneda embedding $A \mapsto \mor_\cala(\--, A)$
embeds $\cala$ into the abelian category of $\IZ\cala$-modules, i.e., the category of
$\IZ$-linear covariant functors from $\cala$ to $\MODcat{\IZ}$.  The $\IZ\cala$-modules in
the image of this functor are called \emph{finitely generate free}.  A $\IZ\cala$-module
is \emph{finitely presented} if it is the cokernel of a map between finitely generated
free modules.  The additive category $\cala$ is said to be \emph{regular coherent} if any
finitely presented $\IZ\cala$-module has a finite resolution by finitely generated
projective $\IZ\cala$-modules\footnote{These are exactly the direct summands of finitely
  generated free $\IZ\cala$-modules.}.  It is \emph{$l$-uniformly regular coherent} if in
addition the resolution can be chosen to be of length at most $l$.
	
A sequence $A \to A' \to A''$ in $\cala$ is \emph{exact} at $A'$, if its image is exact at
$\mor_\cala(\--, A')$ in $\IZ\cala$-modules.  A functor $F \colon \cala \to \calb$ of
additive categories is \emph{exact}, if it sends sequences that are exact at $A'$ to a
sequence that is exact at $F(A')$.  For a more detailed discussion
see~\cite[Sec.~6]{Bartels-Lueck(2020additive)}.

%-----------------------------------------------------------------

\subsection{Formulation of the $\COP$-Farrell--Jones Conjecture}\label{subsec:formulation-COP-FJ-conj}

\begin{definition}[Reg]\label{def:regularity-for-COP} 
  A Hecke category with $G$-support is said to satisfy condition (Reg) if for 
  every natural number $d$ there is a natural number $l(d)$ such that for
 every compact open subgroup $U \subseteq G$ the additive category
 $\calb[G/U]_{\oplus}[\IZ^d]$ is $l(d)$-uniformly regular coherent.
\end{definition}

\begin{conjecture}[$\COP$-Farrell--Jones
  Conjecture]\label{conj:ismorphism-conj-coefficients}
  Let $G$ be a td-group and $\calb$ be a Hecke category with $G$-support satisfying (Reg)
  from Definition~\ref{def:regularity-for-COP}.  Then the $\COP$-assembly
  map~\eqref{eq:assembly-cop-calb} for $\calb$ is an equivalence.
\end{conjecture}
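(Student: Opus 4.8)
The statement is posed as a conjecture and, for an arbitrary td-group, is not expected to be within reach; what follows is therefore the strategy by which one establishes it in the cases that are accessible, together with an indication of the obstruction in general.

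The plan is to factor the $\COP$-assembly map through a larger assembly map built from the family $\CVCYC$ of closed subgroups that are either compact or surject onto $\IZ$ with compact kernel. One replaces the orbit category by the category $\EPplus\All(G)$ of formal products of orbits (allowing the empty product), which is forced on us because a product $G/V \times G/V'$ of orbits of a topological group is not a coproduct of orbits, and one builds a functor $P \mapsto \bfK(\contc_G(P))$ on $\EPplus\All(G)$ whose value on smooth orbits recovers $\bfK(\calb[G/U])$ up to a degree shift (Proposition~\ref{prop:c-is-b-for-smooth-P}). This produces the comparison diagram~\eqref{eq:discret-assembly-maps-now-td}, in which $\alpha_{\COP}$ is identified with the $\COP$-assembly map (Proposition~\ref{prop:cop-iso-via-PSub}); the conjecture then reduces to proving that the relative map $\alpha_{\text{rel}}$ and the $\CVCYC$-assembly map $\alpha_{\CVCYC}$ are both equivalences.

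For $\alpha_{\text{rel}}$ the input is exactly the hypothesis (Reg) from Definition~\ref{def:regularity-for-COP}: uniform regular coherence of $\calb[G/U]_{\oplus}[\IZ^d]$ is used, as in the discrete case where $\alpha^{\mathrm{dis}}_{\mathrm{rel}}$ is an equivalence for regular coefficient rings containing $\IQ$, to annihilate the relative Nil-type and $\VCYC$-type contributions. Since the controlled-algebra model $\contc_G(P)$ is too large to inherit regularity directly, I would proceed as in the Reduction Theorem~\ref{thm:reduction}: prove the statement first for a better-behaved variant $\contC_G(P)$, reducing it through a sequence of localization and cofinality arguments to a K-theory computation for infinite product categories of the type carried out in~\cite{Bartels-Lueck(2020additive)}, and then transport the conclusion back along the natural map $\bfK(\contc_G(P)) \to \bfK(\contC_G(P))$ by a diagram chase, which works once $\alpha_{\CVCYC}$ is known.

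For $\alpha_{\CVCYC}$, which is the $\CVCYC$-Farrell--Jones Conjecture~\ref{conj:Farrell-Jones-Conj-for-td} and needs no hypothesis on $\calb$, the plan is the geodesic-flow method: realize $P \mapsto \bfK(\contc_G(P))$ as a retract of $P \mapsto \bfD_G(\bfJ_\CVCYC(G) \times P)$ via a transfer map (Theorem~\ref{thm:transfer-bfD0}), where $\bfD_G$ is a $G$-homology theory built from controlled categories carrying an additional $\epsilon$-control direction and $\bfJ_\CVCYC(G)$ is an analogue of the numerable classifying space for $\CVCYC$, and then apply excision and homotopy invariance of $\bfD_G$ in the $\bfJ_\CVCYC(G)$-variable. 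The heart of this step, and the reason the statement is only a conjecture for general $G$, is the construction of the transfer: it needs almost-equivariant maps from a finite-dimensional $\CAT(0)$-model of $\EGF{G}{\COP}$ to a space $|\bfJ_\CVCYC(G)|^\wedge$, and these are produced by exploiting the dynamics of the geodesic flow on a flow space attached to that model, under a contracting-type technical hypothesis (cf.\ Remark~\ref{rem:uses-mostly-CAT0} and Theorem~\ref{thm:X-to-J}). For a reductive $p$-adic group the extended Bruhat--Tits building provides such a model and the hypothesis holds, yielding Theorem~\ref{thm:main-COP-FJ-for-reductive}; inheritance (Theorem~\ref{the:inheritance}) then extends the conclusion to all td-groups that are, modulo a compact normal subgroup, closed subgroups of reductive $p$-adic groups, and the same argument applies whenever $G$ acts cofinitely, smoothly and properly on a building whose flow space satisfies the contracting hypothesis. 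Outside these situations no comparable $\CAT(0)$ geometry is available, so the conjecture is open; this geometric input is the step I expect to be the genuine obstacle.
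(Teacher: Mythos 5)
Your proposal is essentially the paper's own treatment: the statement is indeed left as a conjecture in general, and the cases the paper establishes (Theorem~\ref{thm:main-COP-FJ-for-reductive} and its closed-subgroup-modulo-compact consequences via Theorem~\ref{the:inheritance}) are obtained exactly as you describe, by combining the Reduction Theorem~\ref{thm:reduction} (using (Reg), the detour through the variant category, and the limit-category computation from~\cite{Bartels-Lueck(2020additive)}, with the diagram chase that requires $\alpha_{\CVCYC}$) with the $\CVCYC$-Farrell--Jones Theorem~\ref{thm:Farrell-Jones-Conjecture-for-reductive-p-adic}, proved by the transfer of Theorem~\ref{thm:transfer-bfD0} and the geodesic-flow input of Theorem~\ref{thm:X-to-J} on the extended Bruhat--Tits building. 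You also correctly identify the geometric input (the flow-space hypothesis on a $\CAT(0)$ model) as the genuine obstruction to the general case.
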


As discussed in Subsection~\ref{subsec:homotopy-colimit-in-intro} the $\COP$-assembly
map~\eqref{eq:assembly-cop-calb} for $\calb$ after applying $\pi_n$ can be identified
with~\eqref{eq:COP-assembly-homolgy-theory-cala}.  Thus
Conjecture~\ref{conj:ismorphism-conj-coefficients} is just a restatement of
Conjecture~\ref{conj:COP-FJ-for-Hecke-Cat} from the introduction.

%-----------------------------------------------------------------

\subsection{Product categories}\label{subsec:product-cats}
We digress briefly to introduce some notation for formal products, that will be useful
later on.  Let $\calc$ be a category.  We define the category $\EPplus\calc$ as follows.
Objects of $\EPplus\calc$ are $n$-tuples of objects of $\calc$, $(C_1, \dots, C_n)$.  Here
$n=0$ is allowed; the empty tuple is the unique $0$-tuple and will be written as
$\ast \in \EPplus\calc$.  Morphisms
$f \colon (C_1, \dots, C_n) \to (C'_1, \dots, C'_{n'})$ are pairs $f=(u,\varphi)$, where
$u \colon \{1,\dots,n'\} \to \{1,\dots,n\}$ and
$\varphi \colon \{1,\dots,n'\} \to \mor_\calc$ are maps such that for each $i' \in \{1,\dots,n'\}$
the morphism $\varphi({i'})$ in $\calc$ is of the shape 
$\varphi({i'}) \colon C_{u(i')} \to C'_{i'}$.  The composition of
\begin{equation*}
  (C_1,\dots,C_n) \xrightarrow{(u,\varphi)} (C'_1,\dots,C'_{n'}) \xrightarrow{(u',\varphi')} (C''_1,\dots,C''_{n''}) 
\end{equation*} 
is $\bigl(u \circ u', i'' \mapsto \varphi'(i'') \circ \varphi(u'(i''))\bigr)$.

As there is a unique map from the empty set to any other set, the empty tuple $\ast$ is a
terminal object in $\EPplus\calc$.  For objects $P = (C_1,\dots,C_n)$, $P' = (C'_1,\dots,C_{n'})$
their product is given by $P \times P' = (C_1,\dots,C_n,C'_1,\dots,C'_{n'})$.  For example
the projection $P \times P' \to P'$ is given by $(u,\varphi)$ with $u(i)=i+n$ and
$\varphi(i') = \id_{C'_{i'}}$ for $i'=1,\dots,n'$.  We write $\EP\calc$ for the full
subcategory of $\EPplus\calc$ obtained by removing the empty product $\ast$.

One advantage of the product category in connection with homotopy colimits is that it
often allows us to disregard all non-identity morphism in $\Or_\calf(G)$.  We write
$\calf(G)$ for the subcategory of $\Or_\calf(G)$ that contains all objects, but only
identity morphisms; for the corresponding subcategory of $\Or(G)$ containing all $G/H$
with $H$ a closed subgroup we write $\All(G)$.  Passing to product categories we obtain
$\EPplus\calf(G)$ and $\EP\calf(G)$.
Thus a morphism
$u \colon (G/H_1, \ldots , G/H_n) \to (G/H'_1, \ldots , G/H'_{n'})$ in
$\EPplus\All(G)$ is a function 
$u \colon  \{1, 2, \ldots, n'\} \to \{1, 2, \ldots, n\}$
satisfying $H_{u(i)} = H'_i$ for all $i$.

%-----------------------------------------------------------------

\subsection{A reformulation of the $\COP$-Farrell--Jones Conjecture}
Let $G$ be a td-group and $\calb$ be a category with $G$-support.  In the following we
write
\begin{equation*}
  \calb[P] := \calb[G/U_1 \times \cdots \times G/U_n]
\end{equation*}
for $P = (G/U_1,\dots,G/U_n) \in \EP \Or_{\OPEN}(G)$.
We note that $\calb[\ast]$ and $\calb[(G/G,\dots,G/G)]$ are both just (the $\IZ$-linear category underlying) $\calb$.  
In terms of the notation introduced later $\calb[P] = \calb[|P|]$.

\begin{proposition}\label{prop:cop-iso-via-PSub}
  For a family $\calu$ of open subgroups the canonical maps
   induced by the canoncial inclusions
   $\Or_\calu(G) \to \EP\Or_\calu(G)$ and   $\EP\calu(G) \to \EP\Or_\calu(G)$
  \begin{equation*}
    \xymatrix@C=-8ex{\displaystyle \hocolimunder_{G/U \in \Or_\calu(G)}
      \bfK  \big(\calb[G/U]\big) \ar[rd]^{\sim}_{\alpha_1}
      & &  \displaystyle \hocolimunder_{P \in \EP\calu(G)} \bfK \big( \calb[P]\big) \ar[ld]_{\sim}^{\alpha_2}
\\
      & 
      \displaystyle\hocolimunder_{P \in \EP\Or_\calu(G)} \bfK \big( \calb[P]\big)	
    }
  \end{equation*}
  are equivalences.
\end{proposition}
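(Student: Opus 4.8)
The plan is to exhibit all three homotopy colimits as equivalent models for $\hocolimunder_{G/U\in\Or_\calu(G)}\bfK(\calb[G/U])$, the common reduction passing through the orbit decomposition of a finite product of orbits.

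First I would record the following. For $P=(G/U_1,\dots,G/U_n)\in\EP\Or_\calu(G)$ the underlying $G$-set $|P|=G/U_1\times\dots\times G/U_n$ is a disjoint union of orbits, and since $\calu$ is closed under conjugation and finite intersections each of these orbits has stabilizer in $\calu$.  Hence by~\eqref{eq:contb-and-coproducts} the category $\calb[P]$ is canonically $\coprod_{O}\calb[O]$, the coproduct of $\IZ$-linear categories indexed by the orbits $O$ of $|P|$; and since $\bfK$ commutes with filtered colimits and sends finite coproducts of additive categories to products (which in spectra are finite wedges), $\bfK(\calb[P])\simeq\bigvee_{O}\bfK(\calb[O])$, naturally in $P$.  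Writing $T\colon\EP\Or_\calu(G)\to\Sets$ for the functor $P\mapsto\{\text{orbits of }|P|\}$, the coproduct formula for homotopy colimits (Appendix~\ref{app:homotopy-colimits}) gives an equivalence
\[
\hocolimunder_{P\in\EP\Or_\calu(G)}\bfK(\calb[P])\;\simeq\;\hocolimunder_{(P,O)}\bfK(\calb[O]),
\]
where $(P,O)$ ranges over the category of elements of $T$, whose morphisms $(P,O)\to(P',O')$ are the morphisms $f\colon P\to P'$ in $\EP\Or_\calu(G)$ with $|f|(O)\subseteq O'$.  Now $(P,O)\mapsto\bfK(\calb[O])$ equals $\bfK(\calb[-])\circ q$, where $q$, $(P,O)\mapsto O$, is the projection onto the category $\mathcal{O}$ of $G$-orbits with stabilizer in $\calu$; and $\mathcal{O}$ is equivalent to $\Or_\calu(G)$ compatibly with the two functors written $\bfK(\calb[-])$.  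The identical discussion with $\EP\Or_\calu(G)$ replaced throughout by $\EP\calu(G)$ produces a functor $T_0$ and a projection $q_0$ onto $\mathcal{O}$, and the inclusions $\Or_\calu(G)\hookrightarrow\EP\Or_\calu(G)$ and $\EP\calu(G)\hookrightarrow\EP\Or_\calu(G)$ that induce $\alpha_1$ and $\alpha_2$ induce, on categories of elements, functors commuting with the projections to $\mathcal{O}$.  So by the cofinality Lemma~\ref{lem:cofinality} the proposition reduces to showing that $q$ and $q_0$ are homotopy cofinal, after which a routine two-out-of-three argument in the resulting commutative triangles identifies the equivalences with $\alpha_1$ and $\alpha_2$.

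Cofinality of $q$ is immediate: for $R\in\mathcal{O}$ the comma category $R\downarrow q$ has the initial object $\bigl((R),R,\id_R\bigr)$, with $(R)$ the one-tuple and $R$ its unique orbit.  Indeed a morphism out of it to $(P',O',h')$ is a morphism $(R)\to P'$ in $\EP\Or_\calu(G)$, i.e.\ a $G$-map $R\to|P'|$, whose corestriction onto the orbit it meets must be the prescribed $G$-map $h'\colon R\to O'\subseteq|P'|$; since $h'$ is itself such a $G$-map, there is exactly one.  So $R\downarrow q$ is contractible.

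For $q_0$ there is in general no initial object, and instead I would show that each $R\downarrow q_0$ is cofiltered, hence contractible.  It is nonempty; given objects $(P_1,O_1,h_1)$ and $(P_2,O_2,h_2)$, the concatenation $P_1\times P_2$ (an object of $\EP\calu(G)$), together with the orbit $O$ of $|P_1|\times|P_2|$ hit by $(h_1,h_2)\colon R\to|P_1|\times|P_2|$ and the two projections $P_1\times P_2\to P_k$ (morphisms of $\EP\calu(G)$, restricting to $O\to O_k$ and compatible with the $h$'s), is a common predecessor; and a parallel pair $f,g\colon(P,O,h)\rightrightarrows(P',O',h')$ is equalized after precomposition with the equalizer $e\colon E\to P$ of $f,g$ in $\EP\calu(G)$ --- on index sets $E$ is a quotient of the index set of $P$, so $|e|\colon|E|\hookrightarrow|P|$ is injective, the identity $f|_O\circ h=g|_O\circ h$ forces $h(R)$ into its image, and lifting $h$ along $|e|$ yields the required object over $(P,O,h)$.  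This completes the proof, and also pinpoints the main obstacle: unlike for $q$, there is no initial object, so one must verify the cofiltered-ness conditions --- in particular the equalizer condition --- by careful bookkeeping of the reindexing combinatorics of morphisms in $\EP\calu(G)$ and of how the orbit data $(O,h)$ transforms under products and equalizers.  Everything else is formal once~\eqref{eq:contb-and-coproducts} and the coproduct formula for homotopy colimits are available.
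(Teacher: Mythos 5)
Your proof is correct, but it takes a genuinely different route from the paper's. The paper disposes of the two maps separately and more quickly: for $\alpha_1$ it applies the transitivity Lemma~\ref{lem:transitivity}, using that $|P|$ splits into orbits $\coprod_j G/W_j$ with $W_j\in\calu$, that $\Or_\calu(G)\downarrow P$ splits accordingly into pieces each having $\id_{G/W_j}$ as terminal object, and the compatibility~\eqref{eq:contb-and-coproducts} of $\calb[\--]$ with coproducts; for $\alpha_2$ it simply invokes Lemma~\ref{lem:hocolim-products-Q-to-P}: the inclusion $\EP\calu(G)\to\EP\Or_\calu(G)$ preserves finite products (concatenation of tuples), the comma categories $P\downarrow F$ are nonempty (the two categories even have the same objects), and a category with all nonempty finite products has contractible nerve. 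You instead funnel both sides through the Grothendieck construction of the orbit functor and verify cofinality of the two projections onto a model of $\Or_\calu(G)$ --- once via an initial object, once via cofilteredness. Your $\alpha_1$ argument is essentially the paper's in cofinality rather than transitivity packaging; your $\alpha_2$ argument is more laborious (it needs the product and equalizer bookkeeping in $\EP\calu(G)$ and the lifting of the orbit data along $|e|$, which you carry out correctly), but it buys a uniform treatment and an explicit identification of both homotopy colimits with $\hocolimunder_{\Or_\calu(G)}\bfK(\calb[G/U])$, which the paper's formal cofinality criterion does not provide. Two small points to tighten: the ``coproduct formula for homotopy colimits'' is not literally stated in Appendix~\ref{app:homotopy-colimits}, though it follows from Lemma~\ref{lem:transitivity} because the projection from the category of elements is a discrete opfibration whose comma categories split into pieces with terminal objects; and since $|P|$ generally has infinitely many orbits, the step from $\calb[P]\simeq\coprod_O\calb[O]$ to $\bfK(\calb[P])\simeq\bigvee_O\bfK(\calb[O])$ needs K-theory of arbitrary coproducts of $\IZ$-linear categories, which your reduction to finite coproducts plus filtered colimits does supply.
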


\begin{proof}
  To show that $\alpha_1$ is an equivalence we will use the transitivity
  Lemma~\ref{lem:transitivity} for homotopy colimits.  It thus suffices to show that for
  any $P = (G/U_1,\dots,G/U_n) \in \EP\Or_\calu(G)$ the canonical map 
  \begin{equation}\label{eq:OrUoverP} \hocolimunder_{(G/U,f) \in \Or_\calu(G)
      \downarrow P} \bfK \big( \calb[G/U] \big) \; \xrightarrow{\sim} \; \bfK \big(
    \calb[P] \big)
  \end{equation}
  is an equivalence.  
  As the $U_i$ are open we have $G/U_1 \times \dots \times G/U_n = \coprod_{j \in J} G/W_j$
  with $W_j \in \calu$.    Then
  $\Or_\calu(G) \downarrow P \simeq \coprod_{j} \big( \Or_\calu(G) \downarrow
  G/W_j \big)$ and that $\id_{G/W_j}$ is a terminal object of
  $\Or_\calu(G) \downarrow G/W_j$.  Together with the compatibility of $\calb[\--]$
  with coproducts~\eqref{eq:contb-and-coproducts} this implies that~\eqref{eq:OrUoverP} is
  an equivalence.
    
  Lemma~\ref{lem:hocolim-products-Q-to-P} implies directly that $\alpha_2$ is an
  equivalence.
\end{proof}

  Now we can reformulate 
  Conjecture~\ref{conj:ismorphism-conj-coefficients} by precomposing the assembly
  map~\eqref{eq:assembly-cop-calb} with $\alpha_1^{-1}$ or $\alpha_1^{-1} \circ \alpha_2$,
  thus changing the source
  $\hocolimunder_{G/U \in \Or_{\COP}(G)} \bfK \big( \calb[G/U] \big)$ of the assembly
  map~\eqref{eq:assembly-cop-calb} to
  $\hocolimunder_{P \in \EP\Or_\calu(G)} \bfK \big( \calb[P]\big)$ or
  $\hocolimunder_{P \in \EP\calu(G)} \bfK \big( \calb[P]\big)$.

%%%%%%%%%%%%%%%%%%%%%%%%%%%%%%%%%%%%%%%%%%%%%%%%%%%%%%%%%%%%%%%%%%%%

\section{Controlled algebra}\label{sec:controlled-algebra}

%-----------------------------------------------------------------

\subsection{The category $\contrCatUcoef{G}{X}{\calb}$}

Let $X$ be a set.
In the following we will often write $2$-tupels in $X$ as $\twovec{x'}{x}$.

\begin{definition}\label{def:contrCatNUcoef}
  Let $X$ be a $G$-set and $\calb$ be a category with $G$-support.  We define the 
  category $\contrCatUcoef{G}{X}{\calb}$ as follows.  Objects are triples $\bfB = (S,\pi,B)$ where
  \begin{enumerate}[
                 label=(\thetheorem\alph*),
                 align=parleft, 
                 leftmargin=*,
                 labelindent=1pt,
                 %labelwidth=60pt,
                 %labelsep=8pt,
                 %itemindent=-3pt
                 %itemsep=1pt
                 ] 
  \item\label{def:contrCatNUcoef:S} $S$ is a set,
  \item\label{def:contrCatNUcoef:pi} $\pi \colon S \to X$ is a map,
  \item\label{def:contrCatNUcoef:b} $B \colon S \to \ob \calb$ is a map.
  \end{enumerate}
  Morphisms $\bfB = (S,\pi,B) \to \bfB' = (S',\pi',B')$ in $\contrCatUcoef{G}{X}{\calb}$ are matrices
  $\varphi = (\varphi_s^{s'} \colon B(s) \to B'(s'))_{s \in S, s' \in S'}$ of morphisms in
  $\calb$.  Morphisms are required to be column finite: for each $s \in S$ there are only
  finitely many $s' \in S'$ with $\varphi_s^{s'} \neq 0$.  Composition is matrix
  multiplication (using composition in $\calb$)
  \begin{equation*}
    (\varphi' \circ \varphi)_s^{s''} := \sum_{s'} {\varphi'}_{s'}^{s''} \circ \varphi_s^{s'}.
  \end{equation*}  
\end{definition}
 
The formula for the identity $\id_\bfB$ of $\bfB = (S,\pi,B) \in \contrCatUcoef{G}{X}{\calb}$  is
   \begin{equation*}
   	   (\id_\bfB)_s^{s'} = \begin{cases} \id_{B(s)} & s = s' \\ 0 & \text{else.} \end{cases}
         \end{equation*}

\begin{definition}[Support and finiteness for $\contrCatUcoef{G}{X}{\calb}$]\label{def:support-B(X)}
  The \emph{support} of an object $\bfB = (S,\pi,B)$ in $\calb(X)$ is defined
  to be 
  \begin{equation*}
  	  \suppobj \bfB := \pi(S) \subseteq X.
  \end{equation*} 
  The \emph{support} of a morphism
  $\varphi \colon (S,\pi,B) \to (S',\pi',B')$ in $\contrCatUcoef{G}{X}{\calb}$ is
  \begin{equation*}
    \suppX \varphi := \Big\{ \twovec{\pi'(s')}{g\pi(s)} \,\Big|\, s \in S, s' \in S',
  g \in \supp (\varphi_{s}^{s'})  \Big\} \subseteq X \times X.
  \end{equation*} 
  The $G$-support of a morphism $\varphi$ in $\contrCatUcoef{G}{X}{\calb}$ is
  \begin{equation*}
  	  \suppG \varphi := \bigcup_{s \in S,s' \in S'}  \supp \varphi_s^{s'}.
  \end{equation*}
  We abbreviate    
  \begin{equation*}
    \suppX \bfB := \suppX \id_{\bfB} = \Big\{ \twovec{\pi(s)}{g\pi(s)} \,\Big|\, s \in S, g \in  \supp_G (\id_{B(s)})  \Big\}
  \end{equation*}
  and $\suppG \bfB := \suppG \id_{\bfB}$.
  
  For a subset $A$ of $X$ we will say that $\bfB$ is \emph{finite over $A$}, if $\pi^{-1}(A)$ is finite.
  We will say that $\bfB$ is \emph{finite}, if it is finite over $X$, i.e., if $S$ is finite. 
\end{definition}

For $E, E' \subset X \times X$ we call
\begin{equation*}
  E' \circ E := \{  \twovec{x''}{x} \mid \exists x' \text{with}\; \twovec{x''}{x'} \in E', \twovec{x'}{x} \in E \}
\end{equation*}
the \emph{composition} of $E$ and $E'$.  We call
$E^\op := \{ \twovec{x}{x'} \mid \twovec{x'}{x} \in E \}$ the \emph{opposite} of $E$.  The
product of two subsets $M,M'$ of a group $G$ is
$M' \cdot M := \{ g'g \mid g' \in M', g \in M \}$.  For a subset $M$ of $G$ we write
$M^{-1} := \{ g^{-1} \mid g \in M\}$ for its elementwise inverse.

Note that for $\bfB \xrightarrow{\varphi} \bfB' \xrightarrow{\varphi'} \bfB''$ in
$\contrCatUcoef{G}{X}{\bfB}$ we have
\begin{equation}\label{eq:composition}
	\suppX (\varphi' \circ \varphi) \subseteq \suppX \varphi' \circ (\suppG \varphi' \cdot \suppX \varphi).
\end{equation}

\begin{remark}\label{rem:B(X)-cat-is-additive}        
   We note that $\suppX \bfB$ is not necessarily contained in the diagonal of $X \times X$.
   The category $\contrCatUcoef{G}{X}{\calb}$ is additive; the direct sum comes from disjoint unions, i.e.,   
   \begin{equation*}
   	  (S,\pi,B) \oplus (S',\pi',B') \cong (S \sqcup S',\pi \sqcup \pi',B \sqcup B').  
   \end{equation*} 	
 \end{remark}
 		
\begin{remark}
  Typically $\contrCatUcoef{G}{X}{\calb}$ does not really encode information about $X$;
  any map $f \colon X \to Y$ between non-empty $G$-sets induces an equivalence
  $\contrCatUcoef{G}{X}{\calb} \xrightarrow{\sim} \contrCatUcoef{G}{Y}{\calb}$.
  
  The use of $\contrCatUcoef{G}{X}{\calb}$ will be as a home for interesting subcategories
  that we will exhibit using additional structure on $X$.  The general framework to
  determine subcategories of $\contrCatUcoef{G}{X}{\calb}$ uses the support notions from
  Definition~\ref{def:support-B(X)} and the formalism of $G$-control structures that we
  discuss in Subsection~\ref{subsec:G-control-structures}.
\end{remark}

\begin{remark}[Functoriality]\label{rem:functoriality-calb(X)} The definition of
  $\contrCatUcoef{G}{X}{\calb}$ does not really use a $G$-action on $X$; it is just the
  notion of support that makes use of the $G$-action.  Any map $f \colon X \to Y$ induces
  a functor $f_* \colon \contrCatUcoef{G}{X}{\calb} \to \contrCatUcoef{G}{X}{\calb}$ with
  $f_*(S,\pi,B) = (S,f \circ \pi,B)$.
  \begin{equation*}
    f_* (S,\pi,B) = (S,f\circ \pi,B), \qquad (f_*(\varphi))_{s}^{s'} = \varphi_s^{s'}.
  \end{equation*}
  We have $\suppG f_*(\varphi) = \suppG \varphi$ and $\suppobj f_*(\bfB) = f(\suppobj \bfB)$.
  If $f$ is $G$-equivariant, then $\suppX f_*(\varphi) = f^{\times 2}(\suppX \varphi)$.
  If $f$ is not $G$-equivariant, then there is no general formula that expresses
  $\suppX f_*(\varphi)$ directly in terms of $\suppX \varphi$, but we have
  \begin{equation}\label{eq:support-after-non-equiv-f}
    \suppX f_*(\varphi) \subseteq  f^{\times 2}(\suppX \varphi) \circ
    \Big\{ \twovec{f(gx)}{gf(x)} \; \Big|\; x \in \suppobj \bfB, g \in
    \suppG \varphi \Big\}.
  \end{equation}
  Thus to estimate $\suppX f_*(\varphi)$ we need to estimate the failure of equivariance
  of $f$.
\end{remark}

%-----------------------------------------------------------------

\subsection{$G$-control structures}\label{subsec:G-control-structures}

\begin{definition}\label{def:G-control-structure} Let $G$ be a group and $X$ be a
  $G$-set.  A $G$-control structure on $X$ is a triple $\mfE = (\mfE_1,\mfE_2,\mfE_G)$
  where
  \begin{itemize}[
                 label=$\bullet$,
                 align=parleft, 
                 leftmargin=*,
                 labelindent=5pt,
                 ] 
  \item $\mfE_1$ is a collection of subsets of $X$ that is closed under finite unions and
    taking subsets;
  \item $\mfE_2$ is a collection of subsets of $X \times X$ that is closed under finite
    unions, taking subsets, opposites, and composition;
  \item $\mfE_G$ is a collection of subsets of $G$ that is closed under finite unions,
    taking subsets, elementwise inverses, and products.  \end{itemize} We require in
  addition that for $M \in \mfE_G$, $E \in \mfE_2$ the product
  $M \cdot E := \big\{ \twovec{gx'}{gx} \; \big| \; g \in M, \twovec{x'}{x} \in E \big\}$
  belongs to $\mfE_2$.
\end{definition}

One might wonder if the condition that $\mfE_G$ is closed under elementwise inverses is
really necessary.  We use this condition in the proof of
Lemma~\ref{lem:filtered-for-controlled}.

\begin{remark}
  In our examples $X$ will always be a topological space and the elements of $\mfE_1$ will
  always have finite intersections with compact subsets of $X$.
\end{remark}

\begin{remark}
  In almost all our examples $\mfE_G$ will be the collection of relatively compact subsets of $G$.
    The only other example for $\mfE_G$ that we use is the
  collection of all subsets of $G$.  It will only be used in Section~\ref{sec:reduction}
  for the proof of the Reduction Theorem~\ref{thm:reduction}.
\end{remark}

\begin{example}[Trivial control structure]\label{ex:trivial-control-structure} Let $X$ be
  a $G$-set.  We obtain a $G$-control structure $\mfE$ on $X$, where $\mfE_1$ is the
  collection of all finite subsets, $\mfE_2$ is the collection of all subsets of the
  diagonal in $X \times X$ and $\mfE_G$ is the collection of all relatively compact subsets of $G$.
\end{example}

%-----------------------------------------------------------------

\subsection{The category $\contrCatUcoef{G}{\mfE}{\calb}$}

\begin{definition}\label{def:controlled-hecke-cat}
  Let $X$ be a $G$-set, $\mfE = (\mfE_1,\mfE_2,\mfE_G)$ be a $G$-control structure on $X$,
  and $\calb$ be a category with $G$-support.  The additive category
  $\contrCatUcoef{G}{\mfE}{\calb}$ is the following subcategory of
  $\contrCatUcoef{G}{X}{\calb}$.
  \begin{enumerate}[
                 label=(\thetheorem\alph*),
                 align=parleft, 
                 leftmargin=*,
                 labelindent=1pt,
                 %labelwidth=60pt,
                 labelsep=8pt,
                 %itemindent=-3pt
                 itemsep=1pt
                 ] 
               \item\label{def:controlled-hecke-cat:obj} An object $\bfB = (S,\pi,B)$
                 from $\contrCatUcoef{G}{X}{\calb}$ belongs to
   $\contrCatUcoef{G}{\mfE}{\calb}$,  iff it is finite over each point of
   $X$\footnote{I.e., $\pi \colon S \to X$ is finite-to-one}, $\suppobj \bfB \in \mfE_1$,
  $\suppX \bfB \in \mfE_2$ and $\suppG \bfB \in \mfE_G$;
     \item\label{def:controlled-hecke-cat:mor} A morphism $\varphi$ in
       $\contrCatUcoef{G}{X}{\calb}$ between objects from $\contrCatUcoef{G}{\mfE}{\calb}$
       belongs to $\contrCatUcoef{G}{\mfE}{\calb}$ iff
  $\suppX \varphi \in \mfE_2$, $\suppG \varphi \in \mfE_G$ and $\varphi$ is row
  finite\footnote{As morphisms in $\contrCatUcoef{G}{X}{\calb}$ are already required to be column
    finite, this means that $\varphi$ is column and row finite, i.e., for fixed $s$ there
    are only finitely many $s'$ with $\varphi_s^{s'} \neq 0$ and for fixed $s'$ there are
    only finitely many $s$ with $\varphi_s^{s'} \neq 0$.}. 
  \end{enumerate}
\end{definition}

\begin{remark}
  For a smooth $G$-set $X$ and the $G$-control structure $\mfE$ from
  Example~\ref{ex:trivial-control-structure} the category $\contrCatUcoef{G}{\mfE}{\calb}$ is equivalent to
  $(\calb[X])_\oplus$.
\end{remark}

\begin{remark}[Summands]\label{rem:summands} Let $\bfB = (S,\pi,B) \in \contrCatUcoef{G}{\mfE}{\calb}$.
  For $S_0 \subseteq S$ set $\bfB|_{S_0} := (S_0,\pi|_{S_0},B|_{S_0})$.  Consider
  \begin{equation*}
    \bfB|_{S_0} \xrightarrow{i} \bfB \xrightarrow{r} \bfB|_{S_0} \quad \text{with} \quad i_{s_0}^s = r_{s}^{s_0} 
= \begin{cases} \id_{B(s)} & s = s_0; \\ 0 & s \neq s_0. \end{cases}	
  \end{equation*} 
  Then $\id_{\bfB|_{S_0}} = r \circ i$, $\suppobj \bfB|_{S_0} \subseteq \suppobj \bfB$,
  $\suppX i = \suppX r \subseteq \suppX \bfB$.  Altogether $\bfB|_{S_0}$ is a direct
  summand of $\bfB$ in $\contrCatUcoef{G}{\mfE}{\calb}$.
  
  For $Y \subseteq X$ we abbreviate $\bfB|_{Y} := \bfB|_{\pi^{-1}(Y)}$.
\end{remark}

\begin{remark}[Corners]\label{rem:corners} 
  Let $\varphi \colon \bfB = (S,\pi,B) \to \bfB' = (S',\pi',B')$.
  For $Y,Y' \subseteq X$ we obtain summands 
  $\bfB|_{Y} \xrightarrow{i_Y} \bfB \xrightarrow{r^Y} \bfB|_{Y}$ and
  $\bfB'|_{Y'} \xrightarrow{i_{Y'}} \bfB' \xrightarrow{r^{Y'}} \bfB|_{Y'}$ as in Remark~\ref{rem:summands}.
  We define $\varphi_Y^{Y'}$ as the composition
  $\bfB|_Y \xrightarrow{i_Y} \bfB \xrightarrow{\varphi} \bfB' \xrightarrow{r^{Y'}} \bfB'|_{Y'}$.
  Then
  \begin{equation*}
    (\varphi_Y^{Y'})_s^{s'} = \begin{cases} \varphi_s^{s'} & \pi(s) \in Y, \pi'(s') \in Y';
      \\
      0 & \text{else}. \end{cases}
  \end{equation*}   
  If $\twovec{x}{x'} \in \suppX \varphi_Y^{Y'}$, then $x=\pi(s)$, $x'=g\pi'(s')$ for
  $\pi(s) \in Y$, $\pi'(s') \in Y'$, $g \in \suppG \varphi$ and
  $g \in \supp \varphi_s^{s'}$.  Thus
  \begin{equation}\label{eq:support-of-corner}
  	\suppX \varphi_Y^{Y'} \subseteq Y' \times (\suppG \varphi)\cdot Y.
  \end{equation}
\end{remark}  

\begin{lemma}\label{lem:corner}
  Consider the situation of of Remark~\ref{rem:corners}.  Suppose that for all
  $\twovec{x'}{x} \in \suppX \varphi$ with $x \in (\supp_G \varphi) \cdot Y$ we have
  $x' \in Y'$.  Then
  \begin{equation*}
    i_{Y'} \circ \varphi_Y^{Y'} = \varphi \circ i_{Y} \colon \bfB|_Y \to \bfB'.
  \end{equation*}
\end{lemma}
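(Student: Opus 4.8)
The plan is to verify the identity entrywise, using the explicit matrix descriptions of $i_Y$, $i_{Y'}$ from Remark~\ref{rem:summands} and of $\varphi_Y^{Y'}$ from Remark~\ref{rem:corners}. Both $i_{Y'} \circ \varphi_Y^{Y'}$ and $\varphi \circ i_{Y}$ are morphisms $\bfB|_Y \to \bfB'$, with domain indexed by $\pi^{-1}(Y)$ and target indexed by $S'$. For $s \in \pi^{-1}(Y)$ and $s' \in S'$ one computes directly, since $i_Y$ and $i_{Y'}$ are ``partial identity'' matrices, that
\[
  (\varphi \circ i_{Y})_s^{s'} = \varphi_s^{s'}, \qquad
  (i_{Y'} \circ \varphi_Y^{Y'})_s^{s'} =
  \begin{cases} \varphi_s^{s'} & \text{if } \pi'(s') \in Y'; \\ 0 & \text{otherwise}; \end{cases}
\]
where in the second identity we use that the constraint $\pi(s) \in Y$ appearing in the formula for $\varphi_Y^{Y'}$ is automatic for $s \in \pi^{-1}(Y)$. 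Hence the asserted equation is equivalent to the vanishing statement that $\varphi_s^{s'} = 0$ whenever $s \in \pi^{-1}(Y)$ and $\pi'(s') \notin Y'$.

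To establish this vanishing I would argue by contraposition. Suppose $s \in \pi^{-1}(Y)$, $s' \in S'$ and $\varphi_s^{s'} \neq 0$. By axiom~\ref{def:category_with_G-support:(emptyset)} for the category with $G$-support $\calb$, the set $\supp \varphi_s^{s'}$ is nonempty, so we may choose $g \in \supp \varphi_s^{s'} \subseteq \suppG \varphi$. By the definition of $\suppX \varphi$ in Definition~\ref{def:support-B(X)} we then have $\twovec{\pi'(s')}{g\pi(s)} \in \suppX \varphi$, and moreover $g\pi(s) \in (\suppG \varphi) \cdot Y$ because $g \in \suppG \varphi$ and $\pi(s) \in Y$. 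The hypothesis of the lemma applied to this element of $\suppX \varphi$ forces $\pi'(s') \in Y'$. Combining this with the two displays above gives $(i_{Y'} \circ \varphi_Y^{Y'})_s^{s'} = \varphi_s^{s'} = (\varphi \circ i_Y)_s^{s'}$ for all admissible $s,s'$, which is the claim.

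There is no real obstacle here: the argument is bookkeeping with the matrix conventions of Definition~\ref{def:contrCatNUcoef} and the support notions of Definition~\ref{def:support-B(X)}. The only point deserving attention is the use of the non-degeneracy axiom~\ref{def:category_with_G-support:(emptyset)}, which is exactly what allows one to pass from a nonzero matrix entry $\varphi_s^{s'}$ to an actual element of $\suppG \varphi$, and hence to invoke the geometric hypothesis on $\suppX \varphi$; without it the statement could fail for degenerate reasons.
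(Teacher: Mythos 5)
Your proof is correct and follows essentially the same route as the paper's: reduce the identity to showing that $\varphi_s^{s'} \neq 0$ with $\pi(s) \in Y$ forces $\pi'(s') \in Y'$, then pick $g \in \supp \varphi_s^{s'}$ (nonempty by the support axiom) to produce $\twovec{\pi'(s')}{g\pi(s)} \in \suppX \varphi$ with $g\pi(s) \in (\suppG\varphi)\cdot Y$ and invoke the hypothesis. You merely spell out the entrywise bookkeeping that the paper leaves implicit.
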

 
\begin{proof}
  We need to check that $\varphi_s^{s'} \neq 0$ with $s \in \pi^{-1}(Y)$ implies
  $\pi'(s') \in Y'$.  If $\varphi_s^{s'} \neq 0$, then there is
  $g \in \supp_G \varphi_s^{s'} \subseteq \suppG \varphi$ and so
  $\twovec{\pi'(s')}{g \pi(s)} \in \suppX \varphi$.  Hence  $\pi'(s') \in Y'$ by assumption.
\end{proof}

\begin{remark}[Shifted copy]\label{rem:X-shifted-copies} 
  Let $\bfB = (S,\pi,B) \in \contrCatUcoef{G}{\mfE}{\calb}$.  
  Let $\sigma \colon S \to X$ be a finite-to-one map.  
  Assume that $\sigma(S) \in \mfE_1$ and that $\pi$ and $\sigma$ are
  $\mfE_2$-equivalent in the sense that
  \begin{equation*}
    E := \Big\{ \twovec{\pi(s)}{\sigma(s)} \; \Big| \;  s \in S \Big\} \in \mfE_2.
  \end{equation*}
  Consider $\bfB_{\sigma} := (S,\sigma,B)$ and
  \begin{equation*}
    \bfB_{\sigma} \xrightarrow{\varphi} \bfB \xrightarrow{\psi} \bfB_\sigma \quad
    \text{with} \quad \varphi_{s}^{s'} = \psi_{s}^{s'} =
    \begin{cases} \id_{B(s)} & s = s'; \\ 0 & s \neq s'. \end{cases}	
  \end{equation*}
  Then $\suppobj \bfB_{\sigma} = \sigma(S_0) \in \mfE_1$,
  $\id_{\bfB_\sigma} = \psi \circ \varphi$, $\id_{\bfB} = \varphi \circ \psi$, and
  \begin{eqnarray*}
    \suppX \varphi
    & = & \
\Big\{ \twovec{\pi(s)}{g \sigma(s)} \; \Big| \; \mid s \in S, g \in \suppG B(s) \Big\}
          \subseteq  \suppX \bfB \circ (\suppG \bfB) \cdot E \in \mfE_2;
\\
    \suppX \psi 
& = &  
\Big\{ \twovec{\sigma(s)}{g \pi(s)} \; \Big| \; \mid s \in S, g \in \suppG B(s) \Big\}
                      \subseteq E^{\op} \circ \suppX \bfB \in \mfE_2. 
  \end{eqnarray*}
  Altogether, $\bfB_{\sigma}$ and $\bfB$ are canonically isomorphic in $\contrCatUcoef{G}{\mfE}{\calb}$.
  We call $\bfB_{\sigma}$ \emph{a shifted copy of $\bfB$}.
\end{remark}

%-----------------------------------------------------------------

\subsection{Quotients}

\begin{definition}\label{def:E_Y} Let $\mfE = (\mfE_1,\mfE_2,\mfE_G)$ be a $G$-control
  structure on $X$.  Let $\caly$ be a collection of subsets $Y$ of $X$ that is closed
  under finite unions and taking subsets.  Suppose also that for $M \in \mfE_G$ and
  $Y \in \caly$ we have $M \cdot Y \in \caly$.  We obtain a $G$-control structure
  $\mfE|_\caly := (\mfE_1|_\caly ,\mfE_2,\mfE_G)$ on $X$ where
  $\mfE_1|_\caly := \{ F \cap Y \mid F \in \mfE_1, Y \in \caly \}$.
\end{definition}

\begin{definition}\label{def:calb(E,Y)} In the situation of Definition~\ref{def:E_Y} the
  category $\contrCatUcoef{G}{\mfE|_\caly}{\calb}$ is a full subcategory of
  $\contrCatUcoef{G}{\mfE}{\calb}$ and we define
  \begin{equation*}
    \contrCatUcoef{G}{\mfE,\caly}{\calb} := \contrCatUcoef{G}{\mfE}{\calb} \; \big/ \;
    \contrCatUcoef{G}{\mfE|_\caly}{\calb}.
  \end{equation*}
\end{definition}

\begin{lemma}\label{lem:filtered-for-controlled} In the situation of
  Definition~\ref{def:E_Y} the category $\contrCatUcoef{G}{\mfE}{\calb}$ is
  $\contrCatUcoef{G}{\mfE|_\caly}{\calb}$-filtered,
  see Definition~\ref{def:U-filtered}.
\end{lemma}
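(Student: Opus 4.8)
The plan is to run the standard cut-down argument for Karoubi filtrations. We verify condition~\ref{def:U-filtered:filtered} of Definition~\ref{def:U-filtered} for $\cala := \contrCatUcoef{G}{\mfE}{\calb}$ and its full subcategory $\calu := \contrCatUcoef{G}{\mfE|_\caly}{\calb}$. Fix $\bfA = (S,\pi,B) \in \cala$, objects $\bfU_\pm = (S_\pm,\pi_\pm,B_\pm) \in \calu$, and morphisms $\bfU_- \xrightarrow{\varphi_-} \bfA \xrightarrow{\varphi_+} \bfU_+$ in $\cala$. Set
\[
  S_0 := \bigl\{\, s \in S \ \big|\ (\varphi_-)_{s_-}^{s} \neq 0 \text{ for some } s_- \in S_-, \text{ or } (\varphi_+)_{s}^{s_+} \neq 0 \text{ for some } s_+ \in S_+ \,\bigr\},
\]
and let $\bfU := \bfA|_{S_0}$ with $\bfU \xrightarrow{i} \bfA \xrightarrow{r} \bfU$ and $p := i\circ r$ as in Remark~\ref{rem:summands}. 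By that remark $\bfU$ is a direct summand of $\bfA$ in $\cala$, and $p$ has entries $p_{s'}^{s} = \id_{B(s)}$ for $s = s' \in S_0$ and $0$ otherwise; a direct matrix computation then yields $p\circ\varphi_- = \varphi_-$ and $\varphi_+\circ p = \varphi_+$, using precisely that $S_0$ contains every target index occurring in $\varphi_-$ and every source index occurring in $\varphi_+$. Thus $\varphi_\pm$ factor over $\bfU$.

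It remains to check $\bfU \in \calu$. Since $S_0 \subseteq S$, the object $\bfU$ is finite over each point and $\suppX\bfU \subseteq \suppX\bfA \in \mfE_2$, $\suppG\bfU \subseteq \suppG\bfA \in \mfE_G$; so the only substantive point is $\suppobj\bfU = \pi(S_0) \in \mfE_1|_\caly$. As $\pi(S_0) \subseteq \suppobj\bfA \in \mfE_1$ and $\mfE_1|_\caly$ is subset-closed, it suffices to produce $Y \in \caly$ with $\pi(S_0) \subseteq Y$. If $s \in S_0$ is witnessed by $\varphi_-$, choose $s_-$ and $g \in \supp\bigl((\varphi_-)_{s_-}^{s}\bigr) \subseteq \suppG\varphi_-$; then $\twovec{\pi(s)}{g\pi_-(s_-)} \in \suppX\varphi_-$ by the definition of $\suppX$, so $\pi(s)$ lies in the enlargement of $(\suppG\varphi_-)\cdot\suppobj\bfU_-$ along $\suppX\varphi_- \in \mfE_2$. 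If $s \in S_0$ is witnessed by $\varphi_+$, choose $s_+$ and $g \in \supp\bigl((\varphi_+)_{s}^{s_+}\bigr) \subseteq \suppG\varphi_+$; then $\twovec{\pi_+(s_+)}{g\pi(s)} \in \suppX\varphi_+$, so $g\pi(s)$ lies in the enlargement of $\suppobj\bfU_+$ along $(\suppX\varphi_+)^{\op} \in \mfE_2$, whence $\pi(s) \in (\suppG\varphi_+)^{-1}\cdot\bigl(\text{that enlargement}\bigr)$. Now $\suppobj\bfU_\pm \in \mfE_1|_\caly \subseteq \caly$, $\suppG\varphi_\pm \in \mfE_G$, and $\mfE_G$ is closed under elementwise inverses; hence, by the closure properties of the control structure and of $\caly$ from Definition~\ref{def:E_Y}, the union $Y$ of the two sets just described lies in $\caly$ and contains $\pi(S_0)$. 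This completes the verification, and with it the proof.

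The main obstacle is this last support estimate, and within it the asymmetry of the morphism support from Definition~\ref{def:support-B(X)}: the second coordinate of $\suppX\varphi$ records $g$ \emph{applied to} the source point rather than the source point itself. On the $\varphi_+$-side this forces one to strip off $g$ by multiplying with $g^{-1}$, which is exactly why one needs $(\suppG\varphi_+)^{-1} \in \mfE_G$ — the use of the inverse-closure of $\mfE_G$ advertised in the remark after Definition~\ref{def:G-control-structure}. Everything else is routine bookkeeping with the three support notions and with the composition estimate~\eqref{eq:composition}.
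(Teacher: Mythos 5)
Your setup — the choice of $S_0$, the factorization of $\varphi_\pm$ through the summand $\bfA|_{S_0}$, and the identification of exactly which support data control the situation — matches the paper's proof. The gap is in the last step, where you try to show that $\bfA|_{S_0}$ itself lies in $\contrCatUcoef{G}{\mfE|_\caly}{\calb}$ by exhibiting $Y \in \caly$ containing $\pi(S_0)$. Your candidate $Y$ is (a union of) \emph{enlargements along $\mfE_2$-relations} of sets of the form $M \cdot \suppobj \bfU_\pm$ with $M \in \mfE_G$: you place $\pi(s)$ in $\bigl((\suppG\varphi_-)\cdot\suppobj\bfU_-\bigr)^{\suppX\varphi_-}$, etc. But Definition~\ref{def:E_Y} only requires $\caly$ to be closed under finite unions, subsets, and products $M\cdot Y$ with $M \in \mfE_G$; it is \emph{not} required to be closed under $Y \mapsto Y^E$ for $E \in \mfE_2$. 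That closure is a genuinely extra hypothesis — it appears explicitly as the assumption of Lemma~\ref{lem:formal-excision} precisely because it does not hold automatically. So the sentence ``by the closure properties \dots the union $Y$ \dots lies in $\caly$'' is unjustified, and indeed $\pi(S_0)$ need not be contained in any member of $\caly$ in the stated generality.

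The paper's resolution keeps your $S_0$ and your factorization but does not try to put $\bfA|_{S_0}$ into $\calu$ on the nose; instead it shows $\bfA|_{S_0}$ is \emph{isomorphic} in $\contrCatUcoef{G}{\mfE}{\calb}$ to an object of $\calu$, which suffices for the filtration condition after transporting the splitting along the isomorphism. Concretely, one replaces the reference map $\pi|_{S_0}$ by a new finite-to-one map $\sigma \colon S_0 \to X$ with $\sigma(s) := g\pi_-(s_-)$ (resp.\ $\sigma(s) := g^{-1}\pi_+(s_+)$) for a witnessing $s_-$, $g \in \supp\bigl((\varphi_-)_{s_-}^{s}\bigr)$ (resp.\ $s_+$, $g \in \supp\bigl((\varphi_+)_{s}^{s_+}\bigr)$). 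Then $\sigma(S_0) \subseteq (\suppG\varphi_-)\cdot\suppobj\bfU_- \cup (\suppG\varphi_+)^{-1}\cdot\suppobj\bfU_+$ lies in $\caly$ using only the closure of $\caly$ under $M\cdot Y$ and finite unions (and the inverse-closure of $\mfE_G$, exactly as you anticipated). The $\mfE_2$-relation between $\pi|_{S_0}$ and $\sigma$ — which is the set you correctly computed to be contained in $\suppX\varphi_- \cup (\suppG\varphi_+)^{-1}\cdot(\suppX\varphi_+)^{\op} \in \mfE_2$ — is used only to produce the isomorphism $\bfA|_{S_0} \cong (\bfA|_{S_0})_\sigma$ via the shifted-copy construction of Remark~\ref{rem:X-shifted-copies}, not to enlarge a $\caly$-set. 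In other words, the data you assembled are the right ones; they just have to enter as the equivalence $E$ of a shifted copy rather than as an enlargement of $Y$.
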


\begin{proof}
  Let $\bfU_- \xrightarrow{\varphi_-} \bfB \xrightarrow{\varphi_+} \bfU_+$ be morphisms in
  $\contrCatUcoef{G}{\mfE}{\calb}$ 
  with $\bfU_-, \bfU_+ \in \contrCatUcoef{G}{\mfE|_\caly}{\calb}$.  
  Write
  $\bfU_\pm = (S_\pm, \pi_\pm, U_\pm)$ and $\bfB = (T, \rho, B)$.  Let $T_- \subset T$
  consist of all $t$, for which there is $s \in S_-$ with $(\varphi_-)_s^t \neq 0$, and let
  $T_+ \subset T$ consist of all $t$, for which there is $s_+ \in S_+$ with
  $(\varphi_+)^{s}_t \neq 0$.  Set $T_0 := T_- \cup T_+$.  We obtain the summand
  $\bfB|_{T_0}$ of $\bfB$ as in Remark~\ref{rem:summands}.  Clearly $\varphi_\pm$ factors
  over $\bfB|_{T_0}$.  It suffices now to check that $\bfB|_{T_0}$ is isomorphic to an
  object in $\contrCatUcoef{G}{\mfE|_\caly}{\calb}$.  We now use a shifted copy of $\bfB|_{T_0}$.  Choose
  $\sigma \colon T_0 \to X$ such that for every $t \in T_0$ there are either $s \in S_-$,
  $g \in G$ with $(\varphi_-)_s^t(g) \neq 0$ and $\sigma(t) = g\pi_-(s)$ or there are
  $s \in S_+$, $g \in G$ with $(\varphi_+)^{s}_t(g) \neq 0$ and
  $\sigma(t) = (g)^{-1} \pi_+(s)$.  Then $\sigma$ is finite-to-one because $\varphi$ is
  column finite and $\varphi'$ is row finite.  Our choice of $\sigma$ implies
  \begin{equation*}
    E := \Big\{ \twovec{\rho(t)}{\sigma(t)} \; \Big| \; t \in T_0 \Big\}
    \subseteq \suppX \varphi_- \cup (\suppG \varphi_+)^{-1} \cdot (\suppX \varphi_+)^{\op}.
  \end{equation*}  
  Thus\footnote{Here we use in particular that $\mfE_G$ is closed under pointwise
    inverses.} $E \in \mfE_2$ and the shifted copy $(\bfB|_{T_0})_{\sigma}$ of
  $\bfB|_{T_0}$ is isomorphic to $\bfB$, see Remark~\ref{rem:X-shifted-copies}.  By
  construction
  \begin{multline*}
    \suppobj (\bfB|_{T_0})_{\sigma} = \sigma(T_0) \subseteq \\ (\suppG \varphi_-) \cdot
    \supp_1(\varphi_-) \cup (\suppG \varphi_+)^{-1} \cdot \supp_1(\varphi_+) \in \caly
  \end{multline*}
  and $(\bfB|_{T_0})_{\sigma} \in \contrCatUcoef{G}{\mfE|_\caly}{\calb}$ as required.
\end{proof}

Combining Lemma~\ref{lem:filtered-for-controlled} with
Theorem~\ref{thm:Karoubi-filtration} we obtain a fibration sequence
\begin{equation}\label{eq:karoubi-sequence} \bfK (\contrCatUcoef{G}{\mfE|_\caly}{\calb}) \to
  \bfK(\contrCatUcoef{G}{\mfE}{\calb}) \to \bfK(\contrCatUcoef{G}{\mfE,\caly}{\calb}).
\end{equation}
More general, if $\caly_0$ is another collection of subsets of $X$ also satisfying the
conditions from Definition~\ref{def:calb(E,Y)}, and if $\caly_0 \subseteq \caly$, then
\begin{equation}\label{eq:karoubi-sequence-rel} \bfK (\contrCatUcoef{G}{\mfE|_\caly,\caly_0}{\calb}) \to
  \bfK(\contrCatUcoef{G}{\mfE,\caly_0}{\calb}) \to \bfK(\contrCatUcoef{G}{\mfE,\caly}{\calb})
\end{equation}
is a fibration sequence\footnote{\eqref{eq:karoubi-sequence} applies to the two vertical
  and the upper horizontal sequence in
  \begin{equation*}
    \xymatrix{\contrCatUcoef{G}{(\mfE|_{\caly})|_{\caly_0}}{\calb} \ar[r]^{=} \ar[d]
      & \contrCatUcoef{G}{\mfE|_{\caly_0}}{\calb} \ar[d] \\
      \contrCatUcoef{G}{\mfE|_\caly}{\calb} \ar[r] \ar[d]
      &  \contrCatUcoef{G}{\mfE}{\calb} \ar[r] \ar[d] & \contrCatUcoef{G}{\mfE,\caly}{\calb} \ar[d]^{=} \\
      \contrCatUcoef{G}{\mfE|_\caly,\caly_0}{\calb} \ar[r] &  \contrCatUcoef{G}{\mfE,\caly_0}{\calb} \ar[r]
      & \contrCatUcoef{G}{\mfE,\caly}{\calb} \\
    }
  \end{equation*} and the lower horizontal sequence is therefore a fibration sequence in K-theory.}.
We will refer to sequences of additive categories of the form
\begin{equation*}
  \contrCatUcoef{G}{\mfE|_\caly,\caly_0}{\calb} \to
  \contrCatUcoef{G}{\mfE,\caly_0}{\calb} \to \contrCatUcoef{G}{\mfE,\caly}{\calb}
\end{equation*}
as Karoubi sequences.  K-theory takes Karoubi sequences to a fibration sequences of
spectra.

% -----------------------------------------------------------------

\subsection{Excision}

Let $\mfE$ be a $G$-control structure on $X$.  Let $\caly_0$ and $\caly_1$ be two
collections of subsets of $X$ satisfying the assumptions in Definition~\ref{def:E_Y}.
Then $\caly_0 \cap \caly_1$ also satisfies these assumptions.  The union of $\caly_0$ and
$\caly_1$ may not, but we abuse notation and define $\caly_0 \cup \caly_1$ as the
collection of all sets $Y_0 \cup Y_1$ with $Y_i \in \caly_i$.  
There is a natural functor
\begin{equation}\label{eq:excision-functor} \contrCatUcoef{G}{\mfE|_{\caly_1}, \caly_0 \cap
  \caly_1}{\calb} \to \contrCatUcoef{G}{\mfE|_{\caly_0 \cup \caly_1},\caly_0}{\calb}.
\end{equation}
It is not difficult to check that~\eqref{eq:excision-functor} is surjective on morphism
sets and on isomorphism classes of objects, but it may fail to be injective on morphism
sets.  There are different possible assumptions that guarantee
that~\eqref{eq:excision-functor} is an equivalence.  We later use the following.

\begin{lemma}\label{lem:formal-excision} Assume that for all $Y_1 \in \caly_1$,
  $E \in \mfE_2$ there are $Y_0 \in \caly_0$, $Y_1' \in \caly_1$ such that
  $Y_1 \subseteq Y_1' \cup Y_0$ and
  \begin{equation*}
    (Y_1')^{E} := \{ y' \in X \mid \exists y \in Y \; \text{with} \; \twovec{y'}{y} \in E \} \in \caly_1. 
  \end{equation*}	
  Then~\eqref{eq:excision-functor} is an equivalence.
\end{lemma}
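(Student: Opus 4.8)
It has already been noted that~\eqref{eq:excision-functor} is full and essentially surjective, so the plan is to prove that it is faithful; together with those two properties this will give the asserted equivalence. Since all the categories occurring are additive and the functor is additive, faithfulness amounts to the following: if $\varphi\colon\bfB\to\bfB'$ is a morphism of $\contrCatUcoef{G}{\mfE|_{\caly_1}}{\calb}$ whose image in $\contrCatUcoef{G}{\mfE|_{\caly_0\cup\caly_1},\caly_0}{\calb}$ vanishes, then $\varphi$ already vanishes in $\contrCatUcoef{G}{\mfE|_{\caly_1},\caly_0\cap\caly_1}{\calb}$. Using $(\mfE|_{\caly_0\cup\caly_1})|_{\caly_0}=\mfE|_{\caly_0}$, this hypothesis means precisely that $\varphi$, regarded inside $\contrCatUcoef{G}{\mfE|_{\caly_0\cup\caly_1}}{\calb}$, factors as $\bfB\xrightarrow{\alpha}\bfU\xrightarrow{\beta}\bfB'$ for some $\bfU\in\contrCatUcoef{G}{\mfE|_{\caly_0}}{\calb}$; the goal is to refactor $\varphi$ through an object of $\contrCatUcoef{G}{\mfE|_{\caly_0\cap\caly_1}}{\calb}$. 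Since $\contrCatUcoef{G}{\mfE|_{\caly_1}}{\calb}$ is a full subcategory of $\contrCatUcoef{G}{\mfE|_{\caly_0\cup\caly_1}}{\calb}$ (the conditions on morphisms only involve $\mfE_2$ and $\mfE_G$), any such refactorization automatically lies in $\contrCatUcoef{G}{\mfE|_{\caly_1}}{\calb}$. First I would replace $\bfU$ by its summand on the index set $\{u\mid\alpha_s^u\neq 0\ \text{for some}\ s\}$, using Remark~\ref{rem:summands}, so that every index of $\bfU$ occurs in $\alpha$.

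The heart of the argument is a single application of the hypothesis of the lemma, used to enlarge the object support of $\bfB$ along $\alpha$. Choose $Y_1^{\bfB}\in\caly_1$ with $\suppobj\bfB\subseteq Y_1^{\bfB}$ and set $Z:=(\suppG\alpha)\cdot Y_1^{\bfB}$, which lies in $\caly_1$ by the closure property in Definition~\ref{def:E_Y}. Applying the hypothesis to $Z\in\caly_1$ and $E:=\suppX\alpha\in\mfE_2$ produces $Y_0^{*}\in\caly_0$ and $Y_1^{*}\in\caly_1$ with $Z\subseteq Y_1^{*}\cup Y_0^{*}$ and $(Y_1^{*})^{\suppX\alpha}\in\caly_1$. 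Next I would split the index set $S$ of $\bfU$ as $S=S^{\mathrm{g}}\sqcup S^{\mathrm{b}}$, declaring an index $u$ to lie in $S^{\mathrm{b}}$ exactly when $g\pi_{\bfB}(s)\in Y_0^{*}$ for every $s$ and every $g\in\supp(\alpha_s^u)$. The summand decomposition of Remark~\ref{rem:summands} then writes $\varphi=\beta^{\mathrm{g}}\alpha^{\mathrm{g}}+\beta^{\mathrm{b}}\alpha^{\mathrm{b}}$, where $\alpha^{\mathrm{g}},\beta^{\mathrm{g}}$ factor through $\bfU|_{S^{\mathrm{g}}}$ and $\alpha^{\mathrm{b}},\beta^{\mathrm{b}}$ through $\bfU|_{S^{\mathrm{b}}}$.

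It then remains to see that each of these two summands factors through an object of $\contrCatUcoef{G}{\mfE|_{\caly_0\cap\caly_1}}{\calb}$. For $u\in S^{\mathrm{g}}$ one may pick a witness $(s,g)$ with $\alpha_s^u\neq 0$, $g\in\supp(\alpha_s^u)$ and $g\pi_{\bfB}(s)\notin Y_0^{*}$; since $g\pi_{\bfB}(s)\in Z\subseteq Y_1^{*}\cup Y_0^{*}$ this forces $g\pi_{\bfB}(s)\in Y_1^{*}$, and as $\twovec{\pi_{\bfU}(u)}{g\pi_{\bfB}(s)}\in\suppX\alpha$ we get $\pi_{\bfU}(u)\in(Y_1^{*})^{\suppX\alpha}$. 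Hence $\suppobj(\bfU|_{S^{\mathrm{g}}})\subseteq(Y_1^{*})^{\suppX\alpha}\cap\suppobj\bfU$, which lies in $\mfE_1$ and in $\caly_0$ (being a subset of $\suppobj\bfU\in\mfE_1|_{\caly_0}$) and in $\caly_1$, so $\bfU|_{S^{\mathrm{g}}}\in\contrCatUcoef{G}{\mfE|_{\caly_0\cap\caly_1}}{\calb}$. For $u\in S^{\mathrm{b}}$ the defining condition forces $\alpha_s^u=0$ whenever $\pi_{\bfB}(s)\notin W:=(\suppG\alpha)^{-1}\cdot Y_0^{*}$, so $\alpha^{\mathrm{b}}$ factors through the summand $\bfB|_W$; and $\suppobj(\bfB|_W)\subseteq W\cap\suppobj\bfB$ lies in $\caly_0$ (here $W\in\caly_0$ by Definition~\ref{def:E_Y}, using that $\mfE_G$ is closed under elementwise inverses), in $\mfE_1$, and in $\caly_1$ (both as subsets of $\suppobj\bfB$), so $\bfB|_W\in\contrCatUcoef{G}{\mfE|_{\caly_0\cap\caly_1}}{\calb}$. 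Consequently $\varphi$ factors through $\bfU|_{S^{\mathrm{g}}}\oplus\bfB|_W$, as required. The step I expect to be the main obstacle is the treatment of the bad indices $S^{\mathrm{b}}$: enlarging supports once does not move $\bfU$ wholesale into $\caly_1$, because the swelling $(Y_0^{*})^{\suppX\alpha}$ of the $\caly_0$-part is uncontrolled; the resolution is to route those directions back through $\bfB$, whose object support is $\caly_1$-small by hypothesis.
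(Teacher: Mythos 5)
Your proof is correct and follows essentially the same route as the paper's: reduce to faithfulness of the functor, apply the hypothesis once to the $\caly_1$-set of translates $\suppG\alpha\cdot\suppobj\bfB$ with $E$ built from $\suppX\alpha$, and split the given factorization into a piece whose intermediate object has support in $(Y_1^{*})^{E}\cap\suppobj\bfU\in\caly_0\cap\caly_1$ and a piece that can be rerouted through a $\caly_0\cap\caly_1$-small summand of $\bfB$. The only organizational difference is that you partition the index set of the intermediate object $\bfU$ according to where the witnesses $g\pi_{\bfB}(s)$ land, whereas the paper first splits $\bfB$ itself along $Y_1'\cup Y_0$ and then shrinks $\bfX$ to the indices actually hit by $\varphi_-$; your bookkeeping is if anything slightly more careful, in particular in making explicit where the inverse-closure of $\mfE_G$ is used to see $W=(\suppG\alpha)^{-1}\cdot Y_0^{*}\in\caly_0$.
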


\begin{proof}
  We only need to discuss faithfulness on morphisms.  Let $\varphi \colon \bfB \to \bfB'$
  be a morphism in $\contrCatUcoef{G}{\mfE|_{\caly_1}}{\calb}$.  Assume that $\varphi$ can be factored in
  $\contrCatUcoef{G}{\mfE|_{\caly_0 \cup \caly_1}}{\calb}$ as
  \begin{equation*}
    \bfB \xrightarrow{\varphi_-} \bfX \xrightarrow{\varphi_+} \bfB
  \end{equation*}
  where $\bfX \in \contrCatUcoef{G}{\mfE|_{\caly_0}}{\calb}$, i.e., $\suppobj \bfX \in \caly_0$.  We need to
  produce such a factorization over an $\bfX' \in \contrCatUcoef{G}{\mfE|_{\caly_0 \cap \caly_1}}{\calb}$,
  i.e., $\suppobj \bfX' \in \caly_0 \cap \caly_1$.  We have
  $Y_1 := \suppG \varphi \cdot \suppobj \bfB \in \caly_1$ and
  $E := (\suppG \varphi_-)^{-1} \cdot \suppX \in \bfE_2$.  Applying the assumption we find
  $Y_0 \in \caly_0$, $Y_1' \in \caly_1$ such that $Y_1 \subseteq Y_1' \cup Y_0$ and
  $(Y_1')^{E} \in \caly_1$.  Now $\bfB = \bfB|_{Y_1'} \oplus \bfB|_{Y_0}$.  Write
  $\pi_{Y_1'},\pi_{Y_0} \colon \bfB \to \bfB$ for the corresponding projections.  Then
  $\varphi - \varphi \circ \pi_{Y_1'} = \varphi \circ \pi_{Y_0}$ factors over
  $\bfB|_{Y_0} \in \contrCatUcoef{G}{\mfE|_{\caly_0}}{\calb}$.  This allows us to replace $\varphi$ with
  $\varphi \circ \pi_{Y_1'}$ and $\varphi_-$ with $\varphi_- \circ \pi_{Y_1'}$, or put
  differently, we may assume without loss of generality $\suppobj \bfB \subseteq Y_1'$.
  
  Let now $Y'_0 \subseteq \suppobj \bfX$ consist of all $y_0 \in \suppobj \bfX$ for which
  there are $y_1 \in \suppobj \bfB$ and $g \in \suppG \varphi_-$ with
  $\twovec{y_0}{gy_1} \in \suppX \varphi_-$, i.e., the matrix entry of $\varphi$ for
  $\twovec{y_0}{y_1}$ is non-trivial.  Then $\varphi_-$ factors canonically over the
  inclusion $\bfX|_{Y'_0} \to \bfX$.  This allows us to replace $\bfX$ with
  $\bfX|_{Y'_0}$.  It remains to check that $Y'_0 \in \caly_0 \cap \caly_1$.  As
  $Y'_0 \subseteq \suppobj \bfX \in \caly_0$ we have $Y'_0 \in \caly_0$.  For
  $y_0 \in Y'_0$ there are $y_1 \in \suppobj \bfB \subseteq Y'_1$ and
  $g \in \suppG \varphi_-$ with $\twovec{y_0}{gy_1} \in \suppX \varphi_-$.  This implies
  $y_0 \in (Y'_1)^E$.  As $(Y'_1)^E \in \caly_1$ we now also have $Y'_0 \in \caly_1$.
\end{proof}

\begin{lemma}\label{lem:y-rel-Y-irrelevant} Let $\caly$ and $\caly_0$ be two collections
  of subsets of $X$ satisfying the assumptions from Definition~\ref{def:E_Y}.  Then the
  canonical functor 
  \begin{equation*}
    \contrCatUcoef{G}{\mfE|_{\caly_0},\caly}{\calb} \to \contrCatUcoef{G}{\mfE|_{\caly_0 \cup \caly},\caly}{\calb}
  \end{equation*}
  is an equivalence.
\end{lemma}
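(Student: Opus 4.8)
The plan is to verify directly that the canonical functor
\[
\Phi\colon \contrCatUcoef{G}{\mfE|_{\caly_0},\caly}{\calb} \longrightarrow \contrCatUcoef{G}{\mfE|_{\caly_0 \cup \caly},\caly}{\calb}
\]
is essentially surjective, full and faithful. It helps to unwind the two sides: $\contrCatUcoef{G}{\mfE|_{\caly'}}{\calb}$ is the full subcategory of $\contrCatUcoef{G}{\mfE}{\calb}$ on the objects $\bfB$ with $\suppobj\bfB$ contained in a member of $\caly'$; the conditions on morphisms in $\contrCatUcoef{G}{\mfE|_{\caly'}}{\calb}$ do not involve $\caly'$ at all; the source of $\Phi$ is $\contrCatUcoef{G}{\mfE|_{\caly_0}}{\calb}$ divided by its full subcategory of objects with support in $\caly_0 \cap \caly$; and the target is $\contrCatUcoef{G}{\mfE|_{\caly_0 \cup \caly}}{\calb}$ divided by its objects with support in $\caly$. (If $\caly_0 = \emptyset$ both sides are empty, so assume $\emptyset \in \caly_0$.) For essential surjectivity I would take $\bfB = (S,\pi,B)$ with $\suppobj\bfB \subseteq Y_0 \cup Y$, $Y_0 \in \caly_0$, $Y \in \caly$, and split $\bfB \cong \bfB|_{Y_0} \oplus \bfB|_{S \setminus \pi^{-1}(Y_0)}$ in $\contrCatUcoef{G}{\mfE|_{\caly_0\cup\caly}}{\calb}$ using Remark~\ref{rem:summands}; the first summand has support in $\caly_0$, hence lies in the image of $\Phi$ on objects, and the second has support in $Y \in \caly$, hence is isomorphic to $0$ in the target. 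Fullness is immediate, since for objects coming from $\contrCatUcoef{G}{\mfE|_{\caly_0}}{\calb}$ the morphism sets in $\contrCatUcoef{G}{\mfE|_{\caly_0}}{\calb}$ and in $\contrCatUcoef{G}{\mfE|_{\caly_0\cup\caly}}{\calb}$ both equal the morphisms of $\contrCatUcoef{G}{\mfE}{\calb}$ between them.

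The substance is faithfulness, and here I would run the argument from the proof of Lemma~\ref{lem:filtered-for-controlled}. Suppose $\varphi\colon \bfB = (S,\pi,B) \to \bfB' = (S',\pi',B')$ becomes $0$ under $\Phi$, i.e.\ factors in $\contrCatUcoef{G}{\mfE|_{\caly_0\cup\caly}}{\calb}$ as $\bfB \xrightarrow{\varphi_-} \bfX \xrightarrow{\varphi_+} \bfB'$ with $\suppobj\bfX \in \caly$, where $\bfX = (T,\tau,X)$. First I would cut $\bfX$ down to its summand $\bfX|_{T_0}$ on the set $T_0$ of indices $t$ carrying a nonzero entry $(\varphi_-)_s^t$ or $(\varphi_+)_t^{s'}$; then $\varphi_-$, $\varphi_+$, and hence $\varphi$, still factor over $\bfX|_{T_0}$. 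Next I would replace $\bfX|_{T_0}$ by a shifted copy $(\bfX|_{T_0})_\sigma$ in the sense of Remark~\ref{rem:X-shifted-copies}, choosing for each $t \in T_0$ either $\sigma(t) = g\pi(s)$ with $(\varphi_-)_s^t \neq 0$ and $g \in \supp(\varphi_-)_s^t$, or $\sigma(t) = g^{-1}\pi'(s')$ with $(\varphi_+)_t^{s'} \neq 0$ and $g \in \supp(\varphi_+)_t^{s'}$. Exactly as in Lemma~\ref{lem:filtered-for-controlled} one checks that $\sigma$ is finite-to-one, that $\sigma$ is $\mfE_2$-equivalent to the structure map of $\bfX|_{T_0}$, and that $(\bfX|_{T_0})_\sigma \cong \bfX|_{T_0}$ in $\contrCatUcoef{G}{\mfE}{\calb}$, so $\varphi$ still factors through $(\bfX|_{T_0})_\sigma$. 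The new point, compared with Lemma~\ref{lem:filtered-for-controlled}, is the support estimate
\[
\suppobj (\bfX|_{T_0})_\sigma = \sigma(T_0) \subseteq (\suppG\varphi_-) \cdot \suppobj\bfB \;\cup\; (\suppG\varphi_+)^{-1} \cdot \suppobj\bfB' .
\]
Since $\bfB$ and $\bfB'$ lie in $\contrCatUcoef{G}{\mfE|_{\caly_0}}{\calb}$ their object supports lie in $\caly_0$, which is closed under multiplication by members of $\mfE_G$, so the right-hand side lies in $\caly_0$; on the other hand, being a shifted copy of the $\caly$-supported object $\bfX|_{T_0}$, the object $(\bfX|_{T_0})_\sigma$ retains support in $\caly$ (its support differs from $\suppobj\bfX|_{T_0}$ only by the $\mfE_2$-enlargements under which $\caly$ is stable, exactly as in the proof of Lemma~\ref{lem:filtered-for-controlled}). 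Hence $(\bfX|_{T_0})_\sigma$ has support in $\caly_0 \cap \caly$, it belongs to $\contrCatUcoef{G}{\mfE|_{\caly_0\cap\caly}}{\calb}$, and the resulting factorization of $\varphi$ takes place in $\contrCatUcoef{G}{\mfE|_{\caly_0}}{\calb}$; so $\varphi$ represents $0$ in the source of $\Phi$, as needed.

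I expect the only real obstacle to be precisely this support bookkeeping in the faithfulness step: one must arrange the shifted copy of the mediating object to have support simultaneously in $\caly_0$ (forced by shifting it towards the supports of $\bfB$ and $\bfB'$, which lie in $\caly_0$) and in $\caly$ (inherited from $\bfX$, since replacing an object by a shifted copy perturbs its support only within the control structure $\mfE$). This is the same kind of estimate as in the proof of Lemma~\ref{lem:filtered-for-controlled}, with the role played there by $\caly$ now played by $\caly_0$, together with the additional observation that $\caly$-control is not lost when passing to the shifted copy.
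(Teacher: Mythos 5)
Your identification of faithfulness as the real content is correct, and your essential surjectivity and fullness arguments are fine, but the faithfulness step has a genuine gap. You assert that the shifted copy $(\bfX|_{T_0})_\sigma$ ``retains support in $\caly$'' because its support differs from $\suppobj \bfX|_{T_0}$ ``only by the $\mfE_2$-enlargements under which $\caly$ is stable''. Definition~\ref{def:E_Y} does \emph{not} make $\caly$ stable under $\mfE_2$-enlargements; it only requires closure under finite unions, subsets, and multiplication by members of $\mfE_G$. This is precisely why Lemma~\ref{lem:formal-excision} --- which concerns the same comparison functor with the roles of the two families interchanged, and whose proof is the model for what you need here --- carries the extra hypothesis that a suitable $(Y_1')^{E}$ again lies in the family: that hypothesis exists exactly to place the cut-down mediating object in both families simultaneously. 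In the proof of Lemma~\ref{lem:filtered-for-controlled} the shifted copy only has to land in a \emph{single} family, and it does so because it is shifted towards the supports of objects already known to lie in that family; nothing about the original support of $\bfX$ is ``retained'' there. Your bound $\sigma(T_0)\subseteq(\suppG\varphi_-)\cdot\suppobj\bfB\cup(\suppG\varphi_+)^{-1}\cdot\suppobj\bfB'$ gives membership in $\caly_0$ but says nothing about $\caly$. The missing stability cannot be recovered from Definition~\ref{def:E_Y} alone: take $G$ trivial, $X=\IZ$, $\mfE_2$ the bounded-propagation entourages, $\caly_0$ the subsets of $2\IZ$ and $\caly$ the subsets of $2\IZ+1$; a nonzero morphism between objects placed at $0$ and $2$ factors in the larger category through an object placed at $1$ and hence dies in the target, while the source is only divided by objects with empty support.

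For comparison, the paper's own proof is a two-sentence remark --- the target only has more objects, and those extra objects are trivial in the quotient --- which addresses essential surjectivity and treats full faithfulness as evident; it does not engage with the point you (rightly) worry about. In every place the lemma is invoked, $\caly$ is a family of the form ``sets that eventually vanish in the $\IN$-directions'' (such as $\caly(\bfSigma)$ or $\caly_0(P)$), and such families \emph{are} stable under the bounded enlargements permitted by the relevant $\mfE_2$; in that setting your shifted-copy argument does close the gap. So the correct repair is to either add $\mfE_2$-stability of $\caly$ as a hypothesis or verify it in each application, in the spirit of Lemma~\ref{lem:formal-excision}, rather than to claim it follows from Definition~\ref{def:E_Y}.
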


\begin{proof}
  The only difference between the two categories is that the category on the right has
  more objects, i.e., objects with support in $\caly$.  But, by Definition, these
  additional objects are trivial (isomorphic to zero).
\end{proof}

\begin{lemma}\label{lem:formal-homotopy-pushout-square} Let $\caly,\caly_0,\caly_1$ be
  collections of subsets of $X$ satisfying the assumptions in Definition~\ref{def:E_Y}.
  Assume that the condition from Lemma~\ref{lem:formal-excision} holds.  Then
  \begin{equation*}
    \xymatrix{\bfK(\contrCatUcoef{G}{\mfE|_{\caly_0 \cap \caly_1}, \caly}{\calb}) \ar[r] \ar[d] &
      \bfK(\contrCatUcoef{G}{\mfE|_{\caly_1}, \caly)}{\calb} \ar[d] \\   
      \bfK(\contrCatUcoef{G}{\mfE|_{\caly_0}, \caly)}{\calb} \ar[r] &
      \bfK(\contrCatUcoef{G}{\mfE|_{\caly_0 \cup \caly_1}, \caly)}{\calb}  }
  \end{equation*}	
  is a homotopy pushout square.
\end{lemma}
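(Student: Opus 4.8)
The plan is to reduce to the ``absolute'' version of the square, in which the quotient collection $\caly$ is dropped, and then to identify the horizontal cofibers of that absolute square by combining Lemma~\ref{lem:filtered-for-controlled} (through the Karoubi sequence~\eqref{eq:karoubi-sequence}) with the excision Lemma~\ref{lem:formal-excision}.

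First I would perform the reduction. By Lemma~\ref{lem:y-rel-Y-irrelevant}, applied at each of the four corners, one may replace $\caly_0$ by $\caly_0 \cup \caly$, replace $\caly_1$ by $\caly_1 \cup \caly$, replace $\caly_0 \cap \caly_1$ by $(\caly_0 \cap \caly_1) \cup \caly$, and replace $\caly_0 \cup \caly_1$ by $\caly_0 \cup \caly_1 \cup \caly$; these replacements are compatible with all the structure maps of the square, and afterwards $\caly$ is contained in each of the four collections. One has to check that the hypothesis of Lemma~\ref{lem:formal-excision} survives this replacement: given $Y_1 \cup Z$ with $Y_1 \in \caly_1$, $Z \in \caly$ and $E \in \mfE_2$, one applies the old hypothesis to $Y_1$ to obtain $Y_0 \in \caly_0$, $Y_1' \in \caly_1$ with $Y_1 \subseteq Y_1' \cup Y_0$ and $(Y_1')^{E} \in \caly_1$, and then $Y_1'$ together with $Y_0 \cup Z$ does the job for $Y_1 \cup Z$. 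So from now on assume $\caly \subseteq \caly_0 \cap \caly_1$.

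Next, for each of the four collections $\calz \in \{\caly_0 \cap \caly_1,\ \caly_1,\ \caly_0,\ \caly_0 \cup \caly_1\}$ we have $(\mfE|_\calz)|_\caly = \mfE|_\caly$, so Lemma~\ref{lem:filtered-for-controlled} together with the Karoubi sequence~\eqref{eq:karoubi-sequence} yields a fibration sequence of spectra
\[ \bfK\big(\contrCatUcoef{G}{\mfE|_\caly}{\calb}\big) \to \bfK\big(\contrCatUcoef{G}{\mfE|_\calz}{\calb}\big) \to \bfK\big(\contrCatUcoef{G}{\mfE|_\calz,\caly}{\calb}\big), \]
natural in $\calz$ with respect to the inclusions $\caly_0 \cap \caly_1 \subseteq \caly_i \subseteq \caly_0 \cup \caly_1$, and with the fiber term and the fiber map independent of $\calz$. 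Since iterated homotopy cofibers can be formed in either order, the horizontal cofibers of our square of spectra $\bfK(\contrCatUcoef{G}{\mfE|_\calz,\caly}{\calb})$ are then canonically identified with the horizontal cofibers of the ``absolute'' square of spectra $\bfK(\contrCatUcoef{G}{\mfE|_\calz}{\calb})$ (the cofiber of the identity of $\bfK(\contrCatUcoef{G}{\mfE|_\caly}{\calb})$ being trivial). Hence it suffices to show that the absolute square is a homotopy pushout.

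For the absolute square I would again invoke Lemma~\ref{lem:filtered-for-controlled}, once with control structure $\mfE|_{\caly_1}$ and collection $\caly_0$ (observing $(\mfE|_{\caly_1})|_{\caly_0} = \mfE|_{\caly_0 \cap \caly_1}$) and once with control structure $\mfE|_{\caly_0 \cup \caly_1}$ and collection $\caly_0$ (observing $(\mfE|_{\caly_0 \cup \caly_1})|_{\caly_0} = \mfE|_{\caly_0}$). Via~\eqref{eq:karoubi-sequence} this identifies the top horizontal cofiber of the absolute square with $\bfK(\contrCatUcoef{G}{\mfE|_{\caly_1},\caly_0 \cap \caly_1}{\calb})$ and the bottom one with $\bfK(\contrCatUcoef{G}{\mfE|_{\caly_0 \cup \caly_1},\caly_0}{\calb})$, and the comparison map between them with $\bfK$ applied to the excision functor~\eqref{eq:excision-functor}. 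By Lemma~\ref{lem:formal-excision} that functor is an equivalence, so the induced map on horizontal cofibers is an equivalence; hence the absolute square, and therefore the original square, is a homotopy pushout square of spectra (equivalently, since we work with spectra, a homotopy pullback square). The step I expect to require the most care is the bookkeeping: keeping track of which collection plays the role of ``$\caly$'' (and of ``$\caly_0$'') in each application of Lemmas~\ref{lem:filtered-for-controlled} and~\ref{lem:formal-excision}, and in particular verifying, as indicated above, that the hypothesis of Lemma~\ref{lem:formal-excision} is inherited by $\caly_0 \cup \caly$ and $\caly_1 \cup \caly$.
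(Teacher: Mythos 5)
Your proof is correct and follows essentially the same route as the paper: reduce to $\caly \subseteq \caly_0 \cap \caly_1$ via Lemma~\ref{lem:y-rel-Y-irrelevant} (checking the excision hypothesis survives), identify the horizontal cofibers via Karoubi sequences, and recognize the induced map on cofibers as $\bfK$ of the excision functor~\eqref{eq:excision-functor}, which is an equivalence by Lemma~\ref{lem:formal-excision}. The only cosmetic difference is that you pass through the ``absolute'' square and commute cofibers, whereas the paper invokes the relative Karoubi sequence~\eqref{eq:karoubi-sequence-rel} directly; your version also spells out the verifications the paper leaves as ``not difficult to check.''
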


\begin{proof}
  We first argue that we may assume that $\caly \subseteq \caly_0 \cap \caly_1$.  Indeed,
  Lemma~\ref{lem:y-rel-Y-irrelevant} allows us to replace $\caly_0$ with
  $\caly_0 \cup \caly$ and $\caly_1$ with $\caly_1 \cup \caly$.  It is not difficult to
  check that the condition from Lemma~\ref{lem:formal-excision} is preserved.

  Now the horizontal homotopy cofibers of the above diagram are determined
  by~\eqref{eq:karoubi-sequence-rel} and are given by 
  $\bfK(\contrCatUcoef{G}{\mfE|_{\caly_0},\caly_0 \cap \caly_1)}{\calb}$ and
  $\bfK(\contrCatUcoef{G}{\mfE|_{\caly_0 \cup \caly_1}, \caly_1)}{\calb}$.  The excision result of
  Lemma~\ref{lem:formal-excision} applies to show that the induced maps between the
  homotopy cofibers is an equivalence.  This implies the assertion.
\end{proof}

%-----------------------------------------------------------------

\subsection{Swindles}

Let $\mfE$ be a $G$-control structure on $X$.  Let $\caly$ be a collection of subsets of
$X$ satisfying the assumptions in Definition~\ref{def:E_Y}.  
Sometimes it is easy to produce Eilenberg swindles on
$\contrCatUcoef{G}{\mfE,\caly}{\calb}$.  Often such a swindle either comes from some map $f \colon X \to X$
that pushes everything to $\infty$, as in~\ref{lem:formal-swindle:leave} below.  We will
use the following formal result later on.

\begin{lemma}\label{lem:formal-swindle} Assume that there is a $G$-map $f \colon X \to X$
  satisfying
  \begin{enumerate}[
                 label=(\thetheorem\alph*),
                 align=parleft, 
                 leftmargin=*,
                 labelindent=1pt,
                 %labelwidth=60pt,
                 labelsep=8pt,
                 %itemindent=-3pt
                 itemsep=1pt
                 ] 
  \item\label{lem:formal-swindle:leave} for all $x \in X$ there is $n$ such that
    $(f^{\circ n})^{-1}(x) = \emptyset$;
  \item\label{lem:formal-swindle:E_1} for all $Y \in \mfE_1$ we have
    $\bigcup_{n \in \IN} f^n(Y) \in \mfE_1$;
  \item\label{lem:formal-swindle:caly} for all $Y \in \caly$ we have
    $\bigcup_{n \in \IN} f^n(Y) \in \caly$;
  \item\label{lem:formal-swindle:E_2} for all $E \in \mfE_2$ we have
    $\bigcup_{n \in \IN} (f \times f)^{\circ n}(E) \in \mfE_2$;
  \item\label{lem:formal-swindle:shift} for all $M \in \mfE_G$ we have
    $\{ \twovec{f(x)}{gx} \mid x \in X; g \in M \} \in \mfE_2$.
  \end{enumerate}
  Then the K-theory of $\contrCatUcoef{G}{\mfE,\caly}{\calb}$ is trivial.
\end{lemma}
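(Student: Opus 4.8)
The plan is to exhibit an Eilenberg swindle on the Karoubi quotient $\contrCatUcoef{G}{\mfE,\caly}{\calb}$. For an object $\bfB = (S,\pi,B)$ of $\contrCatUcoef{G}{\mfE}{\calb}$ set
\[
  \Sigma \bfB := \bigoplus_{n \geq 1} (f^{\circ n})_* \bfB ,
\]
using the pushforward functors $f_*$ from Remark~\ref{rem:functoriality-calb(X)}; note that $f$, being a $G$-map, is $G$-equivariant, so pushforward interacts simply with the supports. On a morphism $\varphi$ put $\Sigma \varphi := \bigoplus_{n \geq 1}(f^{\circ n})_* \varphi$. First I would check that $\Sigma$ is a well-defined additive endofunctor of $\contrCatUcoef{G}{\mfE,\caly}{\calb}$ and that there is a natural isomorphism $\id \oplus \Sigma \cong \Sigma$; the standard Eilenberg swindle argument (using that $\bfK$ turns a direct sum of additive functors into a sum of maps of spectra) then gives $\Sigma_* \simeq \id_* + \Sigma_*$ as self-maps of $\bfK(\contrCatUcoef{G}{\mfE,\caly}{\calb})$, hence $\id_* \simeq 0$, hence $\bfK(\contrCatUcoef{G}{\mfE,\caly}{\calb}) \simeq *$.

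To see that $\Sigma \bfB$ is an object of $\contrCatUcoef{G}{\mfE}{\calb}$: its object support is $\bigcup_{n} f^{\circ n}(\suppobj \bfB) \in \mfE_1$ by~\ref{lem:formal-swindle:E_1}; since $f$ is equivariant, $\suppX (f^{\circ n})_* \bfB = (f \times f)^{\circ n}(\suppX \bfB)$ (Remark~\ref{rem:functoriality-calb(X)}), so $\suppX \Sigma \bfB = \bigcup_n (f \times f)^{\circ n}(\suppX \bfB) \in \mfE_2$ by~\ref{lem:formal-swindle:E_2}; and $\suppG \Sigma \bfB = \suppG \bfB \in \mfE_G$. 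The crucial point is that $\Sigma \bfB$ is still finite over each point of $X$, and this is exactly what~\ref{lem:formal-swindle:leave} provides: given $x \in X$ choose $N$ with $(f^{\circ N})^{-1}(x) = \emptyset$; then $(f^{\circ n})^{-1}(x) = \emptyset$ for all $n \geq N$, so only the finitely many summands with $n < N$ contribute to the fiber of $\Sigma \bfB$ over $x$, and this fiber is finite. The same support estimates (together with $\suppG (f^{\circ n})_* \varphi = \suppG \varphi$ and the obvious preservation of row and column finiteness) show that $\Sigma \varphi$ is a morphism of $\contrCatUcoef{G}{\mfE}{\calb}$, so $\Sigma$ is an additive endofunctor of $\contrCatUcoef{G}{\mfE}{\calb}$; by~\ref{lem:formal-swindle:caly} it carries $\contrCatUcoef{G}{\mfE|_\caly}{\calb}$ into itself and therefore descends to the quotient $\contrCatUcoef{G}{\mfE,\caly}{\calb}$.

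For the natural isomorphism, condition~\ref{lem:formal-swindle:shift} is the essential input. The ``identity matrix'' morphism $\iota_n \colon (f^{\circ n})_* \bfB \to (f^{\circ (n+1)})_* \bfB$, which keeps the entries of $\id_\bfB$ but moves the position $y = f^{\circ n}(\pi(s))$ to $f(y)$, has $\suppX$ contained in $\bigl\{ \twovec{f(y)}{gy} \mid y \in X,\ g \in \suppG \bfB \bigr\} \in \mfE_2$ by~\ref{lem:formal-swindle:shift}; its inverse, moving $f(y)$ back to $y$, has support the opposite of such a set after reindexing $g \mapsto g^{-1}$ and using equivariance of $f$, hence also in $\mfE_2$. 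Assembling the $\iota_n$ over $n \geq 0$ yields an isomorphism $\bigoplus_{n \geq 0}(f^{\circ n})_* \bfB \xrightarrow{\cong} \bigoplus_{n \geq 1}(f^{\circ n})_* \bfB$, i.e.\ $\id_{\bfB} \oplus \Sigma \bfB \cong \Sigma \bfB$, and naturality in $\bfB$ is then routine. Applying $\bfK$ as above finishes the proof.

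The step I expect to be the main obstacle is checking that $\Sigma \bfB$ genuinely lands back in $\contrCatUcoef{G}{\mfE,\caly}{\calb}$ — above all the finiteness over points of $X$, which is precisely the content of the ``pushes everything to $\infty$'' hypothesis~\ref{lem:formal-swindle:leave} (without it the infinite direct sum would accumulate somewhere and fail to be an object). The support bookkeeping governed by~\ref{lem:formal-swindle:E_1},~\ref{lem:formal-swindle:caly},~\ref{lem:formal-swindle:E_2} is routine, and~\ref{lem:formal-swindle:shift} is the essential ingredient that upgrades the index shift from a mere morphism to an isomorphism, so that the swindle relation $\id \oplus \Sigma \cong \Sigma$ actually holds.
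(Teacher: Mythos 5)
Your proposal is correct and follows essentially the same route as the paper's proof: the paper forms $\bfB^\infty = (S\times\IN,\ (s,n)\mapsto f^{\circ n}(\pi(s)),\ (s,n)\mapsto B(s))$ (your $\id\oplus\Sigma$), uses~\ref{lem:formal-swindle:leave} for finiteness over points, \ref{lem:formal-swindle:E_1} and~\ref{lem:formal-swindle:E_2} for the support conditions, \ref{lem:formal-swindle:caly} for descent to the quotient, and realizes the swindle isomorphism via the shift $(s,n)\mapsto(s,n+1)$, whose membership in the category is exactly condition~\ref{lem:formal-swindle:shift}. Your identification of which hypothesis does which job matches the paper's argument point for point.
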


\begin{proof}
  For $\bfB = (S,\pi,B) \in \contrCatUcoef{G}{\mfE}{\calb}$ we define
  $\bfB^{\infty} = (S^{\infty},\pi^{\infty},B^{\infty})$ where $S^\infty = S \times \IN$,
  $\pi^\infty(s,n) = f^{\circ n}(s)$, $B^\infty(s,t) = B(s)$.
  Assumption~\ref{lem:formal-swindle:leave} implies that $\pi^{\infty}$ is finite-to-one.
  Assumption~\ref{lem:formal-swindle:E_1} implies $\suppobj \bfB^{\infty} \in \mfE_1$.
  Assumption~\ref{lem:formal-swindle:E_2} implies $\suppX \bfB^{\infty} \in \mfE_2$.  Thus
  $\bfB^{\infty} \in \contrCatUcoef{G}{\mfE}{\calb}$.
   
  For $\varphi \colon (S,\pi,B) \to (S',\pi',B') \in \contrCatUcoef{G}{\mfE}{\calb}$ we define
  $\varphi^\infty$ by $(\varphi^\infty)^{s',t'}_{s,t} := \varphi_s^{s'}$.
  Assumption~\ref{lem:formal-swindle:E_2} implies $\suppX \varphi^\infty \in \mfE_2$.  As
  $\varphi^\infty$ is also row and column finite (because $\varphi$ is), we have
  $\varphi^\infty \in \contrCatUcoef{G}{\mfE}{\calb}$.  Compatibility with composition is straight forward
  and we obtain an endofunctor $(\--)^\infty$ of $\contrCatUcoef{G}{\mfE}{\calb}$.  For
  $\bfB = (S,\pi,B) \in \contrCatUcoef{G}{\mfE}{\calb}$ let $i_\bfB \colon \bfB \to \bfB^\infty$ and
  $\sh_\bfB \colon \bfB^\infty \to \bfB^\infty$ be induced by the inclusions
  \begin{eqnarray*}
    S  & \to & S \times \IN, \quad s \mapsto (s,0); \\
    S \times \IN & \to & S \times \IN, \quad (s,t) \mapsto (s,t+1).  
  \end{eqnarray*}
  Clearly, $i_\bfB \in \contrCatUcoef{G}{\mfE}{\calb}$.  Assumption~\ref{lem:formal-swindle:shift} (for
  $M = \suppG \bfB$) implies that $\sh_\bfB \in \contrCatUcoef{G}{\mfE}{\calb}$.  
  Now $i \oplus \sh$ is a
  natural isomorphism $(\--)^\infty \oplus \id_{\contrCatUcoef{G}{\mfE}{\calb}} \cong (\--)^\infty$.
  Altogether we defined a swindle on $\contrCatUcoef{G}{\mfE}{\calb}$.
  Assumption~\ref{lem:formal-swindle:caly} ensures that this swindle descends to
  $\contrCatUcoef{G}{\mfE,\caly}{\calb}$.
\end{proof}

We will need a variation of the swindle from Lemma~\ref{lem:formal-swindle}, where we can
swindle towards some $Z \in \caly$ instead of towards $\infty$\footnote{Of course,
  Lemma~\ref{lem:formal-swindle} is implied by Lemma~\ref{lem:formal-swindle-Z} by taking
    $Z = \emptyset$.}.

\begin{lemma}\label{lem:formal-swindle-Z} Assume that there are a $G$-map
  $f \colon X \to X$ and $Z \in \caly$ satisfying
  \begin{enumerate}[
                 label=(\thetheorem\alph*),
                 align=parleft, 
                 leftmargin=*,
                 labelindent=1pt,
                 itemsep=1pt
                 %labelwidth=60pt,
                 %labelsep=3pt,
                 %itemindent=-3pt
                 ] 
  \item\label{lem:formal-swindle-Z:leave} for all $x \in X \setminus Z$ there is $n$ such
    that $(f^{\circ n})^{-1}(x) = \emptyset$;
  \item\label{lem:formal-swindle-Z:E_1} for all $Y \in \mfE_1$ we have
    $\bigcup_{n \in \IN} Y_n \in \mfE_1$ where $Y_0 = Y$ and
    $Y_{n+1} = f(Y_n \setminus Z)$;
  \item\label{lem:formal-swindle-Z:caly} for all $Y \in \caly$ we have
    $\bigcup_{n \in \IN} Y_n \in \caly$ where $Y_0 = Y$ and
    $Y_{n+1} = f(Y_n \setminus Z)$;
  \item\label{lem:formal-swindle-Z:E_2} for all $E \in \mfE_2$ we have
    $\bigcup_{n \in \IN} (f \times f)^{\circ n}(E) \in \mfE_2$;
  \item\label{lem:formal-swindle-Z:shift} for all $M \in \mfE_G$ we have
    $\{ \twovec{gx}{f(x)} \mid x \in X; g \in M \} \in \mfE_2$.
  \end{enumerate}
  Then the K-theory of $\contrCatUcoef{G}{\mfE,\caly}{\calb}$ is trivial.
\end{lemma}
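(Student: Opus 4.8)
The plan is to exhibit an Eilenberg swindle on the quotient category $\contrCatUcoef{G}{\mfE,\caly}{\calb}$, proceeding exactly as in the proof of Lemma~\ref{lem:formal-swindle} but truncating each forward $f$-orbit at the first time it enters $Z$. The tails that land in $Z$ are supported in $Z\in\caly$, hence become trivial in the quotient; this is precisely what replaces the hypothesis of Lemma~\ref{lem:formal-swindle} that $f$ pushes \emph{every} point to infinity.

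First I would define, for an object $\bfB=(S,\pi,B)\in\contrCatUcoef{G}{\mfE}{\calb}$, the object $\bfB^\infty:=(S^\infty,\pi^\infty,B^\infty)$ with
$$S^\infty:=\bigl\{(s,n)\mid s\in S,\ n\in\IN,\ f^{\circ k}(\pi(s))\notin Z\text{ for all }0\le k<n\bigr\},$$
$\pi^\infty(s,n):=f^{\circ n}(\pi(s))$ and $B^\infty(s,n):=B(s)$. One checks $\bfB^\infty\in\contrCatUcoef{G}{\mfE}{\calb}$: by induction $\pi^\infty(S^\infty)\subseteq\bigcup_n Y_n$ with $Y_n$ as in the statement, so $\suppobj\bfB^\infty\in\mfE_1$ by~\ref{lem:formal-swindle-Z:E_1}; $\pi^\infty$ is finite over each point by~\ref{lem:formal-swindle-Z:leave} together with the finite-to-one property of $\pi$ (as in Lemma~\ref{lem:formal-swindle}); and, since $f$ is a $G$-map, $\suppX\bfB^\infty\subseteq\bigcup_n(f\times f)^{\circ n}(\suppX\bfB)\in\mfE_2$ by~\ref{lem:formal-swindle-Z:E_2}, while $\suppG\bfB^\infty=\suppG\bfB\in\mfE_G$. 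The assignment $\varphi\mapsto\varphi^\infty$, $(\varphi^\infty)^{(s',n')}_{(s,n)}:=\varphi^{s'}_s$, upgrades this to an additive endofunctor of $\contrCatUcoef{G}{\mfE}{\calb}$ ($G$-equivariance of $f$ and~\ref{lem:formal-swindle-Z:E_2} again control $\suppX\varphi^\infty$); it carries objects supported in $\caly$ to objects supported in $\bigcup_n Y_n\in\caly$ by~\ref{lem:formal-swindle-Z:caly}, hence descends to an endofunctor $\Sigma:=(\--)^\infty$ of $\contrCatUcoef{G}{\mfE,\caly}{\calb}$.

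Next I would build the swindle isomorphism. Let $i_\bfB\colon\bfB\to\bfB^\infty$ be $s\mapsto(s,0)$, and let $\sh_\bfB\colon\bfB^\infty\to\bfB^\infty$ have matrix entry $\id_{B(s)}$ from $(s,n)$ to $(s,n+1)$ whenever $(s,n+1)\in S^\infty$ (equivalently $f^{\circ n}(\pi(s))\notin Z$) and $0$ otherwise; both are morphisms of $\contrCatUcoef{G}{\mfE}{\calb}$, the support of $\sh_\bfB$ being controlled by~\ref{lem:formal-swindle-Z:shift}. Splitting $\bfB^\infty=\bfB^\infty|_{X\setminus Z}\oplus\bfB^\infty|_{Z}$ as in Remark~\ref{rem:summands}, one checks that $i_\bfB\oplus\sh_\bfB$ restricts to an \emph{honest} isomorphism $\bfB\oplus\bfB^\infty|_{X\setminus Z}\xrightarrow{\cong}\bfB^\infty$ in $\contrCatUcoef{G}{\mfE}{\calb}$: on index sets it is the bijection $s\mapsto(s,0)$, $(s,n)\mapsto(s,n+1)$, using that $(t,m)\in S^\infty$ with $m\ge1$ forces $(t,m-1)\in S^\infty$ and $f^{\circ(m-1)}(\pi(t))\notin Z$. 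Since $\suppobj(\bfB^\infty|_{Z})\subseteq Z\in\caly$, the summand $\bfB^\infty|_{Z}$ is isomorphic to $0$ in $\contrCatUcoef{G}{\mfE,\caly}{\calb}$, so there $i_\bfB\oplus\sh_\bfB$ becomes an isomorphism $\bfB\oplus\Sigma(\bfB)\xrightarrow{\cong}\Sigma(\bfB)$, natural in $\bfB$. Thus $\Sigma$ is an Eilenberg swindle on $\contrCatUcoef{G}{\mfE,\caly}{\calb}$, whence $\bfK(\contrCatUcoef{G}{\mfE,\caly}{\calb})\simeq\ast$.

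The main obstacle is the first step: checking that $\bfB^\infty$ is a genuine object of $\contrCatUcoef{G}{\mfE}{\calb}$, in particular that $\pi^\infty$ is finite over each point. Here the truncation condition ``$f^{\circ k}(\pi(s))\notin Z$ for $k<n$'' must be used together with~\ref{lem:formal-swindle-Z:leave}: for $x\notin Z$ the orbit lengths $n$ with $f^{\circ n}(\pi(s))=x$ are bounded as in Lemma~\ref{lem:formal-swindle}, and for $x\in Z$ only ``first hitting time'' pairs $(s,n)$ occur, which is exactly why one must work with the truncated index set rather than all of $S\times\IN$. Everything else is a routine transcription of the proof of Lemma~\ref{lem:formal-swindle}, the only genuinely new ingredient being that the truncated tails of the orbits now live in $Z\in\caly$ and are therefore invisible in the quotient category.
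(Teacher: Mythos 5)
Your proof is correct and follows essentially the same route as the paper: the truncated index set you call $S^\infty$ is exactly the paper's $S^\infty_-$ (defined there via $S_{n+1}:=(\pi_n)^{-1}(X\setminus Z)$), and the rest is the swindle of Lemma~\ref{lem:formal-swindle} transported to the quotient, with the tails over $Z$ killed because $Z\in\caly$. You in fact supply more detail than the paper, which only sketches this restriction step.
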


\begin{proof}
  In $\contrCatUcoef{G}{\mfE,\caly}{\calb}$ we have $\bfB|_{Z} \cong 0$ and so
  $\bfB \cong \bfB|_{X \setminus Z}$.  Thus we can systematically get ride of everything
  over $Z$.  A swindle on $\contrCatUcoef{G}{\mfE,\caly}{\calb}$ can be constructed almost verbatim as in
  Lemma~\ref{lem:formal-swindle}.  The only difference is that we use a subset
  $S^\infty_-$ in place of $S^\infty$.  To define this subset set $S_0 := S$,
  $\pi_0 := \pi$ and inductively $S_{n+1} := (\pi_n)^{-1}(X \setminus Z)$,
  $\pi_{n+1} := (f \circ \pi_n)|_{S_{n+1}}$.  Then
  $S^\infty_- := \bigcup_{n} S_n \times \{n\}$.  We remark that $\pi_n$ is just the
  restriction of $\pi^{\infty}$ to $S_n \cong S_n \times \{ n\}$.
    
  After restricting everything from $S^\infty$ to $S^\infty_-$, we obtain a swindle on
  $\contrCatUcoef{G}{\mfE,\caly}{\calb}$ as before.
\end{proof}

%%%%%%%%%%%%%%%%%%%%%%%%%%%%%%%%%%%%%%%%%%%%%%%%%%%%%%%%%%%%%%%%%%%%

\section{The $\CVCYC$-Farrell--Jones Conjecture}\label{sec:Farrell-Jones-Conj-for-td}

%-----------------------------------------------------------------

\subsection{Delooping} 
In order to formulate the $\CVCYC$-Farrell--Jones Conjecture we will need categories
$\contc_G(P;\calb)$ for $P \in \EPplus\All(G)$.  To prepare for their construction in
Definition~\ref{def:C_G-P} later on we discuss the Pedersen-Weibel delooping of K-theory
from~\cite{Pedersen-Weibel(1985)} in our set-up.  Let $X$ be a $G$-set.  Define the
$G$-control structure
$\contstraPW(X) = (\contstraPW_1(X),\contstraPW_2(X),\contstraPW_G(X))$ on $X \times \IN$
as follows.
\begin{itemize}[
                 label=$\bullet$,
                 align=parleft, 
                 leftmargin=*,
                 labelindent=5pt,
                 %labelwidth=60pt,
                 %labelsep=10pt,
                 %itemindent=-3pt
                 %itemsep=1pt
                 ] 
\item $\contstraPW_1(X)$ is the collection of all subsets $F$ of $X \times \IN$ for which
  $F \cap X \times \{t\}$ is finite for all $t$;
\item $\contstraPW_2(X)$ is the collection of all $E \subseteq \IN \times \IN$
  with
  \[
    \supp \Big\{ |t-t'| \; \Big|\; \twovec{x',t'}{x,t} \in E \Big\} < \infty;
  \]
\item $\contstraPW_G(X)$ is the collection of all relatively compact subsets of $G$.
\end{itemize}
Let $\caly$ be the collection of subsets $Y$ of $X \times \IN$ that are contained in
$X \times \{1,\dots,N\}$ for some $N$ (depending on $Y$).  Let $\calb$ be a category with
$G$-support.  Then 
\begin{equation*}
  \contrCatUcoef{G}{\contstraPW(X)|_\caly}{\calb} \to  \contrCatUcoef{G}{\contstraPW(X)}{\calb}
  \to \contrCatUcoef{G}{\contstraPW(X),\caly}{\calb}
\end{equation*}
is a Karoubi sequence.  Applying K-theory we obtain a fibration sequence,
see~\eqref{eq:karoubi-sequence}.
Combining this sequence with Lemma~\ref{lem:equivalence_and_swindle} below we obtain
$\Omega \bfK \big( \contrCatUcoef{G}{\contstraPW(X),\caly}{\calb} \big) \simeq \bfK(\calb)$.
Lemma~\ref{lem:equivalence_and_swindle} is standard, but the proof is instructive as we
will use variation thereof later on.
 		
\begin{lemma}\label{lem:equivalence_and_swindle} \
  \begin{enumerate}[
                 label=(\thetheorem\alph*),
                 align=parleft, 
                 leftmargin=*,
                 labelindent=1pt,
                 %labelwidth=60pt,
                 %labelsep=2pt,
                 %itemindent=-3pt
                 ] 
  \item\label{lem:equivalence_and_swindle:equiv} There is an equivalence
    $\contrCatUcoef{G}{\contstraPW(X)|_\caly}{\calb} \xrightarrow{\sim} \calb_{\oplus}$
    defined by $(S,\pi,B) \mapsto \oplus_{s \in S} B(s)$;
  \item\label{lem:equivalence_and_swindle:swindle} The K-theory of
    $\contrCatUcoef{G}{\contstraPW(X)}{\calb}$  is trivial.
  \end{enumerate}
\end{lemma}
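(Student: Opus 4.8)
The plan is to dispatch the two parts separately; both become routine once one sees how the control structure $\contstraPW(X)$ degenerates in each case.

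For~\ref{lem:equivalence_and_swindle:equiv} I would first unravel the effect of restricting to $\caly$. A subset of $X \times \IN$ that is finite in each slice $X \times \{t\}$ and is contained in $X \times \{1,\dots,N\}$ for some $N$ is finite; hence $\contstraPW_1(X)|_\caly$ is precisely the collection of all finite subsets of $X \times \IN$. Consequently an object $\bfB = (S,\pi,B)$ of $\contrCatUcoef{G}{\contstraPW(X)|_\caly}{\calb}$ has $\suppobj \bfB = \pi(S)$ finite, and since $\pi$ is finite-to-one this forces $S$ to be finite. For finite $S$ all the remaining conditions in Definition~\ref{def:controlled-hecke-cat} are automatic: $\suppX \bfB$ and $\suppX \varphi$ are finite subsets of $(X \times \IN)^{\times 2}$, so the set of values $|t-t'|$ on them is finite and they lie in $\contstraPW_2(X)$; $\suppG \bfB$ and $\suppG \varphi$ are finite unions of the compact sets $\supp B(s)$ resp.\ $\supp \varphi_s^{s'}$, hence relatively compact and in $\contstraPW_G(X)$; and every morphism is trivially row and column finite. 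Thus $\contrCatUcoef{G}{\contstraPW(X)|_\caly}{\calb}$ has as objects finite indexed families of objects of $\calb$ and as morphisms arbitrary matrices of morphisms of $\calb$ — exactly the data defining $\calb_\oplus$. The functor $(S,\pi,B) \mapsto \bigoplus_{s \in S} B(s)$ (after choosing a total order on $S$) is therefore a bijection on Hom-sets, i.e.\ fully faithful; fixing any point $x_0 \in X$, the family $(B_1,\dots,B_n)$ is the image of $(\{1,\dots,n\},\, i \mapsto (x_0,1),\, i \mapsto B_i)$, so it is essentially surjective.

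For~\ref{lem:equivalence_and_swindle:swindle} I would produce an Eilenberg swindle by shifting in the $\IN$-direction, i.e.\ apply (the proof of) Lemma~\ref{lem:formal-swindle} to the $G$-equivariant map $f \colon X \times \IN \to X \times \IN$, $(x,n) \mapsto (x,n+1)$, taking $\caly = \{\emptyset\}$ there — for that choice $\contrCatUcoef{G}{\contstraPW(X),\caly}{\calb}$ equals $\contrCatUcoef{G}{\contstraPW(X)}{\calb}$, since the quotiented-out subcategory $\contrCatUcoef{G}{\contstraPW(X)|_{\{\emptyset\}}}{\calb}$ consists only of zero objects. The five hypotheses of Lemma~\ref{lem:formal-swindle} are immediate: (i) $(f^{\circ n})^{-1}(x,m) = \emptyset$ for $n > m$; (ii) $\bigcup_n f^{\circ n}(Y)$ meets each $X \times \{t\}$ in a finite set, namely the union over $k \le t$ of the slices $\{x \mid (x,k) \in Y\}$, so it stays in $\contstraPW_1(X)$; (iii) $\bigcup_n f^{\circ n}(\emptyset) = \emptyset \in \caly$; (iv) $(f \times f)^{\circ n}$ adds $n$ to both $\IN$-coordinates and hence leaves the set of values $|t-t'|$ unchanged, so it preserves $\contstraPW_2(X)$; (v) for $M$ relatively compact in $G$ the set $\{ \twovec{f(x,m)}{g\cdot(x,m)} \mid (x,m) \in X \times \IN,\ g \in M \} = \{ \twovec{(x,m+1)}{(gx,m)} \}$ has $|t-t'| = 1$ throughout, hence lies in $\contstraPW_2(X)$. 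So Lemma~\ref{lem:formal-swindle} applies and the K-theory of $\contrCatUcoef{G}{\contstraPW(X)}{\calb}$ is trivial. The only point needing a little care is the first one — recognizing that over $\caly$ the support conditions of Definition~\ref{def:controlled-hecke-cat} become vacuous, which rests on combining the slice-wise finiteness built into $\contstraPW_1(X)$ with the $\IN$-boundedness built into $\caly$ — while in~\ref{lem:equivalence_and_swindle:swindle} the substance is merely that an $\IN$-shift alters neither relative position in $\IN$ nor $G$-support.
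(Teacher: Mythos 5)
Your proof is correct and follows essentially the same route as the paper: for part (i) one observes that slice-wise finiteness combined with the $\IN$-boundedness from $\caly$ forces $S$ to be finite and then checks the sum functor is an equivalence, and for part (ii) one applies Lemma~\ref{lem:formal-swindle} to the shift $(x,t)\mapsto(x,t+1)$. (One immaterial slip: $\suppX\varphi$ need not be a \emph{finite} subset of $(X\times\IN)^{\times 2}$, since $\supp\varphi_s^{s'}$ may be an infinite compact set; but the set of differences $|t-t'|$ is still finite because the $\IN$-coordinates range only over the finitely many values of $\pi$ and $\pi'$, which is all that membership in $\contstraPW_2(X)$ requires.)
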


\begin{proof}
  If $(S,\pi,B) \in \contrCatUcoef{G}{\contstraPW(X)}{\calb}$, then by definition of $\contstraPW_1(X)$, for
  each $t \in \IN$, $\pi^{-1}(X \times \{t\})$ is finite.  If
  $(S,\pi,B) \in \contrCatUcoef{G}{\contstraPW(X)|_\caly}{\calb}$, then in addition $\pi^{-1}(X \times \{t\})$ is
  non-empty for only finitely many $t$.  Thus $S$ is finite and
  $(S,\pi,B) \mapsto \oplus_{s \in S} B(s)$ defines a functor
  $\contrCatUcoef{G}{\contstraPW(X)|_\caly}{\calb} \to \calb_{\oplus}$.  It is straight forward to check that
  this functor is an equivalence.
    
  Let $f \colon X \times \IN \to X \times \IN$ be the shift $(x,t) \mapsto (x,t+1)$.  It
  is not difficult to check that $f$ induces an Eilenberg swindle on
  $\contrCatUcoef{G}{\contstraPW(X)}{\calb}$.  More precisely, Lemma~\ref{lem:formal-swindle} applies to $f$
  and $\contrCatUcoef{G}{\contstraPW(X)}{\calb}$.
\end{proof}

Recall from Subsection~\ref{subsec:comparison-discrete} that for the Farrell--Jones
Conjecture for a discrete group $\Gamma$ the group rings over virtually cyclic subgroups
of $\Gamma$ play a central role.  For td-groups the K-theory of the categories
$\contc_G(P;\calb)$ will take this role.  Let $G$ be a td-group and $V \in \CVCYC$.  Our
difficulty is that, if $V$ is closed but not open in $G$, there is no inclusion of
$\calh(V;R)$ into $\calh(G;R)$.  For a category $\calb$ with $G$-support we can restrict
to $V$ and only consider morphisms whose support is contained in $V$.  However, as the
$G$-support is typically open this is not sensible and it is not clear how one might
exhibit a subcategory associated to $V$.  But once we use the (K-theoretic) deloopings
$\contrCatUcoef{G}{\contstraPW(X),\caly}{\calb}$ of $\calb$ this changes;
$\contrCatUcoef{G}{\contstraPW(X),\caly}{\calb}$ has many subcategories.  For example for
any $G$-control structure $\mfE$ on $X \times \IN$ that is contained in $\contstraPW(X)$
we obtain a subcategory $\contrCatUcoef{G}{\mfE,\caly}{\calb}$.  The $\IN$-factor in
$X \times \IN$ allows us create $G$-control structures that become more restrictive with
$t \to \infty$.  In our definition later on we use this to approximate $V$ by smaller and
smaller neighborhoods of $V$ in $G$ as $t \to \infty$.  For the precise $G$-control
structure we use see Definition~\ref{def:mfc} later on.  We will only change
$\contstraPW_2(X)$ by adding what we call the foliated control condition.  (There are many
possible variations for this $G$-control structure; our choice is carefully tailored to
enable us to prove both the $\CVCYC$-Farrell--Jones Conjecture for p-adic groups and the
Reduction Theorem~\ref{thm:reduction}.)

%-----------------------------------------------------------------

\subsection{Two functors to $G$-spaces}\label{subsec:realizations-for-P}
We define two functors $\EPplus\All(G) \to G\text{-}\Spaces$ as follows.  We recall from
Subsection~\ref{subsec:product-cats} that objects in $\EPplus\All(G)$ are $n$-tuples
$(G/V_1, \ldots, G/V_n)$, where $n = 0$ is allowed and each $V_i$ is a closed subgroup of
$G$.  Given two such objects $(G/V_1, \ldots, G/V_n)$ and $(G/V_1', \ldots , G/V'_{n'})$,
a morphism $u \colon (G/V_1, \ldots, G/V_n) \to (G/V_1', \ldots, G/V_{n'})$ is given by a
function $u \colon \{1,\ldots, n'\} \to \{1, \ldots, n\}$ for which $V'_i = V_{u(i)}$
holds\footnote{if $n'=0$ there is precisely one such $u$, if $n' \ge 1$ and $n = 0$, then
  there is no such $u$.}.  The first functor\footnote{Unlike the second the first one
  factors over $\EPplus\Or(G)$.}, written as $P \mapsto |P|$, is defined on objects by
\begin{equation*}
  |(G/V_1,\dots,G/V_n)| := G/V_1 \times \cdots \times G/V_n. 
\end{equation*}
It sends a morphism $u \colon (G/V_1,\ldots,G/V_n) \to (G/V'_1,\ldots,G/V'_{n'})$ to the map
\begin{equation*}
  (x_1V_1,\dots,x_nV_n) \mapsto (x_{u(1)}V'_1,\dots,x_{u(n')}V'_{n'}).
\end{equation*}
The second one, written as $P \mapsto |P|^\wedge$, is defined on objects by
\begin{equation*}
  |G/V_1,\dots,G/V_n|^{\wedge} := G \times \cdots \times G = G^n. 
\end{equation*}
It sends a morphism $u \colon (G/V_1,\dots,G/V_n) \to (G/V'_1,\ldots,G/V'_{n'})$ to the map
\begin{equation*}
  (x_1,\dots,x_n) \mapsto (x_{u(1)},\dots,x_{u(n')}).
\end{equation*}
There is a canonical natural transformation $|\cdot|^\wedge \to |\cdot|$ given on
$(G/V_1,\dots,G/V_n)$ by the map
\begin{equation*}
  G \times \cdots \times G \to G/V_1 \times \cdots \times G/V_n,
  \quad (x_1,\dots,x_n) \mapsto (x_1 V_1,\dots,x_n V_n).
\end{equation*}  
For $P \in \EP\All(G)$ the action of $G$ on $|P|^\wedge$ is free, and we can think of
$|P|^\wedge \to |P|$ as a resolution.  For the empty tuple $\ast$, both, $|\ast|$ and
$|\ast|^\wedge$ are the empty product, i.e., a point.

%-----------------------------------------------------------------

\subsection{$V$-foliated distance}\label{subsec:foliated-distance-V}

Let $G$ be a td-group.  We can equip $G$ with a left invariant proper metric $d_G$ that
generates the topology of $G$, see~\cite[Thm.~4.5]{Haagerup-Przybyszewska(2006)}
or~\cite[Thm.~1.1]{Abels-Manoussos-Noskov-proper-inv-metric(2011)}.  Let $V$ be a closed
subgroup of $G$.  For $g,g' \in G$, $\beta \geq 0, \eta > 0$ we write
\begin{equation*}
  \fold{V}(g,g') < (\beta,\eta),
\end{equation*} 
iff there is $v \in V$ with $d_G(e,v) = d_G(g,gv) \leq \beta$ and $d_G(gv,g') < \eta$.
Similarly, for $g,g' \in G$, $\beta, \eta \geq 0$ we write
$\fold{V}(g,g') \leq (\beta,\eta)$, iff there is $v \in V$ with
$d_G(e,v) = d_G(g,gv) \leq \beta$ and $d_G(gv,g') \leq \eta$.  We will not consider
$< (\beta,0)$. 

The general idea here is to treat traveling in cosets of $V$ different from traveling in
arbitrary directions in $G$.  Typically, $\beta$ will be a bounded number, whereas $\eta$
will be a small number.  This definition is motivated by similar constructions for flow
spaces, see Subsection~\ref{subsec:V-fol-and-FS}. 

\begin{remark}
  We have $\fold{V}(g,g') \leq (\beta,0)$, iff $g^{-1}g' \in V$ and $d_G(g,g') \leq \beta$.
\end{remark}

\begin{remark}\label{rem:d-fol-open-U}
  Let $U$ be an open subgroup of $G$.  Then there is $\eta_0 > 0$ such that the
  $\eta_0$-neighborhood of $U$ is just $U$.  Thus for $g,g' \in G$ and $\eta < \eta_0$ we
  have
  \begin{equation*}
    \fold{U}(g,g') \leq (\beta,\eta) \quad \implies \quad \fold{U}(g,g') \leq (\beta+\eta,0).
  \end{equation*}
\end{remark}

\begin{remark}\label{rem:why-foliated-control}
  In our constructions of controlled categories later on (see Definition~\ref{def:C_G-P})
  we would ideally like to work with a $G$-invariant metric on $G/V$.  Typically we would
  be interested in small distances in $G/V$\footnote{For $V$ open in $G$ we could simply
    work with any discrete metric on $G/V$, for example the metric that put different
    points at distance $1$. The difficulty here arises only if $G/V$ is not discrete.}.
  Often there are however no $G$-invariant metrics on $G/V$ (and neither are there
  $G$-invariant uniform structures on $G/V$).  The notion of $V$-foliated control on $G$
  is (left) $G$-invariant and will serve us as a replacement for $G/V$ with (a
  non-existing) $G$-invariant metric.  One way to think about this replacement is that we
  have to add to points in $G/V$ choices of lifts to $G$, where the choice of lifts is
  only relevant up to bounded distance in the fibers for $G \to G/V$.  On the level of
  flow spaces this corresponds to the difference between parametrized geodesics and their
  images.
\end{remark}

%-----------------------------------------------------------------

\subsection{$P$-foliated distance}\label{subsec:foliated-distance-P}

There is a natural extension of $V$-foliated distance to products.  For
$P = (V_1,\dots,V_n) \in \EPplus\All(G)$,
$g = (g_1,\dots,g_n), g' = (g'_1,\dots,g'_n) \in |P|^\wedge = G^n$ we
write
\begin{equation*} \fold{P}(g,g') < (\beta,\eta),
\end{equation*}  
iff $\fold{V_i}(g_i,g'_i) < (\beta,\eta)$ for $i=1,\dots,n$.  Similarly, we write
$\fold{P}(g,g') \leq (\beta,\eta)$, iff $\fold{V_i}(g_i,g'_i) \leq (\beta,\eta)$ for
$i=1,\dots,n$.  Note that if $P = \ast$ is the empty tuple, then
$\fold{P}(g,g') < (\beta,\eta)$ and $\fold{P}(g,g') \leq (\beta,\eta)$ are empty
conditions and thus always satisfied.  However, $|\ast|^\wedge$ is just a point so this
is sensible\footnote{Recall that we do not allow $\eta =0$ when considering
  $< (\beta,\eta)$}.

\begin{remark}\label{rem:d-fol-open-P}
  Remark~\ref{rem:d-fol-open-U} also applies to $P \in \EPplus\OPEN(G)$: if
  $\fold{P}(\lambda,\lambda') \leq (\beta,\eta)$ with sufficiently small $\eta$, then
  $\fold{P}(\lambda,\lambda') \leq (\beta+\eta,0)$.
\end{remark}

We will need the following version of the triangle inequality for $\fold{P}$.  Note that
in the statement $\delta$ depends not on $P$.
 
\begin{lemma}[Foliated triangle inequality]\label{lem:fol-triangle-V-fol}
  Let $\alpha \geq 0$.  Then for any $\epsilon > 0$ there is $\delta > 0$ such that for
  $P \in \EPplus\All(G)$ and $g,g',g'' \in |P|^\wedge = G^n$
  \begin{equation*}
    \fold{P}(g,g'), \; \fold{P}(g',g'') \leq (\alpha,\delta) \quad \implies \quad  \fold{P}(g,g'') \leq (2\alpha,\epsilon).
  \end{equation*}
\end{lemma}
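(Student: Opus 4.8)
The plan is to reduce immediately to the case $P = (V)$ for a single closed subgroup $V \le G$: both $\fold{P}$ and the conclusion are defined coordinatewise, and the $\delta$ we produce will depend only on $\alpha$ and $\epsilon$, so one and the same $\delta$ works simultaneously in all coordinates of an arbitrary tuple $P$ (for $P = \ast$ the statement is vacuous). So fix $V$. Unravelling the definitions, and using that $d_G$ is left invariant --- hence $d_G(g,gv) = d_G(e,v)$ automatically --- the hypotheses $\fold{V}(g,g') \le (\alpha,\delta)$ and $\fold{V}(g',g'') \le (\alpha,\delta)$ supply $v, w \in V$ with $d_G(e,v), d_G(e,w) \le \alpha$, $d_G(gv,g') \le \delta$ and $d_G(g'w,g'') \le \delta$. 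The natural candidate witnessing $\fold{V}(g,g'') \le (2\alpha,\epsilon)$ is $u := vw \in V$. By the triangle inequality together with left invariance, $d_G(e,vw) \le d_G(e,v) + d_G(v,vw) = d_G(e,v) + d_G(e,w) \le 2\alpha$, which takes care of the first coordinate (and $d_G(e,u) = d_G(g,gu)$ holds automatically).

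For the second coordinate I would estimate
\[
  d_G(gvw, g'') \le d_G(gvw, g'w) + d_G(g'w, g'') \le d_G(gvw, g'w) + \delta ,
\]
and then rewrite, using left invariance, $d_G(gvw, g'w) = d_G(e, w^{-1} z w)$ with $z := (gv)^{-1}g'$, noting $d_G(e,z) = d_G(gv,g') \le \delta$. Thus the whole statement reduces to the following uniform fact about conjugation: for every $\epsilon' > 0$ there is $\delta' > 0$ such that $d_G(e, w^{-1} z w) \le \epsilon'$ whenever $d_G(e,w) \le \alpha$ and $d_G(e,z) \le \delta'$. This is the one substantive point, and it is where properness of $d_G$ is used: the closed ball $\bar{B} = \{\, w \in G \mid d_G(e,w) \le \alpha \,\}$ is compact, the map $(w,z) \mapsto w^{-1} z w$ is continuous, and it sends $\bar{B} \times \{e\}$ to $\{e\}$, so a short sequential compactness argument on $\bar{B}$ gives the claim. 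I expect this conjugation estimate to be the main (indeed essentially the only) obstacle; the rest is routine.

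To conclude, given $\alpha$ and $\epsilon$ I would apply the conjugation estimate with $\epsilon' = \epsilon/2$ to obtain $\delta'$, and set $\delta := \min\{\delta', \epsilon/2\}$. Then in the situation above $d_G(gvw, g'') \le \epsilon/2 + \delta \le \epsilon$, so $u = vw$ witnesses $\fold{V}(g,g'') \le (2\alpha,\epsilon)$. Since $\delta$ depends only on $\alpha$ and $\epsilon$, applying this in each coordinate of a general $P = (V_1,\dots,V_n) \in \EPplus\All(G)$ --- with the same $\delta$ for every $i$ --- yields $\fold{P}(g,g'') \le (2\alpha,\epsilon)$, which is the assertion.
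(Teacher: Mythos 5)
Your proof is correct, and it supplies exactly the content that the paper outsources: Lemma~\ref{lem:fol-triangle-V-fol} is deduced there from Lemma~\ref{lem:unif-continuity}~\ref{lem:unif-continuity:V}, which is in turn cited to the companion paper, and your argument (compose the witnesses to $u=vw$, bound $d_G(e,vw)\le 2\alpha$ by left invariance, and control the error term $d_G(gvw,g'w)$ uniformly) is the natural reconstruction of that deferred proof. Note that your conjugation estimate is, via left invariance ($d_G(e,w^{-1}zw)=d_G(zw,w)$... more precisely $d_G(e,w^{-1}zw)=d_G(w,zw)$), exactly the uniform continuity of right multiplication by elements of a compact set stated in Lemma~\ref{lem:unif-continuity}~\ref{lem:unif-continuity:M}, and your compactness argument for it (using properness of $d_G$) is sound.
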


\begin{proof}
  This is an easy consequence of~\ref{lem:unif-continuity:V} below.
\end{proof}

\begin{lemma}\label{lem:unif-continuity} \
  \begin{enumerate}[
                 label=(\thetheorem\alph*),
                 align=parleft, 
                 leftmargin=*,
                 labelindent=1pt,
                 %labelwidth=60pt,
                 %labelsep=3pt,
                 %itemindent=-3pt
                 ] 
  \item\label{lem:unif-continuity:M} Let $M \subseteq G$ be compact. For any
    $\epsilon > 0$ there is $\delta > 0$ such that for all $g,g' \in G$, $v \in M$ we have
    \begin{equation*}
      d_G(g,g') < \delta \implies d_G(gv,g'v) < \epsilon;
    \end{equation*}
  \item\label{lem:unif-continuity:V} Let $\alpha \geq 0$.  Then for any $\epsilon > 0$
    there is $\delta > 0$ such that for any closed subgroup $V$ of $G$ and
    $g,g',g'' \in G$ we haven
    \begin{equation*}
      \fold{V}(g,g'),  \fold{V}(g',g'') \leq (\alpha,\delta) \quad \implies \quad  \fold{V}(g,g'') \leq (2\alpha,\epsilon).
    \end{equation*}
  \end{enumerate}
\end{lemma}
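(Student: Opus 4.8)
The plan is to establish part~\ref{lem:unif-continuity:M} first --- this is the substantive point --- and then to deduce part~\ref{lem:unif-continuity:V} from it by an elementary triangle-inequality argument. The guiding principle is that left invariance of $d_G$ converts statements about right translation into statements near the identity, where compactness of the relevant sets can be exploited.

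For part~\ref{lem:unif-continuity:M} I would begin with the identity $d_G(gv,g'v) = d_G(e, v^{-1}g^{-1}g'v)$, which holds by left invariance, and set $w := g^{-1}g'$, so that $d_G(g,g') = d_G(e,w)$, again by left invariance. Thus it suffices to find, for given $\epsilon > 0$, a $\delta > 0$ such that $d_G(e,w) < \delta$ implies $d_G(e, v^{-1}wv) < \epsilon$ for every $v \in M$. The map $G \times M \to \IR$, $(w,v) \mapsto d_G(e, v^{-1}wv)$, is continuous and vanishes on $\{e\} \times M$; since $M$ is compact, a standard tube-lemma argument produces an open neighbourhood $W$ of $e$ in $G$ with $d_G(e, v^{-1}wv) < \epsilon$ for all $w \in W$ and all $v \in M$. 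Because $d_G$ generates the topology of $G$, there is $\delta > 0$ with the metric ball $\{ w \mid d_G(e,w) < \delta \}$ contained in $W$, and this $\delta$ works.

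For part~\ref{lem:unif-continuity:V}, given $\alpha \ge 0$ and $\epsilon > 0$, I would put $M := \{ v \in G \mid d_G(e,v) \le \alpha \}$, which is compact because $d_G$ is proper, apply part~\ref{lem:unif-continuity:M} to this $M$ with target $\epsilon/2$ to obtain $\delta_0 > 0$, and then fix any $\delta > 0$ with $\delta < \delta_0$ and $\delta \le \epsilon/2$; note that this $\delta$ does not depend on $V$, which is the uniformity claimed in the statement. Now suppose $V$ is a closed subgroup and $\fold{V}(g,g') \le (\alpha,\delta)$ and $\fold{V}(g',g'') \le (\alpha,\delta)$, with witnesses $v_1, v_2 \in V$ satisfying $d_G(e,v_i) \le \alpha$, $d_G(gv_1,g') \le \delta$ and $d_G(g'v_2,g'') \le \delta$. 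I would propose $v := v_1 v_2$, which lies in $V$ since $V$ is a subgroup, as the witness for $\fold{V}(g,g'') \le (2\alpha,\epsilon)$: left invariance and the triangle inequality give $d_G(e,v) \le d_G(e,v_1) + d_G(e,v_2) \le 2\alpha$, while $d_G(gv,g'') \le d_G(gv_1v_2, g'v_2) + d_G(g'v_2, g'')$, whose first summand is $< \epsilon/2$ by part~\ref{lem:unif-continuity:M} applied to $v_2 \in M$ and the pair $gv_1, g'$ (at distance $\le \delta < \delta_0$), and whose second summand is $\le \delta \le \epsilon/2$.

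The main obstacle is part~\ref{lem:unif-continuity:M}: one must obtain uniformity in $g, g'$ ranging over all of $G$ while simultaneously allowing $v$ to vary over the compact set $M$, and the trick making this manageable is the reduction to the conjugation map near the identity followed by the tube lemma. Everything else is routine bookkeeping; in particular the subgroup hypothesis on $V$ enters only through the closure of $V$ under multiplication ($v_1 v_2 \in V$), and the independence of $\delta$ from $V$ is automatic because $\delta$ is built purely out of $M = \{ d_G(e,\cdot) \le \alpha \}$.
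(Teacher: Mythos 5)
Your proof is correct. The paper itself gives no argument for this lemma --- it simply cites \cite[Lem.~3.1]{Bartels-Lueck(2023almost)} --- so there is nothing internal to compare against; your self-contained argument (reduction of~\ref{lem:unif-continuity:M} to uniform continuity of conjugation near $e$ over the compact set $M$ via the tube lemma, then deducing~\ref{lem:unif-continuity:V} with the witness $v_1v_2$, using properness of $d_G$ to make $M=\{d_G(e,\cdot)\le\alpha\}$ compact and hence $\delta$ independent of $V$) is the standard route and is sound.
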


\begin{proof}
   This is~\cite[Lem.~3.1]{Bartels-Lueck(2023almost)}.
\end{proof}

%-----------------------------------------------------------------

\subsection{The category $\contc_G(P)$}\label{subsec:category-contc}

\begin{definition}\label{def:mfc}
  Let $P \in \EPplus\All(G)$.  We define the $G$-control structure
  $\contstrc(P) = \big(\contstrc_1(P),\contstrc_2(P),\contstrc_G(P)\big)$ on
  $|P|^\wedge \times \IN$ as follows:
  \begin{itemize}[
                 label=$\bullet$,
                 align=parleft, 
                 leftmargin=*,
                 labelindent=5pt,
                 %labelwidth=60pt,
                 %labelsep=10pt,
                 %itemindent=-3pt
                 %itemsep=1pt
                 ] 
  \item $\contstrc_1(P)$ consists of all subsets $F$ of $|P|^\wedge \times \IN$ for which
    $F \cap |P|^\wedge \times \{t\}$ is finite for all $t \in \IN$;
  \item $\contstrc_2(P)$ consists of all subsets $E$ of
    $\big(|P|^\wedge \times \IN\big)^{\times 2}$ satisfying the following two conditions
    \begin{itemize}
    \item \emph{bounded control over $\IN$}: there is $\alpha > 0$ such that for all
      $\twovec{\lambda',t'}{\lambda,t} \in E$ we have $|t-t'|\leq \alpha$;
    \item \emph{foliated control over $|P|^\wedge$}: there is $\beta \geq 0$ such that for
      any $\eta > 0$ there is $t_0$ such that for all $t \geq t_0$ and all
      $\lambda,\lambda',t'$ we have
      \begin{equation*}
        \twovec{\lambda',t'}{\lambda,t} \in E  \quad \implies \quad \fold{P}(\lambda,\lambda') < (\beta,\eta);
      \end{equation*}
    \end{itemize}
  \item $\contstrc_G(P)$ consists of all relatively compact subsets of $G$. 
  \end{itemize}
  We write $\caly(P)$ for the collection of all subsets of $|P|^\wedge \times \IN$ that
  are contained in $|P|^\wedge \times \{0,\dots,N\}$ for some $N$.
\end{definition}

  It is an exercise to check that this is indeed a $G$-control structure.  To check that
$\contstrc_2(P)$ is closed under composition, the triangle inequality from
Lemma~\ref{lem:fol-triangle-V-fol} is used.

\begin{definition}\label{def:C_G-P}
  Let $G$ be a td-group and $\calb$ be a category with $G$-support.  We set
  \begin{equation*}
    \contc_G(P;\calb) := \contrCatUcoef{G}{\contstrc(P),\caly(P)}{\calb}. 
  \end{equation*} 	
  We will often drop $\calb$ from the notation and abbreviate $\contc_G(P) = \contc_G(P;\calb)$.
\end{definition}

The assignment $P \mapsto \contc_G(P)$ is functorial in
$\EPplus\All(G)$\footnote{$P \mapsto \contc_G(P)$ is not strictly functorial in
  $\EPplus\Or(G)$. This can be fixed, using a construction that will appear later in
  Subsection~\ref{subsec:functro-orbit-cat}. But for now we will ignore this.}.  We obtain an
$\EPplus\All(G)$-spectrum
\begin{equation*}
  P \mapsto \bfK \big( \contc_G(P) \big).
\end{equation*}

%-----------------------------------------------------------------

\subsection{The $\CVCYC$-Farrell--Jones Conjecture}

\begin{definition}[$\CVCYC$-assembly map]\label{def:Farrell-Jones-assembly-map} Let $G$
  be a td-group and $\calb$ be a category with $G$-support.  The maps $P \to \ast$ for
  $P \in \EP\CVCYC(G)$ induce a map
  \begin{equation}\label{eq:FJ-assembly-map}
    \hocolimunder_{P \in \EP\CVCYC(G)} \bfK \big( \contc_G(P;\calb) \big) \;
    \to \; \bfK \big( \contc_G(\ast;\calb) \big).
  \end{equation}
  This is the \emph{$\CVCYC$-assembly map for $\calb$}.
\end{definition}

In light of the Farrell--Jones Conjecture for discrete groups one might expect that the
homotopy colimit in~\eqref{eq:FJ-assembly-map} should be taken over $\Or_{\CVCYC}(G)$
instead of $\EP\CVCYC(G)$.  However, as discussed in
Subsection~\ref{subsec:comparison-discrete} it is an important point that we allow
products here.  On the other hand with slightly different definitions we could use
$\EP\Or_{\CVCYC}(G)$ in place of $\EP\CVCYC(G)$, see
Subsection~\ref{subsec:functro-orbit-cat}.

\begin{conjecture}[$\CVCYC$-Farrell--Jones Conjecture]%
\label{conj:Farrell-Jones-Conj-for-td} $ $ \\
  Let $G$ be a td-group and $\calb$ be a Hecke category with $G$-support. 
  Then the $\CVCYC$-assembly
  map~\eqref{eq:FJ-assembly-map} for $\calb$ is an equivalence.
\end{conjecture}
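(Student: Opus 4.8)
The plan is to establish Conjecture~\ref{conj:Farrell-Jones-Conj-for-td} by the geodesic-flow method of~\cite{Bartels-Lueck(2012annals)} adapted to td-groups, following the architecture of Sections~\ref{sec:formal-framwork}--\ref{sec:constr-transfer}; concretely, the conjecture holds for every td-group $G$ that admits a cocompact, proper, isometric action on a finite-dimensional $\CAT(0)$-space $X$ whose associated flow space satisfies~\cite[Assumption~2.7]{Bartels-Lueck(2023almost)}, and in particular for reductive $p$-adic groups with $X$ the extended Bruhat--Tits building (cf.\ Remark~\ref{rem:uses-mostly-CAT0}). The assertion is that the $\CVCYC$-assembly map~\eqref{eq:FJ-assembly-map} is an equivalence for an arbitrary Hecke category with $G$-support $\calb$; note that, unlike the $\COP$-Farrell--Jones Conjecture, no regularity condition on $\calb$ enters, so the entire input is geometric and dynamical.

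First I would construct the $G$-homology theory $\bfD_G$ (and its technical variant $\bfD^0_G$) on the category $\regularPOrGSC$ of combinatorial $G$-spaces built from products of orbits and simplices: these are the non-connective K-theories of controlled categories obtained from the categories $\contc_G(P)$ of Definition~\ref{def:C_G-P} by adjoining a second control direction implementing an $\epsilon$-control condition. Inside $\regularPOrGSC$ one has an object $\bfJ_\CVCYC(G)$ playing the role of the numerable classifying space for the family $\CVCYC$, i.e.\ with isotropy in $\CVCYC$ and ``$\CVCYC$-contractible''. From the construction one reads off that $\bfD_G$ is $G$-homotopy invariant and sends $G$-pushouts to homotopy pushouts; combined with the Davis--L\"uck hocolim description and the fact that $\bfJ_\CVCYC(G)$ absorbs $\EGF{G}{\CVCYC}$ up to $G$-homotopy, this yields that the canonical map
\begin{equation*}
  \hocolimunder_{P \in \EP\CVCYC(G)} \bfD^0_G\big(\bfJ_\CVCYC(G) \times P\big) \;\longrightarrow\; \bfD^0_G\big(\bfJ_\CVCYC(G)\big)
\end{equation*}
induced by the maps $P \to \ast$ is an equivalence.

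The heart of the argument is the \emph{transfer} (Theorem~\ref{thm:transfer-bfD0}): the $\EPplus\All(G)$-spectrum $P \mapsto \bfK(\contc_G(P))$ is a natural retract of $P \mapsto \bfD^0_G(\bfJ_\CVCYC(G) \times P)$. Granting this, $\alpha_{\CVCYC}$ is a retract of the equivalence displayed above and hence itself an equivalence, which is the conjecture. The transfer functor is produced from a family of \emph{almost equivariant} maps $X \to |\bfJ_\CVCYC(G)|^\wedge$ with arbitrarily small foliated discrepancy and long-time contraction controlled by the geodesic flow on the flow space of $X$ --- this is Theorem~\ref{thm:X-to-J}, whose analytic core is outsourced to~\cite{Bartels-Lueck(2023almost)}. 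Such a map induces a functor from $\contc_G(P)$ into the controlled category defining $\bfD^0_G(\bfJ_\CVCYC(G) \times P)$; the $V$-foliated and $P$-foliated distances of Subsections~\ref{subsec:foliated-distance-V} and~\ref{subsec:foliated-distance-P} are designed so that the resulting support estimates (Sections~\ref{sec:support-estimates} and~\ref{sec:constr-transfer}) exactly fit the foliated control condition of Definition~\ref{def:mfc}, and the retraction back to $\contc_G(P)$, whose composite with the transfer is the identity, is obtained by comparing the two control directions and an Eilenberg swindle in the $\IN$-direction. The $\CVCYC$-subgroups appearing as isotropy of $\bfJ_\CVCYC(G)$ are precisely the $V_c$ --- pointwise stabilizers of bi-infinite geodesics $c$ in $X$ --- which are either compact or $\IZ$-by-compact, i.e.\ lie in $\CVCYC$; this is how the family $\CVCYC$ is forced by the geometry.

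The main obstacle, and the reason the statement is phrased as a conjecture for a general td-group, is exactly the construction of these almost equivariant maps $X \to |\bfJ_\CVCYC(G)|^\wedge$: at present they are available only when $G$ acts nicely on a $\CAT(0)$-space --- buildings, hence reductive $p$-adic groups and, more generally, cofinite smooth proper actions on buildings of type $M$ subject to~\cite[Assumption~2.7]{Bartels-Lueck(2023almost)} --- so for a td-group without such an action the conjecture remains open. A further, purely bookkeeping, difficulty is the need to run the whole argument over the product categories $\EP\CVCYC(G)$ rather than $\Or_\CVCYC(G)$ (patching the local maps to orbits $G/V$ over the flow space produces maps to products of orbits, cf.\ the discussion in Subsection~\ref{subsec:comparison-discrete}) and to keep the two control directions of $\bfD^0_G$ versus $\bfD_G$ compatible with the homotopy colimit; this is handled by the formalism of Sections~\ref{sec:formal-framwork}--\ref{sec:properties-cats-D_upper_D0}.
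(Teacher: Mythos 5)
The statement you were asked about is a conjecture that the paper itself proves only for reductive $p$-adic groups (Theorem~\ref{thm:Farrell-Jones-Conjecture-for-reductive-p-adic}, with the expected extension to suitable building actions discussed in Remark~\ref{rem:uses-mostly-CAT0}), and you correctly identify this and leave the general td-group case open. Your outline of that special case — the functors $\bfD_G$ and $\bfD^0_G$ as K-theory of controlled categories with a second ($\epsilon$-control) direction, the object $\bfJ_\CVCYC(G)$, homotopy invariance, excision and skeleton continuity combined with Lemma~\ref{lem:hocolim-times-Q}, and the transfer of Theorem~\ref{thm:transfer-bfD0} built from the almost equivariant maps of Theorem~\ref{thm:X-to-J} — is essentially the paper's own argument, so the proposal is correct and takes the same approach.
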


\begin{remark}\label{rem:CVCYC-FJ-is-about}
  By Proposition~\ref{prop:c-is-b-for-smooth-P} below
  (for $P = \ast$) we have  \begin{equation*}
    \Omega \bfK ( \contc_G(\ast;\calb) ) \simeq  \bfK (\calb_G[\ast]) 
      = \bfK (\calb).
  \end{equation*}
  If $\calb = \calb(G;R)$, then $\bfK(\calb) = \bfK(\calh(G;R))$ and so in this case
  the $\CVCYC$-Farrell--Jones Conjecture~\ref{conj:Farrell-Jones-Conj-for-td} is about
  the K-theory of the Hecke algebra $\calh(G;R)$.
\end{remark}

\begin{remark}\label{rem:CVCYC-FJ-implies-discrete-FJ}
  Let $\Gamma$ be a discrete group and $\cala$ be an additive category with a
  $\Gamma$-action.  One obtains a category $\cala[\Gamma]$ whose objects are the objects
  of $\cala$.  Morphisms $A \to A'$ in $\cala[\Gamma]$ are finite formal sums
  $\sum_{\gamma} \varphi_\gamma \cdot \gamma$ where
  $\varphi_\gamma \colon \gamma A \to A'$ is a morphism in $\cala$.  The K-theoretic
  Farrell--Jones conjecture with coefficients for $\Gamma$ concerns the K-theory of
  $\cala[\Gamma]$.  As $\cala[\Gamma]$ is a Hecke category with $\Gamma$-support in an
  obvious way, one can use Proposition~\ref{prop:cop-iso-via-PSub} and
  Proposition~\ref{prop:c-is-b-for-smooth-P} below to check that for discrete groups the
  $\CVCYC$-Farrell--Jones Conjecture~\ref{conj:Farrell-Jones-Conj-for-td} implies the usual
  K-theoretic Farrell--Jones Conjecture with coefficients.  In fact, for discrete groups the two conjectures are equivalent see~\cite[Remark~5.7]{Bartels-Lueck(2023foundations)}.
\end{remark}

\begin{theorem}[$\CVCYC$-Farrell--Jones Conjecture for reductive $p$-adic groups]%
\label{thm:Farrell-Jones-Conjecture-for-reductive-p-adic}
  $ $\\
  Let $G$ be a reductive $p$-adic group and $\calb$ be a Hecke category with $G$-support. 
  Then the $\CVCYC$-assembly map~\eqref{eq:FJ-assembly-map} for $\calb$ is an
  equivalence.
\end{theorem}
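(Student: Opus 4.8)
The plan is to run, in the setting of td-groups, the machinery developed in Sections~\ref{sec:formal-framwork}--\ref{sec:constr-transfer}: one constructs a $G$-homology theory $\bfD_G$ on the category $\regularPOrGSC$ of combinatorial $G$-spaces (together with its technical variant $\bfD^0_G$), singles out inside $\regularPOrGSC$ the object $\bfJ_\CVCYC(G)$ playing the role of the classifying space for the family $\CVCYC$, and produces a transfer map. Once the transfer is available, the theorem follows from a short formal argument that feeds the excision and homotopy-invariance properties of $\bfD^0_G$ (verified in Section~\ref{sec:properties-cats-D_upper_D0}) into the $\bfJ_\CVCYC(G)$-variable. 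The whole point of the carefully tailored definition of $\contc_G(P)$, and in particular of the foliated control condition of Definition~\ref{def:mfc}, is precisely to make this transfer construction possible.

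In more detail, I would first invoke the transfer Theorem~\ref{thm:transfer-bfD0}, which exhibits the functor $\EPplus\All(G) \to \Spectra$, $P \mapsto \bfK(\contc_G(P;\calb))$, as a retract of the functor $P \mapsto \bfD^0_G(\bfJ_\CVCYC(G) \times P)$. Applying $\hocolimunder$ over $P \in \EP\CVCYC(G)$ and comparing with the values at the empty product $\ast$, the $\CVCYC$-assembly map~\eqref{eq:FJ-assembly-map} is then (in the arrow category) a retract of
\[
  \hocolimunder_{P \in \EP\CVCYC(G)} \bfD^0_G(\bfJ_\CVCYC(G) \times P) \;\longrightarrow\; \bfD^0_G(\bfJ_\CVCYC(G)) .
\]
Since a retract of an equivalence is an equivalence, it remains to show that this last map is an equivalence. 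Here I would use that $\bfD^0_G$ is a $G$-homology theory on $\regularPOrGSC$: it is compatible with the relevant homotopy colimits, so the source identifies with $\bfD^0_G$ applied to $\bfJ_\CVCYC(G)\times\EGF{G}{\CVCYC}$ (the homotopy colimit of the $|P|$ over $\EP\CVCYC(G)$ being a model for the classifying space $\EGF{G}{\CVCYC}$), and the map in question becomes $\bfD^0_G$ applied to the projection $\bfJ_\CVCYC(G)\times\EGF{G}{\CVCYC}\to\bfJ_\CVCYC(G)$. As $\bfJ_\CVCYC(G)$ already has all isotropy in $\CVCYC$ and is $\CVCYC$-contractible, both source and target of this projection are models for $\EGF{G}{\CVCYC}$, so it is a $G$-homotopy equivalence, and homotopy invariance of $\bfD^0_G$ finishes the argument; the minor point of passing between $\bfD_G$ and $\bfD^0_G$ is handled by the comparison maps of Sections~\ref{sec:cats-D_and_D0}--\ref{sec:properties-cats-D_upper_D0}.

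The genuinely hard part — and the only place where being a reductive $p$-adic group is used beyond the cocompact proper isometric action on the $\CAT(0)$ extended Bruhat--Tits building $X$ — is the construction of the transfer in Sections~\ref{sec:cals},~\ref{sec:tensor-products},~\ref{sec:support-estimates} and~\ref{sec:constr-transfer}. It rests on producing, from the dynamics of the geodesic flow on the flow space associated to $X$, a family of almost-equivariant maps $X \to |\bfJ_\CVCYC(G)|^\wedge$ with the right contraction and control properties; this is Theorem~\ref{thm:X-to-J}, whose proof is outsourced to~\cite{Bartels-Lueck(2023almost)} and invokes~\cite[Assumption~2.7]{Bartels-Lueck(2023almost)}. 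Turning these maps into an actual natural transformation of spectra (the tensor-product and homotopy-coherence constructions of Section~\ref{sec:tensor-products} and Appendix~\ref{app:homotopy-coherent}) and verifying the required support estimates of Section~\ref{sec:support-estimates} is where essentially all of the technical labor lies; the homotopy-theoretic steps in the preceding paragraph are by comparison routine.
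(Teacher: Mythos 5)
Your overall architecture is exactly the paper's: compare $\bfK(\contc_G(\--))$ with $\bfD_G(\--)$, express $\bfD_G$ through homotopy colimits of $\bfD_G^0$, use the transfer of Theorem~\ref{thm:transfer-bfD0} to exhibit the $\CVCYC$-assembly map as a retract of the corresponding map with $\bfJ_\CVCYC(G)$ inserted, and then exploit homotopy invariance, excision and skeleton continuity of $\bfD_G^0$ in the $\bfJ_\CVCYC(G)$-variable. You also correctly locate the genuinely hard input in Theorem~\ref{thm:X-to-J} and the transfer construction.

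However, your concrete justification of the final step has a gap: as written it relies on properties that $\bfD_G^0$ does not have. You propose to commute the homotopy colimit over $P$ into the second variable, identify the source with $\bfD_G^0$ applied to $\bfJ_\CVCYC(G)\times\EGF{G}{\CVCYC}$, and conclude because the projection is a $G$-homotopy equivalence between models of $\EGF{G}{\CVCYC}$. But (i) $\bfD_G^0$ is not shown to commute with such homotopy colimits --- the wedge decomposition of Subsection~\ref{subsec:computation-bfD-on-zero} holds for $\bfD_G$ and explicitly fails for $\bfD_G^0$, which is precisely why both functors are introduced; and (ii) homotopy invariance of $\bfD_G^0$ is only established in the restricted form $\ZerodimR\xrightarrow{\sim}\bfDelta^d_\ZerodimR$ of Proposition~\ref{prop:homotopy-inv-bfD0}; there is no statement that $\bfD_G^0$ inverts arbitrary $G$-homotopy equivalences, and maps between arbitrary models of $\EGF{G}{\CVCYC}$ need not even be morphisms in $\regularPOrGSC$. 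The paper's argument runs in the opposite direction: excision, skeleton continuity and the restricted homotopy invariance are used to write the functor $\bfD_G^0(\bfJ_\CVCYC(G)\times\--)$ as a homotopy colimit of functors $\bfD_G^0(\ZerodimR_0\times\--)$ with $\ZerodimR_0\in\regularPOrGSCzerocalf{\CVCYC}$, and then the purely formal cofinality statement of Lemma~\ref{lem:hocolim-times-Q} --- valid for any functor on a category with finite products, with no homological or geometric input --- shows that the canonical map $\hocolimunder_{\ZerodimR}\bfD_G^0(\ZerodimR_0\times\ZerodimR)\to\bfD_G^0(\ZerodimR_0)$ is an equivalence for each such $\ZerodimR_0$. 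No identification with classifying $G$-spaces is needed or used. You should also take the retract over $\regularPOrGSCzerocalf{\CVCYC}$ rather than over $\EP\CVCYC(G)$; the passage between the two uses the wedge decomposition for $\bfD_G$ (not $\bfD_G^0$) together with the $3\times 3$ homotopy colimit comparison of Proposition~\ref{prop:contd-vs-contd0}, which is slightly more than the ``minor point'' you defer to the comparison maps.
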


The formal framework of the proof of
Theorem~\ref{thm:Farrell-Jones-Conjecture-for-reductive-p-adic} is discussed in
Section~\ref{sec:formal-framwork}.  The proof is then carried out in
Sections~\ref{sec:cats-D_and_D0} to~\ref{sec:constr-transfer}.

%-----------------------------------------------------------------

\subsection{Relating $\contc_G(P;\calb)$ to $\calb[P]$}

\begin{proposition}\label{prop:c-is-b-for-smooth-P}
  Let $\calb$ be a category with $G$-support.  
  There is a zig-zag of weak equivalences
  between the $\EPplus\OPEN(G)$-$\Spectra$
  \begin{equation*}
    P \mapsto \Omega\bfK \big( \contc_G(P;\calb) \big) \quad \text{and} \quad P \mapsto \bfK \big( \calb[P] \big).
  \end{equation*}
\end{proposition}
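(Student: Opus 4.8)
The plan is to recognize $\contc_G(P;\calb)$, for $P$ ranging over $\EPplus\OPEN(G)$, as a Pedersen--Weibel type delooping of the coefficient category $(\calb[P])_\oplus$, closely following the delooping discussion around Lemma~\ref{lem:equivalence_and_swindle}. Everything below is natural in $P$, so it suffices to treat a fixed $P = (G/U_1,\dots,G/U_n)\in\EPplus\OPEN(G)$, with $q\colon|P|^\wedge = G^n \to |P| = \prod_i G/U_i$ the free, $G$-equivariant quotient map, and to check functoriality at the end.

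First I would use that the $U_i$ are open to put $\contstrc(P)$ in a standard form. By Remark~\ref{rem:d-fol-open-P} and the description of $\fold{P}(\lambda,\lambda')\le(\beta,0)$, for open $P$ a pair $\twovec{\lambda',t'}{\lambda,t}$ with $\fold{P}(\lambda,\lambda')<(\beta,\eta)$ for all small $\eta$ already satisfies $\lambda_i^{-1}\lambda_i'\in U_i$ and $d_G(\lambda_i,\lambda_i')\le\beta'$ for some $\beta'$; hence the foliated clause in $\contstrc_2(P)$ becomes: there are $\beta\ge0$ and $t_0$ so that every pair in $E$ of bottom $\IN$-level $\ge t_0$ lies in one fibre of $q$ at $G^n$-distance $\le\beta$. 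Introduce the $G$-control structure $\mfE_0$ on $|P|^\wedge$ whose $1$-part is the finite subsets, whose $2$-part consists of the subsets on which each pair lies in one fibre of $q$ at bounded $G^n$-distance, and whose $G$-part is the relatively compact sets; let $\widetilde{\mfE}_0$ be the product control structure on $|P|^\wedge\times\IN$ built from $\mfE_0$ and the bounded-propagation control in the $\IN$-direction (so the case $P=\ast$ recovers the set-up of Lemma~\ref{lem:equivalence_and_swindle}). Then $\contstrc(P)$ agrees with $\widetilde{\mfE}_0$ except that the fibre clause may be violated below some object/morphism-dependent $\IN$-level, and I claim this discrepancy washes out after dividing by $\caly(P)$: the $s\in S$ whose object, resp.\ the corner of a morphism, violates the fibre clause sit in a bounded range of $\IN$-levels, so the corresponding summand of $\bfB$ (Remark~\ref{rem:summands}, Remark~\ref{rem:corners}) lies in $\contrCatUcoef{G}{\contstrc(P)|_{\caly(P)}}{\calb}$ and dies in the quotient, while its complement lies in $\contrCatUcoef{G}{\widetilde{\mfE}_0}{\calb}$; faithfulness is handled by the shifted-copy trick of Remark~\ref{rem:X-shifted-copies} as in the proof of Lemma~\ref{lem:filtered-for-controlled}. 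This yields an equivalence $\contrCatUcoef{G}{\widetilde{\mfE}_0,\caly(P)}{\calb}\xrightarrow{\sim}\contc_G(P;\calb)$.

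Second I would identify the coefficient category. Since $|P|$ is a smooth $G$-set, $\contrCatUcoef{G}{\mfE^{\mathrm{triv}}}{\calb}\simeq(\calb[P])_\oplus$, where $\mfE^{\mathrm{triv}}$ is the trivial $G$-control structure on $|P|$ from Example~\ref{ex:trivial-control-structure}; and pushforward along $q$ gives an equivalence $q_*\colon\contrCatUcoef{G}{\mfE_0}{\calb}\xrightarrow{\sim}\contrCatUcoef{G}{\mfE^{\mathrm{triv}}}{\calb}$. Faithfulness of $q_*$ is immediate (it does not alter the underlying $\calb$-matrix); fullness and essential surjectivity use that objects of both sides are finite, that an object $(S,\pi,B)$ of the trivial-control category forces $\suppG B(s)\subseteq G_{\pi(s)}$, and that — because $S$ is finite and $\suppG\varphi$ is always compact — the bounded-$G^n$-distance clause of $\mfE_0$ is then automatic, so any set-theoretic section of $q$ provides the required lifts. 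Hence $\bfK(\contrCatUcoef{G}{\mfE_0}{\calb})\simeq\bfK(\calb[P])$.

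Third I would run the delooping of Lemma~\ref{lem:equivalence_and_swindle} with $\calb$ replaced by $\contrCatUcoef{G}{\mfE_0}{\calb}$: the Karoubi sequence $\contrCatUcoef{G}{\widetilde{\mfE}_0|_{\caly(P)}}{\calb}\to\contrCatUcoef{G}{\widetilde{\mfE}_0}{\calb}\to\contrCatUcoef{G}{\widetilde{\mfE}_0,\caly(P)}{\calb}$ gives a fibration sequence in K-theory, its first term is $\contrCatUcoef{G}{\mfE_0}{\calb}$ (restricting to bounded $\IN$-range makes objects finite and the $\IN$-coordinate inert), and its middle term has trivial K-theory by an Eilenberg swindle towards $\IN=\infty$. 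Chaining with the equivalences of the first two steps produces the desired zig-zag
\[
  \Omega\bfK(\contc_G(P;\calb))\ \xleftarrow{\ \sim\ }\ \Omega\bfK\big(\contrCatUcoef{G}{\widetilde{\mfE}_0,\caly(P)}{\calb}\big)\ \xrightarrow[\ \sim\ ]{\ \partial\ }\ \bfK(\contrCatUcoef{G}{\mfE_0}{\calb})\ \xrightarrow[\ \sim\ ]{\ q_*\ }\ \bfK(\calb[P]),
\]
natural in $P\in\EPplus\OPEN(G)$. The triviality of $\bfK(\contrCatUcoef{G}{\widetilde{\mfE}_0}{\calb})$ is the crux: the plain $\IN$-shift $(\lambda,t)\mapsto(\lambda,t+1)$ does not preserve admissibility, because the $G$-action on the $|P|^\wedge$-factor clashes with the fibrewise control, so Lemma~\ref{lem:formal-swindle} does not apply directly and a variation of it is required — first pushing the support of each object and morphism entirely into large $\IN$-levels via shifted copies, so that the fibre control is everywhere in force, and only then swindling towards $\infty$. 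Carrying out this swindle in the presence of the $G$-equivariant fibre control over $|P|^\wedge$ is the step I expect to be the main obstacle.
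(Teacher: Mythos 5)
Your proposal follows the same route as the paper's proof: your $\widetilde{\mfE}_0$ is, up to notation, the paper's control structure $\contstrc^\dis(P)$, and your three steps are exactly parts~\ref{lem:about-calc-dis:dis-is-no-dis}, \ref{lem:about-calc-dis:dis-is-B} and~\ref{lem:about-calc-dis:swindle} of Lemma~\ref{lem:about-calc-dis}, followed by the Karoubi/delooping argument. Your first two steps are fine (the paper treats them as exercises in the definitions, and your sketches of them are correct).

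The one place you go astray is precisely the step you flag as the main obstacle, the vanishing of $\bfK\big(\contrCatUcoef{G}{\widetilde{\mfE}_0}{\calb}\big)$. Your proposed remedy — pushing supports into large $\IN$-levels ``so that the fibre control is everywhere in force'' — is aimed at the wrong target: in $\widetilde{\mfE}_0=\contstrc^\dis(P)$ the fibrewise $(\beta,0)$-control is already imposed at \emph{every} $\IN$-level by definition (that was the whole point of your first step), so shifting supports upward changes nothing; that manoeuvre would be relevant only for the original $\contstrc(P)$, where (as the footnote to Lemma~\ref{lem:about-calc-dis} explains) it still would not help, since hypothesis~\ref{lem:formal-swindle:E_2} genuinely fails there. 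The difficulty you are sensing for $\widetilde{\mfE}_0$ is real but harmless: what fails is only the literal hypothesis~\ref{lem:formal-swindle:shift} of Lemma~\ref{lem:formal-swindle}, because $\big\{\twovec{f(x)}{gx}\mid x, g\in M\big\}$ is not in $\contstrc^\dis_2(P)$ (the element $\lambda^{-1}g^{-1}\lambda$ need not lie in $U_i$ for arbitrary $\lambda$). But in the proof of that lemma this hypothesis is used only to bound $\suppX \sh_{\bfB}$, and $\sh_{\bfB}$ does not move the $|P|^\wedge$-coordinate: its support is $\big\{\twovec{(\lambda(s),\,t+1)}{(g\lambda(s),\,t)}\mid s\in S,\ g\in \suppG B(s)\big\}$, and the requirement $\fold{P}(g\lambda(s),\lambda(s))\leq(\beta,0)$ for $g\in\suppG B(s)$ is exactly the constraint $\suppX\bfB\in\contstrc^\dis_2(P)$ that $\bfB$ already satisfies as an object of the category. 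So the plain shift swindle goes through once one checks the support conditions on the actual objects and morphisms produced by the swindle rather than demanding hypothesis~\ref{lem:formal-swindle:shift} for all of $|P|^\wedge\times\IN$; no modification of the swindle is needed, and your "main obstacle" dissolves.
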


We start the proof of Proposition~\ref{prop:c-is-b-for-smooth-P} with the following
observation.  Let $P \in \EPplus\OPEN(G)$.  As noted in Remark~\ref{rem:d-fol-open-P}, if
$\fold{P}(\lambda,\lambda') \leq (\beta,\eta)$ with small $\eta$, then
$\fold{P}(\lambda,\lambda') \leq (\beta+\eta,0)$.  This implies that (using the foliated
control condition) for $E \in \contstrc_2(P)$ there are $\beta > 0$ and $t_0 \in \IN$ such
that for all $t \geq t_0$ and all $\lambda,\lambda',t'$ we have
\begin{equation*}
  \twovec{\lambda',t'}{\lambda,t} \in E  \quad \implies \quad \fold{P}(\lambda,\lambda') < (\beta,0).
\end{equation*}
In the following definition we strengthen this to all $t$, not just sufficiently large
$t$.  This produce a control structure that is discrete over $|P|$ (with respect to the
projection $|P|^\wedge \to |P|$).

\begin{definition}
  Let $P \in \EPplus\OPEN(G)$.  We define the $G$-control structure
  $\contstrc^\dis(P) = \big(\contstrc^\dis_1(P), \contstrc^\dis_2(P),
  \contstrc^\dis_G(P)\big)$ as follows.  We set $\contstrc^\dis_1(P) := \contstrc_1(P)$,
  $\contstrc^\dis_G(P) := \contstrc_G(P)$ and define $\contstrc^\dis_2(P)$ to consist of
  all $E \in \contstrc_2(P)$ satisfying in addition the following: there is $\beta > 0$
  such that
  \begin{equation*}
    \twovec{\lambda',t'}{\lambda,t} \in E \quad \implies \quad \fold{P}(\lambda, \lambda') \leq (\beta,0).
  \end{equation*}
  We define
  $\contc_G^\dis(P) := \contrCatUcoef{G}{\contstrc^\dis(P),\caly(P)}{\calb}$.
\end{definition}

\begin{lemma}\label{lem:about-calc-dis}
  Let $P \in \EPplus\OPEN(G)$.
  \begin{enumerate}[
                 label=(\thetheorem\alph*),
                 align=parleft, 
                 leftmargin=*,
                 labelindent=1pt,
                 %labelwidth=60pt,
                 %labelsep=3pt,
                 %itemindent=-3pt
                 ] 
  \item\label{lem:about-calc-dis:dis-is-no-dis} The inclusion
    $\contc_G^\dis(P) \to \contc_G(P)$ is an equivalence.
  \item\label{lem:about-calc-dis:dis-is-B} The projection $|P|^\wedge \times \IN \to |P|$
    induces an equivalence
    $\contrCatUcoef{G}{\contstrc^\dis(P)|_{\caly(P)}}{\calb} \to (\calb[P])_\oplus$.
  \item\label{lem:about-calc-dis:swindle} The K-theory of the category
    $\contrCatUcoef{G}{\contstrc^\dis(P)}{\calb}$ vanishes.
  \end{enumerate}
\end{lemma}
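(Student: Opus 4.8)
The plan is to prove~\ref{lem:about-calc-dis:dis-is-B} first, since it both identifies the relevant subquotient with $(\calb[P])_\oplus$ and isolates the one structural fact used throughout: for $P \in \EPplus\OPEN(G)$, an object or morphism of $\contrCatUcoef{G}{\contstrc^\dis(P)}{\calb}$ is so tightly controlled that its matrix entries only link points lying over the \emph{same} point of $|P|$. Writing $q \colon |P|^\wedge \times \IN \to |P|$ for the projection, $\suppX \varphi \in \contstrc^\dis_2(P)$ means there is $\beta$ so that every element $\twovec{\lambda',t'}{\lambda,t} \in \suppX\varphi$ has $\fold{P}(\lambda,\lambda') \leq (\beta,0)$; as these elements are of the form $\twovec{\pi'(s')}{g\pi(s)}$ with $g \in \supp\varphi_s^{s'}$, the ``$\lambda^{-1}\lambda' \in V$''-part of $(\beta,0)$-control forces $g \cdot q(\pi(s)) = q(\pi'(s'))$, i.e.\ $\supp \varphi_s^{s'} \subseteq G_{q(\pi(s)),q(\pi'(s'))}$, while the ``$d_G \leq \beta$''-part is just a (forgettable) uniform bound; applied to identities this gives $\suppG B(s) \subseteq G_{q(\pi(s)),q(\pi(s))}$.

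For~\ref{lem:about-calc-dis:dis-is-B} I would first observe that objects of $\contrCatUcoef{G}{\contstrc^\dis(P)|_{\caly(P)}}{\calb}$ have finite $S$ (the sets of $\contstrc^\dis_1(P)|_{\caly(P)}$ are finite and $\pi$ is finite-to-one), and then check that $(S,\pi,B) \mapsto \bigl((B(s), q(\pi(s)))\bigr)_{s \in S}$, $\varphi \mapsto \varphi$, defines a functor to $(\calb[P])_\oplus$ --- well-definedness of the morphisms being precisely the previous paragraph. This functor is faithful since it is the identity on matrix entries, full since morphisms of $\calb$ (hence of $\calb[P]$) have compact and therefore $d_G$-bounded support so the missing bound $\beta$ can always be supplied, and essentially surjective by choosing lifts in $G^n$ of the points of $|P|$. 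Hence it is an equivalence.

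For~\ref{lem:about-calc-dis:dis-is-no-dis} I would show the inclusion $\contc_G^\dis(P) \to \contc_G(P)$ is essentially surjective, full, and faithful. The geometric input is Remark~\ref{rem:d-fol-open-P}: since $P$ is open, for any $E \in \contstrc_2(P)$ there are $\beta$ and $T$ so that the truncation $E \cap \{\twovec{\lambda',t'}{\lambda,t} \mid t \geq T\}$ already lies in $\contstrc^\dis_2(P)$. Thus for $\bfB \in \contc_G(P)$ the summand $\bfB|_{|P|^\wedge \times \{t \geq T\}}$ lies in $\contc_G^\dis(P)$ and is isomorphic to $\bfB$ in $\contc_G(P)$, the complement being $\caly(P)$-supported; this gives essential surjectivity, and the same truncation applied to the corners $\varphi_{\geq T}^{\geq T}$ of a morphism gives fullness. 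For faithfulness, if $\varphi \colon \bfB \to \bfB'$ in $\contc_G^\dis(P)$ factors in $\contrCatUcoef{G}{\contstrc(P)}{\calb}$ through some $\bfX$ with $\suppobj \bfX \in \caly(P)$, then bounded control over $\IN$ forces the factoring morphisms to vanish outside a bounded range of $\IN$-coordinates on $\bfB$ and $\bfB'$; hence $\varphi$ factors through a summand $\bfB|_{|P|^\wedge \times \{0,\dots,M\}} \in \contrCatUcoef{G}{\contstrc^\dis(P)|_{\caly(P)}}{\calb}$ via morphisms that are corners of $\varphi$ (hence in $\contrCatUcoef{G}{\contstrc^\dis(P)}{\calb}$), so $\varphi = 0$ in $\contc_G^\dis(P)$. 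This step, modelled on the proof of Lemma~\ref{lem:filtered-for-controlled}, is the part I expect to demand the most bookkeeping --- keeping track of which truncated pieces satisfy the $\contstrc^\dis$-condition rather than merely the $\contstrc$-condition.

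For~\ref{lem:about-calc-dis:swindle} I would run the Pedersen--Weibel swindle from the proof of Lemma~\ref{lem:equivalence_and_swindle}~\ref{lem:equivalence_and_swindle:swindle} along the shift $f \colon (\mu,t) \mapsto (\mu,t+1)$: set $\bfB^\infty = \bigl(S \times \IN,\ (s,n) \mapsto f^{\circ n}(\pi(s)),\ (s,n) \mapsto B(s)\bigr)$, with the inclusion $i_\bfB \colon \bfB \to \bfB^\infty$ and the shift $\sh_\bfB \colon \bfB^\infty \to \bfB^\infty$ yielding a natural isomorphism $(-)^\infty \oplus \id \cong (-)^\infty$, whence $\bfK$ vanishes. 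The only thing to verify is that $\bfB^\infty$, $i_\bfB$, $\sh_\bfB$ and $\varphi^\infty$ stay inside $\contrCatUcoef{G}{\contstrc^\dis(P)}{\calb}$; this is where the structural fact from the first paragraph is used: $f$ leaves the $|P|^\wedge$-coordinate fixed and changes the $\IN$-coordinate by a bounded amount, and $\suppG B(s) \subseteq G_{q(\pi(s)),q(\pi(s))}$ guarantees the $P$-foliated bound survives with the same $\beta$. (Note that Lemma~\ref{lem:formal-swindle} is not directly applicable here: its hypothesis~\ref{lem:formal-swindle:shift} asks for $\{\twovec{f(x)}{gx}\} \in \mfE_2$ for all $x$ and all $g$ in a relatively compact set, which fails for $\contstrc^\dis(P)$ because only the pairings $g \in \suppG B(s)$, $x \in \pi^\infty(\{s\} \times \IN)$ actually occur in the shift morphisms.)
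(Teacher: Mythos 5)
Your proposal is correct and follows essentially the same route as the paper, which dismisses~\ref{lem:about-calc-dis:dis-is-no-dis} and~\ref{lem:about-calc-dis:dis-is-B} as easy exercises and proves~\ref{lem:about-calc-dis:swindle} by exactly the shift swindle you describe, citing Lemma~\ref{lem:formal-swindle}. Your parenthetical caveat is well taken: hypothesis~\ref{lem:formal-swindle:shift} does fail literally for $\contstrc^\dis(P)$ (for general $x \in |P|^\wedge$ and $g$ in a compact set one cannot have $\fold{P}(gx,x) \leq (\beta,0)$), but the proof of Lemma~\ref{lem:formal-swindle} only invokes it for $x$ in the image of $\pi^\infty$ and $g \in \suppG B(s)$, where it holds by precisely the structural fact you isolate ($\suppX \bfB \in \contstrc^\dis_2(P)$ forces $\supp B(s)$ to stabilize $q(\pi(s))$ up to bounded distance); your direct verification of membership for $\bfB^\infty$, $i_\bfB$, $\sh_\bfB$, $\varphi^\infty$ is the careful way to say what the paper leaves implicit.
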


\begin{proof}
  The first two are easy exercises in the Definitions.  The third comes from the standard
  Eilenberg swindle on $\contrCatUcoef{G}{\contstrc^\dis(P)}{\calb}$ using the shift
  $(\lambda,t) \mapsto (\lambda,t+1)$, i.e., Lemma~\ref{lem:formal-swindle}
  applies\footnote{It is instructive to note that this swindle does not work on
    $\contrCatUcoef{G}{\contstrc(P)}{\calb}$: For $\varphi \in \contrCatUcoef{G}{\contstrc(P)}{\calb}$ there can be
    $\twovec{\lambda',t'}{\lambda,t} \in \suppX \varphi$ where $\lambda$ and $\lambda'$
    have different images in $|P|$, i.e., there is $\eta$ such that
    $\fold{P}(\lambda,\lambda') < (\beta,\eta)$ fails regardless of $\beta$.  Then
    $\twovec{\lambda',t'+n}{\lambda,t+n} \in \suppX \varphi^\infty$ for all $n$ and this
    violates the foliated control condition over $|P|^\wedge$, i.e.,
    $\varphi^\infty \not\in \contrCatUcoef{G}{\contstrc(P)}{\calb}$. In other
    words~\ref{lem:formal-swindle-Z:E_2} fails.}.
\end{proof}

\begin{proof}[Proof of Proposition~\ref{prop:c-is-b-for-smooth-P}]
  The Karoubi sequence 
  \begin{equation*}
    \contrCatUcoef{G}{\contstrc^\dis(P)|_{\caly(P)}}{\calb} \to \contrCatUcoef{G}{\contstrc^\dis(P)}{\calb}
    \to \contrCatUcoef{G}{\contstrc^\dis(P),{\caly(P)}}{\calb} = \contc_G^\dis(P)
  \end{equation*}  
  induces a fibration sequence in K-theory, see~\eqref{eq:karoubi-sequence}.
  Using~\ref{lem:about-calc-dis:swindle} we obtain a weak equivalence
  $\Omega \bfK \big( \contc_G^\dis(P) \big) \xrightarrow{\sim} \bfK
  \big(\contrCatUcoef{G}{\contstrc^\dis(P)|_{\caly(P)}}{\calb} \big)$.
  Now~\ref{lem:about-calc-dis:dis-is-no-dis} and~\ref{lem:about-calc-dis:dis-is-B} give
  the result.
\end{proof}

%%%%%%%%%%%%%%%%%%%%%%%%%%%%%%%%%%%%%%%%%%%%%%%%%%%%%%%%%%%%%%%%%%%%%%%%

\section{Formal framework of proof of the $\CVCYC$-Farrell--Jones Conjecture for reductive $p$-adic groups}%
\label{sec:formal-framwork}

The proof of the $\CVCYC$-Farrell--Jones Conjecture for reductive $p$-adic groups
(Theorem~\ref{thm:Farrell-Jones-Conjecture-for-reductive-p-adic}) is organized around two
functors
\begin{equation*}
  \bfD_G( \--;\calb) ,\; \bfD_G^0(\--;\calb) \; \colon \regularPOrGSC \; \to \; \Spectra.  
\end{equation*}  
We will define the source category below and then discuss some properties of these
functors.  Theorem~\ref{thm:Farrell-Jones-Conjecture-for-reductive-p-adic} is then an easy
consequence of these properties.  The functors
$\bfD_G(\--;\calb) = \bfK(\contd_G(\--;\calb))$ and
$\bfD_G^0(\--;\calb) = \bfK(\contd^0_G(\--;\calb))$ will be constructed in
Section~\ref{sec:cats-D_and_D0} as the K-theory of certain additive categories.  The
verification of their properties will occupy Sections~\ref{sec:cats-D_and_D0}
to~\ref{sec:constr-transfer}.  For most of these properties we can work with any category
with $G$-support $\calb$.  The exception is the transfer from
Theorem~\ref{thm:transfer-bfD0}, for which we need $\calb$ to be a Hecke category with
$G$-support.  Often we will drop $\calb$ from the notation and write
$\bfD_G( \--) = \bfD_G( \--;\calb)$ and $\bfD_G^0(\--) = \bfD_G^0(\--;\calb)$.

%-----------------------------------------------------------------

\subsection{$\calc$-simplicial complexes.}

\begin{definition}[$\calc$-simplicial complexes]
  Let $\calc$ be a category.  A \emph{$\calc$-simplicial complex} is a pair
  $\bfSigma = (\Sigma,C)$, where $\Sigma$ is a simplicial complex and
  $P \colon \simp(\Sigma)^{\op} \to \calc$ is a contravariant functor from the poset of
  simplices of $\Sigma$, ordered by inclusion, to $\calc$.  A map of $\calc$-complexes
  $(\Sigma,C) \to (\Sigma',C')$ is a pair $\bff = (f,\kappa)$, where
  $f \colon \Sigma \to \Sigma'$ is a simplicial map and $\kappa \colon C \to C' \circ f_*$
  is a natural transformation.  Here we write
  $f_* \colon \simp(\Sigma) \to \simp(\Sigma')$ for the map induced by $f$.

  The \emph{dimension} of $\bfSigma = (\Sigma,C)$ is the dimension of $\Sigma$; its
  \emph{$d$-skeleton} is $\bfSigma^d := (\Sigma^d, C|_{\simp(\Sigma^d)})$, where
  $\Sigma^d$ is the $d$-skeleton of $\Sigma$.
\end{definition}

\begin{definition}
  We define $\regularPOrGSC$ as the category of $\EPplus\All(G)$-simplicial complexes and
  write $\regularPOrGSCzero$ for the full subcategory of $0$-dimensional
  $\EPplus\All(G)$-simplicial complexes.
\end{definition}

We can think about $\regularPOrGSCzero$ as being obtained from $\EPplus\All(G)$ by adding
arbitrary coproducts to $\EPplus\All(G)$.  There is a product
\begin{equation}\label{eq:product-POrGSC-with-POrGSCzero}
  \begin{split}
    \regularPOrGSC \times \regularPOrGSCzero \qquad  & \to   \qquad \regularPOrGSC \\
    \big( (\Sigma,P), (\ZerodimRunderlyingSet,Q) \big) \;& \mapsto \; \big(\Sigma \times
    \ZerodimRunderlyingSet, (\sigma,\ZerodimRunderlyingSetElement) \mapsto P(\sigma)
    \times Q(\ZerodimRunderlyingSetElement) \big).
  \end{split}
\end{equation}
We write $\regularPOrGSCzerocalf{\calf}$ for the full subcategory of $\regularPOrGSCzero$
on all $(\ZerodimRunderlyingSet,P)$ where $P$ takes values in $\EP\calf(G)$.  A drawback
of our notation is that $\regularPOrGSCzerocalf{\All(G)} \subsetneq \regularPOrGSCzero$,
because the empty product $\ast$ is not contained in $\EP\All(G)$.  However, typically
$\calf$ will be a proper collection of subgroups, so this should not lead to serious
confusion.

\begin{example}\label{ex:bfJ_F}
  Let $\calf$ be a collection of closed subgroups of $G$.
  % Typically $\calf$ will be a family, i.e., closed under conjugation and finite
  % intersections, but this will not be important at the moment.
  We write $\Sigma_\calf(G)$ for the following simplicial complex.  Vertices of
  $\Sigma_\calf(G)$ are pairs $(n,V)$ with $n \in \IN$ and $V \in \calf$.  Vertices
  $(n_0,V_0),\dots,(n_k,V_k)$ form a simplex of $\Sigma_\calf(G)$, if and only if the
  $n_i$ are pairwise distinct\footnote{Alternatively, $\Sigma_\calf(G)$ is the infinite
    join $\ast_{n \in \IN} (\coprod_{F \in \calf} G/F)$.}.  There is an evident functor
  $P_\calf(G) \colon \simp(\Sigma_\calf(G))^{\op} \to \EP\calf(G)$ that sends a simplex
  $\sigma = \big\{(n_0,V_0),\ldots,(n_k,V_k)\big\}$ to $(G/V_0,\ldots,G/V_k)$ with
  $n_0 < \cdots < n_k$, where we choose the numbering such that $n_0 < \cdots < n_k$.  We
  obtain $\bfJ_\calf(G) := \big(\Sigma_\calf(G),P_\calf(G)\big) \in \regularPOrGSC$.
   
  We write $\Sigma^N_\calf(G)$ for the subcomplex of $\Sigma_\calf(G)$ spanned by all
  vertices $(n,V)$ with $n \leq N$\footnote{Alternatively,
    $\Sigma^N_\calf(G) = \ast_{n \leq N} (\coprod_{F \in \calf} G/F)$.}.  Then
  $\Sigma^N_\calf(G)$ is a proper subcomplex of the $N$-skeleton
  $\big(\Sigma_\calf(G)\big)^N$.  For finite $\calf$, $\Sigma^N_\calf(G)$ is a finite
  complex, while the $N$-skeleton $\big(\Sigma_\calf(G)\big)^N$ is never finite.  We set
  $\bfJ^N_\calf(G) := \big(\Sigma^N_\calf(G),P_\calf(G)|_{\simp(\Sigma^N_\calf(G))}\big)$.
  
  We will discuss in Subsection~\ref{subsec:realization-PORG-simpl-cx} realization
  functors from $\regularPOrGSC$ to $G$-spaces.  The realization $|\bfJ_\calf(G)|$ of
  $\bfJ_\calf(G)$ is the numerable classifying spaces for
  $\calf$~\cite[A1]{Baum-Connes-Higson(1994)}, see Example~\ref{ex:bfJ_F-realizations}.
  This motivated the definition of $\bfJ_\calf(G)$.
\end{example}

%-----------------------------------------------------------------

\subsection{Coefficients of $\bfD_G$}\label{subsec:Coefficients-bfD}

Write
$I \colon \EPplus\All(G) \; \to \; \regularPOrGSCzero$ for the inclusion. 
The underlying simplicial complex of $I(P)$ consist of one vertex which is sent to $P$.
We will show in
Proposition~\ref{prop:coefficients-contd} that there exists a zig-zag of equivalences of
$\EPplus\All(G)$-spectra between $I^*\Omega\bfD_G(\--)$ and
$\bfK \big(\contc_G(\--)\big)$.
To ease notation we will often abbreviate $P = I(P)$ and omit $I^*$ from the notation.

%-----------------------------------------------------------------

\subsection{Computation of $\bfD_G$ on
  $\regularPOrGSCzero$}\label{subsec:computation-bfD-on-zero}

Let
$(\ZerodimRunderlyingSet,P) \in \regularPOrGSCzero$.  We will show in
Proposition~\ref{prop:computation-bfD-on-zero} that the canonical map

\begin{equation}\label{eq:bfD(B,P)-as-bigvee} \bigvee_{\ZerodimRunderlyingSetElement \in
    \ZerodimRunderlyingSet} \bfD_G(P(\ZerodimRunderlyingSetElement))
  \xrightarrow{\sim} \bfD_G((\ZerodimRunderlyingSet,P))
\end{equation}
is an equivalence.

%-----------------------------------------------------------------

\subsection{$\bfD_G^0$ determines $\bfD_G$}\label{subsec:bfD-vs-bfD0}

We will construct in
Proposition~\ref{prop:contd-vs-contd0} a diagram in $\regularPOrGSC\text{-}\Spectra$
\begin{equation*}
  \xymatrix{\bfD_G^0(\--) & \bfD_G^0(\--) \ar[r] \ar[l] & \bfD_G^0(\--) \\
    \bfD_G^0(\--) \ar[u] \ar[d] & \bfD_G^0(\--) \ar[r] \ar[l] \ar[u] \ar[d]
    & \bfD_G^0(\--) \ar[u] \ar[d] \\
    \bfD_G^0(\--) & \bfD_G^0(\--) \ar[r] \ar[l] & \bfD_G^0(\--) 
  }	
\end{equation*}
whose homotopy colimit is equivalent to $\bfD_G(\--)$.

%-----------------------------------------------------------------

\subsection{Homotopy invariance for $\bfD_G^0$}\label{subsec:homotopy-inv-bfD0}

Let
$\ZerodimR = (\ZerodimRunderlyingSet,P) \in \regularPOrGSCzero$.  Let
$\pi \colon \ZerodimRunderlyingSet \times \Delta^d \to \ZerodimRunderlyingSet$ be the projection. 
We obtain
\begin{equation*}
  \bfDelta^d_\ZerodimR := (\ZerodimRunderlyingSet \times \Delta^d,P \circ \pi_*) \in \regularPOrGSC.
\end{equation*}
A choice of a point $x_0 \in |\Delta^d|$ determines an inclusion
$\bfi \colon \ZerodimR \to \bfDelta^d_\ZerodimR$.  We show in
Proposition~\ref{prop:homotopy-inv-bfD0} that $\bfi$ induces an equivalence
\begin{equation*}
  \bfD_G^0(\ZerodimR)   \; \xrightarrow{\sim} \; 	\bfD_G^0(\bfDelta^d_\ZerodimR).
\end{equation*}

%-----------------------------------------------------------------

\subsection{Excision for $\bfD_G^0$}\label{subsec:excision-bfD0}

Let $\bfSigma = (\Sigma,P)$ in
$\regularPOrGSC$ be $d$-dimensional.  Assume that the vertices of $\Sigma$ are locally
ordered; then any simplex of $\Sigma$ is canonically isomorphic to a standard simplex
$\Delta^k$.  Let $B$ be the set of $d$-simplices of $\Sigma$.  We obtain a canonical map
$f \colon B \times \Delta^d \to \Sigma$.  Let $\pi \colon B \times \Delta^d \to B$ be the
projection.  Set $\widehat\Sigma := B \times \Delta^d$, $\widehat P := p|_B \circ \pi_*$
and $\widehat \bfSigma := ( \widehat\Sigma, \widehat P)$.  Let
$\bff := (f,\kappa) \colon \widehat\bfSigma \to \bfSigma$ where $\kappa$ is defined as
follows.  Let $\tau$ be a simplex of $\widehat\Sigma$.  Then $\tau$ is contained in
$\{ \sigma \} \times \Delta^d$ for some $d$-simplex $\sigma$ of $\Sigma$ and we have
$\widehat P(\tau) = P(\sigma)$ and $f(\tau) \subseteq \sigma$.  We define
$\kappa_\tau \colon \widehat P(\tau) \to P(f(\tau))$ as the evaluation of $P$ on the
inclusion $f(\tau) \subseteq \sigma$.  Let $\bf\Sigma' = (\Sigma',P')$ be the
$(d\,\text{-}1)$-skeleton of $\bfSigma$ and $\widehat \bfSigma' = (\Sigma',P')$ be the
$(d\,\text{-}1)$-skeleton of $\widehat\bfSigma$.  Then $\bff$ restricts to
$\bff' \colon \widehat\bfSigma' \to \bfSigma'$.  We write
$\widehat \iota \colon \widehat\bfSigma' \to \widehat\bfSigma$ and
$\iota \colon \bfSigma' \to \bfSigma$ for the canonical inclusions and obtain
\begin{equation}\label{eq:excison-diagram-hat-no-hat} \xymatrix{\widehat\bfSigma'
    \ar[r]^{\bff'} \ar[d]^{\widehat \iota}
    &  \bfSigma'  \ar[d]^{\iota} \\
    \widehat\bfSigma \ar[r]^{\bff} & \bfSigma. 
  }
\end{equation}
We show in Proposition~\ref{prop:excision-bfD0} that $\bfD_G^0(\--)$ 
takes this diagram to a homotopy pushout diagram of spectra.

%-----------------------------------------------------------------

\subsection{Skeleton continuity of $\bfD_G^0$}\label{subsec:continuity-bfD0}

Let $\bfSigma \in \regularPOrGSC$.  We show
in Proposition~\ref{prop:continuity-contd0} that the canonical map
\begin{equation*}
  \hocolimunder_{d \in \IN} \bfD_G^0(\bfSigma^d) \xrightarrow{\sim} \bfD_G^0(\bfSigma)
\end{equation*}
is an equivalence.

%-----------------------------------------------------------------

\subsection{Transfer}\label{subsec:transfer-bfD0}

We use $\bfJ_{\CVCYC}(G)$ from
Example~\ref{ex:bfJ_F} and consider
\begin{eqnarray*}
  \bfD_G^0\big(\bfJ_\CVCYC(G) \times \--;\calb \big) \colon \regularPOrGSC \to \Spectra.
\end{eqnarray*}
The projections $\bfJ_\CVCYC(G) \times P \to P$ induce a projection 
\begin{equation*}
  \bfp \colon \bfD_G^0\big(\bfJ_\CVCYC(G) \times \--;\calb\big) \to \bfD_G^0(\--;\calb)
\end{equation*}
in $\regularPOrGSC$-spectra.
  
\begin{theorem}\label{thm:transfer-bfD0}
  Assume that $G$ is a reductive $p$-adic group and that $\calb$ is a Hecke category with
  $G$-support.  Then the projection $\bfp$ admits a section, i.e., there is
  $\bftr \colon \bfD_G^0(\--;\calb) \to \bfD_G^0 \big(\bfJ_\CVCYC(G) \times
  \--;\calb\big)$ such that $\bfp \circ \bftr$ is equivalent to the identity in
  $\regularPOrGSCzero$-spectra. 
\end{theorem}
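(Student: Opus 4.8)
The plan is to build the transfer map $\bftr$ by constructing, for each $G$-space $X$ appearing as the realization of an object of $\regularPOrGSC$, a family of ``transfer objects'' in the additive categories underlying $\bfD_G^0$ that are controlled near the diagonal and whose direct sum realizes the inclusion of $\bfD_G^0(\--;\calb)$ as a retract of $\bfD_G^0\big(\bfJ_\CVCYC(G)\times\--;\calb\big)$. The essential geometric input is a construction of \emph{almost equivariant} maps from the extended Bruhat-Tits building $X$ (with its cocompact proper isometric $G$-action) to the space $|\bfJ_\CVCYC(G)|^\wedge$ associated with the classifying object $\bfJ_\CVCYC(G)$; this is where the dynamics of the geodesic flow on a flow space associated with $X$ enters, and is exactly the content outsourced to~\cite{Bartels-Lueck(2023almost)} (and, as the excerpt flags in Remark~\ref{rem:uses-mostly-CAT0} via Theorem~\ref{thm:X-to-J}, the one step relying on a technical assumption on the flow-space action). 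First I would fix such an almost-equivariant map $\phi\colon X\to |\bfJ_\CVCYC(G)|^\wedge$ with good control estimates: the failure of equivariance should be small on compact subsets of $G$ after composing with the geodesic flow, so that the $\epsilon$-control condition built into $\bfD_G^0$ can absorb it.

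Next I would use $\phi$ to manufacture the transfer. On the level of controlled categories, the idea is the classical ``pullback along $\phi$'' transfer of Farrell--Jones theory: an object $\bfB=(S,\pi,B)$ of the controlled category underlying $\bfD_G^0(\bfSigma)$ is sent to an object whose index set is roughly $S$ times the set of simplices (vertices) of $\bfJ_\CVCYC(G)$ that $\phi$ meets near $\pi(s)$, with the new projection recording both the old position and the $\phi$-image; morphisms are pulled back entrywise. Here the Hecke-category hypothesis on $\calb$ is used crucially: the restriction, induction, and translation morphisms supplied by Definition~\ref{def:Hecke-category}\ref{def:Hecke-category:subgr}--\ref{def:Hecke-category:cofinal} are exactly what is needed to define the matrix coefficients of the transferred morphisms (this is presumably why $\bfD_G^0$, and not $\bfD_G$, is the relevant functor — $\bfD_G^0$ has the second control direction tailored so that these support estimates close up). I would then verify in turn: (i) the transferred data satisfies the $\contstrc$-control, the $\IN$-bounded control, and the $\epsilon$-control conditions (support-estimate bookkeeping, using the foliated triangle inequality Lemma~\ref{lem:fol-triangle-V-fol} and the uniform continuity Lemma~\ref{lem:unif-continuity}); (ii) $\bftr$ is functorial in $\regularPOrGSC$ (naturality in maps of $\EPplus\All(G)$-simplicial complexes); and (iii) the composite $\bfp\circ\bftr$ is equivalent to the identity, which amounts to a swindle/homotopy argument showing that summing over the $|\bfJ_\CVCYC(G)|^\wedge$-direction and then projecting back is, up to controlled isomorphism and an Eilenberg swindle of the type in Lemma~\ref{lem:formal-swindle}, the identity.

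In more detail, step (iii) is where I would use that $\bfJ_\CVCYC(G)$ is the (combinatorial model of the) numerable classifying space for $\CVCYC$: a partition-of-unity argument on $|\bfJ_\CVCYC(G)|$ pulled back via $\phi$ produces, functorially in the simplex coordinate, a chain homotopy between $\bfp\circ\bftr$ and a ``diagonal'' map, and the diagonal map is identified with the identity after passing to $K$-theory by the standard trick of absorbing the auxiliary summands into an infinite direct sum that carries a swindle (so its $K$-theory contribution vanishes). This is precisely the mechanism by which $P\mapsto\bfK(\contc_G(P))$ becomes a \emph{retract} of $P\mapsto\bfD_G(\bfJ_\CVCYC(G)\times P)$, as announced in Section~\ref{sec:formal-framwork}. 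The construction is organized so that it suffices to do everything on $\regularPOrGSCzero$ (single vertices, i.e.\ objects of $\EPplus\All(G)$) and then extend over simplices using the skeleton continuity and excision properties; hence the statement that $\bfp\circ\bftr$ is equivalent to the identity ``in $\regularPOrGSCzero$-spectra'' is the correct and sufficient target.

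The main obstacle is step (i) combined with the geometric construction of $\phi$: one must produce an almost-equivariant $\phi\colon X\to|\bfJ_\CVCYC(G)|^\wedge$ whose defect of equivariance is controlled \emph{uniformly} in a way compatible simultaneously with the foliated control condition over $|P|^\wedge$ (Definition~\ref{def:mfc}), the $\IN$-direction control, and the extra $\epsilon$-direction of $\bfD_G^0$. This is genuinely hard because it requires exploiting the long-time behaviour of the geodesic flow on the flow space of $X$ to replace non-equivariant ``local'' maps $U\to G/V$ by a globally patched map landing in products of orbits (the issue flagged in the footnote to Theorem~\ref{thm:FS-to-J}); the needed input is Theorem~\ref{thm:X-to-J} together with the flow-space estimates of~\cite{Bartels-Lueck(2023almost)}. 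Once $\phi$ with the required estimates is in hand, the remaining verifications are laborious but formal support-calculus in the spirit of Remarks~\ref{rem:corners}--\ref{rem:X-shifted-copies} and the Karoubi-filtration machinery of Section~\ref{sec:controlled-algebra}. I would therefore structure the proof as: (a) cite/recall the almost-equivariant map $\phi$ and its control properties; (b) define the transfer functor on controlled categories and check it lands in $\contd^0_G$; (c) check functoriality in $\regularPOrGSC$; (d) produce the homotopy $\bfp\circ\bftr\simeq\id$ via the partition of unity and a swindle; concluding the theorem.
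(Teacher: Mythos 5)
You correctly identify the geometric input (the almost equivariant maps $f\colon X\to|\bfJ_\CVCYC(G)|^\wedge$ of Theorem~\ref{thm:X-to-J}, supplied by the flow-space analysis of the companion paper), and you are right that the construction is carried out over $\regularPOrGSCzero$ and then extended formally. But the algebraic core of your transfer is not the paper's, and as described it does not work. Your functor relabels an object $(S,\pi,B)$ by recording nearby $\phi$-images of $\pi(s)$ and ``pulls back morphisms entrywise''. Such a relabelling cannot simultaneously satisfy the finiteness condition on objects and the foliated control condition on morphisms — this tension is exactly the obstruction isolated in Remark~\ref{rem:shortcomings-singular-ch-cx} — and, more fundamentally, if it did work it would make $\bfp\circ\bftr$ equal to ``multiplication by the number of labels'', not the identity. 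The paper's transfer is instead a \emph{diagonal tensor product with the finite simplicial chain complex} $\bfC_*(\ball_{\underline{t}})$ of a large triangulated ball $\ball_{\underline{t}}\subseteq X$ (Sections~\ref{sec:cals}--\ref{sec:constr-transfer}), pushed forward to $|\bfJ_\CVCYC(G)|^\wedge$ along $f_{\underline{t}}\circ\bary$; the object attached to $(\sigma,s)$ is $G_\sigma\cap B(s)$, which is where the Hecke axioms and the idempotent completion enter. Because the balls are not $G$-invariant, this is only a \emph{homotopy coherent} functor into finite chain complexes (built from $i$, $r$ and the homotopy $H$ between $\bfC_*(\ball)$ and $\bfS_*(X)$), which forces the dg-category K-theory of Appendices~\ref{app:K-theory-dg-cat} and~\ref{app:homotopy-coherent}. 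None of this apparatus appears in your sketch.

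The second gap is your step (iii). The identity $\bfp\circ\bftr\simeq\id$ does not come from a partition of unity on $|\bfJ_\CVCYC(G)|$ plus an Eilenberg swindle; it comes from the \emph{contractibility of the ball}: the augmentation $p(\bfC_*(\ball))\to\bbone{U}$ is a chain homotopy equivalence, which yields a strict natural transformation $\tau\colon P\circ F\to I$ by weak equivalences (Lemmas~\ref{lem:tau-is-transformation} and~\ref{lem:tau-is-weak-equiv}) and hence homotopic maps in K-theory. This Euler-characteristic-one mechanism is the entire reason the transfer is a section, and it is absent from your argument. Finally, the reason $\bfD_G^0$ rather than $\bfD_G$ is used is not that its control conditions ``close up'' better, but that $\bfD_G^0$ admits the sequence description of Remark~\ref{rem:contd0-more-explicit}, so the transfer can be built independently for each $\underline{t}\in\IN^{\times 2}$ (with balls growing in $t_0$, $t_1$) and assembled as a restricted product; the support estimates of Proposition~\ref{prop:supp-tildeF_l} are then what verifies that the product lands in $\contd^0_G(\bfJ_\CVCYC(G)\times\--)$.
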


\begin{proof}[Proof of Theorem~\ref{thm:Farrell-Jones-Conjecture-for-reductive-p-adic}
  modulo properties of $\bfD_G$ and $\bfD_G^0$]  
  We need to show that the
  $\CVCYC$-assembly map
  \begin{equation}\label{eq:FJ-assembly-map-again}
    \hocolimunder_{P \in \EP\CVCYC(G)} \bfK \big( \contc_G(P) \big) \;
    \to \; \bfK \big( \contc_G(\ast) \big)
  \end{equation} 
  is an equivalence.  By the equivalence from Subsection~\ref{subsec:Coefficients-bfD} we
  can equivalently show that
  \begin{equation}\label{eq:FJ-assembly-map-with-D}
    \hocolimunder_{P \in \EP\CVCYC(G)} \bfD_G(P) \; \to \; \bfD_G(\ast)
  \end{equation} 
  is an equivalence.  We obtain the following factorization
  of~\eqref{eq:FJ-assembly-map-with-D}
  \begin{equation}\label{eq:FJ-assembly-map-factorization}
    \hocolimunder_{P \in \EP\CVCYC(G)} \bfD_G(P) \; \to \;
    \hocolimunder_{(\ZerodimRunderlyingSet,P) \in \regularPOrGSCzerocalf{\CVCYC}}
    \bfD_G(\ZerodimRunderlyingSet,P) \; \to \; \bfD_G(\ast).
  \end{equation}
  For fixed $(\ZerodimRunderlyingSet,P) \in \regularPOrGSCzerocalf{\CVCYC}$ consider the
  canonical map
  \begin{equation}\label{eq:bfD_upper_0(B,P)-as-hocolim} \hocolimunder_{Q \in I
      \downarrow (\ZerodimRunderlyingSet,P)} \bfD_G(Q) \to
    \bfD_G(\ZerodimRunderlyingSet,P)
  \end{equation} 
  where $I$ denotes the inclusion $\EP\CVCYC(G) \to \regularPOrGSCzerocalf{\CVCYC}$.  It
  is not difficult to identify~\eqref{eq:bfD_upper_0(B,P)-as-hocolim}
  with~\eqref{eq:bfD(B,P)-as-bigvee}, which is an equivalence.  The transitivity
  Lemma~\ref{lem:transitivity} for homotopy colimits implies now that the first map
  in~\eqref{eq:FJ-assembly-map-factorization} is an equivalence.  As $\bfD_G$ can be
  expressed as a homotopy colimit in $\bfD_G^0$, see Subsection~\ref{subsec:bfD-vs-bfD0},
  the second map in~\eqref{eq:FJ-assembly-map-factorization} is an equivalence if
  \begin{equation}\label{eq:FJ-assembly-map-D0}
    \hocolimunder_{(\ZerodimRunderlyingSet,P) \in \regularPOrGSCzerocalf{\CVCYC}}
    \bfD_G^0(\ZerodimRunderlyingSet,P) \; \to \; \bfD_G^0(\ast)
  \end{equation} 
  is an equivalence.  Theorem~\ref{thm:transfer-bfD0} implies
  that~\eqref{eq:FJ-assembly-map-D0} is a retract of
  \begin{equation}\label{eq:FJ-assembly-map-D0-J}
    \hocolimunder_{(\ZerodimRunderlyingSet,P) \in \regularPOrGSCzerocalf{\CVCYC}}
    \bfD_G^0 \big(\bfJ_\CVCYC(G) \times (\ZerodimRunderlyingSet,P)\big) \;
    \to \; \bfD_G^0(\bfJ_\CVCYC(G)).
  \end{equation}
  We now use that $\bfD_G^0$ is homotopy invariant
  (Subsection~\ref{subsec:homotopy-inv-bfD0}), satisfies an excision result
  (Subsection~\ref{subsec:excision-bfD0}) and skeleta continuity
  (Subsection~\ref{subsec:continuity-bfD0}).  These properties imply that
  $\bfD_G^0 \big(\bfJ_\CVCYC(G) \times \--\big)$ can in $\regularPOrGSCzero$-spectra be
  constructed as a homotopy colimit of functors of the form
  $\bfD_G^0\big((\ZerodimRunderlyingSet_0,P_0) \times \--\big)$ with
  $(\ZerodimRunderlyingSet_0,P_0) \in \regularPOrGSCzerocalf{\CVCYC}$.
  Lemma~\ref{lem:hocolim-times-Q} implies that for all
  $(\ZerodimRunderlyingSet_0,P_0) \in \regularPOrGSCzerocalf{\CVCYC}$
  \begin{equation*}
    \hocolimunder_{(\ZerodimRunderlyingSet,P) \in \regularPOrGSCzerocalf{\CVCYC}}
    \bfD_G^0 \big((\ZerodimRunderlyingSet_0,P_0) \times (\ZerodimRunderlyingSet,P)\big) \;
    \xrightarrow{\sim} \; \bfD_G^0(\ZerodimRunderlyingSet_0,P_0)
  \end{equation*}
  is an equivalence.  Thus~\eqref{eq:FJ-assembly-map-D0-J} is an equivalence and so
  is~\eqref{eq:FJ-assembly-map-again}.
\end{proof}

\begin{remark}
  It is possible to show that the functor $\bfD_G$, which we construct later on, also
  satisfies homotopy invariance, excision and continuity exactly as $\bfD_G^0$.  Moreover,
  Theorem~\ref{thm:transfer-bfD0} holds also for $\bfD_G$.  Thus it is possible to prove
  the Farrell--Jones Conjecture for reductive $p$-adic groups using only $\bfD_G$.  In
  fact, homotopy invariance, excision and continuity for $\bfD_G$ can be proven in exactly
  the same way as for $\bfD_G^0$.  However, the construction of the transfer map $\bftr$ in
  Theorem~\ref{thm:transfer-bfD0} is technically easier for $\bfD_G^0$ than for $\bfD_G$;
  this is the reason for our small detour through $\bfD_G^0$.  The other way round, we
  cannot replace $\bfD_G$ with $\bfD_G^0$ throughout; the equivalences from
  Subsections~\ref{subsec:Coefficients-bfD} and~\ref{subsec:computation-bfD-on-zero} do
  not hold for $\bfD_G^0$ in place of $\bfD_G$.
\end{remark}

%%%%%%%%%%%%%%%%%%%%%%%%%%%%%%%%%%%%%%%%%%%%%%%%%%%%%%%%%%%%%%%%%%%%

\section{The categories $\contd_G(\bfSigma)$ and $\contd^0_G(\bfSigma)$}%
\label{sec:cats-D_and_D0}

In this section we construct the two functors $\bfD_G$ and $\bfD_G^0$ promised in
Section~\ref{sec:formal-framwork} as the K-theory of functors $\contd_G(\--)$ and
$\contd_G^0(\--)$ to additive categories.  We will need some preparations.

%-----------------------------------------------------------------

\subsection{Some notation for simplicial complexes}
Let $\Sigma$ be an (abstract) simplicial complex.  We write $\vertices(\Sigma)$ for the
set of vertices of $\Sigma$.  We will write $|\Sigma|$ for the realization of $\Sigma$ to
topological spaces.  For a simplex $\sigma$ of $\Sigma$ we write $\Delta_\sigma$ for the
simplicial subcomplex of $\Sigma$ spanned by $\sigma$ and $\partial \Delta_\sigma$ for its
boundary.  So $\partial \Delta_\sigma$ is obtained from $\Delta_\sigma$ by omitting
$\sigma$.  For a vertex $v \in \vertices(\Sigma)$ we will not distinguish between the
abstract vertex $v$ and the corresponding point $v \in |\Sigma|$.  Any point
$x \in |\Sigma|$ has unique barycentric coordinates,
$x = \sum_{v \in \vertices(\Sigma)} x(v) \cdot v$ with $x(v) \in [0,1]$,
$\sum_{v \in \vertices(\Sigma)} x(v) = 1$, $x(v) \neq 0$ for only finitely many $v$.  Of
course, $\sigma := \{v \mid x(v) \not= 0\}$ forms a simplex with
$x \in |\Delta_\sigma| \setminus |\partial\Delta_\sigma|$.
% Thus $|\Sigma|$ canonically embeds into the linear space of $\IR$-valued functions on
% $\vertices(\Sigma)$.
The $\ell^\infty$-metric\footnote{In general, the topology of the $\ell^\infty$-metric is
  coarser than the weak topology on $|\Sigma|$, but we will mostly only use it on finite
  subcomplexes of $\Sigma$, where both topologies coincide.  Also, on finite dimensional
  subcomplexes the $\ell^\infty$-metric and the $\ell^1$-metric (that we used for example
  in~\cite{Bartels-Lueck-Reich(2008hyper)}) are Lipschitz equivalent.  Using the
  $\ell^\infty$-metric is more convenient here, but there is no substantial difference.}
on $|\Sigma|$ is
\begin{equation}
  d^\infty(x,x') := \max_{v \in \vertices(\Sigma)} |x(v) - x'(v)|.
\end{equation}  
For $\sigma \in \simp(\Sigma)$ we set
\begin{equation*}
  U_{\sigma} := \big\{ x \in |\Sigma| \; \big|  \; \forall v \in \sigma : x(v) > 0 \big\},
\end{equation*}
this is the open star of $\sigma$, i.e., the union of the interiors of those simplices
which contain $\sigma$ as face.  It is an open neighborhood of
$|\Delta_\sigma| \setminus |\partial \Delta_\sigma|$.  For $\epsilon > 0$ we set
\begin{equation*}
  K_{\sigma,\epsilon} := \{ x \in |\Sigma| \mid \forall v \in \sigma : x(v) \geq \epsilon \}.
\end{equation*}
This is a closed subset of $U_\sigma$.  We record that the $K_{\sigma,\epsilon}$ get
larger with decreasing $\epsilon$ and that
$U_\sigma = \bigcup_{\epsilon > 0} K_{\sigma,\epsilon}$.  Moreover $U_\sigma$ is the
$\epsilon$-neighborhood of $K_{\sigma,\epsilon}$ with respect to $d^\infty$.
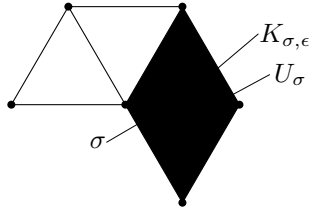
\begin{figure}[H]
  \begin{tikzpicture}
    \fill [black,opacity=.07] (1.5,0) -- (2.25,1.3) -- (3,0) -- (2.25,-1.3) -- (1.5,0);
    \fill [black,opacity=.15] (1.7,0) -- (2.25,1.02) -- (2.8,0) -- (2.25,-1.02) --
    (1.7,0); \draw [line width=.06mm] (1.7,0) -- (2.25,1.02) -- (2.8,0) -- (2.25,-1.02) --
    (1.7,0); \fill [black,opacity=.5] (0,0) circle (1.5pt) (1.5,0) circle (1.5pt) (3,0)
    circle (1.5pt) (.75,1.3) circle (1.5pt) (2.25,1.3) circle (1.5pt) (2.25,-1.3) circle
    (1.5pt); \draw (0,0) -- (1.5,0) -- (3,0) -- (2.25,1.3) -- (.75,1.3) -- (1.5,0) --
    (2.25,1.3) (1.5,0) -- (2.25,-1.3) -- (3,0) (0,0) -- (.75,1.3); \draw [line width=.5mm]
    (1.5,0) -- (3,0); \draw [line width=.06mm] (2.85,.1) -- (3.4,.4); \draw (3.67,.4) node
    {$U_\sigma$}; \draw [line width=.06mm] (2.5,.3) -- (3.25,.94); \draw (3.6,.9) node
    {$K_{\sigma,\epsilon}$}; \draw [line width=.06mm] (2.1,0) -- (1.25,-.5); \draw
    (1.14,-.5) node {$\sigma$};
  \end{tikzpicture}
  \caption{$U_\sigma$ and $K_{\sigma,\epsilon}$}\label{fig:U-sigma-K-sigma-eps}
\end{figure}

%-----------------------------------------------------------------

\subsection{The $G$-spaces $|\bfSigma|$ and $|\bfSigma|^\wedge$}%
\label{subsec:realization-PORG-simpl-cx}

Write $\Delta \colon \simp(\Sigma) \to \Spaces$ for the functor
$\sigma \mapsto |\Delta_\sigma|$ and define the realization functor
\begin{eqnarray*}
  |\--| \; \colon \regularPOrGSC & \to & \SpacesC{G} \\
  \bfSigma = (\Sigma,P) & \mapsto &  \Delta \times_{\simp(\Sigma)}  |P(-)|.
\end{eqnarray*}
In the construction of $\contc_G(P)$ for $P \in \EPplus\All(G)$ we used the $G$-space
$|P|^\wedge$ from Subsection~\ref{subsec:realizations-for-P}, thought of as a resolution
of $|P|$.  We will need a similar resolution for $|\bfSigma|$.  Define
\begin{eqnarray*}
  |\--|^\wedge \; \colon \regularPOrGSC & \to & \SpacesC{G} \\
  \bfSigma = (\Sigma,P) & \mapsto & \Delta \times_{\simp(\Sigma)} |P(\--)|^\wedge. 
\end{eqnarray*}
The projections maps $|P|^\wedge \to \ast$ induce a map
\begin{equation*}
  p_\bfSigma \colon |\bfSigma|^\wedge \to |\Sigma| = \Delta \times_{\simp{\Sigma}} \ast. 
\end{equation*}
Let $\sigma$ be a simplex of $\Sigma$.  It comes with a canonical map
\begin{equation*}
  |\Delta_\sigma| \times |P(\sigma)|^\wedge \to |\bfSigma|^\wedge.
\end{equation*}
We will write this map as $(x,\lambda) \mapsto [x,\lambda]_\sigma$.  Of course
$p_\bfSigma \big( [x,\lambda]_\sigma \big) = x$.  Altogether these canonical maps define
the projection
\begin{equation*}
  \coprod_{\tau} |\Delta_\tau| \times |P(\tau)|^\wedge \; \xrightarrow{q_\bfSigma} \; |\bfSigma|^{\wedge}
\end{equation*}
which is an identification of topological spaces.  For a simplex $\sigma$ the preimage of
$U_{\sigma} \subseteq |\Sigma|$ under $p_\bfSigma \circ q_\bfSigma$ is
$\coprod_{\tau, \sigma \le \tau} \big( |\Delta_{\tau}| \setminus
\partial_{\sigma}|\Delta_{\tau}| \big) \times |P(\tau)|^\wedge$, where
$\partial_{\sigma}|\Delta_{\tau}| = \bigcup_{\mu \subseteq \tau, \sigma \not\subseteq
  \tau} |\Delta_{\mu}|$.  We define
$\lambda_{\sigma,\tau} \colon \big( |\Delta_{\tau}| \setminus
\partial_{\sigma}|\Delta_{\tau}| \big) \times |P(\tau)|^\wedge \to |P(\sigma)|^\wedge$ to
be the composite of the projection
$\big( |\Delta_{\tau}| \setminus \partial_{\sigma}|\Delta_{\tau}|\big) \times
|P(\tau)|^\wedge \to |P(\tau)|^\wedge$ with the map
$|P(\tau)|^\wedge \to |P(\sigma)|^\wedge$ induced by $\sigma \subseteq \tau$.  One easily
checks that the map
\[
  \coprod_{\tau, \sigma \le \tau} \lambda_{\sigma,\tau} \; \; \colon \coprod_{\tau, \sigma
    \le \tau} \big(|\Delta_{\tau}| \setminus \partial_{\sigma}|\Delta_{\tau}| \big) \times
  |P(\tau)|^\wedge \to |P(\sigma)|^\wedge
\]
factorizes over the projection
$\coprod_{\tau, \sigma \le \tau} |\Delta_{\tau}| \setminus
\partial_{\sigma}|\Delta_{\tau}|\times |P(\tau)|^\wedge \to
(p_\bfSigma)^{-1}(U_{\sigma})$, %which is an identification of topological spaces,
to a map $\lambda_\sigma \colon (p_\bfSigma)^{-1}(U_{\sigma}) \to |P(\sigma)|^\wedge$.  We
note that $\lambda_\sigma \big( [x,\lambda]_\sigma \big) = \lambda$.

\begin{example}[Realizations of $\bfJ_\calf(G)$ and
  $\bfJ^N_\calf(G)$]\label{ex:bfJ_F-realizations}
  Let $\bfJ_\calf(G) = (\Sigma_\calf(G),P_\calf(G))$ be as in Example~\ref{ex:bfJ_F}.  It
  is not hard to check that then
  $|\bfJ_\calf(G)| = \ast_{n \in \IN} (\coprod_{F \in \calf} G/F)$ and
  $|\bfJ_\calf(G)|^\wedge = \ast_{n \in \IN} (\coprod_{F \in \calf} G) = \ast_{n \in \IN}
  (G \times \calf)$ hold.  The canonical projection
  $(\coprod_{F \in \calf} G) = G \times \calf \to \calf$ induces the projection
  $p_{\bfJ_\calf(G)} \colon |\bfJ_\calf(G)|^\wedge = \ast_{n \in \IN} (G \times \calf) \to
  \ast_{n \in \IN} (\calf) = |\Sigma_\calf(G)|$.
	
  Similarly, $|\bfJ^N_\calf(G)| = \ast_{n \leq N} (\coprod_{F \in \calf} G/F)$ and
  $|\bfJ_\calf(G)|^\wedge = \ast_{n \leq N} (\coprod_{F \in \calf} G) = \ast_{n \leq N} (G
  \times \calf)$.  In this description of points in $|\bfJ_\calf(G)|^\wedge$ can be
  written as $z = [t_0 \cdot (g_0,V_0),\dots,t_N \cdot (g_N,V_N)]$ with $t_i \in [0,1]$,
  $g_i \in G$, $V_i \in \calf$ where $\sum t_i = 1$.  In this notation
  $[t_0 \cdot (g_0,V_0),\dots,t_N \cdot (g_N,V_N)] = [t'_0 \cdot (g'_0,H'_0),\dots,t'_{N}
  \cdot (g'_{N},H'_{N})]$ if and only if $t_i = t'_{i}$ for $i = 0,\dots,N$, and
  $(g_i,V_i) = (g'_i,H'_i)$ for all $i$ with $t_i = t'_i\neq 0$.
\end{example}

%-----------------------------------------------------------------

\subsection{Foliated distance in $|\bfSigma|^\wedge$}\label{subsec:fol-dis-bfSigma-wedge}

We extend the notion of foliated distance from Subsection~\ref{subsec:foliated-distance-P}
to $\regularPOrGSC$.  Let $\bfSigma = (\Sigma,P) \in \regularPOrGSC$ and
$\beta, \eta, \epsilon > 0$.  For $z,z' \in |\bfSigma|^\wedge$ we write
\begin{equation*}
  \fold{\bfSigma}(z,z') < (\beta,\eta,\epsilon),
\end{equation*}
iff the following two conditions are satisfied~\refstepcounter{theorem}
\begin{enumerate}[
                 label=(\thetheorem\alph*),
                 align=parleft, 
                 leftmargin=*,
                 labelindent=1pt,
                 %labelwidth=60pt,
                 %labelsep=3pt,
                 %itemindent=-3pt
                 ] 
\item\label{nl:bfSigma-fol-distance:eps}
  $d^\infty\big(p_\bfSigma(z),p_\bfSigma(z')\big) < \epsilon$;
\item\label{nl:bfSigma-fol-distance:beta-eta} for all $\sigma \in \simp(\Sigma)$ with
  $p_\Sigma(z) \in K_{\sigma,\epsilon}$ or $p_\Sigma(z') \in K_{\sigma,\epsilon}$, we
  require
  \begin{equation*}
    \fold{P(\sigma)} \big(\lambda_\sigma(z),\lambda_\sigma(z')\big) < (\beta,\eta).
  \end{equation*}  
\end{enumerate}
Note that the first condition implies that if $p_\Sigma(z) \in K_{\sigma,\epsilon}$ or
$p_\Sigma(z') \in K_{\sigma,\epsilon}$, then both, $p_\Sigma(z)$ and $p_\Sigma(z')$,
belong to $U_{\sigma}$, and both, $\lambda_\sigma(z)$ and $\lambda_\sigma(z')$, are
defined.

We note that this definition is compatible with restrictions to subcomplexes.  More
precisely, let $\Sigma' \subseteq \Sigma$ be a subcomplex and let
$\bfSigma' := (\Sigma',P|_{\simp(\Sigma')})$.  If $\sigma$ is a simplex of $\Sigma'$, then
$K^{\Sigma'}_{\sigma,\epsilon} = \Sigma' \cap K^{\Sigma}_{\sigma,\epsilon}$, where the
upper index indicates in which complex we form $K_{\sigma,\epsilon}$.  Thus for
$z,z' \in |\bfSigma'|^\wedge \subseteq |\bfSigma|^\wedge$ we have
$\fold{\bfSigma'}(z,z') < (\beta,\eta,\epsilon)$ iff
$\fold{\bfSigma}(z,z') < (\beta,\eta,\epsilon)$.

\begin{remark}\label{rem:why-foliated-bfSigma-control}
  Recall that we think of $V$-foliated distance on $G$ as a way to get around the problem
  that there may not exist $G$-invariant metrics on $G/V$, see
  Remark~\ref{rem:why-foliated-control}
	
  Given $\bfSigma = (\Sigma,P) \in \regularPOrGSC$, we would ideally like to equip the
  $G$-space $|\bfSigma|$ with a $G$-invariant metric (and as for $G/V$ we would be
  interested in small distances in $|\bfSigma|$).  However, this space can have isotropy
  groups for which the orbit $G/V$ admits no $G$-invariant metric and then neither does
  $|\bfSigma|$.  The notion of foliated distance in $|\bfSigma|^\wedge$ is our replacement
  for $|\bfSigma|$ with (a non-existing) $G$-invariant metric.  A way to think about this
  replacement is that we add to points in $|\bfSigma|$ a choice of lift to
  $|\bfSigma|^\wedge$, where the choice of lift is only relevant up to bounded distance in
  the fibers for $|\bfSigma|^\wedge \to |\bfSigma|$.  Alternatively, we can think of this
  as adding to points in $|\Sigma|$ a choice of lift to $|\bfSigma|^\wedge$, where the
  choice of lift is only relevant up to foliated distance in the fibers for
  $p_\bfSigma \colon |\bfSigma|^\wedge \to |\Sigma|$.
\end{remark}

\begin{remark}\label{rem:two-stage}
  One should think of $\fold{\bfSigma}(z,z') < (\beta,\eta,\epsilon)$ as a two stage
  condition.  The first stage just uses the images of $z,z'$ in $|\Sigma|$ and requires
  their distance to be $< \epsilon$.  For general $z$ and $z'$ we can not compare the two
  fibers for $p_\bfSigma \colon |\bfSigma|^\wedge \to |\Sigma|$ containing them.  But
  whenever one of $z$ and $z'$ projects into $K_{\sigma,\epsilon}$, then
  $\lambda_\sigma(z),\lambda_\sigma(z') \in P(\sigma)$ are both defined and we require
  $\fold{P(\sigma)}( \lambda_\sigma(z),\lambda_\sigma(z')) < (\beta,\eta)$ in the second
  stage of the condition.
    
  We point out the following detail about the second stage.  Recall
  $K_{\sigma,\epsilon} \subseteq U_{\sigma}$.  One might be tempted to require
  $\fold{P(\sigma)}( \lambda_\sigma(z),\lambda_\sigma(z')) < (\beta,\eta)$ whenever both
  $z$ and $z'$ project into $U_\sigma$, as this suffices for
  $\lambda_\sigma(z),\lambda_\sigma(z')$ to be defined.  However, if neither $z$ nor $z'$
  projects into $K_{\sigma,\epsilon}$ (but their images in $|\Sigma|$ are close), then
  both $z$ and $z'$ are close to $p_{\bfSigma}^{-1}\big(|\Delta_\tau|\big)$ for a simplex
  $\tau$ (of smaller dimension than $\sigma$) and in passing from $z$ to $z'$ one might
  take a short-cut through $p_{\bfSigma}^{-1}(|\Delta_\tau|)$ and mostly avoid
  $p_{\bfSigma}^{-1}(|\Delta_\sigma|)$.  Thus in this situation
  $\fold{P(\sigma)}\big(\lambda_\sigma(z),\lambda_\sigma(z')\big)$ is not necessarily
  relevant in comparing $z$ and $z'$.  Requiring that that at least one of $z$ and $z'$
  projects into $K_{\sigma,\epsilon}$ avoids this problem.
  
  More formally, our formulation of~\ref{nl:bfSigma-fol-distance:eps}
  and~\ref{nl:bfSigma-fol-distance:beta-eta} guarantees (using
  Lemma~\ref{lem:fol-triangle-V-fol}) the following version of the triangle inequality.
  Given $\beta > 0$, $\eta > 0$ there is $\rho > 0$ such that for $\epsilon > 0$,
  $z,z',z'' \in |\bfSigma|^\wedge$ we have
  \begin{equation*}
    \fold{\bfSigma}(z,z') < (\beta,\rho,\epsilon), \fold{\bfSigma}(z',z'') < (\beta,\rho,\epsilon) 
\implies \fold{\bfSigma}(z,z'') < (2\beta,\eta,2\epsilon).
  \end{equation*}
\end{remark}

\begin{example}[Foliated distance for $\bfJ^N_\calf(G)$]\label{ex:bfJ_F-fol-distance}
  For $\bfJ^N_\calf(G) = (\Sigma^N_\calf(G),P_\calf(G))$ from Example~\ref{ex:bfJ_F} we
  can use the join description of
  $|\bfJ^N_\calf(G)|^\wedge = \ast_{n \leq N} (G \times \calf)$ from
  Example~\ref{ex:bfJ_F-realizations} to unravel the definition of the foliated distance
  as follows.  Let $z := [t_0 \cdot (g_0,V_0),\dots,t_N \cdot (g_N,V_N)]$,
  $z' := [t'_0 \cdot (g'_0,V'_0),\dots,t'_{N} \cdot (g'_{N},V'_{N})] \in
  |\bfJ^N_\calf(G)|^\wedge$.  Then $\fold{\bfJ^N_\calf(G)}(z,z') < (\beta,\eta,\epsilon)$
  if and only if
  \begin{enumerate}[label=(\thetheorem\alph*),leftmargin=*]
  \item $|t_i -t'_i| < \epsilon$ for all $i$;
  \item for all $i$ with $\max\{t_i,t'_i\}$ we have $V_i = V'_i$ and
    $\fold{V_i}(g_i,g'_i) < (\beta,\eta)$.
  \end{enumerate}
\end{example}

%-----------------------------------------------------------------

\subsection{The $G$-control structures $\contstrd(\bfSigma)$ and $\contstrd^0(\bfSigma)$}

Let $\bfSigma = (\Sigma,P) \in \regularPOrGSC$.  In the following definition we will
define a control structure $\contstrd({\bfSigma})$ on
$|\bfSigma|^{\wedge} \times \IN^{\times 2}$.  The two $\IN$-directions will be used to
encode two different control conditions.  The first factor will be used to encode a
foliated control conditions over the $P(\sigma)$, that is compatible with
$\contstrc(P(\sigma))$.  The second $\IN$-factor will be used to encode an
$\epsilon$-control condition over $|\Sigma|$ with respect to $d^\infty$.  In particular
Definition~\ref{def:D_bfSigma} is not symmetric in $t_0$ and $t_1$.  The remarks following
the definition provide some discussion and motivation.

We will use the $\ell^1$-norm $|(t_0,t_1)| = t_0 +t_1$ on $\IN^{\times 2}$.

\begin{definition}\label{def:D_bfSigma}
  Let $\bfSigma = (\Sigma,P) \in \regularPOrGSC$.  We define the $G$-control structure
  $\contstrd({\bfSigma}) = \big( \contstrd_1({\bfSigma}), \contstrd_2({\bfSigma}),
  \contstrd_G({\bfSigma}) \big)$ on the $G$-space
  $|\bfSigma|^{\wedge} \times \IN^{\times 2}$ as follows.
  \begin{enumerate}[
                 label=(\thetheorem\alph*),
                 align=parleft, 
                 leftmargin=*,
                 labelindent=1pt,
                 %labelwidth=60pt,
                 labelsep=1pt,
                 %itemindent=-3pt
                 ] 
  \item\label{def:D_bfSigma:obj} $\contstrd_1({\bfSigma})$ consists of all subsets $F$ of
    $|\bfSigma|^{\wedge} \times \IN^{\times 2}$ satisfying the following conditions
    \begin{itemize}
    \item \emph{Finiteness over $\IN^{\times 2}$}: for all
      $\underlinetupel{t} \in \IN^{\times 2}$ the set
      $F \cap |\bfSigma|^{\wedge} \times \{ \underlinetupel{t} \}$ is finite;
    \item \emph{Compact support in $|\Sigma|$}: for every $t_0 \in \IN$ there exists a
      finite subcomplex $\Sigma_0$ of $\Sigma$ such that
      $F \cap |\bfSigma|^\wedge \times \{ t_0 \} \times \IN \subseteq
      p_\bfSigma^{-1}\big(|\Sigma_0|\big) \times \IN^{\times 2}$;
    \item \emph{Finite dimensional support}:
      $F \subseteq |\bfSigma^d|^\wedge \times \IN^{\times 2}$;
      
    \end{itemize}
  \item\label{def:D_bfSigma:X} $\contstrd_2({\bfSigma})$ consists of all subsets $E$ of
    $\big(|\bfSigma|^{\wedge} \times \IN^{\times 2}\big)^{\times 2}$ satisfying the
    following conditions
    \begin{itemize}
    \item \emph{Bounded control over $\IN^{\times 2}$}: there is $\alpha > 0$ such that if
      $\twovec{z',\underlinetupel{t}'}{z,\underlinetupel{t}} \in E$, then
      $|\underlinetupel{t}-\underlinetupel{t}'|\leq \alpha$;
    \item \emph{Foliated control over $\bfSigma$}: for any $\epsilon > 0$ there is
      $k_0 \in \IN$ such that for all $t_0 \in \IN_{\geq k_0}$ there is $\beta > 0$ such
      that for all $\eta > 0$ there is $k_1 \in \IN$ such that for all
      $t_1 \geq \IN_{\geq k_1}$ and all $z,z' \in |\bfSigma|^\wedge$,
      $\underlinetupel{t}' \in \IN^{\times 2}$, with $\underlinetupel{t} := (t_0,t_1)$ we
      have
      \begin{equation*}
        \qquad \twovec{z',\underlinetupel{t}'}{z,\underlinetupel{t}} \in E
        \; \implies \; \fold{\bfSigma}(z,z') < (\beta,\eta,\epsilon);
      \end{equation*}
    \end{itemize}
  \item\label{def:D_bfSigma:G} $\contstrd_G(\bfSigma)$ consists of all relatively compact
    subsets of $G$.
  \end{enumerate}

  It is an exercise to check that this is indeed a $G$-control structure.  To check that
  $\contstrd_2({\bfSigma})$ is closed under composition,  the triangle inequality from
  Remark~\ref{rem:two-stage} is used.
  
  We define the $G$-control structure
  $\contstrd^0(\bfSigma) = \big(
  \contstrd_1^0(\bfSigma),\contstrd_2^0(\bfSigma),\contstrd_G^0(\bfSigma) \big)$ as
  follows.  Set $\contstrd_1^0(\bfSigma) := \contstrd_1(\bfSigma)$,
  $\contstrd_G^0(\bfSigma) := \contstrd_G(\bfSigma)$.  We define $\contstrd_2^0(\bfSigma)$
  to consist of all $E \in \contstrd_2(\bfSigma)$ satisfying
  \begin{equation*}
    \qquad \twovec{z',\underlinetupel{t}'}{z,\underlinetupel{t}} \in E
    \; \implies \; \underlinetupel{t}' = \underlinetupel{t}.
  \end{equation*} 
\end{definition}

\begin{remark}\label{rem:Sigma-fol-with-quantifiers} Using quantifiers the foliated
  control in Definition~\ref{def:D_bfSigma} reads as
  \begin{align*}
    \forall \epsilon > 0 \;\exists k_0\; \forall t_0 \geq k_0\;
    \exists \beta > 0\; \forall \eta > 0\;  \exists
    & k_1 \; \forall \big( t_1 \geq k_1,   z,z',\underlinetupel{t}' \big) \; \text{we have}
    \\
    &\twovec{z',\underlinetupel{t}'}{z,\underlinetupel{t}}
      \in E \implies \fold{\bfSigma}(z,z') < (\beta,\eta,\epsilon).
  \end{align*}
\end{remark}

\begin{remark}[$\epsilon$-control over $|\Sigma|$]\label{rem:fol-contrl-gives-eps-contrl}
  The foliated control condition in Definition~\ref{def:D_bfSigma} implies that for any
  $\epsilon > 0$ there is $k_0$ such that for all $t_0 \geq k_0$ there is $k_1$ such that
  for all $t_1 \geq k_1$ and all $z,z',\underlinetupel{t}'$ with
  $\twovec{z',\underlinetupel{t}'}{z,\underlinetupel{t}} \in E$ for
  $\underlinetupel{t} = (t_0,t_1)$ we have
  $d^\infty\big(p_\bfSigma(z),p_\bfSigma(z')\big) < \epsilon$.  A possible shape of a
  region in the $\IN^{\times 2}$-plane where $\epsilon$-control holds for a fixed
  $\epsilon > 0$ is illustrated in Figure~\ref{fig:eps-control}.
  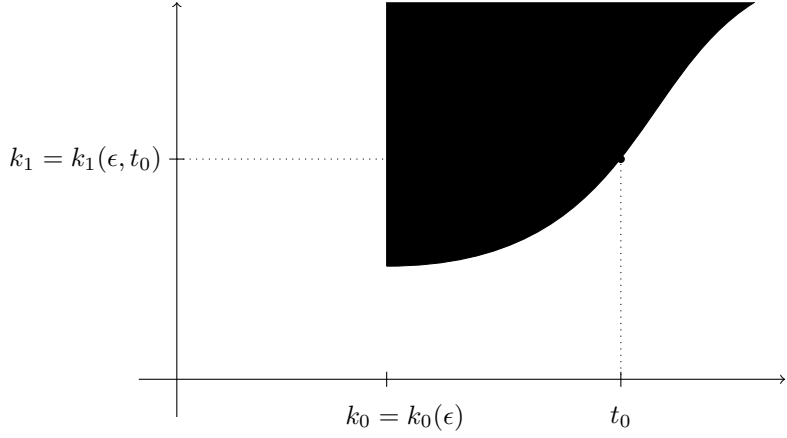
\begin{figure}[H]
    \begin{tikzpicture}
      \useasboundingbox (-2.2,-.7) rectangle (10,5);
      % \draw (-2.2,-.7) rectangle (10,5);
      % \fill [black,opacity=.15] (2.76,1.5) -- (2.76,5) -- (7.6,5) -- (7.6,4) --
      % (2.76,1.5);
      \fill[black,opacity=.15] (2.76,1.5) .. controls (6,1.5) and (6,4) ..  (7.6,5) --
      (2.76,5) -- (2.76,1.5); \draw[->] (-.5,0) -- (8,0); \draw [->] (0,-.5) -- (0,5);
      \draw (2.76,-.1) -- (2.76,.1); \draw(3,-.5) node {$k_0 = k_0(\epsilon)$}; \draw
      (5.84,-.1) -- (5.84,.1); \draw (5.84,-.5) node {$t_0$}; \draw (2.76,1.5) --
      (2.76,5); \draw (2.76,1.5) .. controls (6,1.5) and (6,4) .. (7.6,5); \draw (5,4.3)
      node {$\epsilon$-control over $|\Sigma|$}; \fill [black,opacity=.5] (5.84,2.92)
      circle (1.5pt); \draw (-.1,2.92) -- (.1,2.92); \draw (-1.2,2.92) node
      {$k_1 = k_1(\epsilon,t_0)$}; \draw [dotted] (0,2.92) -- (5.84,2.92) -- (5.84,0);
    \end{tikzpicture}
    \caption{Where we have $\epsilon$-control.}\label{fig:eps-control}
  \end{figure}
\end{remark}
     
 \begin{remark}[$\epsilon$-control and excision]\label{rem:eps-and-excision}
   It is mostly the $\epsilon$-control from Remark~\ref{rem:fol-contrl-gives-eps-contrl}
   that guarantees that $\bfK \big(\contd^0_G(\--)\big)$ as defined below is excisive as
   required in Proposition~\ref{prop:excision-bfD0}.  This is analogous to many other
   similar results in controlled topology/algebra, see for example the construction of the
   homology theory associated to the K-theory spectrum of a ring by Pedersen-Weibel
   in~\cite{Pedersen-Weibel(1989)}.
   % The main technical difficulty in our proof of~\ref{nl:excisive} is in checking that
   % the additional foliated control over the $P(\sigma)$ required
   % in~\ref{def:D_bfSigma:Sigma-fol} does not interfere in adapting the excision result
   % to our situation.
 \end{remark}

 \begin{remark}[Foliated control over $P(\sigma)$]\label{rem:foliated-control-P-picture}
   The foliated control condition in Definition~\ref{def:D_bfSigma} includes a second
   stage\footnote{The first stage is the $\epsilon$-control condition discussed in
     Remark~\ref{rem:fol-contrl-gives-eps-contrl}} that implies that for certain
   $z,z' \in |\bfSigma|^\wedge$ we have
   ${\fold{P(\sigma)}}\big(\lambda_\sigma(z),\lambda_\sigma(z')\big) < (\beta,\eta)$.  See
   also Remark~\ref{rem:two-stage} where $\fold{\bfSigma}(z,z') < (\beta,\eta,\epsilon)$
   is explained as a two stage condition.  Figure~\ref{fig:beta-eta-control} illustrates
   where this $(\beta,\eta)$-control applies along a vertical ray in the $\IN^{\times 2}$
   plane for fixed $\epsilon > 0$ and $t_0$.  Here $\beta(t_0)$ is fixed along the ray and
   $\eta(t_0,t_1) \to 0$ with $t_1 \to \infty$.
   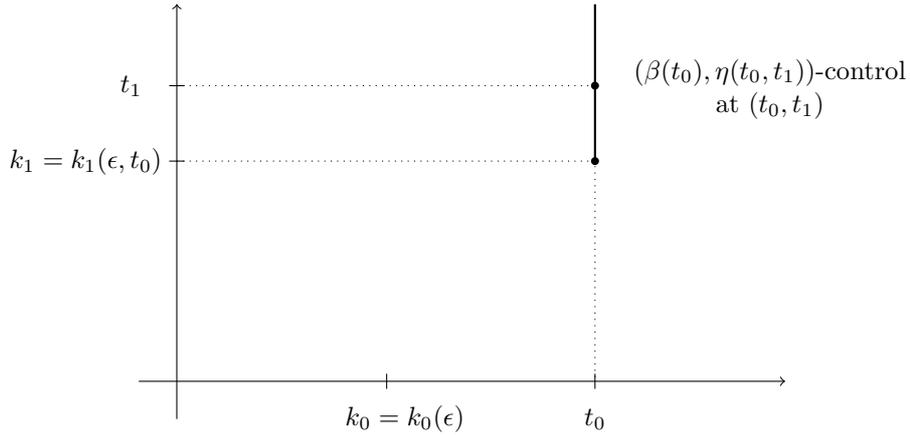
\begin{figure}[H]
     \begin{tikzpicture}
       \useasboundingbox (-2.2,-.7) rectangle (10,5);
       % \draw (-2.2,-.7) rectangle (10,5);
       % \fill [black,opacity=.15] (2.76,1.5) -- (2.76,5) -- (7.6,5) -- (7.6,4) --
       % (2.76,1.5);
       \draw [->] (-.5,0) -- (8,0); \draw [->] (0,-.5) -- (0,5); \draw (2.76,-.1) --
       (2.76,.1); \draw (3,-.5) node {$k_0 = k_0(\epsilon)$}; \draw (5.5,-.1) -- (5.5,.1);
       \draw (5.5,-.5) node {$t_0$};
       % \draw (2.76,1.5) -- (2.76,5);
       % \draw (2.76,1.5) -- (7.6,4);
       % \draw (5,4.3) node {here $\epsilon$-control over $|\Sigma|$};
       \fill [black,opacity=.5] (5.5,2.92) circle (1.5pt); \draw [thick] (5.5,2.92) --
       (5.5,5); \draw (-.1,2.92) -- (.1,2.92); \draw (-1.2,2.92) node
       {$k_1 = k_1(\epsilon,t_0)$}; \fill [black,opacity=.5] (5.5,3.92) circle (1.5pt);
       \draw (-.1,3.92) -- (.1,3.92); \draw (-.6,3.92) node {$t_1$}; \draw (7.8,4.1) node
       {$(\beta(t_0),\eta(t_0,t_1))$-control}; \draw (7.8,3.66) node {at $(t_0,t_1)$};
       \draw [dotted] (0,2.92) -- (5.5,2.92) -- (5.5,0); \draw [dotted] (0,3.92) --
       (5.5,3.92);
     \end{tikzpicture}
     \caption{Where we have $(\beta,\eta)$-control.}\label{fig:beta-eta-control}
   \end{figure}
 \end{remark}

\begin{remark}[On non-uniform compact support]\label{rem:why-non-uniform}
  An important aspect of Definition~\ref{def:D_bfSigma} is that the compact support
  condition in~\ref{def:D_bfSigma:obj} is not uniform over all
  $\underlinetupel{t} \in \IN^{\times 2}$.  This creates some difficulties in the
  computation of $\bfD_G(\ZerodimR)$ for $\ZerodimR \in \regularPOrGSCzero$ in
  Proposition~\ref{prop:computation-bfD-on-zero} (which would be easier using a uniform
  compact support condition).  But the non-uniformness will be crucial for the
  construction of the required transfer in Theorem~\ref{thm:transfer-bfD0}\footnote{This
    issue comes also up in proofs of the Farrell--Jones Conjecture for certain discrete
    groups, but somewhat less visible.  For example the category $\cald^G(Y;\cala)$
    in~\cite[Sec.~3.3]{Bartels-Lueck-Reich(2008hyper)} does use a uniform compact support
    condition, but the proof later also uses the category
    $\calo^G(Y,(Z_n,d_n)_{n \in \IN})$ where the compact support condition is not uniform
    in $n \in \IN$.}.  The construction of the transfer depends on certain maps
  $X \to |\bfJ_{\CVCYC}|^\wedge$ where $X$ is the extended  Bruhat-Tits building associated to $G$,
  see Theorem~\ref{thm:X-to-J}.  The construction of these maps uses an intermediate step
  maps $X \to \FS(X)$, where $\FS(X)$ is the flow space associated to $X$, see
  Subsection~\ref{subsec:factor-over-FS}.  In the construction of these latter maps the
  geodesic flow on $\FS(X)$ is used for arbitrary long times.  Roughly, this has the
  effect that the images of $X$ in $\FS(X)$ are spread out over large parts of $\FS(X)$
  and ultimately we do not have uniform control over the images of the maps
  $X \to |\bfJ_{\CVCYC}|^\wedge$.  This forces us to work with the non-uniform compact
  support condition over $|\Sigma|$.  This is also the reason for the non-uniform nature
  of $\contstrc_1(P)$ in Definition~\ref{def:mfc}.  The non-uniformness of this compact
  support condition in turn force us to work with $P$-foliated control over $|P|^\wedge$
  instead of continuous control of $|P|$, compare Remark~\ref{rem:why-foliated-control}.
\end{remark}

\begin{remark}\label{rem:uniform-fin-dim-support}
  In contrast to the compact support conditions, the finite dimensional support condition
  is uniform in $\IN^{\times 2}$.  This is crucial for (and directly implies) the skeleton
  continuity of $\bfD_G^0$ in Proposition~\ref{prop:continuity-contd0}.
\end{remark}

\begin{definition}\label{def:caly-bfSigma}
  For $\bfSigma \in \regularPOrGSC$ we define $\caly(\bfSigma)$ as the collection of all
  subsets $Y$ of $|\bfSigma|^\wedge \times \IN^{\times 2}$ satisfying the following
  condition: there is $k_0$ such that for each $t_0 \geq k_0$ there is $k_1$ with
  \begin{equation*}
    Y \cap |\bfSigma|^\wedge \times \{t_0\} \times \IN_{\geq k_1} = \emptyset. 
  \end{equation*} 
\end{definition}

\begin{definition}\label{def:cald_G}
  Let $\calb$ be a category with $G$-support.  For $\bfSigma \in \regularPOrGSC$ we apply
  Definition~\ref{def:calb(E,Y)} and define
  \begin{eqnarray*}
    \contd_G(\bfSigma;\calb) & := &  \contrCatUcoef{G}{\contstrd(\bfSigma),\caly(\bfSigma)}{\calb};\\
    \contd^0_G(\bfSigma;\calb) & := & \contrCatUcoef{G}{\contstrd^0(\bfSigma),\caly(\bfSigma)}{\calb}.
  \end{eqnarray*}
  Often we drop $\calb$ from the notation and write
  $\contd_G(\bfSigma) = \contd_G(\bfSigma;\calb)$ and
  $\contd^0_G(\bfSigma) = \contd^0_G(\bfSigma;\calb)$.
\end{definition}

\begin{remark}\label{rem:contd-more-explicit}
  The category $\contd_G(\bfSigma)$ can be described slightly more explicit as
  follows.  Objects of $\contd_G(\bfSigma)$ are objects of
  $\contrCatUcoef{G}{\contstrd(\bfSigma)}{\calb}$.  Morphisms in $\contd_G(\bfSigma)$ are equivalence
  classes of morphisms in $\contrCatUcoef{G}{\contstrd(\bfSigma)}{\calb}$, where
  $\varphi, \psi \colon (S,\pi,B) \to (S',\pi',B')$ are identified, if and on if there is
  $Y \in \caly(\bfSigma)$ such that
  \begin{equation*}
    \varphi_s^{s'} = \psi_s^{s'} 
  \end{equation*} 	
  whenever $s \in S$, $s' \in S'$ satisfy $\pi(s),\pi'(s') \not\in Y$.
\end{remark}

\begin{remark}[$\contd^0_G(\bfSigma)$ as sequences]\label{rem:contd0-more-explicit}
  An advantage of $\bfD_G^0(\bfSigma)$ over $\bfD_G(\bfSigma)$ is that it
  admits the following description.
  For $\underlinetupel{t} \in \IN^{\times 2}$ we can restrict to
  $|\bfSigma|^\wedge \times \{ \underlinetupel{t} \}$ and obtain a functor
  \begin{equation*}
    \res_\underlinetupel{t} \colon \contrCatUcoef{G}{\contstrd^0(\bfSigma)}{\calb}  
    \to  \contrCatUcoef{G}{|\bfSigma|^\wedge}{\calb}.
  \end{equation*} 
  Write $\prodprimeinline_{\IN^{\times 2}} \contrCatUcoef{G}{|\bfSigma|^\wedge}{\calb}$
  for the following category.  Objects are sequences
  $(\bfB_\underlinetupel{t})_{\underlinetupel{t} \in \IN^{\times 2}}$ of objects in
  $\contrCatUcoef{G}{|\bfSigma|^\wedge}{\calb}$.  Morphisms are equivalence classes of
  sequences $(\varphi_\underlinetupel{t})_{\underlinetupel{t} \in \IN^{\times 2}}$ of
  morphisms in $\contrCatUcoef{G}{|\bfSigma|^\wedge}{\calb}$, where two sequences
  $(\varphi_\underlinetupel{t})_{\underlinetupel{t} \in \IN^{\times 2}}$,
  $(\varphi'_\underlinetupel{t})_{\underlinetupel{t} \in \IN^{\times 2}}$ are equivalent
  if there is $k_0$ such that for any $t_0 \geq k_0$ there is $k_1$ such that for all
  $t_1 \geq k_1$ we have $\varphi_{t_0,t_1} = \psi_{t_0,t_1}$.  The above restrictions
  combine to a faithful functor
  \begin{equation}\label{eq:contd0-more-explicit}
    \contd^0_G(\bfSigma) \; \to \; \prodprimedisplay_{\IN^{\times 2}} \contrCatUcoef{G}{|\bfSigma|^\wedge}{\calb}.
  \end{equation}
  A sequence $\bfB = (\bfB_\underlinetupel{t})_{\underlinetupel{t} \in \IN^{\times 2}}$ of
  objects in $\contrCatUcoef{G}{|\bfSigma|^\wedge}{\calb}$ is in the image
  of~\eqref{eq:contd0-more-explicit}, if and only if the following four conditions are
  satisfied:
  \begin{enumerate}[
                 label=(\thetheorem\alph*),
                 align=parleft, 
                 leftmargin=*,
                 %labelindent=1pt,
                 %labelwidth=60pt,
                 %labelsep=3pt,
                 %itemindent=-3pt
                 ] 
  \item\label{rem:contd0-more-explicit:obj:suppobj}
    $\suppobj \bfB = \big\{ (z,\underlinetupel{t})
    \; \big| \; z \in \suppobj \bfB_\underlinetupel{t} \big\} \in \contstrd^0_1(\bfSigma)$;\\[-1ex]
  \item\label{rem:contd0-more-explicit:obj:suppX}
    $\suppX \bfB = \Big\{ \twovec{z',\underlinetupel{t}}{z,\underlinetupel{t}} \;
    \Big| \; \twovec{z'}{z} \in \suppX \bfB_\underlinetupel{t} \Big\} \in \contstrd^0_2(\bfSigma)$;\\[-1ex]
  \item\label{rem:contd0-more-explicit:obj:suppG} $\suppG \bfB
    =  \bigcup_{\underlinetupel{t} \in \IN^{\times 2}} \suppG \bfB_\underlinetupel{t} \in \contstrd^0_G(\bfSigma)$;\\[-1ex]
  \item\label{rem:contd0-more-explicit:obj:finite} $\bfB_\underlinetupel{t}$ is finite for
    all $\underlinetupel{t}$\footnote{This condition comes from the finite over points
      condition in~\ref{def:controlled-hecke-cat:obj} and the finiteness over $\IN^{\times 2}$
      in~\ref{def:D_bfSigma:obj}.}.
  \end{enumerate}
  A sequence of morphisms is in the image of~\eqref{eq:contd0-more-explicit} if and only
  if it is equivalent to a sequence
  $(\varphi_\underlinetupel{t})_{\underlinetupel{t} \in \IN^{\times 2}}$ of morphisms in
  $\contrCatUcoef{G}{|\bfSigma|^\wedge}{\calb}$ satisfying
  \begin{enumerate}[resume,
                 label=(\thetheorem\alph*),
                 align=parleft, 
                 leftmargin=*,
                 %labelindent=1pt,
                 %labelwidth=60pt,
                 %labelsep=3pt,
                 %itemindent=-3pt
                 ] 
  \item\label{rem:contd0-more-explicit:mor:suppX}
    $\suppX \varphi = \big\{ \twovec{z',\underlinetupel{t}}{z,\underlinetupel{t}} \; \big|
    \; \twovec{z'}{z} \in \suppX \varphi_\underlinetupel{t} \big\} \in
    \contstrd^0_2(\bfSigma)$;
  \item\label{rem:contd0-more-explicit:mor:suppG}
    $\suppG \varphi = \bigcup_{\underlinetupel{t} \in \IN^{\times 2}} \suppG
    \varphi_\underlinetupel{t} \in \contstrd^0_G(\bfSigma)$.
  \end{enumerate}
\end{remark}

For $\bfSigma = (\Sigma,P) \in \regularPOrGSC$ and $Q \in \EPplus\All(G)$ we have
$\bfSigma \times Q = (\Sigma,\sigma \mapsto P(\sigma) \times Q)$ as a special case
of~\eqref{eq:product-POrGSC-with-POrGSCzero}.  Note that
$|\bfSigma \times Q|^\wedge = |\bfSigma|^\wedge \times |Q|^\wedge$.

\begin{lemma}\label{lem:D_upper_0_bfSigma-prod-P} 
  Let $\bfSigma \in \regularPOrGSC$ and $Q \in \EPplus\All(G)$.
  \begin{enumerate}[
                 label=(\thetheorem\alph*),
                 align=parleft, 
                 leftmargin=*,
                 labelindent=1pt,
                 %labelwidth=60pt,
                 %labelsep=3pt,
                 %itemindent=-3pt
                 ] 
  \item\label{lem:D_upper_0_bfSigma-prod-P:1} For $F \in \contstrd_1^0(\bfSigma)$,
    $F' \in \contstrd_1^0(\ZerodimR)$ we have
    \begin{equation*}
      \big\{ (z,\lambda,\underlinetupel{t})  \mid (z,\underlinetupel{t})
      \in F, (\lambda,\underlinetupel{t}) \in F' \big\}
      \in \contstrd_1^0(\bfSigma \times Q);
    \end{equation*}  	
  \item\label{lem:D_upper_0_bfSigma-prod-P:2} For $E \in \contstrd_2^0(\bfSigma)$,
    $E' \in \contstrd_2^0(\ZerodimR)$ we have
    \begin{equation*}
      \Big\{ \twovec{z',\lambda',\underlinetupel{t}}{z,\lambda,\underlinetupel{t}}
      \, \Big| \, \twovec{z',\underlinetupel{t}}{z,\underlinetupel{t}} \in E,
      \twovec{\lambda',\underlinetupel{t}}{\lambda,\underlinetupel{t}} \in E' \Big\}
      \in \contstrd_2^0(\bfSigma \times Q).
    \end{equation*}  	
  \end{enumerate}
\end{lemma}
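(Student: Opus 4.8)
The plan is to unwind the definitions of the control structures $\contstrd^0_1$, $\contstrd^0_2$ and of the realization $|\bfSigma \times Q|^\wedge = |\bfSigma|^\wedge \times |Q|^\wedge$, and check the two bulleted conditions directly. Throughout, recall that for $Q \in \EPplus\All(G)$, the $0$-dimensional object $\ZerodimR$ in the statement is $I(Q)$, so $|\ZerodimR|^\wedge = |Q|^\wedge$ and the underlying complex of $\ZerodimR$ is a single vertex sent to $Q$; in particular $p_\ZerodimR$ is the (unique, trivial) map to a point, and the foliated-distance condition for $\ZerodimR$ reduces exactly to the $P$-foliated distance $\fold{Q}$ from Subsection~\ref{subsec:foliated-distance-P}. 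Also note that for a simplex $\sigma$ of $\Sigma$ we have $P(\sigma) \times Q$ as the coefficient of $\bfSigma \times Q$ at $\sigma$, and $\lambda_\sigma^{\bfSigma \times Q}(z,\lambda) = (\lambda_\sigma^{\bfSigma}(z),\lambda)$ under the identification $|P(\sigma) \times Q|^\wedge = |P(\sigma)|^\wedge \times |Q|^\wedge$, so that $\fold{(P(\sigma) \times Q)}\big((\lambda_\sigma(z),\lambda),(\lambda_\sigma(z'),\lambda')\big) < (\beta,\eta)$ iff both $\fold{P(\sigma)}(\lambda_\sigma(z),\lambda_\sigma(z')) < (\beta,\eta)$ and $\fold{Q}(\lambda,\lambda') < (\beta,\eta)$.

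For part~\ref{lem:D_upper_0_bfSigma-prod-P:1}: write $F'' := \{(z,\lambda,\underlinetupel t) \mid (z,\underlinetupel t) \in F, (\lambda,\underlinetupel t) \in F'\}$. Finiteness over $\IN^{\times 2}$ is immediate since $F''\cap (|\bfSigma|^\wedge\times|Q|^\wedge)\times\{\underlinetupel t\}$ injects into $(F\cap|\bfSigma|^\wedge\times\{\underlinetupel t\}) \times (F'\cap|Q|^\wedge\times\{\underlinetupel t\})$, a finite set. For compact support in $|\Sigma|$: given $t_0$, choose the finite subcomplex $\Sigma_0$ provided by $F \in \contstrd^0_1(\bfSigma)$; since $p_{\bfSigma\times Q}$ factors through $p_\bfSigma$ on the first coordinate, the same $\Sigma_0$ works for $F''$. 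For finite-dimensional support, $F \subseteq |\bfSigma^d|^\wedge \times \IN^{\times 2}$ gives $F'' \subseteq |(\bfSigma\times Q)^d|^\wedge \times \IN^{\times 2}$ with the same $d$ (the product with a $0$-dimensional complex does not raise dimension). This establishes~\ref{lem:D_upper_0_bfSigma-prod-P:1}.

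For part~\ref{lem:D_upper_0_bfSigma-prod-P:2}: write $E''$ for the displayed set. Bounded control over $\IN^{\times 2}$ is trivial from either factor (indeed $\underlinetupel t' = \underlinetupel t$ here since we are in the $0$-decorated structures, so $\alpha = 0$). The real content is foliated control over $\bfSigma \times Q$, which is quantified as in Remark~\ref{rem:Sigma-fol-with-quantifiers}. Given $\epsilon > 0$: apply the foliated-control condition for $E$ to get $k_0$; for each $t_0 \geq k_0$ get $\beta_\Sigma > 0$; then for each $\eta > 0$ get $k_1^\Sigma$. Separately, $E' \in \contstrd^0_2(Q)$ (an object of $\contstrc(Q)$-type data over $|Q|^\wedge\times\{\underlinetupel t\}$) gives a $\beta_Q \geq 0$ so that for each $\eta > 0$ there is $k_1^Q$ with $\twovec{\lambda',\underlinetupel t'}{\lambda,\underlinetupel t} \in E' \Rightarrow \fold{Q}(\lambda,\lambda') < (\beta_Q,\eta)$ once $t_1 \geq k_1^Q$. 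Set $\beta := \max\{\beta_\Sigma,\beta_Q\}$ and $k_1 := \max\{k_1^\Sigma,k_1^Q\}$. Then for $t_1 \geq k_1$ and $\twovec{z',\lambda',\underlinetupel t}{z,\lambda,\underlinetupel t} \in E''$ we have $\fold{\bfSigma}(z,z') < (\beta_\Sigma,\eta,\epsilon)$ and $\fold{Q}(\lambda,\lambda') < (\beta_Q,\eta)$; combining via the remark above on $\lambda_\sigma^{\bfSigma\times Q}$ and $d^\infty$ (note $d^\infty$ on $|\Sigma|$ is unchanged by the product with a point) yields $\fold{(\bfSigma\times Q)}\big((z,\lambda),(z',\lambda')\big) < (\beta,\eta,\epsilon)$, as the foliated-control definition demands. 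The only point requiring a little care is that the $P(\sigma)$-stage of $\fold{\bfSigma\times Q}$ is triggered exactly when $p_\Sigma(z) \in K_{\sigma,\epsilon}$ or $p_\Sigma(z') \in K_{\sigma,\epsilon}$, which is the same trigger as for $\fold{\bfSigma}$, so no extra simplices enter; and on that stage $\lambda_\sigma$ of the product is the pair of the two $\lambda_\sigma$'s. I do not anticipate a genuine obstacle — the statement is a compatibility lemma and the proof is bookkeeping — but the step most prone to slips is the quantifier juggling in the foliated-control condition, where one must take $k_0$ from the $\bfSigma$-factor and then the maxima of the $\beta$'s and $k_1$'s at the appropriate stages, since the condition is not symmetric in its three parameters.
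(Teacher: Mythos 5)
Your proof is correct and is exactly the definition-unwinding that the paper intends: its own proof of this lemma is just the remark ``This is an easy exercise in the definitions.'' The one point worth keeping explicit in your write-up is the one you already flag, namely that $\beta$ in the foliated-control condition is allowed to depend on $t_0$, so taking $\beta(t_0):=\max\{\beta_\Sigma(t_0),\beta_Q(t_0)\}$ at that stage of the quantifier string is legitimate.
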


\begin{proof}
  This is an easy exercise in the definitions.
\end{proof}

\begin{definition}\label{def:bfD-bfD0}
  For a category $\calb$ with $G$-support we define the two functors
  $\bfD_G(\--;\calb),\, \bfD_G^0(\--;\calb) \;\colon \regularPOrGSC \to \Spectra$ by
  \begin{equation*}
    \bfD_G(\bfSigma;\calb) := \bfK \big( \contd_G(\bfSigma;\calb) \big) \quad \text{and}
    \quad \bfD_G^0(\bfSigma;\calb) := \bfK \big( \contd^0_G(\bfSigma;\calb) \big).
  \end{equation*}
  We often abbreviate $\bfD_G(\--) = \bfD_G(\--;\calb)$ and $\bfD_G^0(\--) = \bfD_G^0(\--;\calb)$.
\end{definition}

%%%%%%%%%%%%%%%%%%%%%%%%%%%%%%%%%%%%%%%%%%%%%%%%%%%%%%%%%%%%%%%%%%%%

\section{Properties of $\contd_G(\--)$ and $\contd_G^0(\--)$}%
\label{sec:properties-cats-D_upper_D0}

%-----------------------------------------------------------------

\subsection{Computation of $\contd_G(P)$}

We write again $I \colon \EPplus\All(G) \to \regularPOrGSC$ for the inclusion.

\begin{proposition}\label{prop:coefficients-contd}
  There is a zig-zag of equivalences of$\EPplus\All(G)$-spectra between
  $I^*\Omega\bfK ( \contd_G(\--) )$ and $\bfK ( \contc_G(\--) )$.
\end{proposition}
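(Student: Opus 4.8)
The assertion compares, for $P \in \EPplus\All(G)$, the spectrum $\Omega\bfK\big(\contd_G(I(P))\big)$ with $\bfK\big(\contc_G(P)\big)$. The idea is the same delooping trick that was used in Lemma~\ref{lem:equivalence_and_swindle} and in the proof of Proposition~\ref{prop:c-is-b-for-smooth-P}: exhibit $\contd_G(I(P))$ as the Karoubi quotient of a category with vanishing K-theory by a subcategory whose K-theory is $\bfK(\contc_G(P))$, and then read off the loop-space equivalence from the resulting fibration sequence~\eqref{eq:karoubi-sequence}. First I would unwind the definitions for $\bfSigma = I(P)$. Since $I(P)$ has a single vertex sent to $P$, its realization is $|I(P)|^\wedge = |P|^\wedge$, and $\contstrd(I(P))$ is a $G$-control structure on $|P|^\wedge \times \IN^{\times 2}$. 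The second $\IN$-factor ($t_0$) carries only the (non-uniform) compact-support-over-$|\Sigma|$ condition, which is vacuous here because $\Sigma$ is a point; so the $t_0$-direction decouples. The first $\IN$-factor ($t_1$) carries a foliated control condition that, by inspection of Definition~\ref{def:D_bfSigma}\ref{def:D_bfSigma:X} specialized to a point-complex, matches exactly the foliated control over $|P|^\wedge$ from $\contstrc_2(P)$ in Definition~\ref{def:mfc}, with the identification of the $t_1$-direction of $\contstrd$ with the $\IN$-direction of $\contstrc$. Concretely I expect a natural identification of control structures so that $\contrCatUcoef{G}{\contstrd(I(P))}{\calb}$ is, up to the extra $t_0$-direction, the category $\contrCatUcoef{G}{\contstrc(P) \times (\text{bounded control on }\IN)}{\calb}$ built on $|P|^\wedge \times \IN \times \IN$.

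Second, I would run the Pedersen--Weibel delooping in the $t_0$-direction exactly as in Lemma~\ref{lem:equivalence_and_swindle}. Let $\caly'$ be the subcollection of $\caly(I(P))$ of subsets that are bounded in the $t_0$-direction; then
\begin{equation*}
  \contrCatUcoef{G}{\contstrd(I(P))|_{\caly'}}{\calb} \to \contrCatUcoef{G}{\contstrd(I(P))}{\calb} \to \contrCatUcoef{G}{\contstrd(I(P)),\caly'}{\calb} = \contd_G(I(P))
\end{equation*}
is a Karoubi sequence, so K-theory gives a fibration sequence by~\eqref{eq:karoubi-sequence}. The shift $(z,t_0,t_1)\mapsto(z,t_0+1,t_1)$ provides an Eilenberg swindle on the middle term (one checks the hypotheses of Lemma~\ref{lem:formal-swindle}; the swindle is compatible with the foliated control in the $t_1$-direction because that condition does not see $t_0$), so its K-theory vanishes. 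Hence the connecting map yields $\Omega\bfK(\contd_G(I(P))) \simeq \bfK\big(\contrCatUcoef{G}{\contstrd(I(P))|_{\caly'}}{\calb}\big)$, naturally in $P$. Restricting to $t_0$ bounded collapses the first $\IN$-factor (objects are finite over each point and have $t_0$ supported in a finite set), and the residual category is precisely $\contrCatUcoef{G}{\contstrc(P),\caly(P)}{\calb} = \contc_G(P)$: the $t_1$-direction together with $\caly(P)$ (subsets bounded in $t_1$) survives, and the foliated control over $|P|^\wedge$ is exactly what is imposed by $\contstrc_2(P)$.

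Third, I would assemble these identifications into the claimed zig-zag of equivalences of $\EPplus\All(G)$-spectra. The only subtlety is naturality: a morphism $u\colon P \to P'$ in $\EPplus\All(G)$ induces $|u|^\wedge \colon |P|^\wedge \to |P'|^\wedge$, which is $G$-equivariant (it is a projection of a product of copies of $G$), so by Remark~\ref{rem:functoriality-calb(X)} the pushforward is compatible with all the support notions, and one checks the foliated distances behave as required (this is where the estimate of $\fold{P}$ under coordinate projections from Subsection~\ref{subsec:foliated-distance-P} enters). The main obstacle I anticipate is purely bookkeeping: carefully matching the quantifier structure of the foliated control condition in Definition~\ref{def:D_bfSigma} (with its nested $\epsilon, k_0, t_0, \beta, \eta, k_1, t_1$) against the simpler two-layer condition of Definition~\ref{def:mfc} once $\Sigma$ is a point, and verifying that the swindle in the $t_0$-direction is genuinely compatible with the $\caly$-quotient so that it descends after passing to the quotient category. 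Once that compatibility is in place, the proof is a formal consequence of the Karoubi sequence and the swindle, mirroring Lemma~\ref{lem:equivalence_and_swindle} verbatim.
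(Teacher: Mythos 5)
Your overall strategy -- deloop $\contd_G(I(P))$ via a Karoubi sequence whose middle term is killed by an Eilenberg swindle, in the style of Lemma~\ref{lem:equivalence_and_swindle} -- is indeed the strategy of the paper. But the specific sequence you write down is not the right one, and both of your endpoint identifications fail. You set $\caly'$ to be the sets bounded in the $t_0$-direction and claim $\contrCatUcoef{G}{\contstrd(I(P)),\caly'}{\calb} = \contd_G(I(P))$. This is false: by Definition~\ref{def:cald_G}, $\contd_G(I(P))$ is the quotient by $\caly(I(P))$ from Definition~\ref{def:caly-bfSigma}, and $\caly'$ is a strictly smaller collection (for instance $|P|^\wedge \times \IN \times \{0,\dots,N\}$ lies in $\caly(I(P))$ but is unbounded in $t_0$, so a morphism supported there is zero in $\contd_G(I(P))$ but not in your quotient). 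Likewise your fiber $\contrCatUcoef{G}{\contstrd(I(P))|_{\caly'}}{\calb}$ is a genuine subcategory with no quotient taken, so it cannot equal the quotient category $\contc_G(P) = \contrCatUcoef{G}{\contstrc(P),\caly(P)}{\calb}$; in $\contc_G(P)$ every object supported over a bounded $\IN$-range is isomorphic to zero, which is not true in your fiber. There is no mechanism in your setup by which the quotient by bounded-$t_1$ sets, which is intrinsic to $\contc_G(P)$, ever appears.

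The correct choice is to quotient by the collection $\caly_0(P)$ of sets bounded in the \emph{second} coordinate $t_1$ (not the first), and to use the relative Karoubi sequence~\eqref{eq:karoubi-sequence-rel} with $\caly_0(P) \subseteq \caly(P)$:
\begin{equation*}
  \contrCatUcoef{G}{\contstrd(P)|_{\caly(P)},\caly_0(P)}{\calb} \to
  \contrCatUcoef{G}{\contstrd(P),\caly_0(P)}{\calb} \to
  \contrCatUcoef{G}{\contstrd(P),\caly(P)}{\calb} = \contd_G(P).
\end{equation*}
Now the third term really is $\contd_G(P)$; the first term is equivalent to $\contc_G(P)$ via $(\lambda,t) \mapsto (\lambda,0,t)$, because the bounded-$t_1$ quotient matches the quotient defining $\contc_G(P)$ and the $t_1$-direction inherits exactly the $\contstrc_2(P)$-control (here your observation that the foliated condition of Definition~\ref{def:D_bfSigma} degenerates over a one-point complex is correct and is exactly what is needed); and the middle term admits the $t_0$-shift swindle. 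Note also that with your $\caly'$ the swindle could not even descend to the quotient: shifting a bounded-$t_0$ set infinitely often in the $t_0$-direction leaves $\caly'$, violating~\ref{lem:formal-swindle:caly}, whereas the $t_0$-shift preserves boundedness in $t_1$ and hence is compatible with $\caly_0(P)$. So the gap is not mere bookkeeping: with your choice of auxiliary collection neither end of the fibration sequence is the category the proposition is about.
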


The proof of Proposition~\ref{prop:coefficients-contd} will need a preparation. 

\begin{definition}
  Let $P \in \EPplus\All(G)$.  Let $\caly_0(P)$ be the collection of all subsets $Y$ of
  $|P|^\wedge \times \IN \times \IN$ that are contained in
  $|P|^\wedge \times \IN \times \{ 0,\dots,N \}$ for some $N$ (depending on $Y$).  We
  define
  \begin{eqnarray*}
    \contd^{\fin}_G(P) & := & \contrCatUcoef{G}{\contstrd(P)|_{\caly(P)},\caly_0(P)}{\calb}; \\
    \contd^{\sw}_G(P) & := & \contrCatUcoef{G}{\contstrd(P),\caly_0(P)}{\calb}.
  \end{eqnarray*}   
  Here $\caly(P)$ is from Definition~\ref{def:caly-bfSigma}.
\end{definition}

As $\contd_G(P) = \contrCatUcoef{G}{\contstrd(P),\caly(P)}{\calb}$ and
$\caly_0(P) \subseteq \caly(P)$ we obtain a Karoubi sequence
\begin{equation}\label{eq:karoubi-D-fin-D-sw-D-for-P} \contd^{\fin}_G(P) \; \to \;
  \contd^{\sw}_G(P) \; \to \; \contd_G(P).
\end{equation}

\begin{lemma}\label{lem:about-contd-sw-fin}
  Let $P \in \EPplus\All(G)$.
  \begin{enumerate}[
                 label=(\thetheorem\alph*),
                 align=parleft, 
                 leftmargin=*,
                 labelindent=1pt,
                 %labelwidth=60pt,
                 %labelsep=3pt,
                 %itemindent=-3pt
                 ] 
  \item\label{lem:about-contd-sw-fin:fin} The inclusion
    $|P|^\wedge \times \IN \to |P|^\wedge \times \IN^{\times 2}$,
    $(\lambda,t) \mapsto (\lambda,0,t)$ induces an equivalence
    $\contc_G(P) \to \contd^\fin(P)$;
  \item\label{lem:about-contd-sw-fin:sw} The K-theory of $\contd^{\sw}_G(P)$ vanishes.
  \end{enumerate}
\end{lemma}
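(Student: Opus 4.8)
The plan is to combine Lemma~\ref{lem:about-contd-sw-fin} with the Karoubi sequence~\eqref{eq:karoubi-D-fin-D-sw-D-for-P}: applying $\bfK$ gives a fibration sequence $\bfK(\contd^{\fin}_G(P)) \to \bfK(\contd^{\sw}_G(P)) \to \bfK(\contd_G(P))$, so part~\ref{lem:about-contd-sw-fin:sw} forces $\Omega\bfK(\contd_G(P)) \simeq \bfK(\contd^{\fin}_G(P))$, and part~\ref{lem:about-contd-sw-fin:fin} identifies this with $\bfK(\contc_G(P))$; tracking naturality of all maps in $P \in \EPplus\All(G)$ then yields Proposition~\ref{prop:coefficients-contd}. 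Throughout one uses that the underlying simplicial complex of $I(P)$ is a single vertex, so that $|I(P)|^\wedge = |P|^\wedge$, the ``compact support in $|\Sigma|$'' and ``finite dimensional support'' clauses of Definition~\ref{def:D_bfSigma} become vacuous, and $\fold{I(P)}$ reduces to $\fold{P}$; this is what makes $\contstrd(I(P))$, over the germ of sets bounded in the second $\IN$-coordinate, a mild variant of the structure $\contstrc(P)$ of Definition~\ref{def:mfc}.

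For part~\ref{lem:about-contd-sw-fin:fin} I would first check that $j$ induces a well-defined functor $j_*\colon \contc_G(P) \to \contd^{\fin}_G(P)$. The point is that after setting the first $\IN$-coordinate $t_0$ to $0$, the ``bounded control over $\IN$'' and ``foliated control over $|P|^\wedge$'' clauses defining $\contstrc_2(P)$ become precisely the restriction to the second $\IN$-coordinate of the clauses defining $\contstrd_2(I(P))$, the finiteness clauses match, and the bounded subsets of $|P|^\wedge \times \IN$ map into $\caly_0(P)$; hence $j_*$ respects the controlled subcategories and both Karoubi quotients. That $j_*$ is an equivalence then requires handling the auxiliary coordinate $t_0$: faithfulness is immediate since $j$ is injective, and for fullness and essential surjectivity one uses that every object of $\contd^{\fin}_G(P)$ is, by Remark~\ref{rem:summands}, the sum of its restrictions to the slices $|P|^\wedge \times \{t_0\} \times \IN$; on a single slice a shifted copy (Remark~\ref{rem:X-shifted-copies}) translating $t_0$ down to $0$ is an $\mfE_2$-equivalence (legitimate because $t_0$ is bounded on one slice), identifying the slice with an object in the image of $j_*$, and the same device reduces morphisms between images of $j_*$ to morphisms coming from $\contstrc_2(P)$. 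The part of an object supported over an unbounded range of $t_0$'s is disposed of by an auxiliary Karoubi sequence splitting $\contd^{\fin}_G(P)$ into its bounded-$t_0$ part --- equivalent to $\contc_G(P)$ by the shifted-copy argument just described --- and a quotient living ``at $t_0 = \infty$'' whose K-theory vanishes by an Eilenberg swindle in the $t_0$-direction; here one uses that objects of $\contd^{\fin}_G(P)$ have support in $\caly(I(P))$, which forces $t_1$-boundedness once $t_0 \geq k_0$, keeping the swindle compatible with $\contstrd_2(I(P))$.

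Part~\ref{lem:about-contd-sw-fin:sw} is proved by an Eilenberg swindle on $\contd^{\sw}_G(P) = \contrCatUcoef{G}{\contstrd(I(P)),\caly_0(P)}{\calb}$, invoking Lemma~\ref{lem:formal-swindle} or its relative version Lemma~\ref{lem:formal-swindle-Z}, as for the analogous Lemma~\ref{lem:equivalence_and_swindle}\ref{lem:equivalence_and_swindle:swindle} and Lemma~\ref{lem:about-calc-dis}\ref{lem:about-calc-dis:swindle}. The real content is to choose a $G$-map $f\colon |P|^\wedge \times \IN^{\times 2} \to |P|^\wedge \times \IN^{\times 2}$ pushing to infinity and to verify the hypotheses of that lemma, and the step I expect to be the main obstacle is precisely this verification. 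Indeed, the compact-support and finite-dimensionality hypotheses are automatic because $I(P)$'s complex is a point, and the hypothesis about the germ $\caly_0(P)$ forces $f$ to essentially fix the $t_1$-coordinate; but the hypothesis about $\contstrd_2(I(P))$ is delicate, since --- unlike for a Pedersen--Weibel control structure, where the control condition constrains only differences --- the foliated-control clause here constrains absolute positions in $\IN^{\times 2}$, control being demanded only past thresholds $t_0 \geq k_0$ and $t_1 \geq k_1$. Consequently $f$ must be engineered so that the telescoping copies land at coordinates where the foliated-control clause is vacuous (small $t_1$) while being driven off in the remaining ($t_0$-)direction, and one must chase the nested quantifiers to confirm that $\bigcup_n (f\times f)^{\circ n}(E)$ and $\bigcup_n f^{\circ n}(F)$ stay in the control structure. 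Once the right $f$ is in hand the remaining verifications, as well as the control-condition bookkeeping for $j_*$ in part~\ref{lem:about-contd-sw-fin:fin}, are routine.
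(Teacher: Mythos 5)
Your overall strategy matches the paper's: part~\ref{lem:about-contd-sw-fin:fin} is an unwinding of the definitions (the paper dismisses it as an easy exercise; your slice-by-slice reduction via shifted copies is a reasonable way to organize it), and part~\ref{lem:about-contd-sw-fin:sw} is the Eilenberg swindle of Lemma~\ref{lem:formal-swindle} for the shift $(\lambda,t_0,t_1)\mapsto(\lambda,t_0+1,t_1)$, which fixes $t_1$ and is therefore compatible with the quotient by $\caly_0(P)$.

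The gap is in part~\ref{lem:about-contd-sw-fin:sw}. You correctly isolate the verification of~\ref{lem:formal-swindle:E_2} --- that $\bigcup_n(f\times f)^{\circ n}(E)\in\contstrd_2(I(P))$ --- as the only non-routine step, but you then leave it undone, and the heuristic you offer for why it should succeed is wrong. You say the telescoped copies should ``land at coordinates where the foliated-control clause is vacuous (small $t_1$)''; but since $f$ fixes the $t_1$-coordinate (as it must, by your own analysis of~\ref{lem:formal-swindle:caly}), each copy of a pair of $E$ sits at exactly the same, possibly arbitrarily large, $t_1$ as the original, merely translated in $t_0$, so it does not land in the vacuous region --- and if all copies did land at bounded $t_1$, then $\bfB^\infty$ would be supported in $\caly_0(P)$, hence zero in $\contd^{\sw}_G(P)$, and the isomorphism $\bfB^\infty\oplus\bfB\cong\bfB^\infty$ would force every object to vanish, which is absurd. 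What actually has to be deployed is the observation you record in your opening paragraph but never use: for $\bfSigma=I(P)$ one has $\fold{\bfSigma}(z,z')<(\beta,\eta,\epsilon)\iff\fold{P}(z,z')<(\beta,\eta)$ for every $\epsilon>0$, so the conclusion of the foliated-control clause in Definition~\ref{def:D_bfSigma}~\ref{def:D_bfSigma:X} involves neither $\epsilon$ nor the simplicial coordinate; this is precisely what the paper invokes (``the conditions in~\ref{def:D_bfSigma:X} are constant in the first coordinate'') to see that the condition survives the union over $t_0$-translates. Without making this explicit, the quantifier chase you defer does not obviously close up: the clause constrains only $t_0\geq k_0$, with $\beta$ allowed to depend on $t_0$, so one must explain why translating support upward in $t_0$ does not propagate pairs from the unconstrained range of small $t_0$ into the constrained range. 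Your write-up needs this argument (or an explicit substitute), not the ``vacuous at small $t_1$'' one.
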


\begin{proof} 
  The first statement is an easy exercise in the definitions.  The second comes from a
  standard Eilenberg swindle on $\contd^{\sw}_G(P)$ using the shift
  $(\lambda,t_0,t_1) \mapsto (\lambda,t_0+1,t_1)$; formally we use
  Lemma~\ref{lem:formal-swindle}.  
  Let $P \in \EPplus\All(G)$.
  Let $\bfSigma \in \regularPOrGSCzero$ be the object whose  underlying simplicial complex consist of one vertex which is sent to $P$, i.e., $\bfSigma = I(P)$.
  Now the point is that for $z,z' \in |\bfSigma|^\wedge = |P|^\wedge$ we have for any $\epsilon > 0$
  \begin{equation*}
    \fold{\bfSigma}(z,z') < (\beta,\eta,\epsilon) \quad \iff \quad \fold{P}(z,z') < (\beta,\eta).
  \end{equation*}
  Thus the conditions in~\ref{def:D_bfSigma:X} are constant in the first coordinate.  This
  is used to verify~\ref{lem:formal-swindle-Z:E_2}, the other assumptions of
  Lemma~\ref{lem:formal-swindle} are straight forward to check.
\end{proof}

\begin{proof}[Proof of Proposition~\ref{prop:coefficients-contd}]
  The Karoubi sequence~\eqref{eq:karoubi-D-fin-D-sw-D-for-P} induces a fibration sequence
  in K-theory, see~\eqref{eq:karoubi-sequence-rel}.  Thus~\ref{lem:about-contd-sw-fin:fin}
  and~\ref{lem:about-contd-sw-fin:sw} give the result.
\end{proof}

%-----------------------------------------------------------------

\subsection{Computation of $\contd_G(\--)$ on
  $\regularPOrGSCzero$}\label{subsec:computation-bfD-on-zero-really}

\begin{proposition}\label{prop:computation-bfD-on-zero} 
  Let $\ZerodimR = (\ZerodimRunderlyingSet,P) \in \regularPOrGSCzero$.  The canonical map
  \begin{equation*}
    \bigvee_{\ZerodimRunderlyingSetElement \in \ZerodimRunderlyingSet}
    \bfK(\contd_G(P(\ZerodimRunderlyingSetElement))) \xrightarrow{\sim} \bfK(\contd_G(\ZerodimR))
  \end{equation*}
  is an equivalence.
\end{proposition}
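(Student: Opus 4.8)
The plan is to reduce the statement about an infinite wedge to the already-established swindle and finiteness results, exactly as one does for Pedersen--Weibel style homology theories. The key point is that an object $\bfB = (S,\pi,B)$ of $\contd_G(\ZerodimR)$ with $\ZerodimR = (\ZerodimRunderlyingSet,P)$ decomposes according to the vertex of $\ZerodimRunderlyingSet$ over which it sits: writing $\ZerodimRunderlyingSet$ as a (discrete, $0$-dimensional) simplicial complex with vertex set also called $\ZerodimRunderlyingSet$, we have $|\ZerodimR|^\wedge = \coprod_{\ZerodimRunderlyingSetElement} |P(\ZerodimRunderlyingSetElement)|^\wedge$ as $G$-spaces, and this disjoint union is compatible with the control structure $\contstrd(\ZerodimR)$ and the collection $\caly(\ZerodimR)$ in the obvious way. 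So every object is canonically a direct sum (possibly infinite) $\bigoplus_{\ZerodimRunderlyingSetElement} \bfB|_{|P(\ZerodimRunderlyingSetElement)|^\wedge \times \IN^{\times 2}}$ using the summand construction of Remark~\ref{rem:summands}. However, this direct sum is typically infinite, so $\contd_G(\ZerodimR)$ is strictly larger than the coproduct $\coprod_{\ZerodimRunderlyingSetElement}\contd_G(P(\ZerodimRunderlyingSetElement))$ and one cannot conclude directly; the non-uniform compact support condition over $|\Sigma|$ from~\ref{def:D_bfSigma:obj} (here $\Sigma = \ZerodimRunderlyingSet$) is exactly what makes infinitely supported objects possible, compare Remark~\ref{rem:why-non-uniform}.

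The way around this is the standard ``infinite versus locally finite'' argument. First I would introduce the full subcategory $\contd_G^{\fin}(\ZerodimR) \subseteq \contd_G(\ZerodimR)$ on objects $\bfB$ whose support meets only finitely many of the pieces $|P(\ZerodimRunderlyingSetElement)|^\wedge \times \IN^{\times 2}$; equivalently, objects supported over a finite subcomplex $\ZerodimRunderlyingSet_0 \subseteq \ZerodimRunderlyingSet$. On such objects the canonical functor $\coprod_{\ZerodimRunderlyingSetElement} \contd_G(P(\ZerodimRunderlyingSetElement)) \to \contd_G^{\fin}(\ZerodimR)$ is an equivalence (a morphism in $\contd_G(\ZerodimR)$ between objects over different vertices has empty support, hence is zero, because the control structure is discrete over $\ZerodimRunderlyingSet$ — an element of $G$ cannot move a point of $|P(\ZerodimRunderlyingSetElement)|^\wedge$ to a point of $|P(\ZerodimRunderlyingSetElement')|^\wedge$ for $\ZerodimRunderlyingSetElement \ne \ZerodimRunderlyingSetElement'$, as these lie over distinct vertices of the trivial $G$-complex $\ZerodimRunderlyingSet$ — wait, $G$ acts on $\ZerodimRunderlyingSet$; one must instead note that $\pi$-images lie over a single $G$-orbit-component and the vertices in a single summand are not permuted, which follows because $\ZerodimRunderlyingSetElement$ indexes simplices and the functor $P$ is only defined on $\simp(\ZerodimRunderlyingSet)$; more carefully one uses that $\suppobj \bfB$ together with the compact-support-in-$|\Sigma|$ condition pins down finitely many vertices). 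Passing to non-connective K-theory, $\bfK$ commutes with such filtered colimits of additive categories (K-theory commutes with filtered colimits, see e.g.~the foundational references in Subsection~\ref{subsec:K-theory}), and $\contd_G^{\fin}(\ZerodimR) = \colim_{\ZerodimRunderlyingSet_0} \coprod_{\ZerodimRunderlyingSetElement \in \ZerodimRunderlyingSet_0}\contd_G(P(\ZerodimRunderlyingSetElement))$, so $\bfK(\contd_G^{\fin}(\ZerodimR)) \simeq \bigvee_{\ZerodimRunderlyingSetElement} \bfK(\contd_G(P(\ZerodimRunderlyingSetElement)))$ since K-theory of a coproduct of additive categories is the wedge.

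It then remains to show that the inclusion $\contd_G^{\fin}(\ZerodimR) \to \contd_G(\ZerodimR)$ induces an equivalence on K-theory. For this I would use a Karoubi-sequence plus swindle argument of the type already deployed in Lemma~\ref{lem:equivalence_and_swindle}, Lemma~\ref{lem:about-calc-dis} and Lemma~\ref{lem:about-contd-sw-fin}. Concretely: choose an exhaustion $\ZerodimRunderlyingSet_0 \subseteq \ZerodimRunderlyingSet_1 \subseteq \cdots$ of $\ZerodimRunderlyingSet$ by finite subcomplexes (if $\ZerodimRunderlyingSet$ is uncountable one works with the filtered poset of finite subcomplexes throughout), and consider the quotient category $\contd_G(\ZerodimR)/\contd_G^{\fin}(\ZerodimR)$, which by Lemma~\ref{lem:filtered-for-controlled} receives a Karoubi filtration, giving a fibration sequence $\bfK(\contd_G^{\fin}(\ZerodimR)) \to \bfK(\contd_G(\ZerodimR)) \to \bfK(\contd_G(\ZerodimR)/\contd_G^{\fin}(\ZerodimR))$. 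On the quotient one constructs an Eilenberg swindle: an object ``at infinity'' over $\ZerodimRunderlyingSet$ can be pushed along the exhaustion — more precisely, one uses the non-uniform compact support to shift an object supported on $\ZerodimRunderlyingSet \setminus \ZerodimRunderlyingSet_n$ further and further out, absorbing it via $\bfB \mapsto \bfB \oplus \bfB \oplus \cdots$, the shifted summands being invisible in the quotient; this is the formal swindle of Lemma~\ref{lem:formal-swindle} applied to a suitable $G$-map $f$ of $|\ZerodimR|^\wedge \times \IN^{\times 2}$ that translates in the $t_1$-direction in a manner growing with the ``depth'' in $\ZerodimRunderlyingSet$. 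Hence $\bfK(\contd_G(\ZerodimR)/\contd_G^{\fin}(\ZerodimR)) \simeq \ast$ and the inclusion is a K-equivalence, which combined with the previous paragraph finishes the proof. The main obstacle I anticipate is getting the swindle on the quotient to respect all three support conditions of $\contstrd(\ZerodimR)$ simultaneously — in particular the finite-dimensional support condition and the interaction of the shift with the $\IN^{\times 2}$-directions — and verifying that the shifted object genuinely lies in $\contd_G(\ZerodimR)$ rather than merely in $\contrCatUcoef{G}{\contstrd(\ZerodimR)}{\calb}$; this is where the precise (non-uniform) shape of Definition~\ref{def:D_bfSigma}~\ref{def:D_bfSigma:obj} has to be used with care, just as in Remark~\ref{rem:why-non-uniform}.
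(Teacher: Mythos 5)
Your first half---splitting $|\ZerodimR|^\wedge$ over the vertices, observing that cross-terms between distinct vertices die, and identifying the subcategory of objects supported over finitely many vertices with $\bigoplus_{\ZerodimRunderlyingSetElement}\contd_G(P(\ZerodimRunderlyingSetElement))$---is in the spirit of the paper's Lemma~\ref{lem:about-constr-dis}. Note, though, that the reason cross-terms vanish is not the one you grope for in your parenthetical: $G$ does not act on $\ZerodimRunderlyingSet$ at all (the action lives inside each $|P(\ZerodimRunderlyingSetElement)|^\wedge$), and the correct point is that the $\epsilon$-control part of the foliated control condition forces a morphism between different vertices to be supported in a set belonging to $\caly(\ZerodimR)$, hence to be zero in the quotient category; this is Remark~\ref{rem:contrstrd-is-almost-contrstrddis} and part~\ref{lem:about-constr-dis:dis} of that lemma.

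The genuine gap is your last step. The paper does \emph{not} form the Karoubi sequence $\contd_G^{\fin}(\ZerodimR)\to\contd_G(\ZerodimR)\to\contd_G(\ZerodimR)/\contd_G^{\fin}(\ZerodimR)$ and kill the quotient by a swindle, and for good reason: no such swindle is available. The quotient is taken with respect to the collection of subsets lying over finitely many vertices, and there is no ``infinity'' in the discrete set $\ZerodimRunderlyingSet$ towards which to push. Your proposed map, translating in the $t_1$-direction by an amount growing with the ``depth'' of the vertex, (i) does not move objects into the subcategory being quotiented out, since that subcategory is cut out by a condition on vertices and not on $t_1$, and (ii) has unbounded displacement in $\IN^{\times 2}$, so condition~\ref{lem:formal-swindle:shift} of Lemma~\ref{lem:formal-swindle} fails and the shift is not even a morphism of the controlled category; moreover Lemmas~\ref{lem:formal-swindle} and~\ref{lem:formal-swindle-Z} require a single $Z\in\caly$, whereas ``finitely many vertices'' is only a directed union of such sets. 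The paper's route around this is to deloop first: it replaces $\contd_G(\ZerodimR)$ by the equivalent category $\contd_G^{\dis}(\ZerodimR)$ of morphisms $0$-controlled over $\ZerodimRunderlyingSet$, runs the Karoubi sequence $\contd_G^{\dis,\fin}(\ZerodimR)\to\contd_G^{\dis,\sw}(\ZerodimR)\to\contd_G^{\dis}(\ZerodimR)$ in which the Eilenberg swindle lives on the \emph{middle} term (the shift $t_0\mapsto t_0+1$, which preserves control only after passing to the $\dis$-variant), and then decomposes the fibre term $\contd_G^{\dis,\fin}(\ZerodimR)$---whose objects have much more restrictive supports---as $\bigoplus_{\ZerodimRunderlyingSetElement}\contd_G^{\dis,\fin}(P(\ZerodimRunderlyingSetElement))$. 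The vertex decomposition thus happens one delooping down, where the non-uniform compact support condition of Definition~\ref{def:D_bfSigma}~\ref{def:D_bfSigma:obj} no longer causes trouble; Remark~\ref{rem:why-non-uniform} flags exactly this difficulty. As written, your argument reduces the proposition to the vanishing of the K-theory of $\contd_G(\ZerodimR)/\contd_G^{\fin}(\ZerodimR)$, which is essentially equivalent to the proposition itself, and supplies no valid proof of that vanishing.
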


The proof of Proposition~\ref{prop:computation-bfD-on-zero} requires some preparations.

\begin{definition}\label{def:contddis}
  Let $\ZerodimR = (\ZerodimRunderlyingSet,P) \in \regularPOrGSCzero$.  We define the
  $G$-control structure
  $\contstrd^\dis(\ZerodimR) =
  \big(\contstrd^\dis_1(\ZerodimR),\contstrd^\dis_2(\ZerodimR),\contstrd^\dis_G(\ZerodimR)\big)$
  as follows.  We set $\contstrd^\dis_1(\ZerodimR) := \contstrd_1(\ZerodimR)$ and
  $\contstrd^\dis_G(\ZerodimR) := \contstrd_G(\ZerodimR)$.  We define
  $\contstrd^\dis_2(\ZerodimR)$ to consist of all $E \in \contstrd_2(\ZerodimR)$ that are
  $0$-controlled over $\ZerodimRunderlyingSet$, i.e., satisfy the following.  Let
  $\twovec{\lambda',\underlinetupel{t}'}{\lambda,\underlinetupel{t}} \in E$.  Write
  $\ZerodimRunderlyingSetElement$ and $\ZerodimRunderlyingSetElement'$ for the images of
  $\lambda$ and $\lambda'$ in $\ZerodimRunderlyingSet$ under the projection
  $|\ZerodimR|^\wedge \to \ZerodimRunderlyingSet$.  We require
  $\ZerodimRunderlyingSetElement=\ZerodimRunderlyingSetElement'$.
	
  Let $\caly_0(\ZerodimRunderlyingSet)$ be the collection of all subsets $Y$ of
  $|\ZerodimR|^\wedge \times \IN \times \IN$ that are contained in
  $|\ZerodimR|^\wedge \times \IN \times \{ 0,\dots,N \}$ for some $N$ (depending on $Y$).
  We define
  \begin{eqnarray*}
    \contd_G^{\dis,\fin}(\ZerodimR)
    & := &
           \contrCatUcoef{G}{\contstrd^\dis(\ZerodimR)|_{\caly(\ZerodimR)},\caly_0(\ZerodimR)}{\calb}; \\
    \contd_G^{\dis,\sw}(\ZerodimR)
    & := &
           \contrCatUcoef{G}{\contstrd^\dis(\ZerodimR),\caly_0(\ZerodimR)}{\calb}; \\ 
    \contd_G^\dis(\ZerodimR)
    & := &
           \contrCatUcoef{G}{\contstrd^\dis(\ZerodimR),\caly(\ZerodimR)}{\calb}. 
  \end{eqnarray*}
\end{definition}

\begin{remark}\label{rem:contrstrd-is-almost-contrstrddis}
  The $\epsilon$-control aspect of the foliated control condition in~\ref{def:D_bfSigma:X}
  implies that for all $E \in \contstrd_2(\ZerodimR)$ there is $Y \in \caly(\ZerodimR)$
  such that
  \begin{equation*}
    E \cap Y \times Y \; \in \; \contstrd^\dis_2(\ZerodimR). 
  \end{equation*}
\end{remark}

\begin{lemma}\label{lem:about-constr-dis}
  Let $\ZerodimR = (\ZerodimRunderlyingSet,P) \in \regularPOrGSCzero$.
  \begin{enumerate}[
                 label=(\thetheorem\alph*),
                 align=parleft, 
                 leftmargin=*,
                 labelindent=1pt,
                 %labelwidth=60pt,
                 %labelsep=3pt,
                 %itemindent=-3pt
                 ]   
    \item\label{lem:about-constr-dis:dis-fin} The canonical map
    \begin{equation*}
      \bigoplus_{\ZerodimRunderlyingSetElement \in \ZerodimRunderlyingSet}
      \contd_G^{\dis,\fin}(P(\ZerodimRunderlyingSetElement)) \xrightarrow{\sim} \contd_G^{\dis,\fin}(\ZerodimR)
    \end{equation*}
    is an equivalence
  \item\label{lem:about-constr-dis:dis-sw} The K-theory of
    $\contd_G^{\dis,\sw}(\ZerodimR)$ vanishes.
  \item\label{lem:about-constr-dis:dis} The inclusion
    $\contd_G^\dis(\ZerodimR) \to \contd_G(\ZerodimR)$ is an equivalence.
  \end{enumerate}
\end{lemma}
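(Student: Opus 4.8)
The plan is to treat the three statements by the same formal pattern that was used for
Lemma~\ref{lem:about-contd-sw-fin} and Lemma~\ref{lem:about-calc-dis}: one Karoubi
sequence together with an Eilenberg swindle, plus one elementary bookkeeping argument
using disjoint unions. First I would record the Karoubi sequence
\begin{equation*}
  \contd_G^{\dis,\fin}(\ZerodimR) \; \to \; \contd_G^{\dis,\sw}(\ZerodimR)
  \; \to \; \contd_G^\dis(\ZerodimR),
\end{equation*}
which exists because $\contstrd^\dis(\ZerodimR)$ is a $G$-control structure contained in
$\contstrd(\ZerodimR)$ and $\caly_0(\ZerodimR) \subseteq \caly(\ZerodimR)$; this
applies~\eqref{eq:karoubi-sequence-rel}.

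For~\ref{lem:about-constr-dis:dis-fin} I would observe that objects of
$\contd_G^{\dis,\fin}(\ZerodimR)$ have support in
$|\ZerodimR|^\wedge \times \{0\text{,}\dots\text{,}N\} \times \{0\text{,}\dots\text{,}N'\}$
and are finite over each point; combined with the $0$-control over
$\ZerodimRunderlyingSet$ this forces every object and morphism to decompose as a finite
direct sum indexed by the components $P(\ZerodimRunderlyingSetElement)$ of $\ZerodimR$,
with no mixing of components. Hence the canonical functor from
$\bigoplus_{\ZerodimRunderlyingSetElement} \contd_G^{\dis,\fin}(P(\ZerodimRunderlyingSetElement))$
is essentially surjective and fully faithful; since non-connective K-theory takes
arbitrary direct sums of additive categories to wedges, passing to $\bfK$ gives the
wedge. (This is where the discreteness over $\ZerodimRunderlyingSet$ is essential:
without it the foliated control condition over $|\ZerodimR|^\wedge$ would not be
component-wise.) For~\ref{lem:about-constr-dis:dis-sw} I would use the shift
$(\lambda,t_0,t_1)\mapsto(\lambda,t_0+1,t_1)$ on $|\ZerodimR|^\wedge\times\IN^{\times2}$
and invoke Lemma~\ref{lem:formal-swindle}; the only nontrivial hypothesis to check is
\ref{lem:formal-swindle:E_2}, and here the key point (exactly as in the proof of
Lemma~\ref{lem:about-contd-sw-fin}\ref{lem:about-contd-sw-fin:sw}) is that for
$\bfSigma = I(\ZerodimR)$ with a single vertex, $\fold{\bfSigma}(z,z')<(\beta,\eta,\epsilon)$
is equivalent to $\fold{P}(z,z')<(\beta,\eta)$, so the foliated control condition in
$\contstrd_2^\dis(\ZerodimR)$ is constant in the $t_0$-coordinate and is preserved by the
shift. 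Assumption~\ref{lem:formal-swindle:caly} holds because $\caly_0(\ZerodimR)$
only constrains the $t_1$-coordinate, and~\ref{lem:formal-swindle:E_1},
\ref{lem:formal-swindle:shift} are routine.

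Finally, for~\ref{lem:about-constr-dis:dis} I would show that the inclusion
$\contstrd^\dis(\ZerodimR)\to\contstrd(\ZerodimR)$ induces an equivalence on the quotient
categories $\contd_G^\dis(\ZerodimR)\to\contd_G(\ZerodimR)$. Essential surjectivity is
clear since the two control structures have the same objects with support in
$\contstrd_1$. For fullness and faithfulness on morphisms the key input is
Remark~\ref{rem:contrstrd-is-almost-contrstrddis}: for any $E\in\contstrd_2(\ZerodimR)$
there is $Y\in\caly(\ZerodimR)$ with $E\cap Y\times Y\in\contstrd_2^\dis(\ZerodimR)$.
Thus given a morphism $\varphi$ in $\contrCatUcoef{G}{\contstrd(\ZerodimR)}{\calb}$, the
corner $\varphi_Y^Y$ (notation of Remark~\ref{rem:corners}) lies in
$\contrCatUcoef{G}{\contstrd^\dis(\ZerodimR)}{\calb}$ and differs from $\varphi$ by a
morphism supported in $Y\in\caly(\ZerodimR)$, hence represents the same morphism in the
quotient; this gives fullness. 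Faithfulness is similar: if a morphism of
$\contd_G^\dis(\ZerodimR)$ becomes zero in $\contd_G(\ZerodimR)$, it is already supported
in some $Y\in\caly(\ZerodimR)$ after enlarging $Y$ so that it is $0$-controlled, hence is
zero in $\contd_G^\dis(\ZerodimR)$. I expect the main obstacle to be organizing the
corner/enlargement bookkeeping in~\ref{lem:about-constr-dis:dis} carefully enough that the
identifications on morphisms are well defined and compatible with composition; the
swindle and the direct-sum argument are essentially routine given the cited lemmas.
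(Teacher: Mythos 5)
Your overall architecture --- the Karoubi sequence, the component decomposition for~\ref{lem:about-constr-dis:dis-fin}, the $t_0$-shift swindle for~\ref{lem:about-constr-dis:dis-sw}, and the corner argument via Remark~\ref{rem:contrstrd-is-almost-contrstrddis} for~\ref{lem:about-constr-dis:dis} --- is the same as the paper's, and your treatments of~\ref{lem:about-constr-dis:dis-sw} and~\ref{lem:about-constr-dis:dis} match the intended proofs.

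The gap is in~\ref{lem:about-constr-dis:dis-fin}. You assert that objects of $\contd_G^{\dis,\fin}(\ZerodimR)$ have support in $|\ZerodimR|^\wedge \times \{0,\dots,N\}\times\{0,\dots,N'\}$ and deduce from this (together with finiteness over points) that every object is supported over only finitely many points of $\ZerodimRunderlyingSet$, so that the functor out of the finitely supported direct sum is essentially surjective. That premise is false. Objects of $\contd_G^{\dis,\fin}(\ZerodimR)$ have $\suppobj$ in $\contstrd_1(\ZerodimR)|_{\caly(\ZerodimR)}$; neither $\contstrd_1(\ZerodimR)$ nor $\caly(\ZerodimR)$ bounds the $t_0$-coordinate, and $\caly(\ZerodimR)$ bounds $t_1$ only for large $t_0$ and only by a bound $k_1(t_0)$ depending on $t_0$. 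A uniform bound on $t_1$ is the defining property of $\caly_0(\ZerodimR)$, which is the subcategory being divided out, not a constraint on the objects of the quotient. Concretely, an object with one generator over $(\lambda_{n},n,n)$ for each $n\in\IN$, where the $\lambda_{n}$ lie in pairwise distinct components $|P(\ZerodimRunderlyingSetElement_{n})|^\wedge$, is a legitimate nonzero object of $\contd_G^{\dis,\fin}(\ZerodimR)$ supported over infinitely many elements of $\ZerodimRunderlyingSet$. What the definitions actually provide is only the per-$t_0$ finiteness from the compact support condition in~\ref{def:D_bfSigma:obj} --- finitely many components for each fixed $t_0$, with no uniformity in $t_0$ --- and this non-uniformity is exactly the difficulty the paper singles out in Remark~\ref{rem:why-non-uniform} for this computation. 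The paper's proof of~\ref{lem:about-constr-dis:dis-fin} cites precisely this per-$t_0$ condition together with the $0$-control over $\ZerodimRunderlyingSet$; your argument replaces the one delicate point by a global finiteness that is not available, so your essential surjectivity step fails as written and must be reworked.
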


\begin{proof}
  The first statement follows from the compact support condition over $|\Sigma| = M$
  in~\ref{def:D_bfSigma:obj} and the fact that we have $0$-control over over $M$.  The
  second statement uses the Eilenberg swindle on $\contd_G^{\dis,\sw}(\ZerodimR)$ induced
  by the map $(\lambda,t_0,t_1) \mapsto (\lambda,t_0+1,t_1)$, see
  Lemma~\ref{lem:formal-swindle}.  To verify~\ref{lem:formal-swindle:E_2} it is again
  important that $0$-control over $B$ is in $\contstrd^\dis(\ZerodimR)$ enforced for all
  $t_0 \in \IN$\footnote{Neither~\ref{lem:about-constr-dis:dis-fin}
    nor~\ref{lem:about-constr-dis:dis-sw} hold if we use $\contstrd(\ZerodimR)$ instead of
    $\contstrd^\dis(\ZerodimR)$}.  The third statement is an easy exercise in the
  definitions and uses the observation from
  Remark~\ref{rem:contrstrd-is-almost-contrstrddis}.
\end{proof}

\begin{proof}[Proof of Proposition~\ref{prop:computation-bfD-on-zero}] 
  By~\ref{lem:about-constr-dis:dis} it suffices to prove the assertion for
  $\contd_G^{\dis}$ in place of $\contd_G$.  The Karoubi sequence
  \begin{equation*}
    \contd_G^{\dis,\fin}(\ZerodimR) \; \to \; \contd_G^{\dis,\sw}(\ZerodimR) \; \to \; \contd_G^{\dis}(\ZerodimR) 
  \end{equation*} 
  induces a fibration sequence in K-theory, see~\eqref{eq:karoubi-sequence-rel}.
  Using~\ref{lem:about-constr-dis:dis-sw} we obtain an equivalence
  $\Omega \bfK \big(\contd_G^{\dis}(\--)\big) \xrightarrow{\sim} \bfK \big(
  \contd_G^{\dis,\fin}(\--) \big)$.  Thus it suffices to prove the assertion with
  $\contd_G^{\dis,\fin}$ in place of $\contd_G^\dis$.  In this formulation the assertion
  follows from~\ref{lem:about-constr-dis:dis-fin}.
\end{proof}

%-----------------------------------------------------------------

\subsection{The K-theory of $\contd_G^0(\bfSigma)$ determines the K-theory of
  $\contd_G(\bfSigma)$}%
\label{subsec:contd-vs-contd0}

\begin{proposition}\label{prop:contd-vs-contd0}
  There exists a diagram in $\regularPOrGSC\text{-}\Spectra$
  \begin{equation*}
    \xymatrix{\bfK \big( \contd^0_G(\--)\big) & \bfK \big( \contd^0_G(\--)\big) \ar[r] \ar[l]
      & \bfK \big( \contd^0_G(\--)\big) \\
      \bfK \big( \contd^0_G(\--)\big) \ar[u] \ar[d] & \bfK \big( \contd^0_G(\--)\big) \ar[r] \ar[l] \ar[u] \ar[d]
      & \bfK \big( \contd^0_G(\--)\big) \ar[u] \ar[d] \\
      \bfK \big( \contd^0_G(\--)\big) & \bfK \big( \contd^0_G(\--)\big) \ar[r] \ar[l]
      & \bfK \big( \contd^0_G(\--)\big) 
    }	
  \end{equation*}
  whose homotopy colimit is equivalent to $\bfK \big( \contd_G(\--)\big)$.
\end{proposition}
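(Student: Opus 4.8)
The goal is to express $\bfK(\contd_G(\bfSigma))$ functorially in $\bfSigma$ as the homotopy colimit of a $3\times 3$ diagram, each entry of which is $\bfK(\contd^0_G(\bfSigma))$. The plan is to set up a $3\times 3$ grid of Karoubi sequences in the two $\IN$-directions of $|\bfSigma|^\wedge\times\IN^{\times 2}$, analogous to the classical construction that exhibits a once-delooped controlled K-theory as a homotopy colimit of a $3\times3$ square of (shifted) controlled categories over the line. Concretely, recall that $\contd_2(\bfSigma)$ allows bounded (nonzero) motion in $\underlinetupel t=(t_0,t_1)\in\IN^{\times 2}$, whereas $\contd_2^0(\bfSigma)$ forbids any motion in $\underlinetupel t$; so $\contd^0_G(\bfSigma)$ is the ``diagonal'' piece and $\contd_G(\bfSigma)$ is the full category with bounded $\IN^{\times 2}$-control. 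The standard trick (compare the Pedersen--Weibel delooping and its iterated form, and the mapping-telescope-style decompositions used in controlled algebra) is: for a single $\IN$-direction, split the category with bounded control over $\IN$ into three pieces, using the subcategories supported in $[0,\infty)$, in $(-\infty,0]$ (here: bounded above by a moving threshold) and their intersection, and observe that the two ``half-line'' pieces have vanishing K-theory by an Eilenberg swindle while their ``germ at $\infty$'' quotients recover the bounded category. Iterating this over the two factors of $\IN^{\times 2}$ yields a $3\times3$ square (or the $3\times3$ diagram as written, which is the ``blown up'' form where we record nine copies before taking iterated cofibers).

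First I would isolate the one-variable statement: for a $G$-control structure $\mfE$ on $Y\times\IN$ that is ``bounded over $\IN$'', let $\mfE^0$ be its $0$-controlled-over-$\IN$ refinement; then there is a homotopy pushout square, in the variable $\bfSigma$,
\begin{equation*}
  \xymatrix{\bfK(\contrCatUcoef{G}{\mfE^0}{\calb}) \ar[r]\ar[d] & \bfK(\contrCatUcoef{G}{\mfE^0_{+}}{\calb}) \ar[d] \\
  \bfK(\contrCatUcoef{G}{\mfE^0_{-}}{\calb}) \ar[r] & \bfK(\contrCatUcoef{G}{\mfE}{\calb}),}
\end{equation*}
where $\mfE^0_{\pm}$ are the variants allowing bounded motion in $\IN$ only in the positive (resp.\ negative) direction; this is proved by the Karoubi sequences of Section~\ref{sec:controlled-algebra} (Lemma~\ref{lem:filtered-for-controlled} and~\eqref{eq:karoubi-sequence-rel}) together with Eilenberg swindles of the kind used in Lemma~\ref{lem:equivalence_and_swindle} and Lemma~\ref{lem:about-calc-dis}~\ref{lem:about-calc-dis:swindle}, which kill the two half-line categories. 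Crucially, for $\bfSigma\in\regularPOrGSC$ the swindle over the $t_1$-factor is available precisely because the foliated-control condition of Definition~\ref{def:D_bfSigma} tolerates shifting in $t_1$ (the analogue of the argument in Lemma~\ref{lem:about-contd-sw-fin}); one must check that the compact-support-over-$|\Sigma|$ condition~\ref{def:D_bfSigma:obj} and the collection $\caly(\bfSigma)$ of Definition~\ref{def:caly-bfSigma} are compatible with these truncations and shifts, which is where the nonuniformity discussed in Remark~\ref{rem:why-non-uniform} has to be handled with care. Then I would carry out the analogous split in the $t_0$-direction and note that $\contd^0$ itself is the $(0,0)$-refinement, so iterating the two pushout squares produces a $3\times3$-shaped homotopy colimit diagram all of whose terms, after swindling away the half-line pieces, are equivalent to $\bfK(\contd^0_G(\bfSigma))$ — matching the displayed diagram.

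The main obstacle I anticipate is bookkeeping the interaction of the two control directions with the finite-dimensional-support condition and especially with $\caly(\bfSigma)$: because $\caly(\bfSigma)$ is itself a ``germ at $\infty$ in $(t_0,t_1)$'' condition (not a fixed bounded strip), the truncated categories $\contrCatUcoef{G}{\mfE^0_\pm,\caly}{\calb}$ must be shown to still admit the swindle, i.e.\ one needs the relative Karoubi sequence~\eqref{eq:karoubi-sequence-rel} applied with the correct pair of collections and Lemma~\ref{lem:formal-swindle-Z} (swindling towards a $Z\in\caly$) rather than the naive Lemma~\ref{lem:formal-swindle}. A secondary technical point is naturality: all constructions (truncations, shifts, and the induced maps between the nine corners) must be strictly functorial in $\bfSigma\in\regularPOrGSC$, which follows because they are defined purely in terms of the $\IN^{\times 2}$-coordinate and the functor $\bfSigma\mapsto\contrCatUcoef{G}{\contstrd(\bfSigma)}{\calb}$ is already functorial. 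Once these points are settled, assembling the diagram and identifying its homotopy colimit with $\bfK(\contd_G(\--))$ is a formal consequence of the homotopy-colimit lemmas of Appendix~\ref{app:homotopy-colimits} together with the iterated pushout squares.
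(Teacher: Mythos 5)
Your overall two-step architecture (handle the $t_0$- and $t_1$-directions separately through an intermediate category, using Karoubi sequences and swindles, with care about $\caly(\bfSigma)$) matches the paper, but the one-variable decomposition you propose is the wrong one, and the gap is essential. You want a pushout square whose off-diagonal entries are ``directed control'' categories $\mfE^0_{\pm}$ with vanishing K-theory. Two things go wrong. First, the excision machinery available here (Lemma~\ref{lem:formal-excision}, Lemma~\ref{lem:formal-homotopy-pushout-square}) produces pushout squares only for decompositions of the \emph{support} by collections $\caly_0,\caly_1$ of subsets of the underlying space; it says nothing about filtering the \emph{morphisms} by the direction of their motion in $\IN$, so your square is not known to be a homotopy pushout. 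Second, the Eilenberg swindle that is supposed to kill $\mfE^0_{\pm}$ fails: the swindle shifts objects and morphisms to arbitrarily large $t_0$ (resp.\ $t_1$) and requires, via condition~\ref{lem:formal-swindle:E_2}, that $\bigcup_n(f\times f)^{\circ n}(E)$ still lie in $\contstrd_2(\bfSigma)$; but the foliated control condition of Definition~\ref{def:D_bfSigma} demands that for every $\epsilon>0$ there is $k_0$ with $\epsilon$-control over $|\Sigma|$ for all $t_0\geq k_0$, and a single pair $\twovec{z'}{z}\in\suppX\varphi$ with $d^\infty(p_\bfSigma(z),p_\bfSigma(z'))$ bounded away from $0$, once pushed to all larger $t_0$, violates this. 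This is precisely the obstruction recorded in the footnote to Lemma~\ref{lem:about-calc-dis} and in Remark~\ref{rem:why-non-uniform}. Finally, even if both claims held, your square would exhibit $\bfK(\contd_G(\--))$ as roughly a suspension of $\bfK(\contd^0_G(\--))$ in each direction, so the off-center entries of the resulting $3\times3$ diagram would have trivial K-theory; the diagram would not consist of nine copies of $\bfK(\contd^0_G(\--))$ as the proposition requires.

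The missing idea is the paper's interleaving trick. Choose $0=a_0<a_1<\cdots$ with $a_{n+1}-a_n\to\infty$ and decompose the $t_0$-axis into the two unions of discrete intervals $A=[a_0,a_3]\cup[a_4,a_7]\cup\cdots$ and $B=[a_2,a_5]\cup[a_6,a_9]\cup\cdots$. This \emph{is} a support decomposition, so Lemma~\ref{lem:formal-homotopy-pushout-square} gives a pushout square with corners $\contd^{A\cap B}_G$, $\contd^{A}_G$, $\contd^{B}_G$, $\contd_G$. The point is that none of the three restricted pieces is flasque; instead each is K-theoretically equivalent to the intermediate category $\contd^{.5}_G$ with zero control in $t_0$ (Lemma~\ref{lem:all-are-point5}): because both the lengths of the intervals and the gaps between them grow, bounded control over $A$ is eventually zero control between intervals, and the passage from the sparse subset $A_0=\{a_0,a_4,\dots\}$ to $A$ is a K-equivalence by a swindle within each interval towards its endpoint, using Lemma~\ref{lem:formal-swindle-Z} rather than Lemma~\ref{lem:formal-swindle}. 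Repeating this in the $t_1$-direction expresses $\contd^{.5}_G$ as a pushout of three copies of $\contd^0_G$, and substituting one pushout into the other yields the asserted $3\times3$ diagram, in which all nine entries are $\bfK(\contd^0_G(\--))$ but the connecting maps are in general not identities.
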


The proof of Proposition~\ref{prop:contd-vs-contd0} requires some preparations.

\begin{definition}
  We define the $G$-control structure
  \begin{equation*}
  	\contstrd^{.5}(\bfSigma) = \big(
  \contstrd_1^{.5}(\bfSigma),\contstrd_2^{.5}(\bfSigma),\contstrd_G^{.5}(\bfSigma) \big)
  \end{equation*}
  as follows.  Set $\contstrd_1^{.5}(\bfSigma) := \contstrd_1(\bfSigma)$,
  $\contstrd_G^{.5}(\bfSigma) := \contstrd_G(\bfSigma)$.  We define
  $\contstrd_2^{.5}(\bfSigma)$ to consist of all $E \in \contstrd_2(\bfSigma)$ satisfying
  \begin{equation*}
    \qquad \twovec{z',t'_0,t'_1}{z,t_0,t_1} \in E \; \implies \; t_0 = t'_0.
  \end{equation*} 
  We set
  \begin{equation*}
    \contd^{.5}_G(\bfSigma)  :=  \contrCatUcoef{G}{\contstrd^{.5}(\bfSigma),\caly(\bfSigma)}{\calb}.
  \end{equation*}
\end{definition}

Note that
$\contstrd_2^{0}(\bfSigma) \subseteq \contstrd_2^{.5}(\bfSigma) \subseteq
\contstrd_2(\bfSigma)$ and so
$\contd^{0}_G(\bfSigma) \subseteq \contd^{.5}_G(\bfSigma) \subseteq \contd_G(\bfSigma)$.

\begin{lemma}\label{lem:two-pushout}
  There are homotopy pushouts in $\regularPOrGSC\text{-}\Spectra$
  \begin{equation*}
    \xymatrix{\bfK \big( \contd^0_G(\--)\big) \ar[r] \ar[d] & \bfK \big( \contd^0_G(\--)\big) \ar[d]
      & \bfK \big( \contd^{.5}_G(\--)\big) \ar[r] \ar[d] & \bfK \big( \contd^{.5}_G(\--)\big) \ar[d] \\
      \bfK \big( \contd^0_G(\--)\big) \ar[r]  & \bfK \big( \contd^{.5}_G(\--)\big)
      & \bfK \big( \contd^{.5}_G(\--)\big) \ar[r] & \bfK \big( \contd_G(\--)\big).
    }	 
  \end{equation*}
\end{lemma}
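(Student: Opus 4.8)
The plan is to observe first that the two squares are one and the same statement applied to the two control directions that separate $\contstrd^0(\bfSigma)\subseteq\contstrd^{.5}(\bfSigma)\subseteq\contstrd(\bfSigma)$: the first square upgrades $0$-control in the $t_1$-coordinate to bounded control in $t_1$, the second upgrades $0$-control in $t_0$ to bounded control in $t_0$. So I would isolate a single ``deloop one $\IN$-coordinate'' homotopy pushout and apply it twice; these are exactly the two pushouts needed to assemble the homotopy colimit of the $3\times 3$ diagram in Proposition~\ref{prop:contd-vs-contd0} (replace the span of $\contd^0_G$'s through the centre by $\contd^{.5}_G$ using the first square, then the resulting span of $\contd^{.5}_G$'s by $\contd_G$ using the second). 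Both squares I would produce as instances of the formal Mayer--Vietoris criterion Lemma~\ref{lem:formal-homotopy-pushout-square}.

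Concretely, for the first square I would keep the space $|\bfSigma|^\wedge\times\IN^{\times 2}$, take the control structure $\mfE=\contstrd^{.5}(\bfSigma)$, and the ambient germ $\caly=\caly(\bfSigma)$. I would decompose the $t_1$-axis $\IN$ as a union $A\cup B$ of two interleaved families of \emph{overlapping} blocks, with both the blocks and their overlaps of unboundedly growing length, and let $\caly_0$ (resp.\ $\caly_1$) consist of all subsets contained in a finite union of $A$-blocks (resp.\ $B$-blocks) together with a set from $\caly(\bfSigma)$. The growing-overlap/growing-block condition is precisely what lets one check the excision hypothesis of Lemma~\ref{lem:formal-excision} for $(\caly_0,\caly_1)$ --- this is the asymptotic-dimension-one argument for $\IN$: any prescribed $\contstrd_2$-control radius in $t_1$ is eventually dominated, so an $E$-neighbourhood of a $\caly_1$-set differs from a $\caly_1$-set only by a bounded, hence $\caly_0$-, set. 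Lemma~\ref{lem:formal-homotopy-pushout-square} then gives a homotopy pushout square. Its corner over $\caly_0\cup\caly_1$ is all of $\contstrd^{.5}(\bfSigma)$ (the blocks cover $\IN$), hence equals $\contd^{.5}_G(\bfSigma)$. Each of the remaining three corners --- over $\caly_0$, over $\caly_1$, and over $\caly_0\cap\caly_1$ --- I would identify with $\contd^0_G(\bfSigma)$ by collapsing every block (respectively every overlap region) to a single value of $t_1$: bounded control within a region whose size grows becomes $0$-control over the cofinal collapsed index set, which after re-indexing by $\IN$ is $\contstrd^0$-control in $t_1$ (still $0$-control in $t_0$), germed by $\caly(\bfSigma)$. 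This collapse is made precise with shifted copies (Remark~\ref{rem:X-shifted-copies}) and the swindle Lemma~\ref{lem:formal-swindle-Z} towards the union of the initial bounded blocks, which also pins down the two maps of the square. For the second square I would run the identical argument along the $t_0$-axis with $\contstrd^{.5}(\bfSigma)$ in the role of the base control structure, the only extra care being that $\caly(\bfSigma)$ is asymmetric in $t_0$ and $t_1$, so the $t_0$-blocks must be chosen so that the collapsed corners still see $\caly(\bfSigma)$ correctly.

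The hard part is the simultaneous bookkeeping that makes all four corner identifications cohere and be natural in $\bfSigma\in\regularPOrGSC$: one must arrange the block families $A,B$, the collections $\caly_0,\caly_1$, and the way they sit over the already-present germ $\caly(\bfSigma)$ so that (i) the excision hypothesis of Lemma~\ref{lem:formal-excision} really holds, (ii) the corner over $\caly_0\cup\caly_1$ is \emph{exactly} $\contd^{.5}_G(\bfSigma)$ rather than an enlargement, and (iii) the three block-collapses land in $\contd^0_G(\bfSigma)$ via equivalences compatible with the inclusions $\contstrd^0\subseteq\contstrd^{.5}\subseteq\contstrd$, so that the square has the stated shape and legs. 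Because of the non-uniform compact-support condition and the $\epsilon$-control condition in Definition~\ref{def:D_bfSigma}, none of these three points is a purely formal consequence of the block decomposition; verifying them is where essentially all of the work lies, the rest reducing to Lemmas~\ref{lem:formal-homotopy-pushout-square}, \ref{lem:formal-swindle-Z}, \ref{lem:formal-excision} and Remark~\ref{rem:X-shifted-copies}.
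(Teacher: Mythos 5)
Your proposal is correct and follows essentially the same route as the paper: interleaved overlapping blocks of growing length along one $\IN$-axis, the formal Mayer--Vietoris criterion of Lemma~\ref{lem:formal-homotopy-pushout-square} for the square, and identification of the three restricted corners with $\contd^{.5}_G(\--)$ (resp.\ $\contd^0_G(\--)$) by relabelling the block-initial points and swindling each block onto them via Lemma~\ref{lem:formal-swindle-Z} --- this is precisely Lemmas~\ref{lem:A-B-A-cap-B-pushout} and~\ref{lem:all-are-point5}. The only cosmetic difference is which of the two squares is treated explicitly and which is declared analogous.
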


We will only give the construction of the right homotopy pushout square in
Lemma~\ref{lem:two-pushout}; the construction of the left homotopy pushout square is
entirely analogous.

\begin{definition}
  For $X \subseteq \IN$ let $\caly^X(\bfSigma)$ be the collection of all subsets of
  $|\bfSigma|^\wedge \times X \times \IN$.  We define
  \begin{equation*}
    \contd^X_G(\bfSigma) := \contrCatUcoef{G}{\contstrd(\bfSigma)|_{\caly^X(\bfSigma)},\caly(\bfSigma)}{\calb}.
  \end{equation*}
\end{definition}

The definition amounts to replacing $|\bfSigma|^\wedge \times \IN \times \IN$ with
$|\bfSigma|^\wedge \times X \times \IN$.  Note that because of the bounded control
requirement over $\IN \times \IN$ in~\ref{def:D_bfSigma:X} the category
$\contd^X_G(\bfSigma)$ depends on $X$ with the metric it inherits from $\IN$.  In
particular, the properties of $\contd^X_G(\bfSigma)$ depend on the coarse
structure of $X$.

We now choose natural numbers $0=a_0 < a_1 < a_2 < \dots$ such that
$a_{n+1} - a_n \to \infty$ as $n \to \infty$.  We set
\begin{eqnarray*}
  A & := & [a_0,a_3] \cup [a_4,a_7] \cup [a_{\,8},\,a_{11}] \cup [a_{12},a_{15}] \cup \dots; \\
  B & := & [a_2,a_5] \cup [a_6,a_9] \cup [a_{10},a_{13}] \cup [a_{14},a_{17}] \cup \dots, 
\end{eqnarray*} 
where $[a,b]$ is the discrete interval $\{a,a+1,\dots,b\}$.  The point is that each of the
three sets $A$, $B$ and $A \cap B$ is the infinite union of intervals, where both the
length and the distance between successive grow to $\infty$.  We have
\refstepcounter{theorem}
\begin{enumerate}[
                 label=(\thetheorem\alph*),
                 align=parleft, 
                 leftmargin=*,
                 %labelindent=2pt,
                 %labelwidth=60pt,
                 labelsep=8pt,
                 %itemindent=-3pt
                 ] 
\item\label{nl:A-B-distance} for any $r$ there is $R$ such that for $a \in A \setminus B$
  and $b \in B$ we have either $a,b \leq R$ or $|b-a| \leq r$;
\item\label{nl:a_i-a_j-distance} for any $r$ there is $R$ such that if
  $|a_i -a_j| \leq r$ and $a_i \neq a_j$, then $a_i,a_j \leq R$.
\end{enumerate}

\begin{lemma}\label{lem:A-B-A-cap-B-pushout} The square
  \begin{equation*}
    \xymatrix{\bfK\bigl(\contd^{A \cap B}_G (\--)\bigr) \ar[r] \ar[d] & \bfK\bigl(\contd_G^A(\--)\bigr) \ar[d] \\
      \bfK\bigl(\contd_G^B(\--) \bigr) \ar[r] & \bfK\bigl(\contd_G(\--)\bigr) 
    }
  \end{equation*}	
  is a homotopy pushout square in $\regularPOrGSC\text{-}\Spectra$.
\end{lemma}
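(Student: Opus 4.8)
The plan is to deduce this from the excision machinery of Section~\ref{sec:controlled-algebra}, in particular Lemma~\ref{lem:formal-homotopy-pushout-square}, applied to the control structure $\contstrd(\bfSigma)$ on $|\bfSigma|^\wedge \times \IN^{\times 2}$. Write $\pi_0$ for the projection of $|\bfSigma|^\wedge \times \IN^{\times 2}$ onto the first $\IN$-factor, and set
\[
  \caly_0 := \{ Y \mid \pi_0(Y) \subseteq A \cup [0,N] \text{ for some } N \}, \qquad
  \caly_1 := \{ Y \mid \pi_0(Y) \subseteq B \cup [0,N] \text{ for some } N \},
\]
and $\caly := \caly(\bfSigma)$ from Definition~\ref{def:caly-bfSigma}. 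These satisfy the hypotheses of Definition~\ref{def:E_Y}, trivially, since $G$ acts only on the $|\bfSigma|^\wedge$-coordinate. First I would record the identifications of the four corners. Since $A \cup B = \IN$, the abusive union $\caly_0 \cup \caly_1$ is the collection of all subsets, so $\contrCatUcoef{G}{\contstrd(\bfSigma)|_{\caly_0 \cup \caly_1},\caly}{\calb} = \contd_G(\bfSigma)$. Because $|\bfSigma|^\wedge \times [0,N] \times \IN \in \caly(\bfSigma)$, any object supported over a bounded range of $t_0$ becomes trivial in the quotient by $\caly$; a direct-summand argument (as in Lemma~\ref{lem:y-rel-Y-irrelevant}) then identifies $\contrCatUcoef{G}{\contstrd(\bfSigma)|_{\caly_0},\caly}{\calb}$, $\contrCatUcoef{G}{\contstrd(\bfSigma)|_{\caly_1},\caly}{\calb}$ and $\contrCatUcoef{G}{\contstrd(\bfSigma)|_{\caly_0 \cap \caly_1},\caly}{\calb}$ with $\contd^A_G(\bfSigma)$, $\contd^B_G(\bfSigma)$ and $\contd^{A \cap B}_G(\bfSigma)$, respectively; on morphisms there is no problem since morphism supports lie in $\contstrd_2(\bfSigma)$ and hence have bounded control over $\IN^{\times 2}$.

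The main point is to verify the hypothesis of Lemma~\ref{lem:formal-excision} for the pair $\caly_1,\caly_0$ (and, by the symmetry between $A$ and $B$, also with the roles reversed, should Lemma~\ref{lem:formal-homotopy-pushout-square} require it). So let $Y_1 \in \caly_1$ and $E \in \contstrd_2(\bfSigma)$ with bounded-control parameter $\alpha$ over $\IN^{\times 2}$; I must produce $Y_0 \in \caly_0$, $Y_1' \in \caly_1$ with $Y_1 \subseteq Y_1' \cup Y_0$ and $(Y_1')^E \in \caly_1$. Bounded control over $\IN^{\times 2}$ gives $\pi_0\big((Y_1')^E\big) \subseteq \{ t_0 \mid \exists\, t_0' \in \pi_0(Y_1'),\ |t_0-t_0'|\le \alpha\}$, so it suffices to split $S := \pi_0(Y_1)$ as $S = S_1 \cup S_0$ with $S_1 \subseteq B$, the $\alpha$-neighbourhood of $S_1$ still inside $B$, and $S_0 \subseteq A \cup [0,N']$ for some $N'$. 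Taking $S_1 := S \cap B_\alpha^\circ$ with $B_\alpha^\circ := \{ b \in B \mid [b-\alpha,b+\alpha] \subseteq B \}$ handles the neighbourhood condition by construction; what remains is that $B \setminus B_\alpha^\circ$ — the points of $B$ within $\alpha$ of a gap of $B$ — is contained in $A$ outside a bounded set. This is exactly where the coarse geometry of $A$, $B$ and $A \cap B$ recorded in~\ref{nl:A-B-distance} and~\ref{nl:a_i-a_j-distance} enters: each gap of $B$ is flanked on both sides by an interval of $A$ which, by~\ref{nl:a_i-a_j-distance}, has length exceeding $\alpha$ for all but finitely many indices, and the finitely many exceptions contribute only the bounded overhang. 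This yields the desired $S_0$, hence $Y_0 := Y_1 \cap \pi_0^{-1}(S_0)$ and $Y_1' := Y_1 \cap \pi_0^{-1}(S_1)$.

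With this hypothesis checked, Lemma~\ref{lem:formal-homotopy-pushout-square} produces a homotopy pushout square relating the K-theories of $\contrCatUcoef{G}{\contstrd(\bfSigma)|_{\caly_0 \cap \caly_1},\caly}{\calb}$, $\contrCatUcoef{G}{\contstrd(\bfSigma)|_{\caly_i},\caly}{\calb}$ and $\contrCatUcoef{G}{\contstrd(\bfSigma),\caly}{\calb}$, which under the identifications above is precisely the asserted square. Finally, the collections $\caly_0,\caly_1,\caly$ and all the functors and natural transformations in sight are defined uniformly in $\bfSigma \in \regularPOrGSC$ (the maps in the square are induced by inclusions of subcategories, hence strictly natural), so the square lives in $\regularPOrGSC\text{-}\Spectra$. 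I expect the excision step — isolating the bounded exceptional set and arranging $\caly_0,\caly_1$ to absorb it — to be the only real work; everything else is a direct appeal to Section~\ref{sec:controlled-algebra}.
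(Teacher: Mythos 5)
Your proposal is correct and follows essentially the same route as the paper: apply Lemma~\ref{lem:formal-homotopy-pushout-square} with the collections of subsets supported over $A$ resp.\ $B$ (up to sets absorbed by $\caly(\bfSigma)$), and verify the hypothesis of Lemma~\ref{lem:formal-excision} by combining bounded control over $\IN^{\times 2}$ with the coarse separation property~\ref{nl:A-B-distance} to split off the bounded exceptional part near the gaps. The only cosmetic differences are that you swap the roles of $\caly_0$ and $\caly_1$ (harmless, as you note, by the symmetry of $A$ and $B$) and build the bounded padding into the collections from the start rather than invoking the $\caly$-enlargement step inside Lemma~\ref{lem:formal-homotopy-pushout-square}.
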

 
\begin{proof} 
  Let $\bfSigma \in \regularPOrGSC$.  We check that
  Lemma~\ref{lem:formal-homotopy-pushout-square} applies to the above square evaluated on
  $\bfSigma$.  Let $Y_A \in \caly^A(\bfSigma)$, $E \in \contstrd_2(\bfSigma)$ be given.
  Set $Y_B := Y_A \cap \big(|\bfSigma|^\wedge \times B \times \IN \big)$ and
  $Y'_A := Y_A \setminus |\bfSigma|^\wedge \times B \times \IN$.  Then
  $Y_A \in \caly^A(\bfSigma)$, $Y_B \in \caly^B(\bfSigma)$ and $Y_A = Y'_A \cup Y_B$.  The
  bounded control condition from~\ref{def:D_bfSigma:X} together with~\ref{nl:A-B-distance}
  implies that there is $R > 0$ such that for all $(z,t_0,t_1) \in (Y'_A)^E$ we have
  $t_0 \in A$ or $t_0 \leq R$.  Thus $(Y'_A)^E \in \caly^A(\bfSigma)$ and
  Lemma~\ref{lem:formal-homotopy-pushout-square} applies.
\end{proof}

\begin{lemma}\label{lem:all-are-point5} In $\regularPOrGSC\text{-}\Spectra$, the functors
  $\bfK\bigl( \contd^{A \cap B}_G (\--)\bigr)$, $\bfK\bigl( \contd_G^A(\--)\bigr)$,
  $\bfK\bigl( \contd_G^B(\--)\bigr) $ are all equivalent to
  $\bfK\bigl( \contd_G^{.5}(\--)\bigr) $.
\end{lemma}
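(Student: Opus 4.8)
The plan is to prove Lemma~\ref{lem:all-are-point5} by showing that each of the three ``striped'' subsets $A$, $B$, $A\cap B$ of $\IN$ is \emph{coarsely equivalent}, as a metric subspace of $\IN$, to a disjoint union $\coprod_{n\in\IN} \{n\}\times [0,\ell_n]$ of growing intervals, and that this coarse structure is precisely what the category $\contd_G^X(\bfSigma)$ sees in its first $\IN$-coordinate. Each of $A$, $B$ and $A\cap B$ is by construction an infinite disjoint union of discrete intervals $[c_k,d_k]$ with $d_k-c_k\to\infty$ and with the gaps $c_{k+1}-d_k\to\infty$ as well; property~\ref{nl:a_i-a_j-distance} is exactly the statement that two points of such a set lying in different intervals but within bounded distance must both be bounded. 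So from the point of view of the bounded control condition in~\ref{def:D_bfSigma:X} over $\IN^{\times 2}$, for any fixed $\alpha$ the $\alpha$-equivalence classes of points of $X$ (for $X=A,B$ or $A\cap B$) with sufficiently large $t_0$ are exactly the intervals making up $X$.

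First I would make precise the comparison functor. Fix $X\in\{A,B,A\cap B\}$ and enumerate the maximal intervals of $X$ as $[c_k,d_k]$, $k\in\IN$. There is a ``collapse'' map $\rho\colon X\to \IN$, $t_0\mapsto k$ whenever $t_0\in[c_k,d_k]$, and a section-like map $\IN\to X$ sending $k$ to $c_k$. Using these I would build a $G$-equivariant map $|\bfSigma|^\wedge\times X\times\IN \to |\bfSigma|^\wedge\times \IN^{\times 2}$ over the identity on $|\bfSigma|^\wedge$ that collapses each interval to a point in the first $\IN$-factor and is the identity on the second, and a map back. By~\ref{nl:a_i-a_j-distance}, the induced functor $\contd_G^X(\bfSigma)\to\contd_G^{\dots}(\bfSigma)$ and its inverse are well-defined: bounded-control subsets over $X$ go to subsets that are eventually $0$-controlled in the first coordinate (i.e.\ land in $\contstrd^{.5}$), and conversely, because the gaps grow, a subset that is $0$-controlled in the first coordinate over $\IN$ and bounded-controlled in the second, once pulled back, stays bounded-controlled over $X$. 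Crucially the foliated control condition over $\bfSigma$ involves only ``for all $t_0\ge k_0$, \dots, for all $t_1\ge k_1$'' quantifiers on the \emph{values} $z,z'$, and these are unaffected by a coarse reindexing of $t_0$; the finiteness and compact-support conditions in~\ref{def:D_bfSigma:obj} are likewise preserved since each interval is finite and contributes one $t_0$-slice after collapsing. The upshot is that $\contd_G^X(\bfSigma)$ is equivalent (on the nose, or via an obvious natural isomorphism as in Remark~\ref{rem:X-shifted-copies}) to $\contd_G^{.5}(\bfSigma)$, and all of this is natural in $\bfSigma\in\regularPOrGSC$, which is what is needed for an equivalence of $\regularPOrGSC$-spectra after applying $\bfK$.

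The key steps, in order: (i) record the coarse-geometric properties of $A$, $B$, $A\cap B$ inside $\IN$ — each is a growing union of growing intervals, and~\ref{nl:a_i-a_j-distance} controls cross-interval pairs; (ii) define the collapse/expand maps between $|\bfSigma|^\wedge\times X\times\IN$ and $|\bfSigma|^\wedge\times\IN^{\times 2}$ and check they induce mutually inverse (up to canonical isomorphism) functors $\contd_G^X(\bfSigma)\leftrightarrows\contd_G^{.5}(\bfSigma)$, verifying each of the three control conditions and the collection $\caly$ of negligible subsets is matched correctly; (iii) observe naturality in $\bfSigma$ and apply $\bfK$. I expect the main obstacle to be step (ii): specifically, checking that the collection $\caly(\bfSigma)$ (Definition~\ref{def:caly-bfSigma}) and the subcollection $\caly^X(\bfSigma)$ behave correctly under the reindexing — one must confirm that ``eventually empty in $t_0$, and for each $t_0$ eventually empty in $t_1$'' is preserved in both directions — and that the $\epsilon$-control region (Remark~\ref{rem:fol-contrl-gives-eps-contrl}), whose shape in the $\IN^{\times2}$-plane is genuinely two-dimensional, is not disturbed by collapsing a growing family of $t_0$-intervals. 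The growth condition $d_k-c_k\to\infty$, $c_{k+1}-d_k\to\infty$ is exactly what makes both directions work, so the verification is bookkeeping rather than anything deep, but it must be done carefully for all of $\contstrd_1$, $\contstrd_2$ and the quotient by $\caly$.
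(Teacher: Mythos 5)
There is a genuine gap at your step (ii), and it is exactly the point where the paper has to do real work. Your plan is to collapse each maximal interval $[c_k,d_k]$ of $X$ to a single point and to claim that collapse and expansion are mutually inverse up to ``an obvious natural isomorphism as in Remark~\ref{rem:X-shifted-copies}''. But the shifted-copy mechanism of Remark~\ref{rem:X-shifted-copies} only produces an isomorphism when the displacement set $E=\bigl\{\twovec{\pi(s)}{\sigma(s)}\bigr\}$ lies in $\mfE_2$, i.e.\ here in $\contstrd_2(\bfSigma)$, which in particular enforces \emph{bounded} control over $\IN^{\times 2}$. Collapsing $[c_k,d_k]$ onto its left endpoint moves points by up to $d_k-c_k$, and by construction $d_k-c_k=a_{j+3}-a_j\to\infty$. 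So the composite expand$\circ$collapse is \emph{not} isomorphic to the identity in $\contd_G^A(\bfSigma)$: an object supported near the right ends of the intervals is not isomorphic in that category to any object supported at the left endpoints, because the required shift violates bounded control. Consequently your two functors are not an equivalence of categories, and the statement cannot be reduced to ``bookkeeping''.

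What the paper does instead is split your step (ii) into two genuinely different moves. First it passes to the coarsely discrete subset $A_0=\{a_0,a_4,a_8,\dots\}$ of left endpoints; there the identification with $\contd_G^{.5}(\bfSigma)$ \emph{is} an honest equivalence of categories, by the argument you correctly sketch using~\ref{nl:a_i-a_j-distance} (bounded cross-gap pairs are eventually impossible, hence negligible modulo $\caly(\bfSigma)$). Second, it shows that the inclusion $\contd_G^{A_0}(\bfSigma)\to\contd_G^{A}(\bfSigma)$ becomes an equivalence only after applying $\bfK$: one forms the Karoubi sequence with quotient $\contrCatUcoef{G}{\contstrd(\bfSigma)|_{\caly^A(\bfSigma)},\caly^{A_0}(\bfSigma)}{\calb}$ and kills its K-theory with the Eilenberg swindle of Lemma~\ref{lem:formal-swindle-Z}, using the map that decrements $t_0$ by $1$ off $A_0$. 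Each step of that swindle is a bounded (distance $1$) move, and the unbounded total displacement that obstructs your single isomorphism is absorbed precisely by the infinite iteration inherent in a swindle. If you want to salvage your approach, you must replace the claimed categorical equivalence between $\contd_G^{X}(\bfSigma)$ and $\contd_G^{.5}(\bfSigma)$ by this two-stage argument (or some equivalent K-theoretic input); a direct reindexing cannot work.
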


\begin{proof}
  The argument is almost the same in all three cases.  We treat
  $\bfK\bigl( \contd_G^A(\--)\bigr)$.  Let $A_0 := \{a_0,a_4,a_8,a_{12},\dots \}$.  We
  claim that for any $\bfSigma \in \regularPOrGSC$
  \begin{enumerate}[
                 label=(\thetheorem\alph*),
                 align=parleft, 
                 leftmargin=*,
                 %labelindent=2pt,
                 %labelwidth=60pt,
                 labelsep=10pt,
                 %itemindent=-3pt
                 ]  
   \item\label{nl:IN-A_0} the bijection $\IN \to A_0$, $i \mapsto a_{4i}$ induces an
    equivalence $\contd_G^{.5}(\bfSigma) \to \contd_G^{A_0}(\bfSigma)$ of categories;
  \item\label{nl:A_0-A} the inclusion $A_0 \to A$ induces an equivalence
    $\contd_G^{A_0}(\bfSigma) \to \contd_G^{A}(\bfSigma)$ in K-theory.
  \end{enumerate}
  Clearly,~\ref{nl:IN-A_0} and~\ref{nl:A_0-A} together give us an equivalence
  $\bfK\bigl( \contd_G^{.5}(\--)\bigr) \to \bfK\bigl( \contd_G^A(\--)\bigr)$.
	
  The difference between $\contstrd^{.5}(\bfSigma)$ and $\contstrd(\bfSigma)$ is that for
  $\twovec{z',t_0',t_1'}{z,t_0,t_1'} \in E \in \contstrd_2(\bfSigma)$ we can have
  $t_0' \neq t_0$, while this does not happen for $\contstrd_2^{.5}(\bfSigma)$.  However,
  if $t_0 = a_i$ and $t'_0 = a_{i'}$ then~\ref{nl:a_i-a_j-distance} implies that either
  $t_0 = t'_0$ or $t_0,t'_0$ are bounded.  More formally, for
  $E \in \contstrd_2(\bfSigma)$ there is $Y \in \caly(\bfSigma)$ such that
  $(E \setminus Y \times Y) \cap \big(|\bfSigma|^\wedge \times A_0 \times \IN\big)^{\times
    2} \in \contstrd_2^{.5}(\bfSigma)$ and from this it is not difficult to
  verify~\ref{nl:IN-A_0}.
	
  The Karoubi sequence
  \begin{equation*}
    \contd_G^{A_0}(\bfSigma) \to \contd_G^{A}(\bfSigma)
    \to \contrCatUcoef{G}{\contstrd(\bfSigma)|_{\caly^A(\bfSigma)},\caly^{A_0}(\bfSigma)}{\calb}
  \end{equation*}
  induce a fibration sequence in K-theory, see~\eqref{eq:karoubi-sequence-rel}.  To
  prove~\ref{nl:A_0-A} it suffice to show that
  $\contrCatUcoef{G}{\contstrd(\bfSigma)|_{\caly^A(\bfSigma)},\caly^{A_0}(\bfSigma)}{\calb}$ admits an Eilenberg
  swindle.  Set $Z := |\bfSigma|^\wedge \times A_0 \times \IN$.  Consider
  $f \colon |\bfSigma|^\wedge \times A \times \IN \to |\bfSigma|^\wedge \times A \times
  \IN$ with
  \begin{equation*}
    f(z,t_0,t_1) := \begin{cases}
      (z,t_0,t_1) & t_0 \in A_0 \\ (z,t_0-1,t_1) & 	t_0 \not\in A_0. 
    \end{cases}
  \end{equation*}
  It is not difficult to check that Lemma~\ref{lem:formal-swindle-Z} applies and we obtain
  a swindle on $\calb^{\wedge}_G(\mfE(\bfSigma)|_{\caly^A(\bfSigma)},\caly^{A_0}(\bfSigma))$.
  $\contrCatUcoef{G}{\mfE(\bfSigma)|_{\caly^A(\bfSigma)},\caly^{A_0}(\bfSigma)}{\calb}$.
\end{proof}
 
\begin{proof}[Proof of Lemma~\ref{lem:two-pushout}]
  The existence of the right hand square follows directly from
  Lemma~\ref{lem:A-B-A-cap-B-pushout} and Lemma~\ref{lem:all-are-point5}.  The left hand
  square can be constructed by a similar argument.
\end{proof}

\begin{proof}[Proof of Proposition~\ref{prop:contd-vs-contd0}]
  This follows from Lemma~\ref{lem:two-pushout}.
\end{proof}

%-----------------------------------------------------------------

\subsection{Homotopy invariance for $\bfD_G^0$}\label{subsec:homotopy-inv-contd0}

Let $\ZerodimR = (\ZerodimRunderlyingSet,P) \in \regularPOrGSCzero$.  Let
$\pi \colon \ZerodimRunderlyingSet \times \Delta^n \to \ZerodimRunderlyingSet$ be the projection.  
  We obtain
$\bfDelta^d_\ZerodimR = (\ZerodimRunderlyingSet \times \Delta^d,P \circ \pi_*) \in
\regularPOrGSC$ as in Subsection~\ref{subsec:homotopy-inv-bfD0}.  A choice of a point
$x_0 \in |\Delta^d|$ determines an inclusion
$\bfi \colon \ZerodimR \to \bfDelta^d_\ZerodimR$.

\begin{proposition}\label{prop:homotopy-inv-bfD0} The inclusion $\bfi$ induces an
  equivalence
  \begin{equation*}
    \bfK \big( \contd_G^0 (\ZerodimR) \big) \; \xrightarrow{\sim} \;
    \bfK \big( \contd_G^0 (\bfDelta^d_\ZerodimR) \big).
  \end{equation*}
\end{proposition}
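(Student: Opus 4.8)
The plan is to reduce the statement to a homotopy-invariance statement for controlled categories, where the fibre coordinate in $\Delta^d$ can be ``squeezed away''. First I would observe that $\bfi$ has a retraction: the projection $\pi \colon \ZerodimRunderlyingSet \times \Delta^d \to \ZerodimRunderlyingSet$ induces a map $\bfDelta^d_\ZerodimR \to \ZerodimR$ of $\EPplus\All(G)$-simplicial complexes (note $P\circ\pi_*\circ(\pi_*)^{-1} = P$ strictly on the relevant simplices since the vertex map is an isomorphism), and composing with $\bfi$ gives the identity on $\ZerodimR$. So it suffices to show that the induced map $\bfK(\contd_G^0(\bfDelta^d_\ZerodimR)) \to \bfK(\contd_G^0(\ZerodimR))$ is an equivalence; equivalently that $\bfi_* \circ (\text{proj})_* $ is homotopic to the identity on $\bfK(\contd_G^0(\bfDelta^d_\ZerodimR))$.

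Next I would set up the standard controlled homotopy-invariance machinery. Since $\bfD_G^0$ uses the $\epsilon$-control condition over $|\bfSigma|$ (Remark~\ref{rem:eps-and-excision}), the point is that in $\contstrd_2^0$ one has, for every $\epsilon > 0$, $\epsilon$-control over $|\Sigma|$ in the region of $\IN^{\times 2}$ described in Figure~\ref{fig:eps-control}. For $\bfSigma = \bfDelta^d_\ZerodimR$ the $|\Sigma|$-factor is $\ZerodimRunderlyingSet \times |\Delta^d|$, and the foliated control over $P\circ\pi_*$ is independent of the $|\Delta^d|$-coordinate (because $\pi_*$ is an isomorphism of simplices, so $\lambda_\sigma$ only sees the $\ZerodimRunderlyingSet$-part). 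Thus the only thing the map $\bfi_*\circ(\text{proj})_*$ changes is the $|\Delta^d|$-coordinate of objects, replacing it by the constant $x_0$, and $\epsilon$-control over $|\Delta^d|$ is eventually satisfied for the relevant $\underlinetupel t$. The plan is to construct an explicit elementary homotopy: interpolate linearly in $|\Delta^d|$ between the identity and $\bfi_*\circ(\text{proj})_*$, i.e. use the maps $|\bfSigma|^\wedge \times \IN^{\times 2} \to |\bfSigma|^\wedge \times \IN^{\times 2}$ that move the $\Delta^d$-coordinate towards $x_0$ at a rate controlled by $t_1$ (keeping $t_0$, the $\ZerodimRunderlyingSet$-coordinate and the $|P|^\wedge$-coordinate fixed). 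One then checks that the mapping cylinder construction stays inside $\contstrd^0(\bfDelta^d_\ZerodimR)$: bounded control over $\IN^{\times 2}$, finite dimensional support, and the non-uniform compact support condition are all preserved because we do not move in the $t$-directions; the foliated control is preserved since it does not involve the $\Delta^d$-coordinate except through $d^\infty$ over $|\Sigma|$, and small perturbations of a bounded-support region in $|\Delta^d|$ keep $d^\infty$ small once $t_1$ is large, which is all that $\contstrd_2^0$ requires (and $\caly(\bfDelta^d_\ZerodimR)$ is preserved as well).

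Concretely I would package this as: build the ``$\Delta^1$-cylinder'' object $\bfCyl$ over $\bfDelta^d_\ZerodimR$, show the two inclusions of the ends into $\bfCyl$ induce the same map in K-theory (this is where one uses that one end inclusion is an isomorphism onto a direct summand complement, in the style of the elementary homotopy-invariance arguments for controlled categories, e.g. as in~\cite{Bartels-Lueck-Reich(2008hyper)}), and identify the two composites with the identity and with $\bfi_*\circ(\text{proj})_*$ respectively. Alternatively, and perhaps more cleanly, one can invoke the excision and skeleton-continuity properties already set up (Propositions~\ref{prop:excision-bfD0} and~\ref{prop:continuity-contd0}) to reduce from $\Delta^d$ to $\Delta^1$ and then to the two vertices, but the cylinder argument is self-contained and I would prefer it here.

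The main obstacle I anticipate is verifying that the interpolating self-maps of $|\bfDelta^d_\ZerodimR|^\wedge \times \IN^{\times 2}$ genuinely define morphisms in $\contrCatUcoef{G}{\contstrd^0(\bfDelta^d_\ZerodimR)}{\calb}$ — specifically, that pushing the $\Delta^d$-coordinate around does not destroy the delicate non-uniform compact support condition over $|\Sigma| = \ZerodimRunderlyingSet\times|\Delta^d|$ from~\ref{def:D_bfSigma:obj}, and that the $\epsilon$-control over $|\Sigma|$ in $\contstrd_2^0$ (which, recall, is part of, not separate from, the foliated control condition) is compatible with the homotopy. Since $|\Delta^d|$ is compact the support condition there is automatic, so the real work is bookkeeping: ensuring the homotopy is ``$t$-preserving'' so that all the quantifier-heavy conditions of Definition~\ref{def:D_bfSigma} restricted to $\contstrd_2^0$ go through coordinatewise in $\underlinetupel t$, using Lemma~\ref{lem:D_upper_0_bfSigma-prod-P} to handle the product structure $\bfDelta^d_\ZerodimR = \ZerodimR \times (\text{pt},\ldots)$-type decomposition if convenient. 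I expect this to be, as the authors put it elsewhere, ``an easy exercise in the definitions'', but it is the only place any care is needed.
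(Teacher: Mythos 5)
Your opening observation is fine: the projection $p \colon \bfDelta^d_\ZerodimR \to \ZerodimR$ is a map in $\regularPOrGSC$ with $p \circ \bfi = \id$, so $\bfK(\bfi_*)$ is split injective and it would suffice to identify $\bfi_* \circ p_*$ with the identity on K-theory. The gap is in how you propose to do that. A finite cylinder (or any natural chain of morphisms in $\contd_G^0(\bfDelta^d_\ZerodimR)$ joining $\id$ to $\bfi_* \circ p_*$) cannot exist: the functor $\bfi_* \circ p_*$ replaces the $|\Delta^d|$-coordinate of every point of the support by $x_0$, so the structure maps of such a cylinder would have $\suppX$ containing pairs $\twovec{(\lambda,x_0,\underlinetupel{t})}{(\lambda,x,\underlinetupel{t})}$ with $d^\infty(x,x_0)$ of order $1$ for \emph{all} $\underlinetupel{t}$. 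By the $\epsilon$-control part of the foliated control condition in Definition~\ref{def:D_bfSigma} (see Remark~\ref{rem:fol-contrl-gives-eps-contrl}), any admissible morphism must displace the $|\Sigma|$-coordinate by an amount tending to $0$ in the relevant region of $\IN^{\times 2}$; so no such cylinder is a diagram in the category. Your own interpolating maps, which move the $\Delta^d$-coordinate only a $t$-dependent small amount per step, are admissible precisely because they do \emph{not} reach $\bfi_* \circ p_*$ in finitely many steps. The missing idea is that the contraction must be iterated infinitely and packaged as an Eilenberg swindle, not as a homotopy.

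That is what the paper does: it identifies $\contd_G^0(\ZerodimR)$ with the full subcategory of $\contd_G^0(\bfDelta^d_\ZerodimR)$ supported over $|\ZerodimR|^\wedge \times \{x_0\} \times \IN^{\times 2}$, forms the Karoubi sequence~\eqref{eq:karoubi-sequence-rel} with quotient $\contrCatUcoef{G}{\contstrd^0(\bfDelta^d_\ZerodimR),\caly(\bfDelta^d_\ZerodimR) \cup \caly(x_0)}{\calb}$, and kills the K-theory of that quotient by applying Lemma~\ref{lem:formal-swindle-Z} with $Z = |\ZerodimR|^\wedge \times \{x_0\} \times \IN^{\times 2}$ to the map $f(\lambda,x,t_0,t_1) = (\lambda,f_{t_0}(x),t_0,t_1)$, where $f_{t_0}$ moves $x$ towards $x_0$ by $\min\{1/t_0, d(x,x_0)\}$. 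Two further remarks. First, the paper shrinks the step size with $t_0$ rather than $t_1$; for the one-step conditions of Lemma~\ref{lem:formal-swindle-Z} either choice can be made to satisfy the quantifier structure, so this is a minor point, but you should check condition~\ref{lem:formal-swindle-Z:E_2} for the infinite iteration, which is where the shrinking rate actually matters. Second, your parenthetical alternative via excision and skeleton continuity does not help: excision compares a complex with its codimension-one skeleton, it does not collapse $\Delta^d$ to a vertex, and its proof (Lemma~\ref{lem:thick-is-not-thick}) already relies on exactly the kind of swindle you are trying to avoid.
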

 
\begin{proof}
  We have $|\bfDelta^d_\ZerodimR|^\wedge = |\ZerodimR|^\wedge \times |\Delta^d| $.  Let
  $\caly(x_0)$ be the collection of all subsets of
  $|\ZerodimR|^\wedge \times \{ x \} \times \IN \times \IN \subset
  |\bfDelta^d_\ZerodimR|^\wedge \times \IN \times \IN $.  Then
  $\contd_G^0 (\ZerodimR) = \contrCatUcoef{G}{\contstrd^0(\ZerodimR),\caly(\ZerodimR)}{\calb}$ is
  equivalent to
  $\contrCatUcoef{G}{\contstrd^0(\bfDelta^d_\ZerodimR)|_{\caly(x_0)},\caly(\bfDelta^d_\ZerodimR)}{\calb}$.
  We obtain a Karoubi sequence
  \begin{align*}
    \contrCatUcoef{G}{\contstrd^0(\bfDelta^d_\ZerodimR)|_{\caly(x_0)},\caly(\bfDelta^d_\ZerodimR)}{\calb}
    \to \contrCatUcoef{G}{\contstrd^0(\bfDelta^d_\ZerodimR),\caly(\bfDelta^d_\ZerodimR)}{\calb}
    & \\ & \hspace{-80pt}
     \to \contrCatUcoef{G}{\contstrd^0(\bfDelta^d_\ZerodimR),\caly(\bfDelta^d_\ZerodimR) \cup \caly(x_0)}{\calb}
  \end{align*}
  as in~\eqref{eq:karoubi-sequence-rel}.  It suffices to show that the K-theory of right
  most category of this sequence is trivial.  To this end we produce an Eilenberg swindle
  and use Lemma~\ref{lem:formal-swindle-Z} with
  $Z = |\ZerodimR|^\wedge \times \{ x_0 \} \times \IN \times \IN$.  The point of the
  swindle is that we can contract $|\Delta^d|$ linearly to $\{x_0\}$.  The
  $\epsilon$-control part (see~\ref{nl:bfSigma-fol-distance:eps}) of the foliated control
  condition in~\ref{def:D_bfSigma:X} requires a bit of care here: with increasing $t_0$ we
  need to push slower and slower.  More precisely, we construct
  $f \colon |\ZerodimR|^\wedge \times |\Delta^d| \times \IN \times \IN \to
  |\ZerodimR|^\wedge \times |\Delta^d| \times \IN \times \IN$ as follows.  Write
  $f_{t_0} \colon |\Delta^d| \to |\Delta^d|$ for the map that sends $x$ to the point $x'$
  on the straight line from $x$ to $x_0$ in $|\Delta^d|$, whose distance from $x$ is
  $\min\{1/t_0,d(x,x_0)\}$.  We can set
  \begin{equation*}
    f(\lambda,x,t_0,t_1) := (\lambda,f_{t_0}(x),t_0,t_1).
  \end{equation*} 
  The foliated control condition in~\ref{def:D_bfSigma:X} also involves foliated control
  over $|(P \circ \pi_*)(\{m\} \times \sigma)|^\wedge = |P(m)|^\wedge$
  for each $m \in \ZerodimRunderlyingSet$,
  see~\ref{nl:bfSigma-fol-distance:beta-eta}.    
  The map $f$ preserves this condition,
  because $f$ acts as the identity on the $|\ZerodimR|^\wedge$-coordinate and because
  $|(P \circ \pi_*)(\{m\} \times \sigma)$ depends on $m$ but not on $\sigma$ (otherwise
  the passage from a subsimplex to a larger simplex could create problems).  With this
  observation it is not difficult to check that the assumptions of
  Lemma~\ref{lem:formal-swindle-Z} are satisfied.
\end{proof}

%-----------------------------------------------------------------

\subsection{Excision for $\contd^0_G(\--)$}\label{subsec:excision-bfD0-really}

Let $\bfSigma = (\Sigma,P) \in \regularPOrGSC$ be $d$-dimensional.  Let
$\bfSigma' = (\Sigma',P')$ be its $(d\,\text{-}1)$-skeleton.  For $W \subseteq |\Sigma|$
we let $|W|^\wedge$ be the preimage of $W$ under
$|\bfSigma|^\wedge \xrightarrow{p_\bfSigma} |\Sigma|$ and define $\caly_W$ as the
collection of all subsets of $|W|^\wedge \times \IN^{\times 2}$.  We set
\begin{equation*}
  \contd^{0,W}_G(\bfSigma) :=
  \contrCatUcoef{G}{\contstrd^0(\bfSigma)|_{\caly_W},\caly(\bfSigma)}{\calb},
\end{equation*}
i.e., $\contd^{0,W}_G(\bfSigma)$ is the full subcategory of $\contd^{0,W}_G(\bfSigma)$ on
all objects whose support is contained in $|W|^\wedge \times \IN^{\times 2}$.  Setting
$\caly_W^+ := \caly_W \cup \caly(\bfSigma)$, we can apply
Lemma~\ref{lem:y-rel-Y-irrelevant} to conclude that $\contd^{0,W}_G(\bfSigma)$ is
canonically equivalent
to $\contrCatUcoef{G}{\contstrd^0(\bfSigma)|_{\caly^+_W},\caly(\bfSigma)}{\calb}$.

Fix $0 < \epsilon < 1/(d+1)$.  Let $N$ be the (open\footnote{We could equally well work
  with closed neighborhoods.}) $\epsilon$-neighborhood of $|\Sigma'|$ in $|\Sigma|$
(always with respect to the $l^{\infty}$-metric).  The choice of $\epsilon$  guarantees
$|\Sigma'| \subseteq N \subsetneq |\Sigma|$.  
Let $M$ be the complement of the $\epsilon/2$-neighborhood of
$|\Sigma'|$ in $|\Sigma|$.  Thus $|\Sigma| = N \cup M$.

\begin{lemma}\label{lem:thick-is-not-thick}
  The functor $\contd_G^{0}(\bfSigma') \to \contd_G^{0,N}(\bfSigma)$ induced by the
  inclusion $\bfSigma' \to \bfSigma$ yields an equivalence in K-theory.
\end{lemma}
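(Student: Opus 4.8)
The plan is to produce a deformation retraction of $\contd_G^{0,N}(\bfSigma)$ onto the subcategory $\contd_G^0(\bfSigma')$ via an Eilenberg swindle that pushes the extra $d$-dimensional ``collar'' directions of $N$ towards $|\Sigma'|$. Concretely, recall $N$ is the open $\epsilon$-neighborhood of $|\Sigma'|$ in $|\Sigma|$ with $0 < \epsilon < 1/(d+1)$; the key geometric point is that $N$ retracts onto $|\Sigma'|$ and, since $\bfSigma$ is $d$-dimensional and $\bfSigma'$ is its $(d\text{-}1)$-skeleton, over each open $d$-simplex the bundle $|\bfSigma|^\wedge \to |\Sigma|$ restricted to $N$ is pulled back from $|\partial\Delta_\sigma|$. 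So over $N$ the functor $P$ on simplices does not ``grow'' when we push towards the boundary: for a $d$-simplex $\sigma$ and a face $\tau \subsetneq \sigma$ with $\tau \in \simp(\Sigma')$, the map $P(\sigma) \to P(\tau)$ is what governs $\lambda_\tau$, and a homotopy that moves a point $z$ with $p_\bfSigma(z) \in N$ towards $|\Sigma'|$ changes its image in $|\Sigma|$ but leaves the relevant fiber coordinate essentially fixed up to bounded foliated distance.

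First I would set up the comparison along the lines of the proof of Proposition~\ref{prop:homotopy-inv-bfD0}. Using Lemma~\ref{lem:y-rel-Y-irrelevant}, replace $\contd_G^{0,N}(\bfSigma)$ by $\contrCatUcoef{G}{\contstrd^0(\bfSigma)|_{\caly^+_N},\caly(\bfSigma)}{\calb}$ and observe that $\contd_G^0(\bfSigma') = \contrCatUcoef{G}{\contstrd^0(\bfSigma'),\caly(\bfSigma')}{\calb}$ is equivalent to $\contrCatUcoef{G}{\contstrd^0(\bfSigma)|_{\caly_{|\Sigma'|}},\caly(\bfSigma)}{\calb}$ (objects supported over $|\Sigma'|^\wedge$). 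Then the inclusion induces a Karoubi sequence
\begin{equation*}
  \contrCatUcoef{G}{\contstrd^0(\bfSigma)|_{\caly_{|\Sigma'|}},\caly(\bfSigma)}{\calb}
  \to \contrCatUcoef{G}{\contstrd^0(\bfSigma)|_{\caly^+_N},\caly(\bfSigma)}{\calb}
  \to \contrCatUcoef{G}{\contstrd^0(\bfSigma)|_{\caly^+_N},\caly(\bfSigma)\cup\caly_{|\Sigma'|}}{\calb}
\end{equation*}
as in~\eqref{eq:karoubi-sequence-rel}, and it suffices to show the K-theory of the quotient category on the right vanishes by exhibiting an Eilenberg swindle.

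The swindle is produced by Lemma~\ref{lem:formal-swindle-Z} with $Z = |\Sigma'|^\wedge \times \IN^{\times 2}$. One needs a $G$-map $f \colon |\bfSigma|^\wedge\times\IN^{\times 2} \to |\bfSigma|^\wedge\times\IN^{\times 2}$ which fixes everything over $|\Sigma'|$ and, on the $N$-part, moves each point along the straight line towards $|\Sigma'|$ in the local $\Delta^d$-coordinate — but slowed down with increasing $t_0$, moving by at most $\min\{1/t_0, \text{distance to }|\Sigma'|\}$ exactly as in Proposition~\ref{prop:homotopy-inv-bfD0}. Iterating $f$ eventually pushes any point off $N\setminus|\Sigma'|$, giving~\ref{lem:formal-swindle-Z:leave}; the non-uniform compact-support condition of $\contstrd_1^0$ is preserved because the retraction is along compact fibers; and the crucial~\ref{lem:formal-swindle-Z:E_2} (that $\bigcup_n (f\times f)^{\circ n}(E)\in\contstrd_2^0(\bfSigma)$) holds because the slowdown by $1/t_0$ means the $\epsilon$-control part~\ref{nl:bfSigma-fol-distance:eps} of foliated control is only perturbed by a summable error while the $(\beta,\eta)$-part~\ref{nl:bfSigma-fol-distance:beta-eta} is untouched — here one uses that $P$ composed with a face inclusion does not depend on the top simplex, so $\lambda_\tau$ is unchanged. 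I expect the main obstacle to be verifying~\ref{lem:formal-swindle-Z:E_2} carefully: one must check that sliding towards $|\Sigma'|$ does not secretly change which $K_{\sigma,\epsilon}$ a point lies in and thereby trigger a foliated-control requirement in a fiber $|P(\sigma)|^\wedge$ where $\lambda_\sigma$ of the two points are far apart; the choice $\epsilon < 1/(d+1)$ and the fact that $N$ is an $\epsilon$-neighborhood of $|\Sigma'|$ (so that on $N$ only faces $\tau$ with $\tau \subseteq |\Sigma'|$ have $p_\bfSigma(z)\in K_{\tau,\epsilon}$ after sufficient pushing) is what makes this work, and spelling out the bookkeeping for the neighborhoods $M$ versus $N$ is the delicate step.
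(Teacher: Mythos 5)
Your proposal is correct and follows essentially the same route as the paper: the same Karoubi sequence reducing to the quotient category, and the same Eilenberg swindle via Lemma~\ref{lem:formal-swindle-Z} with $Z = |\bfSigma'|^\wedge \times \IN^{\times 2}$, using the radial push towards $|\Sigma'|$ slowed down by $1/t_0$ and acting trivially on the fiber coordinate $\lambda$. The delicate point you flag (whether the push can change which $K_{\sigma,\epsilon}$ a point lies in) is resolved in the paper exactly as you suggest, by observing that the linear radial projection towards $|\partial\Delta_\sigma|$ preserves the sets $K_{\tau,\epsilon}$.
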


\begin{proof}
  First note that the inclusion $\bfSigma' \to \bfSigma$ induces an equivalence
  $\contd_G^{0}(\bfSigma') \to \contd_G^{0,|\Sigma'|}(\bfSigma)$.  We have a Karoubi
  sequence
  \begin{align*}
    \contrCatUcoef{G}{\contstrd^0(\bfSigma)|_{\caly_{|\Sigma'|}},\caly(\bfSigma)}{\calb} \to
    \contrCatUcoef{G}{\contstrd^0(\bfSigma)|_{\caly_N},\caly(\bfSigma)}{\calb} & \\ & \hspace{-80pt} \to
    \contrCatUcoef{G}{\contstrd^0(\bfSigma|_{\caly_N},\caly(\bfSigma) \cup \caly_{|\Sigma'|})}{\calb} 
  \end{align*}
  as in~\eqref{eq:karoubi-sequence-rel}.  The first map can be identified with
  $\contd_G^{0,|\Sigma'|}(\bfSigma) \to \contd_G^{0,N}(\bfSigma)$.  Thus we need to show
  that the K-theory of the third term is trivial.  To this end we will use
  Lemma~\ref{lem:formal-swindle} to construct an Eilenberg swindle on
  $\contrCatUcoef{G}{\contstrd^0(\bfSigma)|_{\caly(N)},\caly(\bfSigma) \cup
  \caly_{|\Sigma'|}}{\calb}$.  The swindle will be similar to the one constructed in the
  proof of homotopy invariance in Proposition~\ref{prop:homotopy-inv-bfD0}.  This time we
  swindle towards $|\bfSigma'|^\wedge$ in $|N|^\wedge$.  Let
  $Z := |\bfSigma'|^\wedge \times \IN \times \IN$.  Let $p \colon N \to |\Sigma'|$ be the
  radial projection, i.e., if $\sigma$ is a $d$-simplex of $\Sigma$ and
  $x \in N \cap |\Delta_\sigma|$, then $p(x)$ is the unique point in
  $|\partial \Delta_\sigma|$ such that $x$ lies on the straight line between $p(x)$ and
  the barycenter of $\sigma$.  Let $f_t(x)$ be the point on the straight line from $x$ to
  $p(x)$ of distance $\min\{1/(t+1),d(x,p(x))$ from $x$.  Let
  $f^\wedge_t \colon |N|^\wedge \to |N|^\wedge$ be the map that sends
  $[x,\lambda]_\sigma \in |N|^\wedge$ to $[f_t(x),\lambda]_\sigma$.  We now define
  $f \colon |N|^\wedge \times \IN \times \IN \to |W|^\wedge \times \IN \times \IN$ by
  \begin{equation*}
    f(z,t_0,t_1) = (f^\wedge_{t_0},t_0,t_1).
  \end{equation*}
  It is not too difficult to check that the assumptions of
  Lemma~\ref{lem:formal-swindle-Z} are satisfied.  As in the proof of
  Proposition~\ref{prop:homotopy-inv-bfD0}, it is important here that $f_t$ pushes less
  with increasing $t$, in order to preserve the $\epsilon$-control part
  (see~\ref{nl:bfSigma-fol-distance:eps}) of the foliated control condition
  in~\ref{def:D_bfSigma:X}.  As $f_t$ pushes linearly towards the boundary of
  $|\Delta_\sigma|$, it does preserve the sets $K_{\tau,\epsilon}$
  from~\ref{nl:bfSigma-fol-distance:beta-eta}.  With these observations it is easy to
  control the interaction of $f^\wedge_t$ with the foliated control condition
  in~\ref{def:D_bfSigma:X}.
\end{proof}

% \item\label{nl:bfSigma-fol-distance:eps}
 
% \item\label{nl:bfSigma-fol-distance:beta-eta}

\begin{proposition}\label{prop:M-N-MV} The diagram
  \begin{equation*}
    \xymatrix{\contd_G^{0,M \cap N}(\bfSigma) \ar[r] \ar[d] & \contd_G^{0,M}(\bfSigma) \ar[d]
      \\ \contd_G^{0,N}(\bfSigma) \ar[r] & \contd_G^{0}(\bfSigma)
    }
  \end{equation*}
  is a homotopy pushout.
\end{proposition}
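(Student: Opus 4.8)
The plan is to deduce, from Lemma~\ref{lem:formal-homotopy-pushout-square}, that $\bfK$ applied to the square is a homotopy pushout of spectra. I would apply that lemma to the $G$-control structure $\mfE := \contstrd^0(\bfSigma)$ on $|\bfSigma|^\wedge \times \IN^{\times 2}$, to the collection $\caly := \caly(\bfSigma)$ from Definition~\ref{def:caly-bfSigma}, and to the collections $\caly_0 := \caly^+_N$ and $\caly_1 := \caly^+_M$, where $\caly_W$ and $\caly^+_W = \caly_W \cup \caly(\bfSigma)$ are as in Subsection~\ref{subsec:excision-bfD0-really}. Granting the hypotheses, its conclusion is that
\begin{equation*}
  \xymatrix{\bfK(\contrCatUcoef{G}{\mfE|_{\caly_0 \cap \caly_1}, \caly}{\calb}) \ar[r] \ar[d] &
    \bfK(\contrCatUcoef{G}{\mfE|_{\caly_1}, \caly}{\calb}) \ar[d] \\
    \bfK(\contrCatUcoef{G}{\mfE|_{\caly_0}, \caly}{\calb}) \ar[r] &
    \bfK(\contrCatUcoef{G}{\mfE|_{\caly_0 \cup \caly_1}, \caly}{\calb})}
\end{equation*}
is a homotopy pushout square. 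Since $N \cup M = |\Sigma|$, every subset of $|\bfSigma|^\wedge \times \IN^{\times 2}$ splits along $|N|^\wedge$ and $|M|^\wedge$, so $\caly_0 \cup \caly_1$ is the collection of all subsets and the lower right corner is $\bfK(\contd^0_G(\bfSigma))$; and since a set lies in $\caly^+_W$ precisely when its part outside $|W|^\wedge \times \IN^{\times 2}$ lies in $\caly(\bfSigma)$, one gets $\caly_0 \cap \caly_1 = \caly^+_{N \cap M}$. Using Lemma~\ref{lem:y-rel-Y-irrelevant} to pass between $\caly_W$ and $\caly^+_W$ then identifies the corners with $\bfK$ of $\contd^{0,M\cap N}_G(\bfSigma)$, $\contd^{0,M}_G(\bfSigma)$, $\contd^{0,N}_G(\bfSigma)$, $\contd^0_G(\bfSigma)$, and the maps with the canonical inclusions, i.e. with $\bfK$ applied to the square in the statement. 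So it remains to check the two hypotheses of Lemma~\ref{lem:formal-homotopy-pushout-square}.

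The first hypothesis, that $\caly(\bfSigma)$, $\caly^+_N$ and $\caly^+_M$ satisfy Definition~\ref{def:E_Y}, is routine. The collection $\caly(\bfSigma)$ is closed under subsets, closed under finite unions (take maxima of the constants $k_0$, and of the $k_1(t_0)$ for each $t_0$), and closed under $M\cdot(-)$ for $M \in \contstrd^0_G(\bfSigma)$ since its defining condition only restricts the $\IN^{\times 2}$-coordinates, on which $G$ acts trivially. Each $\caly_W$ has the same closure properties because $p_\bfSigma$ is $G$-equivariant with trivial $G$-action on $|\Sigma|$, so $|W|^\wedge$ is $G$-invariant; and the elementwise union of two collections with these properties again has them.

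The second hypothesis — the condition from Lemma~\ref{lem:formal-excision} for the pair $(\caly^+_N, \caly^+_M)$ — is the heart of the argument. Given $Y_1 \in \caly^+_M$ and $E \in \contstrd^0_2(\bfSigma)$, write $Y_1 = Z \cup Z'$ with $Z := Y_1 \cap (|M|^\wedge \times \IN^{\times 2})$ and $Z' \in \caly(\bfSigma)$, and put $M_\circ := M \setminus N$, the complement in $|\Sigma|$ of the $\epsilon$-neighborhood of $|\Sigma'|$. By Remark~\ref{rem:fol-contrl-gives-eps-contrl} applied to $E$ with parameter $\epsilon/4$ there is $k_0$, and for each $t_0 \geq k_0$ a number $k_1(t_0)$, such that whenever $\twovec{z',\underlinetupel{t}}{z,\underlinetupel{t}} \in E$ with $\underlinetupel{t} = (t_0,t_1)$ and $t_1 \geq k_1(t_0)$ then $d^\infty(p_\bfSigma(z),p_\bfSigma(z')) < \epsilon/4$ (here I use that the sets in $\contstrd^0_2(\bfSigma)$ satisfy $\underlinetupel{t}' = \underlinetupel{t}$). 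I would then take
\begin{equation*}
  Y^{\mathrm{bad}} := \big\{ (z,t_0,t_1) \in Z \;\big|\; t_0 < k_0 \text{ or } t_1 < k_1(t_0) \big\},
  \qquad Y_1' := \big(Z \setminus Y^{\mathrm{bad}}\big) \cap \big(|M_\circ|^\wedge \times \IN^{\times 2}\big),
\end{equation*}
and $Y_0 := Y_1 \setminus Y_1'$. The key point is that $Y^{\mathrm{bad}} \in \caly(\bfSigma)$: for $t_0 \geq k_0$ it meets $|\bfSigma|^\wedge \times \{t_0\} \times \IN$ only inside $\IN_{< k_1(t_0)}$, while for $t_0 < k_0$ nothing is required. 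Hence $Y_0$ is the union of a set in $\caly_N$ with sets in $\caly(\bfSigma)$, so $Y_0 \in \caly^+_N$, while $Y_1' \in \caly_M \subseteq \caly^+_M$ and $Y_1 = Y_1' \cup Y_0$. Finally, for $(z',\underlinetupel{t}) \in (Y_1')^E$ choose $(z,\underlinetupel{t}) \in Y_1'$ with $\twovec{z',\underlinetupel{t}}{z,\underlinetupel{t}} \in E$; then $t_1 \geq k_1(t_0)$, so $d^\infty(p_\bfSigma(z),p_\bfSigma(z')) < \epsilon/4$, and as $p_\bfSigma(z) \in M_\circ$ has $d^\infty$-distance at least $\epsilon$ from $|\Sigma'|$ the point $p_\bfSigma(z')$ has $d^\infty$-distance more than $\epsilon/2$ from $|\Sigma'|$, hence lies in $M$. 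Thus $(Y_1')^E \in \caly_M \subseteq \caly^+_M$, which is the condition from Lemma~\ref{lem:formal-excision}. This last step is the only delicate one: the ``small $\underlinetupel{t}$'' part $Y^{\mathrm{bad}}$, where the $\epsilon$-control over $|\Sigma|$ coming from the foliated control condition in Definition~\ref{def:D_bfSigma} has not yet become effective, has to be discarded, and it is exactly the definition of $\caly(\bfSigma)$ that makes it discardable. With both hypotheses verified, Lemma~\ref{lem:formal-homotopy-pushout-square} gives the assertion.
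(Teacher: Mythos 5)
Your proof is correct and follows essentially the same route as the paper: apply Lemma~\ref{lem:formal-homotopy-pushout-square} with $\caly_0 = \caly^+_N$, $\caly_1 = \caly^+_M$, identify the corners via Lemma~\ref{lem:y-rel-Y-irrelevant}, and verify the condition of Lemma~\ref{lem:formal-excision} using the $\epsilon$-control of Remark~\ref{rem:fol-contrl-gives-eps-contrl} together with the uniform gap between $M \setminus N$ and $N \setminus M$, discarding the small-$\underlinetupel{t}$ region, which lies in $\caly(\bfSigma)$. If anything, your write-up is more careful than the paper's terse argument at the key step, since you restrict $Y_1'$ to the part lying over $M \setminus N$ and explicitly handle the $t_1 < k_1(t_0)$ region via $Y^{\mathrm{bad}}$, both of which are needed to literally guarantee $(Y_1')^E \in \caly^+_M$.
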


\begin{proof}
  It is not difficult to check that Lemma~\ref{lem:formal-homotopy-pushout-square}
  applies, where we use $\caly_0 := \caly_N^+$, $\caly_1 := \caly_M^+$ and observe that
  $\caly_0 \cap \caly_1 = \caly_{M \cap N}$ and that, as $|\Sigma| = M \cup N$,
  $\contd_G^0(\bfSigma)|_{\caly_{M \cup N}} = \contd_G^0(\bfSigma)$.  To verify that the
  assumption from Lemma~\ref{lem:formal-excision} (as required in
  Lemma~\ref{lem:formal-homotopy-pushout-square}) is satisfied, observe that the distance
  between points in $M \setminus N$ and in $N \setminus M$ is uniformly bounded from below
  by some $\delta > 0$.  Let $Y_1 \in \caly_M^+$ and $E \in \contstrd_2^0(\bfSigma)$ be
  given.  The $\epsilon$-control requirement in~\ref{def:D_bfSigma:X} for $E$ gives us
  some $k_0 \in \IN$ such that if $\twovec{z',t'_0,t'_1}{z,t_0,t_1} \in E$ with
  $t_0=t'_0 \geq k_0$, then the distance of $p_\bfSigma(z)$ and $p_\bfSigma(z')$ in
  $|\Sigma|$ is $<\delta$; in particular $p_\bfSigma(z)$ and $p_\bfSigma(z')$ are either
  both in $M$ or in $N$.  Then with
  $Y_0 := |\bfSigma|^\wedge \times \IN_{\leq k_0} \in \caly(\bfSigma) \subseteq
  \caly_N^+$, $Y'_1 := Y_1 \setminus Y_1 \in \caly_M^+$ we have $(Y'_1)^E \in \caly_M^+$.
\end{proof}

Let now $\widehat \bfSigma = (\widehat \Sigma, \widehat P)$ be as in
Subsection~\ref{subsec:excision-bfD0}, i.e., $\widehat \Sigma = B \times \Delta^d$, where
$B$ is the set of $d$-simplices of $\Sigma$.  We can also apply the previous definitions
to $\widehat \bfSigma$ and obtain $\widehat N$,
$\contd_G^{0,\widehat M}(\widehat \bfSigma)$ and so on.

\begin{lemma}\label{lem:hat-M-vs-M} The functors
  \begin{equation*}
    \contd^{0,\widehat M \cap \widehat N }_G( \widehat \bfSigma )
    \to \contd^{0,M \cap N }_G(  \bfSigma )   \quad \text{and}
    \quad \contd^{0,\widehat M}_G(\widehat \bfSigma) \to \contd^{0,M}_G(\bfSigma)
  \end{equation*}
  induced by the projection $\widehat \bfSigma \xrightarrow{\bff} \bfSigma$ are
  equivalences.
\end{lemma}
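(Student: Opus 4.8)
The plan is to realise both functors as induced by $G$-equivariant bijections of the underlying controlled universes, and thereby reduce the statement to a comparison of $G$-control structures. First I would make $\widehat\bfSigma$ and $\bff$ explicit. As $\widehat\Sigma=B\times\Delta^d$ is the disjoint union of copies of $\Delta^d$ indexed by the $d$-simplices $\sigma\in B$, and $\widehat P$ is constant equal to $P(\sigma)$ with identity transition maps on the $\sigma$-copy, one has $|\widehat\bfSigma|^\wedge=\coprod_{\sigma\in B}|\Delta_\sigma|\times|P(\sigma)|^\wedge$ with $p_{\widehat\bfSigma}$ the obvious projection, and $|\bff|^\wedge$ is the $G$-map whose restriction to the $\sigma$-summand is $(x,\lambda)\mapsto[x,\lambda]_\sigma$; over $\mathrm{int}|\Delta_\sigma|$ it restricts to a bijection onto $p_\bfSigma^{-1}(\mathrm{int}|\Delta_\sigma|)$, with inverse recorded by $\lambda_\sigma$ (recall $\lambda_\sigma([x,\lambda]_\sigma)=\lambda$).

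Next I would pin down the regions using $\ell^\infty$-barycentric coordinates. For $x$ in the interior of a $d$-simplex $\sigma$ one has $d^\infty(x,|\Sigma'|)=\min_{v\in\sigma}x(v)$, and for distinct $d$-simplices $\sigma,\sigma'$ the sets $M\cap\mathrm{int}|\Delta_\sigma|$ and $M\cap\mathrm{int}|\Delta_{\sigma'}|$ are $\tfrac{\epsilon}{2}$-separated (pick a vertex in $\sigma\setminus\sigma'$). Since $\epsilon<1/(d+1)$, $M$ is disjoint from $|\Sigma'|$, so $M=\coprod_{\sigma\in B}\{x\in\mathrm{int}|\Delta_\sigma|:\min_v x(v)\ge\tfrac{\epsilon}{2}\}$ and $M\cap N=\coprod_{\sigma\in B}\{x\in\mathrm{int}|\Delta_\sigma|:\tfrac{\epsilon}{2}\le\min_v x(v)<\epsilon\}$; the analogous descriptions hold for $\widehat M$ and $\widehat M\cap\widehat N$ with $|\Sigma'|$ replaced by $|\widehat\Sigma'|=\coprod_\sigma|\partial\Delta_\sigma|$. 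Hence $f$ restricts to $G$-equivariant bijections $\widehat M\to M$ and $\widehat M\cap\widehat N\to M\cap N$, and $|\bff|^\wedge$ restricts to $G$-equivariant bijections $p_{\widehat\bfSigma}^{-1}(\widehat M)\to p_\bfSigma^{-1}(M)$ and $p_{\widehat\bfSigma}^{-1}(\widehat M\cap\widehat N)\to p_\bfSigma^{-1}(M\cap N)$. By Remark~\ref{rem:functoriality-calb(X)}, a $G$-equivariant bijection induces an isomorphism of controlled categories $\calb_G(-)$ as soon as it identifies the $G$-control structures; the functors of the lemma are obtained from such (after extending by the identity on $\IN^{\times2}$) by passing to the quotients by $\caly(\widehat\bfSigma)$ resp.\ $\caly(\bfSigma)$. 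So it remains to check that under the above bijections the structures $\contstrd^0(\widehat\bfSigma)$ and $\contstrd^0(\bfSigma)$, restricted to the corresponding regions, coincide, and likewise for $\caly(\widehat\bfSigma),\caly(\bfSigma)$ and the decorating collections $\caly_{\widehat M},\caly_M,\dots$.

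For the finiteness, compact-support-over-$|\Sigma|$ and finite-dimensionality conditions (the $\contstrd^0_1$-part), for the bounded-control and the $t=t'$ conditions (part of $\contstrd^0_2$), and for all the $\caly$-collections, this is immediate since they only refer to the decomposition into top cells and to the factor $\IN^{\times2}$. The hard part will be the foliated control condition of $\contstrd^0_2$. Here I would use that over $M$ every point projects $\tfrac{\epsilon}{2}$-deep into the interior of a unique $d$-simplex $\sigma$, so that for the small values of the third parameter that are forced for large $t_0,t_1$ (Remark~\ref{rem:fol-contrl-gives-eps-contrl}) the only simplices $\tau$ entering the two-stage definition of $\fold{\bfSigma}(z,z')<(\beta,\eta,\epsilon')$ are faces of $\sigma$; that $d^\infty$ restricted to $|\Delta_\sigma|$ agrees with the one formed in the $\sigma$-summand of $|\widehat\Sigma|$; and that the fibre-coordinate maps $\lambda_\tau$ on the two sides differ only by the evaluation of $P$ on a face inclusion, which is a reindexing of the factors of an object of $\EP\All(G)$ and hence preserves $P$-foliated distance in the sense of Subsection~\ref{subsec:foliated-distance-P}. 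Matching the conditions in both directions — keeping track of how the sets $K_{\tau,\epsilon}$ and the open stars $U_\tau$ behave under $f$, and absorbing the harmless bounded-coset-travel discrepancies into the constant $\beta$ via the foliated triangle inequality (Lemma~\ref{lem:fol-triangle-V-fol}) — is the delicate bookkeeping heart of the argument and the main obstacle; it is the controlled-algebra incarnation of the standard fact that such categories are insensitive to subdivision. Once this identification is established, $\bff_*$ restricts to isomorphisms of the relevant controlled subcategories, hence of their $\caly$-quotients, and the two functors of the lemma are equivalences.
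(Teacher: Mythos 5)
Your proposal is correct and follows essentially the same route as the paper's (very terse) proof: both rest on the observations that $\bff$ restricts to a $G$-equivariant bijection $\widehat M \to M$ (resp.\ $\widehat M \cap \widehat N \to M\cap N$), that the $\ell^\infty$-metrics on the two sides agree on path components, and that distinct components are uniformly separated, so that the relevant control structures are identified. You simply carry out more of the bookkeeping (the description via barycentric coordinates, the reduction of condition~\ref{nl:bfSigma-fol-distance:beta-eta} to faces of the unique ambient $d$-simplex, and the behaviour of $\lambda_\tau$ under the reindexing maps) that the paper leaves to the reader.
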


\begin{proof}
  The projection $|\widehat\Sigma| \xrightarrow{f} \Sigma|$ restricts to an bijection
  $\widehat M \to M$.  The restrictions of the $l^\infty$-metrics of $|\widehat \Sigma|$
  and $|\Sigma|$ to $\widehat M$ and $M$ are different, but agree on path components.
  Moreover, in both cases the distance between points in different path components is
  bounded from below by a universal constant (depending only on $d$).  With these
  observations it is not difficult to verify the assertion.
\end{proof}

\begin{proposition}\label{prop:excision-bfD0} The diagram
  \begin{equation}\label{another-pushout}
    \xymatrix{\bfK \big( \contd^0_G( \widehat\bfSigma') \big) \ar[d]^{\widehat \iota_*} \ar[r]^{\bff'_*}
      &  \bfK \big( \contd^0_G(\bfSigma')\big) \ar[d]^{\iota_*}
      \\ \bfK \big( \contd^0_G(\widehat\bfSigma)\big) \ar[r]^{\bff_*}
      &
      \bfK \big( \contd^0_G(\bfSigma)\big) 
    }
  \end{equation} 
  obtained by applying K-theory to~\eqref{eq:excison-diagram-hat-no-hat} is a homotopy
  pushout diagram.
\end{proposition}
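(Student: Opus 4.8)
The plan is to deduce Proposition~\ref{prop:excision-bfD0} from the Mayer--Vietoris square of Proposition~\ref{prop:M-N-MV} by identifying its four corners with those of~\eqref{another-pushout}. First I would recall that Proposition~\ref{prop:M-N-MV} (applied to $\bfSigma$ and to $\widehat\bfSigma$) gives homotopy pushout squares with corners $\contd_G^{0,M\cap N}$, $\contd_G^{0,M}$, $\contd_G^{0,N}$, $\contd_G^{0}$; by Lemma~\ref{lem:thick-is-not-thick} the map $\contd_G^0(\bfSigma')\to\contd_G^{0,N}(\bfSigma)$ is a K-theory equivalence, and likewise $\contd_G^0(\widehat\bfSigma')\to\contd_G^{0,\widehat N}(\widehat\bfSigma)$. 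So after applying $\bfK$ the two Mayer--Vietoris squares have upper-left corner $\contd_G^{0,M\cap N}$, lower-left corner $\bfK(\contd_G^0(\bfSigma'))$ (resp.\ the hatted versions), upper-right corner $\contd_G^{0,M}$, and lower-right corner $\bfK(\contd_G^0(\bfSigma))$ (resp.\ hatted). It then remains to compare the hatted square to the unhatted one via $\bff$.

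The comparison is supplied by Lemma~\ref{lem:hat-M-vs-M}, which says $\bff$ induces K-theory equivalences $\contd^{0,\widehat M\cap\widehat N}_G(\widehat\bfSigma)\to\contd^{0,M\cap N}_G(\bfSigma)$ and $\contd^{0,\widehat M}_G(\widehat\bfSigma)\to\contd^{0,M}_G(\bfSigma)$. Thus for the left-hand column I would stack the square of Proposition~\ref{prop:M-N-MV} for $\widehat\bfSigma$, the equivalence from Lemma~\ref{lem:hat-M-vs-M} on the $M\cap N$-corner and on the $M$-corner, and the equivalence $\bfK(\contd_G^0(\widehat\bfSigma'))\xrightarrow{\sim}\bfK(\contd_G^{0,\widehat N}_G(\widehat\bfSigma))$ from Lemma~\ref{lem:thick-is-not-thick}. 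These fit into a cube whose top and bottom faces are the two Mayer--Vietoris squares and three of whose vertical edges are equivalences; hence the fourth vertical edge $\bff_*\colon\bfK(\contd_G^0(\widehat\bfSigma))\to\bfK(\contd_G^0(\bfSigma))$ together with the three equivalences exhibits~\eqref{another-pushout} as a square equivalent to the homotopy pushout square of Proposition~\ref{prop:M-N-MV}. Therefore~\eqref{another-pushout} is itself a homotopy pushout.

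Concretely, I would organize the argument as follows. Step 1: record the commuting cube whose faces are (i) the homotopy pushout square of Proposition~\ref{prop:M-N-MV} for $\bfSigma$, (ii) the same for $\widehat\bfSigma$, and whose connecting vertical maps are induced by $\bff$ on the $M$ and $M\cap N$ corners and by $\bff'$ together with Lemma~\ref{lem:thick-is-not-thick} on the $N$ and $|\Sigma'|$ corners, checking naturality of all the Karoubi sequences involved (this only uses that the excision functors and the radial-projection swindles are natural in the simplicial complex). Step 2: invoke Lemma~\ref{lem:hat-M-vs-M} and Lemma~\ref{lem:thick-is-not-thick} to see that three of the four vertical maps in the cube are equivalences after $\bfK$. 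Step 3: conclude, using that a cube with a homotopy pushout bottom face and three equivalences among the vertical maps has a homotopy pushout top face — equivalently, the induced map on horizontal homotopy cofibers of~\eqref{another-pushout} agrees under these equivalences with the one for Proposition~\ref{prop:M-N-MV}, which is an equivalence, so~\eqref{another-pushout} is a homotopy pushout. The main obstacle I anticipate is purely bookkeeping: making the identifications of corners strictly natural, i.e.\ checking that the equivalence of Lemma~\ref{lem:thick-is-not-thick} (which is stated for a single complex) is compatible with the map $\bff'$ on $(d\text{-}1)$-skeleta and with the inclusions $\iota$, $\widehat\iota$, so that one genuinely gets a commuting cube rather than just four separately-equivalent squares; once that compatibility is in place the rest is formal.
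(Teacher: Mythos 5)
You have assembled exactly the ingredients the paper uses (the two Mayer--Vietoris squares of Proposition~\ref{prop:M-N-MV}, Lemma~\ref{lem:hat-M-vs-M}, Lemma~\ref{lem:thick-is-not-thick}, and a comparison cube), but the final deduction in Step~3 is not correct as stated. In the cube whose top and bottom faces are the Mayer--Vietoris squares for $\widehat\bfSigma$ and $\bfSigma$, only \emph{two} of the four vertical edges are equivalences, namely $\contd^{0,\widehat M\cap\widehat N}_G(\widehat\bfSigma)\to\contd^{0,M\cap N}_G(\bfSigma)$ and $\contd^{0,\widehat M}_G(\widehat\bfSigma)\to\contd^{0,M}_G(\bfSigma)$ from Lemma~\ref{lem:hat-M-vs-M}. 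The equivalence of Lemma~\ref{lem:thick-is-not-thick} is not a vertical edge of this cube: it identifies the corner $\contd^{0,\widehat N}_G(\widehat\bfSigma)$ with $\contd^{0}_G(\widehat\bfSigma')$ (and likewise without hats), whereas the actual vertical edge at that corner is, under this identification, the map $\bff'_*$ --- one of the maps of~\eqref{another-pushout} itself, which is not an equivalence; the fourth vertical edge is $\bff_*$. Consequently~\eqref{another-pushout} is a \emph{side face} of the cube (the one spanned by the $N$-corner and the total corner), it is not equivalent corner-by-corner to the Mayer--Vietoris square of Proposition~\ref{prop:M-N-MV}, and the principle ``pushout face plus three vertical equivalences implies pushout face'' does not apply to it.

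The correct conclusion from your cube is a pasting argument, which is what the paper does: the side face on the $M\cap N$- and $M$-corners is a homotopy pushout because its two horizontal maps are the equivalences of Lemma~\ref{lem:hat-M-vs-M}; stacking it on top of the Mayer--Vietoris square for $\bfSigma$ yields the same composed square as stacking the Mayer--Vietoris square for $\widehat\bfSigma$ on top of~\eqref{another-pushout}. Three of these four squares are homotopy pushouts, hence so is the composite, and the two-out-of-three (pasting) law for homotopy pushouts then forces~\eqref{another-pushout} to be one. Equivalently, take cofibers of the cube in the hatted-to-unhatted direction: since the top and bottom faces are pushouts, the resulting square of cofibers is a pushout; its two initial corners are trivial by Lemma~\ref{lem:hat-M-vs-M}, so $\hocofib(\bff'_*)\to\hocofib(\bff_*)$ is an equivalence, which is exactly the assertion that~\eqref{another-pushout} is a homotopy pushout. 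With either repair your argument coincides with the paper's proof; the naturality bookkeeping you flag in Step~1 is indeed the only other point that needs checking.
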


\begin{proof}
  Consider
  \begin{equation*}
    \xymatrix{\contd^{0,\widehat M \cap \widehat N }_G( \widehat \bfSigma ) \ar[r] \ar[d] \ar@{} [dr] |{(1)}  & 
      \contd^{0,\widehat M}_G(\widehat \bfSigma ) \ar[d] & 
      \contd^{0,\widehat M \cap \widehat N }_G( \widehat \bfSigma ) \ar[r] \ar[d] \ar@{} [dr] |{(3)} & 
      \contd^{0,\widehat M}_G(\widehat \bfSigma) \ar[d]  \\
      \contd^{0,\widehat N}_G(\widehat \bfSigma) \ar[r] \ar[d] \ar@{} [dr] |{(2)}   
      & 
      \contd^{0}_G(\widehat \bfSigma ) \ar[d] & 
      \contd^{0,M \cap N}_G(\bfSigma ) \ar[r] \ar[d] \ar@{} [dr] |{(4)} & 
      \contd^{0,M}_G(\bfSigma) \ar[d] \\
      \contd^{0,N}_G(\bfSigma) \ar[r] & 
      \contd^{0}_G(\bfSigma) & 
      \contd^{0,N}_G(\bfSigma) \ar[r] & 
      \contd^{0}_G(\bfSigma)  
    }
  \end{equation*}	
  We will argue that (2) is a homotopy pushout in K-theory.  This will give the assertion
  of the proposition, as Lemma~\ref{lem:thick-is-not-thick} (which also applies to
  $\widehat \bfSigma$) allows us to replace $\contd^{0,\widehat N}_G(\widehat \bfSigma)$
  with $\contd^{0}_G(\widehat \bfSigma')$ and $\contd^{0,N}_G(\bfSigma)$ with
  $\contd^{0}_G(\bfSigma')$.  (Note that the positions of the right/top and left/bottom
  corners in~\eqref{another-pushout} and in (2)  are switched.)
   
  Proposition~\ref{prop:M-N-MV} (which also applies to $\widehat \bfSigma$) tells us that (1)
  and (4) are homotopy pushouts in K-theory.  Lemma~\ref{lem:hat-M-vs-M} implies that (3)
  is a homotopy pushouts in K-theory as well.  As the combinations of (1) with (2) and
  (3) with (4) agree, this implies that (4) is homotopy pushouts in K-theory.
\end{proof}

%-----------------------------------------------------------------

\subsection{Skeleton continuity of $\contd^0_G(\--)$}\label{subsec:continuity-contd0}
       
\begin{proposition}\label{prop:continuity-contd0}
  For any $\bfSigma \in \regularPOrGSC$ the canonical map
  \begin{equation*}
    \hocolimunder_{d \in \IN} \bfK \big( \contd_G^0(\bfSigma^d) \big)
    \; \to \; \bfK \big(\contd_G^0(\bfSigma)\big)
  \end{equation*}   
  is an equivalence.
\end{proposition}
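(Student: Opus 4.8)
The plan is to exploit the uniform finite-dimensional support condition built into Definition~\ref{def:D_bfSigma}~\ref{def:D_bfSigma:obj} (see also Remark~\ref{rem:uniform-fin-dim-support}): every object $\bfB$ of $\contd_G^0(\bfSigma)$ satisfies $\suppobj \bfB \subseteq |\bfSigma^d|^\wedge \times \IN^{\times 2}$ for some $d \in \IN$, \emph{independently} of the $\IN^{\times 2}$-coordinates. So $\contd_G^0(\bfSigma)$ should be literally the union of its full subcategories on objects with support contained in $|\bfSigma^d|^\wedge \times \IN^{\times 2}$, one hopes to identify these subcategories with $\contd_G^0(\bfSigma^d)$, and then to invoke that non-connective K-theory of additive categories commutes with filtered colimits.

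First I would fix $d$ and check that the inclusion $\bfSigma^d \hookrightarrow \bfSigma$ induces an isomorphism of additive categories from $\contd_G^0(\bfSigma^d)$ onto the full subcategory $\contd_G^{0,d}(\bfSigma) \subseteq \contd_G^0(\bfSigma)$ of objects supported in $|\bfSigma^d|^\wedge \times \IN^{\times 2}$. For this I would verify that the restriction of the control structure $\contstrd^0(\bfSigma)$ and of the collection $\caly(\bfSigma)$ to subsets of $|\bfSigma^d|^\wedge \times \IN^{\times 2}$ coincides with $\contstrd^0(\bfSigma^d)$ and $\caly(\bfSigma^d)$. Here I would use that $\Sigma^d$ is $G$-invariant (the action is simplicial), so that $|\bfSigma^d|^\wedge$ is $G$-invariant and a morphism $\varphi$ between objects supported there has $\suppX \varphi \subseteq \big(|\bfSigma^d|^\wedge \times \IN^{\times 2}\big)^{\times 2}$ (as each $g \in \suppG \varphi$ preserves $|\bfSigma^d|^\wedge$); together with the compatibility of $\fold{\bfSigma}$ with passage to subcomplexes noted in Subsection~\ref{subsec:fol-dis-bfSigma-wedge}, and the fact that a finite subcomplex of $\Sigma^d$ is a finite subcomplex of $\Sigma$, so that the compact support condition over $|\Sigma|$ transports correctly.

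Next I would observe that, by the uniform finite-dimensional support, every object of $\contd_G^0(\bfSigma)$ lies in $\contd_G^{0,d}(\bfSigma)$ for $d \gg 0$, and every morphism $\varphi \colon \bfB \to \bfB'$ with $\bfB$, $\bfB'$ supported in $|\bfSigma^{d_1}|^\wedge$, $|\bfSigma^{d_2}|^\wedge$ already lies in $\contd_G^{0,d}(\bfSigma)$ for $d = \max\{d_1,d_2\}$. Hence the canonical functor
\begin{equation*}
  \colimunder_{d \in \IN} \contd_G^0(\bfSigma^d) \; \xrightarrow{\cong} \; \contd_G^0(\bfSigma)
\end{equation*}
along the full inclusions induced by $\bfSigma^d \hookrightarrow \bfSigma^{d+1}$ is an isomorphism of additive categories. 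Finally, since non-connective K-theory of additive categories commutes with filtered (in particular sequential) colimits --- this holds for the Pedersen--Weibel/L\"uck--Steimle delooping because each of its levels is a connective K-theory of a category built functorially from the input and compatibly with filtered colimits --- applying $\bfK$ turns the above into the asserted equivalence $\hocolimunder_{d \in \IN} \bfK(\contd_G^0(\bfSigma^d)) \xrightarrow{\sim} \bfK(\contd_G^0(\bfSigma))$, which one checks is the canonical map of the Proposition.

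The step I expect to require the most care is the bookkeeping in the second paragraph --- matching $\contstrd^0(\bfSigma)$ and $\caly(\bfSigma)$ restricted to $|\bfSigma^d|^\wedge \times \IN^{\times 2}$ with $\contstrd^0(\bfSigma^d)$ and $\caly(\bfSigma^d)$ on the nose, in particular the behaviour of $\suppX$ of morphisms under the $G$-action --- though this is genuinely routine. The only external input is that non-connective K-theory commutes with filtered colimits of additive categories, which is standard.
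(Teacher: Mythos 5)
Your proposal is correct and is essentially the paper's own argument: the paper's proof consists of the single observation that the uniform finite-dimensional support condition makes $\contd_G^0(\bfSigma)$ the directed union of the $\contd_G^0(\bfSigma^d)$, and then invokes compatibility of K-theory with directed colimits; you have simply filled in the bookkeeping. One tiny remark: there is no $G$-action on the simplicial complex $\Sigma$ itself (the action lives entirely in the fibers of $p_\bfSigma \colon |\bfSigma|^\wedge \to |\Sigma|$), so the $G$-invariance of $|\bfSigma^d|^\wedge$ that you worry about is automatic rather than a consequence of the action being simplicial.
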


\begin{proof}
  The finite dimensional support condition in Definition~\ref{def:D_bfSigma} directly
  implies that $\contd_G^0(\bfSigma)$ is the directed union of the
  $\contd_G^0(\bfSigma^d)$ and this gives the result.
\end{proof}

%%%%%%%%%%%%%%%%%%%%%%%%%%%%%%%%%%%%%%%%%%%%%%%%%%%%%%%%%%%%%%%%%%%%

\section{Outline of the construction of the transfer}\label{sec:outline-transfer}

We now assume that $\calb$ is a Hecke category with $G$-support.
As before we abbreviate $\contd^0_G(\bfSigma) = \contd^0_G(\bfSigma;\calb)$.
%\begin{equation}\label{eq:abbreivate-no-cala-no-hecke} \contH_G(X) :=
%  \Hecke{G}(\cala)(X), \qquad \contd^0_G(\bfSigma) :=
%  \contd^0_G(\bfSigma;\Hecke{G}(\cala)).
%\end{equation} 
Theorem~\ref{thm:transfer-bfD0} asserts that the maps \begin{equation}\label{eq:needs-splitting} \bfK \contd_G^0\big(\bfJ_\CVCYC(G) \times
  \ZerodimR\big) \; \xrightarrow{\bfp_M} \; \bfK \contd_G^0(\ZerodimR)
\end{equation} 
induced by the projections
$\bfJ_\CVCYC(G) \times \ZerodimR \to \ZerodimR$ admit sections $\bftr_\ZerodimR$ that are natural in $\ZerodimR \in \regularPOrGSCzero$.  In
this outline we will concentrate on the case where $\ZerodimR = \ast$ is the terminal
object\footnote{i.e., $\ast = (\Delta^0,\ast_{\EPplus\All(G)})$.} in $\regularPOrGSC$; the
general case requires no real additional input.

%-----------------------------------------------------------------

\subsection{Sequences}\label{subsec:sequences} To construct a section 
to~\eqref{eq:needs-splitting} for $\ZerodimR = \ast$ we will work with finite chain complexes
and construct a homotopy coherent\footnote{See Appendix~\ref{app:homotopy-coherent}.}
functor
\begin{equation}\label{eq:is-splitting-ast} \contd_G^0(\ast) \to \ch_\fin \Idem
  \contd_G^0\big(\bfJ_\CVCYC(G)\big).
\end{equation}
We use the sequence description of $\contd_G^0\big(\bfSigma)$ from
Remark~\ref{rem:contd0-more-explicit} as a subcategory of
$\prodprimeinline_{\IN^{\times 2}} \contrCatUcoef{G}{|\bfSigma|^\wedge}{\calb}$.  We will for each
$\underlinetupel{t} \in \IN^{\times 2}$ construct a (homotopy coherent) functor
\begin{equation*}
  \tilde F_\underlinetupel{t} \; \colon \; \contrCatUcoef{G}{\ast}{\calb} \;
  \to \; \ch_\fin \big( \Idem \contrCatUcoef{G}{|\bfJ_\CVCYC(G)|^\wedge}{\calb} \big)
\end{equation*}
such that their product
\begin{equation*}
  \prodprimedisplay_{\IN^{\times 2}} \tilde F_\underlinetupel{t} \; \colon \; \contrCatUcoef{G}{\ast}{\calb}\; \to \;
  \prodprimedisplay_{\IN^{\times 2}} \ch_\fin \big(\Idem \contrCatUcoef{G}{|\bfJ_\CVCYC(G)|^\wedge}{\calb}\big)
\end{equation*}
restricts to the desired functor~\eqref{eq:is-splitting-ast}.  This boils down to
verifying the
conditions~\ref{rem:contd0-more-explicit:obj:suppobj},~\ref{rem:contd0-more-explicit:obj:suppX},~%
\ref{rem:contd0-more-explicit:obj:suppG},~\ref{rem:contd0-more-explicit:obj:finite},~%
\ref{rem:contd0-more-explicit:mor:suppX},~\ref{rem:contd0-more-explicit:obj:suppG} spelled out in
Remark~\ref{rem:contd0-more-explicit}.

Objects in $\contd_G^0(\ast)$ are sequences
$\bfB = (\bfB_\underlinetupel{t})_{\underlinetupel{t} \in \IN^{\times 2}}$ of objects in
$\contrCatUcoef{G}{\ast}{\calb}$, such that $\suppG \bfB_\underlinetupel{t} \subseteq K$
for all $\underlinetupel{t}$ for some compact subset $K$ that does not depend on
$\underlinetupel{t}$.  (In fact, we will pass to a subcategory of
$\contrCatUcoef{G}{\ast}{\calb}$, and have a bit more control over the
$\suppG \bfB_\underlinetupel{t}$.)  Write
$(\tilde F_\underlinetupel{t} (\bfB_\underlinetupel{t}))_n$ for the $n$-th chain module of
$\tilde F_\underlinetupel{t} (\bfB_\underlinetupel{t})$.  In order for
$\prodprimeinline_{\IN^{\times 2}} \tilde F_t$ to restrict to the desired functor, we
will, among other things, require that
$\suppobj (\tilde F_\underlinetupel{t}(\bfB_\underlinetupel{t}))_n \subseteq
|\bfJ_{\CVCYC}(G)|^\wedge$ is finite for all $\underlinetupel{t} \in \IN^{\times 2}$.
Morphisms in $\contd_G^0(\ast)$ are (equivalence classes of) sequences of morphisms
$\varphi = (\varphi_\underlinetupel{t})_{\underlinetupel{t} \in \IN^{\times 2}}$ in
$\contrCatUcoef{G}{\ast}{\calb}$ such that $\suppG \varphi_\underlinetupel{t} \subseteq K$
for all $\underlinetupel{t}$ for some compact subset $K$ that does not depend on
$\underlinetupel{t}$.  In order for
$\prodprimeinline_{\IN^{\times 2}} \tilde F_\underlinetupel{t}$ to restrict to the desired
functor, we will, among other things, need to verify the foliated control condition
from~\ref{def:D_bfSigma:X}.  This means roughly the following.  Given a compact subset $K$
of $G$ we need $\beta > 0$, $\eta_\underlinetupel{t}, \epsilon_\underlinetupel{t} > 0$ for
$\underline{t} = (t_0,t_1) \in \IN^{\times 2}$ with $\eta_\underlinetupel{t} \to 0$ as
$t_1 \to \infty$ (for fixed $t_0$) and $\epsilon_{\underlinetupel{t}} \to 0$ as
$t_0 \to \infty$ (uniform in $t_1$), such that if $\varphi$ is a morphism in
$\contrCatUcoef{G}{\ast}{\calb}$ with $\suppG \varphi \subseteq K$, then
\begin{equation*}
  \qquad \twovec{z'}{z} \in \suppX \tilde F_{\underlinetupel{t}}(\varphi) \;
  \implies \; \fold{\bfSigma}(z,z') < (\beta,\eta_\underlinetupel{t},\epsilon_\underlinetupel{t}).
\end{equation*}
These two required properties, finiteness for objects, 
  and foliated control for morphisms,
are in tension with each other\footnote{For example, it is not difficult to construct a
  functor
  $F \colon \contrCatUcoef{G}{\ast}{\calb}
  \to \ch_\fin \big(\contrCatUcoef{G}{|\bfJ_\CVCYC(G)|^\wedge}{\calb}\big)$ such
  that for $\twovec{z}{z} \in \suppX \tilde F_{t}(\varphi_{t})$  
  we
  even have $z=z'$, but such an $F$ will fail the required finiteness property for
  objects, see Remark~\ref{rem:shortcomings-singular-ch-cx}.}.  Let $X$ be the extended  Bruhat-Tits
building associated to $G$.  The functors $\tilde F_\underlinetupel{t}$ are constructed as
a composition
\begin{equation*}
  \contrCatUcoef{G}{\ast}{\calb} \; \xrightarrow{F_\underlinetupel{t}} \; \ch_\fin \Idem \contrCatUcoef{G}{X}{\calb}  \;
  \xrightarrow{(f_\underlinetupel{t})_*} \; \ch_\fin \big( \Idem \contrCatUcoef{G}{|\bfJ_\CVCYC(G)|^\wedge}{\calb}\big)
\end{equation*}
where the $F_\underlinetupel{t}$ are given by a tensor product with certain complexes over
$X$ and $f_\underlinetupel{t} \colon X \to |\bfJ_\CVCYC(G)|^\wedge$ are certain maps.  We
give a brief outline for both below.

%-----------------------------------------------------------------

\subsection{The diagonal tensor product}

Given two smooth $G$-representations $V$, $W$ we can equip $V \otimes W$ with the diagonal
action $g \cdot (v \otimes w) = gv \otimes gw$.  We obtain a functor
\[
  V \otimes \-- \; \colon \;\Rep(G) \; \to \; \Rep(G)
\]
on categories of smooth representations.  If the underlying vector space of $V$ is finite
dimensional, then this functor preserves finitely generated and projective
representations.  If $V$ is the permutation representation of a smooth $G$-set $\Sigma$,
then for Hecke category $\calb$ with $G$-suport this can be generalized\footnote{The
  formula in Subsection~\ref{subsec:diagonal-otimes} looks a priori different. The
  translation between the two functors uses a shearing isomorphism.} to
\[
  \Sigma \otimes \-- \; \colon \; \contrCatUcoef{G}{\ast}{\calb}
  \; \to \;  \Idem \contrCatUcoef{G}{\ast}{\calb}.
\] 
Moreover, a map $c \colon \Sigma \to X$ determines a lift of this functor to
\[
  (\Sigma,c) \otimes \-- \; \colon \; \contrCatUcoef{G}{\ast}{\calb}
  \; \to \; \Idem \contrCatUcoef{G}{X}{\calb}.
\]
In Subsection~\ref{subsec:S_upper_G} we define a $\IZ$-linear category $\cals^G(X)$, whose
objects are pairs $(\Sigma,c)$ as above, and in
Subsection~\ref{subsec:diagonal-otimes}  we obtain a functor
\[
  \-- \otimes \-- \; \colon \; \cals^G(X) \times \contrCatUcoef{G}{\ast}{\calb}
  \; \to \; \Idem \contrCatUcoef{G}{X}{\calb}.
\]
The functor $(\Sigma,c) \otimes \--$ will provide a splitting for
$\contrCatUcoef{G}{X}{\calb} \to \contrCatUcoef{G}{\ast}{\calb}$ if $\Sigma = G/G$, but
this does not give us enough flexibility to construct a sequence $F_t$ that will satisfy
the foliated control condition from~\ref{def:D_bfSigma:X}.  We can view the singular chain
complex $\bfS_*(X)$ of $X$ as a chain complex over $\cals^G(X)$\footnote{For technical
  reason we will have to replace $X$ with the set $S(X)$ of singular simplices in $X$ and
  view $\bfS_*(X)$ as a chain complex over $\cals^G(S(X))$; applying the barycenter map
  $S(X) \to X$ we then obtain a chain complex over $\cals^G(X)$.}, see
Subsection~\ref{subsec:singular-ch-cx-in-cals}, and obtain
\[
  \bfS_*(X) \otimes \-- \; \colon \; \contrCatUcoef{G}{\ast}{\calb}
  \; \to \; \ch \Idem \contrCatUcoef{G}{X}{\calb}.
\]
As $X$ is contractible this functor is much closer to providing a splitting and is much
better compatible with control conditions for morphisms,
see~\ref{lem:supp-rho-otimes-varphi:suppX}.  The remaining problem is that the singular
chain complex is very large and this will lead to conflict with the finiteness
conditions~\ref{def:D_bfSigma:obj}, see Remark~\ref{rem:shortcomings-singular-ch-cx}.  Of
course, as $X$ is contractible, $\bfS_*(X)$ is finite up to homotopy.  But such a homotopy
involves moving through $X$ and incorporating it into our construction would again lead to
conflict with the foliated control condition from~\ref{def:D_bfSigma:X}.  The solution is
a compromise between $X$ and a point.  We will use large balls $B_\underlinetupel{t}$ in
$X$.  Moreover, in place of the singular complex we will use the simplicial complex of a
suitable (fine) triangulation of $B_\underlinetupel{t}$.  The balls $B_\underlinetupel{t}$
are not $G$-invariant.  To resolve this we use that $B_\underlinetupel{t} \subseteq X$ is
a deformation retract via the radial projection $X \to B_\underlinetupel{t}$.  This way
$B_\underlinetupel{t}$ inherits a homotopy coherent $G$-action from the $G$-action on $X$.
Altogether we will construct a homotopy coherent functor
\begin{equation*}
  F_\underlinetupel{t} \colon \contrCatUcoef{G,U_\underlinetupel{t}}{\ast}{\calb}
  \; \to \; \ch_\fin \Idem \contrCatUcoef{G}{X}{\calb}, 
\end{equation*}
see Subsection~\ref{subsec:tensorproduct-subcomples-of-singular}.  Here
$U_\underlinetupel{t}$ is the compact open subgroup of $G$ that fixes the ball
$B_\underlinetupel{t}$ pointwise and
$\contrCatUcoef{G,U_\underlinetupel{t}}{\ast}{\calb} \subseteq
\contrCatUcoef{G}{\ast}{\calb}$ is the full subcategory on objects $(S,\pi,B)$ where
$\supp B(s) \subseteq U_t$ for all $s$.  Passing to this subcategory is not a serious
restriction, by the support cofinality property~\ref{def:Hecke-category:cofinal} for
$\calb$, the idempotent completions of
$\contrCatUcoef{G,U_\underlinetupel{t}}{\ast}{\calb}$ and $\contrCatUcoef{G}{\ast}{\calb}$
coincide.  The deformation of $X$ onto $B_\underlinetupel{t}$ still moves through $X$,
creating conflict with the foliated control condition from~\ref{def:D_bfSigma:X}.  We will
outline how this conflict is resolved in the next subsection.

%-----------------------------------------------------------------

\subsection{The maps $X \to |\bfJ_\CVCYC(G)|^\wedge$}

Recall that morphisms in $\contd_G^0(\ast)$ have relatively compact $G$-support.  Using
the equivalence relation on morphisms in $\contd_G^0(\ast)$ (or in
$\prodprimeinline \contrCatUcoef{G}{\ast}{\calb}$) this means that in the construction of the
$\tilde F_\underlinetupel{t}$  for each fixed $\underlinetupel{t}$ we only have to
control its interaction with a relatively compact set in $G$, specified later in
Subsection~\ref{subsec:data-choosing} and denoted $M_\underlinetupel{t}$.  The important
point is that $M_\underlinetupel{t} = M_{t_0,t_1} \to \infty$ as $t_0 \to \infty$.  In a
similar way we will for fixed $\underlinetupel{t}$ not need to worry about the deformation
of all of $X$ onto $B_\underlinetupel{t}$, but only about its restriction to the
$L_\underlinetupel{t}$-neighborhood $B_\underlinetupel{t}^{(L_\underlinetupel{t})}$ of
$B_\underlinetupel{t}$.  Here it will be important that
$L_\underlinetupel{t} = L_{t_0,t_1} \to \infty$ as $t_1 \to \infty$\footnote{Typically the
  radii of the $B_\underlinetupel{t}$ will grow much quicker than the
  $L_\underlinetupel{t}$.}.  This leads to the following requirements for the maps
$f_\underlinetupel{t} \colon X \to |\bfJ_\CVCYC(G)|^\wedge$.
\begin{enumerate}[label=(\alph*),leftmargin=*]
\item The restriction of $f_\underlinetupel{t}$ to
  $B_\underlinetupel{t}^{(L_\underlinetupel{t})}$ should be
  $M_\underlinetupel{t}$-equivariant up to a $\bfJ_\CVCYC(G)$-foliated error-term; i.e.,
  we have control over
  \begin{equation*}
    \fold{\bfJ_\CVCYC(G)}(gf_\underlinetupel{t}(x),f_\underlinetupel{t}(gx))
  \end{equation*}
  for $g \in M_\underlinetupel{t}$ and
  $x \in B_\underlinetupel{t}^{(L_\underlinetupel{t})}$.  The precise formulation
  is~\ref{nl:f:G};
  
\item The restriction of $f_\underlinetupel{t}$ to the tracks of the radial deformation of
  $B_\underlinetupel{t}^{(L_\underlinetupel{t})}$ to $B_\underlinetupel{t}$ is constant up
  to a $\bfJ_\CVCYC(G)$-foliated error-term; i.e., we have control over \begin{equation*}
    \fold{\bfJ_\CVCYC(G)}(f_\underlinetupel{t}(x),f_\underlinetupel{t}(\pi_{R'}(x)))
  \end{equation*}
  for $x \in B_\underlinetupel{t}^{(L_\underlinetupel{t})}$ and $\pi_{R'}(x)$ on the
  geodesic between $x$ and its image in $B_\underlinetupel{t}$ under the radial projection
  $B_\underlinetupel{t}^{(L_\underlinetupel{t})} \to B_\underlinetupel{t}$.  The precise
  formulation is~\ref{nl:f:H}.
\end{enumerate}
There is third requirement~\ref{nl:f:rho} for $f_\underlinetupel{t}$.  This should be
thought of as a substitute for continuity of $f_\underlinetupel{t}$.  In fact, with a more
careful choice for the resolution $|\bfJ\CVCYC(G)|^\wedge \to |\bfJ\CVCYC(G)|$ we could
arrange for the $f_\underlinetupel{t}$ to be continuous, but we found it more convenient
to allow some non-continuity for the $f_\underlinetupel{t}$.  
The construction of the
$f_\underlinetupel{t}$ uses a geodesic flow on $X$ and
depends on the fact that $X$ is $\CAT(0)$. 
It is outlined in Appendix~\ref{app:X-to-J}, details are worked out in~\cite{Bartels-Lueck(2023almost)}.

%%%%%%%%%%%%%%%%%%%%%%%%%%%%%%%%%%%%%%%%%%%%%%%%%%%%%%%%%%%%%%%%%%%%

\section{The category $\cals^G(\Omega)$}\label{sec:cals}

Throughout this section we fix a smooth $G$-space $X$.
We assume that for
$K \subseteq X$ compact, the pointwise isotropy group $G_K = \bigcap_{x \in K} G_x$ is an
open subgroup of $G$. 
Later $X$ will be the extended Bruhat-Tits building associated to a reductive
$p$-adic group.

%-----------------------------------------------------------------

\subsection{The category $\cals^G(\Omega)$}\label{subsec:S_upper_G}

\begin{definition}\label{def:S_upper_G}
  For a smooth $G$-set $\Omega$ we define the additive category $\cals^G(\Omega)$ as
  follows.  Objects are pairs $\bfV = (\Sigma,c)$ where $\Sigma$ is a smooth $G$-set and
  $c \colon \Sigma \to \Omega$ is a $G$-map.  A morphism
  $\rho \colon \bfV = (\Sigma,c) \to \bfV' = (\Sigma',c')$ is an
  $\Sigma \times \Sigma'$-matrix
  $(\rho_\sigma^{\sigma'})_{\sigma \in \Sigma, \sigma' \in \Sigma'}$ over $\IZ$ satisfying
  the following two conditions
  \begin{enumerate}[
                 label=(\thetheorem\alph*),
                 align=parleft, 
                 leftmargin=*,
                 %labelindent=1pt,
                 %labelwidth=60pt,
                 labelsep=10pt,
                 %itemindent=-3pt
                 ]
  \item\label{def:S_upper_G:col-finite} for all $\sigma \in \Sigma$ the set
    $\{ \sigma' \in \Sigma' \mid \rho_\sigma^{\sigma'} \neq 0 \}$ is finite;
  \item\label{def:S_upper_G:G} for all $g \in G$, $\sigma \in \Sigma$, $\sigma' \in \Sigma'$ we have
    $\rho_{g\sigma}^{g\sigma'} = \rho_{\sigma}^{\sigma'}$.
  \end{enumerate}
  The support of $\rho$ is
  \begin{equation*}
    \suppX \rho := \Big\{ \twovec{c'(\sigma')}{c(\sigma)} \,\Big|\, \rho_{\sigma}^{\sigma'}
    \neq 0 \Big\} \subseteq \Omega \times \Omega.
  \end{equation*}
  The support of $\bfV$ is $\suppobj \bfV := c(\Sigma)$.
  Composition is matrix multiplication
  $(\rho' \circ \rho)_\sigma^{\sigma''} := \sum_{\sigma'} {\rho'}_{\sigma'}^{\sigma''} \circ \rho_\sigma^{\sigma'}$. 
  We will say that $\bfV$ is finite, if $\Sigma$ is finite. 
\end{definition}

The identity $\id_\bfV$ of $\bfV = (\Sigma,c)$ is given by
$(\id_\bfV)_\sigma^{\sigma'} = 1$ for $\sigma = \sigma'$ and
$(\id_\bfV)_\sigma^{\sigma'} = 0$ for $\sigma \not= \sigma'$.

%-----------------------------------------------------------------

\subsection{The singular chain complex of $X$ as a chain complex over $\cals^G(S(X))$}%
\label{subsec:singular-ch-cx-in-cals}

Let $S_n (X)$ be the set of singular $n$-simplices of $X$.  
Let $S(X)$ be the union of the $S_n(X)$.
By our assumption on $X$ this is a smooth  $G$-set via the $G$-action on $X$.    
Let $c_n \colon S_n (X) \to S(X)$ be the inclusion.
We obtain
$(S_n (X),c_n) \in \cals^G(S(X))$.  
We define $\partial_n \colon (S_n (X),c_n) \to (S_{n-1} (X),c_{n-1})$ by
\begin{equation}\label{eq:boundary-map} 
  (\partial_n)_\sigma^{\sigma'} := \begin{cases}
    (-1)^i & \text{if $\sigma'$ is the $i$-th face of $\sigma$;} \\ 0 & \text{else}.
  \end{cases} 
\end{equation}
We write $\bfS_*(X) \in \ch \cals^G(S(X))$ for the chain complex in $\cals^G(S(X))$ obtained this way.

Later we will use the $G$-map $\bary \colon S(X) \to X$ that sends
$\sigma \colon |\Delta^n| \to X$ to the image of the barycenter of $|\Delta^n|$ under
$\sigma$.  Then $\bary_*(\bfS_*(X)) \in \ch \cals^G(X)$\footnote{A $G$-map $f \colon \Omega \to \Omega'$ induces a map $f_* \colon
 \ch \cals^G(\Omega) \to \cals^G(\Omega')$ by composition.}.  
Working in $\cals^G(S(X))$ and
not in $\cals^G(X)$ will allow us to consider certain restrictions in
Subsection~\ref{subsec:estimates-over-S(X)}.

%-----------------------------------------------------------------

\subsection{Subspace}
For some purposes the singular chain complex $\bfS_*(X)$ is too big.  To replace it by
smaller chain complexes we will pass from $X$ to a subspace (and later to a subspace
equipped with a triangulation).  Typically, the subspace will not be $G$-invariant, and we
will have to pass to an open subgroup as well.

Let $U \subseteq G$ be an open subgroup.  Let $\ball \subseteq X$ be a $U$-invariant
subspace\footnote{Later on $\ball$ will be contained in the $U$-fixed point set $X^U$}.  We
write $S_n (\ball)$ for the set of singular simplices of $\ball$.  We obtain the chain complex
$\bfS_*(\ball)$ over $\cals^U(S(X))$ whose $n$-th chain module is $(S_n \ball , c_n|_{S_n (\ball)})$.  The
boundary map is still given by~\eqref{eq:boundary-map}.

Let $\ball'$ be a further $U$-invariant subspace.  A $U$-equivariant map
$f \colon \ball \to \ball'$ induces a chain map
$f_* \colon \bfS_* (\ball) \to \bfS_* (\ball')$ in $\ch \cals^U(S(X))$ with
\begin{equation*}
	(f_*)_\sigma^{\sigma'} :=  	\begin{cases}
		1 & \text{if} \; \sigma' = f \circ \sigma; \\ 0 & \text{else.} 
	\end{cases} 
\end{equation*}
A $U$-equivariant homotopy $H \colon \ball \times [0,1] \to \ball'$ with $H(-,0) = f_0$,
$H(-,1) = f_1$ determines a chain homotopy $H_* \colon (f_0)_* \simeq (f_1)_*$ by the
usual formula.  In order to give the formula in detail, we write $v_k$ for the $k$-th
vertex of $\Delta^n$ and let $i_j \colon |\Delta^{n+1}| \to |\Delta^n| \times [0,1]$ be
the affine map determined by
\begin{equation*}
	i_j(v_k) := \begin{cases}
                   (v_k,0) & j \leq k; \\  (v_{k-1},1) &  j > k.   	
                 \end{cases}
               \end{equation*}
Then\footnote{Usually $H_*$ is not written as a matrix; for this reason the formula may not look
familiar at a first glance.} 
\begin{equation*}
	(H_*)_{\sigma}^{\sigma'} = \sum_{j \colon \sigma' = H \circ (\sigma \times \id_{[0,1]}) \circ i_j } (-1)^j.
\end{equation*}

\begin{lemma}\label{lem:support-H_ast} For $H \colon \ball \times [0,1] \to \ball'$ we have
  \begin{equation*}
    \twovec{\sigma'}{\sigma} \in \suppX H_*  \quad \implies
    \quad \Image(\sigma') \subseteq \Image(H \circ \sigma \times \id_{[0,1]}).
  \end{equation*}
\end{lemma}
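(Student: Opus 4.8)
The plan is to simply unwind the definition of the support of a morphism in $\cals^U(S(X))$ together with the explicit matrix formula for the chain homotopy $H_*$. First I would recall that for each degree $n$ the component of $H_*$ is a morphism $(S_n(\ball), c_n) \to (S_{n+1}(\ball'), c_{n+1})$ in $\cals^U(S(X))$, and that the structure maps $c_n$ are inclusions of $S_n(\ball)$ (resp.\ $S_{n+1}(\ball')$) into $S(X)$. Hence the general definition of $\suppX$ in $\cals^G(\Omega)$ specializes here to
\begin{equation*}
  \suppX H_* = \Big\{ \twovec{\sigma'}{\sigma} \,\Big|\, (H_*)_\sigma^{\sigma'} \neq 0 \Big\},
\end{equation*}
where $\sigma$ ranges over singular $n$-simplices of $\ball$ and $\sigma'$ over singular $(n+1)$-simplices of $\ball'$. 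Thus $\twovec{\sigma'}{\sigma} \in \suppX H_*$ holds precisely when $(H_*)_\sigma^{\sigma'} \neq 0$.

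Next I would feed in the formula $(H_*)_\sigma^{\sigma'} = \sum_{j \colon \sigma' = H \circ (\sigma \times \id_{[0,1]}) \circ i_j} (-1)^j$. A nonzero value of this sum forces the index set $\{ j \mid \sigma' = H \circ (\sigma \times \id_{[0,1]}) \circ i_j \}$ to be nonempty, so we may fix some $j$ with $\sigma' = H \circ (\sigma \times \id_{[0,1]}) \circ i_j$. Since $i_j$ is a map $|\Delta^{n+1}| \to |\Delta^n| \times [0,1]$, its image lies in $|\Delta^n| \times [0,1]$, and therefore
\begin{equation*}
  \Image(\sigma') = \big( H \circ (\sigma \times \id_{[0,1]}) \circ i_j \big)\big(|\Delta^{n+1}|\big)
  \subseteq \big( H \circ (\sigma \times \id_{[0,1]}) \big)\big(|\Delta^n| \times [0,1]\big) = \Image\big( H \circ \sigma \times \id_{[0,1]} \big),
\end{equation*}
which is the asserted inclusion.

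There is no genuine obstacle here; the argument is pure bookkeeping. The only subtlety worth flagging in the writeup is that one should \emph{not} attempt to analyze whether the signed sum in the formula for $(H_*)_\sigma^{\sigma'}$ actually vanishes: the statement only needs the existence of a single contributing index $j$, and that is automatic from the matrix entry being nonzero, so possible cancellations among the $(-1)^j$ are irrelevant.
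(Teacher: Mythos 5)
Your argument is correct and is exactly the paper's proof, which simply states that the lemma is a direct consequence of the displayed formula for $(H_*)_\sigma^{\sigma'}$; you have just spelled out the (routine) unwinding of $\suppX$ and the observation that a nonzero matrix entry forces the index set $\{ j \mid \sigma' = H \circ (\sigma \times \id_{[0,1]}) \circ i_j \}$ to be nonempty. Your remark that possible cancellations among the signs are irrelevant is a correct and sensible point to flag.
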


\begin{proof}
  This is a direct consequence of the formula for $H_*$ reviewed above.
\end{proof}

\begin{lemma}\label{lem:chain-homotopy-subdivided}
   Let $H \colon \ball \times [0,1] \to \ball'$ be a homotopy between $f_0$ and $f_1$.
   Let $d_X$ be a metric on $X$. 
   Assume that $\ball$ is compact.
   Assume that for all $s \in [0,1]$ and all $x,x' \in B$ we have
   \begin{equation*}
   	  d_X\big(H(x,s),H(x',s)\big) \leq d_X(x,x').
   \end{equation*}
   Let $\epsilon > 0$.  Then there is a chain homotopy
   $\widetilde{H} \colon \bfS_*(\ball) \to \bfS_{*+1}(\ball')$   between $(f_0)_*$ and
   $(f_1)_*$     with the following property.  Let $\twovec{\sigma'}{\sigma} \in \suppX
   \widetilde{H}$\footnote{The support of a graded map is the union of the supports of the maps in all degrees.}.    Then
   \begin{enumerate}[label=(\alph*),leftmargin=*]
   \item if the diameter of the image of $\sigma$ is $< \kappa$, then the diameter of the
     image of $\sigma'$ is $<\kappa+\epsilon$;
   \item $\Image \sigma' \subseteq \Image(H \circ \sigma \times \id_{[0,1]})$.
   \end{enumerate}
 \end{lemma}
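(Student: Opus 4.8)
The plan is to subdivide the homotopy $H$ finely in the $[0,1]$-direction and concatenate the standard chain homotopies of the pieces; this repairs the defect of the naive chain homotopy $H_*$, whose support is controlled only by the (possibly large) prisms $\Image(H\circ\sigma\times\id_{[0,1]})$. Since $\ball$ is compact, the space $\ball\times[0,1]$ is compact and $H$ is uniformly continuous, so there is $\delta>0$ with
\[
  d_X\bigl(H(y,s),H(y,s')\bigr)<\epsilon/2 \qquad\text{for all } y\in\ball,\ s,s'\in[0,1],\ |s-s'|<\delta.
\]
Fix $N\in\IN$ with $1/N<\delta$ and define, for $k=1,\dots,N$, the $U$-equivariant homotopy $H_k\colon\ball\times[0,1]\to\ball'$, $H_k(x,s):=H\bigl(x,(k-1+s)/N\bigr)$, which runs from $f_{(k-1)/N}:=H(-,(k-1)/N)$ to $f_{k/N}:=H(-,k/N)$; note $f_{0/N}=f_0$ and $f_{N/N}=f_1$.

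Next I would let $\widetilde H_k:=(H_k)_*\colon\bfS_*(\ball)\to\bfS_{*+1}(\ball')$ be the standard chain homotopy recalled before Lemma~\ref{lem:support-H_ast}, so that $\partial\circ\widetilde H_k+\widetilde H_k\circ\partial=(f_{k/N})_*-(f_{(k-1)/N})_*$, and set $\widetilde H:=\sum_{k=1}^N\widetilde H_k$. Each $\widetilde H_k$ is a morphism in $\cals^U(S(X))$ (column finiteness holds because each $|\Delta^{n+1}|$ has only $n+2$ vertices, and $U$-invariance because $H$, hence $H_k$, is $U$-equivariant), so $\widetilde H$ is as well, and the telescoping sum yields $\partial\circ\widetilde H+\widetilde H\circ\partial=(f_1)_*-(f_0)_*$. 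Thus $\widetilde H$ is a chain homotopy between $(f_0)_*$ and $(f_1)_*$.

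Finally I would check the support estimates. Let $\twovec{\sigma'}{\sigma}\in\suppX\widetilde H$. Since $\widetilde H=\sum_k\widetilde H_k$, we have $(\widetilde H_k)_\sigma^{\sigma'}\neq 0$ for some $k$, hence $\twovec{\sigma'}{\sigma}\in\suppX\widetilde H_k$, so Lemma~\ref{lem:support-H_ast} applied to $H_k$ gives $\Image\sigma'\subseteq\Image(H_k\circ\sigma\times\id_{[0,1]})\subseteq\Image(H\circ\sigma\times\id_{[0,1]})$, which is (b). For (a), assume the diameter of $\Image\sigma$ is $<\kappa$ and take points $p=H(\sigma(x),u)$, $q=H(\sigma(x'),u')$ of $\Image(H_k\circ\sigma\times\id_{[0,1]})$ with $u,u'\in[(k-1)/N,k/N]$, so $|u-u'|\le 1/N<\delta$. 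Then
\begin{align*}
  d_X(p,q)&\le d_X\bigl(H(\sigma(x),u),H(\sigma(x'),u)\bigr)+d_X\bigl(H(\sigma(x'),u),H(\sigma(x'),u')\bigr)\\
  &\le d_X\bigl(\sigma(x),\sigma(x')\bigr)+\epsilon/2<\kappa+\epsilon/2,
\end{align*}
using the Lipschitz hypothesis on $H$ for the first summand and the choice of $\delta$ for the second. Hence the diameter of $\Image\sigma'$ is $\le\kappa+\epsilon/2<\kappa+\epsilon$, giving (a). There is no serious obstacle here; the only point requiring care is that the ``vertical'' spread of each prism $\Image(H_k\circ\sigma\times\id_{[0,1]})$ must be estimated via uniform continuity rather than via the Lipschitz condition, which is precisely why $[0,1]$ is subdivided and why compactness of $\ball$ is used.
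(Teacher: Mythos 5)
Your proof is correct and is essentially the same as the paper's: both use compactness of $\ball$ to get a uniform-continuity constant $\delta$ in the $[0,1]$-direction, subdivide $[0,1]$ with mesh $<\delta$, sum the standard chain homotopies of the restricted pieces, and deduce the support properties piecewise (the paper leaves the final diameter estimate as "easy to check", which you spell out correctly).
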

 
 \begin{proof}
   As $\ball$ is compact we can find $\delta > 0$ such that
   $d_X(H(x,s),H(x,s')) < \epsilon/2$ for all $x \in D$ and all $s,s' \in [0,1]$ with
   $|s-s'| < \delta$.  Now we choose $0=s_0 < s_1 < \cdots < s_n =1$ with
   $|s_{i+1} - s_i| < \delta$.  Then
   $\widetilde{H} := \sum_{i=1}^{n} (H|_{\ball \times [s_{i-1},s_i]})_*$  is a chain homotopy between $(f_0)_*$ and
   $(f_1)_*$.  Now
   $\supp \widetilde{H}_{\sigma}^{\sigma'} \subseteq \bigcup_{i=1}^{n} \supp
   \big((H|_{\ball \times [s_{i-1},s_i]})_*\big)$ and it is easy to check that $\widetilde{H}$ has the required properties.
 \end{proof}

%-----------------------------------------------------------------

 \subsection{Triangulations}\label{subsec:triangulations}
 Assume that the $U$-invariant subspace $\ball$ of $X$ is equipped with a $U$-invariant
 triangulation, i.e., $\ball = |K|$ for a simplicial complex $K$ with a smooth $U$-action.
 Assume that the triangulation is locally ordered, i.e., there is a partial order on the
 set of vertices that restricts to a linear order on the vertex set of every simplex.
 This ensures that every simplex in the triangulation determines a unique singular simplex
 of $\ball$ and therefore of $X$.  In particular, we can view the set $\simp_n(\ball)$ of
 $n$-simplices of $\ball$ as a subset of $S_n \ball$.  We obtain the chain complex
 $\bfC_*(\ball) \in \ch \cals^U(\res_G^U S(X))$, whose $n$-th chain module is
 $(\simp_n(\ball),c_n|_{\simp_n(\ball)})$.  It comes with an inclusion
 $i \colon \bfC_*(\ball) \to \bfS_*(\ball)$ defined by
\begin{equation*}
  i_\sigma^{\sigma'} := \begin{cases}
    1 & \text{if} \; \sigma' =  \sigma; \\ 0 & \text{else}.
  \end{cases} 
\end{equation*} 
Let $d_X$ be a metric on $X$.  For a singular simplex $\sigma \colon \Delta^n \to X$ we
write $\Image \sigma$ for its image and $(\Image \sigma)^\epsilon$ for the open
$\epsilon$-neighborhood of $\Image \sigma$. 

\begin{lemma}\label{lem:singular-vs-simplicial} Assume that all simplices of the
  triangulation of $\ball$ are of diameter $< \epsilon$.  In $\ch \cals^U(S(X))$ there
  exists a chain map $r \colon \bfS_*(\ball) \to \bfC_*(\ball)$ with
  $r \circ i = \id_{\bfC_*(\ball)}$ and a chain homotopy
  $H \colon \id_{\bfS_*(\ball)} \simeq i \circ r$ such that if
  $\twovec{\sigma'}{\sigma} \in \suppX r \cup \suppX H$, then
  $\Image \sigma' \subseteq (\Image \sigma)^\epsilon$.
\end{lemma}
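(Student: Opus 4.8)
The plan is to construct $r$ and $H$ by an equivariant simplicial approximation of the identity, carried out simplex by simplex and coherently over skeleta. For a singular simplex $\sigma\colon\Delta^n\to\ball$ one subdivides the domain barycentrically enough to obtain a simplicial approximation $\sigma'\colon\sd^m\Delta^n\to K$ of $\sigma$, together with the straight-line homotopy $\bar H_\sigma(x,t):=(1-t)\sigma(x)+t\sigma'(x)$, which is well defined because the simplicial approximation condition places $\sigma(x)$ and $\sigma'(x)$ in a common closed simplex of $K$ for every $x$. Set $r(\sigma):=\sigma'_*[\sd^m\Delta^n]\in\bfC_n(\ball)$, the image of the fundamental chain of $\sd^m\Delta^n$, and let $H(\sigma)\in\bfS_{n+1}(\ball)$ be the prism operator over $\bar H_\sigma$; these are given by the same matrix formulas as the induced maps $f_*$ and chain homotopies $H_*$ recalled in Subsection~\ref{subsec:triangulations}. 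For a genuine simplex of the triangulation, viewed as a singular simplex via the local order, one uses the empty subdivision, so $\sigma'=\sigma$ and $\bar H_\sigma$ is constant; this yields $r\circ i=\id_{\bfC_*(\ball)}$ and makes $H$ vanish on $\bfC_*(\ball)$. The construction is entirely local, so no global topology of $\ball$ enters.

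That these formulas assemble to a chain map $r$ and a chain homotopy $H\colon\id_{\bfS_*(\ball)}\simeq i\circ r$ amounts, via the prism identity, to the single compatibility $\sigma'|_{\partial_j\Delta^n}=(\partial_j\sigma)'$ with faces. This is arranged by induction on $n$: one fixes the data on a set of $U$-orbit representatives of singular simplices and transports it by the $U$-action; at the inductive step the subdivisions and vertex maps already chosen on $\partial\Delta^n$ (consistent, by the coherence at lower stages) are extended to the interior after passing to a subdivision fine enough for the simplicial approximation theorem, the required fineness being controlled by a Lebesgue number for the cover of $\ball$ by open vertex stars of the compact set $\Image\sigma$, hence choosable uniformly over the orbit of $\sigma$. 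For the transport to be well defined we need $r(\sigma)$ and $H(\sigma)$ to be fixed by the stabilizer $U_\sigma$ of $\sigma$ (which is open, since the action is smooth, and equals the pointwise stabilizer of $\Image\sigma$): this is guaranteed by taking the vertex map of $\sigma'$ to land in the fixed subcomplex $K^{U_\sigma}$. Here the cellularity of the $G$-action on the triangulation is used crucially: it implies that any simplex of $K$ preserved by $U_\sigma$ is fixed pointwise, so that $\Image\sigma\subseteq|K^{U_\sigma}|$, the open stars of $U_\sigma$-fixed vertices still cover $|K^{U_\sigma}|$, the straight-line homotopy stays inside $|K^{U_\sigma}|$, and the $U$-transport is free of orientation/sign ambiguities.

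For the support estimate, fix a top simplex $\tau$ of $\sd^m\Delta^n$ and a point $b\in\tau$ (e.g.\ its barycenter). By the simplicial approximation condition each vertex of $\sigma'(\tau)$ is a vertex of the carrier $\mu_{\sigma(b)}$ of $\sigma(b)$, so $\sigma'(\tau)$ is a face of $\mu_{\sigma(b)}$; likewise, for $x$ running over $\tau$, the segment $[\sigma(x),\sigma'(x)]$ lies in the closed simplex $\overline{\mu_{\sigma(x)}}$. Hence every simplex in the support of $r(\sigma)$ and, by Lemma~\ref{lem:support-H_ast}, every singular simplex in the support of $H(\sigma)$ has image contained in a closed simplex $\overline{\mu_{\sigma(x)}}$ with $\sigma(x)\in\Image\sigma$; since that simplex has diameter $<\epsilon$ and contains $\sigma(x)\in\Image\sigma$, its image lies in $(\Image\sigma)^\epsilon$, which is exactly the claim. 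I expect the main obstacle to be the bookkeeping in the inductive step, namely making the choices of subdivisions and vertex maps simultaneously coherent with faces and $U$-equivariant — the rest is the classical simplicial approximation theorem and routine prism-operator algebra — and this is precisely where the cellularity hypothesis on the triangulation cannot be dispensed with.
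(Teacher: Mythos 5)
Your approach is genuinely different from the paper's (which never invokes simplicial approximation: it works in the abelian category of functors from the poset $\calk$ of subcomplexes of $K$ to $\MODcat{\IZ}$, observes that $\underline{C}_*$ and $\underline{S}_*$ are degreewise free and quasi-isomorphic there, and produces $r$ and $H$ by pure homological algebra, the support estimate falling out of functoriality in $\calk$). Unfortunately, the explicit route you propose has a genuine gap at the step you dismiss as bookkeeping: the exact, on-the-nose compatibility of the approximations with face maps. The subdivision exponent $m$ needed for an $n$-simplex $\sigma$ is in general strictly larger than the exponents $m_j$ already used for its faces, and ``passing to a subdivision fine enough'' refines the boundary as well: the $j$-th face of $r(\sigma)=\sigma'_*[\sd^m\Delta^n]$ is a chain computed from $\sd^m\partial_j\Delta^n$, whereas $r(\partial_j\sigma)=(\partial_j\sigma)'_*[\sd^{m_j}\Delta^{n-1}]$ was computed from the coarser $\sd^{m_j}\Delta^{n-1}$. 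These agree up to chain homotopy but not as chains, so $\partial r\neq r\partial$ as defined. One can try to repair this by composing the boundary data with the standard collapse $\sd^{m}\to\sd^{m_j}$ and proving a simplicial-approximation-rel-boundary statement with subdivisions that are finer in the interior than on the boundary, but that is no longer the classical theorem and is precisely the coherence problem that the acyclic-models-style argument is designed to circumvent. A given $(n-1)$-simplex is a face of infinitely many $n$-simplices requiring unbounded subdivision, so there is no way to fix the exponents face-first.

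A second, related omission: even granting coherent approximations, $i\circ r(\sigma)$ corresponds to the singular chain $\sigma'_*[\sd^m\Delta^n]$, while the prism operator over $\bar H_\sigma$ only provides a homotopy between $\sigma_*[\sd^m\Delta^n]$ (the $m$-fold subdivided $\sigma$) and $\sigma'_*[\sd^m\Delta^n]$. You still need the standard subdivision homotopy $T$ with $\partial T+T\partial=\id-\sd^m$ to connect $\sigma$ to its subdivision, with $m$ varying with $\sigma$, and this too must be made exactly compatible with faces and with the $U$-action. (Your treatment of equivariance itself is fine: since every point of $\Image\sigma$ is fixed by the pointwise stabilizer $U_\sigma$ and carriers of fixed points are pointwise fixed by cellularity, any simplicial approximation automatically lands in $K^{U_\sigma}$, and your support estimate via carriers is correct.) I would recommend replacing the construction by the paper's argument: both $n$-chain functors are projective objects in $\IZ\calk$-modules, $\underline{i}$ is a degreewise split quasi-isomorphism, hence admits a functorial retraction and homotopy, and functoriality in $\calk$ forces $\supp r(\sigma)$ and $\supp H(\sigma)$ into the smallest subcomplex containing $\Image\sigma$, which lies in $(\Image\sigma)^\epsilon$ because all simplices have diameter $<\epsilon$.
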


\begin{proof}
  This a minor variation of~\cite[Lem.~6.9]{Bartels-Lueck-Reich(2008hyper)}.  Let $\calk$
  be the poset of subcomplexes of $K$ ordered by inclusion.  We view $\calk$ as a category
  and work now in the abelian category of ${\IZ\calk}$-modules\footnote{That is in the
    category of functors $\calk \to \MODcat{\IZ}$.}.  For $K_0 \in \calk$ let
  $\underline{C}_*(K_0)$ be the simplicial chain complex of $K_0$ and
  $\underline{S}_*(K_0)$ be the singular chain complex of $|K_0|$.  This defines chain
  complexes $\underline{C}_*$ and $\underline{S}_*$ of ${\IZ\calk}$-modules and it is not
  difficult to check that the underlying ${\IZ\calk}$-modules $\underline{C}_n$,
  $\underline{S}_n$ are free (and thus projective) for all $n$. 
  Write
  $\underline{i} \colon \underline{C}_* \to \underline{S}_*$ for the inclusion.  Each
  $\underline{i}_n \colon \underline{C}_n \to \underline{S}_n$ is the inclusion of a
  direct summand.  Moreover, $\underline{i}$ induces an isomorphism in homology (taken in
  the category of $\IZ\calk$-modules).  By general results in homological algebra (in
  abelian categories) it follows that there exists a chain map
  $\underline{r} \colon \underline{S}_* \to \underline{C}_*$ with
  $\underline{r} \circ \underline{i} = \id_{C_*}$ and a homotopy
  $\underline{H} \colon \id_{S_*} \simeq i \circ r$.  We can set $r := \underline{r}(K)$,
  $H := \underline{H}(K)$.  The additional properties of $r$ and $H$ follow from the
  functoriality in $\calk$ of $\underline{r}$ and $\underline{H}$: Suppose
  $r_\sigma^{\sigma'} \neq 0$.   Let $K_\sigma$ be the smallest subcomplex of $K$ such that
  $|K_\sigma|$ contains $\Image \sigma$, so $\sigma \in \underline{S}_*(K_\sigma)$.  As
  $\underline{r}(K_\sigma) \colon \underline{S}_*(K_\sigma) \to \underline{C}_*(K_\sigma)$
  it  follows that $\sigma'$ is a simplex of $K_\sigma$.  Now
  $|K_\sigma| \subseteq (\Image \sigma)^\epsilon$ as the diameter of simplices in $K$ are
  of diameter $< \epsilon$.  Thus $\Image \sigma' \subseteq (\Image \sigma)^\epsilon$.
  The same argument applies to $H$.
\end{proof}

%%%%%%%%%%%%%%%%%%%%%%%%%%%%%%%%%%%%%%%%%%%%%%%%%%%%%%%%%%%%%%%%%%%%

\section{Diagonal tensor products}\label{sec:tensor-products}

Throughout this section we fix a $G$-set $\Lambda$ and a smooth $G$-space $X$.  Later we
will have $\Lambda = |\ZerodimR|^\wedge$ for $\ZerodimR \in \regularPOrGSCzero$, while for $X$ we will take
the extended Bruhat-Tits building associated to a reductive $p$-adic group.  We also fix a
smooth $G$-set $\Omega$.  Later $\Omega = S(X)$ will be the set of singular simplices of
$X$.
  
To simplify the discussion we assume throughout this section that $\calb$ is the category
$\calb(G;R)$ from Example~\ref{ex:calb(G;R)}.   This means in particular that if
$\bfB = (S,\pi,B)$ is an object from $\contrCatUcoef{G}{\Lambda}{\calb}$, then the $B(s)$
for $s \in S$ are compact open subgroups of $G$.  Also, if
$\varphi \colon \bfB = (S,\pi,B) \to (S',\pi',S')$ is a morphism in
$\contrCatUcoef{G}{\Lambda}{\calb}$, then the $\varphi_s^{s'}$ are elements of the Hecke
algebra $\calh(G;R)$ satisfying $\varphi_s^{s'}(a'ga) = \varphi_s^{s'}(g)$ for all
$a \in B(s)$, $a' \in B(s')$.

Everything\footnote{More precisely, there is a diagonal tensor product such that
  Lemmas~\ref{lem:id-is-id}, and~\ref{lem:properties-diag-tensor}, and the
  identities~\eqref{eq:ind-res-is-ind},~\eqref{eq:ind-res-is-ind-morphisms} are still
  valid, see~\cite[Lem.~7.41,~7.43,~7.44]{Bartels-Lueck(2023foundations)}.  Everything else in this section just depends on these results.}  we need also
works for general Hecke categories with $G$-support and is treated in detail
in~\cite[Sec.~7]{Bartels-Lueck(2023foundations)}.

% -----------------------------------------------------------------

\subsection{Precursor}~\label{subsec:Precursor}
Our first goal in this section is the construction of a bilinear functor
\begin{equation*}
  \-- \otimes \-- \; \colon \cals^G(\Omega) \times \contrCatUcoef{G}{\Lambda}{\calb} \to \Idem \contrCatUcoef{G}{\Omega \times \Lambda}{\calb}.
\end{equation*} 
The construction is easier under some simplifying assumptions.  
So we assume for this subsection
that $G$ is discrete and consider the full subcategory
$\contrCatUcoef{G}{\Lambda}{\calb^{1}}$ of $\contrCatUcoef{G}{\Lambda}{\calb}$ on the
objects $(S,\pi,B)$,  for  which $B(s)$ is the trivial subgroup for all $s$.  
  For
$\bfV = (\Sigma,c) \in \cals^G(X)$ and
$\bfB = (S,\pi,B) \in \contrCatUcoef{G}{\Lambda}{\calb^{1}}$, we can then define
\begin{equation}\label{eq:V-otimes-B-wrong-B(s)}
  \bfV \otimes \bfB := (\Sigma \times S,c \times \pi, (\sigma,s) \mapsto B(s)).
\end{equation}
For $\rho \colon (\Sigma,c) \to (\Sigma',c')$ and
$\varphi \colon (S,\pi,B) \to (S',\pi',B')$ we can define
\begin{equation}\label{eq:rho-otimes-varphi}
  (\rho \otimes \varphi)^{\sigma',s'}_{\sigma,s}(g) := \rho_{g\sigma}^{\sigma'}\cdot \varphi_s^{s'}(g).
\end{equation}
The general idea is that we use the $G$-action on $\Sigma$ to twist morphisms in
$\contrCatUcoef{G}{\Lambda}{\calb^{1}}$\footnote{The category
  $\contrCatUcoef{G}{\Lambda}{\calb^{1}}$ is equivalent to the category of free
  $R[G]$-modules. Under this equivalence our functor is equivalent to
  $((\Sigma,c),M) \mapsto \IZ[\Sigma] \otimes_{\IZ} M$, where the tensor product over
  $\IZ$ is equipped with the diagonal action of $G$.}.  In the general case these formulas
do not define a functor; for example we can have
$(\rho \otimes \varphi)^{\sigma',s'}_{\sigma,s} \neq (\rho \otimes
\varphi)^{\sigma',s'}_{\sigma,s}(a'ga)$ for $a' \in B'(s')$, $a \in B(s)$.  To account for
this we will replace $(\sigma,s) \mapsto B(s)$ with
$(\sigma,s) \mapsto G_\sigma \cap B(s)$.  The only remaining drawback is that the functor
obtained this way does not preserve units.  We will correct this using the idempotent
completion.

\begin{remark}[Non-unital categories]
  In this remark we will contemplate categories without units.  Let
  $\calb^\nounit := \underline{\calh(G;R)}$ be the $\IZ$-linear category with exactly one
  object whose endomorphism ring is the Hecke algebra $\calh(G;R)$.  As $\calh(G;R)$ does
  not have a unit, $\calb^\nounit$ is a non-unital category, but its idempotent completion
  $\Idem \calb^\nounit$ has units.  Via $U \mapsto \frac{\chi_U}{\mu(U)}$ the category
  $\calb = \calb(G;R)$ can be identified with a full subcategory of $\Idem \calb^\nounit$.
  Definition~\ref{def:contrCatNUcoef} also makes sense for $\calb^\nounit$ in place of
  $\calb$ and we obtain the non-unital category $\contrCatNUcoef{G}{X}{\calb}$.  Now the
  formulas~\eqref{eq:V-otimes-B-wrong-B(s)} and~\eqref{eq:rho-otimes-varphi} define a
  functor (of non-unital categories)
  \begin{equation*}
    \cals^G(\Omega) \times \contrCatNUcoef{G}{\Lambda}{\calb}
    \to \contrCatNUcoef{G}{\Omega \times \Lambda}{\calb}.
  \end{equation*}
  Of course this functor sends idempotents to idempotents and so induces a functor (of
  unital categories)
  \begin{equation*}
    \cals^G(\Omega) \times \Idem\contrCatNUcoef{G}{\Lambda}{\calb}
    \to \Idem\contrCatNUcoef{G}{\Omega \times \Lambda}{\calb}.
  \end{equation*}
  One can now identify $\contrCatNUcoef{G}{\Omega}{\calb}$ with a full subcategory of
  $\Idem \contrCatNUcoef{G}{\Omega}{\calb}$ and use this to obtain
  \begin{equation*}
    \cals^G(\Omega) \times \contrCatUcoef{G}{\Lambda}{\calb}
    \to \Idem \contrCatUcoef{G}{\Omega \times \Lambda}{\calb}.
  \end{equation*}
  This is essentially what we will do in Subsections~\ref{subsec:diagonal-otimes_0}
  and~\ref{subsec:diagonal-otimes}, although we will avoid non-unital categories and
  instead give the resulting formulas more directly.
\end{remark}

\subsection{The diagonal tensor product $\otimes_0$}\label{subsec:diagonal-otimes_0}

We define
\begin{equation*} %\label{eq:diagonal-otimes0}
  \-- \otimes_0 \-- \; \colon \cals^G(\Omega) \times \contrCatUcoef{G}{\Lambda}{\calb}
  \to  \contrCatUcoef{G}{\Omega \times \Lambda}{\calb}
\end{equation*}
as follows.  For $\bfV = (\Sigma,c) \in \cals^G(X)$ and
$\bfB = (S,\pi,B) \in \contrCatUcoef{G}{\Lambda}{\calb}$ we set
\begin{equation*}
  \bfV \otimes_0 \bfB := (\Sigma \times S,c \times \pi, (\sigma,s) \mapsto G_\sigma \cap B(s)).
\end{equation*} 
For morphisms $\rho \colon \bfV = (\Sigma,c) \to \bfV'= (\Sigma',c')$ in $\cals^G(\Omega)$
and $\varphi \colon \bfB = (S,\pi,B) \to \bfB' = (S',\pi',B')$ in
$\contrCatUcoef{G}{\Lambda}{\calb}$ we define
\begin{equation*}
  (\rho \otimes_0 \varphi)^{s,'\sigma'}_{s,\sigma}(g) := \rho_{g\sigma}^{\sigma'}\cdot \varphi_s^{s'}(g)
\end{equation*}
as in~\eqref{eq:rho-otimes-varphi}.

We will check in Lemmas~\ref{lem:otimes0-well} and~\ref{lem:composition-otimes0} below
that $\rho \otimes_0 \varphi$ is well defined and compatible with composition.  While we
do not claim that $\id_\bfV \otimes_0 \id_{\bfB}$ is $\id_{\bfV \otimes_0 \bfB}$,
compatibility with composition implies that $\id_\bfV \otimes_0 \id_{\bfB}$ is an
idempotent endomorphism of $\bfV \otimes_0 \bfB$.

\begin{lemma}\label{lem:otimes0-well}
  Let $\rho \colon \bfV = (\Sigma,c) \to \bfV'= (\Sigma',c')$ in $\cals^G(\Omega)$ and
  $\varphi \colon \bfB = (S,\pi,B) \to \bfB' = (S',\pi',B')$ in
  $\contrCatUcoef{G}{\Lambda}{\calb}$.  Then
  $\rho \otimes_0 \varphi \colon \bfV \otimes_0 \bfB \to \bfV' \otimes \bfB'$ is a
  morphism in $\contrCatUcoef{G}{\Omega \times \Lambda}{\calb}$
\end{lemma}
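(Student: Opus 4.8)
The plan is to unwind Definition~\ref{def:contrCatNUcoef}: a morphism in $\contrCatUcoef{G}{\Omega\times\Lambda}{\calb}$ between two triples is precisely a column-finite matrix whose entries are morphisms in $\calb = \calb(G;R)$ between the corresponding objects. So three things must be verified: that $\bfV\otimes_0\bfB$ and $\bfV'\otimes_0\bfB'$ are genuine objects of $\contrCatUcoef{G}{\Omega\times\Lambda}{\calb}$ (i.e.\ the groups $G_\sigma\cap B(s)$ are compact open subgroups of $G$); that each matrix entry $g\mapsto \rho_{g\sigma}^{\sigma'}\cdot\varphi_s^{s'}(g)$ is a morphism $G_\sigma\cap B(s)\to G_{\sigma'}\cap B'(s')$ in $\calb(G;R)$; and that $\rho\otimes_0\varphi$ is column finite. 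No control conditions enter here, since the target is the ``large'' category $\contrCatUcoef{G}{\Omega\times\Lambda}{\calb}$; finiteness over points and support estimates are not claimed.

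For the first point, since $\Omega$ is a smooth $G$-set, $G_\sigma$ is open for every $\sigma\in\Sigma$, while $B(s)$ is a compact open subgroup of $G$ (as $\calb=\calb(G;R)$); hence $G_\sigma\cap B(s)$, being an open subgroup contained in the compact group $B(s)$, is a compact open subgroup. For the entry-wise check I would argue as follows. Local constancy and compact support: the map $G\to\Sigma$, $g\mapsto g\sigma$, factors through the discrete quotient $G/G_\sigma$ and is therefore locally constant, so $g\mapsto\rho_{g\sigma}^{\sigma'}$ is locally constant; multiplying by the locally constant, compactly supported function $\varphi_s^{s'}\in\calh(G;R)$ yields again an element of $\calh(G;R)$. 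Right-invariance under $G_\sigma\cap B(s)$: for $a\in G_\sigma\cap B(s)$ we have $a\sigma=\sigma$, so $ga\sigma=g\sigma$, and $\varphi_s^{s'}(ga)=\varphi_s^{s'}(g)$ by right $B(s)$-invariance of $\varphi_s^{s'}$. Left-invariance under $G_{\sigma'}\cap B'(s')$: for $a'\in G_{\sigma'}\cap B'(s')$ condition~\ref{def:S_upper_G:G} gives $\rho_{a'g\sigma}^{\sigma'}=\rho_{a'g\sigma}^{a'\sigma'}=\rho_{g\sigma}^{\sigma'}$ (using $a'\sigma'=\sigma'$), and $\varphi_s^{s'}(a'g)=\varphi_s^{s'}(g)$ by left $B'(s')$-invariance of $\varphi_s^{s'}$. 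Thus each entry lies in $\mor_{\calb(G;R)}\big(G_\sigma\cap B(s),\,G_{\sigma'}\cap B'(s')\big)$.

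It remains to check column finiteness. Fix $(\sigma,s)\in\Sigma\times S$. By column finiteness of $\varphi$ there are only finitely many $s'$ with $\varphi_s^{s'}\neq 0$, so it suffices to bound, for each such $s'$, the number of $\sigma'$ with $(\rho\otimes_0\varphi)_{s,\sigma}^{s',\sigma'}\neq 0$. Such an entry is nonzero only if there is $g\in\supp\varphi_s^{s'}$ with $\rho_{g\sigma}^{\sigma'}\neq 0$. Since $\supp\varphi_s^{s'}$ is compact and $g\mapsto g\sigma$ is locally constant, the set $\{g\sigma\mid g\in\supp\varphi_s^{s'}\}$ is finite; and for each element $\tau$ of this finite set, column finiteness of $\rho$ (condition~\ref{def:S_upper_G:col-finite}) leaves only finitely many $\sigma'$ with $\rho_\tau^{\sigma'}\neq 0$. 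Hence only finitely many pairs $(\sigma',s')$ occur, proving column finiteness and completing the argument.

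I do not expect a genuine obstacle here; the statement is a routine bookkeeping check. The one point deserving attention is that the $G$-orbit $G\sigma\subseteq\Sigma$ may be infinite, so column finiteness of $\rho$ alone does not immediately yield column finiteness of $\rho\otimes_0\varphi$ — this is exactly where compactness of $\supp\varphi_s^{s'}$ together with the smoothness of $\Sigma$ is used. Analogous care with the same inputs will recur in the companion Lemma~\ref{lem:composition-otimes0} on compatibility with composition.
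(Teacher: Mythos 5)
Your proof is correct and follows essentially the same route as the paper's: the same bi-invariance computation for each matrix entry (using $a\sigma=\sigma$, the $G$-equivariance of $\rho$, and the bi-invariance of $\varphi_s^{s'}$), and the same column-finiteness argument combining column finiteness of $\varphi$, compactness of $\supp\varphi_s^{s'}$, and smoothness of $\Sigma$ to reduce to column finiteness of $\rho$ over the finite set $M\cdot\sigma$. You spell out a few points the paper leaves implicit (that $G_\sigma\cap B(s)$ is compact open and that the entries are locally constant with compact support), which is fine but not a different argument.
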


\begin{proof}
  For $a \in G_\sigma \cap B(s)$, $a' \in G_{\sigma'} \cap B(s')$ we have
  \begin{align*}
    (\rho \otimes_0 \varphi)^{\sigma',s'}_{\sigma,s}(a'ga) = \rho_{a'ga\sigma}^{\sigma'}\cdot \varphi_s^{s'}(a'ga) =
    & \rho_{g\sigma}^{(a')^{-1}\sigma'}\cdot \varphi_s^{s'}(g)
    \\ = &\rho_{g\sigma}^{\sigma'}\cdot \varphi_s^{s'}(g) = (\rho \otimes_0 \varphi)^{\sigma',s'}_{\sigma,s}(g),
  \end{align*}
  so for fixed $\sigma,\sigma',s,s'$,
  $(\rho \otimes_0 \varphi)^{\sigma',s'}_{\sigma,s} \colon G_\sigma \cap B(s) \to
  G_{\sigma'} \cap B(s')$ is a morphism in $\calb = \calb(G;R)$.  We also need to check
  that $\rho \otimes_0 \varphi$ is column finite.  Fix $(\sigma,s)$.  We need to check
  that there are only finitely many $(\sigma',s')$ with
  $(\rho \otimes \varphi)_{\sigma,s}^{\sigma',s'} \neq 0$.  As $\varphi$ is column finite
  there is $S'_0 \subset S'$ finite such that $\varphi_s^{s'} \neq 0$ implies
  $s' \in S'_0$.  The $\varphi_{s}^{s'}$ are compactly supported.  Thus there is
  $M \subseteq G$ compact such that $\varphi_{s}^{s'}(g) \neq 0$ implies $g \in M$.  As
  $\Sigma$ is a smooth $G$-set, the set $M\cdot\sigma \subseteq \Sigma$ is finite.  As
  $\rho$ is column finite there is $\Sigma'_0 \subset \Sigma'$ finite such that
  $\rho_{g\sigma}^{\sigma'} \neq 0$ with $g \in M$ implies $\sigma' \in \Sigma'_0$.  Now
  if $(\rho \otimes \varphi)_{s,\sigma}^{s',\sigma'} \neq 0$, then for some $g \in G$ we
  have $\rho^{\sigma'}_{g\sigma} \neq 0$ and $\varphi_{s}^{s'}(g) \neq 0$.  Thus
  $(\sigma',s') \in \Sigma'_0 \times S'_0$.
\end{proof}

\begin{lemma}\label{lem:composition-otimes0}
  Let $(S,\pi) \xrightarrow{\varphi} (S',\pi') \xrightarrow{\varphi'} (S'',\pi'')$ be
  composable morphisms in $\contrCatUcoef{G}{\Lambda}{\calb}$ and
  $(\Sigma,c) \xrightarrow{\rho} (\Sigma',c') \xrightarrow{\rho'} (\Sigma'',c'')$ be
  composable morphisms in $\cals^G(\Omega)$.  Then
  \begin{equation*}
    (\rho' \otimes_0 \varphi') \circ (\rho \otimes_0 \varphi) = (\rho' \circ \rho) \otimes_0 (\varphi' \circ \varphi).
  \end{equation*}
\end{lemma}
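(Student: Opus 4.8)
The statement asserts that $\otimes_0$ is compatible with composition. The plan is to compute both sides of the claimed identity entry-by-entry, indexed by a pair $(\sigma,s) \in \Sigma \times S$ (source) and $(\sigma'',s'') \in \Sigma'' \times S''$ (target), and to recognize that each side equals the same convolution sum. First I would fix indices $(\sigma,s)$ and $(\sigma'',s'')$ and write out the left-hand side. By the definition of composition in $\contrCatUcoef{G}{\Omega \times \Lambda}{\calb}$ (matrix multiplication using convolution in $\calh(G;R)$), the entry $\bigl((\rho' \otimes_0 \varphi') \circ (\rho \otimes_0 \varphi)\bigr)_{\sigma,s}^{\sigma'',s''}$ is the sum over intermediate indices $(\sigma',s') \in \Sigma' \times S'$ of the convolution product $(\rho' \otimes_0 \varphi')_{\sigma',s'}^{\sigma'',s''} \ast (\rho \otimes_0 \varphi)_{\sigma,s}^{\sigma',s'}$, evaluated at some $g \in G$. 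Unwinding the convolution integral and plugging in the defining formula $(\rho \otimes_0 \varphi)_{\sigma,s}^{\sigma',s'}(g) = \rho_{g\sigma}^{\sigma'} \cdot \varphi_s^{s'}(g)$, one gets an expression of the shape
\begin{equation*}
  \int_G \sum_{(\sigma',s')} \rho_{h\sigma'}^{\sigma''} \, \varphi'^{s''}_{s'}(h) \, \rho_{h^{-1}g\sigma}^{\sigma'} \, \varphi_s^{s'}(h^{-1}g) \; dh.
\end{equation*}

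Next I would split the inner sum over $(\sigma',s')$ into the sum over $\sigma' \in \Sigma'$ and the sum over $s' \in S'$, which is legitimate since for each $g$ and $h$ only finitely many terms are nonzero (by column finiteness of $\varphi$ and $\rho$, as exploited already in Lemma~\ref{lem:otimes0-well}). The crucial observation is that in the term $\rho_{h\sigma'}^{\sigma''}$ the index is $h\sigma'$, whereas the other $\rho$-factor produces $\sigma'$ in a position where we can substitute. Using the $G$-invariance property~\ref{def:S_upper_G:G} of morphisms in $\cals^G(\Omega)$, namely $\rho_{g\tau}^{g\tau'} = \rho_\tau^{\tau'}$, I would rewrite $\rho_{h\sigma'}^{\sigma''}$ by pulling back along $h$: the sum $\sum_{\sigma'} \rho'^{\sigma''}_{h\sigma'} \rho^{\sigma'}_{h^{-1}g\sigma}$ becomes, after reindexing $\sigma' \mapsto h\sigma'$ inside and using $G$-invariance of $\rho$ (so $\rho^{h\sigma'}_{h^{-1}g\sigma} = \rho^{\sigma'}_{h^{-1}\cdot h^{-1}g\sigma}$ — one has to be a little careful whether $G$ acts on left indices, upper indices, or both, and apply~\ref{def:S_upper_G:G} accordingly), exactly $(\rho' \circ \rho)^{\sigma''}_{h^{-1}g\sigma}$ in a suitably twisted form. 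Meanwhile the sum $\sum_{s'} \varphi'^{s''}_{s'}(h) \varphi_s^{s'}(h^{-1}g)$ is, by definition of convolution and matrix multiplication, the entry $(\varphi' \circ \varphi)_s^{s''}$ before integration. Carrying the integral over $h$ through then identifies the whole expression with $\bigl((\rho' \circ \rho) \otimes_0 (\varphi' \circ \varphi)\bigr)_{\sigma,s}^{\sigma'',s''}(g) = (\rho' \circ \rho)^{g\sigma}_{?}\cdot\ldots$, matching the definition of $\otimes_0$ on morphisms.

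The main obstacle is bookkeeping: the $G$-twist in $\otimes_0$ means the argument $g$ of the Hecke-algebra element is entangled with the simplicial index through $g\sigma$, so when one expands the convolution $\int_G (\cdots)(h)(\cdots)(h^{-1}g)\,dh$ the simplicial indices become $h\sigma'$ and $h^{-1}g\sigma$ rather than simply $\sigma'$ and $\sigma$. Getting the substitution right requires invoking~\ref{def:S_upper_G:G} at precisely the right moment and checking that the reindexing of the (finite, for fixed $g,h$) sum over $\sigma'$ is a bijection. I expect the cleanest route is to verify the identity of matrix entries at a fixed $g \in G$ by: (i) writing LHS as an iterated sum/integral, (ii) interchanging the finite sum over $s'$ and $\sigma'$ with the integral (justified by local constancy and compact support, so everything is a finite sum pointwise), (iii) applying $G$-invariance of $\rho,\rho'$ to collapse $\sum_{\sigma'}\rho'^{\sigma''}_{h\sigma'}\rho^{\sigma'}_{h^{-1}g\sigma}$ to $(\rho'\circ\rho)$ evaluated at the correct twisted index, (iv) recognizing the remaining $\int_G \sum_{s'} \varphi'^{s''}_{s'}(h)\varphi^{s'}_s(h^{-1}g)\,dh$ as $(\varphi'\circ\varphi)^{s''}_s(g)$, and (v) comparing with the definition of $(\rho'\circ\rho)\otimes_0(\varphi'\circ\varphi)$ to conclude. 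No deep input is needed beyond the definitions and~\ref{def:S_upper_G:G}; the proof is a careful but routine calculation, so I would keep it terse and signpost the one place where $G$-invariance is essential.
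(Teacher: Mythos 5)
Your proposal is correct and follows essentially the same route as the paper's proof: expand the matrix entry as a sum over intermediate indices of convolution integrals, plug in the defining formula for $\otimes_0$, use the $G$-invariance~\ref{def:S_upper_G:G} to rewrite $\rho^{\sigma'}_{h^{-1}g\sigma}$ as $\rho^{h\sigma'}_{g\sigma}$, reindex the (finite) sum over $\sigma'$ so that the $\rho$-part collapses to $(\rho'\circ\rho)^{\sigma''}_{g\sigma}$ independently of the integration variable, and identify the remaining integral with $(\varphi'\circ\varphi)^{s''}_s(g)$. The only differences are cosmetic (a reparametrization $h=gx$ of the convolution variable, and a missing prime on the first $\rho$ in your display).
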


\begin{proof} 
  \begin{align*}
    \big((\rho'  \otimes_0 \varphi') \circ (\rho \otimes_0 &\varphi)\big)_{\sigma,s}^{\sigma'',s''}(g) 
    \;\;= \sum_{\sigma' \in \Sigma',s \in S'}  \Big( (\rho' \otimes_0 \varphi')^{\sigma'',s''}_{\sigma',s'} \circ (\rho \otimes_0 \varphi)^{\sigma',s'}_{\sigma,s} \Big) (g) \\
    & = \sum_{\sigma' \in \Sigma',s \in S'} \int_{x \in G}  (\rho' \otimes_0 \varphi')^{\sigma'',s''}_{\sigma',s'}(gx) \circ (\rho \otimes_0 \varphi)^{\sigma',s'}_{\sigma,s}  (x^{-1}) \\ 
    & = \sum_{\sigma' \in \Sigma',s \in S'} \int_{x \in G}  (\rho')^{\sigma''}_{gx\sigma'} \cdot (\varphi')^{s''}_{s'}(gx) \cdot \rho^{\sigma'}_{x^{-1}\sigma} \cdot \varphi^{s'}_{s}  (x^{-1}) \\ 
    & = \sum_{\sigma' \in \Sigma',s \in S'} \int_{x \in G}  (\rho')^{\sigma''}_{gx\sigma'} \cdot \rho^{\sigma'}_{x^{-1}\sigma} \cdot (\varphi')^{s''}_{s'}(gx) \cdot \varphi^{s'}_{s}  (x^{-1}) \\ 
    & = \sum_{\sigma' \in \Sigma',s \in S'} \int_{x \in G}  (\rho')^{\sigma''}_{gx\sigma'} \cdot \rho^{gx\sigma'}_{g\sigma} \cdot (\varphi')^{s''}_{s'}(gx) \cdot \varphi^{s'}_{s}  (x^{-1}) \\ 
    & = \sum_{\sigma' \in \Sigma',s \in S'} \int_{x \in G}  (\rho')^{\sigma''}_{\sigma'} \cdot \rho^{\sigma'}_{g\sigma} \cdot (\varphi')^{s''}_{s'}(gx) \cdot \varphi^{s'}_{s}  (x^{-1}) \\ 
    %& = \sum_{\sigma' \in \Sigma',s \in S'} (\rho')^{\sigma''}_{\sigma'} \cdot \rho^{\sigma'}_{g\sigma} \cdot  \int_{x \in G} (\varphi')^{s''}_{s'}(gx) \cdot \varphi^{s'}_{s}  (x^{-1}) \\ 
    %& = \sum_{\sigma' \in \Sigma'} (\rho')^{\sigma''}_{\sigma'} \cdot \rho^{\sigma'}_{g\sigma} \cdot \sum_{s \in S'}   \big((\varphi')^{s''}_{s'} \circ \varphi^{s'}_{s}\big)  (g) \\ 
 & = (\rho' \circ \rho)^{\sigma''}_{g\sigma} \cdot    (\varphi' \circ \varphi)^{s''}_{s} (g)  \\
    & = \big((\rho' \circ \rho) \otimes_0 (\varphi' \circ \varphi)\big)^{\sigma'',s''}_{\sigma,s} (g).
  \end{align*}     
\end{proof}

%-----------------------------------------------------------------

\subsection{The diagonal tensor product $\otimes$}\label{subsec:diagonal-otimes}

For $\bfV  \in \cals^G(\Omega)$ and $\bfB  \in \contrCatUcoef{G}{\Lambda}{\calb}$ we set
\begin{equation*}
	\bfV \otimes \bfB := ( \bfV \otimes_0 \bfB, \id_\bfV \otimes_0 \id_{\bfB} ).
\end{equation*} 
For morphisms
$\rho \colon \bfV  \to \bfV'$ in $\cals^G(X)$ and
$\varphi \colon \bfB  \to \bfB'$ in $\contrCatUcoef{G}{\Lambda}{\calb}$ we define
\begin{equation*} 
   (\rho \otimes \varphi) := (\rho \otimes_0 \varphi) \colon \bfV \otimes \bfB \to \bfV' \otimes \bfB'.
\end{equation*}
Now $\id_\bfV \otimes \id_\bfB = \id_{\bfV \otimes \bfB}$.
Altogether we have now defined a bilinear functor
\begin{equation}\label{eq:diagonal-otimes}
  \-- \otimes \-- \; \colon \cals^G(\Omega) \times \contrCatUcoef{G}{\Lambda}{\calb}
  \to  \Idem \contrCatUcoef{G}{\Omega \times \Lambda}{\calb}.
\end{equation}
The following observation will often allow us to get rid of idempotent completions.

\begin{lemma}\label{lem:id-is-id}
  Let $\bfV = (\Sigma,c) \in \cals^G(\Omega)$ and $\bfB = (S,\pi,B) \in \contrCatUcoef{G}{\Lambda}{\calb}$.  If
  $\Sigma$ is fixed pointwise by all $B(s)$, then $\bfV \otimes \bfB = \bfV \otimes_0 \bfB$.
\end{lemma}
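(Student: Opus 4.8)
The plan is to unwind the definitions of $\otimes$ and $\otimes_0$ from Subsection~\ref{subsec:diagonal-otimes} and reduce the asserted equality to the single identity $\id_\bfV \otimes_0 \id_\bfB = \id_{\bfV \otimes_0 \bfB}$. Recall that $\bfV \otimes \bfB$ is by definition the object $(\bfV \otimes_0 \bfB,\, \id_\bfV \otimes_0 \id_\bfB)$ of $\Idem \contrCatUcoef{G}{\Omega \times \Lambda}{\calb}$, and that under the canonical embedding $\contrCatUcoef{G}{\Omega\times\Lambda}{\calb} \hookrightarrow \Idem\contrCatUcoef{G}{\Omega\times\Lambda}{\calb}$ (see Subsection~\ref{subsec:idem}) an object $\bfC$ is sent to $(\bfC,\id_\bfC)$. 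Hence, once we know that the idempotent $\id_\bfV \otimes_0 \id_\bfB$ is actually the identity of $\bfV \otimes_0 \bfB$, the object $\bfV \otimes \bfB$ is identified with $\bfV \otimes_0 \bfB$; and then for morphisms $\rho \otimes \varphi := \rho \otimes_0 \varphi$ is literally the same morphism (this is consistent with $\rho\otimes_0\varphi$ being a morphism $(\bfV\otimes_0\bfB,\id)\to(\bfV'\otimes_0\bfB',\id)$ in the idempotent completion, as follows from Lemma~\ref{lem:composition-otimes0}). So everything comes down to that one identity.

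First I would record the elementary consequence of the hypothesis: since $\Sigma$ is fixed pointwise by $B(s)$, we have $B(s) \subseteq G_\sigma$ for every $\sigma \in \Sigma$ and every $s \in S$, so the chain module $G_\sigma \cap B(s)$ appearing at $(\sigma,s)$ in $\bfV \otimes_0 \bfB$ is just $B(s)$. Then I would carry out the direct computation. Writing $\bfV = (\Sigma,c)$, $\bfB = (S,\pi,B)$, and using $\calb = \calb(G;R)$ so that $\id_{B(s)} = \chi_{B(s)}/\mu(B(s)) \in \calh(G;R)$, the defining formula gives
\[
  (\id_\bfV \otimes_0 \id_\bfB)^{\sigma',s'}_{\sigma,s}(g)
  = (\id_\bfV)_{g\sigma}^{\sigma'}\cdot (\id_\bfB)_{s}^{s'}(g),
\]
which vanishes unless $s = s'$, and for $s = s'$ equals $[\,g\sigma = \sigma'\,]\cdot \id_{B(s)}(g)$. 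Now $\id_{B(s)}(g) = 0$ for $g \notin B(s)$, while for $g \in B(s)$ the pointwise-fixing hypothesis gives $g\sigma = \sigma$; hence $[\,g\sigma = \sigma'\,]\,\id_{B(s)}(g) = [\,\sigma = \sigma'\,]\,\id_{B(s)}(g)$ in all cases. Therefore $(\id_\bfV \otimes_0 \id_\bfB)^{\sigma',s'}_{\sigma,s} = \delta_{\sigma,\sigma'}\,\delta_{s,s'}\,\id_{B(s)}$, which is exactly $\id_{\bfV \otimes_0 \bfB}$.

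This is really all there is to it: the statement is a bookkeeping lemma, and I do not expect a genuine obstacle. The only point needing a little care is not to confuse the module $G_\sigma \cap B(s)$ with $B(s)$ and to use the correct normalization of the identity in $\calb(G;R)$; the hypothesis is precisely what collapses $[\,g\sigma = \sigma'\,]$ to $[\,\sigma = \sigma'\,]$ on the support of $\id_{B(s)}$. I would also remark that, using the analogous identity-morphism formula, the same computation goes through for a general Hecke category with $G$-support, via the results cited at the start of Section~\ref{sec:tensor-products}.
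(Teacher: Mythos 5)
Your proof is correct and follows essentially the same route as the paper: both reduce the claim to the identity $\id_\bfV \otimes_0 \id_\bfB = \id_{\bfV \otimes_0 \bfB}$, use the hypothesis to get $G_\sigma \cap B(s) = B(s)$, and then evaluate the defining matrix formula, where the support of $\id_{B(s)}$ being contained in $B(s) \subseteq G_\sigma$ collapses $(\id_\bfV)_{g\sigma}^{\sigma'}$ to the Kronecker delta $\delta_{\sigma,\sigma'}$. Your write-up is just a more explicit version of the paper's one-line computation.
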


\begin{proof}
  The content of the lemma is that $\id_{\bfV} \otimes_0 \id_{\bfB}$ is the identity of
  $\bfV \otimes_0 \bfB$, not just an idempotent.  Indeed, since
  $G_\sigma \cap B(s) = B(s)$ for all $\sigma$ and $s$ we have
  \begin{equation*}
    (\id_{\bfV} \otimes_0 \id_{\bfB})_{\sigma,s}^{\sigma',s'}(g) = (\id_\bfV)_{g\sigma}^{\sigma'}
    (\id_\bfB)_s^{s'} (g)  = (\id_{\bfV \otimes_0 \bfB})_{\sigma,s}^{\sigma',s'}(g). 
  \end{equation*}    	
\end{proof}

For $E \subseteq \Omega \times \Omega$ and $E' \subseteq \Lambda \times \Lambda$
we use the following convention
\begin{equation}\label{eq:cheated-times}
  E \times E' := \Big\{ \twovec{x',\lambda'}{x,\lambda} \; \Big| \; \twovec{x'}{x} \in E, \twovec{\lambda'}{\lambda}
  \in E' \Big\} \subseteq (\Omega \times \Lambda)^{\times 2}.
\end{equation}

\begin{lemma}\label{lem:properties-diag-tensor}
   Let $\bfV = (\Sigma,c) \in \cals^G(\Omega)$ and $\bfB = (S,\pi,B) \in \contrCatUcoef{G}{\Lambda}{\calb}$.
   \begin{enumerate}[
                 label=(\thetheorem\alph*),
                 align=parleft, 
                 leftmargin=*,
                 labelindent=2pt,
                 %labelwidth=60pt,
                 labelsep=10pt,
                 %itemindent=-3pt
                 ]
               \item\label{lem:finiteness_and_suppobj-diag-tensor-obj:finite} If $\bfV$ and $\bfB$ are finite,
                 then $\bfV \otimes_0 \bfB$ is finite as well;
   	\item\label{lem:finiteness_and_suppobj-diag-tensor-obj:suppobj}
   	      $\suppobj (\bfV \otimes_0 \bfB) = \suppobj \bfV \times \suppobj \bfB$. 
   \end{enumerate}
   Let $\rho \colon \bfV = (\Sigma,c) \to \bfV' = (\Sigma',c')$ in $\cals^G(\Omega)$,
  $\varphi \colon \bfB = (S,\pi,B) \to \bfB' = (S',\pi',B')$ in $\contrCatUcoef{G}{\Lambda}{\calb}$.  Then
  for $\rho \otimes \varphi$ in $\contrCatUcoef{G}{\Omega \times \Lambda}{\calb}$ we have
  \begin{enumerate}[
                 label=(\thetheorem\alph*),
                 align=parleft, 
                 leftmargin=*,
                 labelindent=2pt,
                 %labelwidth=60pt,
                 labelsep=10pt,
                 %itemindent=-3pt
                 resume
                 ]
               \item\label{lem:supp-rho-otimes-varphi:suppX}
                 $\suppX (\rho \otimes \varphi) \; \subseteq \; \suppX \rho \times \suppX \varphi$;
               \item\label{lem:supp-rho-otimes-varphi:suppG}
                 $\suppG (\rho \otimes \varphi) \; \subseteq \; \suppG \varphi$.
  \end{enumerate}
\end{lemma}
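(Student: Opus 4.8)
\textbf{Plan for the proof of Lemma~\ref{lem:properties-diag-tensor}.}
The statement bundles four assertions, two about objects and two about morphisms, and each follows by unwinding the explicit formulas for $\otimes_0$ from Subsection~\ref{subsec:diagonal-otimes_0}. First I would observe that since $\rho \otimes \varphi = \rho \otimes_0 \varphi$ as a morphism (only the ambient object is decorated with an idempotent when passing to $\otimes$), all four claims may be checked for $\otimes_0$, and for objects we likewise have $\bfV \otimes \bfB = (\bfV \otimes_0 \bfB, \id_\bfV \otimes_0 \id_\bfB)$ with the same underlying triple, so $\suppobj$ and finiteness are read off $\bfV \otimes_0 \bfB$. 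Recall $\bfV \otimes_0 \bfB = (\Sigma \times S, c \times \pi, (\sigma,s) \mapsto G_\sigma \cap B(s))$.

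For~\ref{lem:finiteness_and_suppobj-diag-tensor-obj:finite}: if $\Sigma$ and $S$ are both finite then $\Sigma \times S$ is finite, which is exactly the finiteness of $\bfV \otimes_0 \bfB$. For~\ref{lem:finiteness_and_suppobj-diag-tensor-obj:suppobj}: by Definition~\ref{def:support-B(X)}, $\suppobj(\bfV \otimes_0 \bfB) = (c \times \pi)(\Sigma \times S) = c(\Sigma) \times \pi(S) = \suppobj \bfV \times \suppobj \bfB$, where here I am using the convention~\eqref{eq:cheated-times} reading a product of a subset of $\Omega$ with a subset of $\Lambda$ as living in $\Omega \times \Lambda$ (not $(\Omega \times \Lambda)^{\times 2}$); this is the evident analogue for object supports, so strictly one should note that $\suppobj \bfV \subseteq \Omega$ and $\suppobj \bfB \subseteq \Lambda$ and the product is the literal Cartesian product in $\Omega \times \Lambda$. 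For~\ref{lem:supp-rho-otimes-varphi:suppG}: by definition $\suppG(\rho \otimes_0 \varphi) = \bigcup_{\sigma,s,\sigma',s'} \supp\big((\rho \otimes_0 \varphi)_{\sigma,s}^{\sigma',s'}\big)$, and $(\rho \otimes_0 \varphi)_{\sigma,s}^{\sigma',s'}(g) = \rho_{g\sigma}^{\sigma'} \cdot \varphi_s^{s'}(g)$, which can only be nonzero at $g$ if $\varphi_s^{s'}(g) \neq 0$, hence $\supp\big((\rho \otimes_0 \varphi)_{\sigma,s}^{\sigma',s'}\big) \subseteq \supp \varphi_s^{s'} \subseteq \suppG \varphi$; taking the union over all indices gives $\suppG(\rho \otimes_0 \varphi) \subseteq \suppG \varphi$. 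For~\ref{lem:supp-rho-otimes-varphi:suppX}: suppose $\twovec{(c'(\sigma'),\pi'(s'))}{(g c(\sigma), g\pi(s))} \in \suppX(\rho \otimes_0 \varphi)$, so there are $\sigma,s,\sigma',s'$ and $g \in \supp\big((\rho \otimes_0 \varphi)_{\sigma,s}^{\sigma',s'}\big)$, i.e. $\rho_{g\sigma}^{\sigma'} \neq 0$ and $\varphi_s^{s'}(g) \neq 0$. From $\rho_{g\sigma}^{\sigma'} \neq 0$ and the $G$-invariance~\ref{def:S_upper_G:G} we get $\rho_\sigma^{\sigma'} = \rho_{g\sigma}^{g\sigma'}$; more to the point, $\rho_{g\sigma}^{\sigma'} \neq 0$ means the pair $\twovec{c'(\sigma')}{c(g\sigma)} = \twovec{c'(\sigma')}{g c(\sigma)}$ lies in $\suppX \rho$ (here using that $c$ is a $G$-map, so $c(g\sigma) = g c(\sigma)$), and $g \in \supp \varphi_s^{s'}$ means $\twovec{\pi'(s')}{g\pi(s)} \in \suppX \varphi$. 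Hence the pair in question lies in $\suppX \rho \times \suppX \varphi$ in the sense of~\eqref{eq:cheated-times}, proving the inclusion.

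None of the steps presents a genuine obstacle; the only point requiring mild care is the bookkeeping with the two different product conventions (Cartesian products of object supports versus the "paired" product~\eqref{eq:cheated-times} of morphism supports) and the consistent use of $c(g\sigma) = gc(\sigma)$, $c'(g\sigma') = gc'(\sigma')$ since $c,c'$ are $G$-maps. Thus I would present this as a short verification, citing Definition~\ref{def:support-B(X)}, Definition~\ref{def:S_upper_G}, and the formula for $\otimes_0$, and noting that $\rho \otimes \varphi$ and $\rho \otimes_0 \varphi$ agree as morphisms with the same support data. The whole proof is, as the excerpt itself says in the adjacent lemmas, "an easy exercise in the definitions."
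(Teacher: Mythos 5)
Your proof is correct and follows exactly the paper's route: all four claims are direct unwindings of the formulas for $\otimes_0$, and your argument for~\ref{lem:supp-rho-otimes-varphi:suppX} (factoring $(\rho\otimes\varphi)_{\sigma,s}^{\sigma',s'}(g)=\rho^{\sigma'}_{g\sigma}\cdot\varphi_s^{s'}(g)$ and using $c(g\sigma)=gc(\sigma)$) is the same computation the paper spells out. Nothing is missing.
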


\begin{proof}
	These claims are straight forward from the definitions.
	We give some details for~\ref{lem:supp-rho-otimes-varphi:suppX}. 
  Let $\twovec{x',\lambda'}{x,\lambda} \in \suppX (\rho \otimes \varphi)$.  Then there are
  $\sigma \in \Sigma$, $\sigma' \in \Sigma'$, $s \in S$, $s' \in S'$, $g \in G$ with
  $(\rho \otimes \varphi)_{\sigma,s}^{\sigma',s'}(g) \neq 0$ and $x = gc(\sigma)$,
  $x'=c'(\sigma')$, $\lambda = g\pi(s)$, $\lambda' = \pi'(s')$.  By definition
  $(\rho \otimes \varphi)_{\sigma,s}^{\sigma',s'}(g) =
  \rho^{\sigma'}_{g\sigma} \cdot \varphi_s^{s'}(g)$ and thus
  $ \twovec{x'}{x} = \twovec{c'(\sigma')}{gc(\sigma)} = \twovec{c'(\sigma')}{c(g\sigma)}
  \in \suppX \rho$ and
  $\twovec{\lambda'}{\lambda} = \twovec{\pi'(s')}{g\pi(s)} \in \suppX \varphi$.
\end{proof}

%-----------------------------------------------------------------

\subsection{Tensor product with a singular chain complex}%
\label{subsec:tensor-product-singular-chain-cx}

Consider the singular chain complex $\bfS_*(X) \in \ch \cals^G(S(X))$ from
Subsection~\ref{subsec:singular-ch-cx-in-cals}.  Using the diagonal tensor
product~\eqref{eq:diagonal-otimes} we obtain a functor
\begin{equation*}
  \bfS_*(X)\, \otimes \; \--  \; \colon \contrCatUcoef{G}{\Lambda}{\calb}
  \, \to \, \ch \Idem \big( \contrCatUcoef{G}{S(X) \times \Lambda}{\calb}\big).
\end{equation*}
Note that for a morphism $\varphi \colon \bfB \to \bfB'$ in
$\contrCatUcoef{G}{\Lambda}{\calb}$ by~\ref{lem:supp-rho-otimes-varphi:suppX} we have
\begin{equation}\label{eq:supp-id-otimes-varphi} \suppX \big(\id_{\bfS_*(X)} \otimes
  \varphi\big)
  \subseteq \Big\{ \twovec{\sigma,\lambda'}{\sigma,\lambda} \,\Big|\, \sigma \in S(X),
  \twovec{\lambda'}{\lambda} \in \suppX \varphi \Big\}.
\end{equation}

\begin{remark}[Shortcomings of $\bfS_*(X)$]\label{rem:shortcomings-singular-ch-cx}
  Let $J := |\bfJ_\calf(G)|^{\wedge}$,
  $\ZerodimR \in \regularPOrGSCzero$, and $\Lambda := |\ZerodimR|^\wedge$.  Let
  $f \colon S(X) \to |\bfJ_\calf(G)|$ be a $G$-equivariant map.  Suppose also that
  $\widehat f \colon S(X) \to |\bfJ_\calf(G)|^\wedge$ is a lift of $f$.  In light
  of~\eqref{eq:supp-id-otimes-varphi} one might hope, that
  $(\widehat f \times \id_{|P|^\wedge})_* ( \bfS_*(X) \otimes -)$ induces a
  functor\footnote{We use the sequence description of $\contd^0_G(\--)$ from
    Remark~\ref{rem:contd0-more-explicit}.}
  \begin{eqnarray*}
    \contd^0_G(P)
    & \to &
      \ch \Idem \contd^0_G\big(\bfJ_\calf(G)  \times P\big),
    \\
    (\bfB_{\underlinetupel{t}})_{\underlinetupel{t} \in \IN^{\times  2}}
    & \mapsto &
                \big((\widehat f \times \id_{|P|^\wedge})_*
                (\bfS_*(X) \otimes \bfB_{\underlinetupel{t}})\big)_{\underlinetupel{t} \in  \IN^{\times  2}},  
  \end{eqnarray*}
  but this is not the case.  Let
  $(\bfB_{\underlinetupel{t}})_{\underlinetupel{t} \in \IN^{\times 2}} =
  (S_\underlinetupel{t},\pi_\underlinetupel{t},B_\underlinetupel{t})_{\underlinetupel{t}
    \in \IN^{\times 2}} \in \contd^0_G(P)$.  Typically
  $\big((\widehat f \times \id_{|P|^\wedge})_*(\bfS_*(X) \otimes
  \bfB_\underlinetupel{t})\big)_{\underlinetupel{t} \in \IN^{\times 2}}$ will fail in two
  ways to define a chain complex in $ \Idem \contd^0_G(P \times \bfJ_\calf)$.

  Firstly, the boundary maps
  $(\widehat f \times \id_{|P|^\wedge})_*(\partial_n \otimes \id_{\bfB_t})$ do not satisfy
  the required control conditions to define morphisms in
  $\contd^0_G(\bfJ_\calf(G) \times P)$.  For example the $\epsilon$-control condition over
  $|J_\calf|$ from Remark~\ref{rem:fol-contrl-gives-eps-contrl} will typically fail.  The
  support of
  $(\widehat f \times \id_P)_*(\partial_n \otimes \id_{\bfB_\underlinetupel{t}})$ is the
  set of all
  \begin{equation*}
    \twovec{\widehat f(x'),\lambda}{g\widehat f(x),g\lambda}
    \in ( |\bfJ_\calf|^\wedge \times |P|^\wedge)^{\times 2}
  \end{equation*} 
  with $g \in \suppG B_\underlinetupel{t}(s)$ for some $s \in S$ with
  $\pi_\underlinetupel{t}(s) = x$ the barycenter of a singular $n$-simplex $\sigma$ of
  $X$, and $x'$ the barycenter of a face of $\sigma$.  Since we can always arrange for the
  $G$-supports of the $B_\underlinetupel{t}(s)$ to be small, the appearance of $g$ in the
  above formula is not the main problem.  The real problem comes from the difference
  between $\widehat f(x)$ and $\widehat f(x')$.  This difference shrinks if we use only
  small singular simplices $X$.  Therefore, in the construction of a functor
  $\contd^0_G(P) \to \contd^0_G(\bfJ_\calf \times P)$ we will need to work with chain
  complexes of simplices that get smaller as $\underlinetupel{t} \to \infty$.

  Secondly,
  $\big((\widehat f \times \id_{|P|^\wedge})_*(\bfS_n(X) \otimes
  B_\underlinetupel{t})\big)_{\underlinetupel{t} \in \IN^{\times 2}}$ typically does
  not define an object of $\contd^0_G(\bfJ_\calf \times P)$.  The singular chain complex
  is simply to big: $S_n(X)$ is infinite and so~\ref{rem:contd0-more-explicit:obj:finite}
  will fail.  In order to overcome this problem we will replace $X$ with a suitable large
  ball $\ball$ in $X$ (which is compact).  We will also replace singular simplices with
  simplices from a triangulation of $\ball$.  The set of $n$-simplices $\simp_n \ball$ is no
  longer a smooth $G$-set, as $\ball$ is not $G$-invariant in $X$, but $\simp_n \ball$ will be
  invariant for a compact open subgroup of $G$.  Moreover, the $G$-action on $X$ still
  induces a homotopy coherent action of $G$ on $\ball$.  Theorem~\ref{thm:X-to-J} provides
  maps $X \to |\bfJ_\CVCYC(G)|^\wedge$.  For the restrictions of these maps to large balls
  we control the failure of equivariance relative to this homotopy coherent action.  Again
  we will need the construction to vary in $\underlinetupel{t} \in \IN^{\times 2}$.
\end{remark}

%-----------------------------------------------------------------

\subsection{Restriction to open subgroups}\label{subsec:restriction}

As discussed in Remark~\ref{rem:shortcomings-singular-ch-cx}, in applications of the
tensor product later on we will not always be able to work with smooth $G$-sets, but will
also need to consider $U$-invariant sets for an open subgroup of $G$.  
To formalize this we discuss restrictions and inductions.

Let $U$ be an open subgroup of $G$.  
Write $\res_G^U \colon G\text{-}\Sets \to U\text{-}\Sets$ for the restriction functor.  
It induces a restriction functor
\begin{eqnarray*}
  \cals^G(\Omega) & \to & \cals^U(\res_G^U \Omega), \\
  (\Sigma,c) & \mapsto & (\res_G^U \Sigma, \res_G^U c),
\end{eqnarray*}
that we will also denote by $\res_G^U$.  We write
$\ind_U^G \colon \contrCatUcoef{U}{\res_G^U \Lambda}{\calb} \to
\contrCatUcoef{G}{\Lambda}{\calb}$ for the canonical inclusion\footnote{More precisely,
  writing $\res_G^U \calb$ for the subcategory of $\calb$ whose morphisms have support in
  $U$ we have
  $\ind_U^G \colon \contrCatUcoef{U}{\res_G^U \Lambda}{\res_G^U \calb} \to
  \contrCatUcoef{G}{\Lambda}{\calb}$.}.  This inclusion identifies
$\contrCatUcoef{U}{\res_G^U \Lambda}{\calb}$ with the subcategory of
$\contrCatUcoef{G}{\Lambda}{\calb}$ consisting of all objects and morphisms with
$G$-support in $U$.  Let us briefly write
\begin{equation*}
	\begin{array}{rlclcl}
		\otimes^G  \colon & \cals^G(\Omega) & \times  & \contrCatUcoef{G}{\Lambda}{\calb}
  & \to & \Idem \contrCatUcoef{G}{\Omega \times \Lambda}{\calb}; \\
  \otimes^U \colon & \cals^U(\res_G^U \Omega) & \times & \contrCatUcoef{U}{\res_G^U \Lambda}{\calb} 
  & \to & \Idem \contrCatUcoef{U}{\res_G^U (\Omega \times \Lambda)}{\calb},	
  \end{array}
\end{equation*}
for the tensor products.  Directly from the definition it follows that for
$\bfV \in \cals^G(\Omega)$ and $\bfB \in \contrCatUcoef{U}{\res_G^U \Lambda}{\calb}$ we have
\begin{equation}\label{eq:ind-res-is-ind} \ind_U^G ( \res_G^U \bfV \otimes^U \bfB) = \bfV
  \otimes^G \ind_U^G \bfB.
\end{equation}
Similarly, for morphisms $\rho \colon \bfV \to \bfV'$ in $\cals^G(\Omega)$ and
$\varphi \colon \bfB \to \bfB'$ in $\contrCatUcoef{U}{\res_G^U \Lambda}{\calb}$ we have
\begin{equation}\label{eq:ind-res-is-ind-morphisms} 
  \ind_U^G ( \res_G^U \rho \otimes^U \varphi) = \rho \otimes^G \ind_U^G \varphi.
\end{equation}
Because of these identities we will later often drop $\ind_U^G$ and $\res_G^U$ from the
notation and simply write $\otimes = \otimes^G = \otimes^U$.

%-----------------------------------------------------------------

\subsection{The category
  $\contrCatUcoef{G,U}{\Lambda}{\calb}$}\label{subsec:calh-G-U} 
Let $U$ be an open subgroup of $G$.  We write $\contrCatUcoef{G,U}{\Lambda}{\calb}$ for
the full subcategory of $\contrCatUcoef{G}{\Lambda}{\calb}$ on all objects with
$G$-support in $U$.  The induction
$\ind_U^G \colon \contrCatUcoef{U}{\res_{G}^U\Lambda}{\calb} \to
\contrCatUcoef{G}{\Lambda}{\calb}$ from Subsection~\ref{subsec:restriction} factors
through the inclusion
$\contrCatUcoef{G,U}{\Lambda}{\calb} \subseteq \contrCatUcoef{G}{\Lambda}{\calb}$.
Moreover, $\contrCatUcoef{G,U}{\Lambda}{\calb}$ is the full subcategory on objects in the
image of $\ind_U^G$.  Because of support cofinality~\ref{def:Hecke-category:cofinal} for
$\calb$ the inclusion
$\contrCatUcoef{G,U}{\Lambda}{\calb} \subseteq \contrCatUcoef{G}{\Lambda}{\calb}$ induces
an equivalence on idempotent completions.  Thus we can often work with
$\contrCatUcoef{G,U}{\Lambda}{\calb}$ in place of $\contrCatUcoef{G}{\Lambda}{\calb}$.

%-----------------------------------------------------------------

\subsection{Tensor product with subcomplexes of
  $\bfS_*(X)$}\label{subsec:tensorproduct-subcomples-of-singular}
Assume we are given~\refstepcounter{theorem}
\begin{enumerate}[
                 label=(\thetheorem\alph*),
                 align=parleft, 
                 leftmargin=*,
                 labelindent=2pt,
                 %labelwidth=60pt,
                 labelsep=10pt,
                 %itemindent=-3pt
                 ]
               \item\label{nl:U-B} a compact open subgroup $U$ of $G$ and a compact
                 $U$-invariant subspace $\ball$ of the $U$-fixed points $X^U$ with a
                 locally ordered triangulation.
\end{enumerate}
Then we obtain the simplicial chain complex $\bfC_*(\ball) \in \ch \cals^U(\res_G^U S(X))$, see
Subsection~\ref{subsec:triangulations}.  The tensor product with $\bfC_*(\ball)$ then yields a
functor
\begin{equation*}
  \bfC_*(\ball) \, \otimes^U \, \-- \; \colon \contrCatUcoef{U}{\res_G^U\Lambda}{\calb} 
  \, \to \,  \ch  \contrCatUcoef{U}{\res_G^U(S(X) \times \Lambda)}{\calb}. 
\end{equation*}
Here we do not need the idempotent completion because of Lemma~\ref{lem:id-is-id}.
Write $i \colon \bfC_*(\ball) \to \res_G^U \bfS_*(X)$ for the inclusion in
$\ch \cals^U(\res_G^U S(X))$.  Assume we are given in addition
\begin{enumerate}[start=2,
                 label=(\thetheorem\alph*),
                 align=parleft, 
                 leftmargin=*,
                 labelindent=2pt,
                 %labelwidth=60pt,
                 labelsep=10pt,
                 %itemindent=-3pt
                 ]
\item\label{nl:r-H} a chain map $r \colon \res_G^U \bfS_*(X) \to \bfC_*(\ball)$ in
  $\ch\cals^U(\res_G^U S(X))$ with $r \circ i = \id_{\bfC_*(\ball)}$ and a chain homotopy
  $H \colon i \circ r \simeq \id_{\res_G^U\bfS_*(X)}$.
\end{enumerate}
In $\ch \contrCatUcoef{U}{\res_G^U (S(X) \times \Lambda)}{\calb}$ we now obtain for each
$\bfB \in \contrCatUcoef{U}{\res_G^U\Lambda}{\calb}$
\begin{equation*} %\label{eq:BtimesA-XtimesA}
  \xymatrix{\bfC_*(\ball) \otimes^U \bfB \ar@<.5ex>[rrr]^{i\otimes^U{\id_\bfB}} & & &
    \res_G^U \bfS_*(X) \otimes^U \bfB \ar@<.5ex>[lll]^{r\otimes^U{\id_\bfB}} 
  }
\end{equation*}
where
$(r \otimes^U {\id_\bfB}) \circ (i\otimes^U {\id_\bfB}) = \id_{\bfC_*(\ball) \otimes^U \bfB}$
and
$H\otimes^U {\id_\bfB} \colon (i\otimes^U{\id_\bfB}) \circ \, (r \,\otimes^U {\id_\bfB})
\simeq \id_{\res_G^U\bfS_*(X) \otimes \bfB}$.  Applying $\ind_U^G$ and
using~\eqref{eq:ind-res-is-ind} we obtain
\begin{equation*}
  \xymatrix{\ind_U^G \big(\bfC_*(\ball) \otimes^U \bfB\big) \ar@<.5ex>[rrr]^{\ind_U^G (i\otimes^U{\id_\bfB})}
    & & &
    \bfS_*(X) \otimes^G \ind_U ^G \bfB \ar@<.5ex>[lll]^{\ind_U^G (r\otimes^U{\id_\bfB})} 
  }
\end{equation*}
in $\ch \Idem \contrCatUcoef{G}{S(X) \times \Lambda}{\calb}$.  
The main advantage of
$\ind_U^G (\bfC_*(\ball) \otimes^U \bfB)$ over $\bfS_*(X) \otimes^G \ind_U^G \bfB$ is that it
is smaller and has better chances of satisfying control conditions.  Its disadvantage is
that, as $\ball$ is only $U$-invariant and not $G$-invariant, morphisms
$\varphi \colon \ind_U^G \bfB \to \ind_U^G \bfB'$ in $\contrCatUcoef{G,U}{\Lambda}{\calb}$ do not induce
maps $\ind_U^G (\bfC_*(\ball) \otimes^U \bfB) \to \ind_U^G (\bfC_*(\ball) \otimes^U \bfB')$.  But
$\id_{\bfS_*(X)} \otimes \varphi$ is defined and we can use the composition
\begin{equation}\label{eq:with-lots-of-ind-and-res} \xymatrix{\ind_U^G \big(\bfC_*(\ball)
    \otimes^U \bfB\big) \ar[rrr]^{\ind_U^G (i\otimes^U{\id_\bfB})} & & &
    \bfS_*(X) \otimes^G \ind_U ^G \bfB \ar[d]^{\id_{\bfS_*(X)} \otimes^G \varphi} \\
    \ind_U^G \big(\bfC_*(\ball) \otimes^U \bfB'\big) & & & \bfS_*(X) \otimes^G \ind_U ^G \bfB'
    \ar[lll]_{\ind_U^G (r\otimes^U{\id_\bfB})} }
\end{equation}
instead.  While this is not strictly compatible with composition, the homotopies
$\ind_U^G (H\otimes^U {\id_\bfB})$ guarantee that it is compatible with composition up to
coherent homotopy.  From now on we will simply the notation as alluded to in
Subsection~\ref{subsec:restriction} and drop $\ind_U^G$ and $\res_G^U$ from the notation
and simply write $\otimes = \otimes^G = \otimes^U$.
Thus~\eqref{eq:with-lots-of-ind-and-res} abbreviates to
\begin{equation*}
  \xymatrix{\bfC_*(\ball) \otimes \bfB \ar[rr]^{i \otimes {\id_\bfB}}
    & & \bfS_*(X) \otimes \bfB \phantom{'.} \ar[d]^{\id_{\bfS_*(X)} \otimes \varphi} \\ 
    \bfC_*(\ball) \otimes \bfB' & & \bfS_*(X) \otimes \bfB'.
    \ar[ll]_{r\otimes{\id_\bfB'}} 
  }
\end{equation*}

In summary, we can use the data chosen in~\ref{nl:U-B} and~\ref{nl:r-H} to define a
homotopy coherent functor, see Definition~\ref{def:homotopy-coherent-functor},
\begin{equation}\label{eq:homotopy-coherent-F} F = (F^0,F^1,\dots) \;
  \colon \contrCatUcoef{G,U}{\Lambda}{\calb} \to \ch  \contrCatUcoef{G}{S(X) \times \Lambda}{\calb}
\end{equation}
as follows.  For $\bfB \in \contrCatUcoef{G,U}{\Lambda}{\calb}$ we set
\begin{equation*}
  F^0(\bfB) := \bfC_*(\ball) \otimes \bfB.
\end{equation*}
For a chain
\begin{equation*}
  \bfB_0 \xleftarrow{\varphi_1} \bfB_1 \xleftarrow{\varphi_2} \dots \xleftarrow{\varphi_n} \bfB_n
\end{equation*}
of composable morphisms in $\contrCatUcoef{G,U}{\Lambda}{\calb}$ we define
\begin{align*}
  F^n(\varphi_1,\dots,\varphi_n) := (r \otimes
  & \id_{\bfB_0}) \circ (\id_{\bfS_*(X)} \otimes \varphi_1) \circ (H \otimes \id_{\bfB_1}) \circ \dots
  \\ & \dots \circ (H \otimes \id_{\bfB_{n-1}}) \circ (\id_{\bfS_*(X)} \otimes \varphi_n) \circ  (i \otimes \id_{\bfB_n}).
\end{align*} 
It is not difficult to check that this defines a homotopy coherent functor, compare
Example~\ref{ex:def-functor-to-homotopy-coherent}.

In Section~\ref{sec:support-estimates} we will discuss the effect
of~\eqref{eq:homotopy-coherent-F} on $\suppX$.  For now we record the following easy
facts.

\begin{lemma}\label{lem:suppG_and_suppobj-for-diagonal-tensor} \
   \begin{enumerate}[
                 label=(\thetheorem\alph*),
                 align=parleft, 
                 leftmargin=*,
                 labelindent=2pt,
                 %labelwidth=60pt,
                 labelsep=10pt,
                 itemindent=-3pt
                 ]
               \item\label{lem:suppG_and_suppobj-for-diagonal-tensor:finite}
                 If $\bfB$ is finite, then all chain modules $(F^0(\bfB))_n$ of $F^0(\bfB)$ are finite as well;
               \item\label{lem:suppG_and_suppobj-for-diagonal-tensor:obj}
                 We have $\suppobj (F^0(\bfB))_n \subseteq \simp_n(\ball) \times \suppobj \bfB$
                 and $\suppG (F^0(\bfB))_n \subseteq \suppG \bfB$;
               \item\label{lem:suppG_and_suppobj-for-diagonal-tensor:boundary}
                 The $n$-th boundary map $\partial^{F^{0}(\bfB)}_n$ of $F^0(\bfB)$
                 satisfies $\suppG \partial^{F^{0}(\bfB)}_n \subseteq \suppG \bfB$.
   \end{enumerate}	
\end{lemma}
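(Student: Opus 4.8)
\textbf{Proof plan for Lemma~\ref{lem:suppG_and_suppobj-for-diagonal-tensor}.}
All three assertions are about $F^0(\bfB) = \bfC_*(\ball) \otimes \bfB$, which is built from the diagonal tensor product of the simplicial chain complex $\bfC_*(\ball) \in \ch \cals^U(\res_G^U S(X))$ with $\bfB = (S,\pi,B) \in \contrCatUcoef{G,U}{\Lambda}{\calb}$. The plan is to unwind the definitions of $\otimes_0$ (Subsection~\ref{subsec:diagonal-otimes_0}) and $\otimes$ (Subsection~\ref{subsec:diagonal-otimes}), using Lemma~\ref{lem:id-is-id} to dispense with the idempotent completion: since $\ball \subseteq X^U$ and $\bfB$ has $G$-support in $U$, every simplex $\sigma$ of $\ball$ is fixed pointwise by each $B(s) \subseteq U$, so $\bfC_*(\ball) \otimes \bfB = \bfC_*(\ball) \otimes_0 \bfB$ with no idempotent correction. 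Thus $(F^0(\bfB))_n$ is literally $(\simp_n(\ball),c_n|_{\simp_n(\ball)}) \otimes_0 \bfB = \big(\simp_n(\ball) \times S,\ c_n \times \pi,\ (\sigma,s) \mapsto G_\sigma \cap B(s)\big)$, and the $n$-th boundary map is $\partial_n \otimes_0 \id_\bfB$ where $\partial_n$ is the simplicial boundary map of $\bfC_*(\ball)$.

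For~\ref{lem:suppG_and_suppobj-for-diagonal-tensor:finite}: if $\bfB$ is finite, i.e. $S$ is finite, then since $\ball$ is a compact space with a (necessarily locally finite, hence finite) triangulation, $\simp_n(\ball)$ is a finite set, so $\simp_n(\ball) \times S$ is finite and $(F^0(\bfB))_n$ is finite. (I should note this is where compactness of $\ball$ from~\ref{nl:U-B} is used.) For~\ref{lem:suppG_and_suppobj-for-diagonal-tensor:obj}: by the displayed formula for the objects, $\suppobj (F^0(\bfB))_n = (c_n \times \pi)\big(\simp_n(\ball) \times S\big) \subseteq \simp_n(\ball) \times \pi(S) = \simp_n(\ball) \times \suppobj \bfB$ (identifying $\simp_n(\ball)$ with its image in $S(X)$ under $c_n$). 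For the $G$-support of the object, $\suppG (F^0(\bfB))_n = \bigcup_{(\sigma,s)} \supp_G \id_{G_\sigma \cap B(s)} \subseteq \bigcup_s \supp_G \id_{B(s)} = \suppG \bfB$, using that $G_\sigma \cap B(s)$ is a subgroup of $B(s)$ and that in $\calb = \calb(G;R)$ the support of the identity of a compact open subgroup $W$ is $W$ itself. For~\ref{lem:suppG_and_suppobj-for-diagonal-tensor:boundary}: from the formula $(\partial_n \otimes_0 \id_\bfB)^{\sigma',s'}_{\sigma,s}(g) = (\partial_n)^{\sigma'}_{g\sigma} \cdot (\id_\bfB)^{s'}_s(g)$, the $G$-support is contained in $\bigcup_{s,s'} \supp_G (\id_\bfB)^{s'}_s = \suppG \bfB$ (indeed $(\id_\bfB)^{s'}_s$ is $\id_{B(s)}$ when $s=s'$, supported in $B(s) \subseteq \suppG \bfB$, and $0$ otherwise), since the scalars $(\partial_n)^{\sigma'}_{g\sigma} \in \IZ$ only modify coefficients and do not enlarge $\supp_G$.

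None of the steps presents a genuine obstacle; the only point requiring a little care is~\ref{lem:suppG_and_suppobj-for-diagonal-tensor:finite}, where one must invoke that a compact space with a triangulation has a finite triangulation (equivalently, that the triangulation of $\ball$ from~\ref{nl:U-B} is finite because $\ball$ is compact) — otherwise $\simp_n(\ball)$ could be infinite. I would state this explicitly. The rest is bookkeeping with the formulas for $\otimes_0$ already recorded in Lemmas~\ref{lem:otimes0-well},~\ref{lem:properties-diag-tensor} and the identification $\bfC_*(\ball)\otimes\bfB = \bfC_*(\ball)\otimes_0\bfB$ from Lemma~\ref{lem:id-is-id}; in fact parts~\ref{lem:suppG_and_suppobj-for-diagonal-tensor:obj} follow from~\ref{lem:finiteness_and_suppobj-diag-tensor-obj:suppobj} of Lemma~\ref{lem:properties-diag-tensor} together with the observation about $\suppG$ of identities in $\calb(G;R)$. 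So the proof is genuinely "straight forward from the definitions", and I would present it as such, spelling out only the two formulas above and the compactness point.
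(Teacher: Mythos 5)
Your proof is correct and follows essentially the same route as the paper, which simply cites items~\ref{lem:finiteness_and_suppobj-diag-tensor-obj:finite}, \ref{lem:finiteness_and_suppobj-diag-tensor-obj:suppobj} and~\ref{lem:supp-rho-otimes-varphi:suppG} of Lemma~\ref{lem:properties-diag-tensor}; you merely unwind those items explicitly and correctly flag the implicit use of compactness of $\ball$ to make $\simp_n(\ball)$ finite.
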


\begin{proof}
  This follows from~\ref{lem:finiteness_and_suppobj-diag-tensor-obj:finite},~\ref{lem:finiteness_and_suppobj-diag-tensor-obj:suppobj}
  and~\ref{lem:supp-rho-otimes-varphi:suppG}.
\end{proof}

%-----------------------------------------------------------------

\subsection{Projection back to $\Lambda$}\label{subsec:back-to-Z} The
projection $\pr \colon S(X) \times \Lambda \to \Lambda$ induces a functor
\begin{equation*}
  P \colon  \ch  \contrCatUcoef{G}{X \times \Lambda}{\calb} \to  \ch  \contrCatUcoef{G}{\Lambda}{\calb}.
\end{equation*}
We are interested in the composition of $P$ with $F$ from~\eqref{eq:homotopy-coherent-F}
\begin{equation*}
  P \circ F  \colon  \contrCatUcoef{G,U}{\Lambda}{\calb} \to \ch \contrCatUcoef{G}{\Lambda}{\calb}.
\end{equation*} 
Let
\begin{equation*}
  I \colon \contrCatUcoef{G,U}{\Lambda}{\calb} \to  \ch  \contrCatUcoef{G}{\Lambda}{\calb}
\end{equation*}
be the inclusion\footnote{Recall that $\contrCatUcoef{G,U}{\Lambda}{\calb}$ is a subcategory of
$\contrCatUcoef{G}{\Lambda}{\calb}$.}.  We construct a natural transformation
$\tau \colon P \circ F \to I$.  Write $\star$ for the one-point space. Let $p(\bfC_*(\ball))$
be the image of $\bfC_*(\ball)$ under the functor $\cals^U(X) \to \cals^U(\star)$ induced by
the projection $X \to \star$.  We can identify
$(P \circ F)(\bfA) \cong p(\bfC_*(\ball)) \otimes \bfB$, for $\bfB \in \contrCatUcoef{G,U}{\Lambda}{\calb}$.
Let $\bbone{G} := (\star,\id_{\star}) \in \cals^G(\star)$.  We have
$\bbone{U} := \res_G^U \bbone{G} = (\star,\id_{\star}) \in \cals^U(\star)$.  We can
identify $I(\bfB) \cong \bbone{U} \otimes \bfB$.  The projection $\ball \to \star$ induces an
augmentation $\epsilon \colon p(\bfC_*(\ball)) \to \bbone{U}$.  Now for
$\bfB \in \contrCatUcoef{G,U}{\Lambda}{\calb}$ we define
\begin{equation}\label{eq:def-tau} \tau_\bfB := \epsilon \otimes
  \id_\bfB \colon p(C_*(\ball)) \otimes \bfB \to \bbone{U} \otimes \bfB.
\end{equation}
We will need the notion of a strict natural transformation between homotopy coherent
functors, see Definition~\ref{def:strict-nat-transf}.

\begin{lemma}\label{lem:tau-is-transformation} 
  Under the canonical
  identifications $(P \circ F)(\bfB) \cong p(\bfC_*(\ball)) \otimes \bfB$ and
  $I(\bfB) \cong \bbone{U} \otimes \bfB$ the maps~\eqref{eq:def-tau} define a strict
  natural transformation $\tau \colon P \circ F \to I$, see
  Example~\ref{ex:strict-nat-transf}.
\end{lemma}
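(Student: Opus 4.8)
\textbf{Proof plan for Lemma~\ref{lem:tau-is-transformation}.}
The plan is to verify directly that the maps $\tau_\bfB = \epsilon \otimes \id_\bfB$ assemble into a strict natural transformation $P \circ F \to I$ in the sense of Definition~\ref{def:strict-nat-transf}, i.e.\ that they are compatible on the nose with all the higher structure maps $F^n$ and $I^n$ of the two homotopy coherent functors. First I would recall that $I$ is the inclusion of $\contrCatUcoef{G,U}{\Lambda}{\calb}$ into chain complexes, viewed as a homotopy coherent functor concentrated in degree $0$ with $I^0(\bfB) = \bbone{U} \otimes \bfB$, $I^1(\varphi) = \bbone{U} \otimes \varphi$, and $I^n = 0$ for $n \ge 2$ (cf.\ Example~\ref{ex:def-functor-to-homotopy-coherent}); this uses the identification $I(\bfB) \cong \bbone{U}\otimes\bfB$ coming from $\bbone{U}\otimes\bfB = (\star\times S,\dots)$ and Lemma~\ref{lem:id-is-id} (the point $\star$ is fixed by everything, so no idempotent completion intervenes). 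Similarly $(P\circ F)^0(\bfB) = p(\bfC_*(\ball))\otimes\bfB$ and $(P\circ F)^n(\varphi_1,\dots,\varphi_n)$ is obtained from $F^n(\varphi_1,\dots,\varphi_n)$ by applying the projection $\pr \colon S(X)\times\Lambda\to\Lambda$, which under the identifications amounts to replacing each $\bfC_*(\ball) \otimes (-)$, $\bfS_*(X)\otimes(-)$ by $p(\bfC_*(\ball))\otimes(-)$, $p(\bfS_*(X))\otimes(-)$.

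The heart of the argument is then a naturality computation. By Definition~\ref{def:strict-nat-transf} I need, for every chain $\bfB_0 \xleftarrow{\varphi_1}\cdots\xleftarrow{\varphi_n}\bfB_n$, the identity
\begin{equation*}
  \tau_{\bfB_0} \circ (P\circ F)^n(\varphi_1,\dots,\varphi_n) \;=\; I^n(\varphi_1,\dots,\varphi_n) \circ \tau_{\bfB_n}
\end{equation*}
in $\ch\contrCatUcoef{G}{\Lambda}{\calb}$, together with the compatibility with the coherence homotopies of $I$ (which are trivial). For $n=0$ this is the definition of $\tau$. For $n=1$ it reads $\tau_{\bfB_0} \circ (P\circ F)^1(\varphi) = (\bbone{U}\otimes\varphi)\circ\tau_{\bfB_1}$, and for $n\ge 2$ the right-hand side vanishes, so I must show $\tau_{\bfB_0}\circ (P\circ F)^n(\varphi_1,\dots,\varphi_n) = 0$. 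The key input is that the augmentation $\epsilon \colon p(\bfC_*(\ball)) \to \bbone{U}$ is a chain map of complexes over $\cals^U(\star)$ and that tensoring is bifunctorial: applying $\epsilon \otimes (-)$ to $p(r)$, $p(i)$, and $p(H)$ collapses them. Concretely, $\epsilon \circ p(r) = \epsilon'$ where $\epsilon' \colon p(\bfS_*(X)) \to \bbone{U}$ is the augmentation of the singular complex (since $r\circ i = \id$ and both augment compatibly, and $\epsilon\circ p(r)$ is forced on homology, but better: $\epsilon$ and $\epsilon\circ p(r)$ both split $i$ up to the identifications — I would spell this out from the explicit matrix formulas for $i,r,H$ in Subsection~\ref{subsec:triangulations}), and crucially $\epsilon' \circ p(H) = 0$ because $H$ raises chain degree while $\bbone{U}$ is concentrated in degree $0$, and $\epsilon'$ composed with $\id_{\bfS_*(X)}\otimes\varphi$ equals $\varphi$ composed with $\epsilon'$ by~\eqref{eq:rho-otimes-varphi} (the tensor is over the diagonal $G$-action, and on the augmentation the $G$-twist disappears). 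Chasing these through the defining formula for $F^n$ in Subsection~\ref{subsec:tensorproduct-subcomples-of-singular}: for $n=1$ the middle $H$-factors are absent, so $\epsilon\otimes\id_{\bfB_0}$ applied to $(r\otimes\id_{\bfB_0})\circ(\id_{\bfS_*(X)}\otimes\varphi)\circ(i\otimes\id_{\bfB_1})$ telescopes to $(\bbone{U}\otimes\varphi)\circ(\epsilon\otimes\id_{\bfB_1})$; for $n\ge 2$ there is at least one $H\otimes\id$ factor sandwiched in, and post-composing with $\epsilon\otimes\id$ and sliding $\epsilon$ leftward past the $\id_{\bfS_*(X)}\otimes\varphi_i$ hits $\epsilon'\circ p(H) = 0$.

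I expect the main obstacle to be purely bookkeeping rather than conceptual: carefully propagating the identifications $(P\circ F)(\bfB)\cong p(\bfC_*(\ball))\otimes\bfB$ and $I(\bfB)\cong\bbone{U}\otimes\bfB$ through the composite $F^n$, and checking that ``applying $\epsilon\otimes(-)$'' genuinely commutes with the projection $P = \pr_*$ and with the induction/restriction bookkeeping of Subsection~\ref{subsec:restriction} (so that $\tau$ is a well-defined morphism in $\contrCatUcoef{G}{\Lambda}{\calb}$ and not merely in the idempotent completion — this again is where Lemma~\ref{lem:id-is-id} is used, since $\bbone{U}$ and $p(\bfC_*(\ball))$ have the point $\star$ fixed by all groups $B(s)$). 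Once the degree-counting observation $\epsilon' \circ p(H) = 0$ is in hand, the strict naturality for all $n$ follows formally, and the compatibility with the (trivial) coherence data of $I$ is immediate. I would close by remarking that this $\tau$ is, on homology in degree $0$, an isomorphism when $\ball$ is contractible — but that observation is not needed for the present lemma and belongs to the subsequent sections.
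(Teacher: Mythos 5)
Your proposal is correct and is exactly the "straightforward exercise in the definitions" that the paper's one-line proof alludes to: you verify the hypotheses of Example~\ref{ex:strict-nat-transf} (namely $\epsilon\circ p(r)=\epsilon'$, $\epsilon'\circ p(H)=0$ by degree reasons, and bifunctoriality of $\otimes$ to slide $\epsilon'$ past $\id_{\bfS_*(X)}\otimes\varphi$), which is precisely the intended route. No substantive difference from the paper's argument.
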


\begin{proof}
  This is a straight forward exercise in the definitions.
\end{proof}

\begin{lemma}\label{lem:tau-is-weak-equiv} Suppose that $\ball$ is
  contractible.  Let $\bfB \in \contrCatUcoef{G,U}{\Lambda}{\calb}$.  Then in
  $\ch \Idem \contrCatUcoef{G,U}{\Lambda}{\calb}$ there are a chain map
  $f \colon \bbone{U} \otimes \bfB \to p(\bfC_*(\ball)) \otimes \bfB$, and chain homotopies
  $h \colon f \circ \tau_\bfB \simeq \id_{p(\bfC_*(\ball)) \otimes \bfB}$,
  $k \colon \tau_\bfB \circ f \simeq \id_\bbone{U} \otimes \bfB$ such that
  \begin{equation*} \begin{aligned} \suppX f, \suppX h, \suppX h' & \subseteq \suppX \bfB,
      \\ \suppG f,\suppG h, \suppG h' & \subseteq \suppG \bfB \subseteq U.
    \end{aligned}\end{equation*}
  In particular, $\tau_\bfB$ is a homotopy equivalence.
\end{lemma}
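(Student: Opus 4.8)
\textbf{Proof plan for Lemma~\ref{lem:tau-is-weak-equiv}.}
The plan is to reduce the statement to a chain-level homotopy equivalence over the one-point space and then tensor it up. First I would observe that since $\ball$ is contractible, the augmentation $\epsilon \colon p(\bfC_*(\ball)) \to \bbone{U}$ of $\IZ$-linear complexes in $\cals^U(\star)$ is a chain homotopy equivalence: choosing a point $x_0 \in \ball$, the constant map $c \colon \ball \to \ball$ with image $x_0$ is $U$-equivariantly homotopic to $\id_\ball$ (linearly, using that $\ball$ lies in $X^U$ and $X$ is $\CAT(0)$, so the straight-line homotopy is $U$-equivariant), and applying $\bfC_*(-)$ — or rather the simplicial-vs-singular comparison of Subsection~\ref{subsec:triangulations} together with Lemma~\ref{lem:support-H_ast} applied to this homotopy — gives a chain map $s \colon \bbone{U} \to p(\bfC_*(\ball))$ and chain homotopies $\epsilon \circ s \simeq \id$, $s \circ \epsilon \simeq \id$ in $\ch \cals^U(\star)$. (One can equally just quote that $\bfC_*(\ball)$ is a finite complex of finitely generated free $\IZ U$-modules with homology $\IZ$ concentrated in degree $0$, hence chain homotopy equivalent to $\IZ$.) The key point is that all of $s$, and the homotopies, are morphisms in $\cals^U(\star)$, so they have empty/trivial support in the $X$-direction — there is nothing to control over $\star$.

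Next I would push this equivalence through the bilinear functor $\otimes$ of~\eqref{eq:diagonal-otimes}. Define $f := s \otimes \id_\bfB \colon \bbone{U} \otimes \bfB \to p(\bfC_*(\ball)) \otimes \bfB$, and let $h$, $k$ be the images under $\-- \otimes \id_\bfB$ of the chain homotopies for $s \circ \epsilon \simeq \id$ and $\epsilon \circ s \simeq \id$ respectively. Functoriality of $\otimes_0$ in the first variable (Lemma~\ref{lem:composition-otimes0}) and the definition of $\otimes$ give immediately that $f$ is a chain map, that $f \circ \tau_\bfB = (s \otimes \id_\bfB) \circ (\epsilon \otimes \id_\bfB) = (s \circ \epsilon) \otimes \id_\bfB$, and hence that $h$ is a chain homotopy $f \circ \tau_\bfB \simeq \id_{p(\bfC_*(\ball)) \otimes \bfB}$; symmetrically for $k$. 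For the support estimates I would invoke~\ref{lem:supp-rho-otimes-varphi:suppX} and~\ref{lem:supp-rho-otimes-varphi:suppG}: for a morphism of the form $\rho \otimes \id_\bfB$ with $\rho$ in $\cals^U(\star)$ one has $\suppX(\rho \otimes \id_\bfB) \subseteq \suppX \rho \times \suppX \id_\bfB$; but $\suppX \rho \subseteq \star \times \star$ is a single point, so under the identification of $\star \times \Lambda$ with $\Lambda$ this simply says $\suppX(\rho \otimes \id_\bfB) \subseteq \suppX \bfB$, and likewise $\suppG(\rho \otimes \id_\bfB) \subseteq \suppG \id_\bfB = \suppG \bfB \subseteq U$. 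Applying this to $\rho = s$ and to the two homotopies yields the claimed inclusions $\suppX f, \suppX h, \suppX k \subseteq \suppX \bfB$ and $\suppG f, \suppG h, \suppG k \subseteq \suppG \bfB \subseteq U$. Finally, since the functor lands in $\ch \Idem \contrCatUcoef{G,U}{\Lambda}{\calb}$ by Lemma~\ref{lem:id-is-id} (the relevant $G$-sets here are fixed by the $B(s) \subseteq U$, so no idempotent completion is actually needed), everything takes place in the stated category.

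The main obstacle — really the only nontrivial point — is constructing the splitting $s$ of $\epsilon$ \emph{equivariantly and with trivial support in the $X$-direction}. Over the one-point space there is no $X$-direction to worry about, so this is handled purely by homological algebra over $\IZ U$ as in the proof of Lemma~\ref{lem:singular-vs-simplicial} (free resolutions in a functor category, or directly: a finite free $\IZ U$-complex with the homology of a point is contractible up to homotopy). I would spell this out by the same $\IZ\calk$-module argument used there, or more cheaply by noting $\ball$ is $U$-equivariantly contractible and using the singular-to-simplicial comparison already available. Once $s$ is in hand, the rest is a bookkeeping exercise with the two support lemmas; I do not expect any difficulty there.
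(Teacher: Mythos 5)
Your proposal is correct and follows essentially the same route as the paper: establish the chain homotopy equivalence $\epsilon \colon p(\bfC_*(\ball)) \simeq \bbone{U}$ in $\ch\cals^U(\star)$ using contractibility of $\ball$, tensor the inverse and the two homotopies with $\id_\bfB$, and read off the support bounds from~\ref{lem:supp-rho-otimes-varphi:suppX} and~\ref{lem:supp-rho-otimes-varphi:suppG}. The "main obstacle" you flag is in fact vacuous, since $\ball \subseteq X^U$ is fixed pointwise by $U$, so the equivalence over $\star$ is just ordinary (non-equivariant) simplicial homotopy theory of a contractible complex.
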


\begin{proof}
  Under the assumptions on $\ball$, $p(\bfC_*(\ball))$ and $\bbone{U}$ are homotopy equivalent in
  $\ch\cals^U(\star)$ (because the simplicial chain complex of $\ball$ is homotopy equivalent
  to the simplicial chain complex of a point).  Thus in $\ch\cals^U(\star)$ there are a
  chain map $f_0 \colon \bbone{U} \to p(\bfC_*(\ball))$, and chain homotopies
  $h_0 \colon f_0 \circ \epsilon \simeq \id_{p(\bfC_*(\ball))}$,
  $k_0 \colon \epsilon \circ f_0 \simeq \id_{\bbone{U}}$.  Now set
  $f := f_0 \otimes \id_\bfB$, $h := h_0 \otimes \id_\bfB$, and
  $k := k_0 \otimes \id_\bfB$.  The claims about $\suppX$ and $\suppG$ follow
  from~\ref{lem:supp-rho-otimes-varphi:suppX} and~\ref{lem:supp-rho-otimes-varphi:suppG}.
\end{proof}

%%%%%%%%%%%%%%%%%%%%%%%%%%%%%%%%%%%%%%%%%%%%%%%%%%%%%%%%%%%%%%%%%%%%

\section{Support estimates for homotopy coherent functors}%
\label{sec:support-estimates}

Let $X$ be a $G$-space equipped with a $G$-invariant metric $d_X$.
We assume that for $K \subseteq X$ compact the pointwise isotropy group $G_K$ is open in $G$.
Let $J$ and $\Lambda$ be further $G$-spaces.  
Let $\calb$ be a Hecke category with $G$-support.

We will refine the construction of the homotopy coherent functor from
Subsection~\ref{subsec:tensorproduct-subcomples-of-singular} and will be interested in its
effect on supports of objects and morphisms.  Its construction and analysis will depend on
a list of data.  This section is very formal and will be used later to check that a
sequence of homotopy coherent functors does descend to $\contd^0_G(\--)$ as discussed in
Subsection~\ref{subsec:sequences}.

%-----------------------------------------------------------------

\subsection{Set-up}
Throughout this section we fix
\begin{itemize}[label=-,                 
                 align=parleft, 
                 leftmargin=*,
                 labelindent=5pt,
                 %labelwidth=60pt,
                 %labelsep=8pt,
                 %itemindent=-3pt
                 %itemsep=1pt
                 ] 
\item numbers $L \in \IN$, $\rho > 0$;
\item a compact open subgroup $U \subseteq G$ and a compact subset $M \subseteq G$ with
  $U \subseteq M$;
\item a subspace $\ball \subseteq X$ with a locally ordered triangulation all whose simplices
  are of diameter $< \rho$;
\item a sequence of further subspaces
  $\ball=\ball^{(0)} \subseteq \ball^{(1)} \subseteq \dots \subseteq \ball^{(L)}$ with
  $M \cdot \ball^{(l-1)} \subseteq \ball^{(l)}$ for $l=1,\dots,L$; moreover we require that
  $\ball^{(L)}$ is pointwise fixed by $U$;
\item a retraction $r^0 \colon X \to \ball$ for the inclusion $i^0 \colon \ball \to X$, i.e.,
  $r^0 \circ i^0 = \id_\ball$;
\item a homotopy $H^0 \colon i^0 \circ r^0 \simeq \id_X$; we will assume that $H^0$ is
  non-expanding, i.e., for all $\tau \in [0,1]$, $x,x' \in X$ we require
  $d_X(H^0(x,\tau),H^0(x',\tau)) \leq d_X(x,x')$; we also assume that $H^0$ preserves the
  $\ball^{(l)}$, i.e., we require $H^0(\ball^{(l)} \times [0,1]) \subseteq \ball^{(l)}$;
\item a map $f \colon X \to J$;
\item a subset $E \subseteq J \times J$.
\end{itemize}
We do \emph{not} assume that $f$ is $G$-equivariant.

\begin{remark}[Some explanations for the list of data]\label{rem:some-explanations}
  Later
  \begin{itemize}[label=-,                 
                 align=parleft, 
                 leftmargin=*,
                 labelindent=5pt,
                 %labelwidth=60pt,
                 %labelsep=8pt,
                 %itemindent=-3pt
                 %itemsep=1pt
                 ] 
  \item $X$ will be the extended Bruhat-Tits building for the reductive $p$-adic group $G$;
  \item the $\ball^{(i)}$ will be an increasing sequence of balls around a common center;
  \item $r^0$ will be the radial projection and $H^0$ will be the associated radial
    homotopy;
  \item $U$ will be the pointwise isotropy group of $\ball^{(L)}$;
  \item $\Lambda$ will be $|\ZerodimR|^\wedge$ for some
    $\ZerodimR \in \regularPOrGSCzero$;
  \item $J$ will be $|\bfJ_{\CVCYC}(G)|^\wedge$;
  \item $f \colon X \to J$ will come from Theorem~\ref{thm:X-to-J}.
  \end{itemize}
  Below $U$, $B$, $r^0$, and $H^0$ will be used to construct a homotopy coherent functor
  \begin{equation*}
    \contrCatUcoef{G,U}{\Lambda}{\calb} \to \ch  \contrCatUcoef{G}{S(X) \times \Lambda}{\calb}
  \end{equation*}
  as in Subsection~\ref{subsec:tensorproduct-subcomples-of-singular}.  
  Let $\bary \colon S(X) \to X$ be the map that sends a singular simplex to its barycenter.  
  We can then compose with $( f \circ \bary \times \id_\Lambda)_*$ and obtain a homotopy coherent functor
  \begin{equation*}
    \contrCatUcoef{G,U}{\Lambda}{\calb} \to \ch  \contrCatUcoef{G}{J \times \Lambda}{\calb}
  \end{equation*}
  and will be interested in estimates for the support of objects and morphisms under this
  latter functor, see Proposition~\ref{prop:supp-tildeF_l} below.  For these estimates we will
  bound the $G$-support of morphisms and objects by $M$ and we will treat chains of at
  most $L$-composable morphisms.  The upper bound for $\suppX$ will be in terms of $E$.
\end{remark}

We will work under the following assumptions throughout this section.

\begin{assumption}\label{assm:E_0}\
  \begin{enumerate}[
                 label=(\thetheorem\alph*),
                 align=parleft, 
                 leftmargin=*,
                 labelindent=2pt,
                 %labelwidth=60pt,
                 labelsep=10pt,
                 %itemindent=50pt
                 ]
  \item\label{assm:E_0:G} $\Big\{ \twovec{f(gx)}{gf(x)} \; \Big| \; x \in \ball^{(L)}, g \in M \Big\} \subseteq E$;\\
  \item\label{assm:E_0:H} $\Big\{ \twovec{f(H^0(x,\tau))}{f(x)} \; \Big| \; x \in \ball^{(L)}, \tau \in [0,1] \Big\} \subseteq E$;\\
  \item\label{assm:E_0:eps}
    $\Big\{ \twovec{f(x')}{f(x)} \; \Big| \; x,x' \in \ball^{(L)}, d_X(x,x') \leq (L+1) \rho
    \Big\} \subseteq E$. 
  \end{enumerate}
\end{assumption}

%-----------------------------------------------------------------

\subsection{From $S(X)$ to $J$}
Set 
\begin{eqnarray*}
	E_X  & :=  &  \big\{ \twovec{x'}{x} \; \big| \; x,x' \in \ball^{(L)}, d_X(x,x') \leq (L+1)\rho \big\} \\
             & & \qquad \cup \; \big\{ \twovec{H^0(x,\tau)}{x} \; \big| \; x \in \ball^{(L)}, \tau \in [0,1] \big\}
                 \quad \subseteq X \times X; \\
	E_S & := & (\bary \times \bary)^{-1}(E_X) \qquad \subseteq S(X) \times S(X).
\end{eqnarray*}
We note that $E_X \subseteq E_X^{\circ 2}$ and $E_S \subseteq E_S^{\circ 2}$.
We use again the convention for products from~\eqref{eq:cheated-times}.

\begin{lemma}\label{lem:extra-E}
  Let $E' \subseteq \Lambda \times \Lambda$,
  $\Phi \colon \bfB \to \bfB' \in \contrCatUcoef{G}{S(X) \times \Lambda}{\calb}$, and
  $\nu \in \IN$.  Suppose $\suppobj \bfB \subseteq S(\ball^{(L)}) \times \Lambda$,
  $\suppX \Phi \subseteq E_S^{\circ \nu} \times E'$, and $\suppG \Phi \subseteq M$.  Then
  $\suppX (f \circ \bary \times \id_\Lambda)_*\Phi \subseteq E^{\circ (\nu+1)} \times E'$.
\end{lemma}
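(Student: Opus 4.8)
The statement is a bookkeeping lemma about how applying $(f \circ \bary \times \id_\Lambda)_*$ changes the support estimate $\suppX$, given the equivariance-failure bounds from Assumption~\ref{assm:E_0}. The plan is to chase an arbitrary element of $\suppX (f \circ \bary \times \id_\Lambda)_*\Phi$ back through the definitions of $\suppX$ of a morphism (Definition~\ref{def:support-B(X)}) and of the pushforward functor $(-)_*$ along a non-equivariant map (Remark~\ref{rem:functoriality-calb(X)}), in particular the estimate~\eqref{eq:support-after-non-equiv-f}. Since $f \circ \bary$ is not $G$-equivariant, that estimate produces $\suppX (f \circ \bary \times \id_\Lambda)_*\Phi \subseteq (f \circ \bary \times \id_\Lambda)^{\times 2}(\suppX \Phi) \circ E_0$, where $E_0$ is the set measuring the failure of equivariance of $f \circ \bary$ on the relevant region.

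First I would unwind the right-hand composition. On the $\Lambda$-coordinate nothing happens: $\pr_\Lambda$ is $G$-equivariant, so the $E'$-factor is unchanged and the whole argument reduces to the $S(X) \to J$ direction, exactly as set up in this subsection (we are, in effect, proving the $S(X)$-coordinate half of the product estimate). So I reduce to showing: if $\suppobj \bfB \subseteq S(\ball^{(L)})$ in the relevant coordinate, $\suppX \Phi \subseteq E_S^{\circ \nu}$ and $\suppG \Phi \subseteq M$, then $\suppX (f \circ \bary)_* \Phi \subseteq E^{\circ(\nu+1)}$. Second, I would apply~\eqref{eq:support-after-non-equiv-f} to the map $h := f \circ \bary \colon S(X) \to J$: this gives $\suppX h_* \Phi \subseteq h^{\times 2}(\suppX \Phi) \circ \{ \twovec{h(g\sigma)}{gh(\sigma)} \mid \sigma \in \suppobj \bfB, g \in \suppG \Phi \}$. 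The second factor: for $\sigma$ a singular simplex with barycenter $x = \bary(\sigma) \in \ball^{(L)}$ and $g \in M$, we have $\bary(g\sigma) = g\,\bary(\sigma) = gx$ (barycenter is equivariant), so the pair is $\twovec{f(gx)}{gf(x)}$ with $x \in \ball^{(L)}$, $g \in M$; by Assumption~\ref{assm:E_0}~\ref{assm:E_0:G} this lies in $E$. Hence the second factor is contained in $E$.

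Third, I would handle the first factor $h^{\times 2}(\suppX \Phi) \subseteq (f\circ\bary)^{\times 2}(E_S^{\circ \nu})$. Here I use the definition $E_S = (\bary \times \bary)^{-1}(E_X)$, so $\bary^{\times 2}(E_S) \subseteq E_X$ and therefore $\bary^{\times 2}(E_S^{\circ \nu}) \subseteq E_X^{\circ \nu}$ (composition of relations is compatible with applying a map to both coordinates, once one notes that a chain $\sigma_0, \ldots, \sigma_\nu$ realizing membership in $E_S^{\circ \nu}$ maps under $\bary$ to a chain in $X$ realizing membership in $E_X^{\circ \nu}$). Then I apply $f^{\times 2}$: by the two clauses in the definition of $E_X$ together with Assumption~\ref{assm:E_0}~\ref{assm:E_0:H} and~\ref{assm:E_0:eps}, $f^{\times 2}(E_X) \subseteq E$, hence $f^{\times 2}(E_X^{\circ \nu}) \subseteq E^{\circ \nu}$ by the same chain argument. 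So the first factor is contained in $E^{\circ \nu}$. Composing the two factors gives $\suppX h_* \Phi \subseteq E^{\circ \nu} \circ E = E^{\circ(\nu+1)}$; restoring the $\Lambda$-coordinate gives the claimed $E^{\circ(\nu+1)} \times E'$.

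The only mildly delicate point — and the place I would be most careful — is justifying that $\bary$ and $f$ interact correctly with iterated composition of relations and with the specific shape of~\eqref{eq:support-after-non-equiv-f}: one must check that the "failure of equivariance" set appearing there, evaluated on $\suppobj \bfB$ and $\suppG \Phi$, really does land in $E$ rather than in some larger $E^{\circ k}$, which is exactly why the hypothesis $\suppobj \bfB \subseteq S(\ball^{(L)}) \times \Lambda$ (not merely $\subseteq S(X) \times \Lambda$) is needed: it guarantees $x \in \ball^{(L)}$, where Assumption~\ref{assm:E_0}~\ref{assm:E_0:G} applies. Everything else is a routine diagram chase in the definitions, as the surrounding lemmas of this section suggest.
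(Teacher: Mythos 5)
Your proposal is correct and follows essentially the same route as the paper's proof: both apply~\eqref{eq:support-after-non-equiv-f} to $f \circ \bary \times \id_\Lambda$, bound the equivariance-failure factor by $E \times \{\text{diagonal}\}$ via~\ref{assm:E_0:G} using $\suppobj \bfB \subseteq S(\ball^{(L)}) \times \Lambda$ and $\suppG \Phi \subseteq M$, and bound the image factor by $E^{\circ\nu} \times E'$ via $(f\circ\bary)^{\times 2}(E_S) \subseteq E$ (from the definition of $E_S$ together with~\ref{assm:E_0:H} and~\ref{assm:E_0:eps}) and compatibility of $h^{\times 2}$ with composition of relations. No gaps.
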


\begin{proof}
  By~\eqref{eq:support-after-non-equiv-f}, and since $\bary$ and $\id_\Lambda$ are
  $G$-equivariant, $\suppX (f \circ \bary \times \id_\Lambda)_*\Phi$ is contained in
  \begin{equation*}
    (f \circ \bary \id_\Lambda)^{\times 2}(E_S^{\circ \nu} \times E') \circ 
    \Big\{ \twovec{f(g\bary(\sigma)),g\lambda}{g f(\bary(\sigma)),g\lambda}
    \, \Big| \, (\sigma,\lambda) \in S(\ball^{(L)}) \times \Lambda, g \in M \Big\}.
  \end{equation*}
  We have
  \begin{multline*}
    (f \circ \bary \id_\Lambda)^{\times 2}(E_S^{\circ \nu} \times E') \subseteq ((f \circ
    \bary)^{\times 2}(E_S^{\circ \nu})) \times E' \\ \subseteq ((f \circ \bary)^{\times
      2}(E_S))^{\circ \nu} \times E' \subseteq E^{\circ \nu} \times E'
  \end{multline*}
  and, by~\ref{assm:E_0:G}
  \begin{equation*}
    \Big\{ \twovec{f(g\bary(\sigma)),g\lambda}{g f(\bary(\sigma)),g\lambda}
    \, \Big| \, (\sigma,\lambda) \in S(\ball^{(L)}) \times \Lambda, g \in M \Big\}
    \subseteq E \times \Big\{ \twovec{\lambda}{\lambda} \;\big| \; \lambda \in \Lambda \Big\}.
  \end{equation*}
  Thus
  \begin{equation*}
    \suppX (f \circ \bary \times \id_\Lambda)_*\Phi
    \subseteq \Big(E^{\circ \nu} \times E'\Big)
    \circ \Big(E \times \Big\{ \twovec{\lambda}{\lambda} \;\big| \; \lambda
    \in \Lambda \Big\} \Big) \subseteq E^{\circ (\nu+1)} \times E'.
  \end{equation*}
\end{proof}
 
% -----------------------------------------------------------------

\subsection{Construction of $r$ and $H$} 
We write $\bfC_*(\ball) \in \ch \cals^U(S(X))$ for the simplicial chain complex of $\ball$
and $\bfS_*(X) \in \ch \cals^G(S(X))$ for the singular chain complex of $X$.  Let
$i \colon \bfC_*(\ball) \to \bfS_*(X)$ be the inclusion.  For $k,l \leq L$ we define
$S^{k,l} \subseteq S(X)$ as the collection of all singular simplices in $X$ that are
contained in $\ball^{(l)}$ and are of diameter $< k\rho$.

\begin{lemma}\label{lem:r-and-H} 
  There is a chain map $r \colon \bfS_*(X) \to \bfC_*(\ball)$ and a chain homotopy
  $H \colon \bfS_*(X) \to \bfS_*(X)$ in $\ch \cals^U(S(X))$ with
  $r \circ i = \id_{\bfC_*(B)}$, $H \colon i \circ r \simeq \id_{\bfS_*(X)}$ satisfying
  the following: if $\twovec{\sigma'}{\sigma} \in \suppX r \cup \suppX H$ with
  $\sigma \in S^{k,l}$ and $k \leq L-1, l \leq L$, then
  \begin{enumerate}[
                 label=(\thetheorem\alph*),
                 align=parleft, 
                 leftmargin=*,
                 labelindent=1pt,
                 %labelwidth=60pt,
                 labelsep=10pt,
                 %itemindent=50pt
                 ]
               \item\label{lem:r-and-H:in-S-delta-l} $\sigma' \in S^{k+1,l}$;
               \item\label{lem:r-and-H:in-E-S}
                 $\twovec{\sigma'}{\sigma} \in E_{S}^{\circ 2}$.
               \end{enumerate}
             \end{lemma}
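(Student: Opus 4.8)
\textbf{Plan for the proof of Lemma~\ref{lem:r-and-H}.}
The plan is to combine the construction of the chain contraction of the singular complex over the poset category $\calk$ of subcomplexes, used in the proof of Lemma~\ref{lem:singular-vs-simplicial}, with a careful bookkeeping of how the supports grow.  First I would set up, as in the proof of Lemma~\ref{lem:singular-vs-simplicial}, the chain complexes $\underline{C}_*$ and $\underline{S}_*$ of ${\IZ\calk}$-modules, where $\calk$ is the poset of subcomplexes of the triangulation $K$ of $\ball$, and the inclusion $\underline{i}\colon \underline{C}_* \to \underline{S}_*$.  Since each $\underline{i}_n$ is a split inclusion of a projective ${\IZ\calk}$-module and $\underline{i}$ is a homology isomorphism, homological algebra in the abelian category of ${\IZ\calk}$-modules yields a chain map $\underline{r}$ and homotopy $\underline{H}$; setting $r := \underline{r}(K)$, $H := \underline{H}(K)$ and remembering equivariance (the $U$-action on $K$ makes $\underline{r}$ and $\underline{H}$ into $U$-equivariant data, exactly as in Lemma~\ref{lem:singular-vs-simplicial}) gives the chain map and homotopy in $\ch \cals^U(S(X))$ with $r\circ i = \id_{\bfC_*(\ball)}$ and $H\colon i\circ r \simeq \id_{\bfS_*(X)}$.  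The nontrivial content is then~\ref{lem:r-and-H:in-S-delta-l} and~\ref{lem:r-and-H:in-E-S}.

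The key observation, as in Lemma~\ref{lem:singular-vs-simplicial}, is the functoriality in $\calk$: if $r_\sigma^{\sigma'} \neq 0$ or $H_\sigma^{\sigma'} \neq 0$, and $K_\sigma$ denotes the smallest subcomplex of $K$ with $\Image\sigma \subseteq |K_\sigma|$, then $\sigma'$ is a simplex of $K_\sigma$ (for $r$) resp.\ again a singular simplex of $|K_\sigma|$ (for $H$).  So I would argue: suppose $\sigma \in S^{k,l}$ with $k \le L-1$, $l \le L$, i.e.\ $\Image\sigma \subseteq \ball^{(l)}$ and $\diam(\Image\sigma) < k\rho$.  Then $|K_\sigma|$ is contained in the $\rho$-neighborhood of $\Image\sigma$ (every simplex of $K$ meeting $\Image\sigma$ has diameter $<\rho$), so $\diam |K_\sigma| < k\rho + 2\rho$; hmm, that is slightly too weak.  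A cleaner route: $|K_\sigma|$ is the union of the (closed) simplices of $K$ whose interior meets $\Image\sigma$, hence is contained in $(\Image\sigma)^{\rho}$ (the open $\rho$-neighborhood), and has diameter $< k\rho + \rho \le (k+1)\rho$ since any two points of $|K_\sigma|$ are within $\rho$ of $\Image\sigma$ — actually $\diam(\Image\sigma)^{\rho} \le \diam\Image\sigma + 2\rho$.  To land inside $S^{k+1,l}$ I would instead use the sharper fact that $\sigma'$ is a \emph{simplex of $K_\sigma$} (for $r$) resp.\ a singular simplex whose image lies in $|K_\sigma|$, together with the bound that $|K_\sigma|$ is covered by simplices of diameter $<\rho$ each adjacent to $\sigma$; the point is that one should track that $K$ is fine enough that the simplicial contraction only moves things by one simplex-width.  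Concretely, for $r$: $\Image\sigma'$ is a simplex of $K$, contained in $|K_\sigma| \subseteq (\Image\sigma)^\rho \subseteq \ball^{(l)}$ (using that $H^0$, and hence the whole construction, preserves $\ball^{(l)}$, and $(\Image\sigma)^\rho$ stays in $\ball^{(l)}$ because triangulation simplices have diameter $<\rho$ — here one may need to enlarge slightly, but since we allow $l \le L$ and $M\cdot\ball^{(l-1)}\subseteq\ball^{(l)}$ gives room, this is where the $\ball^{(l)}$-nesting is used), and has diameter $< \rho < (k+1)\rho$.  Thus $\sigma' \in S^{k+1,l}$, giving~\ref{lem:r-and-H:in-S-delta-l} for $r$; the argument for $H$ is the same except $\sigma'$ need not be a triangulation simplex, only a singular simplex inside $|K_\sigma|$, so $\diam\Image\sigma' \le \diam|K_\sigma| < (k+1)\rho$.

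For~\ref{lem:r-and-H:in-E-S}, recall $E_S = (\bary\times\bary)^{-1}(E_X)$ and $E_X$ contains $\{\twovec{x'}{x}\mid x,x'\in\ball^{(L)},\ d_X(x,x')\le (L+1)\rho\}$ as well as the homotopy tracks of $H^0$.  Given $\twovec{\sigma'}{\sigma}\in\suppX r$: both $\bary(\sigma),\bary(\sigma')\in\ball^{(l)}\subseteq\ball^{(L)}$ by the above, and $d_X(\bary(\sigma),\bary(\sigma')) \le \diam(\Image\sigma\cup\Image\sigma') \le \diam|K_\sigma| < (k+1)\rho \le L\rho \le (L+1)\rho$, so $\twovec{\sigma'}{\sigma}\in E_S \subseteq E_S^{\circ 2}$.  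For $\twovec{\sigma'}{\sigma}\in\suppX H$: the same distance bound applies since $\Image\sigma'\subseteq|K_\sigma|$, giving $\twovec{\sigma'}{\sigma}\in E_S$; the reason the statement allows $E_S^{\circ 2}$ rather than $E_S$ is presumably to absorb the case where the homotopy $H$ is assembled in stages (as in Lemma~\ref{lem:chain-homotopy-subdivided}) or where one composes $i\circ r$ with $H$ — in any case $E_S \subseteq E_S^{\circ 2}$ was already noted, so it suffices to prove membership in $E_S$.  The main obstacle I anticipate is making precise the claim that the ${\IZ\calk}$-functorial contraction $\underline{r},\underline{H}$ moves a chain $\sigma$ only within $|K_\sigma|$ and that $|K_\sigma|$ is small — this requires knowing that the contraction can be chosen functorially in $\calk$ with the subcomplex $K_\sigma$ "remembered", which is exactly the homological-algebra-over-$\calk$ input from Lemma~\ref{lem:singular-vs-simplicial}, so I would simply cite and reuse that construction verbatim, only adding the elementary metric estimates above.
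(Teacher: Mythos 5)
There is a genuine gap: your construction never actually produces a retraction $r$ and homotopy $H$ defined on all of $\bfS_*(X)$. The $\IZ\calk$-module argument you import from Lemma~\ref{lem:singular-vs-simplicial} lives entirely over the poset $\calk$ of subcomplexes of the triangulation $K$ of $\ball$; setting $r:=\underline{r}(K)$, $H:=\underline{H}(K)$ gives maps with domain $\underline{S}_*(K)=\bfS_*(\ball)$, not $\bfS_*(X)$. For a singular simplex $\sigma$ of $X$ whose image is not contained in $\ball$ there is no ``smallest subcomplex $K_\sigma$ containing $\Image\sigma$'', so both the construction and all of your support estimates break down exactly where the lemma has content. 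The paper resolves this by factoring through $\bfS_*(\ball)$ using the \emph{geometric} data fixed in the set-up of Section~\ref{sec:support-estimates}: one takes $r = r^1\circ (r^0)_*$ and $H = (i^0)_*\circ H^1\circ (r^0)_* + (\widetilde H^0)_*$, where $r^0\colon X\to\ball$ is the given retraction, $\widetilde H^0$ is the subdivided chain homotopy of Lemma~\ref{lem:chain-homotopy-subdivided} applied to the non-expanding homotopy $H^0$, and $r^1,H^1$ come from Lemma~\ref{lem:singular-vs-simplicial}. Your proposal uses neither $r^0$, nor $H^0$, nor Lemma~\ref{lem:chain-homotopy-subdivided}, and without them there is no way to control where $r$ and $H$ send simplices lying outside $\ball$ (or even inside $\ball^{(l)}$ for $l>0$, which are not subcomplexes of $K$).

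This also explains the $E_S^{\circ 2}$ in~\ref{lem:r-and-H:in-E-S}, which you guessed at: it is not an artifact of staging the homotopy, but reflects the genuine two-step composition. For $\twovec{\sigma'}{\sigma}\in\suppX r$ one first passes from $\sigma$ to $\tau = r^0\circ\sigma = H^0(\sigma(\text{--}),1)$, which contributes one factor of $E_S$ via the homotopy-tracks part of $E_X$, and then from $\tau$ to the triangulation simplex $\sigma'$ via $r^1$, which contributes a second factor via the metric part of $E_X$ (using~\ref{enum:diameters-r-H}-type diameter control and that $H^0$ is non-expanding). Your claim that membership in $E_S$ itself always holds is therefore not established by your argument, and the verification of~\ref{lem:r-and-H:in-S-delta-l} likewise needs the non-expansiveness of $H^0$ and the fact that $H^0$ preserves the $\ball^{(l)}$ to see that $(\widetilde H^0)_*$ and $(i^0)_*\circ H^1\circ(r^0)_*$ keep simplices in $S^{k+1,l}$. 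The metric estimates you do carry out for simplices already contained in $\ball$ are essentially the right ones for the $r^1,H^1$ stage, but they are only half of the proof.
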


\begin{proof}
  We will use $\Image \sigma$ and $(\Image \sigma)^\rho$ as introduced in
  Subsection~\ref{subsec:triangulations}.  We write
  $i^1 \colon \bfC_*(\ball) \to \bfS_*(\ball)$ for the inclusion.  By
  Lemma~\ref{lem:singular-vs-simplicial} there is
  $r^1 \colon \bfS_*(\ball) \to \bfC_*(\ball)$ with $r^1 \circ i^1 = \id_{\bfC_*(\ball)}$
  and a chain homotopy $H^1 \colon \bfS_*(\ball) \to \bfS_{*+1}(\ball)$ for
  $i^1 \circ r^1 \simeq \id_{\bfS_*(\ball)}$.  Moreover,
  Lemma~\ref{lem:singular-vs-simplicial} also yields
  \begin{enumerate}[
                 label=(\thetheorem\alph*),
                 align=parleft, 
                 leftmargin=*,
                 labelindent=1pt,
                 %labelwidth=60pt,
                 labelsep=8pt,
                 %itemindent=50pt
                 resume ]
               \item\label{enum:diameters-r-H} if
                 $\twovec{\sigma'}{\sigma} \in \suppX (r^1) \cup \in \suppX(H^1)$, then
                 $\Image \sigma' \subseteq (\Image \sigma)^{\rho}$.
               \end{enumerate}
               Next we use that $H^0$ is not expanding and apply
               Lemma~\ref{lem:chain-homotopy-subdivided} to the homotopy $H^0$ and obtain
               a chain homotopy
               $\widetilde{H}^0 \colon (i^0)_* \circ (r^0)_* \simeq \id_{\bfS_*(X)}$ such
               that
  \begin{enumerate}[
                 label=(\thetheorem\alph*),
                 align=parleft, 
                 leftmargin=*,
                 labelindent=1pt,
                 %labelwidth=60pt,
                 labelsep=8pt,
                 %itemindent=50pt
                 resume ]
               \item\label{enum:tilde-H} if
                 $\twovec{\sigma'}{\sigma} \in \suppX (\widetilde{H}^0)$, then
                 $\diam \Image (\sigma') < \diam \Image (\sigma) + \rho$ and
                 $\Image \sigma' \subseteq \Image(H^0 \circ (\sigma \times \id_{[0,1]}))$.
               \end{enumerate}
               % The homotopy $H^0$ preserves the $\ball^{(l)}$ and so~\ref{enum:tilde-H}
               % implies
%  \begin{enumerate}[
%                 label=(\thetheorem\alph*),
%                 align=parleft, 
%                 leftmargin=*,
%                 labelindent=1pt,
%                 %labelwidth=60pt,
%                 labelsep=8pt,
%                 %itemindent=50pt
% resume ]
% \item\label{enum:tilde-H-2} if $\twovec{\sigma'}{\sigma} \in \suppX (\widetilde{H}^0)$
%   and $\sigma \in S(\ball^{(l)})$, then also $\sigma' \in S(\ball^{(l)})$.
% \end{enumerate}
               We have $i = (i^0)_* \circ i^1 \colon \bfC_*(\ball) \to \bfS_*(X)$ and set
               \begin{eqnarray*}
                 r & := & r^1 \circ (r^0)_* \colon \bfS_*(X) \to \bfC_*(\ball);\\
                 H & := & (i^0)_* \circ H^1 \circ (r^0)_*  + (\widetilde{H}^0)_* \colon i \circ r \simeq \id_{\bfS_*(X)}.
               \end{eqnarray*}
               Suppose $\twovec{\sigma'}{\sigma} \in \suppX r$ with $\sigma \in S^{k,l}$.
               As $r$ is a map to $\bfC_*(\ball)$, $\sigma'$ is simplex in the
               triangulation of $\ball$, in particular $\sigma' \in S^{k+1,l}$.  Now
               $r_\sigma^{\sigma'} \neq 0$ implies that there is $\tau$ with
               $(r^1)_\tau^{\sigma'} \neq 0$ and $((r^0)_*)_\sigma^\tau \neq 0$.  The
               latter means $\tau = r^0 \circ \sigma = H^0(\sigma(\--),1)$.  In
               particular, $\twovec{\tau}{\sigma} \in E_S$.  As $H^0$ is non-expanding,
               $\diam \Image \tau \leq \diam \Image \sigma < k\rho$.
               By~\ref{enum:diameters-r-H}, $\Image \sigma' \subseteq (\Image \tau)^\rho$.
               Thus
               $d_X(\bary(\sigma'),\bary(\tau)) \leq \rho + \diam \Image \tau < (k+1)\rho
               \leq L \rho$ and $\twovec{\sigma'}{\tau} \in E_S$.  Therefore
               $\twovec{\sigma'}{\sigma} \in E_S^{\circ 2}$.

               Suppose $\twovec{\sigma'}{\sigma} \in \suppX H$ with $\sigma \in S^{k,l}$.
               Then $\twovec{\sigma'}{\sigma} \in \suppX \widetilde{H}$ or
               $\twovec{\sigma'}{\sigma} \in\suppX ((i^0)_* \circ H^1 \circ (r^0))$.  In
               the first case,~\ref{enum:tilde-H} implies
               $\diam \sigma' < \diam \sigma + \rho < (k+1)\rho$.  Moreover,
               $\Image \sigma' \subseteq H_0(\Image \sigma \times [0,1])$.  As $H^0$
               preserves $\ball^{(l)}$, this implies $\sigma' \in S^{k+1,l}$.  Also
               $\bary(\sigma') = H_0(x,t)$ for some $t \in [0,1]$ and
               $x \in \Image \sigma$.  Let $\tau := H_0(\sigma(\--),t)$.  Then
               $\twovec{\sigma}{\tau} \in E_S$.  Also $\bary(\sigma') \in \Image \tau$.
               As $H_0(\--,t)$ is non-expanding,
               $d_X(\bary(\sigma'),\bary(\tau)) < \diam \Image \tau < \diam \Image \sigma
               < k \rho$.  Thus $\twovec{\tau}{\sigma} \in E_S$.  Altogether,
               $\twovec{\sigma'}{\sigma} \in E_S^{\circ 2}$.
    
               In the second case there are $\tau$, $\tau'$ with
               $\twovec{\sigma'}{\tau'} \in \suppX (i^0)_*$,
               $\twovec{\tau'}{\tau} \in \suppX H^1$, and
               $\twovec{\tau}{\sigma} \in \suppX (r^0)_*$.  Now
               $\twovec{\sigma'}{\tau'} \in \suppX (i^0)_*$ means $\tau' = \sigma'$, while
               $\twovec{\tau}{\sigma} \in \suppX (r^0)_*$ means
               $\tau = r^0 \circ \sigma = H^0(\sigma(\--),1)$.  In particular,
               $\twovec{\tau}{\sigma} \in E_S$.  Also, as $H^0$ is non-expanding,
               $\diam \Image \tau \leq \diam \Image \sigma < k\sigma$.
               Using~\ref{enum:diameters-r-H}, $\twovec{\tau'}{\tau} \in \suppX H^1$
               implies $\Image \tau' \subseteq (\Image \tau)^\rho$.  In particular,
               $d_X(\bary(\tau'),x) < \rho$ for some $x \in \Image \tau$.  Moreover,
               $\diam \Image \sigma' = \diam \Image \tau' < \diam \Image \tau + \rho <
               (k+1)\rho$.  As $H^1$ is a map to $\bfS_*(\ball)$, we have
               $\Image \sigma' = \Image \tau' \subseteq \ball$.  In particular,
               $\sigma' \in S^{k+1,l}$.  Now
               \begin{multline*}
                 d_X(\bary(\sigma'),\bary(\tau)) = d_X(\bary(\tau'),\bary(\tau)) \\ \leq
                 d_X(\bary(\tau'),x) + d_X(x,\bary(\tau)) < \rho + \diam \Image \tau < \rho
                 + k\rho = (k+1)\rho
               \end{multline*}
               implies $\twovec{\sigma'}{\tau} \in E_S$.  Thus
               $\twovec{\sigma'}{\tau} \in E_S^{\circ 2}$.
             \end{proof}

%-----------------------------------------------------------------             

\subsection{Estimates over $S(X) \times \Lambda$}\label{subsec:estimates-over-S(X)}
We now fix $r$ and $H$ as in Lemma~\ref{lem:r-and-H}.  
We obtain a homotopy coherent functor as in~\eqref{eq:homotopy-coherent-F}
\begin{equation*}
  F = (F^0,F^1, \dots, ) \colon \contrCatUcoef{G,U}{\Lambda}{\calb} \to \ch   \contrCatUcoef{G}{S(X) \times \Lambda}{\calb}.
\end{equation*}
Recall that $F_0(\bfB) = \bfC(\ball)_* \otimes \bfB$.
Its $n$-th chain module is $F_0(\bfB)_n = \bfC(\ball)_n \otimes \bfB$.
Its $n$-th boundary map is $\dd_n^{F_0(\bfB)} = \dd^{\bfC_*(\ball)}_n \otimes \id_\bfB$. 

\begin{lemma}\label{lem:supp-F_0}
    Let $\bfB \in \contrCatUcoef{G,U}{\Lambda}{\calb}$.
    Then
	\begin{enumerate}[
                 label=(\thetheorem\alph*),
                 align=parleft, 
                 leftmargin=*,
                 labelindent=1pt,
                 %labelwidth=60pt,
                 labelsep=10pt,
                 %itemindent=50pt
                 ]
		\item\label{lem:supp-F_0:chain-modules}
		  $\suppG F_0(\bfB)_n \subseteq U$, $\suppX F_0(\bfB)_n \subseteq E_S \times \suppX \bfB$; 
		\item\label{lem:supp-F_0:partial}
		  $\suppG \dd_n^{F_0(\bfB)} \subseteq U$, $\suppX \dd_n^{F_0(\bfB)} \subseteq E_S \times \suppX \bfB$. 	
    \end{enumerate}
\end{lemma}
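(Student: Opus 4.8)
The statement is purely bookkeeping about supports in the diagonal tensor product, so the plan is to unwind the definition of $F_0(\bfB) = \bfC_*(\ball) \otimes \bfB$ and apply Lemma~\ref{lem:properties-diag-tensor}. Recall that $\bfC_*(\ball)$ is the simplicial chain complex of the locally ordered triangulation of $\ball$, viewed as a chain complex in $\cals^U(\res_G^U S(X))$, with $n$-th chain module $(\simp_n(\ball), c_n|_{\simp_n(\ball)})$. By Lemma~\ref{lem:id-is-id} (applicable because $\ball$, hence every simplex in its triangulation, is pointwise fixed by $U$, and $\bfB \in \contrCatUcoef{G,U}{\Lambda}{\calb}$ has all $B(s)$ contained in $U$) we have $F_0(\bfB) = \bfC_*(\ball) \otimes_0 \bfB$, so no idempotent completion intervenes and the explicit formulas from Subsection~\ref{subsec:diagonal-otimes_0} apply.

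For~\ref{lem:supp-F_0:chain-modules}: the $n$-th chain module is $(\simp_n(\ball),c_n|_{\simp_n(\ball)}) \otimes_0 \bfB$. By~\ref{lem:finiteness_and_suppobj-diag-tensor-obj:suppobj}, $\suppobj$ of this equals $\simp_n(\ball) \times \suppobj \bfB$ (inside $S(X) \times \Lambda$). For the $G$-support, by definition $\suppG$ of an object is the union of $\supp \id_{B''(s'')}$ over its index set, and here the coefficient objects are $G_\sigma \cap B(s)$ for $\sigma \in \simp_n(\ball)$, $s$ ranging over the index set of $\bfB$; since each $G_\sigma \cap B(s) \subseteq B(s) \subseteq U$, we get $\suppG F_0(\bfB)_n \subseteq \suppG \bfB \subseteq U$. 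For $\suppX F_0(\bfB)_n = \suppX \id_{F_0(\bfB)_n}$: applying~\ref{lem:supp-rho-otimes-varphi:suppX} to $\rho = \id_{\bfC_*(\ball)_n}$ and $\varphi = \id_\bfB$ gives $\suppX \id_{F_0(\bfB)_n} \subseteq \suppX \id_{\bfC_*(\ball)_n} \times \suppX \bfB$, using the product convention~\eqref{eq:cheated-times} for $S(X) \times \Lambda$. It remains to check $\suppX \id_{\bfC_*(\ball)_n} \subseteq E_S$. Unravelling, $\suppX \id_{\bfC_*(\ball)_n}$ consists of $\twovec{\sigma}{g\sigma}$ with $\sigma \in \simp_n(\ball)$ and $g \in \supp_G \id_{G_\sigma \cap B''(\cdot)} \subseteq U$; since $g$ fixes $\ball^{(L)} \supseteq \ball$ pointwise, $g\sigma = \sigma$ as singular simplices, so $\twovec{\sigma}{g\sigma}$ is diagonal. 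Now $(\bary \times \bary)$ sends the diagonal into $E_X$ (diagonal is contained in $E_X$ since $d_X(x,x)=0 \leq (L+1)\rho$, using the first part of the definition of $E_X$), hence the diagonal of $S(X) \times S(X)$ lies in $E_S = (\bary \times \bary)^{-1}(E_X)$. (Strictly the relevant diagonal entries stay inside $S(\ball) \subseteq S(\ball^{(L)})$.) This gives $\suppX F_0(\bfB)_n \subseteq E_S \times \suppX \bfB$.

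For~\ref{lem:supp-F_0:partial}: the $n$-th boundary map is $\dd_n^{F_0(\bfB)} = \dd_n^{\bfC_*(\ball)} \otimes \id_\bfB$. By~\ref{lem:supp-rho-otimes-varphi:suppG}, $\suppG(\dd_n^{\bfC_*(\ball)} \otimes \id_\bfB) \subseteq \suppG \id_\bfB = \suppG \bfB \subseteq U$ (this is also Lemma~\ref{lem:suppG_and_suppobj-for-diagonal-tensor}~\ref{lem:suppG_and_suppobj-for-diagonal-tensor:boundary}). By~\ref{lem:supp-rho-otimes-varphi:suppX}, $\suppX(\dd_n^{\bfC_*(\ball)} \otimes \id_\bfB) \subseteq \suppX \dd_n^{\bfC_*(\ball)} \times \suppX \bfB$, so I must show $\suppX \dd_n^{\bfC_*(\ball)} \subseteq E_S$. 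By the boundary formula~\eqref{eq:boundary-map} together with the $\cals^G$-support definition (and the twist by $g \in U$ which acts trivially on $\ball^{(L)}$), an element of $\suppX \dd_n^{\bfC_*(\ball)}$ is of the form $\twovec{\bary(\sigma')}{\bary(\sigma)}$ where $\sigma \in \simp_n(\ball)$ and $\sigma'$ is a codimension-one face of $\sigma$. Since all simplices of the triangulation have diameter $< \rho \leq (L+1)\rho$ and both $\sigma,\sigma'$ lie in $\ball \subseteq \ball^{(L)}$, we have $d_X(\bary(\sigma),\bary(\sigma')) < \rho \leq (L+1)\rho$, so $\twovec{\bary(\sigma')}{\bary(\sigma)} \in E_X$ by the first clause in the definition of $E_X$, hence $\twovec{\sigma'}{\sigma} \in E_S$. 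This yields $\suppX \dd_n^{F_0(\bfB)} \subseteq E_S \times \suppX \bfB$ and completes the argument.

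\textbf{Main obstacle.} There is no deep obstacle here; it is a matter of correctly matching the ad hoc product conventions (the $E_S \times \suppX \bfB$ uses~\eqref{eq:cheated-times} with $\Omega = S(X)$, $\Lambda = \Lambda$) and remembering that the $U$-twist in the support of identities and boundary maps is harmless because $U$ fixes $\ball^{(L)}$ pointwise. The only point requiring a little care is verifying that the relevant diagonal and face-pair entries actually sit inside $S(\ball^{(L)}) \times S(\ball^{(L)})$ so that the bound $d_X \leq (L+1)\rho$ in the definition of $E_X$ applies; this is immediate since everything takes place in simplices of the triangulation of $\ball \subseteq \ball^{(L)}$.
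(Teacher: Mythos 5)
Your proposal is correct and follows essentially the same route as the paper: both reduce to Lemma~\ref{lem:properties-diag-tensor} after checking that $\suppX \id_{\bfC_n(\ball)}$ lies in the diagonal (hence in $E_S$) and that $\suppX \partial_n^{\bfC_*(\ball)} \subseteq E_S$ because simplices of the triangulation have diameter $< \rho$, together with $\suppG \bfB \subseteq U$. Your extra care about the $U$-twist and the containment of the diagonal in $E_S$ only spells out details the paper leaves implicit.
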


\begin{proof}
  We have
  $\suppX \id_{\bfC_n(\ball)} \subseteq \{ \twovec{\sigma}{\sigma} \mid \sigma \in
  S(\ball) \} \subset E_S$.  Diameters of simplices in the triangulation of $\ball$ are
  $< \rho$.  If $(\partial_n^{\bfC_*(\ball)})_{\sigma}^{\sigma'} \neq 0$, then
  $\Image \sigma' \subseteq \Image \sigma$ and so
  $d_X(\bary{\sigma'},\bary{\sigma}) < \rho$.  Therefore
  $\suppX \partial_n^{\bfC_*(\ball)} \subseteq E_S$.  We have
  $\suppG \bfB = \suppG \id_\bfB \subseteq U$, as
  $\bfB \in \contrCatUcoef{G,U}{\Lambda}{\calb}$.  Both~\ref{lem:supp-F_0:chain-modules},
  and~\ref{lem:supp-F_0:partial} follow now from Lemma~\ref{lem:properties-diag-tensor}.
\end{proof}

We will now use summands and corners as in Remarks~\ref{rem:summands}
and~\ref{rem:corners} relative to $S^{k,l} \subseteq S(X)$.  For
$\bfB \in \contrCatUcoef{G,U}{\Lambda}{\calb}$ we abbreviate
$(\bfS_n(X) \otimes \bfB)_{k,l} := (\bfS_n(X) \otimes \bfB)|_{S^{s,l} \times \Lambda}$ and
write
\begin{equation*}
  (\bfS_n(X) \otimes \bfB)_{k,l} \xrightarrow{i_{k,l}} \bfS_n(X) \otimes \bfB
  \xrightarrow{r^{k,l}} (\bfS_n(X) \otimes \bfB)_{k,l}  
\end{equation*}
for the corresponding inclusion and retraction.  For
$\Phi \colon \bfS_n(X) \otimes \bfB \to \bfS_{n'}(X) \otimes \bfB'$ we set
$\Phi_{k,l}^{k',l'} := r^{k',l'} \circ \Phi \circ i_{k,l}$.  We also set
$(i \otimes \id_\bfB)^{k',l'} := r^{k',l'} \circ (i \otimes \id_\bfB)$, and
$(r\otimes \id_\bfB)_{k,l} := (r\otimes \id_\bfB) \circ i^{k,l}$.
This means 
\begin{eqnarray*}
	(\Phi_{k,l}^{k',l'})_{\sigma,s}^{\sigma',s'} & = & 
                                                           \begin{cases} \Phi_{\sigma,s}^{\sigma',s'} \hspace{7.1ex}
                                                             & \sigma \in S^{k,l}, \sigma' \in S^{k',l'}; \\ 0 & \text{else}; 	
	     \end{cases} 
	     \\
    ((i \otimes \id_\bfB)^{k',l'})_{\sigma,s}^{\sigma',s'} & = &  
       \begin{cases} (i \otimes \id_\bfB)_{\sigma,s}^{\sigma',s'} &  \sigma' \in S^{k',l'}; \\ 0 & \text{else}; 	
	     \end{cases} 
	     \\
	  ((r\otimes \id_\bfB)_{k,l})_{\sigma,s}^{\sigma',s'} & = &   
	  \begin{cases} (r \otimes \id_\bfB)_{\sigma,s}^{\sigma',s'} &  \sigma \in S^{k,l}; \\ 0 & \text{else}.	
	     \end{cases} 
\end{eqnarray*}

\begin{lemma}\label{lem:restrictions} 
  Let $k,l \leq L$, $\bfB, \bfB' \in \contrCatUcoef{G,U}{\Lambda}{\calb}$,
  $\varphi \colon \bfB \to \bfB' \in \contrCatUcoef{G,U}{\Lambda}{\calb}$ with
  $\suppG \varphi \subseteq M$.  Set $E' := \suppX \varphi \cup \suppX \bfB$.  Then
   	\begin{enumerate}[
                 label=(\thetheorem\alph*),
                 align=parleft, 
                 leftmargin=*,
                 labelindent=1pt,
                 %labelwidth=60pt,
                 labelsep=10pt,
                 %itemindent=50pt
                 itemsep=3pt
                 ]
  \item\label{lem:restrictions:varphi-i}
    $i_{k,l+1} \circ (\id_{\bfS_*(X)} \otimes \varphi)_{k,l}^{k,l+1}
    = (\id_{\bfS_*(X)} \otimes \varphi) \circ i_{k,l}$ (provided $l+1 \leq L)$;
  \item\label{lem:restrictions:varphi-supp}
    $\suppX \big( (\id_{\bfS_*(X)} \otimes \varphi)_{k,l}^{k,l+1} \big) \subseteq E_S \times E'$ (provided $l+1 \leq L)$;   
  \item\label{lem:restrictions:H-i}
   $i_{k+1,l} \circ (H \otimes \id_{\bfB})_{k,l}^{k+1,l} = (H \otimes \id_{\bfB}) \circ i_{k,l}$ (provided $k+1 \leq L)$; 
   \item\label{lem:restrictions:H-supp}
  	$\suppX  \big( (H \otimes \id_{\bfB})_{k,l}^{k+1,l} \big) \subseteq E_S^{\circ 2} \times E'$ (provided $k+1 \leq L)$; 
   \item\label{lem:restrictions:i-i}
   $i_{k,l} \circ (i \otimes \id_{\bfB})^{k,l} = i \otimes \id_{\bfB}$;
  \item\label{lem:restrictions:i-supp}
  	$\suppX  \big( (i \otimes \id_{\bfB})^{k,l} \big) \subseteq  E_S^{\circ 2} \times E'$;   
  \item\label{lem:restrictions:r-i}
   $(r \otimes \id_{\bfB})_{k,l} = (r \otimes \id_{\bfB}) \circ i_{k,l}$;
  \item\label{lem:restrictions:r-supp}    
  	$\suppX \big(  (r \otimes \id_{\bfB})_{k,l} \big) \subseteq E_S^{\circ 2} \times E'$. 
  \end{enumerate}
\end{lemma}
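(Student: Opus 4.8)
The plan is to prove all eight assertions by reducing the commutation identities~\ref{lem:restrictions:varphi-i},~\ref{lem:restrictions:H-i},~\ref{lem:restrictions:i-i},~\ref{lem:restrictions:r-i} to Lemma~\ref{lem:corner} and the support bounds~\ref{lem:restrictions:varphi-supp},~\ref{lem:restrictions:H-supp},~\ref{lem:restrictions:i-supp},~\ref{lem:restrictions:r-supp} to the elementary inclusion $\suppX\Phi_Y^{Y'}\subseteq\suppX\Phi$, which is immediate from the matrix formula in Remark~\ref{rem:corners}, together with the support estimates for the diagonal tensor product from Lemma~\ref{lem:properties-diag-tensor} and the estimates for $r$ and $H$ from Lemma~\ref{lem:r-and-H}. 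Two geometric facts are used throughout. First, $M\cdot\ball^{(l-1)}\subseteq\ball^{(l)}$ together with $\ball^{(0)}\subseteq\dots\subseteq\ball^{(L)}$ and the fact that $G$ acts on $X$ by isometries: a translate $g\sigma$ of a simplex $\sigma\in S^{k,l}$ by $g\in M$ again has diameter $<k\rho$ and lies in $\ball^{(l+1)}$, so $g\sigma\in S^{k,l+1}$ whenever $l+1\leq L$. Second, $U$ fixes $\ball^{(L)}$ pointwise, so for $g\in B(s)\subseteq U$ and $\sigma$ contained in $\ball^{(l)}$ with $l\leq L$ we have $g\sigma=\sigma$. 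I also record that $\simp(\ball)\subseteq S^{1,0}$, that the barycenter of a simplex contained in $\ball^{(l)}$ lies in $\ball^{(l)}$, and that the diagonal $\{\twovec{\sigma}{\sigma}\mid\sigma\subseteq\ball^{(L)}\}$ is contained in $E_S$.

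For~\ref{lem:restrictions:varphi-i} and~\ref{lem:restrictions:varphi-supp} I would invoke~\eqref{eq:supp-id-otimes-varphi} and~\ref{lem:supp-rho-otimes-varphi:suppG}: an entry of $\id_{\bfS_*(X)}\otimes\varphi$ over the column $(\sigma,s)$ is nonzero only for the row index $g\sigma$ with $g\in\suppG\varphi\subseteq M$, its $S(X)$-part of $\suppX$ is then the diagonal entry $\twovec{g\sigma}{g\sigma}$, and its $\Lambda$-part lies in $\suppX\varphi$. If $\sigma\in S^{k,l}$ then $g\sigma\in S^{k,l+1}$ by the first geometric fact (using $l+1\leq L$); this verifies the hypothesis of Lemma~\ref{lem:corner} with $Y=S^{k,l}\times\Lambda$, $Y'=S^{k,l+1}\times\Lambda$ — giving~\ref{lem:restrictions:varphi-i} — and, since $g\sigma\subseteq\ball^{(L)}$, puts $\twovec{g\sigma}{g\sigma}\in E_S$, whence $\suppX(\id_{\bfS_*(X)}\otimes\varphi)_{k,l}^{k,l+1}\subseteq E_S\times E'$ with $E'=\suppX\varphi\cup\suppX\bfB$, which is~\ref{lem:restrictions:varphi-supp}.

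For the remaining six assertions the relevant morphism is $\Phi\otimes\id_\bfB$ with $\Phi\in\{H,i,r\}$; here $\suppG(\Phi\otimes\id_\bfB)\subseteq\suppG\id_\bfB\subseteq U$ by~\ref{lem:supp-rho-otimes-varphi:suppG}, and the matrix entry of $\Phi\otimes_0\id_\bfB$ over $(\sigma,s)$ involves $\Phi_{g\sigma}^{\sigma'}$ with $g\in B(s)\subseteq U$, so by the second geometric fact $g\sigma=\sigma$ as soon as $\sigma\subseteq\ball^{(L)}$. Thus, on the columns that survive the relevant corner restriction, the $S(X)$-part of $\suppX(\Phi\otimes\id_\bfB)$ is just $\suppX\Phi$ (no $G$-shift survives) and the $\Lambda$-part lies in $\suppX\id_\bfB=\suppX\bfB\subseteq E'$. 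Then:~\ref{lem:restrictions:H-i} and~\ref{lem:restrictions:H-supp} follow from Lemma~\ref{lem:r-and-H}, which for $\sigma\in S^{k,l}$ with $k+1\leq L$ gives $\sigma'\in S^{k+1,l}$ (clause~\ref{lem:r-and-H:in-S-delta-l}, verifying the hypothesis of Lemma~\ref{lem:corner} with $Y'=S^{k+1,l}\times\Lambda$) and $\twovec{\sigma'}{\sigma}\in E_S^{\circ2}$ (clause~\ref{lem:r-and-H:in-E-S}, the support bound);~\ref{lem:restrictions:i-i} and~\ref{lem:restrictions:i-supp} follow because $i$ is the inclusion of $\bfC_*(\ball)$, so $i_\sigma^{\sigma'}\neq0$ forces $\sigma'=\sigma\in\simp(\ball)\subseteq S^{1,0}\subseteq S^{k,l}$, hence restricting the target to $S^{k,l}$ loses nothing (Lemma~\ref{lem:corner} with unrestricted source) and $\suppX$ lands on the diagonal of $\simp(\ball)\subseteq S(\ball^{(L)})$, contained in $E_S\subseteq E_S^{\circ2}$; and~\ref{lem:restrictions:r-i},~\ref{lem:restrictions:r-supp} are again a direct application of Lemma~\ref{lem:r-and-H}~\ref{lem:r-and-H:in-E-S} after restricting the source to $S^{k,l}$, with~\ref{lem:restrictions:r-i} being immediate from the definition of $(r\otimes\id_\bfB)_{k,l}$.

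The only mildly delicate point, which I would write out most carefully, is the bookkeeping for how the $G$-shift built into the definition of $\suppX$ (which does record translates $g\pi(s)$) interacts with the diagonal tensor product: one must use that $\id_{\bfS_*(X)}$ is $G$-invariant — so that the shifted singular index is forced to equal the row index — and that $i$, $r$, $H$ are $U$-invariant and the group elements occurring fix $\ball^{(L)}$ pointwise, so that the shift acts trivially on the relevant (sub)simplices. Once this is pinned down, each step is a short computation with the matrix formulas of Section~\ref{sec:tensor-products}, Remark~\ref{rem:corners}, and Lemma~\ref{lem:r-and-H}; I expect no genuine obstacle beyond matching the index ranges ($k,l\leq L$, and $k+1\leq L$ or $l+1\leq L$ where relevant) in the various clauses with the hypotheses of Lemma~\ref{lem:r-and-H}.
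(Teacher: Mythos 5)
Your proposal is correct and follows essentially the same route as the paper: the commutation identities via Lemma~\ref{lem:corner}, the support bounds via the corner formula~\eqref{eq:support-of-corner} together with Lemma~\ref{lem:properties-diag-tensor} and Lemma~\ref{lem:r-and-H}, and the two key geometric inputs $M \cdot S^{k,l} \subseteq S^{k,l+1}$ and $U \cdot S^{k,l} = S^{k,l}$. The "delicate point" you flag — that $\id_{\bfS_*(X)}$ forces the shifted index to equal the row index, while for $H$, $i$, $r$ the group elements lie in $U$ and hence act trivially on $\ball^{(L)}$ — is exactly the distinction the paper's proof is organized around.
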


\begin{proof}
  From~\ref{lem:supp-rho-otimes-varphi:suppG} we obtain
  $\suppG \id_{\bfS_*(X)} \otimes \varphi \subseteq \suppG \varphi \subseteq M$.  Using
  $M \cdot \ball^{(l)} \subseteq \ball^{(l+1)}$ we obtain
  $(\suppG \id_{\bfS_*(X)} \otimes \varphi) \cdot S^{k,l} \subseteq M \cdot S^{k,l}
  \subseteq S^{k,l+1}$.  If
  $\twovec{\sigma',\lambda'}{\sigma,\lambda} \in \suppX \id_{\bfS_*(X)} \otimes \varphi$,
  then, by~\ref{lem:supp-rho-otimes-varphi:suppX},
  $\twovec{\sigma'}{\sigma} \in \suppX \id_{\bfS_*(X)}$ and so $\sigma = \sigma'$.  Now
  Lemma~\ref{lem:corner}
  gives~\ref{lem:restrictions:varphi-i}. Using~\ref{lem:supp-rho-otimes-varphi:suppX}
  and~\eqref{eq:support-of-corner} we obtain
  \begin{equation*}
    \suppX (\id_{\bfS_*(X)} \otimes \varphi)_{k,l}^{k,l+1} 
    \subseteq 
    \Big\{ \twovec{\sigma,\lambda'}{\sigma,\lambda} \Big| \sigma \in S^{k,l+1},
    \twovec{\lambda'}{\lambda} \in \suppX \varphi  \Big\} 
  \end{equation*}
  and this implies~\ref{lem:restrictions:varphi-supp}.
	
  By~\ref{lem:supp-rho-otimes-varphi:suppG} $\suppG H \otimes \id_\bfB$,
  $\suppG r \otimes \id_\bfB$, and $\suppG i \otimes \id_\bfB$ are all contained in
  $\suppG \bfB \subseteq U$.  As $U$ fixes $\ball^{(l)}$, we have
  $U \cdot S^{k,l} = S^{k,l}$.  If
  $\twovec{\sigma',\lambda'}{\sigma,\lambda} \in \suppX H \otimes \id_\bfB$, then,
  by~\ref{lem:supp-rho-otimes-varphi:suppX}, $\twovec{\sigma'}{\sigma} \in \suppX H$.  If,
  in addition, $\sigma \in S^{k,l}$, then, by~\ref{lem:r-and-H:in-S-delta-l},
  $\sigma' \in S^{k+1,l}$ and by~\ref{lem:r-and-H:in-E-S}
  $\twovec{\sigma'}{\sigma} \in E_S^{\circ 2}$.  Now~\ref{lem:restrictions:H-i} follows
  from Lemma~\ref{lem:corner} and~\ref{lem:restrictions:H-supp} follows
  from~\ref{lem:supp-rho-otimes-varphi:suppX} and~\eqref{eq:support-of-corner}.
  As~\ref{lem:r-and-H:in-S-delta-l} and~\ref{lem:r-and-H:in-E-S} also apply to $r$, and
  hold directly by definition for
  $i$,~\ref{lem:restrictions:i-i},~\ref{lem:restrictions:i-supp},
  and~\ref{lem:restrictions:r-supp} follow from the same argument.
  Finally,~\ref{lem:restrictions:r-i} is the definition of $(r \otimes \id_{\bfB})_{k,l}$.
\end{proof}

\begin{proposition}\label{prop:supp-F_l-over-S(X)} Let
  $\bfB_0 = (S_0,\pi_0,B_0) \xleftarrow{\varphi_1} \dots \xleftarrow{\varphi_l} \bfB_l =
  (S_l,\pi_l,B_l)$ be a chain of composable morphisms in
  $\contrCatUcoef{G,U}{\Lambda}{\calb}$.  Assume $l \leq L$ and
  $\suppG(\varphi_j) \subseteq M$ for all $j$.  Let
  $E' := \bigcup \suppX \varphi_j \cup \bigcup_j \suppX \bfB_j \subseteq \Lambda \times
  \Lambda$.  Then there are
  $\Phi_i \colon \bfB'_i \to \bfB'_{i+1} \in \contrCatUcoef{G,U}{S(X) \times
    \Lambda}{\calb}$, $i=0,\dots,2l$ such that
  \begin{equation*}
    F^l(\varphi_1,\dots,\varphi_l) = \Phi_{2l} \circ \dots \circ \Phi_{0}
  \end{equation*}
  and $\suppG \Phi_i \subseteq M$, $\suppX \Phi \subseteq E_S^{\circ 2} \times E'$,
  $\suppobj \bfB'_i \subseteq S(\ball) \times \Lambda$ for all $i$.
\end{proposition}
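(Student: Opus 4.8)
The plan is to unwind the definition of the homotopy coherent functor $F$ from Subsection~\ref{subsec:tensorproduct-subcomples-of-singular} and track what happens to supports when we restrict each intermediate tensor factor to the appropriate stratum $S^{k,l}$. Recall that $F^l(\varphi_1,\dots,\varphi_l)$ is, by definition, the composition
\begin{equation*}
  (r \otimes \id_{\bfB_0}) \circ (\id_{\bfS_*(X)} \otimes \varphi_1) \circ (H \otimes \id_{\bfB_1}) \circ \dots \circ (H \otimes \id_{\bfB_{l-1}}) \circ (\id_{\bfS_*(X)} \otimes \varphi_l) \circ (i \otimes \id_{\bfB_l})
\end{equation*}
viewed in $\ch \contrCatUcoef{G}{S(X) \times \Lambda}{\calb}$ via $\ind_U^G$. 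The factors come in three types: $(i \otimes \id_{\bfB_l})$ at the far right, then alternating $(\id_{\bfS_*(X)} \otimes \varphi_j)$ and $(H \otimes \id_{\bfB_j})$, and finally $(r \otimes \id_{\bfB_0})$ at the far left. There are $2l+1$ such factors; reading from right to left, the $i$-th factor (for $i=0,\dots,2l$) will be replaced by its corner $\Phi_i$ relative to suitable strata, so that $\Phi_{2l} \circ \dots \circ \Phi_0$ equals the original composition.

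First I would set up the bookkeeping for which strata to use. Start at the right with $\bfC_*(\ball) \otimes \bfB_l$, whose object support lies in $S(\ball) \times \Lambda \subseteq S^{1,0} \times \Lambda$ by Lemma~\ref{lem:suppG_and_suppobj-for-diagonal-tensor}. The factor $(i \otimes \id_{\bfB_l})$ lands (after restriction) in $(\bfS_*(X) \otimes \bfB_l)_{1,0}$; this is $\Phi_0$, and Lemma~\ref{lem:restrictions}~\ref{lem:restrictions:i-i},~\ref{lem:restrictions:i-supp} give $i_{1,0} \circ (i\otimes\id_{\bfB_l})^{1,0} = i\otimes\id_{\bfB_l}$ and $\suppX \Phi_0 \subseteq E_S^{\circ 2} \times E'$. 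Next, each $(\id_{\bfS_*(X)} \otimes \varphi_j)$ increases the $l$-index by one (via $M \cdot \ball^{(l)} \subseteq \ball^{(l+1)}$) but leaves the diameter index $k$ unchanged, by~\ref{lem:restrictions:varphi-i},~\ref{lem:restrictions:varphi-supp}; each $(H \otimes \id_{\bfB_j})$ increases the diameter index $k$ by one but leaves $l$ unchanged (since $U$ fixes $\ball^{(l)}$ and $H$ has $\suppX$ in $E_S^{\circ 2}$ on each stratum), by~\ref{lem:r-and-H:in-S-delta-l},~\ref{lem:r-and-H:in-E-S} together with~\ref{lem:restrictions:H-i},~\ref{lem:restrictions:H-supp}. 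Tracking this through the $2l$ steps from the right, the largest indices encountered are $k \leq l+1 \leq L$ and $l \leq L$, so every restriction we invoke is legitimate (the provisos "$k+1 \leq L$", "$l+1 \leq L$" in Lemma~\ref{lem:restrictions} are met because we never exceed $L$). I would then define $\Phi_i$ to be the corner of the $i$-th factor relative to the source stratum $S^{k_i,l_i}$ and target stratum $S^{k_{i+1},l_{i+1}}$ dictated by this count, with $\bfB'_i$ the corresponding summand; the compatibility identities $i_{\ast} \circ (\cdots)^{\ast} = (\cdots) \circ i_{\ast}$ from Lemma~\ref{lem:restrictions} telescope to show $\Phi_{2l} \circ \dots \circ \Phi_0$ equals the original composition. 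Finally the last factor $(r \otimes \id_{\bfB_0})$ is a map back into $\bfC_*(\ball) \otimes \bfB_0$, whose object support is again in $S(\ball) \times \Lambda$, and~\ref{lem:restrictions:r-i},~\ref{lem:restrictions:r-supp} handle it as $\Phi_{2l}$.

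For the support bounds, $\suppG \Phi_i \subseteq M$ holds because each factor has $\suppG$ contained in either $U \subseteq M$ (the $i$, $r$, $H$ factors, by~\ref{lem:supp-rho-otimes-varphi:suppG} and $\bfB_j \in \contrCatUcoef{G,U}{\Lambda}{\calb}$) or in $\suppG \varphi_j \subseteq M$ (the $\varphi_j$ factors), and passing to a corner only shrinks $\suppG$. The bound $\suppX \Phi_i \subseteq E_S^{\circ 2} \times E'$ is exactly what each clause of Lemma~\ref{lem:restrictions} provides (using $E_S \subseteq E_S^{\circ 2}$ to unify the $\varphi_j$-case with the others), where $E' = \bigcup_j \suppX \varphi_j \cup \bigcup_j \suppX \bfB_j$ as in the statement. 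The object support bound $\suppobj \bfB'_i \subseteq S(\ball) \times \Lambda$ needs a word of care: the intermediate $\bfB'_i$ for $0 < i < 2l$ are corners of $\bfS_*(X) \otimes \bfB_j$ restricted to some $S^{k,l} \times \Lambda$, which a priori only gives $\suppobj \bfB'_i \subseteq S^{k,l} \times \Lambda$, not $S(\ball) \times \Lambda$. I expect this to be the one point requiring thought: one resolves it by replacing the $\bfB'_i$ with \emph{shifted copies} (Remark~\ref{rem:X-shifted-copies}) using the barycenter map $\bary \colon S(X) \to X$ composed with the radial retraction $r^0 \colon X \to \ball$ to push the $\pi$-coordinate into $\ball$ — the relevant $\mfE_2$-equivalence is controlled by $E_X$ (built from $H^0$-tracks and $(L+1)\rho$-neighborhoods), and the shifted copies are canonically isomorphic to the originals, so the factorization and the $\suppX$, $\suppG$ bounds are preserved up to replacing $E_S^{\circ 2}$ by $E_S^{\circ 2}$ again (the composite of two $E_S$-type relations). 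With this adjustment all three clauses hold for every $i$, completing the proof; the downstream use is Lemma~\ref{lem:extra-E}, which then converts these estimates over $S(X) \times \Lambda$ into estimates over $J \times \Lambda$ after applying $(f \circ \bary \times \id_\Lambda)_*$.
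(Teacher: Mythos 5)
Your argument is essentially the paper's proof: the paper likewise rewrites $F^l(\varphi_1,\dots,\varphi_l)$ as the composition of the $2l+1$ corners $\big(r\otimes\id_{\bfB_0}\big)_{l,l}\circ\big(\id_{\bfS_*(X)}\otimes\varphi_1\big)_{l,l-1}^{l,l}\circ\big(H\otimes\id_{\bfB_1}\big)_{l-1,l-1}^{l,l-1}\circ\dots\circ\big(\id_{\bfS_*(X)}\otimes\varphi_l\big)_{1,0}^{1,1}\circ\big(i\otimes\id_{\bfB_l}\big)^{1,0}$ using exactly the identities~\ref{lem:restrictions:varphi-i},~\ref{lem:restrictions:H-i},~\ref{lem:restrictions:i-i},~\ref{lem:restrictions:r-i}, and reads off $\suppG\subseteq M$ from~\ref{lem:supp-rho-otimes-varphi:suppG} (with $U\subseteq M$) and $\suppX\subseteq E_S^{\circ 2}\times E'$ from~\ref{lem:restrictions:varphi-supp},~\ref{lem:restrictions:H-supp},~\ref{lem:restrictions:i-supp},~\ref{lem:restrictions:r-supp} together with $E_S\subseteq E_S^{\circ 2}$. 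Your stratum bookkeeping matches this; only note that the indices reach at most $(l,l)$, not $(l+1,\cdot)$, so $l\le L$ already guarantees the provisos of Lemma~\ref{lem:restrictions} (your inequality $k\le l+1\le L$ would literally demand $l\le L-1$).

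The one place you diverge is the object-support clause, and there the extra work is not needed. The paper's proof makes no attempt to land in $S(\ball)\times\Lambda$: it simply observes that the intermediate objects are of the form $(\bfS_n(X)\otimes\bfB_j)_{k,l}$ (or $\bfC_*(\ball)\otimes\bfB_j$ at the two ends) and hence have $\suppobj$ contained in $S(\ball^{(L)})\times\Lambda$. This weaker containment is exactly what the only downstream use requires — Lemma~\ref{lem:extra-E} has hypothesis $\suppobj\bfB\subseteq S(\ball^{(L)})\times\Lambda$ — so the ``$S(\ball)$'' in the statement should be read as a slip for $S(\ball^{(L)})$, which you correctly sensed was the sticking point. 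Your shifted-copy repair via $\bary$ and $r^0$ is therefore superfluous; moreover, as sketched it would not deliver the statement verbatim either, since conjugating each $\Phi_i$ by the shift isomorphisms (whose displacement is itself only $E_S$-controlled) enlarges the morphism support from $E_S^{\circ 2}\times E'$ to a higher power of $E_S$. This is harmless for the application, because Lemma~\ref{lem:extra-E} accepts $E_S^{\circ\nu}$ for any $\nu$, but it means the clean route is the paper's: keep the intermediate objects where the corners put them and record the bound $S(\ball^{(L)})\times\Lambda$.
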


\begin{proof}
	Recall that by construction
   \begin{multline*}
     F^l(\varphi_1,\dots,\varphi_l) = \\
     \big(r \otimes \id_{\bfB_0} \big) \circ \big( \id_{\bfS_* (X)} \otimes \varphi_1 \big) 
     \circ \big( H \otimes \id_{\bfB_1} \big)  %\big( \id_{\bfS_* (X)} \otimes \varphi_2 \big) 
     \circ \dots 
     \\ \dots \circ %\big( \id_{\bfS_* (X)} \otimes \varphi_{l-1} \big) \circ 
     \big( H \otimes \id_{\bfB_{l-1}} \big) \circ \big( \id_{\bfS_* (X)} \otimes
     \varphi_l \big) \circ \big( i \otimes \id_{\bfB_l} \big).
   \end{multline*}
   Using~\ref{lem:restrictions:varphi-i},\ref{lem:restrictions:H-i},\ref{lem:restrictions:i-i},
   and~\ref{lem:restrictions:r-i} we can rewrite this as
   \begin{multline*}
     F^l(\varphi_1,\dots,\varphi_l) = \\
     \big(r \otimes \id_{\bfB_0} \big)_{l,l} \circ \big( \id_{\bfS_* (X)} \otimes \varphi_1 \big)^{l,l}_{l,l-1}
     \circ \big( H \otimes \id_{\bfB_1} \big)^{l,l-1}_{l-1,l-1} \circ 
     %\big( \id_{\bfS_* (X)} \otimes \varphi_2 \big)^{l-1,l-1}_{l-1,l-2} \circ 
     \dots 
     \\ \dots \circ % \big( \id_{\bfS_* (X)} \otimes \varphi_{l-1} \big)_{2,1}^{2,2} \circ 
     \big( H \otimes \id_{\bfB_{l-1}} \big)_{1,1}^{2,1} \circ \big( \id_{\bfS_* (X)} \otimes
     \varphi_l \big)_{1,0}^{1,1} \circ \big( i \otimes \id_{\bfB_l} \big)^{1,0}.
   \end{multline*}
   By~\ref{lem:supp-rho-otimes-varphi:suppG} (and since $U \subseteq M$) for each of these
   factors $\suppG$ is contained in $M$.
   By~\ref{lem:restrictions:varphi-supp},\ref{lem:restrictions:H-supp},\ref{lem:restrictions:i-supp},
   and~\ref{lem:restrictions:r-supp}, $\suppX$ of each factor, is contained in
   $E_S^{\circ 2} \times E'$\footnote{Here we use that $E_S \subseteq E_S^{\circ 2}$ for
     our specific $E_S$.}.  Finally, the domains of the factors are of the form
   $(\bfS_n(X) \otimes \bfB_i)_{k,l}$ (or $\bfC(\ball) \otimes \bfB_0$) and have therefore
   $\suppobj$ contained in $S(\ball^{(L)})$.
 \end{proof}

 % -----------------------------------------------------------------

\subsection{Estimates over $J \times \Lambda$}

We now consider the homotopy coherent functor
\begin{equation*}
  \widetilde F := (f \circ \bary \times \id_\Lambda)_* \circ F \colon \contrCatUcoef{G,U}{\Lambda}{\calb}
  \to \ch \contrCatUcoef{G}{J \times \Lambda}{\calb}.
\end{equation*}

\begin{proposition}\label{prop:supp-tildeF_l}\
	\begin{enumerate}[
                 label=(\thetheorem\alph*),
                 align=parleft, 
                 leftmargin=*,
                 labelindent=1pt,
                 %labelwidth=60pt,
                 labelsep=10pt,
                 %itemindent=50pt
                 ]
		\item\label{prop:supp-tildeF_l:chain-modules}
		  Let $\bfB \in \contrCatUcoef{G,U}{\Lambda}{\calb}$ and $E' := \suppX \bfB$. Then
		    \begin{equation*}
		        \begin{array}{rlrl}
                          \suppG \widetilde F_0(\bfB)_n \hspace{-5pt} & \subseteq U,
                          & \suppX \widetilde F_0(\bfB)_n \hspace{-5pt} & \subseteq E^{\circ 2} \times E', \\
                          \suppG \dd_n^{\widetilde F_0(\bfB)}  \hspace{-5pt} & \subseteq U,
                          & \suppX \dd_n^{\widetilde F_0(\bfB)} \hspace{-5pt}& \subseteq E^{\circ 2} \times E';
		        \end{array}
		    \end{equation*} 
		\item\label{prop:supp-tildeF_l:morphisms} Let
  $\bfB_0 = (S_0,\pi_0,B_0) \xleftarrow{\varphi_1} \dots \xleftarrow{\varphi_l} \bfB_l =
  (S_l,\pi_l,B_l)$ be a chain of composable morphisms in $\contrCatUcoef{G,U}{\Lambda}{\calb}$.  
  Assume $l \leq L$ and $\suppG(\varphi_j) \subseteq M$ for all $j$.  
  Let $E' := \bigcup \suppX \varphi_j \cup \bigcup_j \suppX \bfB_j$.  Then
  \begin{equation*}
    \begin{array}{rl}
    \suppX \widetilde F^l(\varphi_1,\dots,\varphi_l) \hspace{-5pt}& \subseteq
     (M^{2l}(E^{\circ 3} \times E'))^{\circ (2l+1)}, \\
     \suppG \widetilde F^l(\varphi_1,\dots,\varphi_l) \hspace{-5pt} & \subseteq  M^{2l+1}.
     \end{array}
  \end{equation*}
      \end{enumerate}
\end{proposition}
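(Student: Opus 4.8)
The statement has two parts. Part~\ref{prop:supp-tildeF_l:chain-modules} is essentially immediate: we apply $(f \circ \bary \times \id_\Lambda)_*$ to the estimates for $F_0(\bfB)$ from Lemma~\ref{lem:supp-F_0}. Since $\suppG \widetilde F_0(\bfB)_n = \suppG F_0(\bfB)_n$ and similarly for the boundary maps (the map $f \circ \bary \times \id_\Lambda$ does not change the $G$-support, see Remark~\ref{rem:functoriality-calb(X)}), the $G$-support claims are literally Lemma~\ref{lem:supp-F_0}. For the $\suppX$-claims, I would first observe that $F_0(\bfB)$ has $\suppobj$ contained in $S(\ball) \times \Lambda \subseteq S(\ball^{(L)}) \times \Lambda$ and $\suppX \subseteq E_S \times \suppX \bfB = E_S \times E'$, so $E_S \subseteq E_S^{\circ 2}$ lets me apply Lemma~\ref{lem:extra-E} with $\nu = 2$ (or directly with $\nu = 1$, but using $E_S \subseteq E_S^{\circ 2}$ is cleaner given how the proposition is phrased) and with $\Phi$ the relevant identity/boundary morphism, concluding $\suppX \subseteq E^{\circ (\nu+1)} \times E' \subseteq E^{\circ 2} \times E'$ after noting $E \subseteq E^{\circ 2}$. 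A small point to check is that the hypothesis $\suppG \Phi \subseteq M$ of Lemma~\ref{lem:extra-E} is satisfied, but this follows from Lemma~\ref{lem:supp-F_0} since $\suppG \subseteq U \subseteq M$.

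For part~\ref{prop:supp-tildeF_l:morphisms} the starting point is Proposition~\ref{prop:supp-F_l-over-S(X)}: it gives a factorization
\[
  F^l(\varphi_1,\dots,\varphi_l) = \Phi_{2l} \circ \dots \circ \Phi_0
\]
with each $\Phi_i$ a morphism in $\contrCatUcoef{G,U}{S(X) \times \Lambda}{\calb}$ satisfying $\suppG \Phi_i \subseteq M$, $\suppX \Phi_i \subseteq E_S^{\circ 2} \times E'$, and with all the intermediate objects $\bfB'_i$ having $\suppobj \subseteq S(\ball) \times \Lambda \subseteq S(\ball^{(L)}) \times \Lambda$. Applying $(f \circ \bary \times \id_\Lambda)_*$ to each factor individually, Lemma~\ref{lem:extra-E} (with $\nu = 2$) yields $\suppX (f \circ \bary \times \id_\Lambda)_* \Phi_i \subseteq E^{\circ 3} \times E'$, while the $G$-support is unchanged, $\suppG (f \circ \bary \times \id_\Lambda)_* \Phi_i \subseteq M$. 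Then $\widetilde F^l(\varphi_1,\dots,\varphi_l)$ is the composite of these $2l+1$ morphisms.

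The remaining work is to estimate the support of a composite from the supports of the factors using the composition rule~\eqref{eq:composition}, namely $\suppX(\psi' \circ \psi) \subseteq \suppX \psi' \circ (\suppG \psi' \cdot \suppX \psi)$, together with $\suppG(\psi' \circ \psi) \subseteq \suppG \psi' \cdot \suppG \psi$ (Definition~\ref{def:category_with_G-support}\ref{def:category_with_G-support:(v_circ_u)}, transported through $\contrCatUcoef{G}{-}{\calb}$). Iterating the $G$-support inequality over $2l+1$ factors each with $\suppG \subseteq M$ gives $\suppG \widetilde F^l(\varphi_1,\dots,\varphi_l) \subseteq M^{2l+1}$, which is the second claim. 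For the $\suppX$ estimate, I would track how the $M$-factors accumulate: composing the $i$-th factor (reading from the right) picks up a $\suppG$-contribution of at most $M$ from that factor but also carries along the $G$-supports of everything already composed to its left; a careful bookkeeping shows each application of~\eqref{eq:composition} contributes a $G$-prefactor bounded by $M^{2l}$ (the product of $G$-supports of the at most $2l$ other factors) applied to a set of the form $E^{\circ 3} \times E'$, and the $2l+1$ nested compositions then give $\bigl(M^{2l}(E^{\circ 3} \times E')\bigr)^{\circ (2l+1)}$. Here I use the convention~\eqref{eq:cheated-times} for the product set and that $M^{2l}$ acts on the first ($J$-)coordinate only, fixing $\Lambda$, so that $M^{2l} \cdot (E^{\circ 3} \times E') = (M^{2l} E^{\circ 3}) \times E'$; absorbing these into the stated form is routine. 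The main obstacle, and the only part requiring genuine care, is this last bookkeeping of how the $G$-prefactors compound under iterated composition --- one has to be slightly generous (hence the $M^{2l}$ rather than a tighter power) to get a clean closed form, and one must be careful that the $\Lambda$-coordinate supports compose to $E'$ rather than something larger, which uses $E' \subseteq (E')^{\circ 2}$ implicitly (so one should first replace $E'$ by $\bigcup_j \suppX \varphi_j \cup \bigcup_j \suppX \bfB_j$ closed under composition, or simply note $E'$ contains all the relevant diagonal pieces so that $(E')^{\circ k} \subseteq E'^{?}$ --- in fact the cleanest route is to observe $E'$ already contains $\{\twovec{\lambda}{\lambda}\}$ for $\lambda \in \suppobj \bfB_j$ via $\suppX \bfB_j$, so the $\Lambda$-coordinate never grows beyond $\bigcup \suppX\varphi_j \cup \bigcup \suppX \bfB_j$ composed with itself, which is still bounded by a fixed power and can be absorbed by enlarging $E'$ in the statement if needed; since the proposition states $E' := \bigcup \suppX \varphi_j \cup \bigcup_j \suppX \bfB_j$ we take powers of $E'$ as they arise and check they fit inside the claimed $(M^{2l}(E^{\circ 3}\times E'))^{\circ(2l+1)}$).
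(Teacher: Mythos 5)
Your proposal is correct and follows essentially the same route as the paper: part (a) is Lemma~\ref{lem:supp-F_0} combined with Lemma~\ref{lem:extra-E}, and part (b) applies $(f \circ \bary \times \id_\Lambda)_*$ to the factorization from Proposition~\ref{prop:supp-F_l-over-S(X)}, uses Lemma~\ref{lem:extra-E} on each factor, and then iterates the composition formula~\eqref{eq:composition} for $\suppX$ and submultiplicativity of $\suppG$. One small correction: in part (a) you should use $\nu=1$ (not $\nu=2$) in Lemma~\ref{lem:extra-E}, since $\nu=2$ yields $E^{\circ 3}\times E'$, which is not contained in the claimed $E^{\circ 2}\times E'$; your parenthetical alternative is the right one.
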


The point here is that the upper bounds are in terms of $E$, $E'$, $M$ and $l$ and
independent of $\bfB$ and the $\varphi_i$.

\begin{proof}%[Proof of Proposition~\ref{prop:supp-tildeF_l}]
  Lemma~\ref{lem:supp-F_0} directly implies $\suppG \widetilde F_0(\bfB)_n$,
  $\suppG \dd_n^{\widetilde F_0(\bfB)} \subseteq U$.  Lemma~\ref{lem:supp-F_0} together
  with Lemma~\ref{lem:extra-E} implies
  $\suppX \widetilde F_0(\bfB)_n \subseteq E^{\circ 2} \times E'$,
  $\suppX \dd_n^{\widetilde F_0(\bfB)}\subseteq E^{\circ 2} \times E'$\footnote{As $U$
    fixes $\ball$ pointwise we could here use $E$ instead of $E^{\circ 2}$, but the
    precise form of the estimates is not important.}.
  Thus~\ref{prop:supp-tildeF_l:chain-modules} holds.
	
  Applying $(f \circ \bary \times \id_\Lambda)_*$ to the factorization from
  Proposition~\ref{prop:supp-F_l-over-S(X)} gives a factorization
  \begin{equation*}
    \widetilde F^l(\varphi_1,\dots,\varphi_l) = \widetilde \Phi_{2l} \circ \dots \circ \widetilde \Phi_{0}. 
  \end{equation*}
  The estimates for the $\Phi_i$ from Proposition~\ref{prop:supp-F_l-over-S(X)} translate
  to the $\tilde \Phi_i$.  We get $\suppG \widetilde \Phi_i \subseteq M$ directly from
  Proposition~\ref{prop:supp-F_l-over-S(X)}.  Using in addition Lemma~\ref{lem:extra-E} we
  get $\suppX \widetilde \Phi_i \subseteq E^{\circ 3} \times E'$.  As $\suppG$ is
  submultiplicative we get
  $\suppG \widetilde F^l(\varphi_1,\dots,\varphi_l) \subseteq M^{2l+1}$.  Using the
  composition formula~\ref{eq:composition} for $\suppX$ it is not difficult to obtain an
  explicit bound for $\suppX \widetilde F^l(\varphi_1,\dots,\varphi_l)$, for example
  $(M^{2l}(E^{\circ 3} \times E'))^{\circ (2l+1)}$ works.
  Thus~\ref{prop:supp-tildeF_l:morphisms} holds.
\end{proof}

%%%%%%%%%%%%%%%%%%%%%%%%%%%%%%%%%%%%%%%%%%%%%%%%%%%%%%%%%%%%%%%%%%%%%%%%%%%
  
 \section{Construction of the transfer}\label{sec:constr-transfer}

 Let $G$ be a reductive $p$-adic 
 group and $X$ be the associated extended Bruhat-Tits building.  For each
 $\underlinetupel{t} \in \IN^{\times 2}$ we will construct the data considered in
 Section~\ref{sec:support-estimates}, i.e.,
 \begin{itemize}[label=-,                 
                 align=parleft, 
                 leftmargin=*,
                 labelindent=5pt,
                 %labelwidth=60pt,
                 %labelsep=8pt,
                 %itemindent=-3pt
                 %itemsep=1pt
                 ] 
 \item numbers $L_\underlinetupel{t} \in \IN$, $\rho_\underlinetupel{t} > 0$;
 \item compact open subgroups $U_\underlinetupel{t} \subseteq G$ and a compact subset
   $M_\underlinetupel{t} \subseteq G$ with $U_\underlinetupel{t} \subseteq M$;
 \item a subspace $\ball_\underlinetupel{t} \subseteq X$ with a locally ordered triangulation
   all whose simplices are of diameter $< \rho_\underlinetupel{t}$;
 \item a sequence of further subspaces
   $\ball_\underlinetupel{t}=\ball^{(0)}_\underlinetupel{t} \subseteq \ball^{(1)}_\underlinetupel{t}
   \subseteq \dots \subseteq \ball^{(L_\underlinetupel{t})}_\underlinetupel{t}$ with
   $M_\underlinetupel{t} \cdot \ball^{l-1}_\underlinetupel{t} \subseteq
   \ball_\underlinetupel{t}^{l}$ for $l=1,\dots,L_t$, and such that $\ball^{L}_\underlinetupel{t}$
   is pointwise fixed by $U_\underlinetupel{t}$;
 \item a retraction $r^0_\underlinetupel{t} \colon X \to \ball_\underlinetupel{t}$ for the inclusion
   $i^0_\underlinetupel{t} \colon \ball_\underlinetupel{t} \to X$, i.e.,
   $r^0_\underlinetupel{t} \circ i^0_\underlinetupel{t} = \id_{\ball_\underlinetupel{t}}$;
 \item a homotopy
   $H^0_\underlinetupel{t} \colon i^0_\underlinetupel{t} \circ r^0_\underlinetupel{t}
   \simeq \id_X$, that is not expanding, i.e., for all $\tau \in [0,1]$, $x,x' \in X$ we
   will have
   $d_X(H^0_\underlinetupel{t}(x,\tau),H^0_\underlinetupel{t}(x',\tau)) \leq d_X(x,x')$;
   % \item an open subgroup $U \subseteq G$ that fixes $B^{L}$ pointwise;
 \item a map $f_\underlinetupel{t} \colon X \to |\bfJ_{\CVCYC}(G)|^\wedge$;
 \item a subset
   $E_\underlinetupel{t} \subseteq |\bfJ_{\CVCYC}(G)|^\wedge \times
   |\bfJ_{\CVCYC}(G)|^\wedge$.
 \end{itemize}
 Let $\ZerodimR \in \regularPOrGSCzero$.  As in Section~\ref{sec:support-estimates} (with
 $\Lambda = |\ZerodimR|^\wedge$) we obtain then for every $\underlinetupel{t}$ a homotopy
 coherent functor
 \begin{equation*}
   F_\underlinetupel{t}  = (F_\underlinetupel{t}^0,F_\underlinetupel{t}^1,\dots)
   \colon \contrCatUcoef{G,U_\underlinetupel{t}}{|\ZerodimR|^\wedge}{\calb}
   \to \ch \contrCatUcoef{G}{S(X) \times |\ZerodimR|^\wedge}{\calb}
 \end{equation*}
 and its composition
 \begin{equation*}
   \widetilde F_\underlinetupel{t} := ((f_\underlinetupel{t} \circ \bary) \times \id_{|\ZerodimR|^\wedge})_*
   \circ F_\underlinetupel{t}
   \colon \contrCatUcoef{G,U_\underlinetupel{t}}{|\ZerodimR|^\wedge}{\calb}
   \to \ch  \contrCatUcoef{G}{|\bfJ_{\CVCYC}(G)|^\wedge \times |\ZerodimR|^\wedge}{\calb}.
 \end{equation*}
 We will verify Assumption~\ref{assm:E_0} for each $\underlinetupel{t}$.  Thus the support
 estimates from  Proposition~\ref{prop:supp-tildeF_l} will be
 available.  These estimates will allow us to combine the
 $(f_\underlinetupel{t} \times \id_{|\ZerodimR|^\wedge})_* \circ F_\underlinetupel{t}$ to
 obtain a homotopy coherent functor
 \begin{equation*}
   \widetilde F_\ZerodimR \colon \contd^0_G(\ZerodimR) \to \ch  \contd^0_G(\bfJ_{\CVCYC}(G))
 \end{equation*}
 and Lemmas~\ref{lem:tau-is-transformation} and~\ref{lem:tau-is-weak-equiv} will imply that
 the $\tilde F_\ZerodimR$ induce the desired transfer for
 \begin{equation*}
   \bfK \contd^0_G\big(\bfJ_{\CVCYC}(G) \times -\big) \to \bfK \contd^0_G(- )
 \end{equation*}
 in $\regularPOrGSCzero\text{-}\Spectra$.

%-----------------------------------------------------------------

 \subsection{Retractions to balls in $X$}

 We write $d_X$ for the $\CAT(0)$-metric of $X$.  We fix a base point $x_0 \in X$ and
 write $\ball_R$ for the closed ball in $X$ centered at $x_0$.  We write
 $\pi_R \colon X \to \ball_R$ for the radial projection.  We define the radial homotopy
 $H_R \colon i_R \circ r_R \simeq \id_X$ with
 $H_R(x,\tau) = \pi_{\tau d_X(x,x_0)+(1-\tau)R}(x)$.  The $\CAT(0)$-condition implies that
 $H_R$ is contracting, i.e.,
 \begin{equation*}
   d_X \big(H^X_R(x,\tau),H_R^X(x',\tau) \big) \leq d_X(x,x')
 \end{equation*} 
 for all $R \geq 0$, $\tau \in [0,1]$ and $x,x' \in X$.  Let $U := G_{x_0}$ be the
 stabilizer of $x_0$ in $G$.  As the action of $G$ on $X$ is smooth and proper, this is a
 compact open subgroup of $G$.

%-----------------------------------------------------------------

 \subsection{Choosing the data}\label{subsec:data-choosing}
    
 We choose for $j \in \IN$ numbers $\epsilon_j > 0$, $\eta_j >0$, $L_j \in \IN$ and
 compact subsets $M_j \subseteq G$ such that
 \begin{equation*}
   \epsilon_j \to 0, \eta_j \to 0, L_j \to \infty \; \text{as} \; j \to \infty
 \end{equation*} 
 and that for any $K \subseteq G$ compact we have $K \subseteq M_j$ for all but finitely
 many $j$.  We also assume that each $M_j$ contains an open subgroup\footnote{To construct
   $M_j$ we can choose a metric on $G$ and take $M_j$ to be the closed ball of radius $j$
   around the unit in $G$. Since $G$ is a td-group, $M_j$ contains a compact open subgroup
   of $G$.}.  We set $\newD_j := L_j \cdot \max\{ d_X(x_0,gx_0) \mid g \in M_j \}$. 
   
 Let $N$ be the first number appearing in Theorem~\ref{thm:X-to-J}.  Given $t_0 \in \IN$,
 we obtain from Theorem~\ref{thm:X-to-J} applied to $M = M_{t_0}$ and
 $\epsilon = \epsilon_{t_0}$ a number $\beta_{t_0}$ and $\calv_{t_0} \subseteq \CVCYC$
 finite.  Given further $t_1 \in \IN$, we obtain again from Theorem~\ref{thm:X-to-J}
 applied to $\eta = \eta_{t_1}$ and $L = \newD_{t_1}$ numbers $R_{t_0,t_1} > 0$,
 $\rho'_{t_0,t_1} > 0$ and a map
 \begin{equation*}
   f_{t_0,t_1} \colon X \to |\bfJ^N_{\calv_{t_0}}(G)|^\wedge  
 \end{equation*}  
 such that~\refstepcounter{theorem}
 \begin{enumerate}[
                 label=(\thetheorem\alph*),
                 align=parleft, 
                 leftmargin=*,
                 %labelindent=2pt,
                 %labelwidth=60pt,
                 labelsep=6pt,
                 %itemindent=50pt
                 ]
 \item\label{nl:f:G} for $x \in \ball_{R_{t_0,t_1}+\newD_{t_1}}$, $g \in M_{t_0}$ we have
   \begin{equation*}
     \fold{\bfJ}\big(f_{t_0,t_1}(gx),gf_{t_0,t_1}(x)\big) <
     (\beta_{t_0},\eta_{t_1},\epsilon_{t_0});
   \end{equation*}
 \item\label{nl:f:H} for $x \in \ball_{R_{t_0,t_1}+\newD_{t_1}}$, $R' \geq R_{t_0,t_1}$ we have
   \begin{equation*}
     \fold{\bfJ}\big(f_{t_0,t_1}(x),f_{t_0,t_1}(\pi_{R'}(x))\big) < (\beta_{t_0},\eta_{t_1},\epsilon_{t_0});
   \end{equation*}
 \item\label{nl:f:rho} for all $x,x' \in X$ with $d_X(x,x') < \rho'_{t_1}$ we have
   \begin{equation*}
     \fold{\bfJ}\big(f_{t_0,t_1}(x),f_{t_0,t_1}(x')\big)
     < (\beta_{t_0},\eta_{t_1},\epsilon_{t_0}).
   \end{equation*}                    
 \end{enumerate}
 Here and later we abbreviate $\fold{\bfJ} = \fold{\bfJ_{\CVCYC}(G)}$.  For
 $\underlinetupel{t}=(t_0,t_1)$ we now define respectively choose
 \begin{itemize}[label=-,                 
                 align=parleft, 
                 leftmargin=*,
                 labelindent=5pt,
                 %labelwidth=60pt,
                 %labelsep=8pt,
                 %itemindent=-3pt
                 %itemsep=1pt
                 ] 
 \item $L_\underlinetupel{t} := L_{t_1}$,
   $\rho_\underlinetupel{t} := \rho'_\underlinetupel{t}/ (L_t+1)$,
   $M_\underlinetupel{t} := M_{t_0}$, $\ball_\underlinetupel{t} := \ball_{R_{t}}$; as balls in a
   building the $\ball_\underlinetupel{t}$ can be triangulated;
 \item
   $\ball_\underlinetupel{t}^{(l)} := \ball_{R_{t_0,t_1} + \frac{l}{L_\underlinetupel{t}}
     \newD_{t_1}}$ for $l=0,\dots,L_t$;
 \item $U_\underlinetupel{t}$ a compact open subgroup
   that fixes $\ball_\underlinetupel{t}^{(L_\underlinetupel{t})}$
   pointwise and is contained in $M_\underlinetupel{t}$;\\[-2ex]
 \item $i^0_\underlinetupel{t} \colon \ball_\underlinetupel{t} \to X$ the inclusion;\\[-2ex]
 \item $r^0_\underlinetupel{t} := \pi_{R_{\underlinetupel{t}}} \colon X \to \ball_\underlinetupel{t}$,
   the radial projection;\\[-2ex]
 \item $f_\underlinetupel{t} := f_{t_0,t_1}$;\\[-2ex]
 \item $H^0_\underlinetupel{t} := H_{R_\underlinetupel{t}}$ the radial homotopy
   $i_\underlinetupel{t}^0 \circ r_\underlinetupel{t}^0 \simeq \id_X$;\\[-2ex]
 \item
   $E_\underlinetupel{t} := \big\{ \twovec{z}{z'} \; \big| \; \fold{\bfJ}(z,z') <
   (\beta_{t_0},\eta_{t_1},\epsilon_{t_0}) \big\}$.
 \end{itemize}
 We note that~\ref{nl:f:G},~\ref{nl:f:H} and~\ref{nl:f:rho} imply that
 Assumption~\ref{assm:E_0} holds.  We can now apply the construction from
 Section~\ref{sec:support-estimates} and obtain for
 $\ZerodimR \in \regularPOrGSCzero$ the
 homotopy coherent functor
 \begin{equation*}
    ((f_\underlinetupel{t} \circ \bary) \times \id_{|\ZerodimR|^\wedge})_* \circ F_\underlinetupel{t}
   \colon \contrCatUcoef{G,U_\underlinetupel{t}}{|\ZerodimR|^\wedge}{\calb}
   \to \ch \contrCatUcoef{G}{|\bfJ_{\CVCYC}(G)|^\wedge \times |\ZerodimR|^\wedge}{\calb}.
 \end{equation*}

%-----------------------------------------------------------------

 \subsection{The $\tilde F_t$ combine to a homotopy coherent functor on
   $\contd^0_{G,\bfU}(\ZerodimR)$.}

 We will now use the $G$-control structure $\contstrd^0(\bfSigma)$ from
 Definition~\ref{def:D_bfSigma} for $\bfSigma = \bfJ_{\CVCYC}(G)$.

\begin{lemma}\label{lem:E-union-of-E_t} 
  Let
  $E := \Big\{ \twovec{z',\underlinetupel{t}}{z,\underlinetupel{t}} \; \Big| \;
  \underlinetupel{t} \in \IN^{\times 2}, \twovec{z'}{z} \in E_\underlinetupel{t} \Big\}$.
  Then $E \in \contstrd^0_2(\bfJ_{\CVCYC}(G))$.
\end{lemma}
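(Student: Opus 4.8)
The statement is that the set $E$ obtained by assembling the $E_\underlinetupel{t}$ fiberwise over $\IN^{\times 2}$ lies in $\contstrd^0_2(\bfJ_{\CVCYC}(G))$. Recall from Definition~\ref{def:D_bfSigma} that a subset of $\big(|\bfSigma|^\wedge \times \IN^{\times 2}\big)^{\times 2}$ belongs to $\contstrd^0_2(\bfSigma)$ iff it belongs to $\contstrd_2(\bfSigma)$ and in addition relates only points with the same $\IN^{\times 2}$-coordinate. The latter is immediate from the definition of $E$, since every element of $E$ has the form $\twovec{z',\underlinetupel{t}}{z,\underlinetupel{t}}$. So the real content is to verify the two conditions of~\ref{def:D_bfSigma:X} for $E$, viewed as a subset of $\big(|\bfJ_{\CVCYC}(G)|^\wedge \times \IN^{\times 2}\big)^{\times 2}$: bounded control over $\IN^{\times 2}$ and foliated control over $\bfJ_{\CVCYC}(G)$.

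Bounded control is trivial: all pairs in $E$ have $\underlinetupel{t}' = \underlinetupel{t}$, so $|\underlinetupel{t}-\underlinetupel{t}'| = 0 \le \alpha$ for any $\alpha > 0$. For foliated control, I would unwind the quantifier structure from Remark~\ref{rem:Sigma-fol-with-quantifiers}: given $\epsilon > 0$ I must produce $k_0$ such that for each $t_0 \ge k_0$ there is $\beta$ such that for each $\eta > 0$ there is $k_1$ such that for all $t_1 \ge k_1$, $z,z'$, $\underlinetupel{t}'$ with $\twovec{z',\underlinetupel{t}'}{z,\underlinetupel{t}} \in E$ (where $\underlinetupel{t} = (t_0,t_1)$) we have $\fold{\bfJ_{\CVCYC}(G)}(z,z') < (\beta,\eta,\epsilon)$. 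By construction $\twovec{z',\underlinetupel{t}}{z,\underlinetupel{t}} \in E$ means exactly $\fold{\bfJ_{\CVCYC}(G)}(z,z') < (\beta_{t_0},\eta_{t_1},\epsilon_{t_0})$, using the definition of $E_\underlinetupel{t}$ chosen in Subsection~\ref{subsec:data-choosing}. So the task reduces to matching up the $\epsilon$-stage and $(\beta,\eta)$-stage numbers. Since $\epsilon_j \to 0$ as $j \to \infty$, choose $k_0$ with $\epsilon_{t_0} < \epsilon$ for all $t_0 \ge k_0$; then for fixed such $t_0$ set $\beta := \beta_{t_0}$ (the number produced by Theorem~\ref{thm:X-to-J} for $M_{t_0}$, $\epsilon_{t_0}$), which depends only on $t_0$; finally, since $\eta_j \to 0$, given $\eta > 0$ choose $k_1$ with $\eta_{t_1} < \eta$ for all $t_1 \ge k_1$. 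Then for $t_0 \ge k_0$ and $t_1 \ge k_1$ the hypothesis $\fold{\bfJ_{\CVCYC}(G)}(z,z') < (\beta_{t_0},\eta_{t_1},\epsilon_{t_0})$ implies $\fold{\bfJ_{\CVCYC}(G)}(z,z') < (\beta,\eta,\epsilon)$, because a $< (\beta,\eta,\epsilon')$ condition with $\epsilon' \le \epsilon$ and $\eta' \le \eta$ immediately implies the $< (\beta,\eta,\epsilon)$ condition (inspect conditions~\ref{nl:bfSigma-fol-distance:eps} and~\ref{nl:bfSigma-fol-distance:beta-eta}: shrinking $\epsilon$ only makes the sets $K_{\sigma,\epsilon}$ larger, but the $d^\infty$-inequality is strengthened; and $\fold{P(\sigma)}(\cdot,\cdot) < (\beta,\eta')$ with $\eta' \le \eta$ gives $< (\beta,\eta)$). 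One small point to be careful about: when $\epsilon$ shrinks the collection of simplices $\sigma$ with $p_\Sigma(z)$ or $p_\Sigma(z') \in K_{\sigma,\epsilon}$ \emph{grows}, so I should double-check that the $(\beta_{t_0},\eta_{t_1})$-bound supplied by $E_{t_0,t_1}$ is already stated for all simplices with the point in $K_{\sigma,\epsilon_{t_0}}$ — which it is, since $\epsilon_{t_0} < \epsilon$ forces $K_{\sigma,\epsilon} \subseteq K_{\sigma,\epsilon_{t_0}}$, so the membership $p_\Sigma(z) \in K_{\sigma,\epsilon}$ implies $p_\Sigma(z) \in K_{\sigma,\epsilon_{t_0}}$ and the $E_{t_0,t_1}$-condition applies.

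I would also need to check the companion object-side finiteness-type conditions are not required here — they are not, since $\contstrd^0_2$ is purely a condition on subsets of the square. The main (and only mildly fiddly) obstacle is bookkeeping the nested quantifiers and the monotonicity of the $< (\beta,\eta,\epsilon)$ relation in its $\epsilon$ and $\eta$ arguments; there is no geometric input beyond the already-granted properties of the data chosen in Subsection~\ref{subsec:data-choosing}. So the proof is essentially: bounded control is automatic; foliated control follows by choosing $k_0$, $k_1$ so that $\epsilon_{t_0} < \epsilon$ and $\eta_{t_1} < \eta$, and invoking monotonicity of the foliated-distance relation together with the defining inequality of $E_\underlinetupel{t}$. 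I would close with a one-line remark that $E$ additionally respects the $\IN^{\times 2}$-diagonal, hence lands in $\contstrd^0_2$ rather than merely $\contstrd_2$.
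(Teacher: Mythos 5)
Your proof is correct and follows exactly the paper's argument: choose $k_0$ with $\epsilon_{t_0}<\epsilon$, set $\beta:=\beta_{t_0}$, choose $k_1$ with $\eta_{t_1}<\eta$, and conclude from the defining inequality of $E_{\underlinetupel{t}}$. Your explicit monotonicity check (that $K_{\sigma,\epsilon}\subseteq K_{\sigma,\epsilon_{t_0}}$, so the target condition quantifies over fewer simplices) is left implicit in the paper but is exactly the right justification.
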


\begin{proof}   
  We need to verify the foliated control condition from Definition~\ref{def:D_bfSigma}.
  Let $\epsilon > 0$ be given.  Choose $k_0$ such that $\epsilon_{t_0} < \epsilon$ for all
  $t_0 \geq k_0$.  Let $t_0 \geq k_0$ be given.  Set $\beta := \beta_{t_0}$.  Let
  $\eta > 0$ be given.  Choose $k_1$ such that $\eta_{t_1} < \eta$ for all $t_1 \geq k_1$.
  Let $t_1 \geq k_1$ be given. Set $\underlinetupel{t} := (t_0,t_1)$.  Let
  $z,z' \in |\bfJ_{\CVCYC}(G)|^\wedge$ be given with
  $\twovec{z',\underlinetupel{t}}{z,\underlinetupel{t}} \in E$.  By definition of $E$ we
  then have $\twovec{z'}{z} \in E_{\underlinetupel{t}}$.  By definition of
  $E_{\underlinetupel{t}}$ we then have
  $\fold{\bfJ}(z,z') < (\beta_{t_0},\eta_{t_1},\epsilon_{t_0})$.  Since
  $\beta = \beta_{t_0}$, $\eta_{t_1} < \eta$ and $\epsilon_{t_0} < \epsilon$ this implies
  \begin{equation*}
    \fold{\bfJ}(z,z') < (\beta,\eta,\epsilon),
  \end{equation*}
  as required.
\end{proof}

Following Remark~\ref{rem:contd0-more-explicit} we will view $\contd^0_{G}(\ZerodimR)$ and
$\contd^0_G(\bfJ_{\CVCYC}(G) \times \ZerodimR)$ respectively as subcategories of
$\prodprimeinline_{\IN^{\times 2}} \contrCatUcoef{G}{|\ZerodimR|^\wedge}{\calb}$ and
$\prodprimeinline_{\IN^{\times 2}} \contrCatUcoef{G}{|\bfJ_{\CVCYC}(G) \times \ZerodimR|^\wedge}{\calb}$
respectively.  Thus objects are sequences
$\bfB = (\bfB_\underlinetupel{t})_{\underlinetupel{t} \in \IN^{\times}}$ of objects in
$\contrCatUcoef{G}{|\ZerodimR|^\wedge}{\calb}$
and $\contrCatUcoef{G}{|\bfJ_{\CVCYC}(G) \times \ZerodimR|^\wedge}{\calb}$
respectively satisfying the conditions spelled out in
Remarks~\ref{rem:contd0-more-explicit}.  Similarly morphisms are equivalence classes of
sequences $(\varphi_\underlinetupel{t})_{\underlinetupel{t} \in \IN^{\times 2}}$.  We
recall that sequences
$(\varphi_\underlinetupel{t})_{\underlinetupel{t} \in \IN^{\times 2}}$ and
$(\varphi'_\underlinetupel{t})_{\underlinetupel{t} \in \IN^{\times 2}}$ are equivalent, if
there is $k_0$ such that for all $t_0 \geq k_0$ there is $k_1$ such that for all
$t_1 \geq k_1$ we have $\varphi_{t_0,t_1} = \varphi'_{t_0,t_1}$.  In particular, we can
ignore all $\varphi_{t_0,t_1}$ with $t_0$ or $t_1$ small.  We define
$\contd^0_{G,\bfU}(\ZerodimR)$ as the full subcategory of $\contd^0_{G}(\ZerodimR)$ on all
objects $\bfB = (\bfB_\underlinetupel{t})_{\underlinetupel{t} \in \IN^{\times 2}}$ with
$\suppG \bfB_\underlinetupel{t} \subseteq U_\underlinetupel{t}$ for all $\underlinetupel{t}$.
We can now define the homotopy coherent functor
\begin{equation*}
  F_\ZerodimR = (F_\ZerodimR^0,F_\ZerodimR^1,\dots) \; \colon \contd^0_{G,\bfU}(\ZerodimR)
  \to \ch \contd^0_G \big(\bfJ_{\CVCYC}(G) \times \ZerodimR\big).	
\end{equation*} 
Let $\bfB = (\bfB_\underlinetupel{t})_{\underlinetupel{t} \in \IN^{\times 2}}$ be an
object of $\contd^0_{G}(\ZerodimR)$.  We define
\begin{equation*}
  F_\ZerodimR^0(\bfB) := \Big(((f_\underlinetupel{t} \circ \bary)
\times \id_{|\ZerodimR|^\wedge})_*(F_\underlinetupel{t}^0(\bfB_\underlinetupel{t}))\Big)_{\underlinetupel{t} \in \IN^{\times 2}}.
\end{equation*}
Let $\bfB_0 \xleftarrow{\varphi_1} \dots \xleftarrow{\varphi_l} \bfB_l$ be a chain of
composable morphisms in $\contd^0_G(\ZerodimR)$.  Write
$\varphi_i = \big((\varphi_i)_\underlinetupel{t}\big)_{\underlinetupel{t} \in \IN^{\times
    2}}$.  We
define
\begin{equation*}
  F_\ZerodimR^l(\varphi_1,\dots,\varphi_l)
  := \Big(((f_\underlinetupel{t} \circ \bary) \times \id_{|\ZerodimR|^\wedge})_*
  \big(F_\underlinetupel{t}^l\big((\varphi_1)_\underlinetupel{t},\dots,(\varphi_l)_\underlinetupel{t}\big)\big)\Big)_{\underlinetupel{t} \in \IN^{\times 2}}.
\end{equation*}
We write $\big(F^0_\ZerodimR(\bfB)\big)_n$ for the $n$-chain module of
$F_\ZerodimR^0(\bfB)$ and $\partial_n^{F^0_\ZerodimR(\bfB)}$ for the $n$-th boundary map.

\begin{lemma}\label{lem:F_P-is-ok}
  \begin{enumerate}[label=(\alph*),leftmargin=*]
  \item\label{lem:F_P-is-ok:chain-modules} $\big(F^0_\ZerodimR(\bfB)\big)_n$ as above is
    an object in $\contd_G(\bfJ_{\CVCYC}(G) \times \ZerodimR)$;
  \item\label{lem:F_P-is-ok:boundary} $\partial_n^{F^0_\ZerodimR(\bfB)}$
    as above is a morphism in $\contd_G(\bfJ_{\CVCYC}(G) \times \ZerodimR)$;\\[-2ex]
  \item\label{lem:F_P-is-ok:morphisms} $F_\ZerodimR^l(\varphi_1,\dots,\varphi_l)$ as above
    is a morphism in $\ch \contd_G(\bfJ_{\CVCYC}(G) \times \ZerodimR)$.
  \end{enumerate}
\end{lemma}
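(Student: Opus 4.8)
The plan is to verify the six conditions listed in Remark~\ref{rem:contd0-more-explicit} for objects and morphisms in $\contd_G^0(\bfJ_{\CVCYC}(G) \times \ZerodimR)$, using the support estimates already established in Proposition~\ref{prop:supp-tildeF_l} together with Lemma~\ref{lem:E-union-of-E_t}.

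First I would treat~\ref{lem:F_P-is-ok:chain-modules}. The object $F^0_\ZerodimR(\bfB)$ is the sequence $\big(\widetilde F^0_\underlinetupel{t}(\bfB_\underlinetupel{t})_n\big)_{\underlinetupel{t}}$, so I need the four object conditions from Remark~\ref{rem:contd0-more-explicit}. Finiteness of each $\widetilde F^0_\underlinetupel{t}(\bfB_\underlinetupel{t})_n$ (condition~\ref{rem:contd0-more-explicit:obj:finite}) follows from Lemma~\ref{lem:suppG_and_suppobj-for-diagonal-tensor}~\ref{lem:suppG_and_suppobj-for-diagonal-tensor:finite}, since $\bfB_\underlinetupel{t}$ is finite and the ball $\ball_\underlinetupel{t}$ is compact so its triangulation is finite. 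For $\suppobj$ (condition~\ref{rem:contd0-more-explicit:obj:suppobj}): by Proposition~\ref{prop:supp-tildeF_l}~\ref{prop:supp-tildeF_l:chain-modules} we have $\suppobj \widetilde F^0_\underlinetupel{t}(\bfB_\underlinetupel{t})_n \subseteq f_\underlinetupel{t}(\bary(\simp_n(\ball_\underlinetupel{t}))) \times \suppobj \bfB_\underlinetupel{t}$; I need to check this assembles to a set in $\contstrd^0_1(\bfJ_{\CVCYC}(G) \times \ZerodimR)$, which amounts to verifying finiteness over $\IN^{\times 2}$, the (non-uniform) compact support in $|\Sigma_{\CVCYC}(G)| \times M$, and finite-dimensional support. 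Here the key point is that for fixed $t_0$ the map $f_\underlinetupel{t}$ lands in $|\bfJ^N_{\calv_{t_0}}(G)|^\wedge$ with $\calv_{t_0}$ finite and $N$ fixed, so the support stays in a fixed finite subcomplex depending only on $t_0$ — this gives the non-uniform compact support condition. For $\suppG$ (condition~\ref{rem:contd0-more-explicit:obj:suppG}): by Proposition~\ref{prop:supp-tildeF_l}~\ref{prop:supp-tildeF_l:chain-modules}, $\suppG \widetilde F^0_\underlinetupel{t}(\bfB_\underlinetupel{t})_n \subseteq U_\underlinetupel{t} \subseteq M_\underlinetupel{t}$; since any compact $K \subseteq G$ lies in $M_{t_0}$ for all but finitely many $t_0$ (and the $M_\underlinetupel{t} = M_{t_0}$ exhaust $G$), the union over $\underlinetupel{t}$ is relatively compact provided $\suppG \bfB \in \contstrd^0_G(\ZerodimR)$ forces enough; actually one uses that $\bfB$ lies in $\contd^0_{G,\bfU}(\ZerodimR)$ to bound. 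The $\suppX$ condition~\ref{rem:contd0-more-explicit:obj:suppX} is the heart: Proposition~\ref{prop:supp-tildeF_l}~\ref{prop:supp-tildeF_l:chain-modules} gives $\suppX \widetilde F^0_\underlinetupel{t}(\bfB_\underlinetupel{t})_n \subseteq E^{\circ 2}_\underlinetupel{t} \times \suppX \bfB_\underlinetupel{t}$, and by Lemma~\ref{lem:E-union-of-E_t} the union $\bigcup_\underlinetupel{t} E^{\circ 2}_\underlinetupel{t}$ (suitably indexed over $\IN^{\times 2}$) lies in $\contstrd^0_2(\bfJ_{\CVCYC}(G))$; combining with $\suppX \bfB \in \contstrd^0_2(\ZerodimR)$ and Lemma~\ref{lem:D_upper_0_bfSigma-prod-P}~\ref{lem:D_upper_0_bfSigma-prod-P:2} gives membership in $\contstrd^0_2(\bfJ_{\CVCYC}(G) \times \ZerodimR)$.

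Next, for~\ref{lem:F_P-is-ok:boundary}, the boundary map $\partial_n^{F^0_\ZerodimR(\bfB)}$ is $\big((f_\underlinetupel{t}\circ\bary \times \id)_*(\partial_n^{\bfC_*(\ball_\underlinetupel{t})} \otimes \id_{\bfB_\underlinetupel{t}})\big)_\underlinetupel{t}$, and the morphism conditions~\ref{rem:contd0-more-explicit:mor:suppX} and~\ref{rem:contd0-more-explicit:mor:suppG} follow from the second line of Proposition~\ref{prop:supp-tildeF_l}~\ref{prop:supp-tildeF_l:chain-modules} by the same combination of Lemma~\ref{lem:E-union-of-E_t} and Lemma~\ref{lem:D_upper_0_bfSigma-prod-P}~\ref{lem:D_upper_0_bfSigma-prod-P:2}; note $E^{\circ 2}_\underlinetupel{t} \subseteq E^{\circ 2}_\underlinetupel{t}$ absorbs the boundary estimate since $E_S \subseteq E_S^{\circ 2}$ was arranged. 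For~\ref{lem:F_P-is-ok:morphisms} I would apply Proposition~\ref{prop:supp-tildeF_l}~\ref{prop:supp-tildeF_l:morphisms}: for a chain of $l$ composable morphisms with $\suppG (\varphi_i)_\underlinetupel{t} \subseteq M_\underlinetupel{t}$ (which holds eventually in $\underlinetupel{t}$ by the equivalence relation, since each $\varphi_i$ has relatively compact $G$-support and $M_{t_0}$ exhausts $G$), we get $\suppX \widetilde F^l_\underlinetupel{t}(\dots) \subseteq (M_\underlinetupel{t}^{2l}(E^{\circ 3}_\underlinetupel{t} \times E'_\underlinetupel{t}))^{\circ(2l+1)}$ and $\suppG \subseteq M_\underlinetupel{t}^{2l+1}$. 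Since $l$ is fixed and $M_\underlinetupel{t}^{2l+1}$ is again (eventually) inside any prescribed compact exhaustion, and since $M_\underlinetupel{t} \cdot E_\underlinetupel{t}$-type expressions still satisfy the $\epsilon$- and $\eta$-control (as $M_\underlinetupel{t}$ acts by left translation and $\fold{\bfJ}$ is left-invariant, so translating by $M_\underlinetupel{t}$ changes $\beta$ by a bounded amount but preserves $\eta \to 0$ and $\epsilon \to 0$), the union over $\underlinetupel{t}$ stays in $\contstrd^0_2(\bfJ_{\CVCYC}(G))$; tensoring with $E'$ and applying Lemma~\ref{lem:D_upper_0_bfSigma-prod-P}~\ref{lem:D_upper_0_bfSigma-prod-P:2} again gives the claim. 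One must also check the chain modules $\bfB'_i$ of the intermediate factorization have $\suppobj$ in the right class, which is the $S(\ball_\underlinetupel{t})$ bound from Proposition~\ref{prop:supp-F_l-over-S(X)}, handled as for~\ref{lem:F_P-is-ok:chain-modules}.

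The main obstacle I anticipate is the bookkeeping in the $\suppX$ condition for morphisms: one must show that $\big(M_\underlinetupel{t}^{2l}(E^{\circ 3}_\underlinetupel{t} \times E'_\underlinetupel{t})\big)^{\circ(2l+1)}$, after projecting to $|\bfJ_{\CVCYC}(G)|^\wedge \times |\bfJ_{\CVCYC}(G)|^\wedge$ and taking the union over $\underlinetupel{t}$, still satisfies the quantifier-nested foliated control condition of Definition~\ref{def:D_bfSigma}~\ref{def:D_bfSigma:X} — i.e.\ that the bounded number of compositions and left-translations by $M_\underlinetupel{t}$ only degrades $\beta$ (which is allowed to grow with $t_0$, or rather stays bounded for fixed $t_0$) while keeping $\eta_{t_1} \to 0$ and $\epsilon_{t_0} \to 0$. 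This relies on the foliated triangle inequality (Remark~\ref{rem:two-stage}) applied a fixed number $(2l+1)$ of times, the left-invariance of $\fold{\bfJ_{\CVCYC}(G)}$, and Lemma~\ref{lem:unif-continuity}~\ref{lem:unif-continuity:M} to control how a compact translation set perturbs the $\eta$-parameter; making the bound $\beta$ uniform over the relevant translations for each fixed $t_0$ is the delicate point, but it follows because $M_{t_0}$ is a fixed compact set once $t_0$ is fixed.
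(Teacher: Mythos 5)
Your proposal follows the paper's proof almost verbatim in structure: verify the six conditions of Remark~\ref{rem:contd0-more-explicit} using Proposition~\ref{prop:supp-tildeF_l}, Lemma~\ref{lem:E-union-of-E_t}, Lemma~\ref{lem:D_upper_0_bfSigma-prod-P}, and the finiteness/support statements of Lemma~\ref{lem:suppG_and_suppobj-for-diagonal-tensor}. The treatment of $\suppobj$, $\suppX$ and finiteness for the chain modules, the boundary maps, and the $\suppX$-condition for $F^l_\ZerodimR(\varphi_1,\dots,\varphi_l)$ all match the paper's argument; in particular you correctly identify that $f_{\underlinetupel{t}}$ landing in the finite complex $|\bfJ^N_{\calv_{t_0}}(G)|^\wedge$ is what gives the (non-uniform) compact and finite-dimensional support conditions.

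There is, however, a concrete error in your handling of the $G$-support of morphisms. You bound $\suppG \widetilde F^l_{\underlinetupel{t}}(\dots)$ by $M_{\underlinetupel{t}}^{2l+1}$ and assert that this is ``eventually inside any prescribed compact exhaustion''; this is backwards. Since the $M_{t_0}$ exhaust $G$, the union $\bigcup_{\underlinetupel{t}} M_{t_0}^{2l+1}$ is all of $G$, so condition~\ref{rem:contd0-more-explicit:mor:suppG} (relative compactness of $\suppG F^l_\ZerodimR(\varphi_1,\dots,\varphi_l) = \bigcup_{\underlinetupel{t}}\suppG\widetilde F^l_{\underlinetupel{t}}$) would fail with this bound. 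The paper instead fixes a single $k_0$ with $\suppG\varphi_i \subseteq M_{k_0}$ for all $i$ (possible because each $\varphi_i$ has relatively compact $G$-support, independent of $\underlinetupel{t}$), discards all $\underlinetupel{t}$ with $t_0 < k_0$ via the equivalence relation, and then obtains the $\underlinetupel{t}$-independent bound $M_{k_0}^{2l+1}$; the same fixed $M_{k_0}$ is used to form $\tilde E = (M_{k_0}^{2l}\cdot E^{\circ 3})^{\circ(2l+1)} \in \contstrd_2^0(\bfJ_{\CVCYC}(G))$, which needs $\mfE_G$ to consist of relatively compact sets and hence cannot accommodate a $\underlinetupel{t}$-dependent, unbounded family of translation sets. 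A related wobble occurs in your $\suppG$ argument for the chain modules, where you again appeal to the exhaustion $M_{t_0} \uparrow G$; the correct and simpler point there is that $\suppG (F^0_{\underlinetupel{t}}(\bfB_{\underlinetupel{t}}))_n \subseteq \suppG \bfB_{\underlinetupel{t}}$ by Lemma~\ref{lem:suppG_and_suppobj-for-diagonal-tensor}, and $\bigcup_{\underlinetupel{t}}\suppG\bfB_{\underlinetupel{t}} = \suppG\bfB$ is relatively compact because $\bfB$ is an object of $\contd^0_G(\ZerodimR)$. With these corrections your argument coincides with the paper's.
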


\begin{proof}~\ref{lem:F_P-is-ok:chain-modules} We need to verify the four
  conditions~\ref{rem:contd0-more-explicit:obj:suppobj}
  to~\ref{rem:contd0-more-explicit:obj:finite}.% from Remark~\ref{rem:contd0-more-explicit}.
  
  For~\ref{rem:contd0-more-explicit:obj:suppobj} we need to check
  $\suppobj \big(F^0_\ZerodimR(\bfB)\big)_n \in \contd^0_1(\bfJ_{\CVCYC}(G) \times
  \ZerodimR)$.  Let
  \begin{equation*}
    F := \bigcup_{\underlinetupel{t} \in \IN^{\times 2}}
    f_\underlinetupel{t}(\bary_n (\ball_{\underlinetupel{t}})) \times \{ \underlinetupel{t} \}.   
  \end{equation*}
  We claim that $F \in \contstrd_1^0(\bfJ_{\CVCYC}(G))$.  This amounts to checking the
  three conditions in~\ref{def:D_bfSigma:obj} for $F$: Finiteness over $\IN^{\times 2}$ is
  clear as $\ball_{\underlinetupel{t}}$ is compact and has only finitely many simplices.  For the other two
  conditions we use that $f_\underlinetupel{t} = f_{t_0,t_1}$ is a map to
  $|\bfJ^N_{\calv_{t_0}}(G)|^\wedge \subseteq |\bfJ_{\CVCYC}(G)|^\wedge$.  Thus $F$ has
  finite dimensional support.  As $\calv_{t_0}$ is finite $|J^N_{\calv_{t_0}}(G)|$ is a
  finite subcomplex of $|J_{\CVCYC}(G)|$.  It follows that $F$ has compact support in
  $|J_{\CVCYC}(G)|$.  So $F \in \contstrd_1^0(\bfJ_{\CVCYC}(G))$.    
  Set $F' := \suppobj \bfB \in \contstrd_1^0(\ZerodimR)$.
  By~\ref{lem:finiteness_and_suppobj-diag-tensor-obj:suppobj}
  \begin{equation*}
    \suppobj ((f_\underlinetupel{t} \circ \bary)\times \id_{|\ZerodimR|^\wedge})_*  (F_{\underlinetupel{t}}^0(\bfB_\underlinetupel{t}))
    = f_\underlinetupel{t}(\bary_n(\ball_\underlinetupel{t})) \times \suppobj \bfB_t.
  \end{equation*}
  Thus  
  \begin{eqnarray*}
    \suppobj \big(F^0_\ZerodimR(\bfB)\big)_n  & = &
                                                    \bigcup_{\underlinetupel{t} \in \IN^{\times 2}}
                                                    \suppobj ((f_\underlinetupel{t} \circ \bary) \times \id_{|\ZerodimR|^\wedge})_*
          (F_{\underlinetupel{t}}^0(\bfB_\underlinetupel{t}) \times \{\underlinetupel{t}\}) \\
    & = & 
    \big\{ (z,\lambda,\underlinetupel{t})
    \mid (z,\underlinetupel{t}) \in F, (\lambda,\underlinetupel{t}) \in F' \big\}.
  \end{eqnarray*}
  Thus
  $\suppobj \big(F^0_\ZerodimR(\bfB)\big)_n \in \contstrd^0_1(\bfJ_{\CVCYC}(G) \times
  \ZerodimR)$ by~\ref{lem:D_upper_0_bfSigma-prod-P:1}.
      
  For~\ref{rem:contd0-more-explicit:obj:suppX} we need to check
  $\suppX \big(F^0_\ZerodimR(\bfB)\big)_n \in \contstrd^0_2(\bfJ_{\CVCYC}(G) \times
  \ZerodimR)$.  By~\ref{prop:supp-tildeF_l:chain-modules} 
  \begin{equation*}
    \suppX (f_\underlinetupel{t} \times \bary)
    \subseteq E_\underlinetupel{t}^{\circ 2} \times \suppX \bfB_\underlinetupel{t}.
  \end{equation*}
  We now use $E \in \contstrd_2^0(\bfJ_{\CVCYC}(G))$ from Lemma~\ref{lem:E-union-of-E_t}.
  Then
  \begin{equation*}
    \suppX (F^0_\ZerodimR(\bfB))_n
    \; \subseteq \; \Big\{ \twovec{z',\lambda',\underlinetupel{t}}{z,\lambda,\underlinetupel{t}}
    \, \Big| \, \twovec{z',\underlinetupel{t}}{z,\underlinetupel{t}} \in E^{\circ 2},
    \twovec{\lambda',\underlinetupel{t}}{\lambda,\underlinetupel{t}}
    \in \suppX \bfB \Big\} 
  \end{equation*}
  which belongs to $\contstrd_2^0(\bfJ_{\CVCYC}(G) \times \ZerodimR)$
  by~\ref{lem:D_upper_0_bfSigma-prod-P:2}.  
    
  For~\ref{rem:contd0-more-explicit:obj:suppG} we need to check
  $\suppG \big(F^0_\ZerodimR(\bfB)\big)_n \in \contstrd^0_G(\bfJ_{\CVCYC}(G) \times
  \ZerodimR)$.  By~\ref{lem:suppG_and_suppobj-for-diagonal-tensor:obj} we have 
  $\suppG \big(F^0_\ZerodimR(\bfB)\big)_n \subseteq \suppG \bfB$.  The latter is
  relatively compact as $\bfB \in \contd_G^0(\ZerodimR)$.  Thus
  $\suppG \big(F^0_\ZerodimR(\bfB)\big)_n$ is relatively compact as well.
    
  Finally, the finiteness condition~\ref{rem:contd0-more-explicit:obj:finite}
  for $\big(F^0_\ZerodimR(\bfB)\big)_n$
  follows from~\ref{lem:suppG_and_suppobj-for-diagonal-tensor:finite} as
  the $\bfB_{\underlinetupel{t}}$ are finite. 
  \medskip \\ \noindent~\ref{lem:F_P-is-ok:boundary} We need to
  verify~\ref{rem:contd0-more-explicit:mor:suppX}
  and~\ref{rem:contd0-more-explicit:mor:suppG} for $\partial_n^{F^0_\ZerodimR(\bfB)}$.
  This can be done exactly in the same way as for the corresponding
  properties~\ref{rem:contd0-more-explicit:obj:suppX}
  and~\ref{rem:contd0-more-explicit:obj:suppG} for $\big(F^0_\ZerodimR(\bfB)\big)_n$
  under~\ref{lem:F_P-is-ok:chain-modules}.  
  For~\ref{rem:contd0-more-explicit:mor:suppG} this
  uses~\ref{lem:suppG_and_suppobj-for-diagonal-tensor:boundary}
  in place of~\ref{lem:suppG_and_suppobj-for-diagonal-tensor:obj}.
  \medskip
  \\ \noindent~\ref{lem:F_P-is-ok:morphisms} We need to
  verify~\ref{rem:contd0-more-explicit:mor:suppX}
  and~\ref{rem:contd0-more-explicit:mor:suppG} for
  $F_\ZerodimR^l(\varphi_1,\dots,\varphi_l)$.
   
  Choose $k_0$ such that $\suppG \varphi_i \subseteq M_{k_0}$ for $i=1,\dots,l$.  Since we
  are working in $\contd^0_G(\bfJ_{\CVCYC}(G) \times \ZerodimR)$ we can ignore all
  $\underlinetupel{t} = (t_0,t_1)$ with $t_0 \leq k_0$.  Thus for the rest of the argument
  we can and will always assume $t_0 \geq k_0$.  From~\ref{prop:supp-tildeF_l:morphisms} we obtain
  that
  \begin{equation*}
    \suppG 	\big(((f_\underlinetupel{t} \bary) \times \id_{|\ZerodimR|^\wedge})_*(F_\underlinetupel{t}^l)
    \big((\varphi_1)_\underlinetupel{t},\dots,(\varphi_l)_\underlinetupel{t}\big)\big) \subseteq M_{k_0}^{2l+1}
  \end{equation*}
  for all $\underlinetupel{t}$.  In particular, the union of these $G$-supports is
  relatively compact in $G$.  Thus
  $\suppG F_\ZerodimR^l(\varphi_1,\dots,\varphi_l) \in \contstrd_G^0(\bfJ_{\CVCYC}(G)
  \times \ZerodimR)$ as required by~\ref{rem:contd0-more-explicit:mor:suppG}.
  	
  Set
  $E'_\underlinetupel{t} := \bigcup_j \suppX ((\varphi_j)_\underlinetupel{t}) \cup
  \bigcup_j \suppX ((\bfB_j)_\underlinetupel{t})$.  From~\ref{prop:supp-tildeF_l:morphisms} we obtain
  \begin{multline}\label{eq:supp-f_t-F_tl} \suppX
    (((f_\underlinetupel{t} \circ \bary) \times
    \id_{|\ZerodimR|^\wedge})_*(F_\underlinetupel{t}^l)\big((\varphi_1)_\underlinetupel{t},\dots,(\varphi_l)_\underlinetupel{t}\big)
    \\
    \subseteq \Big\{ \twovec{z',\lambda'}{z,\lambda} \Big| \twovec{z'}{z} \in
    \big(M_{k_0}^{2l}\cdot E_\underlinetupel{t}^{\circ 3}\big)^{\circ (2l+1)},
    \twovec{\lambda'}{\lambda} \in \big(M_{k_0}^{2l}\cdot
    E'_\underlinetupel{t}\big)^{\circ (2l+1)} \Big\}.
  \end{multline}
  We use again $E \in \contstrd_2^0(\bfJ_{\CVCYC}(G))$ from Lemma~\ref{lem:E-union-of-E_t}
  and set
  \begin{eqnarray*}
    \tilde E := \big( M_{k_0}^{2l} \cdot E^{\circ 3} \big)^{\circ(2l+1)} \in \contstrd_2^0(\bfJ_{\CVCYC}(G)).
  \end{eqnarray*}
  We also have, as the $\varphi_j$ and the $\bfB_j$ are from
  $\contd^0_{G,\bfU}(\ZerodimR)$,
  \begin{equation*}
    E' := \Big\{ \twovec{\lambda',\underlinetupel{t}}{\lambda,\underlinetupel{t}} \; \Big| \;
    \twovec{\lambda'}{\lambda} \in E'_\underlinetupel{t} \Big\} \in \contstrd_2^0(\ZerodimR)
  \end{equation*} 
  and also $\tilde E' := (M_{k_0}^{2l} \cdot E')^{\circ (2l+1)} \in \contstrd_2^0(\ZerodimR)$.
  Now~\eqref{eq:supp-f_t-F_tl} implies
  \begin{equation*}
    \suppX F_\ZerodimR^l(\varphi_1,\dots,\varphi_l)
    \subseteq \Big\{ \twovec{z',\lambda',\underlinetupel{t}}{z,\lambda,\underlinetupel{t}} \; \Big| \;
    \twovec{z',\underlinetupel{t}}{z,\underlinetupel{t}} \in \tilde E,
    \twovec{\lambda',\underlinetupel{t}}{\lambda,\underlinetupel{t}} \in \tilde E' \Big\}.
  \end{equation*}
  The latter belongs to $\contstrd_2^0(\bfJ_{\CVCYC}(G) \times \ZerodimR)$
  by~\ref{lem:D_upper_0_bfSigma-prod-P:2}.  Thus
  $\suppX F_\ZerodimR^l(\varphi_1,\dots,\varphi_l) \in \contstrd_2^0(\bfJ_{\CVCYC}(G)
  \times \ZerodimR)$ as required by~\ref{rem:contd0-more-explicit:mor:suppX}.
\end{proof}

\begin{proof}[Conclusion of the proof of Theorem~\ref{thm:transfer-bfD0}]
  We have the following diagram.
  \begin{equation*}
    \xymatrix{\\
      \contd^0_{G,\bfU}(\ZerodimR) \ar@/^9ex/[rrrr]^{I} \ar[r]^<<<<<{F_\ZerodimR}
      & \ch_{\fin}  \contd^0_G(\bfJ_{\CVCYC}(G) \times \ZerodimR) \ar[rrr]^<<<<<<<<<<<<<<<{(\pr_\ZerodimR)_*}
      & & & \ch_{\fin}  \contd^0_G(\ZerodimR) 
    }
  \end{equation*}	
  The functor $(\pr_\ZerodimR)_*$ is induced by the projection
  $\bfJ_{\CVCYC}(G) \times \ZerodimR \to \ZerodimR$, $F_\ZerodimR$ is the homotopy
  coherent functor defined above (well-defined by Lemma~\ref{lem:F_P-is-ok}), and $I$ is
  the inclusion.  We can then apply K-theory and obtain
  \begin{equation*}
    \xymatrix{\\
      \bfK \big( \contd^0_{G,\bfU}(\ZerodimR) \big) \ar@/^9ex/[rrrr]^{\bfK(I)} \ar[r]^<<<<<{\bfK(F_\ZerodimR)}
      & \bfK \big( \contd^0_G(\bfJ_{\CVCYC}(G) \times \ZerodimR) \big) \ar[rrr]^<<<<<<<<<<<<<<<{\bfp_\ZerodimR = \bfK((\pr_\ZerodimR)_*)}
      & & &\bfK \big(  \contd^0_G(\ZerodimR) \big). 
    }
  \end{equation*}	
  Strictly speaking the homotopy coherent functor $F_\ZerodimR$ induces a zig-zag in
  K-theory, see Remark~\ref{rem:K-zig-zag-homotopy-coherent}.  Since
  $\contd^0_{G,\bfU}(\ZerodimR) \big) \to \contd^0_G(\ZerodimR)$ induces an equivalence of
  idempotent completions, compare Subsection~\ref{subsec:calh-G-U}, the map $\bfK(I)$ is a
  weak equivalence.  It is not difficult to see from the construction that the diagram is
  natural in $\ZerodimR$, basically because the tensor product functor is natural for
  induced functors in both entries.  There is a strict natural transformation
  $\tau_\ZerodimR \colon (\pr_\ZerodimR)_* \circ F_\ZerodimR \to I$ by weak equivalences.
  It is given by applying the construction from Subsection~\ref{subsec:back-to-Z} for each
  $t \in \IN^{\times 2}$, see Lemma~\ref{lem:tau-is-transformation}.  On each object
  $\tau_\ZerodimR$ evaluates to a chain homotopy equivalence, see
  Lemma~\ref{lem:tau-is-weak-equiv}.  It follows that
  $(\pr_\ZerodimR)_* \circ F_\ZerodimR$ and $I$ induce homotopic maps in K-theory, see
  Remark~\ref{rem:K-zig-zag-strict-transf}. 
  Altogether,
  \begin{equation*}
  \xymatrix{\bfK \big(  \contd^0_G(\ZerodimR) \big) & &
     \bfK \big( \contd^0_{G,\bfU}(\ZerodimR) \big)  
     \ar[ll]_{\bfK(I)}^{\sim} \ar[rr]^<<<<<<<<<<{\bfK(F_\ZerodimR)}
     & & 
     \bfK \big( \contd^0_G(\bfJ_{\CVCYC}(G) \times \ZerodimR) \big) 
  }
  \end{equation*}
  is the required section $\bftr_\ZerodimR$ for $\bfp_\ZerodimR$.
\end{proof}

This concludes the proof of the $\CVCYC$-Farrell--Jones Conjecture for reductive $p$-adic
groups, i.e., of Theorem~\ref{thm:Farrell-Jones-Conjecture-for-reductive-p-adic}.

%%%%%%%%%%%%%%%%%%%%%%%%%%%%%%%%%%%%%%%%%%%%%%%%%%%%%%%%%%%%%%%%%%%

\section{Reduction from $\CVCYC$ to $\COM$}\label{sec:reduction}

\begin{theorem}[Reduction]\label{thm:reduction}
  Let $G$ be a td-group and let $\calb$ be Hecke category with $G$-support satisfying (Reg) from
  Definition~\ref{def:regularity-for-COP}.
  Assume that the $\CVCYC$-assembly
  map~\eqref{eq:FJ-assembly-map} for $\calb$
  \begin{equation*} %\label{eq:FJ-assembly-map}
    \hocolimunder_{P \in \EP\CVCYC(G)} \bfK \big( \contc_G(P) \big) \;
    \to \; \bfK \big( \contc_G(\ast) \big) %\simeq \Sigma \bfK \big(\Hecke{G}(\cala) \big)
  \end{equation*}
  is an equivalence.  Then the $\COP$-assembly map~\eqref{eq:assembly-cop-calb} for
  $\calb$ 
  \begin{equation*} 
    \hocolimunder_{G/U \in \Or_{\COP}(G)} \bfK \big( \calb[G/U] \big)
    \to \bfK \big( \calb[G/G] \big) \simeq \bfK \calb
  \end{equation*}  
  is also an equivalence.
\end{theorem}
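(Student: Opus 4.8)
The plan is to compare the $\COP$-assembly map with the $\CVCYC$-assembly map, exactly as in the discrete case one compares the $\FIN$-assembly map with the $\VCYC$-assembly map. Concretely, recall from Proposition~\ref{prop:cop-iso-via-PSub} that the $\COP$-assembly map~\eqref{eq:assembly-cop-calb} may be rewritten with source $\hocolimunder_{P \in \EP\COP(G)} \bfK(\calb[P])$, and from Proposition~\ref{prop:c-is-b-for-smooth-P} that on $\EPplus\OPEN(G)$ there is a zig-zag of equivalences between $P \mapsto \Omega\bfK(\contc_G(P;\calb))$ and $P \mapsto \bfK(\calb[P])$. Hence, after a degree shift and restriction to the smooth part, the $\COP$-assembly map is identified with $\hocolimunder_{P \in \EP\COP(G)} \bfK(\contc_G(P)) \to \bfK(\contc_G(\ast))$, i.e.\ with the map labelled $\alpha_\COP$ in~\eqref{eq:discret-assembly-maps-now-td}. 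Under this identification the $\COP$-assembly map factors through the $\CVCYC$-assembly map via the inclusion $\EP\COP(G) \to \EP\CVCYC(G)$: we have $\alpha_\COP = \alpha_\CVCYC \circ \alpha_\mathrm{rel}$ where $\alpha_\mathrm{rel} \colon \hocolimunder_{P \in \EP\COP(G)} \bfK(\contc_G(P)) \to \hocolimunder_{P \in \EP\CVCYC(G)} \bfK(\contc_G(P))$ is induced by the inclusion of index categories. Since by hypothesis $\alpha_\CVCYC$ is an equivalence, it suffices to prove that $\alpha_\mathrm{rel}$ is an equivalence.

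\textbf{Reducing to $\contC_G$.} The obstacle, as flagged in Subsection~\ref{subsec:comparison-discrete}, is that one cannot prove directly that $\alpha_\mathrm{rel}$ for $P \mapsto \bfK(\contc_G(P))$ is an equivalence; the category $\contc_G(P)$ carries too much controlled-algebra information to visibly inherit a regularity property from $\calb$. The fix is to pass to the variant $P \mapsto \bfK(\contC_G(P))$ of Subsection~\ref{subsec:contC}, for which the corresponding relative assembly map \emph{can} be shown to be an equivalence using (Reg) together with the K-theory computation for the limit category from Subsection~\ref{subsec:sequence-cat} and~\cite{Bartels-Lueck(2020additive)}. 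So the steps are: first, establish that the relative assembly map $\hocolimunder_{P \in \EP\COP(G)} \bfK(\contC_G(P)) \to \hocolimunder_{P \in \EP\CVCYC(G)} \bfK(\contC_G(P))$ is an equivalence; this is where the regularity hypothesis (Reg) on $\calb$ is used, and it is the heart of the argument (it is the content of what the introduction calls the reduction theorem for $\bfK(\contC_G(-))$). Second, use the natural transformation $\bfK(\contc_G(P)) \to \bfK(\contC_G(P))$ from Subsection~\ref{subsec:contC} to transport this to $\contc_G$.

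\textbf{The diagram chase.} For the second step, consider the commutative square whose rows are the relative assembly maps for $\contc_G$ and $\contC_G$ respectively, and whose vertical maps are induced by $\bfK(\contc_G(-)) \to \bfK(\contC_G(-))$:
\begin{equation*}
  \xymatrix{
    \hocolimunder_{P \in \EP\COP(G)} \bfK(\contc_G(P)) \ar[r]^{\alpha_\mathrm{rel}^{\contc}} \ar[d]
    & \hocolimunder_{P \in \EP\CVCYC(G)} \bfK(\contc_G(P)) \ar[d] \\
    \hocolimunder_{P \in \EP\COP(G)} \bfK(\contC_G(P)) \ar[r]^{\alpha_\mathrm{rel}^{\contC}}_{\sim}
    & \hocolimunder_{P \in \EP\CVCYC(G)} \bfK(\contC_G(P)).
  }
\end{equation*}
The left vertical map is an equivalence: for $P$ with open isotropy the comparison $\bfK(\contc_G(P)) \to \bfK(\contC_G(P))$ is an equivalence (this is part of the setup in Subsection~\ref{subsec:contC}, reflecting that both compute $\bfK(\calb[P])$ up to shift), and a homotopy colimit of equivalences is an equivalence. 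The bottom map is an equivalence by the first step. Therefore the composite $\hocolimunder_{P \in \EP\COP(G)} \bfK(\contc_G(P)) \to \hocolimunder_{P \in \EP\CVCYC(G)} \bfK(\contC_G(P))$ is an equivalence, which forces the right vertical map to be split injective and $\alpha_\mathrm{rel}^{\contc}$ to be split surjective; combined with the analogous statement obtained by also knowing (from the $\CVCYC$-assembly hypothesis applied to both $\contc_G$ and, via the same comparison, $\contC_G$) that $\alpha_\CVCYC^{\contc}$ and $\alpha_\CVCYC^{\contC}$ are equivalences, a standard two-out-of-three / retract argument pins down $\alpha_\mathrm{rel}^{\contc}$ as an equivalence. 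Finally, assembling: $\alpha_\COP = \alpha_\CVCYC^{\contc} \circ \alpha_\mathrm{rel}^{\contc}$ is then a composite of equivalences, hence an equivalence, which is the assertion of the theorem. I expect the genuinely hard part to be the first step — proving the reduction theorem for $\bfK(\contC_G(-))$ — since it requires analyzing the limit category of Subsection~\ref{subsec:sequence-cat} and invoking the regularity input; the diagram chase transporting it back to $\contc_G$ is then formal.
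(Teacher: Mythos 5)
Your formal scaffolding coincides with the paper's: factor the $\COP$-assembly map as $\alpha_\CVCYC \circ \alpha_{\mathrm{rel}}$ via Propositions~\ref{prop:cop-iso-via-PSub} and~\ref{prop:c-is-b-for-smooth-P}, pass to an enlarged variant of $\contc_G$ (the paper's $\contc^\allG_G$, where the $G$-support condition is relaxed from relatively compact to arbitrary subsets) for which the relative assembly can be attacked using (Reg), and recover the statement for $\contc_G$ by a diagram chase using that the comparison map is an equivalence for smooth $P$ (Lemma~\ref{lem:c-is-C-for-smooth}) and that $\alpha_\CVCYC$ is an equivalence by hypothesis. Your two-square chase is logically equivalent to the single diagram in the paper's proof of Theorem~\ref{thm:reduction}.

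The genuine gap is that your ``first step'' is exactly the paper's Theorem~\ref{thm:contC_G(P)-for-CVCYC-via-COMP}, whose proof occupies essentially all of Section~\ref{sec:reduction}, and you do not attempt it. Moreover, as you phrase it --- a relative assembly map over the inclusion $\EP\COP(G) \to \EP\CVCYC(G)$ --- it cannot be established orbitwise: morphisms in $\EP\calf(G)$ require equality of the subgroups involved, so for a non-compact $V \in \CVCYC$ there are no morphisms from tuples of compact open orbits to $(G/V,\dots)$ and the relevant comma categories are empty. The paper instead proves a pointwise statement over the comma categories $\EP\Or_\COM(G)\downarrow P$, i.e., it uses the family $\COM$ of all compact (not necessarily open) subgroups together with genuine orbit-category morphisms --- which in turn requires the functoriality fix $\contcover_G$ of Subsection~\ref{subsec:functro-orbit-cat}, since $P \mapsto |P|^\wedge$ is not functorial on $\EPplus\Or(G)$ --- and only afterwards passes from $\COM$ to $\COP$ by cofinality. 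The substance is the computation for a single $P=(G/V_1,\dots,G/V_n)$ with $V_i \in \CVCYC$: replace each $V_i$ by its maximal compact subgroup $K_i$, identify $\contcover^{\allG,0,\nowedge}_G(M)$ for $M=(G/K_1,\dots,G/K_n)$ with the limit category of a nested sequence of Hecke-type categories, and invoke the theorem of~\cite{Bartels-Lueck(2020additive)} for the induced action of the free abelian group $\Gamma = \prod_i V_i/K_i$; this is where (Reg) enters and where the enlargement of $G$-supports to arbitrary subsets is genuinely needed (otherwise the limit category identification fails). None of this appears in your proposal, so while the reduction framework is right, the content of the theorem is left unproven.
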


Modulo further results the proof of Theorem~\ref{thm:reduction} will be given in
Subsection~\ref{subsec:contC}. 

%-----------------------------------------------------------------

\subsection{Functoriality in the orbit category}\label{subsec:functro-orbit-cat}

Recall that the definition of $\contc_G(P)$ used the (free) $G$-space $|P|^\wedge$.  The
construction of this space is not functorial for $P \in \EPplus\Or(G)$, only for
$P \in \EPplus\All(G)$.  For this reason, $\contc_G(P)$ is not functorial in
$\EPplus\Or(G)$.  But this is not a serious issue and we will now tweak the definitions to
remedy this.
  
Let $\calb$ be a category with $G$-support and $\mfE = (\mfE_1,\mfE_2,\mfE_G)$ be a
$G$-control structure on $X$.  Let $\caly$ be a collection of subsets of $X$ as in
Definition~\ref{def:E_Y}.  We say that two maps $\pi, \rho \colon S \to X$ are
\emph{$\mfE_2$-equivalent} if
\begin{equation*}
  \Big\{ \twovec{\rho(s)}{\pi(s)} \; \Big| \; s \in S \Big\} \in \mfE_2.
\end{equation*}
In the definition of $\contrCatUcoef{G}{\mfE,\caly}{\calb}$ we can replace the maps
$\pi \colon S \to X$ with their $\mfE_2$-equivalence classes and obtain the category
$\contrCatUcoefover{G}{\mfE,\caly}{\calb}$.  In more detail, objects of
$\contrCatUcoefover{G}{\mfE,\caly}{\calb}$ are triples $\bfB = (S,[\pi],B)$ such that
$(S,\pi,B)$ is an object of $\contrCatUcoef{G}{\mfE,\caly}{\calb}$.  The point here is
that, if $\pi$ and $\rho$ are $\mfE_2$-equivalent, then $(S,\pi,B)$ is an object of
$\contrCatUcoef{G}{\mfE,\caly}{\calb}$, iff $(S,\rho,B)$ is an object of
$\contrCatUcoef{G}{\mfE,\caly}{\calb}$.  A morphism
$\bfB = (S,[\pi],B) \to \bfB' = (S',[\pi'],B')$ in
$\contrCatUcoefover{G}{\mfE,\caly}{\calb}$ is a matrix
$\varphi = (\varphi^{s'}_s \colon B(s) \to B'(s'))_{s \in S, s' \in S'}$ of morphisms in
$\calb$ that also defines a morphism $(S,\pi,B) \to (S',\pi',B')$ in
$\contrCatUcoef{G}{\mfE,\caly}{\calb}$.  Again, the point here is that, if $\pi$ and
$\rho$ are $\mfE_2$-equivalent, $\pi'$ and $\rho'$ are $\mfE_2$-equivalent, then $\varphi$
defines a morphism in $(S,\pi,B) \to (S',\pi',B')$ in
$\contrCatUcoef{G}{\mfE,\caly}{\calb}$, iff $\varphi$ defines a morphism in
$(S,\rho,B) \to (S',\rho',B')$ in $\contrCatUcoef{G}{\mfE,\caly}{\calb}$.  The functor
\begin{equation}\label{eq:contrCatUcoef-to-contrCatUcoefover}
  \contrCatUcoef{G}{\mfE,\caly}{\calb} \xrightarrow{\sim} \contrCatUcoefover{G}{\mfE,\caly}{\calb}, \quad(S,\pi,B) \mapsto (S,[\pi],B)	
\end{equation} 
is an equivalence.  Essentially, $\contrCatUcoefover{G}{\mfE,\caly}{\calb}$ is obtained
from $\contrCatUcoef{G}{\mfE,\caly}{\calb}$ by identifying $(S,\pi,B)$ and $(S,\rho,B)$
along a canonical isomorphism, whenever $\pi$ and $\rho$ are $\mfE_2$-equivalent.

Recall $\contc_G(P) = \contrCatUcoef{G}{\contstrc(P),\caly(P)}{\calb}$ from
Definition~\ref{def:C_G-P}.  We now define
$\contcover_G(P) := \contrCatUcoefover{G}{\contstrc(P),\caly(P)}{\calb}$.
From~\eqref{eq:contrCatUcoef-to-contrCatUcoefover} we obtain a natural equivalence
\begin{equation}\label{eq:contc-to-contcover}
  \contc_G(P) \xrightarrow{\sim} \contcover_G(P).
\end{equation} 
Let $f \colon P \to P'$ be a map in $\EPplus\Or G$.  It induces a map
$|f| \colon |P| \to |P'|$.  There is always a lift of $|f|$ to
$|P|^\wedge \to |P'|^\wedge$, but it is typically not unique.
 
\begin{lemma}\label{lem:equiv-lifts} Let
  $\widehat f_0, \widehat f_1 \colon |P|^\wedge \to |P|^\wedge$ be lifts of $|f|$.  Then
  $\widehat f_0 \times \id_\IN, \widehat f_1 \times \id_\IN \colon |P|^\wedge \times \IN
  \to |P|^\wedge \times \IN$ are $\contstrc_2(P)$-equivalent.
\end{lemma}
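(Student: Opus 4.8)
\textbf{Proof proposal for Lemma~\ref{lem:equiv-lifts}.}
The plan is to reduce the statement to a concrete bound on how far the two lifts $\widehat f_0, \widehat f_1$ can differ from one another, and then check that this bound lies in $\contstrc_2(P)$. Recall that $|P|^\wedge = G^n$ when $P = (G/V_1,\dots,G/V_n)$, that $|P| = G/V_1 \times \cdots \times G/V_n$, and that the projection $|P|^\wedge \to |P|$ is the coordinatewise quotient map $(x_1,\dots,x_n) \mapsto (x_1V_1,\dots,x_nV_n)$. Since $\widehat f_0$ and $\widehat f_1$ are both lifts of the same map $|f| \colon |P| \to |P'|$, their composites with $|P'|^\wedge \to |P'|$ agree; hence for every $\lambda \in |P|^\wedge$, the points $\widehat f_0(\lambda)$ and $\widehat f_1(\lambda)$ have the same image in $|P'|$. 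Writing $\widehat f_0(\lambda) = (y_1,\dots,y_{n'})$ and $\widehat f_1(\lambda) = (y'_1,\dots,y'_{n'})$, this means $y_i^{-1} y'_i \in V'_i$ for each $i$, i.e.\ $\fold{V'_i}(y_i, y'_i) \leq (\beta_i(\lambda), 0)$ where $\beta_i(\lambda) = d_G(e, y_i^{-1}y'_i)$.

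The key point I would establish next is that the $\beta_i(\lambda)$ are uniformly bounded over $\lambda \in |P|^\wedge$. This should follow from equivariance of the lifts: a map $|P|^\wedge \to |P'|^\wedge$ that descends to $|f|$ is, up to the $V'$-ambiguity, determined on each orbit, so the discrepancy $\widehat f_0(\lambda)^{-1}\widehat f_1(\lambda)$ coordinatewise is locally constant and takes only finitely many values on the (finitely many) relevant orbit representatives; here one uses that $|f|$ itself is given by a function on index sets together with coset translations, as in the description of morphisms in $\EPplus\Or(G)$ recalled in Subsection~\ref{subsec:product-cats}. Concretely, if $f$ is encoded by $u \colon \{1,\dots,n'\} \to \{1,\dots,n\}$ together with elements $g_i \in G$ with $g_i V_{u(i)} g_i^{-1} \subseteq V'_i$, then both $\widehat f_j$ send $(x_1,\dots,x_n)$ to $(x_{u(1)} g_1 w_1^{(j)}, \dots)$ for some fixed $w_i^{(j)} \in V'_i$ independent of $\lambda$; so $\beta_i(\lambda) \leq d_G(e, w_i^{(0)} (w_i^{(1)})^{-1}) =: \beta_i$, a bound independent of $\lambda$. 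Set $\beta := \max_i \beta_i$.

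Finally I would verify that
\begin{equation*}
  E := \Big\{ \twovec{\widehat f_1(\lambda),t}{\widehat f_0(\lambda),t} \;\Big|\; \lambda \in |P|^\wedge, t \in \IN \Big\}
\end{equation*}
belongs to $\contstrc_2(P)$. Bounded control over $\IN$ is immediate since both entries have the same $\IN$-coordinate $t$ (take $\alpha = 0$). For foliated control over $|P|^\wedge$: given $\eta > 0$, we have for every $t$ (so in particular for all $t \geq t_0 := 0$) that $\fold{V'_i}(\widehat f_0(\lambda)_i, \widehat f_1(\lambda)_i) \leq (\beta, 0) < (\beta, \eta)$ for each $i$, hence $\fold{P}(\widehat f_0(\lambda), \widehat f_1(\lambda)) < (\beta,\eta)$ by the definition of $P$-foliated distance in Subsection~\ref{subsec:foliated-distance-P}. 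Thus $E \in \contstrc_2(P)$, which is exactly the assertion that $\widehat f_0 \times \id_\IN$ and $\widehat f_1 \times \id_\IN$ are $\contstrc_2(P)$-equivalent. The main obstacle is the second paragraph: pinning down precisely why the coordinatewise discrepancy between two lifts is uniformly bounded, which requires unwinding the explicit form of morphisms in $\EPplus\Or(G)$ and of the two functors $P \mapsto |P|$ and $P \mapsto |P|^\wedge$ from Subsection~\ref{subsec:realizations-for-P}; once the explicit description is in hand the bound is routine.
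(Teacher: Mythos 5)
Your argument is correct and follows essentially the same route as the paper's proof: both identify the explicit form of a lift coming from a morphism $(u,\varphi)$ in $\EPplus\Or(G)$, observe that two lifts differ coordinatewise by fixed elements of the groups $V'_i$, and take $\beta$ to be the distance between these elements to get $\fold{}(\widehat f_0(\lambda),\widehat f_1(\lambda))\leq(\beta,0)$ uniformly, which gives the $\contstrc_2$-equivalence. The paper merely simplifies notation by treating the case $P=G/H$, $P'=G/H'$, whereas you carry the coordinates along explicitly; the content is identical.
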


\begin{proof}
  To simplify notation, we just treat the case $P = G/H$, $P'=G/H'$.  Then
  $f = |f| \colon G/H = |G/H| \to G/H' = |G/H'|$.  Any $G$-map $f \colon G/H \to G/H'$ is
  of the form $xH \mapsto xgH'$ where $g^{-1}Hg \subseteq H'$.  Now for the two lifts
  $\widehat f_0, \widehat f_1 \colon G = |G/H|^\wedge \to G = |G/H'|^\wedge$ there are
  $h'_0,h'_1 \in H'$ with $\widehat f_0(x) = xgh'_0$, $\widehat f_1(x) = xgh'_1$ for all
  $x \in G$.  With $\beta := d(h'_0,h'_1)$ then for all $x \in G$
  \begin{equation*}
    \fold{H'}(\widehat f_0(x),\widehat f_1(x)) \leq (\beta,0).
  \end{equation*}
  This implies that $\widehat f_0 \times \id_\IN, \widehat f_1 \times \id_\IN$ are
  $\contstrc_2(P)$-equivalent.
\end{proof}

Lemma~\ref{lem:equiv-lifts} implies that the assignment
\begin{equation*}
  P \mapsto \contcover_G(P)
\end{equation*}
is functorial in $\EPplus\Or(G)$. 
This allows us to consider the bottom row in the following diagram
\begin{equation}\label{eq:product-assembly-orbit}
	\xymatrix{\displaystyle \hocolimunder_{P \in \EP\COP(G)}  \bfK \big( \contc_G(P) \big)
	    \ar[r] \ar[d]^{\sim} &
	  \displaystyle \hocolimunder_{P \in \EP\CVCYC(G)}  \bfK \big( \contc_G(P) \big) 
	    \ar[r] \ar[d]^{\sim} &
	  \bfK \big( \contc_G(\ast) \big) \ar[d]^{\sim}
	  \\
	  \displaystyle \hocolimunder_{P \in \EP\COP(G)}  \bfK \big( \contcover_G(P) \big)
	    \ar[r] \ar[d]^{\sim} &
	  \displaystyle \hocolimunder_{P \in \EP\CVCYC(G)}  \bfK \big( \contcover_G(P) \big) 
	    \ar[r] \ar[d]^{\sim} &
	  \bfK \big( \contcover_G(\ast) \big) \ar[d]^{=}
	  \\
	  \displaystyle \hocolimunder_{P \in \EP\Or_{\COP}(G)}  \bfK \big( \contcover_G(P) \big)
	    \ar[r]  &
	  \displaystyle \hocolimunder_{P \in \EP\Or_{\CVCYC}(G)}  \bfK \big( \contcover_G(P) \big) 
	    \ar[r] &
	  \bfK \big( \contcover_G(\ast) \big). 
	  \\
    }
\end{equation}
We claim that the vertical maps in this diagram are all equivalences.
For the first three vertical maps this follows from~\eqref{eq:contc-to-contcover}.  
For the first two vertical maps in the bottom row, this is an application of
Lemma~\ref{lem:hocolim-products-Q-to-P}.

%-----------------------------------------------------------------

\subsection{The category $\contc^\allG_G(P)$}\label{subsec:contC} 

We will need slightly bigger categories than $\contc_G(P)$ and $\contcover_G(P)$, where we
relax the condition on the supports in $G$.  This will be important in
Subsection~\ref{subsec:analysis-of-C_upper_0(M)}.  For smooth $P$ this will not change the
K-theory, but we do not know this for general $P$.

Define the $G$-control structure
$\mfC^\allG(P) = (\mfC^\allG_1(P),\mfC^\allG_2(P),\mfC^\allG_G(P))$ by
$\mfC^\allG_1(P) := \contstrc_1(P)$, $\mfC^\allG_2(P) := \contstrc_2(P)$ and
\begin{equation*}
  \mfC^\allG_G(P) := \text{All subsets of $G$}.
\end{equation*}
We define
\begin{equation*}
  \contc^\allG_G(P) :=  \contrCatUcoef{G}{\mfC^\allG(P),\caly(P)}{\calb}
  \quad \text{and} \quad \contcover^\allG_G(P) :=  \contrCatUcoefover{G}{\mfC^\allG(P),\caly(P)}{\calb}.  
\end{equation*}
As $\contstrc_G(P) \subseteq \mfC^\allG_G(P)$, there are inclusions 
$\contc_G(P) \to \contc^\allG_G(P)$ and $\contcover_G(P) \to \contcover^\allG_G(P)$.

\begin{lemma}\label{lem:c-is-C-for-smooth}
  Suppose that $P$ is smooth, i.e., that $P \in \EPplus\OPEN(G)$.  Then the inclusions
  $\contc_G(P) \to \contc^\allG_G(P)$ and $\contcover_G(P) \to \contcover^\allG_G(P)$
  induce equivalences in K-theory.
\end{lemma}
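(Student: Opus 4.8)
The plan is to run, for $\contc_G(P)$ and $\contc^\allG_G(P)$ simultaneously, the same discretization-plus-swindle argument that underlies Proposition~\ref{prop:c-is-b-for-smooth-P}, and then to observe that the two runs differ only in one term whose K-theory is visibly the same. Throughout, the crucial input is smoothness of $P$.

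First I would pass to the ``discrete over $|P|$'' variants. Introduce the $G$-control structure $\mfC^{\allG,\dis}(P)$ with the same $1$- and $2$-components as $\contstrc^\dis(P)$ but with $G$-component ``all subsets of $G$'', and set $\contc^{\allG,\dis}_G(P) := \contrCatUcoef{G}{\mfC^{\allG,\dis}(P),\caly(P)}{\calb}$, together with the analogous ``over'' category. Since $P$ is smooth, Remark~\ref{rem:d-fol-open-P} shows that every $E \in \contstrc_2(P)$ is already discretely foliated away from finitely many levels of $\IN$, so the argument for Lemma~\ref{lem:about-calc-dis}~\ref{lem:about-calc-dis:dis-is-no-dis}, which uses nothing about the $G$-control, gives equivalences of categories $\contc^\dis_G(P) \to \contc_G(P)$ and $\contc^{\allG,\dis}_G(P) \to \contc^\allG_G(P)$ compatible with the two inclusions in the statement. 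Hence it suffices to prove that $\contc^\dis_G(P) \to \contc^{\allG,\dis}_G(P)$ is a $\bfK$-equivalence. Next I would form the commuting ladder of Karoubi sequences with top row $\contrCatUcoef{G}{\contstrc^\dis(P)|_{\caly(P)}}{\calb} \to \contrCatUcoef{G}{\contstrc^\dis(P)}{\calb} \to \contc^\dis_G(P)$ and bottom row the same with $\mfC^{\allG,\dis}(P)$ throughout. Both middle terms have trivial K-theory: the shift $(\lambda,t)\mapsto(\lambda,t+1)$ yields an Eilenberg swindle exactly as in Lemma~\ref{lem:about-calc-dis}~\ref{lem:about-calc-dis:swindle}, the only new point being that for an object $\bfB$ of the bottom middle term $\suppX \sh_\bfB$ inherits the discrete foliated bound already present in $\suppX \bfB$, while $\suppG \sh_\bfB = \suppG \bfB$ is admissible by fiat since now all subsets of $G$ are. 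Thus by the fibration sequence~\eqref{eq:karoubi-sequence} applied to both rows, $\Omega\bfK$ of the right column of the ladder is identified with $\bfK$ of the left column.

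The crux is then that the left vertical map of the ladder is the identity. For an object $(S,\pi,B)$ of either $\contrCatUcoef{G}{\contstrc^\dis(P)|_{\caly(P)}}{\calb}$ or $\contrCatUcoef{G}{\mfC^{\allG,\dis}(P)|_{\caly(P)}}{\calb}$, the conditions that $\pi$ be finite-to-one, that $\pi(S)$ meet every level of $\IN$ finitely, and that $\pi(S)$ lie in $|P|^\wedge \times \{0,\dots,N\}$ force $S$ to be finite; hence $\suppG \bfB = \bigcup_{s} \supp B(s)$, and likewise $\suppG \varphi = \bigcup_{s,s'} \supp \varphi_s^{s'}$ for any morphism $\varphi$ between such objects, is a finite union of compact sets and so relatively compact. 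The enlarged $G$-control is therefore vacuous after restriction to $\caly(P)$, and the two categories literally coincide. Consequently $\bfK(\contc^\dis_G(P)) \to \bfK(\contc^{\allG,\dis}_G(P))$ is an equivalence, which with the first step gives the assertion for $\contc_G(P) \to \contc^\allG_G(P)$; the ``over'' case follows from~\eqref{eq:contc-to-contcover} together with its $\mfC^\allG$-analogue obtained from~\eqref{eq:contrCatUcoef-to-contrCatUcoefover}. The one place that needs some care is the swindle over the enlarged $G$-control structure, but as indicated this reduces to a fact already implicit in the proof of Lemma~\ref{lem:about-calc-dis}, and everything else is a routine bookkeeping of support conditions.
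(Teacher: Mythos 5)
Your argument is correct and is essentially the paper's own proof made explicit: the paper simply asserts that Proposition~\ref{prop:c-is-b-for-smooth-P} ``holds with exactly the same proof'' for $\contc^\allG_G(P)$, which unpacks to precisely your ladder of Karoubi sequences, the swindles on the two middle terms, and the observation that the two restricted subcategories over $\caly(P)$ literally coincide because their objects are finite and therefore have automatically relatively compact $G$-support. Your direct verification that the shift morphism $\sh_\bfB$ still satisfies the control conditions (rather than invoking hypothesis~\ref{lem:formal-swindle:shift} of Lemma~\ref{lem:formal-swindle} for arbitrary $M \in \mfE_G$, which would fail for the enlarged $G$-control structure) correctly handles the one point where the formal swindle lemma does not apply verbatim.
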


\begin{proof}
  Proposition~\ref{prop:c-is-b-for-smooth-P} holds, with exactly the same proof, for
  $\contc^\allG_G(P)$ in place of $\contc_G(P)$ as well\footnote{This works in the generality of
    categories with $G$-support in place of $\Hecke{G}(\cala)$.}.  This implies the
  result for $\contc_G(P) \to \contc^\allG_G(P)$.
  For $\contcover_G(P) \to \contcover^\allG_G(P)$ it follows now from~\eqref{eq:contrCatUcoef-to-contrCatUcoefover}.
\end{proof}

\begin{theorem}\label{thm:contC_G(P)-for-CVCYC-via-COMP}
  Suppose that $\calb$ satisfies (Reg) from
  Definition~\ref{def:regularity-for-COP}.  Then for all
  $P \in \EP \Or_\CVCYC(G)$ the canonical map
  \begin{equation}\label{eq:PCOM-over-P-to-P-C}
    \hocolimunder_{(Q,f) \in \EP\Or_\COM(G) \downarrow P} \bfK \big( \contcover^\allG_G(Q) \big) \;
    \xrightarrow{\sim} \bfK \big( \contcover^\allG_G(P) \big) 
  \end{equation}
  is an equivalence.
\end{theorem}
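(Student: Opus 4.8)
The plan is to reduce the statement, by a transitivity argument for homotopy colimits (Lemma~\ref{lem:transitivity}), to the case where $P = G/V$ is a single orbit with $V \in \CVCYC$; here I would use that $\EP\Or_\COM(G)\downarrow P$ decomposes over a product $P = (G/V_1,\dots,G/V_n)$ and that $\contcover^\allG_G$ of a product can be understood via the tensor/diagonal constructions, so the multi-factor case follows from the one-factor case as in the proof of Proposition~\ref{prop:cop-iso-via-PSub}. For a single $V \in \CVCYC$, there are two sub-cases. If $V$ is compact, then by Lemma~\ref{lem:small-compact-open} $V$ itself lies in (or is cofinal among) the compact open subgroups containing it, and $\id_{G/V}$ is essentially terminal in the relevant comma category after passing to $\EP\Or_\COM(G)$; combined with Lemma~\ref{lem:c-is-C-for-smooth} (once we are on open subgroups) this makes~\eqref{eq:PCOM-over-P-to-P-C} an equivalence almost formally. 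The genuinely new case is when $V$ admits a surjection $V \to \IZ$ with compact kernel $K$.

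For that case, the idea is to exploit the $\IN$-direction in $\contstrc(P)$ together with the foliated control over $|P|^\wedge = G$ to build a controlled Mayer--Vietoris / swindle decomposition that is the controlled-algebra analogue of the classical computation of $\Kgroup_*$ of a Laurent ring (or of the Farrell--Jones statement for $\IZ$): pick a splitting $\IZ \to V$, push elements of $G$ along the two half-lines $[0,\infty)$ and $(-\infty,0]$ in the $\IZ$-coordinate, and use Lemma~\ref{lem:formal-homotopy-pushout-square} and Lemma~\ref{lem:formal-swindle-Z} to decompose $\bfK(\contcover^\allG_G(G/V))$ as a homotopy pushout of three terms whose K-theory is controlled by the compact subgroup $K$ of $V$ and by $\IZ$-shifts thereof. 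Concretely, one should produce a subfamily: the compact open subgroups of $G$ containing $K$, together with their $\IZ$-translates inside $V$, and identify the pushout with $\hocolim$ over $\EP\Or_\COM(G)\downarrow G/V$. This is exactly the place where the regularity hypothesis (Reg) from Definition~\ref{def:regularity-for-COP} must enter: without it, the Bass--Heller--Swan/Nil-type contributions along the $\IZ$-direction obstruct the decomposition, and one needs the $l$-uniformly regular coherence of $\calb[G/U]_\oplus[\IZ^d]$ to kill those Nil terms (this is the analogue of the classical fact that $\Kgroup_n(RV) \cong \Kgroup_n(R[\IZ\ltimes\text{compact}])$ simplifies for regular $R$), and it is why we work with $\contcover^\allG_G$ (all subsets of $G$ allowed as $G$-supports) rather than $\contcover_G$ — the relaxed $G$-support condition is what lets the $\IZ$-shift swindle of Lemma~\ref{lem:formal-swindle-Z} run in~\ref{subsec:analysis-of-C_upper_0(M)}.

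The main obstacle I expect is precisely this last point: making the controlled swindle/pushout over the $\IZ$-coordinate compatible with the foliated control condition over $|P|^\wedge$ in Definition~\ref{def:mfc}, since the shift map along $\IZ \subseteq V$ interacts delicately with $\fold{V}$-control (one must check the shift satisfies the hypotheses~\ref{lem:formal-swindle-Z:E_2} and~\ref{lem:formal-swindle-Z:shift} relative to $\contstrc_2(P)$), and then matching the resulting pushout diagram term-by-term with the homotopy colimit over $\EP\Or_\COM(G)\downarrow P$. I would organize this by first proving an auxiliary ``Laurent decomposition'' lemma for $\contcover^\allG_G(G/V)$ in terms of the half-line-restricted subcategories, then invoking (Reg) and the K-theory computation from~\cite{Bartels-Lueck(2020additive)} (as in Section~\ref{sec:reduction}'s later use of the limit category in Subsection~\ref{subsec:sequence-cat}) to collapse the Nil contributions, and finally assembling the single-orbit results over a general $P \in \EP\Or_\CVCYC(G)$ via transitivity and the product compatibility of $\contcover^\allG_G$.
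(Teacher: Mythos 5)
You have correctly anticipated the endgame of the paper's proof -- the statement is ultimately reduced to the assembly theorem for finitely generated free abelian groups with coefficients in a limit category $\call(\calc_*)$ from~\cite{Bartels-Lueck(2020additive)}, and (Reg) enters exactly to verify the uniform regular coherence hypothesis of that theorem (morally, to kill Nil terms). But your route to that point has a genuine gap, and it is the first step. The reduction to a single orbit $P=G/V$ via ``the multi-factor case follows from the one-factor case as in Proposition~\ref{prop:cop-iso-via-PSub}'' does not work: that proposition relies on $G/U_1\times\cdots\times G/U_n$ decomposing as a coproduct of orbits, which holds only for \emph{open} subgroups. For closed non-open $V_i\in\CVCYC$ no such decomposition exists -- this is precisely the reason the whole formalism is built on $\EP$ rather than on orbits. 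The paper never reduces to single orbits; instead it handles $P=(G/V_1,\dots,G/V_n)$ at once by passing to the maximal compact open subgroups $K_i\subseteq V_i$, setting $M=(G/K_1,\dots,G/K_n)$ and $\Gamma=\prod_i V_i/K_i$ (free abelian of rank $\le n$), and proving (Lemma~\ref{lem:reduction-to-Gamma-assembly}) that $\underline{\Gamma}\to\EP\Or_\COM(G)\downarrow P$ is right cofinal and that $\contcover^{\allG,0}_G(M)[\Gamma]\to\contcover^{\allG,0}_G(P)$ is an equivalence on idempotent completions. This identification of the comma category with $\underline{\Gamma}$, which uses uniqueness of the maximal compact subgroup of each $V_i$, is the structural heart of the argument and is absent from your plan.

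Your second step -- a hand-rolled Bass--Heller--Swan/Mayer--Vietoris decomposition of $\contcover^\allG_G(G/V)$ along the $\IZ$-direction using Lemmas~\ref{lem:formal-homotopy-pushout-square} and~\ref{lem:formal-swindle-Z} -- is also not what the paper does, and the obstacle you flag yourself (compatibility of the $\IZ$-shift with the foliated control of Definition~\ref{def:mfc}) is real; the paper sidesteps it entirely. After the reduction to the $\Gamma$-assembly map, it first passes to the $0$-controlled categories $\contcover^{\allG,0}_G$ via the homotopy pushout of Proposition~\ref{prop:pushout-3C0-C}, then (using compactness of the $K_i$) replaces $|M|^\wedge$ by $|M|$ with a genuine $G$-invariant metric, and finally identifies $\contc^{\allG,0,\nowedge}_G(M)$ with the limit category of a nested sequence $\calc_i$ built from control over $|Q_i|$ for compact open subgroups $U_{r,i}$ shrinking to $K_r$ -- this identification is where ``all subsets of $G$'' as $G$-supports is needed, not for running a shift swindle. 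The Laurent/Nil analysis is then entirely contained in the citation of~\cite[Thm.~14.1]{Bartels-Lueck(2020additive)} rather than performed on the controlled category. So: right final ingredient, wrong (and in one place broken) path to it.
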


The proof of Theorem~\ref{thm:contC_G(P)-for-CVCYC-via-COMP} will be given later.  We do
not know whether or not Theorem~\ref{thm:contC_G(P)-for-CVCYC-via-COMP} holds also for
$\contcover_G(\--)$ instead of $\contcover^\allG_G(\--)$.  We note that for
Theorem~\ref{thm:contC_G(P)-for-CVCYC-via-COMP} it is important that we used
$\EP\Or_\COM(G)$; in contrast the category $\EP\COM(G) \downarrow P$ is typically empty.

\begin{proof}[Proof of Theorem~\ref{thm:reduction} modulo
  Theorem~\ref{thm:contC_G(P)-for-CVCYC-via-COMP}]
  Consider the following diagram.
  \begin{equation*}
    \xymatrix{\displaystyle \hocolimunder_{P \in \EP\Or_\COP(G)}  \bfK \big( \contcover_G(P) \big) 
      \ar[r]^\sim \ar[d]_{\alpha_1}^\sim
      & 
      \displaystyle \hocolimunder_{P \in \EP\Or_\COP(G)}  \bfK \big( \contcover^\allG_G(P) \big) 
      \ar[d]_{\widehat \alpha_1}^\sim 
      \\
      \displaystyle \hocolimunder_{P \in \EP\Or_\COM(G)}  \bfK \big( \contcover_G(P) \big) 
      \ar[r] \ar[d]_{\alpha_2}
      & 
      \displaystyle \hocolimunder_{P \in \EP\Or_\COM(G)}  \bfK \big( \contcover^\allG_G(P) \big) 
      \ar[d]_{\widehat \alpha_2}^\sim 
      \\
      \displaystyle \hocolimunder_{P \in \EP\Or_\CVCYC(G)}  \bfK \big( \contcover_G(P) \big) 
      \ar[r] \ar[d]_{\mathit{FJ}}^\sim 
      & 
      \displaystyle \hocolimunder_{P \in \EP\Or_\CVCYC(G)}  \bfK \big( \contcover^\allG_G(P) \big) 
      \ar[d]
      \\
      \bfK \big( \contcover_G(\ast) \big) 
      \ar[r]^\sim
      &
      \bfK \big( \contcover^\allG_G(\ast) \big) 
    }
  \end{equation*}
  We will argue below that all maps labeled with an $\sim$ are equivalences.  A diagram
  chase shows then that all other maps in the diagram are equivalences as well.  In
  particular the left vertical composition is an equivalence.
  Proposition~\ref{prop:cop-iso-via-PSub} in combination with the equivalences
  from~\eqref{eq:product-assembly-orbit} implies that this vertical composition is
  equivalent to~\eqref{eq:assembly-cop-calb}.

  The analog of the map labeled $\mathit{FJ}$ for $\contc_G(P)$ instead of
  $\contcover_G(P)$ and $\EP\CVCYC(G)$ instead of $\EP\Or_\CVCYC(G)$ is an equivalence by
  assumption of Theorem~\ref{thm:reduction}.  The equivalences
  from~\eqref{eq:product-assembly-orbit} imply that the map labeled $\mathit{FJ}$ is an
  equivalence as well.

  Lemma~\ref{lem:c-is-C-for-smooth} implies that the top and bottom horizontal maps are
  equivalences.  Any compact subgroup of a td-group is contained in a compact open
  subgroup, see Lemma~\ref{lem:small-compact-open}.  This implies that for all
  $P \in \EP\COM(G)$ the category $P \downarrow \EP\COP(G)$ is non-empty.  Thus
  Lemma~\ref{lem:hocolim-products-Q-to-P} implies that $\alpha_1$ and $\widehat \alpha_1$
  are equivalences.

  The map $\widehat \alpha_2$ is an equivalence by
  Theorem~\ref{thm:contC_G(P)-for-CVCYC-via-COMP} and the transitivity
  Lemma~\ref{lem:transitivity} for homotopy colimits.
\end{proof}

%-----------------------------------------------------------------

\subsection{The category $\contcover^{\allG,0}_G(P)$}
To prove Theorem~\ref{thm:contC_G(P)-for-CVCYC-via-COMP} we introduce a further category
as an intermediate step.  In many ways this is similar to the passage from
$\contd_G(\bfSigma;\calb)$ to $\contd^0_G(\bfSigma;\calb)$, see
Definition~\ref{def:cald_G}.

For $P \in \EPplus\All(G)$ we define the $G$-control structure
\begin{equation*}
	\contstrc^{\allG,0}(P) = (\contstrc^{\allG,0}_1(P),\contstrc^{\allG,0}_2(P),\contstrc^{\allG,0}_G(P))
\end{equation*}
as follows. Set
$\contstrc^{\allG,0}_1(P) := \contstrc^{\allG}_1(P) = \contstrc_1(P)$,
$\contstrc^{\allG,0}_G(P) := \contstrC_G(P)$.  We define $\contstrc^{\allG,0}_2(P)$ to consist of all
$E \in \contstrc^{\allG}_2(P) = \contstrc_2(P)$ satisfying
\begin{equation*}
  \qquad \twovec{\lambda',t'}{\lambda,t} \in E \; \implies \; {t}' = {t}.
\end{equation*} 
Set
\begin{equation*}
  \contc^{\allG,0}_G(P) := \contrCatUcoef{G}{\contstrc^{\allG,0}(P),\caly(P)}{\calb} \quad \text{and} \quad
  \contcover^{\allG,0}_G(P) := \contrCatUcoefover{G}{\contstrc^{\allG,0}(P),\caly(P)}{\calb}.
\end{equation*}

\begin{proposition}\label{prop:pushout-3C0-C}
  There are homotopy pushout diagrams of functors $\EPplus\All(G) \to \Spectra$
  \begin{equation*}
    \xymatrix{\bfK \big( \contc^{\allG,0}_G(\--) \big) \ar[d] \ar[r] & 
      \bfK \big( \contc^{\allG,0}_G(\--) \big) \ar[d] &
      \bfK \big( \contcover^{\allG,0}_G(\--) \big) \ar[d] \ar[r] & 
      \bfK \big( \contcover^{\allG,0}_G(\--) \big) \ar[d] \\
      \bfK \big( \contc^{\allG,0}_G(\--) \big) \ar[r] &
      \bfK \big( \contc^\allG_G(\--) \big) & 
      \bfK \big( \contcover^{\allG,0}_G(\--) \big) \ar[r] &
      \bfK \big( \contcover^\allG_G(\--) \big)
    }
  \end{equation*}
\end{proposition}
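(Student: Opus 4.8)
The plan is to adapt the construction of the right-hand homotopy pushout square in Lemma~\ref{lem:two-pushout} (equivalently, the corresponding half of the proof of Proposition~\ref{prop:contd-vs-contd0}) to the present situation, where the two $\IN$-factors of $|\bfSigma|^\wedge \times \IN^{\times 2}$ are replaced by the single $\IN$-factor of $|P|^\wedge \times \IN$. Here passing from $\contc^{\allG,0}_G(\--)$ to $\contc^\allG_G(\--)$ only relaxes the condition ``$t'=t$'' of $\contstrc^{\allG,0}_2(P)$ to the bounded control condition $|t-t'|\le\alpha$ of $\mfC^\allG_2(P)=\contstrc_2(P)$, so a single pushout square is needed for each of the two functors and there is no intermediate ``$.5$''-stage. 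I would establish the $\contc$-version first and obtain the $\contcover$-version by applying the natural equivalence~\eqref{eq:contrCatUcoef-to-contrCatUcoefover} at all four corners, which is compatible with the control structures $\contstrc^{\allG,0}(P)$ and $\mfC^\allG(P)$ and natural in $P\in\EPplus\All(G)$.

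For the setup I would, for $X\subseteq\IN$, introduce the category $\contc^{\allG,X}_G(P):=\contrCatUcoef{G}{\mfC^\allG(P)|_{\caly^X(P)},\caly(P)}{\calb}$, where $\caly^X(P)$ denotes the subsets of $|P|^\wedge\times\IN$ contained in $|P|^\wedge\times X$; this is the one-variable analogue of $\contd^X_G(\bfSigma)$, it is functorial in $P\in\EPplus\All(G)$, and $\contc^{\allG,\IN}_G(P)=\contc^\allG_G(P)$. As in the paragraph before Lemma~\ref{lem:A-B-A-cap-B-pushout} I would choose $0=a_0<a_1<\cdots$ with $a_{n+1}-a_n\to\infty$ and form $A$ and $B$, so that $A\cup B=\IN$ while $A$, $B$, $A\cap B$ are infinite unions of discrete intervals whose lengths and mutual gaps both grow to infinity, enjoying the separation properties~\ref{nl:A-B-distance} and~\ref{nl:a_i-a_j-distance}.

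The two main steps are then the analogues of Lemma~\ref{lem:A-B-A-cap-B-pushout} and Lemma~\ref{lem:all-are-point5}. First, the square with corners $\bfK(\contc^{\allG,A\cap B}_G(\--))$, $\bfK(\contc^{\allG,A}_G(\--))$, $\bfK(\contc^{\allG,B}_G(\--))$, $\bfK(\contc^\allG_G(\--))$ is a homotopy pushout of functors $\EPplus\All(G)\to\Spectra$: I would check the excision hypothesis of Lemma~\ref{lem:formal-excision} exactly as in the proof of Lemma~\ref{lem:A-B-A-cap-B-pushout}, splitting a given $Y\in\caly^A(P)$ into its part over $B$ and its complement and using the bounded control over $\IN$ from Definition~\ref{def:mfc} together with~\ref{nl:A-B-distance} to see that the $E$-enlargement of the complement still lies in $\caly^A(P)$, and then invoke Lemma~\ref{lem:formal-homotopy-pushout-square}. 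Second, each of $\bfK(\contc^{\allG,A\cap B}_G(\--))$, $\bfK(\contc^{\allG,A}_G(\--))$, $\bfK(\contc^{\allG,B}_G(\--))$ is equivalent to $\bfK(\contc^{\allG,0}_G(\--))$: with $A_0:=\{a_0,a_4,a_8,\dots\}$ the bijection $\IN\to A_0$, $i\mapsto a_{4i}$, induces an equivalence of categories $\contc^{\allG,0}_G(P)\xrightarrow{\sim}\contc^{\allG,A_0}_G(P)$ --- here~\ref{nl:a_i-a_j-distance} shows that modulo $\caly(P)$ a bounded-control morphism supported over $A_0$ already satisfies $t'=t$ --- while the inclusion $A_0\hookrightarrow A$ induces a K-theory equivalence via the Karoubi sequence~\eqref{eq:karoubi-sequence-rel} and an Eilenberg swindle on $\contrCatUcoef{G}{\mfC^\allG(P)|_{\caly^A(P)},\caly^{A_0}(P)}{\calb}$ supplied by Lemma~\ref{lem:formal-swindle-Z} applied to the map sending $(\lambda,t)$ to $(\lambda,t)$ when $t\in A_0$ and to $(\lambda,t-1)$ otherwise, with $Z:=|P|^\wedge\times A_0$. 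Combining the two steps yields the asserted square for $\contc$, naturally in $P$ since all maps are induced by the projection $|P|^\wedge\times\IN\to|P|^\wedge$ or by modifications of the $\IN$-coordinate alone, and the $\contcover$-version follows as above.

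The step I expect to require the most care is checking that the swindle map in the second step (and the inclusions in the first) respects the \emph{foliated} control over $|P|^\wedge$ built into both $\contstrc^{\allG,0}_2(P)$ and $\mfC^\allG_2(P)$ --- a condition of the form ``there is $\beta$ such that for every $\eta$ there is $t_0$ with $\fold{P}(\lambda,\lambda')<(\beta,\eta)$ whenever $t\ge t_0$'', which becomes stricter as $t\to\infty$. Since the swindle map only ever decreases $t$, any point in its image with large $t$-coordinate comes from a point with at least as large a $t$-coordinate, so the ``for all sufficiently large $t$'' clause is preserved; making this precise, together with the bounded-control bookkeeping, is the routine-but-delicate part, just as in the verification that~\ref{lem:formal-swindle-Z:E_2} holds in the proof of Lemma~\ref{lem:all-are-point5}.
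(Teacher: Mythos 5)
Your proposal is correct and follows essentially the same route as the paper, whose proof consists of the remark that the left square is constructed exactly as the pushout squares in Lemma~\ref{lem:two-pushout} (i.e.\ via the $A$, $B$, $A\cap B$ decomposition of the single $\IN$-factor, the analogues of Lemmas~\ref{lem:A-B-A-cap-B-pushout} and~\ref{lem:all-are-point5}, and Lemma~\ref{lem:formal-swindle-Z}), and that the right square then follows from the equivalence~\eqref{eq:contrCatUcoef-to-contrCatUcoefover}. Your observations that only one square is needed here because there is a single $\IN$-direction, and that the swindle preserves the foliated control condition because it only decreases the $\IN$-coordinate, are exactly the right points to flag.
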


\begin{proof}
  For the diagram on the left this is analog to the proof of Proposition~\ref{prop:contd-vs-contd0}, more precisely to
  the construction of either of the pushout squares from Lemma~\ref{lem:two-pushout}.
  For the  diagram on the right it follows now from~\eqref{eq:contrCatUcoef-to-contrCatUcoefover}.
\end{proof}

Proposition~\ref{prop:pushout-3C0-C} implies that it suffices to prove
Theorem~\ref{thm:contC_G(P)-for-CVCYC-via-COMP} for $\contcover^{\allG,0}_G(\--)$ in place of
$\contcover^\allG_G(\--)$.

%-----------------------------------------------------------------

\subsection{Structure of the remainder of the proof}
For the proof of Theorem~\ref{thm:contC_G(P)-for-CVCYC-via-COMP} for
$\contcover^{\allG,0}_G(\--)$ we will introduce further variations of $\contc_G(\--)$ and
argue along the following diagram
\begin{equation}\label{eq:diagram-for-CVCYC-via-COMP} 
  \xymatrix{\displaystyle\hocolimunder_{(Q,f) \in \EP\Or_\COM(G) \downarrow P} \bfK \big(
    \contcover^{\allG,0}_G(Q) \big) \ar[r]  & \bfK \big( \contcover^{\allG,0}_G(P)
    \big) 
    \\
    \displaystyle\hocolimunder_{\underline{\Gamma}} \bfK \big( \contcover^{\allG,0}_G( M ) \big)
    \ar[u]^{\sim}_{(1)} \ar[r] \ar[d]_{\sim}^{(3)} & \bfK \big( \contcover^{\allG,0}_G( M
    )[\Gamma] \big) \ar[u]^{\sim}_{(2)} \ar[d]_{\sim}^{(4)}
    \\
    \displaystyle\hocolimunder_{\underline{\Gamma}} \bfK \big( \contcover^{\allG,0,\nowedge}_G( M )
    \big) \ar[r] & \bfK \big( \contcover^{\allG,0,\nowedge}_G( M )[\Gamma] \big)
    \\
    \displaystyle\hocolimunder_{\underline{\Gamma}} \bfK \big( \contc^{\allG,0,\nowedge}_G( M ) \big)
    \ar[u]^{\sim}_{(4)} \ar[r]^{\sim}_{(7)} & \bfK \big( \contc^{\allG,0,\nowedge}_G( M )[\Gamma]
    \big) \ar[u]^{\sim}_{(6)}.  
    }
\end{equation}
Details will be worked out in the remainder of this section.

%-----------------------------------------------------------------

\subsection{The group $\Gamma$}\label{subsec:Gamma} Here we discuss the equivalences (1)
and (2) from~\eqref{eq:diagram-for-CVCYC-via-COMP}.

Fix $P = (G/V_1,\dots,G/V_n)$ with $V_i \in \CVCYC$.  Let $K_i \subseteq V_i$ be the
maximal compact open subgroup of $V_i$.  Set $M := (G/K_1,\dots,G/K_n)$.  The quotients
$\Gamma_i := V_i / K_i$ are either infinite cyclic or trivial.  Let
$\Gamma := \Gamma_1 \times \cdots \times
\Gamma_n$. %, $V = V_1 \times \cdots \times V_n$, $K = K_1 \times \cdots \times K_n$.
Then $\Gamma$ is a finitely generated free abelian group of rank at most $n$.  There are
canonical maps $h_i \colon \Gamma_i \to \homendo_{\Or(G)}(G/K_i)$, sending $cK_i \in C_i$
to $G/K_i \to G/K_i, gK_i \mapsto gcK_i$.  These combine to an action of $\Gamma$ on $M$
by morphisms in $\EP\Or(G)$.  This induces an action of $\Gamma$ on
$\contcover^{\allG,0}_G(M)$ and we can form $\contcover^{\allG,0}_G(M)[\Gamma]$.  The projection
$\pi \colon M \to P$ is $\Gamma$-equivariant for the trivial action of $\Gamma$ on $P$.
Thus $\pi_* \colon \contcover^{\allG,0}_G(M) \to \contcover^{\allG,0}_G(P)$ is $\Gamma$-equivariant for
the trivial action on $\contcover^{\allG,0}_G(P)$ and induces a functor
\begin{equation*}
  R \colon \contcover^{\allG,0}_G(M)[\Gamma] \; \to \; \contcover^{\allG,0}_G(P).
\end{equation*}
The functor $R$ induces (2) in~\eqref{eq:diagram-for-CVCYC-via-COMP}.  We write
$\underline{\Gamma}$ for the category with exactly one object $\ast_\Gamma$ whose
endomorphisms are given by $\Gamma$.  The action of $\Gamma$ on $M$ determines a functor
$h \colon \underline{\Gamma} \to \EP\Or_\COM(G)\downarrow P$ that sends $\ast_\Gamma$ to
$\pi \colon M \to P$.  In turn $h$ induces a map
\begin{equation*}
  \hocolimunder_{\underline{\Gamma}} \bfK \big( \contcover^{\allG,0}_G( M ) \big) \;
  \to  \; \hocolimunder_{(Q,f) \in \EP\Or_\COM(G) \downarrow P} \bfK \big( \contcover^{\allG,0}_G(Q) \big);
\end{equation*}
this is (1) in~\eqref{eq:diagram-for-CVCYC-via-COMP}.  The inclusion
$\contcover^{\allG,0}_G(M) \to \contcover^{\allG,0}_G(M)[\Gamma]$ induces the assembly map from the
second row of~\eqref{eq:diagram-for-CVCYC-via-COMP}
\begin{equation}\label{eq:assembly-for-Gamma}
  \hocolimunder_{\underline{\Gamma}} \bfK \big( \contcover^{\allG,0}_G( M ) \big)
  \to \bfK \big( \contcover^{\allG,0}_G( M )[\Gamma] \big).
\end{equation}

\begin{lemma}\label{lem:reduction-to-Gamma-assembly}
  \
  \begin{enumerate}[label=(\alph*),leftmargin=*]
  \item\label{lem:reduction-to-Gamma-assembly:R} The functor $R$ induces an equivalence
    in K-theory, i.e., (2) in~\eqref{eq:diagram-for-CVCYC-via-COMP} is an equivalence;
  \item\label{lem:reduction-to-Gamma-assembly:h} the functor $h$ induces an equivalence,
    i.e., (1) in~\eqref{eq:diagram-for-CVCYC-via-COMP} is an equivalence.
   \end{enumerate}
\end{lemma}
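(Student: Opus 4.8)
The plan is to establish the two assertions separately. The second one, about the functor $h$, is a homotopy-cofinality statement that can be checked by inspecting a single comma category; the first, about $R$, requires matching up the two controlled categories $\contcover^{\allG,0}_G(M)[\Gamma]$ and $\contcover^{\allG,0}_G(P)$ and is the more substantial point.

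For the assertion about $h$: by the cofinality Lemma~\ref{lem:cofinality} for homotopy colimits it suffices to show that $h$ is homotopy cofinal, i.e., that for every object $(Q,f)$ of $\EP\Or_\COM(G)\downarrow P$ the comma category $(Q,f)\downarrow h$ is weakly contractible. Unwinding the definitions, an object of $(Q,f)\downarrow h$ is a lift of $f$ along $\pi\colon M\to P$, that is, a morphism $q\colon Q\to M$ in $\EP\Or(G)$ with $\pi\circ q=f$, and a morphism from $q$ to $q'$ is an element $\gamma\in\Gamma$ with $h(\gamma)\circ q=q'$. I would first check that lifts always exist: if $Q=(G/H_1,\dots,G/H_m)$ with the $H_j$ compact and the $i$-th component of $f$ is $G/H_{u(i)}\to G/V_i$, $gH_{u(i)}\mapsto ga_iV_i$ with $a_i^{-1}H_{u(i)}a_i\subseteq V_i$, then $a_i^{-1}H_{u(i)}a_i$ is a \emph{compact} subgroup of $V_i$ and hence contained in the maximal compact open subgroup $K_i$ (here one uses that $\Gamma_i=V_i/K_i$ is torsion free), so $gH_{u(i)}\mapsto ga_iK_i$ is a lift. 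Secondly, any two lifts $q,q'$ differ coordinatewise by right multiplication by an element of $V_i$, and the resulting classes in $\Gamma_i=V_i/K_i$ assemble to a $\gamma\in\Gamma$ with $h(\gamma)\circ q=q'$; moreover this $\gamma$ is unique, since $h(\gamma)\circ q=q$ forces $\gamma$ to be trivial. Thus $(Q,f)\downarrow h$ is a connected groupoid all of whose automorphism groups are trivial, so it is equivalent to the terminal category and in particular has a contractible nerve, as required.

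For the assertion about $R$: recall $|M|^\wedge=|P|^\wedge=G^n$, that the map induced by $\pi$ may be taken to be the identity, and that since $K_i\subseteq V_i$ one has $\contstrc^{\allG,0}_2(M)\subseteq\contstrc^{\allG,0}_2(P)$ with the same constants $\beta,\eta$ (as $\fold{K_i}(g,g')<(\beta,\eta)$ implies $\fold{V_i}(g,g')<(\beta,\eta)$); thus there is a functor $\pi_*\colon\contcover^{\allG,0}_G(M)\to\contcover^{\allG,0}_G(P)$ coarsening the foliated control, and $R$ is the functor it induces after adjoining $\Gamma$. The key computation is that for $c_i\in V_i$ one has $\fold{V_i}(g,gc_i)\leq(d_G(e,c_i),0)$ for all $g\in G$; hence for $c=(c_1,\dots,c_n)$ with $c_i\in V_i$, an object of $\contcover^{\allG,0}_G(P)$ and its right translate by $c$ have $\contstrc^{\allG,0}_2(P)$-equivalent parametrizations, so that in the $\overline{(\,\cdot\,)}$-construction of Subsection~\ref{subsec:functro-orbit-cat} translation by $c$ is a \emph{canonical isomorphism} in $\contcover^{\allG,0}_G(P)$. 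Using this I would show $R$ is essentially surjective (an object of $\contcover^{\allG,0}_G(P)$ can, after replacing its parametrization within its $\contstrc^{\allG,0}_2(P)$-equivalence class by one whose $i$-th components lie in the original $V_i$-cosets but for which the finer $K_i$-foliated condition holds---possible because the $V_i$-foliated bound confines the relevant group elements to a bounded subset, which meets only finitely many cosets of the open subgroup $K_i$---be lifted to $\contcover^{\allG,0}_G(M)$) and full (every morphism of $\contcover^{\allG,0}_G(P)$ splits, after composition with the canonical isomorphisms above, into contributions indexed by $\Gamma$). Faithfulness should then follow from the observation that, after the quotient by $\caly(M)$ that is built into $\contcover^{\allG,0}_G(M)$, the $K_i$-foliated support conditions force the matrix entries coming from distinct $\gamma\in\Gamma$ to be supported over disjoint sets, so that no cancellation between $\Gamma$-components can occur.

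The main obstacle is this last, faithfulness part of the assertion about $R$: the other ingredients are formal, but establishing that $\contcover^{\allG,0}_G(M)[\Gamma]$ is controlled by $P$ categorically---and not merely on K-theory---requires careful bookkeeping of the interplay between the $\Gamma$-action and the $K_i$- versus $V_i$-foliated control conditions modulo $\caly$, precisely the bookkeeping that the $\overline{(\,\cdot\,)}$-formalism of Subsection~\ref{subsec:functro-orbit-cat} is designed to streamline. If a direct matrix argument proves awkward, the fallback is to construct an explicit quasi-inverse to $R$ from the lifts of objects and the $\Gamma$-decomposition of morphisms sketched above, or to argue throughout at the level of idempotent completions, using support cofinality (Definition~\ref{def:Hecke-category:cofinal}) and shifted copies (Remark~\ref{rem:X-shifted-copies}) to normalize objects before comparing them.
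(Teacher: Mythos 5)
Your treatment of part~(b) is correct and coincides with the paper's argument: one checks via the cofinality Lemma~\ref{lem:cofinality} that $(Q,f)\downarrow h$ is a non-empty groupoid in which any two objects are uniquely isomorphic, the existence of lifts coming from the fact that a compact subgroup of $V_i$ is contained in the unique maximal compact subgroup $K_i$. Your full-faithfulness discussion in part~(a) is also in the right spirit (the paper dismisses it as an exercise): for large $t$ the bound $\beta$ confines the witnesses $v\in V_i$ to finitely many cosets of $K_i$ in $V_i$, these cosets are uniformly separated, and morphism additivity~\ref{def:Hecke-category:add} splits a $V_i$-controlled morphism into its $\gamma$-components, with no cancellation possible modulo $\caly$.

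The gap is in your essential-surjectivity argument for $R$. Reparametrizing $g_i(s)\mapsto g_i(s)v_i(s)$ within the $V_i$-coset cannot convert a $V_i$-foliated object-support condition into a $K_i$-foliated one: $K_i$, being the unique maximal compact subgroup of $V_i$, is normal in $V_i$, so the condition that $g_i(s)^{-1}\,\supp B(s)\,g_i(s)$ lie in a small neighbourhood of a bounded part of $K_i$ is unchanged (up to adjusting constants; the $v_i(s)$ are bounded because the new parametrization must be $\contstrc^{\allG,0}_2(P)$-equivalent to the old one) under such a translation. The real obstruction is that the elements of the compact group $\supp B(s)$ may a priori spread over several distinct cosets $\gamma K_i$, and a single reference point per $s$ and $i$ can accommodate only one of them. (Also, $K_i$ is open in $V_i$ but not in $G$.) What the paper does instead --- and what you relegate to a fallback --- is to shrink the object supports rather than move the reference points: using support cofinality~\ref{def:Hecke-category:cofinal} one replaces $B(s)$ by $B(s)|_{K(s)}$ with $K(s)=\supp B(s)\cap\bigcap_i g_i(s)U_{t(s)}g_i(s)^{-1}$ for a decreasing sequence of compact open subgroups $U_t$ with $\diam U_t\to 0$. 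The resulting object $\widetilde\bfB$ is then even $0$-controlled, hence lies in $\contcover^{\allG,0}_G(M)$, and the original $\bfB$ is only a \emph{retract} of $R(\widetilde\bfB)$ via the structure maps $i_{B,L}$, $r_{B,L}$. So $R$ is essentially surjective only after idempotent completion; combined with bijectivity on morphism sets this makes $\Idem(R)$ an equivalence, which suffices for K-theory. You should promote this from fallback to the actual argument.
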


\begin{proof}\ref{lem:reduction-to-Gamma-assembly:R}
  We will first show that for $\bfB \in \contcover^{\allG,0}_G(P)$, there is
  $(\widetilde\bfB,p) \in \Idem \contcover^{\allG,0}_G(M)$ such that $\bfB \cong R(\widetilde\bfB,p)$ in
  $\Idem \contcover^{\allG,0}_G(P)$.  
  Write $\bfB = (S,[\pi],B)$.
  Let $U_0 \subseteq U_1 \subset \dots$ be a sequence of compact
  open subgroups of $G$ with $\diam U_i \to 0$ as $i \to \infty$.  Write
  $\pi(s) := (g_1(s),\dots,g_n(s),t(s)) \in G^n \times \IN = |P|^\wedge \times \IN$.  Set
  \begin{equation*}
    K(s) := \supp B(s) \cap g_1(s) U_{t(s)} (g_1(s))^{-1} \cap \dots \cap g_n(s) U_{t(s)} (g_n(s))^{-1}.
  \end{equation*}
  Using~\ref{def:Hecke-category:cofinal} we set $\widetilde\bfB := (s,\Pi,B|_{K(s)})$.
  By design $\widetilde\bfB \in \contcover^{\allG,0}_G(P)$.  
  Define $i \colon \bfB \to \widetilde\bfB$ and $r \colon \widetilde\bfB \to \bfB$ with
  \begin{equation*}
  	 i_s^{s'} = \begin{cases} i_{B(s),K(s)} & s=s' \\ 0 & \text{else} \end{cases} \quad \text{and} \quad
  	 r_s^{s'} = \begin{cases} r_{B(s),K(s)} & s=s' \\ 0 & \text{else} \end{cases}.
  \end{equation*}
  Again by~\ref{def:Hecke-category:cofinal} we have $r \circ i = \id_\bfB$.  Thus
  $p := i \circ r$ is an idempotent on $\widetilde\bfB$ and
  $\bfB \cong R(\widetilde\bfB,p)$ as promised.  It is an exercise in the definitions to
  check that $R$ is bijective on morphism sets.  Altogether, $R$ induces an equivalence on
  idempotent completions and thus in K-theory.
  \medskip \\
  \noindent~\ref{lem:reduction-to-Gamma-assembly:h} This follows from cofinality
  Lemma~\ref{lem:cofinality} for homotopy colimits; we need to verify that $h$ is right
  cofinal.  Fix $f \colon Q \to P$ from $\EP\Or_\COM(G) \downarrow P$. We need to show
  that $(Q,f) \downarrow h$ is contractible.  Objects in $(Q,f) \downarrow h$ are
  morphisms $Q \xrightarrow{g} M$ in $\EP\Or_\COM(G)$ such that $f = \pi \circ g$.
  Morphisms in $(Q,f) \downarrow h$ are commutative diagrams
  \begin{equation*}
    \xymatrix{Q \ar[r]^{g} \ar[d]^{=} & M \ar[r]^{\pi} \ar[d]^{\gamma}
      & P \ar[d]^{=} \\ Q \ar[r]^{g'} & M \ar[r]^{\pi} & P}
  \end{equation*}  
  We check that $(Q,f) \downarrow h$ is equivalent to a point.  Recall
  $P = (G/V_1,\dots,G/V_n)$ and $M = (G/K_1,\dots,G/K_n)$, where $K_i$ is the maximal
  compact open subgroup of $V_i$.  Write $Q = (G/L_1,\dots,G/L_m)$ and $f = (u,\varphi)$
  with $u \colon \{1,\dots,n\} \to \{1,\dots,m\}$ and
  $\varphi(i) \colon G/L_{u(i)} \to G/V_i$.  Then there are $x_1,\dots,x_n \in G$ with
  $\varphi(i)yL_{u(i)} = yx_iV_i$ for all $y \in G$, and
  $x_i^{-1}L_{u(i)}x_i \subseteq V_i$.  As $x_i^{-1}L_{u(i)}x_i$ is compact and $K_i$ is
  the unique maximal compact subgroup, we have $x_i^{-1}L_{u(i)}x_i \subseteq K_i$.  Let
  $g := (u,\psi) \colon Q \to M$ with $\psi(i) \colon G/L_{u(i)} \to G/K_i$ given by
  $\psi(i)yL_{u(i)} = yx_iK_i$ for all $y \in G$.  Then $f = \pi \circ g$.  Thus
  $(Q,f) \downarrow h$ is non-empty.  If $g' = (u',\psi') \colon Q \to M$ is another map
  with $f = \pi \circ g'$, then $u=u'$ and there are $v_i \in V_i$ with
  $\psi'(i)yL_{u(i)} = yx_iv_iK_i$. Now the $v_i$ define an isomorphism
  $\gamma \colon M \to M$ with $\gamma \circ g=g'$ and 
  this $\gamma$ is the only morphism with this property.  Thus any two objects in
  $(Q,f) \downarrow h$ are uniquely isomorphic.
\end{proof}

%-----------------------------------------------------------------

\subsection{The category $\contcover^{\allG,0,\nowedge}(M)$}
Here we discuss the equivalences (3), (4), (5) and (6)
from~\eqref{eq:diagram-for-CVCYC-via-COMP}.

Let $M = (G/K_1 \times \cdots \times G/K_n)$ be as before.  We give a slight variation of
the categories $\contc^{\allG,0}_G(M)$ and $\contcover^{\allG,0}_G(M)$.  The fact that
$|M|^\wedge \to |M|$ has compact fibers (since $M \in P\Or_\COM$) will allow us to base
the definition on $|M|$ instead of $|M|^\wedge$.  See also
Remark~\ref{rem:why-foliated-control}.

Let $\mu_i$ be a Haar measure on $K_i$.  We can integrate the left-invariant metric $d_G$
on $G$ to a left-invariant metric $d_{G,i}$ on $G$ that is in addition right
$K$-invariant, $d_{G,i}(g,g') := \int_{K_i} d_G(gk,g'k) d\mu_i(k)$ and obtain a
left-invariant metric
\begin{equation*}
  d_{G/K_i}(gK_i,g'K_i) := \min_{k \in K} d_{G,i}(gk,g')
\end{equation*} 
on $G/K_i$.   
We obtain a left-invariant metric $d_{|M|}$ on $|M|$
with
\begin{equation*}
  d_{|M|}\big((g_1K_1,\dots,g_nK_n), (g'_1K_1,\dots,g'_nK_n)\big) := \max_{i} d_{G/K_i,}(g_iK_i,g'_iK_i).
\end{equation*} 
We define the $G$-control structure
$\contstrc^{\allG,0,\nowedge}(M) =
\big(\contstrc^{\allG,0,\nowedge}_1(M),\contstrc^{\allG,0,\nowedge}_2(M),\contstrc^{\allG,0,\nowedge}_G(M)\big)$
on $|M| \times \IN$ as follows:
\begin{itemize}[
                 label=$\bullet$,
                 align=parleft, 
                 leftmargin=*,
                 labelindent=5pt,
                 %labelwidth=60pt,
                 %labelsep=10pt,
                 %itemindent=-3pt
                 %itemsep=1pt
                 ] 
\item $\contstrc^{\allG,0,\nowedge}_1(M)$ consists of all subsets $F$ of $|M| \times \IN$ for which
  $F \cap |M| \times \{t\}$ is finite for all $t \in \IN$;
\item $\contstrc^{\allG,0,\nowedge}_2(M)$ consists of all subsets $E$ of
  $\big(|M| \times \IN\big)^{\times 2}$ satisfying the following two conditions
  \begin{itemize}
  \item \emph{$0$-control over $\IN$}: if $\twovec{\lambda',t'}{\lambda,t} \in E$ then
    $t=t'$;
  \item \emph{metric control over $|M|$}: 
    for any $\eta > 0$ there is $t_0$ such
    that for all $t \geq t_0$ and all $\lambda,\lambda'$ we have
    \begin{equation*}
      \twovec{\lambda',t}{\lambda,t} \in E  \quad \implies \quad d_{|M|}(\lambda,\lambda')  < \eta;
    \end{equation*}
  \end{itemize}
\item $\contstrc^{\allG,0,\nowedge}_G(M)$ consists of all subsets of $G$.
\end{itemize}
Let $\caly^\nowedge(M)$ be the collection of all subsets $Y$ of $|M| \times \IN$, for which
there is $d$ with $Y \subseteq |M| \times \IN_{\leq d}$.  We define
\begin{eqnarray*}
  \contc^{\allG,0,\nowedge}(M)
  & := & \contrCatUcoef{G}{\contstrc^{\allG,0,\nowedge}(M),\caly^\nowedge(M)}{\calb}; \\
  \contcover^{\allG,0,\nowedge}(M)
  & := & \contrCatUcoefover{G}{\contstrc^{\allG,0,\nowedge}(M),\caly^\nowedge(M)}{\calb}.
\end{eqnarray*}
Let $p \colon |M|^\wedge \to |M|$ be the canonical projection.  As the $K_i$ are compact and
therefore of finite diameter, it is not difficult to check that $\contstrc^{\allG,0,\nowedge}_2(M)$ is
exactly the image of $\contstrc^{\allG,0}_2(M)$ under
$(p \times \id_\IN)^{\times 2} \colon (|M|^\wedge \times \IN)^{\times 2} \to (|M| \times
\IN)^{\times 2}$.  This in turn implies that $p \times \id$ induces an equivalence
$\contc^{\allG,0}_G(M) \xrightarrow{\sim} \contc^{\allG,0,\nowedge}_G(M)$.
Applying~\eqref{eq:contrCatUcoef-to-contrCatUcoefover}
we obtain the equivalence $\contc^{\allG,0}_G(M) \xrightarrow{\sim} \contc^{\allG,0,\nowedge}_G(M)$
and this yields the equivalences (3) and (4) from~\eqref{eq:diagram-for-CVCYC-via-COMP}.
Also from~\eqref{eq:contrCatUcoef-to-contrCatUcoefover} we obtain an equivalence
$\contc^{\allG,0,\nowedge}_G(M) \xrightarrow{\sim} \contcover^{\allG,0,\nowedge}_G(M)$ and this induces the
equivalences (5) and (6) from~\eqref{eq:diagram-for-CVCYC-via-COMP}.

%-----------------------------------------------------------------

\subsection{The limit category}\label{subsec:sequence-cat}
We briefly digress to recall a result from~\cite{Bartels-Lueck(2020additive)}.  
Consider a nested sequence of categories
\begin{equation*}
  \calc = \calc_0 \supseteq \calc_1 \supseteq \calc_2 \supseteq \cdots.
\end{equation*}
Associated to this is the \emph{sequence category} $\cals(\calc_*)$.  This is the
subcategory of $\prod_{m \geq 0} \calc$, whose objects are sequences $(C_m)_{m \geq 0}$
such that for any $l$ there is $m_0$ with $C_m \in \calc_l$ for all $m \geq m_0$.
Morphisms are sequences $(\varphi_m)_{m \geq 0}$ such that for any $l$ there is $m_0$ with
$\varphi_m \in \calc_l$ for all $m \geq m_0$.  The sum $\bigoplus_{m \geq 0} \calc$ is a
subcategory of $\cals(\calc_*)$ and we call the quotient category
\begin{equation*}
  \call(\calc_*) := \cals(\calc_*) \bigg/ \bigoplus_{m \geq 0} \calc_0
\end{equation*}
the \emph{limit category}.  We reviewed $l$-uniform regular coherence and exactness in
Subsection~\ref{subsec:regular_and_exact}.  Assume now that \refstepcounter{theorem}
\begin{enumerate}[label=(\thetheorem\alph*),
                 align=parleft, 
                 leftmargin=*,
                 %labelindent=2pt,
                 %labelwidth=60pt,
                 labelsep=10pt,
                 %itemindent=50pt
                 ]                
\item\label{nl:calc-exact} for any $d \in \IN$ there is $m_0$ such that the inclusion
  $\calc_{m+1}[\IZ^d] \to \calc_{m}[\IZ^d]$ is exact for all $m \geq m_0$;
\item\label{nl:calc-reg-coherent} for any $d \in \IN$ there are $m_0,l \in \IN$ such that
  $\calc_m[\IZ^d]$ is $l$-regular coherent for all $m \geq m_0$.
\end{enumerate}
Here we use the trivial action to form $\calc_m[\IZ^d]$.
Then~\cite[Thm.~14.1]{Bartels-Lueck(2020additive)} asserts the following.  Consider an
action of a finitely generated free abelian group $\Gamma$ on $\calc$ with the following
property: for $\gamma \in \Gamma$, $j \in \IN$ there is $i \in \IN$ such that
$\gamma(\calc_i) \subseteq \calc_j$.  Then the assembly map
\begin{equation}\label{eq:assembly-for-Gamma-limit}
  \hocolimunder_{\underline{\Gamma}} \bfK \big( \call(\calc_*) \big)
  \xrightarrow{\sim} \bfK \big( \call(\calc_*)[\Gamma] \big)
\end{equation}
is an equivalence.  (In~\cite{Bartels-Lueck(2020additive)} the result is formulated using
an equivariant homology theory instead of a homotopy colimit, but the two formulations are
equivalent, see~\cite{Davis-Lueck(1998)}.)

%-----------------------------------------------------------------

\subsection{Analysis of $\contc^{\allG,0,\nowedge}_G(M)$}\label{subsec:analysis-of-C_upper_0(M)}
Finally, we discuss the equivalence (7) from~\eqref{eq:diagram-for-CVCYC-via-COMP}.

Fix again $M \in \EP\Or_\COM(G)$ and write $M = (G/K_1,\dots,G/K_n)$ with $K_r \in \COM$.
Using Lemma~\ref{lem:small-compact-open} we fix a choice of
compact open subgroups $(U_{r,i})_{r=1,\dots,n, i \in \IN_{\geq 1}}$ of
$G$ such that for each $r=1,\dots,n$ we have
\begin{equation*}
  U_{r,1} \supseteq U_{r,2} \supseteq \dots \;\;\supseteq  \bigcap_{i} U_{r,i} = K_r. 	
\end{equation*}
Set $U_{r,0} := G$\footnote{This ensure that $\calc_0$ defined later carries a 
  $\Gamma$-action.} and $Q_i := (G/U_{1,i},\dots,G/U_{n,i})$.  We write
$p^i \colon |M| \to |Q_i|$ and, for $j \geq i$, $p_j^i \colon |Q_j| \to |Q_i|$ for the
canonical projections.  We define the $G$-control structure
$\mfE^i = (\mfE^i_1,\mfE^i_2,\mfE^i_G)$ on $|M|$, where $\mfE^i_1$ is the collection of all
finite subsets of $|M|$, $\mfE^i_2$ consists of all $E \subseteq |M| \times |M|$
satisfying
\begin{equation}\label{eq:0-control-over-Q_i} \twovec{\lambda'}{\lambda} \in E \;
  \implies \; p^i(\lambda) = p^i(\lambda'),
\end{equation}
and $\mfE^i_G$ is the collection of all subsets of $G$.
We abbreviate $\calc_i := \contrCatUcoef{G}{\mfE^i}{\calb}$ and obtain a nested sequence
of categories $\calc_0 \supseteq \calc_1 \supseteq \calc_2 \supseteq \dots$.  The metric
control condition for $E \in \contstrCtilde^0_2(M)$ is equivalent to
\begin{itemize}
\item[-] for any $i$ there is $t_0$ such that for all $t \geq t_0$ and all
  $\lambda, \lambda'$ we have
  \begin{equation*}
    \twovec{\lambda',t}{\lambda,t} \in E \implies p^i(\lambda) = p^i(\lambda').
  \end{equation*}
\end{itemize}
From this observation it is easy to deduce that there is an equivalence
$\contc^{\allG,0,\nowedge}_G(M) \xrightarrow{\sim} \call(\calc_*)$ that sends an object $\bfB = (S,\pi,B)$ to the
sequence $(S_i,\pi_i,B_i)_{i \in \IN}$, where $S_i = \pi^{-1}(|M| \times \{ i \})$,
$B_i = B|_{S_i}$ and $\pi_i$ is the composition
\begin{equation*}
  S_i \xrightarrow{\pi|_{S_i}} |M| \times \{i\} \xrightarrow{\equiv} |M|.
\end{equation*}
We point out that for the equivalence $\contc^{\allG,0,\nowedge}_G(M) \xrightarrow{\sim} \call(\calc_*)$
it is important that we work with $\mfC^\allG_G(P) =\contstrc^{\allG,0,\nowedge}_G(M) = \text{All subsets of $G$}$;
if one were to use only compact subsets of $G$ instead, then the limit category would be strictly bigger. 

Let now $\Gamma = \Gamma_1 \times \cdots \times \Gamma_n$ be as in
Subsection~\ref{subsec:Gamma}.  The group $\Gamma_r$ does not necessarily normalize
$U_{r,i}$, but for each $\gamma \in \Gamma_r$ we still have
$\bigcap_i (U_{r,i})^\gamma = (K_r)^\gamma = K_r$.  Using the compactness of the $U_{r,i}$
for $i>0$, it is  easy to check that for fixed $\gamma$ and $j$ there is $i$ with
$(U_{r,i})^\gamma \subseteq U_{r,j}$.  This implies that the action of $\Gamma$ on $M$
induces an action acts on the nested sequence $C_*$ as in
Subsection~\ref{subsec:sequence-cat}.  
Moreover $\contc^{\allG,0,\nowedge}_G(M) \to \call(\calc_*)$ is $\Gamma$-equivariant and an equivalence.
It induces therefore an equivalence
$\contc^{\allG,0,\nowedge}_G(M)[\Gamma] \to \call(\calc_*)[\Gamma]$.  It follows that (7)
from~\eqref{eq:diagram-for-CVCYC-via-COMP} is an equivalence, if and only
if~\eqref{eq:assembly-for-Gamma-limit} is an equivalence for our present choice of
$\calc_*$.

\begin{lemma}\label{lem:nested-is-reg_and_exact} The nested sequence $\calc_*$ defined
  above satisfies~\ref{nl:calc-exact}.  If $\calb$ satisfies  (Reg) from
  Definition~\ref{def:regularity-for-COP}, then $\calc_*$ also
  satisfies~\ref{nl:calc-reg-coherent}.
\end{lemma}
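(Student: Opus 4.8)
The plan is to identify each $\calc_i$ with a controlled category over the \emph{discrete} $G$-set $|Q_i|$, for which both assertions become standard. The key observation is that, since $\mfE^i_2$ is $0$-control over $|Q_i|$ and the objects of $\calc_i$ are finite, the assignment $(S,\pi,B)\mapsto (S,p^i\circ\pi,B)$, together with the identity on matrices of morphisms, is an equivalence $\calc_i\xrightarrow{\ \sim\ }(\calb[|Q_i|])_\oplus$. Faithfulness is clear; fullness holds because a matrix $\varphi$ defines a morphism of $\calc_i$ precisely when each entry satisfies $g\cdot p^i(\pi(s))=p^i(\pi'(s'))$ for all $g\in\supp\varphi_s^{s'}$, which is exactly the support condition defining $\calb[|Q_i|]$ (Definition~\ref{def:calbX}); and essential surjectivity holds because $p^i\colon|M|\to|Q_i|$ is surjective, and for any lift $\pi$ of a map $\psi\colon S\to|Q_i|$ with $\supp B(s)\subseteq G_{\psi(s)}$ the finiteness and $\suppX$-conditions for $\calc_i$ are automatic (recall that $\suppG$ of a morphism is automatically compact, so the requirement on $\mfE^i_G$ is vacuous here). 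Since $|Q_i|=\coprod_r G/U_{r,i}$ is a discrete smooth $G$-set whose orbits all have the form $G/W$ with $W$ compact open, \eqref{eq:contb-and-coproducts} then yields $\calc_i\simeq\bigl(\coprod_{G/W\subseteq|Q_i|}\calb[G/W]\bigr)_{\oplus}$, and by \eqref{eq:calb-on-orbits} each $\calb[G/W]$ is equivalent to $\res_G^W\calb$.

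Under these equivalences the inclusion $\calc_{m+1}\hookrightarrow\calc_m$ becomes the functor $(p_{m+1}^{m})_*$ induced by the projection $p_{m+1}^{m}\colon|Q_{m+1}|\to|Q_m|$; since each $U_{r,m+1}\subseteq U_{r,m}$ is of finite index, $p_{m+1}^{m}$ is a $G$-map of discrete $G$-sets with finite fibres, and decomposing over orbits it is a direct sum of functors $\calb[G/V]\to\calb[G/W]$ with $V\subseteq W$ compact open of finite index. For \ref{nl:calc-reg-coherent} I would use that regular coherence only involves the category of finitely presented modules, which is unchanged under passage to additive hull and to idempotent completion; it therefore suffices that $\coprod_{G/W\subseteq|Q_m|}\calb[G/W]_\oplus[\IZ^d]$ be uniformly regular coherent. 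By (Reg) there is $l(d)$ with each summand $\calb[G/W]_\oplus[\IZ^d]$ being $l(d)$-uniformly regular coherent; a finitely presented module over a coproduct of additive categories is supported on finitely many summands, hence decomposes as a tuple of finitely presented modules that can be resolved separately by projective resolutions of length $\le l(d)$. This gives \ref{nl:calc-reg-coherent}, indeed with $m_0=0$.

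For \ref{nl:calc-exact}, since direct sums of exact functors are exact and $(-)[\IZ^d]$ with the trivial $\IZ^d$-action preserves exactness, it suffices to show that each $\calb[G/V]\to\calb[G/W]$ with $[W:V]<\infty$ is exact. Via $\calb[G/V]\simeq\res_G^V\calb$ and $\calb[G/W]\simeq\res_G^W\calb$ this functor is extension of scalars along the inclusion; for $\calb=\calb(G;R)$ it is the induction functor $\calh(W;R)\otimes_{\calh(V;R)}(-)$, which is exact because $\calh(W;R)$ is free of rank $[W:V]$ as a right $\calh(V;R)$-module, and for a general Hecke category with $G$-support the corresponding statement is furnished by the structural results of \cite{Bartels-Lueck(2023foundations)}. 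Establishing this last exactness — that induction along a finite-index inclusion of compact open subgroups is exact, in the full generality of Hecke categories with $G$-support — is the step I expect to be the main obstacle; everything else is bookkeeping with the orbit decomposition of $|Q_m|$, the translation and support-cofinality axioms~\ref{def:Hecke-category:trans} and~\ref{def:Hecke-category:cofinal} for $\calb$ being essentially the only extra inputs needed.
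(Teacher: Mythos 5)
Your proof follows essentially the same route as the paper's: identify $\calc_i$ for $i\geq 1$ with a direct sum, over the orbits of $|Q_i|$, of copies of $\calb[G/G_\lambda]_\oplus$ with $G_\lambda$ compact open, deduce~\ref{nl:calc-reg-coherent} from (Reg) together with the closure of uniform regular coherence under arbitrary direct sums, equivalences and $(-)[\IZ^d]$, and reduce~\ref{nl:calc-exact} to exactness of the induction functors along the finite-index inclusions $G_\lambda\subseteq G_{p^i_j(\lambda)}$ of compact open subgroups, which the paper likewise takes from~\cite[Lem.~7.51]{Bartels-Lueck(2023foundations)} rather than proving in situ. Two small corrections: $|Q_i|$ is the product $G/U_{1,i}\times\cdots\times G/U_{n,i}$, not $\coprod_r G/U_{r,i}$ — its orbits have stabilizers the finite intersections of conjugates of the $U_{r,i}$, which is what you in fact use afterwards; and the parenthetical claim that $m_0=0$ works is false, since $U_{r,0}=G$ gives $\calc_0\simeq\calb_\oplus$ and (Reg) only controls $\calb[G/U]$ for $U$ compact open, so the argument yields $m_0=1$ (which suffices for~\ref{nl:calc-reg-coherent}).
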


\begin{proof}
  We start by examining $\calc_i = \contrCatUcoef{G}{\mfE^i}{\calb}$ for $i > 0$.  For
  $\lambda \in |Q_i|$ let $G_\lambda$ be the isotropy group of $\lambda$.  This is a
  finite intersection of conjugates of the $U_{i,r}$ and thus compact open in $G$.  Let
  $\bfB = (S,\pi,B) \in \calc_i$ and $s \in S$.  The control
  condition~\eqref{eq:0-control-over-Q_i} implies $\supp B(s) \subseteq G_{p_i(\pi(s))}$,
  in other words $B(s) \in \calb|_{G_{p_i(\pi(s))}}$.  For $\lambda \in |Q_i|$ we obtain,
  still using~\eqref{eq:0-control-over-Q_i}, a functor
  \begin{eqnarray*}
    F_{i,\lambda} \colon \quad \calc_i
    & \to &
            (\calb|_{G_\lambda})_\oplus \\
    \bfB
    & \mapsto &
                \bigoplus_{p_i(\pi(s)) = \lambda} B(s).
  \end{eqnarray*}
  By definition of $\mfE^i_1$ the set $S$ is finite and so the above sum is finite and
  $F_{i,\lambda}(\bfB) = 0$ for all but finitely many $\lambda$.  
  The $F_{i,\lambda}$ combine to an equivalence
  \begin{equation}\label{eq:calc_i-as-Hecke}
    \begin{array}{ccc}
      \calc_i   & \xrightarrow{\sim} & \bigoplus_{\lambda \in |Q_i|} (\calb|_{G_\lambda})_\oplus \\[3pt]
      \bfB  & \mapsto & (F_{i,\lambda}(\bfB))_{\lambda \in |Q_i|}.
    \end{array}
  \end{equation}
    Since the property $l$-uniformly regular passes to direct sums over arbitrary index
    sets and is invariant under equivalence of additive categories and the passage going
    from an additive category $\cala$ to $\cala[\IZ^d]$ is compatible with infinite direct
    sums over arbitrary index sets, (Reg) from
  Definition~\ref{def:regularity-for-COP}
    implies~\ref{nl:calc-reg-coherent}.
    
  For $j > i$ and $\lambda \in |Q_j|$ we have $G_{\lambda} \subseteq G_{p_j^i(\lambda)}$
  as $p_j^i$ is $G$-equivariant.  The inclusion $\calc_j \subseteq \calc_i$ corresponds
  under~\eqref{eq:calc_i-as-Hecke} to the functor
  \begin{equation*}
    \bigoplus_{\lambda \in |Q_j|}  (\calb|_{G_{\lambda}})_\oplus \to
    \bigoplus_{\kappa \in |Q_i|}  (\calb|_{G_{\kappa}})_\oplus
  \end{equation*}
  that sends the $\lambda$-summand into the $\kappa$-summand by the functor induced from
  the inclusion $G_\lambda \subseteq G_\kappa$ where $\kappa = p_j^i(\lambda)$. 
  By~\cite[Lem.~7.51]{Bartels-Lueck(2023foundations)}  each of these is exact.
  This implies~\ref{nl:calc-exact}. 
\end{proof}

Lemma~\ref{lem:nested-is-reg_and_exact} allows us to
apply~\cite[Thm.~14.1]{Bartels-Lueck(2020additive)} as reviewed in
Subsection~\ref{subsec:sequence-cat} to the nested sequence constructed above.
So~\eqref{eq:assembly-for-Gamma-limit} and therefore also
(7) appearing in~\eqref{eq:diagram-for-CVCYC-via-COMP} is an equivalence.

\begin{proof}[Formal conclusion of the proof of Theorem~\ref{thm:contC_G(P)-for-CVCYC-via-COMP}
  and Theorem~\ref{thm:reduction}.]   
  Altogether we have shown that the top horizontal map
  in~\eqref{eq:diagram-for-CVCYC-via-COMP} is an equivalence.  As noted before, because of
  the pushout from Proposition~\ref{prop:pushout-3C0-C}, this also implies
  that~\eqref{eq:PCOM-over-P-to-P-C} is an equivalence as claimed in
  Theorem~\ref{thm:contC_G(P)-for-CVCYC-via-COMP}. We have already explained in
  Subsection~\ref{subsec:contC} that Theorem~\ref{thm:reduction} follows from
  Theorem~\ref{thm:contC_G(P)-for-CVCYC-via-COMP}.
\end{proof}

\begin{remark}\label{rem:value_on_compact_subgroups}
  At least in the case  $M \in \EP\Or_\COM(G)$ we can give an explanation of the value of
 $\bfK \big( \contc_G(M) \big)$.  Write 
 $M = (G/K_1,\dots,G/K_n)$ with $K_r \in \COM$ and fix a
 choice of compact open subgroups $(U_{r,i})_{r=1,\dots,n, i \in \IN_{\geq 1}}$ of
$G$ such that for each $r=1,\dots,n$ we have
$U_{r,1} \supseteq U_{r,2} \supseteq \dots \;\;\supseteq  \bigcap_{i} U_{r,i} = K_r$.
 Put $Q_i = (G/K_{1,i}, \ldots, G/K_{n,i})$ for  $i \in \IN_{\geq 1}$.

Then there is zigzag of weak homotopy equivalences from $\bfK \big( \contc_G(M) \big)$
to the homotopy inverse limit $\holim_{i \to \infty} \bfK \big( \contc_G(M_i) \big)$.
In particular there is for every
  $n \in \IZ$ a short exact sequence
  \begin{multline*}
    0 \to \invlim^1_{i \to \infty} \pi_{n+1}\bigl(\bfK(\contc_G(Q_i))\big)
    \to \pi_n\bigl(\bfK(\contc_G(M))\big) \\
  \to \invlim_{i \to \infty}\pi_{n}\bigl(\bfK(\contc_G(Q_i))\big) \to 0.
  \end{multline*}
  Since we do not need this result in this paper, we omit its proof.
  It is very unlikely that the corresponding statement holds, if
  we drop the assumption that each $K_i$ is compact.
\end{remark}

%%%%%%%%%%%%%%%%%%%%%%%%%%%%%%%%%%%%%%%%%%%%%%%%%%%%%%%%%%%%%%%%%%%%

\appendix
\renewcommand{\thesubsection}{\Alph{section}.{\sc\roman{subsection}}}

%%%%%%%%%%%%%%%%%%%%%%%%%%%%%%%%%%%%%%%%%%%%%%%%%%%%%%%%%%%%%%%%%%%%

\section{Homotopy colimits}\label{app:homotopy-colimits}

%-----------------------------------------------------------------

\subsection{Cofinality and transitivity}

We recall two basic facts about homotopy
colimits~\cite[§9]{Dwyer-Kan(1984a)}.

Let $F \colon \cala \to \calb$ be functor.  Let $B \in \calb$.  We
write $B \downarrow F$ for the following category.  Objects are pairs
$(A,\beta)$ with $A \in \cala$ and $\beta \colon B \to F(A)$ in
$\calb$.  Morphisms $(A,\beta) \to (A',\beta')$ are morphisms
$\alpha \colon A \to A'$ in $\cala$ with
$F(\alpha) \circ \beta = \beta'$.  We write $F \downarrow B$ for the
following category.  Objects are pairs $(A,\beta)$ with $A \in \cala$
and $\beta \colon F(A) \to B$ in $\calb$.  Morphisms
$(A,\beta) \to (A',\beta')$ are morphisms $\alpha \colon A \to A'$ in
$\cala$ with $ \beta \circ F(\alpha) = \beta'$.  If $F$ is the
inclusion of the subcategory $\cala$, then we write
$B \downarrow \cala$ and $\cala \downarrow B$ instead of
$B \downarrow F$ and $F \downarrow B$.

A functor $F \colon \cala \to \calb$ between small categories is said
to be \emph{right cofinal}, if the nerve of $B \downarrow F$ is
contractible (and in particular non-empty) for every $B \in \calb$.
 
\begin{lemma}[Cofinality]\label{lem:cofinality}
  Let $F \colon \cala \to \calb$ be a right cofinal functor between
  small categories.  Then for any $\bfD \colon \calb \to \Spectra$ the
  canonical map
  \begin{equation*}
    \hocolimunder_{\cala} F^*\bfD   \; \xrightarrow{\sim} \; \hocolimunder_{\calb} \bfD 
  \end{equation*}
  is an equivalence.
\end{lemma}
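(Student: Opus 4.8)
The plan is to reduce this classical cofinality statement to the case of corepresentable diagrams, where it unwinds directly into the hypothesis that $b\downarrow F$ has contractible nerve.

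First I would note that, viewed as functors of the variable $\bfD$, both $\bfD\mapsto\hocolimunder_\cala F^*\bfD$ and $\bfD\mapsto\hocolimunder_\calb\bfD$ preserve homotopy colimits (homotopy colimits commute with homotopy colimits, and $F^*$ is formed objectwise), and the comparison map of the Lemma is natural in $\bfD$. Hence the class of diagrams $\bfD$ for which the map is an equivalence is closed under homotopy colimits. Next I would invoke the bar resolution: for any $\bfD\colon\calb\to\Spectra$ the simplicial diagram $[n]\mapsto\bigvee_{b_0\to\cdots\to b_n}\Sigma^{\infty}_{+}\mor_\calb(b_n,-)\wedge\bfD(b_0)$ has geometric realization canonically equivalent to $\bfD$ (its value at $b$ is the Bousfield--Kan simplicial replacement of $\bfD$ restricted to $\calb\downarrow b$, and $\calb\downarrow b$ has the terminal object $(b,\id_b)$). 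Thus $\bfD$ is a homotopy colimit of wedges of corepresentable diagrams $\Sigma^{\infty}_{+}\mor_\calb(b_0,-)\wedge Y$ with $b_0\in\calb$ and $Y\in\Spectra$, so by the previous step it suffices to treat $\bfD=\Sigma^{\infty}_{+}\mor_\calb(b_0,-)$, the factor $Y$ and the wedges being harmless.

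For such $\bfD$ the computation is immediate. Since $\Sigma^{\infty}_{+}$ preserves homotopy colimits, the homotopy colimit of a $\Sets$-valued diagram is the nerve of its category of elements, and the category of elements of $a\mapsto\mor_\calb(b_0,F(a))$ is precisely $b_0\downarrow F$, we get $\hocolimunder_\cala F^*\bfD\simeq\Sigma^{\infty}_{+}N(b_0\downarrow F)$; this is $\IS$ because $N(b_0\downarrow F)$ is contractible by the hypothesis that $F$ is right cofinal. The same computation with $F=\id_\calb$ gives $\hocolimunder_\calb\bfD\simeq\Sigma^{\infty}_{+}N(b_0\downarrow\calb)\simeq\IS$, since $b_0\downarrow\calb$ has the initial object $(b_0,\id_{b_0})$. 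Under these identifications the comparison map is induced by the evident functor $b_0\downarrow F\to b_0\downarrow\calb$, $(a,\beta)\mapsto(F(a),\beta)$; being a map between two contractible nerves, it is an equivalence. This completes the reduction and hence the proof.

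I do not expect a genuinely hard step: the entire content is packaged into the contractibility of $b_0\downarrow F$, i.e.\ into right cofinality. The only points needing care are the bookkeeping in the reduction to corepresentable diagrams --- for which one may instead argue degreewise on the Bousfield--Kan simplicial replacements of $F^*\bfD$ and of $\bfD$ directly --- and the identification of $\hocolimunder_\cala$ of a pulled-back corepresentable with the suspension spectrum of a nerve of a comma category. If preferred, one can simply cite the classical cofinality theorem, e.g.\ from~\cite{Dwyer-Kan(1984a)} as referenced in the statement.
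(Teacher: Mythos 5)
Your argument is correct, but it is worth noting that the paper does not actually prove this lemma: its entire proof is the citation ``This is [Dwyer--Kan, 9.4]'', exactly the fallback you mention in your last sentence. What you have written is the standard self-contained proof in the stable setting: both $\bfD\mapsto\hocolimunder_{\cala}F^*\bfD$ and $\bfD\mapsto\hocolimunder_{\calb}\bfD$ preserve homotopy colimits in $\bfD$ and the comparison map is natural, every $\bfD$ is a realization of wedges of corepresentables smashed with fixed spectra via the bar resolution, and for a corepresentable $\Sigma^\infty_+\mor_\calb(b_0,-)$ both sides compute to $\Sigma^\infty_+$ of the nerves of $b_0\downarrow F$ and $b_0\downarrow\calb$ respectively (the former contractible by hypothesis, the latter by its initial object), with the comparison induced by $(a,\beta)\mapsto(F(a),\beta)$. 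The identification of the homotopy colimit of a set-valued diagram with the nerve of its category of elements, and the naturality of the bar resolution in $\bfD$, are the only points requiring citation or care, and both are standard. Dwyer--Kan's proof is phrased differently (via homotopy left Kan extensions and the formula $\hocolimunder_{\calb}\bfD\simeq\hocolimunder_{\calb}F_*F^*\bfD$ computed over the comma categories $F\downarrow b$, dually $b\downarrow F$), but the mathematical content --- that the contractibility of the comma categories is exactly what makes the comparison an equivalence --- is the same, so your route buys a self-contained argument at the cost of a little simplicial bookkeeping that the paper outsources.
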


\begin{proof}
  This is~\cite[9.4]{Dwyer-Kan(1984a)}.
\end{proof}

\begin{lemma}[Transitivity]\label{lem:transitivity}
  Let $F \colon \cala \to \calb$ be a functor between small
  categories.  Consider functors $\bfD \colon \cala \to \Spectra$,
  $\bfE \colon \calb \to \Spectra$ and a natural transformation
  $\tau \colon \bfD \to F^*\bfE$.  For $B \in \calb$ write
  $v_B \colon F \downarrow B \to \cala$ for the forgetful functor.
  Suppose that for any $B \in \calb$ the canonical map
  \begin{equation*}
    \hocolimunder_{F \downarrow B} v_B^*\bfD \; \xrightarrow{\sim} \; \bfE(B)
  \end{equation*}  
  is an equivalence.  Then the canonical map
  \begin{equation*}
    \hocolimunder_{\cala} \bfD   \; \xrightarrow{\sim} \; \hocolimunder_{\calb} \bfE 
  \end{equation*}
  is an equivalence as well.
\end{lemma}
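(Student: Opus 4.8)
The plan is to reduce the statement to the Cofinality Lemma~\ref{lem:cofinality} together with the Fubini property of homotopy colimits, using the Grothendieck construction of the comma categories $F\downarrow(-)$. Concretely, I would form the category $\calg := \int_\calb (F\downarrow -)$ of the functor $\calb\to\Categories$, $B\mapsto F\downarrow B$: its objects are triples $(B,A,\beta)$ with $B\in\calb$, $A\in\cala$, and $\beta\colon F(A)\to B$ a morphism of $\calb$, and a morphism $(B_1,A_1,\beta_1)\to(B_2,A_2,\beta_2)$ is a pair $(b\colon B_1\to B_2,\ \alpha\colon A_1\to A_2)$ with $\beta_2\circ F(\alpha)=b\circ\beta_1$. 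It carries the forgetful functor $v\colon\calg\to\cala$, $(B,A,\beta)\mapsto A$, and the projection $p\colon\calg\to\calb$, $(B,A,\beta)\mapsto B$; the restriction of $v$ to the fibre of $p$ over $B$ is precisely $v_B\colon F\downarrow B\to\cala$. Finally, $\tau$ and the structure maps of $\bfE$ assemble into a natural transformation $\sigma\colon v^*\bfD\to p^*\bfE$, whose value at $(B,A,\beta)$ is $\bfD(A)\xrightarrow{\tau_A}\bfE(F(A))\xrightarrow{\bfE(\beta)}\bfE(B)$; naturality of $\sigma$ follows immediately from naturality of $\tau$ and functoriality of $\bfE$.

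The first step is to check that $v$ is right cofinal in the sense of Appendix~\ref{app:homotopy-colimits}. For $A_0\in\cala$ the comma category $A_0\downarrow v$ has the initial object given by $\bigl(F(A_0),A_0,\id_{F(A_0)}\bigr)$ equipped with $\id_{A_0}\colon A_0\to A_0$: indeed, for any object $\bigl((B,A,\beta),\ \alpha\colon A_0\to A\bigr)$ the pair $\bigl(\beta\circ F(\alpha),\ \alpha\bigr)$ is the unique morphism out of it. Hence $A_0\downarrow v$ has contractible nerve (in particular it is non-empty), so $v$ is right cofinal and Lemma~\ref{lem:cofinality} gives an equivalence $\hocolimunder_{\calg}v^*\bfD\xrightarrow{\sim}\hocolimunder_{\cala}\bfD$. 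The second step invokes the Fubini property of homotopy colimits over a Grothendieck construction (a standard fact; see e.g.~\cite[\S~9]{Dwyer-Kan(1984a)}), which gives a natural equivalence
\[
  \hocolimunder_{\calg}v^*\bfD \;\simeq\; \hocolimunder_{B\in\calb}\Bigl(\hocolimunder_{(A,\beta)\in F\downarrow B}v_B^*\bfD\Bigr).
\]
The third step uses the hypothesis: each canonical map $\hocolimunder_{F\downarrow B}v_B^*\bfD\to\bfE(B)$ is an equivalence, and these maps are natural in $B$ (up to the collapse maps of the constant diagrams they are the components obtained by restricting $\sigma$ to the fibres of $p$), so, since homotopy colimits preserve objectwise equivalences, they induce an equivalence $\hocolimunder_{B\in\calb}\bigl(\hocolimunder_{F\downarrow B}v_B^*\bfD\bigr)\xrightarrow{\sim}\hocolimunder_{\calb}\bfE$.

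Composing the three equivalences yields an equivalence $\hocolimunder_{\cala}\bfD\xrightarrow{\sim}\hocolimunder_{\calb}\bfE$, and the remaining task is to identify it with the canonical map of the statement, namely the map induced by the compatible family of composites $\bfD(A)\xrightarrow{\tau_A}\bfE(F(A))\to\hocolimunder_{\calb}\bfE$. This is a routine unwinding of definitions: the first equivalence is induced by $v$, the second is the Fubini decomposition, and the third is induced by $\sigma$, so the composite is induced by $\sigma$ followed by the canonical map $\hocolimunder_{\calg}p^*\bfE\to\hocolimunder_{\calb}\bfE$, which becomes the canonical map of the statement after precomposing with the cofinality equivalence $\hocolimunder_{\calg}v^*\bfD\simeq\hocolimunder_{\cala}\bfD$. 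I expect the only point requiring genuine care to be the Fubini property for homotopy colimits over the Grothendieck construction; if no convenient reference is at hand, it can be proved from the simplicial bar construction computing $\hocolimunder_{\calg}$ and Fubini for geometric realisations of bisimplicial spectra. Everything else is formal once the Cofinality Lemma is available.
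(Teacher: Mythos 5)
Your argument is correct, but it is organized differently from the paper's. The paper disposes of the lemma in one line: the hypothesis says precisely that $\bfE$ is the homotopy push down (homotopy left Kan extension) $F_*\bfD$ of $\bfD$ along $F$, whose value at $B$ is by definition $\hocolimunder_{F \downarrow B} v_B^*\bfD$, and then it cites Dwyer--Kan [9.4] for the identification $\hocolimunder_{\calb} F_*\bfD \simeq \hocolimunder_{\cala}\bfD$. What you have written is, in effect, a self-contained proof of that cited fact: you realise the push down via the Grothendieck construction $\calg = \int_{\calb}(F\downarrow -)$, check that the forgetful functor $v\colon\calg\to\cala$ is right cofinal (your verification that $A_0\downarrow v$ has an initial object is correct and complete), decompose $\hocolimunder_{\calg}v^*\bfD$ fibrewise by Fubini/Thomason, and then feed in the objectwise equivalences supplied by the hypothesis. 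The trade-off is clear: the paper's route is shorter but rests entirely on the black-box reference, while yours makes the mechanism visible at the cost of having to justify the Fubini decomposition over a Grothendieck construction — which, as you note, is itself essentially the content of the Dwyer--Kan statement, so you should either cite it there or carry out the bar-construction argument you sketch. Your final identification of the composite equivalence with the canonical map induced by $\tau$ is the right thing to check and is indeed routine.
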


\begin{proof}
  The assumption implies that $\bfE$ is the homotopy push down
  $F_*\bfD$ of $\bfD$ along $F$.  The assertion follows
  from~\cite[9.4]{Dwyer-Kan(1984a)}.
\end{proof}

%-----------------------------------------------------------------

\subsection{Homotopy colimits and categories with product}%
\label{appsubsec:homotopy-colimits_and_products}

Let $\calp$ be a small category with all non-empty finite products.
Let $\calp_+$ be the category obtained from $\calp$ by adding a
terminal object $\ast$.  For example for $\calp = \EP \cala$, we have
$\calp_+ = \EPplus\cala$.  Let $\bfD \colon \calp_+ \to \Spectra$ be a
functor.  The unique maps $P \to \ast$ induce the canonical map
\begin{equation*}
  \hocolimunder_{P \in \calp} \bfD(P)  \; \to \; \bfD(\ast).
\end{equation*}

\begin{lemma}\label{lem:hocolim-times-Q}
  Fix $Q \in \calp$.  Then the canonical map
  \begin{equation*}
    \hocolimunder_{P \in \calp} \bfD(P \times Q)  \; \to \; \bfD(Q)
  \end{equation*}
  is an equivalence.
\end{lemma}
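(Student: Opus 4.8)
The plan is to write the canonical map as a composite of two cofinality equivalences in the sense of Lemma~\ref{lem:cofinality}. Fix once and for all a choice of binary products in $\calp$, and define a functor $T \colon \calp \to \calp \downarrow Q$ by $T(P) := (P \times Q, \pr_Q \colon P \times Q \to Q)$ on objects and $T(u) := u \times \id_Q$ on a morphism $u \colon P \to P'$ (this is compatible with the structure maps to $Q$). Let $V \colon \calp \downarrow Q \to \calp$ be the forgetful functor $(R,f) \mapsto R$, so that $V^*\bfD$ sends $(R,f)$ to $\bfD(R)$ and $T^*V^*\bfD$ sends $P$ to $\bfD(P \times Q)$ (here we write $\bfD$ also for its restriction to the full subcategory $\calp \subseteq \calp_+$).

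First I would check that $T$ is right cofinal. Fix $(R,f) \in \calp \downarrow Q$. An object of the comma category $(R,f) \downarrow T$ is a pair $(P,\beta)$ with $\beta \colon (R,f) \to T(P)$, i.e.\ a morphism $\beta \colon R \to P \times Q$ in $\calp$ satisfying $\pr_Q \circ \beta = f$; such a $\beta$ is determined by its remaining component $\pr_P \circ \beta \colon R \to P$, and this correspondence is compatible with morphisms. Hence $(R,f) \downarrow T$ is isomorphic to the comma category $R \downarrow \calp$, which has the initial object $(R,\id_R)$ and therefore contractible nerve. By Lemma~\ref{lem:cofinality} the induced map
\begin{equation*}
  \hocolimunder_{P \in \calp} \bfD(P \times Q) \;=\; \hocolimunder_{\calp} T^*V^*\bfD \;\xrightarrow{\ \sim\ }\; \hocolimunder_{(R,f) \in \calp \downarrow Q} \bfD(R)
\end{equation*}
is an equivalence.

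Next, the category $\calp \downarrow Q$ has the terminal object $\id_Q$. For a terminal object $t$ of any small category $\calc$, the inclusion of the full subcategory $\{t\}$ is right cofinal, since $c \downarrow \{t\}$ is the terminal category for every $c \in \calc$; so by Lemma~\ref{lem:cofinality} the structure map $\bfD(Q) \to \hocolimunder_{(R,f)} \bfD(R)$ is an equivalence, and its one-sided (hence two-sided) homotopy inverse is the canonical projection $p$ assembled from the unique morphisms $(R,f) \to \id_Q$, that is from the maps $\bfD(f) \colon \bfD(R) \to \bfD(Q)$. Precomposing $p$ with the equivalence $T_*$ of the previous step yields, on the summand indexed by $P$, the map $\bfD(\pr_Q) \colon \bfD(P \times Q) \to \bfD(Q)$, which is precisely the canonical map of the statement; being a composite of equivalences, it is an equivalence. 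The only point requiring minor care is the choice of products needed to make $T$ strictly functorial, which is harmless because products in $\calp$ are unique up to canonical isomorphism and homotopy colimits are invariant under natural equivalences of diagrams. There is no serious obstacle here; the whole content is the bookkeeping identifying $(R,f) \downarrow T$ with $R \downarrow \calp$.
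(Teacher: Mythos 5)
Your proof is correct and follows essentially the same route as the paper: the paper also factors the canonical map through the comma category $\calp \downarrow Q$ via the functor $P \mapsto (P \times Q, \pr_Q)$ and applies the cofinality Lemma~\ref{lem:cofinality} twice, once for this functor (whose right cofinality the paper leaves as "easy to verify" — your identification of $(R,f) \downarrow T$ with $R \downarrow \calp$ is exactly that verification) and once using the terminal object $(Q,\id_Q)$ of $\calp \downarrow Q$.
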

  
\begin{proof}
  Consider
  \begin{equation*}
    \calp \; \xrightarrow{f_Q}  \; \big(\calp\downarrow Q\big) \;
    \xrightarrow{v_Q}  \; \calp  \; \xrightarrow{\bfD} \;  \Spectra 
  \end{equation*}
  where $f_Q$ sends $P$ to the canonical projection $P \times Q \to Q$
  and $v_Q$ is the forgetful functor.  It is easy to verify that $f_Q$
  is right cofinal.  Obviously $(Q,\id_Q)$ is terminal in
  $\calp \downarrow Q$.  Thus, using Lemma~\ref{lem:cofinality},
  \begin{equation*}
    \hocolimunder_{P \in \calp} \bfD(P \times Q)
    = \hocolimunder_{\calp} f_Q^*v_Q^*\bfD \xrightarrow{\sim}
    \hocolimunder_{\calp\downarrow Q} v_Q^*\bfD \xrightarrow{\sim} v_Q^*\bfD(Q,\id_Q) = \bfD(Q). 
  \end{equation*}
\end{proof}

\begin{proposition}\label{prop:hocolim-iso-for-smallest}
  Let $\cala$ be a small category.  Let $\cale$ be the smallest
  collection of functors $\bfE \colon \calp_+ \cala \to \Spectra$ that
  is closed under hocolimits, retracts and contains all functors of
  the form $P \mapsto \bfD(P \times Q)$ with $Q \in \calp$ and
  $\bfD \colon \calp \to \Spectra$.  Then for any $\bfE \in \cale$ the
  canonical map
  \begin{equation*}
    \hocolimunder_{P \in \calp} \bfE(P)  \; \xrightarrow{\sim} \; \bfE(\ast).
  \end{equation*} 
  is an equivalence.
\end{proposition}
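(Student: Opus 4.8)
The plan is to deduce the statement from Lemma~\ref{lem:hocolim-times-Q} by a soft ``closure'' argument. Let $\cale'$ be the collection of all functors $\bfE$ (with the domain and target as in the statement) for which the canonical map
\begin{equation*}
  a_\bfE \colon \hocolimunder_{P \in \calp} \bfE(P) \; \to \; \bfE(\ast)
\end{equation*}
is a weak equivalence. Since $\cale$ is \emph{by definition} the smallest collection of such functors that contains all functors of the form $P \mapsto \bfD(P \times Q)$ (for $Q \in \calp$ and $\bfD \colon \calp \to \Spectra$) and is closed under homotopy colimits and retracts, it suffices to show that $\cale'$ enjoys these three properties; then $\cale \subseteq \cale'$, which is the claim.

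First I would treat the generating functors. Given $\bfD \colon \calp \to \Spectra$ and $Q \in \calp$, the assignment $P \mapsto P \times Q$ is a functor $\calp_+ \to \calp$ (here $\ast \times Q = Q$, as $\ast$ is terminal), so $\bfE := \bfD(\,\--\times Q)$ is a well-defined functor on $\calp_+$ with $\bfE(\ast) = \bfD(Q)$. Under $P \mapsto P \times Q$ the unique morphism $P \to \ast$ of $\calp_+$ is carried to the canonical projection $P \times Q \to Q$, so $a_\bfE$ is precisely the map of Lemma~\ref{lem:hocolim-times-Q}; hence it is an equivalence and $\bfE \in \cale'$.

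Next, closure under homotopy colimits. Homotopy colimits of functors into $\Spectra$ are computed objectwise, so for a small diagram $i \mapsto \bfE_i$ in $\cale'$ one has $\bfE(P) \simeq \hocolimunder_i \bfE_i(P)$ naturally in $P$, where $\bfE := \hocolimunder_i \bfE_i$. The Fubini interchange for iterated homotopy colimits identifies $\hocolimunder_{P}\hocolimunder_{i}\bfE_i(P)$ with $\hocolimunder_{i}\hocolimunder_{P}\bfE_i(P)$ compatibly with the maps down to $\bfE(\ast) \simeq \hocolimunder_i \bfE_i(\ast)$, so $a_\bfE$ is equivalent to $\hocolimunder_i a_{\bfE_i}$; since each $a_{\bfE_i}$ is an equivalence and homotopy colimits preserve weak equivalences, $\bfE \in \cale'$. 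For retracts: if $\bfE'$ is a retract of $\bfE \in \cale'$ (via natural transformations $\bfE' \to \bfE \to \bfE'$ composing to the identity), then naturality of $a_{(\--)}$ exhibits $a_{\bfE'}$ as a retract of $a_\bfE$ in the arrow category of spectra, and a retract of a weak equivalence is a weak equivalence (on each homotopy group, a retract of an isomorphism is an isomorphism). Hence $\bfE' \in \cale'$, which completes the verification and the proof.

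The argument is entirely formal once Lemma~\ref{lem:hocolim-times-Q} is in hand; the step I would be most careful about is the second one, namely confirming that homotopy colimits in the functor category $\calp_+\text{-}\Spectra$ are genuinely objectwise and that the Fubini interchange of homotopy colimits is natural enough to be compatible with the canonical maps $a_{(\--)}$. These are standard facts about homotopy colimits (cf.~\cite{Dwyer-Kan(1984a)} and the material in Appendix~\ref{app:homotopy-colimits}), so I do not expect a genuine obstacle.
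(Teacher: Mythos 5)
Your proof is correct and is essentially the paper's own argument: the paper likewise observes that the class of functors for which the canonical map is an equivalence contains the generators by Lemma~\ref{lem:hocolim-times-Q} and is closed under homotopy colimits and retracts, so the minimality of $\cale$ gives the claim. You have merely spelled out the closure properties (objectwise hocolimits, Fubini, retract of an equivalence) that the paper leaves implicit.
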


\begin{proof}
  Homotopy colimits and retracts of equivalences are again
  equivalences.  The result follows thus from
  Lemma~\ref{lem:hocolim-times-Q}.
\end{proof}

\begin{lemma}\label{lem:finite-products-contractible-nerv}
  The nerv of $\calp$ is contractible (provided that $\calp$ is
  non-empty).
\end{lemma}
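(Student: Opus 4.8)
The plan is to deduce contractibility from the classical principle that a category which admits a ``weak terminal object up to products'' has contractible nerve, using the interpretation of natural transformations as homotopies.

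First I would recall the standard fact that a natural transformation $\eta \colon G \Rightarrow H$ between functors $G, H \colon \calc \to \cald$ induces a homotopy between the induced maps on realizations of nerves, $|N(G)| \simeq |N(H)| \colon |N\calc| \to |N\cald|$. Indeed, such an $\eta$ is the same datum as a functor $\calc \times [1] \to \cald$ (where $[1]$ is the poset $0 \le 1$), and applying $N$ and geometric realization turns this into a homotopy $|N\calc| \times \Delta^1 \to |N\cald|$. I would simply cite this (e.g.\ Segal, or Quillen's ``Higher algebraic K-theory~I'') rather than reprove it.

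Next, since $\calp$ is non-empty, fix an object $Q \in \calp$. Using a choice of finite products in $\calp$, which exist by hypothesis, the assignment $P \mapsto P \times Q$ extends to an endofunctor $F \colon \calp \to \calp$, and the two families of projections assemble into natural transformations
$\pi_1 \colon F \Rightarrow \id_\calp$ and $\pi_2 \colon F \Rightarrow c_Q$,
where $c_Q \colon \calp \to \calp$ denotes the constant functor with value $Q$. Naturality of $\pi_1$ says that for every $f \colon P \to P'$ one has $f \circ \pi_1 = \pi_1 \circ (f \times \id_Q)$, and similarly for $\pi_2$; both are immediate from the universal property of the product.

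Finally, applying the first paragraph to $\pi_1$ and $\pi_2$ yields homotopies $|N(F)| \simeq \id_{|N\calp|}$ and $|N(F)| \simeq |N(c_Q)|$ as self-maps of $|N\calp|$. The map $|N(c_Q)|$ is constant with value the point $Q \in |N\calp|$, so $\id_{|N\calp|}$ is homotopic to a constant map and $|N\calp|$ is contractible. I do not anticipate any genuine obstacle: the only point requiring a (routine) choice is the functoriality of $-\times Q$, which is a standard consequence of having chosen finite products once and for all.
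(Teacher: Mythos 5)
Your proposal is correct and is essentially identical to the paper's own argument: fix $Q$, form the endofunctor $P \mapsto P \times Q$, and use the zig-zag of natural transformations $\id_\calp \leftarrow (- \times Q) \rightarrow c_Q$ together with the standard fact that natural transformations induce homotopies on nerves. No differences worth noting.
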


\begin{proof}  
  Let $Q$ be a fixed object of $\calp$.  Let
  $c_Q, x_Q \colon \calp \to \calp$ be the functors $c_Q(P) = Q$, $c_Q(\varphi) = \id_Q$,
  $x_Q(P) = P \times Q$, $x_Q(\varphi) = (\varphi \times \id_Q)$.  There are evident natural transformations
  $\id_\calp \leftarrow x_Q \to c_Q$.  On the nerv $\id_\calp$ induces
  the identity, $c_Q$ induces a constant map, and the two maps are
  homotopic.
\end{proof}

\begin{lemma}\label{lem:hocolim-products-Q-to-P}
  Let $\calq$ be a further category with all non-empty finite
  products.  Let $F \colon \calq \to \calp$ be a product preserving
  functor.  Assume that $P \downarrow F$ is non-empty for all
  $P \in \calp$.  Then $F$ is right cofinal.  In particular, for any
  $\bfD \colon \calp \to \Spectra$ the canonical map
  \begin{equation*}
    \hocolimunder_\calq F^* \bfD \; \xrightarrow{\sim} \; \hocolimunder_{\calp} \bfD 
  \end{equation*}   
  is an equivalence.
\end{lemma}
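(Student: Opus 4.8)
The plan is to deduce right cofinality of $F$ from the product structure on comma categories, and then invoke the cofinality Lemma~\ref{lem:cofinality}. Concretely, fix $P \in \calp$; I claim that the comma category $P \downarrow F$ has all non-empty finite products. Since $P \downarrow F$ is non-empty by hypothesis, Lemma~\ref{lem:finite-products-contractible-nerv} then shows that its nerve is contractible, which is precisely the assertion that $F$ is right cofinal. The ``in particular'' statement is then immediate from Lemma~\ref{lem:cofinality} applied to $F^*\bfD$.

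To produce products in $P \downarrow F$, take two objects $(Q_1,\beta_1)$ and $(Q_2,\beta_2)$, where $\beta_i \colon P \to F(Q_i)$ in $\calp$. Form the product $Q_1 \times Q_2$ in $\calq$. Since $F$ preserves non-empty finite products, $F(Q_1 \times Q_2)$ is (canonically identified with) $F(Q_1) \times F(Q_2)$, with $F$ of the two projections being the two projections; using the product in $\calp$, the pair $(\beta_1,\beta_2)$ yields $\beta := \langle \beta_1,\beta_2\rangle \colon P \to F(Q_1) \times F(Q_2) = F(Q_1 \times Q_2)$, so that $(Q_1 \times Q_2,\beta)$ is an object of $P \downarrow F$. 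One checks that the two projections $Q_1 \times Q_2 \to Q_i$ define morphisms $(Q_1 \times Q_2,\beta) \to (Q_i,\beta_i)$ in $P \downarrow F$, and that this object has the universal property of a product: given $\alpha_i \colon (Q',\beta') \to (Q_i,\beta_i)$, i.e.\ maps $\alpha_i \colon Q' \to Q_i$ with $F(\alpha_i) \circ \beta' = \beta_i$, the induced map $\langle \alpha_1,\alpha_2\rangle \colon Q' \to Q_1 \times Q_2$ satisfies $F(\langle\alpha_1,\alpha_2\rangle) \circ \beta' = \langle F(\alpha_1)\circ\beta', F(\alpha_2)\circ\beta'\rangle = \beta$ (again using that $F$ preserves products), hence is a morphism in $P \downarrow F$; it is the unique such morphism over $Q'$ because the forgetful functor $P \downarrow F \to \calq$ is faithful and $\langle\alpha_1,\alpha_2\rangle$ is the unique such morphism in $\calq$.

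The only point requiring mild care is the precise reading of ``$F$ preserves non-empty finite products'': the computation above is cleanest when $F$ sends a chosen product together with its projections in $\calq$ strictly to a chosen product with its projections in $\calp$, which is the case for the functors to which the lemma is applied in this paper (for instance $\EP\Or_\COM(G) \to \EP\Or_\CVCYC(G)$ and $\EP\calu(G) \to \EP\Or_\calu(G)$). If one only knows that the comparison morphisms $F(Q_1 \times Q_2) \to F(Q_1) \times F(Q_2)$ are isomorphisms, one simply inserts these isomorphisms into the formulas above and nothing essential changes. I do not expect any genuine obstacle here; the lemma is purely formal, and the proof consists of this bookkeeping together with the two cited lemmas on nerves of categories with products and on cofinality.
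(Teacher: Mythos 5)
Your proposal is correct and is essentially the paper's own argument: the paper likewise verifies that $P \downarrow F$ has all non-empty finite products (leaving the bookkeeping you spell out as "not difficult to check"), then applies Lemma~\ref{lem:finite-products-contractible-nerv} and Lemma~\ref{lem:cofinality}. Your explicit construction of the product in the comma category, including the remark on strict versus up-to-isomorphism product preservation, fills in exactly the details the paper omits.
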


\begin{proof}
  Using that $\calq$ has all non-empty finite products and that $F$
  preserves finite products, it is not difficult to check that
  $P \downarrow F$ also has all non-empty finite products.
  Lemma~\ref{lem:finite-products-contractible-nerv} implies that $F$
  is right cofinal.  The statement about homotopy colimits follows
  from Lemma~\ref{lem:cofinality}.
\end{proof}

%%%%%%%%%%%%%%%%%%%%%%%%%%%%%%%%%%%%%%%%%%%%%%%%%%%%%%%%%%%%%%%%%%%%

\section{K-theory of $\dg$-categories}\label{app:K-theory-dg-cat}

In order to obtain induced maps in K-theory for homotopy coherent
functors we use K-theory for $\dg$-categories.  We briefly review its
construction.  We write $\matheurm{Cat}_{\dg}$ for the category of
small $\dg$-categories~\cite{Keller(2006ICM)}.  For a $\dg$-category
$\calc$ and objects $C,C' \in \calc$, we write $\calc(C,C')$ for the
chain complex of morphisms from $C$ to $C'$.  For a $\dg$-category
$\calc$ the category $H_0(\calc)$ has the same objects as $\calc$ and
for $C,C' \in \calc$ the set of morphisms $C \to C'$ in $H_0(\calc)$
is given by $H_0(\calc(C,C'))$. %\ab{Keller uses cohomological grading}
A functor $F \colon \calc \to \cald$ of $\dg$-categories is a
\emph{quasi-equivalence} if
\begin{equation*}
  F_* \colon \calc(C,C') \to \cald(F(C),F(C'))
\end{equation*}
is a quasi-isomorphism for all $C, C'$, and it induces an equivalence
$H_0(\calc) \to H_0(\calc)$.

\begin{theorem}[\cite{Cisinski-Tabuada(2011),Keller(2006ICM),Schlichting(2006)}]\label{thm:dg-cat-K-theory}
  There exists a functor
  \begin{equation*}
    \bfK \colon \matheurm{Cat}_{\dg} \to \Spectra
  \end{equation*}
  with the following properties:
  \begin{enumerate}[label=(\alph*),leftmargin=*]
  \item\label{thm:dg-cat-K-theory:agreement} Restricted to additive
    categories $\bfK$ is weakly equivalent to the usual
    (non-connective) K-theory
    functor; \item\label{thm:dg-cat-K-theory:ch(A)} For an additive
    category $\cala$ the inclusion of $\cala$ into the category
    $\ch_{\fin} \cala$ of finite chain complexes induces a weak
    equivalence $\bfK(\cala) \to \bfK
    (\ch_{\fin}(\cala))$; \item\label{thm:dg-cat-K-theory:quasi} If
    $F \colon \calc \to \calc'$ in $\matheurm{Cat}_{\dg}$ is a
    quasi-equivalence, then $\bfK(F)$ is a weak equivalence.
  \end{enumerate}
\end{theorem}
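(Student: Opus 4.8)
The plan is to take for $\bfK$ one of the standard constructions of non-connective K-theory of a small $\dg$-category, and then to check that the three listed properties are exactly what the cited references provide. Concretely, to a small $\dg$-category $\calc$ one associates the $\dg$-category of right $\calc$-modules, its full $\dg$-subcategory $\perf(\calc)$ of perfect modules (with quasi-isomorphisms as weak equivalences), and sets $\bfK(\calc) := \bfKinfty(\perf(\calc))$ using a non-connective delooping machine (Schlichting's, or equivalently the universal localizing invariant of Blumberg--Gepner--Tabuada). Functoriality in $\calc$ comes from extension of scalars along a $\dg$-functor. All of this, together with the three properties, is established in~\cite{Cisinski-Tabuada(2011),Keller(2006ICM),Schlichting(2006)}; so the proof is a matter of assembling these inputs.

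For property~\ref{thm:dg-cat-K-theory:quasi}: a quasi-equivalence $F \colon \calc \to \calc'$ induces, via extension of scalars, a Morita equivalence, i.e.\ an exact equivalence $\perf(\calc) \xrightarrow{\sim} \perf(\calc')$ up to quasi-isomorphism; since $\bfKinfty$ is invariant under such equivalences, $\bfK(F)$ is a weak equivalence. For property~\ref{thm:dg-cat-K-theory:ch(A)}: when $\cala$ is an additive category, viewed as a $\dg$-category concentrated in degree $0$, the inclusion $\cala \hookrightarrow \ch_{\fin}\cala$ is a Morita equivalence — both sides have equivalent categories of perfect modules, because a bounded complex over $\cala$ is a perfect $\cala$-module and the representable modules generate — so the induced map on $\perf$, hence on $\bfK$, is an equivalence. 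For property~\ref{thm:dg-cat-K-theory:agreement}: one identifies $\bfKinfty(\perf(\cala))$ with the non-connective K-theory functor $\bfKinfty$ of Subsection~\ref{subsec:K-theory}; here $\perf(\cala)$ is Morita-equivalent to the bounded homotopy category of $\Idem(\cala_\oplus)$, and the Pedersen--Weibel/Lück--Steimle delooping of $\cala$ agrees, after idempotent completion, with the Schlichting delooping, the comparison map being an equivalence by Schlichting's agreement results together with the fact that $\cala \to \Idem\cala$ induces an equivalence in K-theory.

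The main obstacle is property~\ref{thm:dg-cat-K-theory:agreement}: it is the only one requiring a genuine comparison between the $\dg$-theoretic K-theory and the concrete non-connective delooping used throughout the paper, and this comparison must keep track of the idempotent completion and of the particular choice of delooping. Properties~\ref{thm:dg-cat-K-theory:ch(A)} and~\ref{thm:dg-cat-K-theory:quasi} are then formal consequences of Morita invariance of $\bfKinfty$ on $\dg$-categories and introduce nothing new beyond what is in the references.
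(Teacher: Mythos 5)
Your proposal is correct and follows essentially the same route as the paper: both proofs are an assembly of the cited results, defining $\bfK(\calc)$ via a (co)fibrant model for perfect $\calc$-modules, deducing~\ref{thm:dg-cat-K-theory:quasi} and~\ref{thm:dg-cat-K-theory:ch(A)} from invariance under derived/Morita equivalence, and reducing~\ref{thm:dg-cat-K-theory:agreement} to Schlichting's agreement theorem together with the fact that $\cala \to \Idem\cala$ is a K-theory equivalence. The only (cosmetic) difference is that the paper works with Schlichting's concrete model of finite cell modules and Frobenius categories rather than with perfect modules and the universal localizing invariant.
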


\begin{proof}[Sketch of proof]
  K-theory of a $\dg$-category $\calc$ can be defined as the
  Waldhausen K-theory of the category of perfect
  $\calc$-modules~\cite[Sec.~5.2]{Keller(2006ICM)}
  and~\cite{Cisinski-Tabuada(2011)}.
	 
  We review the construction of
  Schlichting~\cite[Sec.~6.4]{Schlichting(2006)}\footnote{Strictly
    speaking Schlichting only considers $\operatorname{dga}$s, not
    $\dg$-categories, but his construction generalizes in a straight
    forward manner.}.  Let $\calc$ be a $\dg$-category.  A
  $\calc$-module is a $\dg$-functor
  $M \colon \calc^{\op} \to \ch(\MODcat{\IZ})$.  A map of
  $\calc$-modules $M \to M'$ is a natural transformation by chain
  maps.  A $\calc$-module is free on $v \in M(C)_k$ if
  $f \mapsto f^*(v)$ defines an isomorphism $\calc(-,C)[k] \to M(-)$
  of $\calc$-modules.  A finite cell $\calc$-module is a
  $\calc$-module $M$ together with a finite filtration
  $M_0 \subseteq M_1 \subseteq \dots \subseteq M_n = M$ such that the
  quotients $M_i/M_{i-1}$ are free on a finite number of
  generators.     We write $\calc$-$\matheurm{cell}$ for the
  category of finite cell $\calc$-modules.  If $\cala$ is an additive
  category, then the category of finite cell $\cala$-modules can be
  identified with the $\dg$-category $\ch_{\fin}(\cala)$ of finite
  chain complexes over $\cala$.  For $\calc = \ch_{\fin}(\cala)$ the
  Yoneda embedding of $\ch_{\fin}(\cala)$ into the category of
  $\ch_{\fin}(\cala)$-modules identifies $\ch_{\fin}(\cala)$ with
  $\ch_{\fin}(\cala)$-$\matheurm{cell}$.  A sequence
  \begin{equation*}
    M \xrightarrow{i} M' \xrightarrow{p} M''
  \end{equation*}
  in $\calc$-$\matheurm{cell}$ is said to be \emph{exact}, if
  $M(C) \to M'(C) \to M''(C)$ is exact in $\ch(\IZ)$ for each
  $C \in \calc$.  With this notion of exact sequences
  $\calc$-$\matheurm{cell}$ is an exact category.
	
  The exact category $\calc$-$\matheurm{cell}$ has enough injectives
  and projectives.\footnote{$I$ is injective (projective) in an exact
    category $\cale$ if for any exact sequence
    $A \xrightarrow{i} B \xrightarrow{p} C$ the map
    $\cale(B,I) \xrightarrow{i^*} \cale(A,I)$ (the map
    $\cale(P,B) \xrightarrow{p_*} \cale(P,C)$) is
    onto~\cite[Sec.~5]{Keller(2006derived-uses)}.}  The classes of
  projectives and injectives in this exact category coincide and are
  given by the contractible cell modules.  A map
  $\calc$-$\matheurm{cell}$ factors over an injective (projective), if
  and only if it chain nullhomotopic. (All this can be proven using
  cones of $\calc$-modules exactly as
  in~\cite[Sec.6.4]{Schlichting(2006)}.)
	
  A \emph{Frobenius category} is an exact category $\cale$ with enough
  projectives and injectives such that the classes of injectives
  and projectives coincide.  The associated stable category is the
  quotient of $\cale$ by the subcategory of projectives (injectives),
  i.e., morphisms are identified if they factor over a projective
  (injective).  A map of Frobenius categories is an exact functor that
  preserves projectives (injectives).  The category of cell modules
  $\calc$-$\matheurm{cell}$ is a Frobenius category and the associated
  stable category is the homotopy category of
  $\calc$-$\matheurm{cell}$.
	
  Schlichting's~\cite[Sec.~12.1]{Schlichting(2006)} non-connective
  K-theory functor
  \begin{equation*}
    \bfK^{\text{Frob}} \colon \text{Frobenius categories} \to \Spectra  
  \end{equation*}
  has the following property: if a map $\cale \to \cale'$ of Frobenius
  categories induces an equivalence of stable categories, then
  $\bfK^{\text{Frob}} (\cale) \to \bfK^{\text{Frob}}(\cale)$ is an
  equivalence~\cite[Cor.~1]{Schlichting(2006)}.
	
  For a $\dg$-category $\calc$ one then defines
  \begin{equation*}
    \bfK (\calc) := \bfK^{\text{Frob}}( \calc\text{-}\matheurm{cell}).
  \end{equation*}
  For an additive category $\cala$ we have
  $\cala$-$\matheurm{cell} \simeq \ch_{\fin}(\cala)$, see above.
  Now~\ref{thm:dg-cat-K-theory:agreement} follows
  from~\cite[Thm.~5]{Schlichting(2006)}.
  For~\ref{thm:dg-cat-K-theory:ch(A)} we can use that for an additive
  category $\cala$ the inclusion $\cala \to \ch_{fin}(\cala)$ induces
  an equivalence on cell modules and therefore in K-theory.
  For~\ref{thm:dg-cat-K-theory:quasi} we use that for a
  quasi-equivalence $\calc \to \calc'$ the induced functor on the
  stable categories associated to the categories of cell modules
  (i.e., on the homotopy categories of cell modules) is an
  equivalence.
\end{proof}

We record the following consequence of the quasi-isomorphism
invariance of K-theory.
  
\begin{corollary}\label{cor:quasi-iso-between-dg-functors-homotopy-in-K} Let
  $F, F' \colon \calc \to \cald$ be $\dg$-functors and let
  $\tau \colon F \to F'$ be a natural transformation of
  $\dg$-functors.  Assume that $H_0(\tau) \colon H_0(F) \to H_0(F')$
  is an isomorphism between the functors
  $H_0(F), H_0(F') \colon H_0(\calc) \to H_0(\cald)$.  Then $\bfK (F)$
  and $\bfK (F')$ are homotopic.
\end{corollary}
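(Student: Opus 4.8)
The plan is to work throughout inside Schlichting's Frobenius-category model of K-theory, as set up in the proof of Theorem~\ref{thm:dg-cat-K-theory}. Recall from that proof that $\bfK(\cald) = \bfK^{\mathrm{Frob}}(\cald\text{-}\matheurm{cell})$ and that a $\dg$-functor $G\colon \calc \to \cald$ induces $\bfK(G) = \bfK^{\mathrm{Frob}}(G_!)$, where $G_! \colon \calc\text{-}\matheurm{cell} \to \cald\text{-}\matheurm{cell}$ is extension of scalars along $G$; this $G_!$ is an exact functor of Frobenius categories, it respects the finite cell filtrations, and it sends the representable module $\calc(-,C)$ to $\cald(-,G(C))$. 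The first step is to unpack the hypothesis: since $H_0(\tau)_C = [\tau_C]$ is invertible in $H_0(\cald)$ for every object $C$ of $\calc$, each closed degree-$0$ morphism $\tau_C \colon F(C) \to F'(C)$ is a chain homotopy equivalence in $\cald$.

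Next I would record that $\tau$ induces a natural transformation $\tau_! \colon F_! \Rightarrow F'_!$ of exact functors $\calc\text{-}\matheurm{cell} \to \cald\text{-}\matheurm{cell}$, with component at $\calc(-,C)$ equal to $(\tau_C)_* \colon \cald(-,F(C)) \to \cald(-,F'(C))$, and establish the key claim: $\tau_!$ is a \emph{levelwise weak equivalence}, i.e. for every finite cell $\calc$-module $M$ the map $(\tau_!)_M \colon F_!(M) \to F'_!(M)$ is an isomorphism in the stable (triangulated) homotopy category of cell $\cald$-modules. For shifted representables this is precisely the statement that $(\tau_C)_*$ is a homotopy equivalence, which follows from the first step. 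In general one inducts along a finite cell filtration $0 = M_{-1} \subseteq M_0 \subseteq \dots \subseteq M_n = M$: the subquotients $M_i/M_{i-1}$ are finite direct sums of shifted representables, $F_!$ and $F'_!$ carry the admissible exact sequences $M_{i-1} \rightarrowtail M_i \twoheadrightarrow M_i/M_{i-1}$ to distinguished triangles, $\tau_!$ yields a morphism of these triangles, and the five lemma completes the induction.

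Finally I would deduce $\bfK^{\mathrm{Frob}}(F_!) \simeq \bfK^{\mathrm{Frob}}(F'_!)$ from this levelwise property. Form the Frobenius category $\cald\text{-}\matheurm{cell}^{\mathrm{wq}}$ of those morphisms in $\cald\text{-}\matheurm{cell}$ that are weak equivalences, with its two evaluation functors $\ev_0, \ev_1$ to $\cald\text{-}\matheurm{cell}$ and the common section $s\colon N \mapsto \operatorname{id}_N$. Naturality of $\tau$ makes $T\colon M \mapsto \bigl((\tau_!)_M \colon F_!(M) \to F'_!(M)\bigr)$ an exact functor $\calc\text{-}\matheurm{cell} \to \cald\text{-}\matheurm{cell}^{\mathrm{wq}}$ with $\ev_0 \circ T = F_!$ and $\ev_1 \circ T = F'_!$. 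Since $s$ induces an equivalence of stable categories and $\ev_0 \circ s = \ev_1 \circ s = \operatorname{id}$, Schlichting's theorem (\cite[Cor.~1]{Schlichting(2006)}) forces $\bfK^{\mathrm{Frob}}(\ev_0) \simeq \bfK^{\mathrm{Frob}}(s)^{-1} \simeq \bfK^{\mathrm{Frob}}(\ev_1)$, and hence $\bfK(F) = \bfK^{\mathrm{Frob}}(\ev_0 \circ T) \simeq \bfK^{\mathrm{Frob}}(\ev_1 \circ T) = \bfK(F')$.

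The main obstacle is not any single deep point but the homotopical bookkeeping concentrated in the last paragraph: verifying carefully that $\cald\text{-}\matheurm{cell}^{\mathrm{wq}}$ is a Frobenius category, that $\ev_0$, $\ev_1$, $s$ are maps of Frobenius categories, and that $s$ induces a stable equivalence, together with making precise (against the framework recalled in the proof of Theorem~\ref{thm:dg-cat-K-theory}) that $F_!$ is genuinely an exact functor of Frobenius categories compatible with cell filtrations. Alternatively, one could phrase the last two paragraphs purely in the Waldhausen picture of Theorem~\ref{thm:dg-cat-K-theory}, using that a natural weak equivalence between exact functors of Waldhausen categories induces homotopic maps on $K$-theory; the cell-filtration argument of the second paragraph is needed either way to check that $\tau_!$ is a natural weak equivalence and not merely a weak equivalence on representables.
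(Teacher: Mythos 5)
Your argument is correct in outline, but it takes a different and noticeably heavier route than the paper's. The paper's proof stays entirely at the level of $\dg$-categories: it forms the path $\dg$-category $P(\cald)$ whose objects are closed degree-$0$ morphisms $D \xrightarrow{f} D'$ that become isomorphisms in $H_0(\cald)$, observes that $\tau$ assembles into a $\dg$-functor $\Psi \colon \calc \to P(\cald)$, $C \mapsto (F(C) \xrightarrow{\tau_C} F'(C))$, with $F = p \circ \Psi$ and $F' = p' \circ \Psi$ for the two evaluation functors $p,p'$, and then quotes that the diagonal $i \colon \cald \to P(\cald)$ is a quasi-equivalence (Tabuada) so that $\bfK(p) \simeq \bfK(p')$ follows immediately from the already-established invariance of $\bfK$ under quasi-equivalences, Theorem~\ref{thm:dg-cat-K-theory}~\ref{thm:dg-cat-K-theory:quasi}. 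You instead descend into Schlichting's Frobenius model and rebuild the same path-object mechanism one level down, for the category $\cald\text{-}\matheurm{cell}^{\mathrm{wq}}$ of weak equivalences of cell modules. This forces you to do two pieces of work the paper avoids: the cell-filtration induction showing $\tau_!$ is a levelwise stable equivalence (the paper never needs this; it only uses that each $\tau_C$ is invertible in $H_0(\cald)$), and the verification — which you defer as "bookkeeping" — that $\cald\text{-}\matheurm{cell}^{\mathrm{wq}}$ is a Frobenius category whose diagonal section $s$ induces an equivalence of stable categories. That last point is not mere bookkeeping: it is exactly the content of Tabuada's quasi-equivalence $i \colon \cald \to P(\cald)$ transported to the cell-module model, and it is the one genuinely nontrivial input to the whole statement. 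Your proof is salvageable (and your suggested Waldhausen-style alternative via natural weak equivalences of exact functors is the standard way to close it), but as written the decisive step is asserted rather than proved, whereas the paper discharges it by citation.
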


\begin{proof}
  This follows from
  Theorem~\ref{thm:dg-cat-K-theory}~\ref{thm:dg-cat-K-theory:quasi}
  using the path $\dg$-category $P(\cald)$ associated to $\cald$
  (see~\cite[2.9]{Drinfeld(2004)}
  and~\cite[Def.~4.1]{Tabuada(2007PhD)}) by a standard argument.

  Objects of $P(\cald)$ are diagrams $D \xrightarrow{f} D'$ in $\cald$
  with $f$ closed of degree $0$ whose homology class is an isomorphism
  in $H_0(\cald)$.  Given objects $D_0 \xrightarrow{f_0} D'_0$,
  $D_1 \xrightarrow{f_1} D'_1$, the complex
  $P(\cald)(D_0 \xrightarrow{f_0} D'_0,D_1 \xrightarrow{f_1} D'_1)$ is
  the homotopy fiber of
  \begin{eqnarray*}
    \cald(D_0,D'_0) \oplus \cald(D_1,D'_1) & \to & \cald(D_0,D'_1) \\
    (\varphi,\varphi') & \mapsto & f_1 \circ \varphi - \varphi' \circ f_0.
  \end{eqnarray*} 
  Composition is defined by the formula
  \begin{equation*}
    (\varphi_1,\varphi'_1,s_1) \circ (\varphi_0,\varphi'_0,s_0)
    := (\varphi_1 \circ \varphi_0, \varphi'_1 \circ \varphi'_0, \varphi'_1 \circ s_0 + s_1 \circ \varphi_0).
  \end{equation*}
  There are $\dg$-functors $p,p' \colon P(\cald) \to \cald$ with
  $p(D \xrightarrow{f} D') = D$, $p'(D \xrightarrow{f} D') = D'$.  We
  also have $i \colon \cald \to P(\cald)$ with
  $i(D) = (D\xrightarrow{\id_D}D)$ and
  $\Psi \colon \calc \to P(\cald)$ with
  $\Psi(C) = (F(C) \xrightarrow{\tau_C} F'(C))$.  Then
  $p \circ i = p' \circ i = \id_\cald$.  Moreover, $i$ is a
  quasi-isomorphism~\cite[Lem.~4.3]{Tabuada(2007PhD)}.  Using
  Theorem~\ref{thm:dg-cat-K-theory}~\ref{thm:dg-cat-K-theory:quasi}
  this implies $\bfK(p) \simeq \bfK(p')$.  We also have
  $F = p \circ \Psi$ and $F' = p' \circ \Psi$.  Thus
  $\bfK(F) \simeq \bfK(F')$.
\end{proof}

%%%%%%%%%%%%%%%%%%%%%%%%%%%%%%%%%%%%%%%%%%%%%%%%%%%%%%%%%%%%%%%%%%%%

\section{Homotopy coherent functors}\label{app:homotopy-coherent}

Our construction of the transfer depends on induced maps in K-theory
for homotopy coherent functors.  Here we give a self contained
construction of such induced maps that is tailored to our specific
needs.  It is very pedestrian, by no means a complete theory, and
makes no direct contact with the elegant language of stable
$\infty$-categories.   We include this for completeness.

\begin{definition}\label{def:homotopy-coherent-functor} Let $\cala$ and $\calb$ be
  additive categories.  A \emph{homotopy coherent functor}
  $F = (F^0,F^1,\dots) \colon \cala \to \ch \calb$ consists of the
  following data.
  \begin{enumerate}[label=(\thetheorem\alph*),
                   align=parleft, 
                 leftmargin=*,
                 %labelindent=5pt,
                 %labelwidth=60pt,
                 %labelsep=8pt,
                 %itemindent=-3pt
                 %itemsep=1pt
                 ] 
  \item For any object $A \in \cala$ an object $F^0(A) \in \ch \calb$;
  \item For any chain
    \begin{equation*}
      A_0 \xleftarrow{\varphi_1} A_1 \xleftarrow{\varphi_2} \dots \xleftarrow{\varphi_n} A_n
    \end{equation*} 
    of morphisms in $\cala$ a map
    $F^n(\varphi_1,\dots,\varphi_n) \colon F^0(A_n) \to F^0(A_0)$ of
    degree $n-1$.
  \end{enumerate}
  $F$ is required to satisfy
  \begin{multline*}
    d F^n(\varphi_1,\dots,\varphi_n) = \sum_{j=1}^{n-1} (-1)^j
    \Big(F^{n-1}(\varphi_1,\dots,\varphi_j \circ \varphi_{j+1},\dots
    \varphi_n) \\- F^j(\varphi_1,\dots,\varphi_j) \circ
    F^{n-j}(\varphi_{j+1},\dots,\varphi_n)\Big).
  \end{multline*}
  Here $d$ is the differential in
  $\hom_{\ch \calb}(F(A_n),F(A_0))$\footnote{We use
    $d(\psi) = d_{B'} \circ \psi - (-1)^{|\psi|} \psi \circ d_B$ for
    $\psi \colon B \to B'$ in $\ch \calb$ of degree $|\psi|$.}.  The
  $F^n$ are also be required to be multi-linear in the $\varphi_i$.
\end{definition}

\begin{example}\label{ex:functor-as-homotopy-coherent} Every functor
  $f \colon \cala \to \ch \calb$ can be viewed as a homotopy coherent
  functor $F$ with $F^0(A) = f(A)$, $F^1(\varphi) = f(\varphi)$,
  $F^n \equiv 0$ for $n \geq 2$.
\end{example}

\begin{example}[Deformation of a functor to a homotopy coherent functor]%
\label{ex:def-functor-to-homotopy-coherent}
  Let $f \colon \cala \to \ch \calb$ be a $\IZ$-linear functor.
  Suppose we are given for each $A \in \cala$ a diagram in $\ch \calb$
  \begin{equation*}
    B_A \xrightarrow{i_A} f(A) \xrightarrow{r_A} B_A  
  \end{equation*}
  with $r_A \circ i_A = \id_{B_A}$ and a chain homotopy
  $H_A \colon i_A \circ r_A \simeq \id_{f(A)}$.  Then we obtain a
  homotopy coherent functor $F = (F^0,F^1,\dots)$ with $F(A) = B_A$
  for $A \in \cala$ and
  \begin{multline*}
    F^n(\varphi_1,\dots,\varphi_n) = \\r_{A_0} \circ f(\varphi_1)
    \circ H_{A_1} \circ f(\varphi_2) \circ \dots \circ
    f(\varphi_{n-1}) \circ H_{A_{n-1}} \circ f(\varphi_n) \circ
    i_{A_n} \hspace{3ex}
  \end{multline*}
  for
  \begin{equation*}
    A_0 \xleftarrow{\varphi_1} A_1 \xleftarrow{\varphi_2} \dots \xleftarrow{\varphi_n} A_n
  \end{equation*} 
  in $\cala$.
\end{example}

\begin{definition}\label{def:strict-nat-transf} Let $F, G \colon \cala \to \ch \calb$ be
  homotopy coherent functors.  A strict natural transformation
  $\tau \colon F \to G$ consists of chain maps
  $\tau_A \colon F^0(A) \to G^0(A)$ for all $A \in \cala$ such that
  for any chain
  \begin{equation*}
    A_0 \xleftarrow{\varphi_1} A_1 \xleftarrow{\varphi_2} \dots \xleftarrow{\varphi_n} A_n
  \end{equation*} 
  of morphisms in $\cala$ the diagram
  \begin{equation*}
    \xymatrix{F(A_n) \ar[r]^{\tau_{A_n}} \ar[d]_{F^n(\varphi_1,\dots,\varphi_n)}
      & G(A_n) \ar[d]^{G^n(\varphi_1,\dots,\varphi_n)} \\ F(A_0) \ar[r]^{\tau_{A_0}} & G(A_0) }
  \end{equation*}     
  commutes.
\end{definition}

\begin{example}\label{ex:strict-nat-transf} Let
  $f \colon \cala \to \ch \calb$,
  \begin{equation*}
    B_A \xrightarrow{i_A} f(A) \xrightarrow{r_A} B_A  
  \end{equation*}
  and $H_A$ be as in
  Example~\ref{ex:def-functor-to-homotopy-coherent}.  Let
  $F \colon \cala \to \ch \calb$ be the homotopy coherent functor
  associated to this data.
	 
  Let $g \colon \cala \to \ch \calb$ a further functor, which we also
  view as a homotopy coherent functor, see
  Example~\ref{ex:functor-as-homotopy-coherent}.
	
  Let $\tau \colon f \to g$ be a natural transformation.  Suppose we
  are given chain maps $p_A \colon B_A \to g(A)$ such that
  $\tau_A = p_A \circ r_A$, $p_A = \tau_A \circ i_A$,
  $0 = \tau_A \circ H_A$.  Then $p_A$ determines a strict natural
  transformation $F \to g$.
\end{example}

\begin{definition}
  We write $\Int$ for the following category.  The objects of $\Int$
  are linearly ordered sets of the form
  $[0,n] := \{0 < 1 < \cdots < n\}$ with $n \in \IN_{>0}$.  Maps
  $[0,n] \to [0,m]$ are strictly order preserving maps
  $\sigma \colon \{0 < \cdots < n\} \to \{0 < \cdots < m\}$ with
  $\sigma(0) = 0$, $\sigma(n) = \sigma(m)$.  In particular $\sigma$ is
  injective and $n \leq m$.
\end{definition}

\begin{remark}
  There is an identification $\Delta_{\text{inj}}^\op \cong
  \Int$. Here $\Delta$ is the usual simplicial category of finite
  ordered sets of the form $\{ 0 < \cdots < n\}$ and
  $\Delta_{\text{inj}}$ is the subcategory obtained by restricting to
  injective maps.
\end{remark}

\begin{remark}\label{rem:evaluate-F-on-phi} Often we will identify $[0,n] \in \Int$ with
  the category associated to $[0,n]$ as a poset.  Note that a
  composable chain of morphisms
  \begin{equation*}
    A_0 \xleftarrow{\varphi_1} A_1 \xleftarrow{\varphi_2} \dots \xleftarrow{\varphi_n} A_n
  \end{equation*}
  in $\cala$ is the same thing as a functor $[0,n]^{\op} \to \cala$.
	
  Moreover, any finite linearly order set $I$ with at least two
  elements is canonically isomorphic to some $[0,n]$.  In particular,
  we can evaluate any homotopy coherent functor
  $F \colon \cala \to \ch \calb$ on any functor $I^{\op} \to \cala$.
\end{remark}

\begin{definition}  
  For $[0,n] \in \ob\Int$ we define the chain complex $C_*([0,n])$ as
  follows.  As an abelian group $C_*([0,n])$ is generated by pairs
  $(I,J)$ with $\{0 < n\} \subseteq J \subseteq I \subseteq [0,n]$.
  The degree of $(I,J)$ is the cardinality $|I \setminus J|$ of
  $I \setminus J$.  The boundary $d$ is determined as follows.  Write
  $I \setminus J = \{ i_1 < \cdots < i_l \}$. 
  Then
  \begin{equation*}
    (I,J) \mapsto \sum_{j=0}^{l} (-1)^j \Big( (I \setminus \{i_j\},J) - (I, J \cup \{i_j\}) \Big).
  \end{equation*} 
  For $\sigma \colon [0,n] \to [0,m]$ we define
  $\sigma_* \colon C_*([0,n]) \to C_*([0,m])$ by
  \begin{equation*}
    \sigma_*(I,J) := \begin{cases} (\sigma(I),\sigma(J)) & |I\setminus J| = |\sigma(I) \setminus \sigma(J)|
      \\ 0 & \text{else} \end{cases}. 
  \end{equation*}
  Altogether we obtain the functor\footnote{In the language
    of~\cite[end of Section~3]{Davis-Lueck(1998)} this is the
    covariant $\IZ\Int$-chain complex
    $C_*(E^{\operatorname{bar}}\Int)$ for $E^{\operatorname{bar}}\Int$
    the bar-model for the covariant classifying $\Int$-$CW$-complex of
    the category $\Int$.}
  \begin{equation*}
    C_* \colon \Int \to \ch(\MODcat{\IZ}).
  \end{equation*} 
\end{definition}

\begin{remark}
  The chain complex $C_*([0,n])$ can also be described as the cellular
  chain complex of the $(n\,\text{-})1$-dimensional cube.
\end{remark}

\begin{remark}\label{rem:C-is-free} 
  In the category of functors $\Int \to \MODcat{\IZ}$ the complex
  $C_*(-)$ is degreewise free; we have
  \begin{equation*}
    C_k(-) \cong \bigoplus_{i=1}^\infty
    \bigoplus_{\tiny \begin{array}{c} \{0 < i\} \subseteq  J \subseteq [0,i], \\ |[0,i] \setminus J|=k \end{array} } \IZ\Int(J,-).
  \end{equation*}
\end{remark}

\begin{remark}
  There is a concatenation map
  \begin{equation}\label{eq:concatenation-C} C_*([0,n]) \otimes
    C_*([0,m]) \to
    C_*([0,n+m])
  \end{equation}
  defined by
  $(I,J) \otimes (I',J') \mapsto (I \cup n+I',J \cup n+J')$.
\end{remark}

\begin{lemma}\label{lem:C-acyclic} The augmentation map
  $\epsilon \colon C_*([0,n]) \to \IZ$ with $\epsilon(I,I) = 1$ is a
  homology isomorphism.
\end{lemma}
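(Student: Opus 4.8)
The plan is to realize $C_*([0,n])$ as an iterated tensor product of the small complex $C_*([0,2])$ and to reduce the claim to an elementary computation there. First I would dispose of $n=1$: the only pair with $\{0,1\}\subseteq J\subseteq I\subseteq[0,1]$ is $I=J=[0,1]$, so $C_*([0,1])=\IZ$ concentrated in degree $0$ and $\epsilon$ is the identity. Next I would record that $\epsilon\colon C_*([0,n])\to\IZ$ is indeed a chain map to the complex $\IZ$ concentrated in degree $0$ (it sends every degree‑$0$ generator $(I,I)$ to $1$ and vanishes in positive degrees): this follows because for a degree‑$1$ generator $(I,J)$, where $I\setminus J=\{i\}$, we have $J=I\setminus\{i\}$ and $I=J\cup\{i\}$, so $d(I,J)=\pm\bigl((J,J)-(I,I)\bigr)$, which $\epsilon$ kills. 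For $n=2$ there are exactly three generators, $([0,2],[0,2])$ and $([0,1,2],[0,1,2])$ in degree $0$ and $([0,1,2],[0,2])$ in degree $1$, and $d$ of the last is $\pm$ the difference of the first two; thus $C_*([0,2])$ is the augmented cellular chain complex of an interval, its homology is $\IZ$ concentrated in degree $0$ generated by the class of either vertex, and $\epsilon_{[0,2]}\colon C_*([0,2])\to\IZ$ is a quasi‑isomorphism.

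The main step is to construct, for $n\ge 1$, an isomorphism of chain complexes
\[
  \phi_n\colon C_*([0,n])\otimes C_*([0,2])\ \xrightarrow{\ \cong\ }\ C_*([0,n+1]).
\]
Here I would use the bookkeeping underlying the remark following the definition of $C_*$: a generator $(I,J)$ of $C_*([0,k])$ records, for each of the interior points $1,\dots,k-1$, whether that point is free (it lies in $I\setminus J$), fixed to $0$ (it lies in $J$), or fixed to $1$ (it lies outside $I$). The map $\phi_n$ should let the $C_*([0,n])$‑factor govern the interior points $1,\dots,n-1$ of $[0,n+1]$ and let the $C_*([0,2])$‑factor (whose one interior point is $1$) govern the interior point $n$; on generators it replaces the old endpoint $n$ of $I,J\subseteq[0,n]$ by $n+1$ and adjoins $n$ to the relevant set exactly when $1\in I'$ resp.\ $1\in J'$. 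This is the Eilenberg--Zilber/shuffle identification of the cellular chain complex of the product CW complex $[0,1]^{n-1}\times[0,1]$ with that of $[0,1]^{n}$; it is a degree‑preserving bijection on generators, it is a chain map, and it carries $\epsilon_{[0,n]}\otimes\epsilon_{[0,2]}$ to $\epsilon_{[0,n+1]}$ because both sides send every degree‑$0$ generator to $1$ and vanish in positive degrees.

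Iterating $\phi_n$ from the base case $C_*([0,1])=\IZ$ gives an isomorphism $C_*([0,n])\cong C_*([0,2])^{\otimes(n-1)}$ carrying $\epsilon_{[0,n]}$ to $\epsilon_{[0,2]}^{\otimes(n-1)}$. Since $C_*([0,2])$ is a bounded complex of free abelian groups and $\epsilon_{[0,2]}$ is a quasi‑isomorphism onto $\IZ$ in degree $0$ — which is free, so the Künneth theorem contributes no correction term — tensoring with flat complexes one factor at a time shows $\epsilon_{[0,2]}^{\otimes(n-1)}$ is again a quasi‑isomorphism, whence so is $\epsilon$. I expect the only genuine work to be checking that $\phi_n$ commutes with the differentials with the correct signs; this is routine, and if one prefers to avoid it entirely one may instead simply invoke the remark following the definition, that $C_*([0,n])$ is the cellular chain complex of the contractible cube $[0,1]^{n-1}$, whose augmented cellular chain complex is therefore acyclic.
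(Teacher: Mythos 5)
Your proof is correct, but it takes a different route from the paper's. The paper argues by induction on $n$: it identifies the subcomplex $\tilde C_*([0,n])$ spanned by the $(I,J)$ with $1\notin I$ (geometrically, a face of the cube where the first coordinate is frozen) with $C_*([0,n-1])$, and then writes down an explicit chain contraction $(I,J)\mapsto (I,J\cup\{1\})$ of the quotient $C_*([0,n])/\tilde C_*([0,n])$, so the inclusion of the subcomplex is a homology isomorphism. You instead make the product structure of the cube explicit via an isomorphism $C_*([0,n+1])\cong C_*([0,n])\otimes C_*([0,2])$, reduce to the three-generator complex $C_*([0,2])$, and conclude with flatness/K\"unneth. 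Both arguments peel off one coordinate of the $(n{-}1)$-cube at a time and both require a routine sign verification (you must check that your shuffle map intertwines the differentials with the Koszul sign; the paper must check that its formula really is a contraction). What the paper's version buys is self-containedness — no tensor products of complexes, no K\"unneth — while yours buys a cleaner structural statement, namely the identification $C_*([0,n])\cong C_*([0,2])^{\otimes(n-1)}$ matching the remark that $C_*([0,n])$ is the cellular chain complex of the cube. Two cosmetic slips: $C_*([0,2])$ is the (unaugmented) cellular chain complex of an interval, not the augmented one; and your final shortcut of ``just invoking'' the cube remark does not actually avoid the sign check, since one still has to verify that the paper's differential is the cellular differential for some CW structure and orientation choices.
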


\begin{proof}
  We proceed by induction on $n$.  For $n = 1$ the augmentation is an
  isomorphism.  For $n > 1$ we define $\tilde C_*([0,n])$ as the
  sub-complex of $C_*([0,n])$ spanned by all $(I,J)$ with
  $1 \notin I$.  Clearly $\tilde C_*([0,n]) \cong C_*([0,{n-1}])$.
  The map defined by
  \begin{equation*}
    (I,J) \mapsto \begin{cases} (I, J \cup \{1\}) & 1 \in I \setminus J \\ 0 & \text{else} \end{cases}
  \end{equation*}
  induces a contraction on the quotient
  $C_*([0,n]) / \tilde C_*([0,n])$.  Thus the inclusion
  $\tilde C_*([0,n]) \to C_*([0,{n}])$ induces an isomorphism in
  homology.
\end{proof}

In the following we abbreviate $\cala(A,A') := \mor_{\cala}(A,A')$.

\begin{definition}
  Let $\cala$ be a small $\IZ$-linear category.  For $[0,n] \in \Int$
  and $A, A' \in \ob \cala$ we define
  \begin{align*}
    \cala_{[0,n]}(A,A') := \bigoplus_{A_1,\dots,A_{n-1} \in \cala} \cala(A_1,A')  \otimes  \cala
    &(A_2,A_1) \otimes \dots  \otimes \cala(A,A_{n-1}). 
  \end{align*}  
  There is an evident concatenation map
  \begin{equation}\label{eq:concatenation-Hom} \cala_{[0,n]}(A',A'')
    \otimes
    \cala_{[0,m]}(A,A') \to \cala_{[0,n+m]}(A,A'').
  \end{equation}
  Using composition in $\cala$, this construction is functorial in
  $\Int$ and we obtain for fixed $A,A' \in \cala$ a contravariant
  functor
  \begin{equation*}
    \cala_{\--}(A,A') \colon \Int \to \MODcat{\IZ}; \quad [0,n] \mapsto \cala_{[0,n]}(A,A'). 
  \end{equation*}
\end{definition}

\begin{definition}
  Let $\cala$ be a small $\IZ$-linear category.  The $\dg$-category
  $\mathfrak{C}_\cala$ is defined as follows.  The objects of
  $\mathfrak{C}_\cala$ are the objects of $\cala$.  For objects
  $A,A' \in \cala$ the chain complex of morphisms is defined as
  \begin{equation*}
    \mathfrak{C}_\cala(A,A') := \cala_{\--}(A,A') \otimes_{\Int} C_*(\--).
  \end{equation*}
  Here $\cala_{[0,n]}(A,A')$ is viewed as a chain complex concentrated
  in degree $0$.  Composition is induced from the concatenation
  maps~\eqref{eq:concatenation-C} and~\eqref{eq:concatenation-Hom}.

\end{definition}

\begin{construction}
  We construct a functor $\comp \colon \mathfrak{C}_\cala \to \cala$
  as follows.  On objects the functor is the identity.  Let
  $A,A' \in \cala$.  Then
  \begin{equation*}
    \comp \colon \mathfrak{C}_\cala(A,A') \to \cala(A,A')
  \end{equation*}
  is defined as follows.  For
  $f \in \big( \mathfrak{C}_\cala(A,A') \big)_{>0}$ we set
  $\comp(\varphi) = 0$.  Let
  $f \in \big( \mathfrak{C}_\cala(A,A') \big)_{0}$.  We can write
  $f = (\varphi_1,\dots,\varphi_n) \otimes (I,I)$ with $n \in \IN$ and
  $\{0,1\} \subseteq I \subseteq [0,n]$.  Then
  $\comp(f) = \varphi_1 \circ \dots \circ \varphi_n$.
\end{construction}

\begin{lemma}\label{lem:comp-is-hmotopy-equiv} The functor
  $\comp \colon \mathfrak{C}_{\cala} \to \cala$ is a
  quasi-isomorphism.
\end{lemma}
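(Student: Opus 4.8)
The plan is to show that $\comp \colon \mathfrak{C}_\cala \to \cala$ is a quasi-isomorphism in the sense defined before Theorem~\ref{thm:dg-cat-K-theory}, i.e., that it induces a quasi-isomorphism on morphism complexes and an equivalence $H_0(\mathfrak{C}_\cala) \to H_0(\cala)$. Since $\cala$ is an ordinary $\IZ$-linear category, $\cala(A,A')$ is concentrated in degree $0$, so the second condition will follow from the first once we check $\comp$ is the identity on objects (which it is by construction) and that $\comp$ is surjective on $H_0$ of morphism complexes (clear, since for $\varphi \colon A \to A'$ the element $\varphi \otimes (\{0,1\},\{0,1\}) \in (\mathfrak{C}_\cala(A,A'))_0$ maps to $\varphi$). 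So the whole statement reduces to: for all $A,A' \in \cala$, the chain map
\begin{equation*}
  \comp_* \colon \mathfrak{C}_\cala(A,A') = \cala_{\--}(A,A') \otimes_{\Int} C_*(\--) \; \to \; \cala(A,A')
\end{equation*}
is a quasi-isomorphism, where the target sits in degree $0$.

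First I would fix $A,A'$ and analyze the coend $\cala_{\--}(A,A') \otimes_{\Int} C_*(\--)$. The key input is Remark~\ref{rem:C-is-free}, which expresses $C_k(\--)$ as a direct sum of corepresentable functors $\IZ\Int(J,\--)$ indexed by pairs $(i, J)$ with $\{0 < i\} \subseteq J \subseteq [0,i]$ and $|[0,i]\setminus J| = k$. By the coend/Yoneda computation $\cala_{\--}(A,A') \otimes_{\Int} \IZ\Int(J,\--) \cong \cala_{J}(A,A')$ (for $J$ identified with the corresponding object $[0,|J|-1]$ of $\Int$ via Remark~\ref{rem:evaluate-F-on-phi}). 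So degreewise the complex $\mathfrak{C}_\cala(A,A')$ becomes an explicit direct sum of tensor-powers $\cala(A_1,A') \otimes \cdots \otimes \cala(A,A_{m-1})$, and I would identify the differential coming from the one on $C_*(\--)$ (the sum over deleting an element of $I\setminus J$, either by "$I \setminus \{i_j\}$" which composes two adjacent morphisms, or by "$J \cup \{i_j\}$" which records a face). In effect $\mathfrak{C}_\cala(A,A')$ is the (normalized-type) cubical/simplicial object built from the nerve of $\cala$ with morphism-groups as coefficients; this is exactly a standard "$W$-construction" or bar-type resolution.

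The cleanest way to finish is to exhibit an explicit contraction, and here Lemma~\ref{lem:C-acyclic} is the model: it says precisely that the functor $C_*(\--) \colon \Int \to \ch(\MODcat\IZ)$ is, objectwise, a resolution of the constant functor $\IZ$. More usefully, the proof of Lemma~\ref{lem:C-acyclic} produces a natural contraction onto the subcomplex $\tilde C_*$, and iterating identifies $C_*([0,n])$ up to homotopy with $C_*([0,1]) = \IZ$ in a way compatible with the $\Int$-structure. I would tensor that natural homotopy with $\cala_{\--}(A,A')$ over $\Int$: since $-\otimes_{\Int} \cala_{\--}(A,A')$ is additive, it carries the contracting homotopy of $C_*(\--) \xrightarrow{\epsilon} \IZ$ (as a map of $\Int$-diagrams, using that $\cala_{\--}(A,A')$ is degreewise free/projective as a contravariant $\Int$-module, again by the explicit decomposition of $\cala_{[0,n]}$ into summands indexed by chains) to a contracting homotopy showing
\begin{equation*}
  \cala_{\--}(A,A') \otimes_{\Int} C_*(\--) \; \xrightarrow{\;\sim\;} \; \cala_{\--}(A,A') \otimes_{\Int} \IZ \;\cong\; \cala(A,A'),
\end{equation*}
and one checks this induced map is exactly $\comp_*$ (on degree $0$ both send $(\varphi_1,\dots,\varphi_n)\otimes(I,I)$ to $\varphi_1\circ\cdots\circ\varphi_n$, matching how $\epsilon$ was defined on $(I,I)$).

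The main obstacle I anticipate is bookkeeping: making the coend manipulation precise (getting the identification $\cala_{\--}(A,A')\otimes_{\Int}\IZ\Int(J,\--)\cong\cala_J(A,A')$ and tracking the induced differential sign-by-sign), and verifying that the natural homotopy from the proof of Lemma~\ref{lem:C-acyclic} is genuinely natural in $\Int$ so that it survives tensoring over $\Int$ — the homotopy deleting "$1$" is not literally $\Int$-equivariant against all maps, so one should instead run the filtration/long-exact-sequence argument (as in Lemma~\ref{lem:C-acyclic}) after tensoring, i.e., filter $\mathfrak{C}_\cala(A,A')$ by the number of the "inner" indices that are collapsed and induct, using at each stage that the associated graded is contractible by the explicit formula. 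This is routine homological algebra but needs care with signs and with the fact that the inner $\bigoplus_{A_1,\dots,A_{n-1}}$ is an infinite sum (no completeness issue arises since in each fixed total degree only finitely many cube-cells contribute for a given word length, and the differential is locally finite). No finiteness or size hypothesis on $\cala$ beyond smallness is needed.
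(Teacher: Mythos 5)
Your proposal is correct and, up to the final step, is the argument the paper gives: both reduce the statement to showing that $\comp$ is a chain homotopy equivalence on each morphism complex $\mathfrak{C}_{\cala}(A,A') = \cala_{\--}(A,A')\otimes_{\Int}C_*(\--)$, and both feed in Lemma~\ref{lem:C-acyclic} (acyclicity of $C_*([0,n])$) together with Remark~\ref{rem:C-is-free} (degreewise freeness of $C_*(\--)$ as an $\Int$-module). The one place where you diverge is in how the equivalence is pushed through the coend. You correctly observe that the explicit contraction from the proof of Lemma~\ref{lem:C-acyclic} is not natural in $\Int$, and you propose to compensate by filtering $\mathfrak{C}_{\cala}(A,A')$ after tensoring and inducting over the associated graded; that can be made to work, but it is the laborious option, and your description of the filtration (``number of inner indices collapsed'') is the one genuinely unverified point of the sketch. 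The paper sidesteps the naturality issue entirely: since $\Int([0,1],[0,n])$ is a singleton, the augmentation gives a quasi-isomorphism $C_*(\--)\to\IZ\big[\Int([0,1],\--)\big]$ between non-negatively graded complexes that are degreewise free in the functor category $\Int\to\MODcat{\IZ}$, and by the standard comparison theorem such a map is automatically a chain homotopy equivalence of $\Int$-diagrams, with natural homotopies; this is then preserved by the additive functor $\cala_{\--}(A,A')\otimes_{\Int}\--$. So neither the naturality of the particular contraction nor any projectivity or flatness of $\cala_{\--}(A,A')$ is needed; the projectivity that does the work is that of $C_*(\--)$ and of $\IZ\big[\Int([0,1],\--)\big]$.
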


\begin{proof}
  The objects of $\mathfrak{C}$ and $\cala$ coincide.  It remains to
  show that for all $A,A' \in \cala$
  \begin{equation*}
    \comp \colon \mathfrak{C}_{\cala}(A,A') \to \cala(A,A')
  \end{equation*}
  is a chain homotopy equivalence.  Here $\cala(A,A')$ is viewed as
  chain complex concentrated in degree $0$.
  	
  By Lemma~\ref{lem:C-acyclic} the augmentation
  $\epsilon \colon C_*([0,n]) \to \IZ$ is a homology isomorphism.  As
  $\Int([0,1],[0,n])$ consists of a single morphism we obtain a
  homology isomorphism
  \begin{equation*}
    C_*(-) \to \IZ\big[\Int([0,1],-)\big],
  \end{equation*}
  of chain complexes in the category of functors
  $\Int \to \MODcat{\IZ}$.  As both are free, see
  Remark~\ref{rem:C-is-free}, it is a chain homotopy equivalence.
  Thus
  \begin{align*}
    \mathfrak{C}_{\cala}(A,A') & = \cala_{[0,n]}(A,A') \otimes_{[0,n] \in \Int} C_*([0,n])
    \\ & \simeq \cala_{[0,n]}(A,A') \otimes_{[0,n] \in \Int} \IZ\big[\Int([0,1],[0,n])\big]
    \\ & = \cala_{[0,1]}(A,A') = \cala(A,A').
  \end{align*}
\end{proof}

\begin{construction}\label{cons:dg-functor-from-homotopy-coherent} Let
  $F \colon \cala \to \ch \calb$ be a homotopy coherent functor.  We
  define a functor
  $F_\mathfrak{C} \colon \mathfrak{C}_\cala \to \ch \calb$ as follows.
  On objects the functor is given by $A \mapsto F(A)$.  For
  $A,A' \in \cala$
  \begin{equation*}
    \mathfrak{C}_\cala(A,A') \to {\ch \calb}(F(A),F(A'))
  \end{equation*}
  is induced from the maps
  \begin{equation}\label{eq:what-we-want-to-define}
    \cala_{[0,n]}(A,A') \otimes C_*([0,n])
    \to {\ch \calb}\big(F(A),F(A')\big)
  \end{equation}
  that we define next.  Let
  $\varphi_1 \otimes \dots \otimes \varphi_n \otimes (I,J) \in
  \cala_{[0,n]}(A,A') \otimes C_*([0,n])$.  We obtain a functor
  $\phi \colon [0,n]^\op \to \cala$ that sends the map $l \to l+1$ in
  $[0,n]$ to $f_{l+1}$.  Write
  $J = \{ j_0= 0 < j_1 < \cdots < j_k =n \}$ and set
  $J_{r-1,r} := \{j_{r-1}< j_{r-1}+1 < \cdots < j_r\} \cap I$ for
  $r=1,\ldots,k$.  Recall that we can evaluate $F$ on any
  (contravariant) functor from a finite linear ordered set to $\cala$.
  Now~\eqref{eq:what-we-want-to-define} sends
  $\varphi_1 \otimes \dots \otimes \varphi_n \otimes (I,J)$ to
  \begin{equation*}
    F(\phi|_{J_{k-1,k}}) \circ F(\phi|_{J_{k-2,k-1}}) \circ \dots \circ F(\phi|_{J_{1,2}}) \circ  F(\phi|_{J_{0,1}}).  
  \end{equation*}
  Verifying that this yields a well-defined functor $F_\mathfrak{C}$
  is a lengthy but not difficult computation, that we omit.
\end{construction}

\begin{remark}
  The construction of $\comp \mathfrak{C}_\cala \to \cala$ is the
  special case of
  Construction~\ref{cons:dg-functor-from-homotopy-coherent} applied to
  $\id_\cala \colon \cala \to \cala$.
\end{remark}

\begin{remark}
  Let $F, G \colon \cala \to \ch \calb$ be homotopy coherent functors
  and $\tau \colon F \to G$ be a strict natural transformation.  Then
  $\tau$ also defines a natural transformation
  $F_\mathfrak{C} \to G_\mathfrak{C}$.
\end{remark}

\begin{remark}\label{rem:K-zig-zag-homotopy-coherent} Let
  $F \colon \cala \to \ch_{\fin} \calb$ be a homotopy coherent
  functor.  We can apply K-theory to
  \begin{equation*}
    \cala \xleftarrow{\comp} \mathfrak{C}_\cala
    \xrightarrow{F_{\mathfrak{C}}} \ch_{\fin} \calb \xleftarrow{i} \calb 
  \end{equation*}
  where $i$ is the inclusion.  Using Theorem~\ref{thm:dg-cat-K-theory}
  we obtain an induced zig-zag in K-theory
  \begin{equation}\label{eq:K-zig-zag-homotopy-coherent} \bfK \cala
    \xleftarrow{\sim}
    \bfK \mathfrak{C}_{\cala} \xrightarrow{\bfK(F_{\mathfrak{C}})} \bfK \ch_{\fin}
    \calb \xleftarrow{\sim} \bfK \calb.
  \end{equation}
\end{remark}

\begin{remark}\label{rem:K-zig-zag-strict-transf} Let
  $F, G \colon \cala \to \ch_{\fin} \calb$ be homotopy coherent
  functors and $\tau \colon F \to G$ be a strict natural
  transformation.  Then $\tau$ also defines a natural transformation
  $F_{\mathfrak{C}} \to G_{\mathfrak{C}}$.  Suppose that
  $\tau_A \colon F^0(A) \to G^0(A)$ is a weak equivalence for all
  $A \in \cala$.
  Corollary~\ref{cor:quasi-iso-between-dg-functors-homotopy-in-K}
  implies that then
  $\bfK (F_{\mathfrak{C}}) \simeq \bfK (G_{\mathfrak{C}})$.  In other
  words the zig-zags~\eqref{eq:K-zig-zag-homotopy-coherent} induced in
  K-theory by $F$ and $G$ agree up to homotopy.
\end{remark}

%%%%%%%%%%%%%%%%%%%%%%%%%%%%%%%%%%%%%%%%%%%%%%%%%%%%%%%%%%%%%%%%%%%%
	
\section{Mapping $X$ to $|\bfJ_{\CVCYC}(G)|^\wedge$}\label{app:X-to-J}

Let $G$ be a reductive $p$-adic  group and let $X$ be the associated
 extended Bruhat-Tits building.  The extended Bruhat-Tits building $X$ can be given a
simplicial structure for which the action of $G$ is simplicial, proper
and smooth.  We will also use the existence of a $\CAT(0)$-metric
$d_X$ on $X$ that generates the topology of $X$ and is $G$-invariant.
We fix a base point $x_0 \in X$.  We write $B_R$ for the closed ball
of radius $R$ centered at $x_0$ and $\pi_R \colon X \to B_R$ for the
radial projection.
 
For a collection of subgroups $\calv \subseteq \CVCYC$ we will
consider the $\EPplus\All(G)$-simplicial complex $\bfJ_\calv(G)$ from
Example~\ref{ex:bfJ_F}.  For $N \in \IN$ we will also use
$\bfJ^N_\calv(G)$ from Example~\ref{ex:bfJ_F}.  Recall that the
simplicial complex $J^N_\calv(G)$ underlying $\bfJ^N_\calv(G)$ is a
finite complex if $\calv$ is finite.  We will also need the foliated
distance $\fold{\bfJ_\calv^N(G)}$ on $|\bfJ_\calv^N(G)|^\wedge$
(defined in Subsection~\ref{subsec:fol-dis-bfSigma-wedge}).  Recall
that it is compatible with restrictions to subcomplexes, i.e.,
$\fold{\bfJ_\calv^N(G)} =
\fold{\bfJ_{\calv'}^{N'}(G)}|_{|\bfJ_\calv^N(G)|^\wedge}$ for
$N \leq N'$, $\calv \subseteq \calv'$.  To ease notation we abbreviate
$\fold{\bfJ} := \fold{\bfJ_\calv^N(G)}$.

The following result is~\cite[Thm.~1.2]{Bartels-Lueck(2023almost)}
for td-groups with an
action on $\CAT(0)$-space satisfying a technical assumption.  This assumption is satisfied
for the action of a reductive $p$-adic  group on its extended Bruhat-Tits
building~\cite[Prop.~A.7]{Bartels-Lueck(2023almost)}.

\begin{theorem}[$X$ to $J$]\label{thm:X-to-J} 
  There is $N \in \IN$ such that for all $M \subseteq G$ compact and
  $\epsilon > 0$ there are $\beta > 0$ and $\calv \subseteq \CVCYC$
  finite with the following property.  For all $\eta > 0$ and all
  $L > 0$ we find $R > 0$ and a (not necessarily continuous) map
  $f \colon X \to |\bfJ_\calv^N(G)|^\wedge$ satisfying:
  \begin{enumerate}[label=(\alph*),leftmargin=*]
  \item\label{thm:X-to-J:G-equiv} for $x \in B_{R+L}$, $g \in M$ we
    have $\fold{\bfJ}(f(gx),gf(x)) < (\beta,\eta,\epsilon)$;
  \item\label{thm:X-to-J:pi_t} for $x \in B_{R+L}$, $R' \geq R$ we
    have $\fold{\bfJ}(f(x),f(\pi_{R'}(x))) < (\beta,\eta,\epsilon)$;
  \item\label{thm:X-to-J:rho-cont} there is $\rho > 0$ such that for
    all $x,x' \in X$ with $d_X(x,x') < \rho$ we have
    $\fold{\bfJ}(f(x),f(x')) < (\beta,\eta,\epsilon)$.
  \end{enumerate}
\end{theorem}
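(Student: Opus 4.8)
The plan is to deduce Theorem~\ref{thm:X-to-J} from the general construction of almost equivariant maps to classifying spaces carried out in~\cite{Bartels-Lueck(2023almost)}. That reference proves, for a td-group $G$ acting cocompactly, properly, isometrically and smoothly on a finite-dimensional $\CAT(0)$-space, a statement of exactly the shape of Theorem~\ref{thm:X-to-J} (with $\CVCYC$ as the relevant family), provided the induced action of $G$ on the associated flow space $\FS(X)$ satisfies the technical condition~\cite[Assumption~2.7]{Bartels-Lueck(2023almost)}; this is~\cite[Thm.~1.2]{Bartels-Lueck(2023almost)}. So the first step is to record that the extended Bruhat--Tits building $X$ is such a $\CAT(0)$-space: it carries a $G$-invariant $\CAT(0)$-metric $d_X$ generating its topology, a cocompact proper smooth simplicial $G$-action, and it is finite-dimensional. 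The second, and essentially only substantial, step for this paper is to verify~\cite[Assumption~2.7]{Bartels-Lueck(2023almost)} for the geodesic flow on $\FS(X)$; this is precisely~\cite[Prop.~A.7]{Bartels-Lueck(2023almost)}, and it is the one input to the proof of the $\CVCYC$-Farrell--Jones Conjecture for reductive $p$-adic groups that does not generalize directly, compare Remark~\ref{rem:uses-mostly-CAT0}. The key point entering that verification is that, since $G$ acts properly on $X$, the stabilizer $V_c$ of a parametrized bi-infinite geodesic $c$ is either compact or surjects onto $\IZ$ with compact kernel, i.e.\ lies in $\CVCYC$, so that the relevant covers of $\FS(X)$ can be organized over $\CVCYC$.

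For orientation it is worth describing the mechanism behind~\cite[Thm.~1.2]{Bartels-Lueck(2023almost)} that produces the map $f$. One factors $f$ as a composition $X \to \FS(X) \to |\bfJ^N_\calv(G)|^\wedge$. The first map sends a point $x$ to (a bundle of) geodesics passing close to $x$; composing with the geodesic flow for a long time $T = T(M,\epsilon)$ one arranges that the flowed images behave equivariantly up to a $\bfJ$-foliated error controlled by $\epsilon$ and $\beta$, the ``$\beta$-direction'' being exactly motion along the flow lines, i.e.\ inside the cosets of the $V_c \in \CVCYC$. This fixes the absolute dimension bound $N$, the constant $\beta = \beta(M,\epsilon)$, and the finite subfamily $\calv = \calv(M,\epsilon) \subseteq \CVCYC$. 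The second map is obtained from a long, thin, finite-dimensional $G$-cover of $\FS(X)$ together with a subordinate partition of unity; its nerve is a finite subcomplex of $|\bfJ_\CVCYC(G)|$, and its ``long and thin'' character is what yields, after choosing the flow time large relative to $L$ and $\eta$, the remaining parameters $R = R(M,\epsilon,\eta,L)$ and $\rho = \rho(M,\epsilon,\eta,L)$ together with the three estimates~\ref{thm:X-to-J:G-equiv},~\ref{thm:X-to-J:pi_t},~\ref{thm:X-to-J:rho-cont}: radial contraction $\pi_{R'}$ with $R' \geq R$ moves points of $B_{R+L}$ only a bounded amount essentially along flow lines, hence changes $f$ only within the foliated error, and $\rho$ is the Lebesgue/Lipschitz scale of the cover, which gives the almost-continuity clause~\ref{thm:X-to-J:rho-cont}.

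The hard part is genuinely located in~\cite{Bartels-Lueck(2023almost)} rather than here: constructing the long thin equivariant covers of the flow space in a way that is equivariant only up to foliated error --- which is forced because for a non-discrete $G$ the orbits $G/V$ with $V \in \CVCYC$ carry no $G$-invariant metric, so honest continuous control over $|\bfJ_\CVCYC(G)|$ must be replaced by foliated control, compare Remark~\ref{rem:why-foliated-bfSigma-control} --- and keeping track of the nested quantifier dependence $N \mapsto (\beta,\calv) \mapsto (R,\rho)$ that appears in the statement. For the present paper, however, the only thing to check is that the proper cocompact isometric smooth $G$-action on the $\CAT(0)$-building $X$ meets the hypotheses of~\cite[Thm.~1.2]{Bartels-Lueck(2023almost)}, i.e.\ that~\cite[Assumption~2.7]{Bartels-Lueck(2023almost)} holds, which is the content of~\cite[Prop.~A.7]{Bartels-Lueck(2023almost)}; granting that, Theorem~\ref{thm:X-to-J} is immediate.
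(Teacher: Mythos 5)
Your proposal is correct and follows the paper's own route exactly: the paper proves Theorem~\ref{thm:X-to-J} by citing~\cite[Thm.~1.2]{Bartels-Lueck(2023almost)} together with the verification of~\cite[Assumption~2.7]{Bartels-Lueck(2023almost)} for the Bruhat--Tits building in~\cite[Prop.~A.7]{Bartels-Lueck(2023almost)}, and its Appendix sketches the same two-step factorization $X \to \FS \to |\bfJ^N_\calv(G)|^\wedge$ via the geodesic flow, long thin covers, and partitions of unity that you describe.
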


The three assertions appearing in Theorem~\ref{thm:X-to-J} correspond
to~\ref{assm:E_0:G},~\ref{assm:E_0:H} and~\ref{assm:E_0:eps} from
Assumption~\ref{assm:E_0}.

The goal in this section is to outline the proof Theorem~\ref{thm:X-to-J}.
 A detailed construction is
 given in~\cite[Thm.~1.2]{Bartels-Lueck(2023almost)}. 
%-----------------------------------------------------------------

\subsection{The flow space}\label{subsec:flow-space}
We briefly recall the flow space $\FS$ associated to $X$
from~\cite[Sec.~1]{Bartels-Lueck(2012CAT(0)flow)}.  It consists of all
\emph{generalized geodesic}, i.e., of continuous maps
$c \colon \IR \to X$ whose restriction to some close
interval\footnote{By this we mean a subset of the form $(-\infty, b]$,
  $[a,\infty)$, $(-\infty,\infty)$, or $[a,b]$.}  is an isometric
embedding and is locally constant on the complement of this interval.
The metric on $\FS$ is given by
\begin{equation*}
  d_\FS(c,c') := \int_\IR \frac{d_X(c(t),c'(t))}{2e^{|t|}} \, dt.
\end{equation*}
In this metric the distance between $c$ and $c'$ is small, iff the
restrictions $c|_{[-a,a]}$ and $c'|_{[-a,a]}$ are pointwise close for
large $a$.  We recall
from~\cite[Prop.~1.7]{Bartels-Lueck(2012CAT(0)flow)} that this metric
generates the topology of uniform convergence on compact subsets.  The
flow $\Phi$ on $\FS$ is defined by
\begin{equation*}
  (\Phi_\tau c)(t) := c(t+\tau).
\end{equation*}
For $c,c' \in \FS$, $\alpha,\delta > 0$ we write
\begin{equation*}
  \fold{\FS}(c,c') < (\alpha,\delta)
\end{equation*}
if there is $t \in [-\alpha,\alpha]$ with
$d_\FS(\Phi_t(c),c') < \delta$.

The construction is natural for the action of the isometry group of
$X$.  In particular, $G$ acts on $\FS$, the flow $\Phi_\tau$ is
$G$-equivariant, and $d_\FS$ and $\fold{\FS}$ are both $G$-invariant.

%-----------------------------------------------------------------

\subsection{$V$-foliated distance and the flow space}%
\label{subsec:V-fol-and-FS}
There is a close relation between the foliated distance $\fold{\FS}$
on the flow space and the $V$-foliated distance from
Subsection~\ref{subsec:foliated-distance-V}.  We discuss the case
$G = \SL_2(F)$.  We will not explicitly use elsewhere what follows,
but this was our motivation for the definition of $V$-foliated
distance.

The building $X$ for $\SL_2(F)$ is a simplicial tree, the Bass-Serre
tree.  We normalize the metric on $X$ such that each edge has length
$1$.  Let us say that a \emph{bi-infinite combinatorial geodesic} is
an isometric embedding $c \colon \IR \to X$ that sends
$\IZ \subseteq \IR$ into the $0$-skeleton $X^0$ of the Bass-Serre
tree\footnote{Bi-infinite combinatorial geodesics are uniquely
  determined by their restriction to $\IZ$ and any isometric embedding
  $\IZ \to X^0$ extends uniquely to a bi-infinite combinatorial
  geodesic.}.  The bi-infinite combinatorial geodesics form a closed
subspace $\FS^\sharp$ of $\FS$.  The flow on $\FS$ restricts to an
$\IZ$ action on $\FS^\sharp$.  Fix $c \in \FS^\sharp$ and let $K_c$ be
the (pointwise) stabilizer of $c$ and $V_c$ be the stabilizer of the
image of $c$ in the quotient of $\FS^\sharp$ by the $\IZ$-action,
compare~\ref{subsec:local-structure}.  There is a choice of $c$ such
that $K_c$ is the subgroup of diagonal matrices in $\SL_2(\calo)$,
where $\calo$ is the ring of integers of $F$ and $V_c$ is the subgroup
of diagonal matrices in $\SL_2(F)$, but this will not be important in
the following.  Define $\pi \colon G \to \FS^\sharp$ as
$g \mapsto gc$.  As the action of $\SL_2(F)$ on $X$ is strongly
transitive, the induced action on $\FS^\sharp$ is transitive.  Thus we
can identify $\FS^\sharp$ with $G/K_c$ via $\pi$.  The group
$V_c / K_c$ acts on $\IZ \cong c(\IZ) \subseteq X^0$ by translation.
This induces a group homomorphism $t \colon V_c \to \IZ$, that in turn
induces an isomorphism $\bar t \colon V_c / K_c \cong \IZ$.  The
quotient $V_c / K_c$ acts from the right on $G/K_c$ and under
$G/K_c \equiv \FS^\sharp$ and $V_c / K_c \cong \IZ$ this action is the
$\IZ$-action on $\FS^\sharp$ induced from the flow on $\FS$.  Recall
that $\fold{V_c}$ on $G$ depended on the choice of a left invariant
proper metric $d_G$ on $G$.  The metric $d_\FS$ restricts to a metric
on $\FS^\sharp$.  As $\pi$ is $G$-equivariant it is in particular
uniformly continuous.  As $d_G|_{V_c}$ is proper and $V_c$-invariant
and as $K_c$ is compact, $t \colon V_c \to \IZ$ is a coarse
equivalence.  This means that for all $A > 0$ there is $B > 0$ such
that for all $v,v' \in V_c$
\begin{equation*}
  \begin{array}{rcr}
    d_G(v,v') \leq A & \implies & |t(v) - t(v')| \leq B; \\
    |t(v) - t(v')| \leq A & \implies &  d_G(v,v') \leq B.	
  \end{array}
\end{equation*}
Combining all this we have the following: for all $\alpha > 0$ there
is $\beta > 0$, such that for $\epsilon > 0$ there is $\delta > 0$
such that for all $g,g' \in G$,
\begin{equation*}
  \begin{array}{rcr}
    \fold{V_c}(g,g') < (\alpha,\delta) & \implies & \fold{\FS}(\pi(g),\pi(g'))  < (\beta,\epsilon); \\
    \fold{\FS}(\pi(g),\pi(g')) < (\alpha,\delta) & \implies & \fold{V_c}(g,g')   < (\beta,\epsilon).	
  \end{array}
\end{equation*}
Thus we can translate between $\fold{\FS}$ and $\fold{V_c}$.

%-----------------------------------------------------------------

\subsection{Factorization over the flow space}\label{subsec:factor-over-FS}
The maps in Theorem~\ref{thm:X-to-J} are constructed in two steps as
compositions 
\begin{equation*}
  X \; \xrightarrow{f_0} \; \FS \; \xrightarrow{f_1} \; |\bfJ_\calv^N(G)|^\wedge;
\end{equation*}

\begin{theorem}[$X$ to $\FS$]\label{thm:X-to-FS}
  For all $M \subseteq G$ compact there is $\alpha > 0$ with the
  following property.  For all $\delta > 0$, $L > 0$ there exists
  $R > 0$ and a uniformly continuous map $f_0 \colon X \to \FS$ such
  that
  \begin{enumerate}[label=(\alph*),leftmargin=*]
  \item\label{thm:X-to-FS:G} for $x \in B_{R+L}$, $g \in M$ we have
    $\fold{\FS}\big(f_0(gx),gf_0(x)\big) < (\alpha,\delta)$;
  \item\label{thm:X-to-FS:pi} for $x \in {B}_{R+L}$, $R' \geq R$ we
    have
    $\fold{\FS}\big(f_0(x),f_0(\pi_{R'}(x))\big) < (\alpha,\delta)$,
    where $\pi_{R'}$ denotes the radial projection onto
    $\overline{B}_{R+L}$.
  \end{enumerate}
\end{theorem}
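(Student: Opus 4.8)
The plan is to construct the map $f_0\colon X \to \FS$ explicitly from the $\CAT(0)$-geometry of $X$ and its geodesic flow, following~\cite[Thm.~1.2]{Bartels-Lueck(2023almost)}, and then to read off~(a) and~(b) from the exponential weighting built into $d_\FS$. First I would set $D := \sup_{g \in M} d_X(x_0, g x_0)$, which is finite since $M$ is compact and $G$ acts continuously by isometries, and arrange that $\alpha$ depends only on $D$ (hence only on $M$), say $\alpha := 2D + 1$. The map $f_0$ sends $x$, with $r_x := d_X(x_0,x)$, to a generalized geodesic $\gamma_x$ recording the geodesic $[x_0,x]$: it is constant $x_0$ on $(-\infty,0]$, traces $[x_0,x]$ by arclength on $[0,r_x]$, and is continued along a geodesic ray (or by the constant $x$) on $[r_x,\infty)$; one then sets $f_0(x) := \Phi_{\sigma(r_x)}(\gamma_x)$ for a normalizing time-shift $\sigma(r_x)$, and smooths out this pointwise recipe near the basepoint by an averaging/partition-of-unity argument. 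Since in a $\CAT(0)$-space geodesics depend $1$-Lipschitz-continuously on their endpoints, both $x \mapsto \gamma_x$ and the flow $\Phi$ are uniformly continuous; together with the fact that $d_\FS$ metrizes uniform convergence on compacta (see~\cite[Prop.~1.7]{Bartels-Lueck(2012CAT(0)flow)}) this gives uniform continuity of $f_0$.

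For~(b) I would argue directly. If $r_x \le R'$ then $\pi_{R'}(x)=x$ and there is nothing to show. If $r_x > R' \ge R$, put $y := \pi_{R'}(x)$; then $y$ lies on $[x_0,x]$ with $d_X(x,y) = r_x - R' \le L$, and $[x_0,y] \subseteq [x_0,x]$. With the chosen normalization $f_0(x)$ and $f_0(y)$ agree at all times at which the corresponding point of $\gamma_x$ lies within distance $R'$ of $x_0$, and differ by at most $d_X(x,y) \le L$ at the remaining times; as those times satisfy $|t| \gtrsim R'$, where $d_\FS$ weights by $\lesssim e^{-R'} \le e^{-R}$, one obtains $d_\FS\big(f_0(x),f_0(\pi_{R'}(x))\big) \le C\,L\,e^{-R}$ for a universal constant $C$. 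Choosing $R$ large in terms of $\delta$ and $L$ makes this $<\delta$, so~(b) holds already with flow time $0$.

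For~(a): the generalized geodesic $g\cdot f_0(x)$ is built from the geodesic from $gx_0$ to $gx$, whereas $f_0(gx)$ is built from the geodesic from $x_0$ to $gx$; since $d_X(x_0,gx_0) \le D$, convexity of the metric in a $\CAT(0)$-space forces these two geodesics to $D$-fellow-travel, with lengths differing by at most $D$. I would absorb the length mismatch by a flow of time at most $\alpha = 2D+1$ and then estimate $d_\FS$, whose weight $e^{-|t|}$ suppresses exactly the portions of the two geodesics lying far from the common point $gx$; this yields $d_\FS\big(\Phi_t(f_0(gx)),\, g f_0(x)\big) \le C\,D\,\mu(x)$ with $\mu(x) \to 0$ as $r_x \to \infty$. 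The hard part will be to design the normalization of $f_0$ so that this estimate is \emph{uniform} over $x \in B_{R+L}$ — in particular stays below $\delta$ for $x$ near the basepoint, where the naive ``geodesic to $x$'' recipe is not even approximately equivariant (two short geodesics near $x_0$ need not be $\fold{\FS}$-close under any bounded flow) — while keeping $\alpha$ independent of $\delta$ and $L$. This is exactly where the flow-time-dependent time-shift $\sigma$ and the smoothing near the basepoint are needed, exploiting that $\FS$ is insensitive to the precise position of $x$ within bounded regions. Granting this construction, (a) and~(b) hold, which proves the theorem; the detailed construction and estimates are carried out in~\cite[Thm.~1.2]{Bartels-Lueck(2023almost)}.
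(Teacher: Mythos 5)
Your overall strategy coincides with the paper's: take the generalized geodesic $c_{x_0,x}$ from the basepoint to $x$, push it with the flow, and read off (a) from $\CAT(0)$-convexity and (b) from the exponential weight in $d_\FS$. The gap is that the entire content of the proof is the choice of the time shift, which you leave unspecified — and your two estimates tacitly use incompatible normalizations. For (b) you claim the two flowed geodesics disagree only at times $|t|\gtrsim R'$, which forces the shift to stay far below $R'$ (bounded, or at most a small fraction of $R'$); for (a) you want the weight $e^{-|t|}$ to suppress "the portions far from the common point $gx$", which forces the time-$0$ point to sit proportionally close to the endpoint. With a bounded shift, the geodesics $[x_0,gx]$ and $[gx_0,gx]$ are a definite distance $\approx\alpha$ apart near time $0$, and no flow by time $\le\alpha$ repairs this for small $\delta$; so (a) genuinely fails for the normalization your (b)-estimate assumes, and conversely. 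Your closing sentence ("Granting this construction \dots carried out in~\cite{Bartels-Lueck(2023almost)}") concedes exactly the step the theorem asserts, so the proposal does not prove it.

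The paper resolves the tension with a single \emph{constant} shift: $f_0=f_1^T$, $f_1^T(x)=\Phi_T(c_{x_0,x})$, where $T=R-r$ with $r$ large but fixed (at least the $\Delta$ of Lemma~\ref{lem:pi_R'-estimate}) and then $R\gg r+L$. For (b), since $T\le R'-\Delta$, the flowed geodesics $\Phi_T(c_{x_0,x})$ and $\Phi_T(c_{x_0,\pi_{R'}(x)})$ agree on all of $(-\infty,R'-T]\supseteq(-\infty,\Delta]$, so $d_\FS$ is $<\delta$ for $\Delta$ large — no smallness of $e^{-R}$ relative to $L$ is needed. For (a), one compares $\Phi_T(c_{gx_0,gx})$ with $\Phi_{T+\tau}(c_{x_0,gx})$, $\tau=d_X(x_0,gx)-d_X(gx_0,gx)\in[-\alpha,\alpha]$ being absorbed by the allowed flow time (so $\alpha=\max_{g\in M}d_X(gx_0,x_0)$ suffices); if $d_X(gx_0,gx)\gtrsim T$, convexity bounds the distance at times near $0$ by roughly $\alpha(r+L+\alpha)/T$, small since $R\gg r+L$, while if $d_X(gx_0,gx)<T$ both flowed geodesics are constant at $gx$ near time $0$ and the discrepancy lives at times $\le -T+O(\alpha)$, killed by $e^{-T}$. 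In particular the basepoint region is handled automatically by the large constant $T$; no point-dependent shift $\sigma(r_x)$ and no partition-of-unity smoothing in $\FS$ is needed (nor is it clear how one would average generalized geodesics). Supplying this choice of $T$, $r$, $R$ and the two regimes of the estimate is what is missing from your write-up.
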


\begin{theorem}[$\FS$ to $J$]\label{thm:FS-to-J} 
  There is $N \in \IN$ such that for any $\alpha > 0$ and any
  $\epsilon > 0$ there are $\beta > 0$ and $\calv \subseteq \CVCYC$
  finite such that for any $\eta > 0$ there are $\delta > 0$,
  $f_1 \colon \FS \to |\bfJ^N_\calv|^\wedge$, satisfying the following
  properties.
  \begin{enumerate}[label=(\alph*),leftmargin=*]
  \item\label{thm:FS-to-J:alpha-delta-to-beta-eta-eps} For
    $c,c' \in \FS$ with $\fold{\FS}(c,c') < (\alpha,\delta)$ we have
    $\fold{\bfJ}\big(f_1(c),f_1(c')\big) < (\beta,\eta,\epsilon)$;
  \item\label{thm:FS-to-J:equiv-up-to-beta-eta-eps} For $c \in \FS$,
    $g \in G$ we have
    $\fold{\bfJ}\big(f_1(gc),gf_1(c)\big) < (\beta,\eta,\epsilon)$.
  \end{enumerate}
\end{theorem}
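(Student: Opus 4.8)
The plan is to construct $f_1$ by factoring through a \emph{long thin equivariant cover} of the flow space, following the classical strategy for geodesic flows adapted to the setting of $V$-foliated control. First I would invoke the long thin cover theorem for the $\CAT(0)$ flow space $\FS = \FS(X)$: there is a number $N$, depending only on (a dimension bound for) $\FS$, such that given $\alpha > 0$ and $\epsilon > 0$ there is a $G$-invariant open cover $\calu$ of $\FS$ with (i) covering dimension $\le N$; (ii) each isotropy group $G_U$, $U \in \calu$, lies in $\CVCYC$, with only finitely many $G$-orbits of members; (iii) each $U$ is transversally $\epsilon$-thin, giving the cover a uniform Lebesgue number controlled by $\epsilon$; and (iv) $\calu$ is $\alpha$-long: for every $c \in \FS$ there is $U_0 \in \calu$ with $\Phi_{[-\alpha,\alpha]}(c) \subseteq U_0$. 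I would then let $\calv \subseteq \CVCYC$ be a finite set of representatives of the conjugacy classes of the $G_U$ (this is where $\calv$ comes from), and (i) is exactly what allows the target to be $|\bfJ^N_\calv(G)|^\wedge = \ast_{n \le N}(G \times \calv)$.

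Next I would fix a $G$-equivariant partition of unity $\{\phi_U\}_{U \in \calu}$ subordinate to $\calu$ (uniformly continuous, $\phi_{gU}(gc) = \phi_U(c)$), chosen moreover so that $\phi_U$ is flow-invariant along the long core of $U$, i.e.\ $\phi_U(\Phi_s c) = \phi_U(c)$ whenever $|s| \le \alpha$ and $U$ is active and long at $c$; such a partition of unity exists since $\FS$ is metrizable and $\calu$ is locally finite with finitely many orbits. To pass from the nerve into the \emph{resolved} join $|\bfJ^N_\calv(G)|^\wedge$ one must also choose, $G$-equivariantly, lifts to $G$: for each orbit representative $U^0$ with $G_{U^0} = V \in \calv$ pick a flow-section point near $U^0$ and a fixed compact transversal, and for $U = g U^0$ define $g_U \in G$ by requiring $g_U^{-1}$ to move the section point of $U$ into that transversal; this determines $g_U$ up to right multiplication by an element of $V$ of $d_G$-size bounded by some $\beta_0 = \beta_0(\epsilon)$, and makes $c \mapsto g_U$ locally Lipschitz into $G/V$ for $U$ active at $c$. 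Setting $\beta$ to be (twice) the maximum of $\beta_0$ and $\alpha$ times the flow-displacement constant, I would define, for $c$ with active members $U_0,\dots,U_k$ ($k \le N$), $f_1(c) := [\phi_{U_0}(c)\cdot(g_{U_0},V_0),\dots,\phi_{U_k}(c)\cdot(g_{U_k},V_k)]$, padded by zero-weight terms.

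The verification of \ref{thm:FS-to-J:equiv-up-to-beta-eta-eps} is then essentially formal: $G$-invariance of $\calu$ and of the partition of unity makes $f_1(gc)$ and $g f_1(c)$ have identical weights and subgroups, while their $G$-coordinates differ by an element of $V_i$ of $d_G$-size $\le \beta_0 \le \beta$; unwinding the definition of $\fold{\bfJ}$ this gives $\fold{\bfJ}(f_1(gc),gf_1(c)) < (\beta,\eta,\epsilon)$ for all $\eta,\epsilon > 0$, with no constraint on $\delta$. For \ref{thm:FS-to-J:alpha-delta-to-beta-eta-eps}, given $\eta$ I fix $\delta$ last. Suppose $\fold{\FS}(c,c') < (\alpha,\delta)$, so $d_\FS(\Phi_s c, c') < \delta$ for some $|s| \le \alpha$. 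By (iv) choose $U_0$ with $\Phi_{[-\alpha,\alpha]}(c) \subseteq U_0$; then $c$ and $\Phi_s c$ lie in the same (long) active members, the weights of $f_1(c)$ and $f_1(\Phi_s c)$ agree (flow-invariance of $\phi$ along cores), and each $G$-coordinate is translated by flowing time $s$ along the periodic direction $V_i$, so $\fold{V_i}(g_{U_i}(c), g_{U_i}(\Phi_s c)) \le (\beta,0)$ and the $|\Sigma|$-coordinates agree; hence $\fold{\bfJ}(f_1(c),f_1(\Phi_s c)) < (\beta,\eta,\epsilon)$. Next, by uniform continuity of the $\phi_U$ and of the lift maps, together with (iii) as a uniform Lebesgue number ensuring $c'$ is active on exactly the members active at $\Phi_s c$ once $\delta$ is small, $d_\FS(\Phi_s c, c') < \delta$ forces $f_1(\Phi_s c)$ and $f_1(c')$ to be $(\beta,\eta',\epsilon')$-close with $\eta',\epsilon' \to 0$ as $\delta \to 0$. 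Composing the two estimates via the foliated triangle inequality (Lemma~\ref{lem:fol-triangle-V-fol}, in the form of Remark~\ref{rem:two-stage}) and choosing $\delta$ small enough, the total is $< (\beta,\eta,\epsilon)$, as required.

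The main obstacle is the first step: producing the long thin cover of $\FS$ with a dimension bound independent of $\alpha$ and $\epsilon$ and with isotropy in $\CVCYC$. This is the genuine dynamical input; it rests on the geometry of the geodesic flow on the $\CAT(0)$ building --- control of periodic orbits and of the ``short'' constant geodesics, whose stabilizers are the compact members of $\CVCYC$ --- and on the flow estimates for $\CAT(0)$ flow spaces, and it is exactly the point requiring the technical hypothesis on the $G$-action on $\FS$ mentioned in Remark~\ref{rem:uses-mostly-CAT0}. A secondary difficulty, visible above, is organising the lifts $g_U$ and the partition of unity so that the resulting map is equivariant, and flow-insensitive, up to an error that is \emph{bounded} and lives purely in the $V$-directions; this is precisely why $V$-foliated distance --- rather than an honest metric on $\FS/\Phi$ or on $G/V$ --- is the correct notion of control, compare Remarks~\ref{rem:why-foliated-control} and~\ref{rem:why-foliated-bfSigma-control}.
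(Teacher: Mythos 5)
Your overall strategy is the one the paper follows: a long thin cover of $\FS$ of dimension $\leq N$ (Proposition~\ref{prop:dimension}), a partition of unity varying slowly along flow lines (Proposition~\ref{prop:partition}), local data assigning to each member a subgroup $V \in \calv$ and a $G$-coordinate well defined up to a $\beta$-bounded element of $V$, and patching into $|\bfJ^N_\calv(G)|^\wedge$; you also correctly identify the existence of the cover and of the local structure as the genuine dynamical input, which the paper likewise defers to \cite{Bartels-Lueck(2023almost)}.

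There is, however, one genuine problem in how you set up the cover, and it is exactly the subtlety separating the td-group case from the discrete case. You ask for a cover whose members are permuted by $G$, with setwise stabilizers $G_U \in \CVCYC$, finitely many $G$-orbits of members, and lifts $g_U$ defined by writing $U = gU^0$. For a td-group this tube picture is not available: the relevant subgroups $V_c$ (stabilizers of flow lines up to the flow) are closed but in general not open, so $G/V$ is not discrete, and a locally finite open cover whose members are indexed by such orbits with these setwise stabilizers cannot exist. This is precisely why the paper's Proposition~\ref{prop:local} takes the members $U$ to be $G$-\emph{invariant} and encodes the subgroup $V$ not through the isotropy of $U$ but through an almost equivariant, almost flow-invariant map $h \colon U \to G$ (constructed in Proposition~\ref{prop:constr-h} from the flow-line stabilizers $V_{c_0}$, with finiteness of $\calv$ supplied by Proposition~\ref{prop:good-FS_0}); and it is why, as stressed in Subsection~\ref{subsec:comparison-discrete}, the patched map necessarily lands in a join of copies of $G \times \calv$, i.e.\ in products of orbits rather than single orbits. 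Once your coset representatives $g_U$ are replaced by the point-dependent values $h(c)$ of these local maps, the rest of your argument --- the partition-of-unity patching, the purely formal check of (b), and the proof of (a) by splitting $\fold{\FS}(c,c') < (\alpha,\delta)$ into a flow step controlled by $\alpha$-longness and a small perturbation controlled by $\delta$, composed via the foliated triangle inequality --- matches the paper's sketch.
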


Theorem~\ref{thm:X-to-FS} is~\cite[Thm.~4.1]{Bartels-Lueck(2023almost)}.
Its proof uses the $\CAT(0)$-geometry of
$X$ and associated dynamic properties of the flow space and is
sketched in Subsection~\ref{subsec:dynamic}. 
Theorem~\ref{thm:FS-to-J} is~\cite[Thm.~4.3]{Bartels-Lueck(2023almost)}.
Its proof uses so called long and thin covers for the
flow space and is sketched in
Subsection~\ref{subsec:long-thin-covers}.

Theorem~\ref{thm:X-to-J} is a formal consequence of
Theorems~\ref{thm:X-to-FS} and~\ref{thm:FS-to-J}.  Formally this uses
that for any $\alpha > 0$, $\delta > 0$ there is $\rho > 0$ such that
$d_X(x,x') < \rho$ implies
$\fold{\FS}(f_1(x),f_1(x')) < (\alpha,\delta)$.  This statement
follows from uniform continuity of $f_1$ (even uniformly in
$\alpha$).

%-----------------------------------------------------------------

\subsection{Dynamic of the flow}\label{subsec:dynamic}

Theorem~\ref{thm:X-to-FS} is closely related to the results
from~\cite[Sec.~3]{Bartels-Lueck(2012CAT(0)flow)} 
and follows from
similar estimates\footnote{The present set-up is simpler and avoids
  the homotopy action in our estimates.}.  We sketch its proof here.

For $x,x' \in X$ we write $c_{x,x'} \in \FS$ for the generalized
geodesic from $x$ to $x'$, i.e., for the generalized geodesic
characterized by
\begin{eqnarray*}
  c_{x,x'}(t) & = & x \quad t \in (-\infty,0]; \\
  c_{x,x'}(t) & = & x \quad t \in [d(x,y),+\infty). 
\end{eqnarray*}
For $T \geq 0$ consider the map $f_1^T \colon X \to \FS$,
$x \mapsto \Phi_T(c_{x_0,x})$.  Recall that $x_0 \in X$ is our fixed
base point.  Both $x \mapsto c_{x_0,x}$ and $\Phi_T$ (for fixed $T$)
are uniformly continuous, in particular, $f_1^T$ is uniformly
continuous.

\begin{lemma}\label{lem:pi_R'-estimate}
  For all $\delta > 0$ there is $\Delta > 0$ such that for all $R',T$
  with $R' \geq T + \Delta$, $x \in X$ we have
  \begin{equation*}
    d_{\FS} \big(f_1^T(x),f_1^T(\pi_{R'}(x)) \big) < \delta.
  \end{equation*}  	
\end{lemma}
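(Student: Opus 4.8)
The plan is to compare the two generalized geodesics $c_{x_0,x}$ and $c_{x_0,\pi_{R'}(x)}$ directly, after translating by the flow $\Phi_T$, and to estimate the resulting integral defining $d_\FS$. First I would dispose of the trivial case: if $d_X(x_0,x) \le R'$, then $\pi_{R'}(x) = x$, so $f_1^T(x) = f_1^T(\pi_{R'}(x))$ and $d_\FS\big(f_1^T(x),f_1^T(\pi_{R'}(x))\big) = 0$. So assume $d_X(x_0,x) > R'$.

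The key geometric observation is that, since $X$ is a $\CAT(0)$-space, geodesics are unique, so $\pi_{R'}(x)$ lies on the geodesic segment $[x_0,x]$ at arc-length $R'$ from $x_0$, and $[x_0,\pi_{R'}(x)]$ is precisely the initial segment of $[x_0,x]$ of length $R'$. Hence $c_{x_0,x}(s) = c_{x_0,\pi_{R'}(x)}(s)$ for all $s \le R'$; and for $s > R'$ both $c_{x_0,x}(s)$ (the point at arc-length $\min(s,d_X(x_0,x))$) and $c_{x_0,\pi_{R'}(x)}(s) = \pi_{R'}(x)$ lie on $[x_0,x]$, so $d_X\big(c_{x_0,x}(s),c_{x_0,\pi_{R'}(x)}(s)\big) \le s - R'$. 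Translating by $T$, the geodesics $\Phi_T(c_{x_0,x})$ and $\Phi_T(c_{x_0,\pi_{R'}(x)})$ agree at every time $t \le R'-T$, and for $t > R'-T$ their pointwise $d_X$-distance is at most $(t+T) - R'$.

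Now I would plug this into $d_\FS$. Suppose $\Delta > 0$ and $R' \ge T + \Delta$, so $R'-T \ge \Delta > 0$; then the integrand vanishes on $(-\infty,\Delta]$ (in particular for all $t \le 0$), and with the substitution $u = t - (R'-T)$,
\begin{equation*}
  d_\FS\big(f_1^T(x),f_1^T(\pi_{R'}(x))\big) \le \int_{R'-T}^{\infty} \frac{(t+T)-R'}{2e^{t}}\,dt
  = \frac{e^{-(R'-T)}}{2}\int_0^{\infty} u\,e^{-u}\,du = \frac{1}{2}\,e^{-(R'-T)} \le \frac{1}{2}\,e^{-\Delta}.
\end{equation*}
It therefore suffices to choose $\Delta > 0$ with $\tfrac{1}{2}e^{-\Delta} < \delta$; this $\Delta$ depends only on $\delta$, and not on $T$, $R'$, or $x$, which is exactly what the statement requires.

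I do not expect any real obstacle: the statement is a routine estimate once one uses that $X$ is $\CAT(0)$ and that the radial projection $\pi_{R'}$ cuts a geodesic ray at a fixed arc-length. The only points needing a little care are the case distinction $d_X(x_0,x) \lessgtr R'$ and the standard fact that in $X$ the geodesic from $x_0$ to a point of $[x_0,x]$ is the corresponding initial segment; both follow immediately from uniqueness of geodesics in $\CAT(0)$-spaces.
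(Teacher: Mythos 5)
Your proof is correct and follows essentially the same route as the paper's sketch (and the detailed version in the companion paper): the two generalized geodesics agree up to time $R'-T$ and thereafter diverge at most linearly, so the exponentially weighted integral defining $d_\FS$ is bounded by $\tfrac12 e^{-(R'-T)} \le \tfrac12 e^{-\Delta}$. The explicit computation, including the case split on $d_X(x_0,x)\lessgtr R'$ and the use of uniqueness of geodesics to identify $[x_0,\pi_{R'}(x)]$ with the initial segment of $[x_0,x]$, is exactly what the paper leaves to the reader.
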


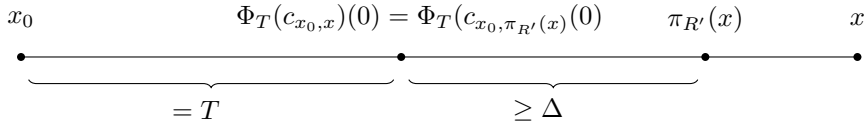
\begin{figure}[H]
  \begin{tikzpicture}
    \draw (0,0) -- (11,0); \fill [black,opacity=.5] (0,0) circle
    (1.5pt) (5,0) circle (1.5pt) (9,0) circle (1.5pt) (11,0) circle
    (1.5pt); \draw (0,.5) node {$x_0$} (5.24,.5) node
    {$\Phi_T(c_{x_0,x})(0) = \Phi_T(c_{x_0,\pi_{R'}(x)}(0)$} (9,.5)
    node {$\pi_{R'}(x)$} (11,.5) node {$x$};
    \draw[decorate,decoration={brace,amplitude=3pt,mirror}] (0.1,-.3)
    -- (4.9,-.3); \draw (2.3,-.7) node {$=T$};
    \draw[decorate,decoration={brace,amplitude=3pt,mirror}] (5.1,-.3)
    -- (8.9,-.3); \draw (6.8,-.7) node {$ \geq \Delta$};
  \end{tikzpicture}
  \caption{Schematic picture for
    Lemma~\ref{lem:pi_R'-estimate}}\label{fig:pi_R'-estimate}
\end{figure}

\begin{proof}[Sketch of proof for Lemma~\ref{lem:pi_R'-estimate}]
  For large $\Delta$ the generalized geodesics
  $f_1^T(x) = \Phi_T(c_{x_0,x})$ and
  $f_1^T(\pi_{R'}(x)) = \Phi_T(c_{x_0,\pi_{R'}(x)})$ agree on a large
  interval $[-a,a]$, see Figure~\ref{fig:pi_R'-estimate}, and are
  therefore close to each other in $\FS$.
  For more details see~\cite[Lem.~5.2]{Bartels-Lueck(2023almost)}.
\end{proof}

\begin{lemma}\label{lem:estimate-d_fol} For all $\alpha > 0$, $\Delta > 0$, $L > 0$, and
  $\delta > 0$ there are $R > 0$, $0 \leq T \leq R - \Delta$ such that
  for all $x \in \overline{B}_{R+L}(b)$ and $g \in G$ with
  $d_X(b,gb) \leq \alpha$, we have
  \begin{equation*}
    \fold{\FS}\big(gf_1^T(x), f_1^T(gx) \big) < (\alpha,\delta).   \end{equation*} 
\end{lemma}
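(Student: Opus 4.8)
The plan is to unwind the definition of $f_1^T$ and reduce the statement to a $\CAT(0)$-geometric estimate about two generalized geodesics sharing an endpoint. Writing $b$ for the base point (so $f_1^T(x) = \Phi_T(c_{b,x})$), I would first use that the flow $\Phi_T$ is $G$-equivariant and that $(y,z)\mapsto c_{y,z}$ is natural for isometries to rewrite $g\cdot f_1^T(x) = g\,\Phi_T(c_{b,x}) = \Phi_T(c_{gb,gx})$ and $f_1^T(gx) = \Phi_T(c_{b,gx})$. Thus the task becomes to compare $\Phi_T(c_{gb,gx})$ and $\Phi_T(c_{b,gx})$: the generalized geodesics $c_{gb,gx}$ and $c_{b,gx}$ have the common endpoint $gx$, and their initial points $gb$, $b$ satisfy $d_X(b,gb)\le\alpha$. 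Put $\ell_1 := d_X(b,gx)$ and $\ell_2 := d_X(gb,gx)$, so $|\ell_1-\ell_2|\le\alpha$ by the triangle inequality. I would then take as the admissible flow shift $t := \ell_2-\ell_1$, so that $|t|\le\alpha$ and (a short check) $\Phi_t\big(\Phi_T(c_{gb,gx})\big)=\Phi_{T+t}(c_{gb,gx})$ and $\Phi_T(c_{b,gx})$ become constantly equal to $gx$ at the same time $\ell_1-T$; it therefore remains to bound $d_\FS\big(\Phi_{T+t}(c_{gb,gx}),\Phi_T(c_{b,gx})\big)$ by $\delta$.

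For that bound I would observe that at each time $s$ with $0\le s+T\le\ell_1$ these two generalized geodesics evaluate to the points at equal arc-distance $r(s):=\ell_1-s-T$ from $gx$ along $[gb,gx]$ and $[b,gx]$; for $s\ge\ell_1-T$ both equal $gx$; and on the remaining range (of bounded width near $s=-T$) the two points lie within $d_X(b,gb)+|\ell_1-\ell_2|\le 2\alpha$ of one another. By convexity of the metric on the $\CAT(0)$-space $X$ — the distance between two geodesics issuing from a common point is convex in the parameter, hence dominated by the far-end distance scaled by the ratio of parameter to length — the pointwise distance of the two flowed geodesics is $\le 2\alpha$ everywhere and $\le 2\alpha\, r(s)/\ell_1$ in the geodesic-tracing range. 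Feeding this into $d_\FS=\int_{-\infty}^{\infty}\tfrac{d_X(c(s),c'(s))}{2e^{|s|}}\,ds$, I would split the integral at $|s|=a$: the contribution of $|s|>a$ is bounded by $2(R+L+2\alpha)\,e^{-a}$ since the integrand there is bounded by $2(R+L+2\alpha)e^{-|s|}$, so choosing $a$ large (depending on $R,L,\alpha,\delta$, which is harmless as $a$ is only an auxiliary threshold) makes it $<\delta/3$; and on $|s|\le a$ one needs the pointwise distance itself to be $<\delta/3$.

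This last requirement is where $T$ and $R$ are pinned down. Taking $T:=R-\Delta$ and $R$ large, I would distinguish two regimes. If $\ell_1=d_X(b,gx)\ge R/2$, then for $|s|\le a$ the contraction factor obeys $r(s)/\ell_1\le(\ell_1+a-T)/\ell_1\le (L+2\alpha+\Delta+a)/(R/2)$, which is $<\delta/(6\alpha)$ once $R$ exceeds a bound depending only on $\alpha,\Delta,L,\delta$. If instead $\ell_1<R/2$, then for $R$ large $T=R-\Delta\ge\ell_2\ge\ell_1$, so both flowed geodesics are already constantly $gx$ for all $s\ge\ell_1-T$, in particular on all of $[-a,a]$, and the whole integral is bounded by $(R+O(\alpha))\,e^{-(R/2-\Delta)}\to 0$. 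Choosing $R$ large enough to meet these finitely many inequalities and setting $T:=R-\Delta$ (which lies in $[0,R-\Delta]$) would conclude. The main obstacle is exactly this bookkeeping: $T$ has to be large for the $\CAT(0)$ fellow-traveling of the two geodesics to beat $\delta$, yet $T\le R-\Delta$ forces $R$ to grow roughly like $(\alpha/\delta)(L+\alpha+\Delta)$ up to the logarithmic $a$, and the degenerate regime where $d_X(b,gx)$ is small — in which the gain comes from the exponential decay of the flow-space weight rather than from geometry — must be handled separately; the rest is a routine estimate of the defining integral for $d_\FS$.
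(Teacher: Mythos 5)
Your argument is correct and follows essentially the same route as the paper's (sketched) proof: rewrite $gf_1^T(x)=\Phi_T(c_{gb,gx})$ and $f_1^T(gx)=\Phi_T(c_{b,gx})$ via equivariance, take the flow shift $t=d_X(gb,gx)-d_X(b,gx)\in[-\alpha,\alpha]$ so the two flowed geodesics synchronize at $gx$, and use $\CAT(0)$-convexity plus the exponential weight in $d_\FS$ to make them $\delta$-close for $T=R-\Delta$ with $R$ large. The paper only records this as a sketch and defers the details to \cite[Lem.~5.4]{Bartels-Lueck(2023almost)}; your write-up supplies those details, including the degenerate regime $d_X(b,gx)<R/2$ and the (non-circular) interplay between the cut-off $a$ and $R$, consistently with that reference.
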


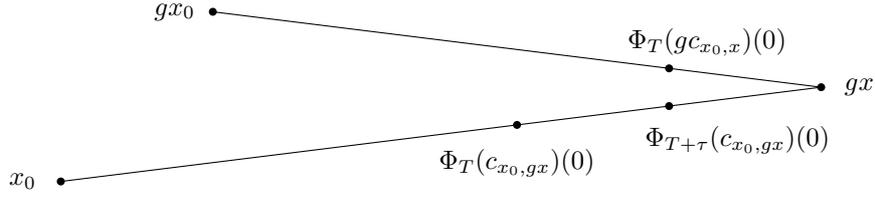
\begin{figure}[H]
  \begin{tikzpicture}
    \draw (2,1) -- (10,0) -- (0,-1.25); \fill [black,opacity=.5] (2,1)
    circle (1.5pt) (10,0) circle (1.5pt) (0,-1.25) circle (1.5pt)
    (8,.25) circle (1.5pt) (6,-.5) circle (1.5pt) (8,-.25) circle
    (1.5pt); \draw (1.5,1) node {$gx_0$} (-.5,-1.25) node {$x_0$}
    (10.5,0) node {$gx$} (8.5,.6) node {$\Phi_T(gc_{x_0,x})(0)$}
    (6,-1) node {$\Phi_T(c_{x_0,gx})(0)$} (8.9,-.7) node
    {$\Phi_{T+\tau}(c_{x_0,gx})(0)$};
  \end{tikzpicture}
  \caption{Schematic picture for
    Lemma~\ref{lem:estimate-d_fol}}\label{fig:estimate-d_fol}
\end{figure}

\begin{proof}[Sketch of proof for Lemma~\ref{lem:estimate-d_fol}]
  We have $gf_1^T(x) = \Phi_T (c_{gx_0,gx})$ and
  $f_1^T(gx) = \Phi_T (c_{gx_0,x})$.  Let
  $\tau := d_X(x_0,gx) - d_X(gx_0,gx)$.  By assumption
  $\tau \in [-\alpha,\alpha]$.  Now for $r >> 0$, $R >> 0$ and
  $T := R - r$, $\Phi_T (c_{gx_0,gx})$ and
  $\Phi_{T+\tau} (c_{gx_0,x})$ will be pointwise close on a large
  interval $[-a,a]$ by the $\CAT(0)$ inequality, see
  Figure~\ref{fig:estimate-d_fol}.
  For more details see~\cite[Lem.~5.4]{Bartels-Lueck(2023almost)}.
\end{proof}
 
\begin{proof}[Sketch of proof for Theorem~\ref{thm:X-to-FS}] 
  We set $\alpha := \max \{ d_X(gx_0,x_0) \mid g \in M \}$.  Given
  $\delta > 0$, $L > 0$ we first choose $\Delta$ as in
  Lemma~\ref{lem:pi_R'-estimate} and use then
  Lemma~\ref{lem:estimate-d_fol} to choose $R$ and $T$.  Then
  $f_1^T \colon X \to \FS$ has the desired properties:
  for~\ref{thm:X-to-FS:G} we use the estimate from
  Lemma~\ref{lem:estimate-d_fol} and for~\ref{thm:X-to-FS:pi} we use
  the estimate from Lemma~\ref{lem:pi_R'-estimate}.
\end{proof}
 
%-----------------------------------------------------------------

\subsection{Long thin covers}\label{subsec:long-thin-covers}

\begin{definition}[$\alpha$-long cover]\label{def:alpha_long_cover}
  A cover $\calu$ of the flow space $\FS$ by open subsets is said to
  be \emph{$\alpha$-long} if for any $c_0 \in \FS$ there exists
  $U \in \calu$ such that $\Phi_{[-\alpha,\alpha]}(c_0) \subseteq U$.
\end{definition}

Typically such covers are thin in directions transversal to the flow
and are often referred to as long thin covers.  The proof of
Theorem~\ref{thm:FS-to-J} uses the following three results for the
flow space.

\begin{proposition}[Partition of unity]\label{prop:partition} For all $\alpha > 0$,
  $\e > 0$, $N \in \IN$ there is $\alpha' > 0$ such that the following
  holds.  Let $\calu$ be a $\alpha'$-long cover of dimension $\leq N$
  by $G$-invariant open subsets of $\FS$.  Then there exists a
  partition of unity $\{ t_U \colon \FS \to [0,1] \mid U \in \calu \}$
  subordinate to $\calu$ and $\delta > 0$ such that
  \begin{enumerate}[label=(\alph*),leftmargin=*]
  \item for $U \in \calu$, $c,c' \in \FS$ with
        $\fold{\FS}(c,c') < (\alpha,\delta)$ we have
    \begin{equation*}
      |t_U(c) -t_U(c')| < \e;  
    \end{equation*}
  \item the $t_U$ are $G$-invariant.
  \end{enumerate}
\end{proposition}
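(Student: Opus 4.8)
The plan is to construct the partition of unity directly from \emph{flow--box functions}, exploiting that an $\alpha'$--long cover places the whole flow segment $\Phi_{[-\alpha',\alpha']}(c)$ of every $c\in\FS$ inside a single member; a ramp of slope $1/\alpha'$ will convert this qualitative fact into the quantitative slow--variation statement (a). I would choose $\alpha'$ large at the very end, in terms of $\alpha,\e,N$, and then $\delta$ small in terms of $\alpha'$. For $U\in\calu$ set $\mu_U(c):=\min\{1,d_\FS(c,\FS\setminus U)\}$; this is $1$--Lipschitz, is $G$--invariant (because $U$, $d_\FS$ and the flow are), and satisfies $\{\mu_U>0\}=U$. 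Define
\begin{equation*}
  a_U(c):=\inf_{\tau\in\IR}\Big(\mu_U(\Phi_\tau c)+\frac{|\tau|}{\alpha'}\Big),\qquad H(c):=\sum_{U\in\calu}a_U(c),\qquad t_U:=\frac{a_U}{H}.
\end{equation*}
Since $a_U\le\mu_U\le 1$, only $|\tau|\le\alpha'$ matter in the infimum, and since $\Phi_\tau$ is $e^{|\tau|}$--Lipschitz on $\FS$ (immediate from the integral formula for $d_\FS$ after the substitution $u=s+\tau$), each $a_U$ is continuous, in fact $e^{\alpha'}$--Lipschitz. One checks $\{a_U>0\}=U$, so (assuming, as one may, that $\calu$ is locally finite) $H$ is a well--defined positive continuous function, $\{t_U>0\}\subseteq U$, and a standard shrinking argument for long thin covers upgrades this to $\supp t_U\subseteq U$. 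Each $a_U$, hence each $t_U$, is $G$--invariant. This already gives a $G$--invariant partition of unity subordinate to $\calu$, which is (b).

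For the quantitative statement (a) I first need $H$ bounded below. Given $c$, the $\alpha'$--long property supplies $U$ with $\Phi_{[-\alpha',\alpha']}(c)\subseteq U$; for $|\tau|\le\alpha'$ the penalty vanishes and $\mu_U(\Phi_\tau c)>0$, while for $|\tau|>\alpha'$ the bracket exceeds $1$. Because $\mu_U$ is capped at $1$, a single member therefore dominates and one obtains a constant $c_0>0$ with $H\ge c_0$ on all of $\FS$ (one may take $c_0=1$); the essential point is that this cap makes $c_0$ independent of $\alpha'$.

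Now the crux: $a_U$ varies at rate $1/\alpha'$ along the flow. Indeed $a_U(\Phi_t c)$ is the same infimum with $|\tau|$ replaced by $|\tau-t|$, so
\begin{equation*}
  \big|a_U(\Phi_t c)-a_U(c)\big|\le\sup_{\tau\in\IR}\frac{\big|\,|\tau-t|-|\tau|\,\big|}{\alpha'}\le\frac{|t|}{\alpha'},
\end{equation*}
while $|a_U(c)-a_U(c')|\le e^{\alpha'}d_\FS(c,c')$ by the Lipschitz bound above. If $\fold{\FS}(c,c')<(\alpha,\delta)$, choose $t$ with $|t|\le\alpha$ and $d_\FS(\Phi_t c,c')<\delta$; then for every $U$,
\begin{equation*}
  \big|a_U(c)-a_U(c')\big|\le\frac{\alpha}{\alpha'}+e^{\alpha'}\delta=:\kappa,
\end{equation*}
and since at most $2(N+1)$ of the $a_U$ are nonzero at $c$ or at $c'$ (dimension $\le N$), also $|H(c)-H(c')|\le 2(N+1)\kappa$. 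Writing $t_U(c')-t_U(c)=\frac{a_U(c')-a_U(c)}{H(c')}+t_U(c)\,\frac{H(c)-H(c')}{H(c')}$ and using $0\le t_U\le 1$, $H\ge c_0$, one gets $|t_U(c)-t_U(c')|\le(2N+3)\kappa/c_0$. Finally I would fix $\alpha'$ with $\alpha/\alpha'<\e c_0/(2(2N+3))$ and then $\delta$ with $e^{\alpha'}\delta<\e c_0/(2(2N+3))$, which yields $|t_U(c)-t_U(c')|<\e$, i.e.\ (a).

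The main obstacle is the interplay of three requirements that pull against one another: the $t_U$ must be subordinate to $\calu$, they must vary at rate $O(1/\alpha')$ along the flow, and the denominator $\sum_U a_U$ must be bounded below by a constant that does not degrade as $\alpha'\to\infty$. The linear ramp $|\tau|/\alpha'$ in the definition of $a_U$ is precisely the device that converts qualitative $\alpha'$--longness into the flow estimate, and capping $\mu_U$ at $1$ is what makes one member dominate $\sum_U a_U$, which is needed so that $c_0$ can be chosen before $\alpha'$ (otherwise the order of quantifiers in the statement becomes circular). The remaining points---continuity of the $a_U$, passing from $\{t_U>0\}\subseteq U$ to $\supp t_U\subseteq U$, and local finiteness---are routine, but they must be carried out bearing in mind that the flow is only $e^{|\tau|}$--Lipschitz, not $1$--Lipschitz, on $\FS$.
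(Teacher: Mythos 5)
Your construction (inf-convolution of the capped distance functions $\mu_U(c)=\min\{1,d_\FS(c,\FS\setminus U)\}$ with the ramp $|\tau|/\alpha'$ along the flow) is a sensible device, and your Lipschitz estimates for $a_U$ along the flow and transversally are correct; but the step you yourself identify as essential --- the uniform lower bound $H=\sum_U a_U\geq c_0$ with $c_0$ independent of the cover and of $\alpha'$ (``one may take $c_0=1$'') --- is false for these functions, and this is a genuine gap. Since $a_U(c)\leq\mu_U(c)\leq d_\FS(c,\FS\setminus U)$, at a point $c$ all of whose containing members are transversally thin (each contains the flow segment $\Phi_{[-\alpha',\alpha']}(c)$, as $\alpha'$-longness requires, but has $c$ very close to its metric boundary) one gets $H(c)\leq(N+1)\max_{U\ni c}d_\FS(c,\FS\setminus U)$, which can be arbitrarily small: Definition~\ref{def:alpha_long_cover} imposes no transversal thickening of the flow segment, so nothing in the hypotheses excludes covers by thin invariant tubes around flow lines. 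The best one can salvage (using cocompactness of the $G$-action on $\FS$ and $G$-invariance of the members, which you do not invoke) is a bound of the shape $H\geq\epsilon_0(\calu)/(1+\alpha')$ with $\epsilon_0(\calu)$ a transversal Lebesgue-type constant of the given cover; this depends on $\calu$ and on $\alpha'$, which is incompatible with the quantifier order you correctly flag as the whole point ($\alpha'$ must be fixed before the cover is handed to you, and only $\delta$ may be chosen afterwards). With your normalization the final estimate for $|t_U(c)-t_U(c')|$ therefore does not follow, and near points where $H$ is small the ratios $a_U/H$ can indeed jump under small foliated perturbations. Repairing this requires building the longness into the profile functions themselves --- e.g.\ measuring (a truncated version of) the time the flow line through $c$ stays inside $U$, so that the long member contributes a definite amount like $1$ or $\alpha'$ --- at the price of handling the resulting semicontinuity by a transversal smoothing whose modulus may depend on $\calu$ (harmless, since $\delta$ is chosen last). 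That is a different construction, not a one-line patch of yours.

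Two smaller points: the aside ``assuming, as one may, that $\calu$ is locally finite'' is not justified (refining could destroy longness, $G$-invariance or the dimension bound), but it is also unnecessary, since multiplicity $\leq N+1$ together with $a_U\leq d_\FS(\cdot,\FS\setminus U)$ already makes $H$ well defined and continuous; and the closing appeal to ``a standard shrinking argument'' to upgrade $\{t_U>0\}=U$ to $\supp t_U\subseteq U$ is hand-waved, though reading ``subordinate'' as $\{t_U>0\}\subseteq U$ (which your construction gives and which is what the application to Theorem~\ref{thm:FS-to-J} uses) makes this moot. For comparison: the paper does not prove Proposition~\ref{prop:partition} internally but cites \cite[Prop.~6.4]{Bartels-Lueck(2023almost)}, with the one-line indication that $\alpha'$-longness is used to produce partition functions varying slowly along flow lines --- consistent with a flow-time-based construction rather than the purely metric one you propose.
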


Proposition~\ref{prop:partition}  is~\cite[Prop.~6.4]{Bartels-Lueck(2023almost)}.
Its proof is not complicated.  The
fact that the cover is $\alpha'$-long allows us to find a partition of
unity whose functions only vary very slowly along flow lines.

\begin{proposition}[Dimension of long thin covers]\label{prop:dimension}
  There is $N \in \IN$ such that for any $\alpha' > 0$ there is
  $\alpha''$ such that the following is true.  Let $\calw$ be an
  $\alpha''$-long cover of $\FS$ by $G$-invariant open subsets.  Then
  there exist collections $\calu_0,\ldots,\calu_N$ of open
  $G$-invariant subsets of $\FS$ such that
  \begin{enumerate}[label=(\alph*),leftmargin=*]
  \item $\calu := \calu_0 \sqcup \ldots \sqcup \calu_N$ is an
    $\alpha'$-long cover of $\FS$, in particular
    $\calu_i \cap \calu_j = \emptyset$ for $i \not= j$;
  \item for each $i$ the open sets in $\calu_i$ are pairwise disjoint;
  \item for each $U \in \calu = \calu_0 \sqcup \ldots \sqcup \calu_N$
    there is $W \in \calw$ with $U \subseteq W$.
  \end{enumerate}
\end{proposition}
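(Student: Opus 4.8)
The plan is to reduce the statement to the existence of long thin covers of finite dimension that is uniform in the "length" parameter, a fact established for flow spaces of $\CAT(0)$-spaces in the work on the Farrell–Jones conjecture for $\CAT(0)$-groups (see~\cite{Bartels-Lueck(2012CAT(0)flow)} and~\cite[Thm.~4.3]{Bartels-Lueck(2023almost)}). Concretely, first I would recall that the flow space $\FS$ associated to the extended Bruhat-Tits building $X$ is a locally compact, finite-dimensional metric space with a proper isometric $G$-action and a continuous flow $\Phi$, and that the periodic and non-periodic parts can be handled separately. The key input is that there exists a single number $N \in \IN$ (depending only on $\dim X$, not on the desired length) such that for every $\alpha'' > 0$ there is an $\alpha''$-long open $G$-invariant cover $\calw_0$ of $\FS$ of dimension $\le N$; this is the hard geometric construction, which uses the flow estimates from Subsection~\ref{subsec:dynamic} together with the structure of the periodic orbits. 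I would quote this rather than reprove it.

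Given such an $N$, the second step is the colouring/disjointification argument. Starting from an $\alpha''$-long cover $\calw$ of dimension $\le N$ (so every point lies in at most $N+1$ members of $\calw$), one uses a standard Lebesgue-number/shrinking argument to produce from $\calw$ a new cover $\calu = \calu_0 \sqcup \cdots \sqcup \calu_N$ where each $\calu_i$ consists of pairwise disjoint open sets, each member of $\calu_i$ is contained in some member of $\calw$, and $\calu$ is still $\alpha'$-long provided $\alpha''$ was chosen large enough relative to $\alpha'$. The disjointification is the usual trick: since $\calw$ has covering dimension $\le N$, one can inductively select, for each $i = 0,\dots,N$, a "colour class" of members of a suitable refinement that are mutually disjoint, shrinking each set slightly so that the mutual disjointness is realized while the union over all colours still covers. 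The loss in the long-ness parameter (from $\alpha''$ to $\alpha'$) comes from the shrinking; one must keep a flow-box neighbourhood of each segment $\Phi_{[-\alpha',\alpha']}(c)$ inside a single $\calu_i$, which is arranged by demanding $\calw$ to be $\alpha''$-long for $\alpha''$ sufficiently larger than $\alpha'$ (depending on $N$). Throughout, $G$-invariance is preserved because all the constructions—Lebesgue numbers, the flow, the shrinking—can be carried out $G$-equivariantly, using that the $G$-action on $\FS$ is proper and cocompact-on-each-bounded-piece in the relevant sense.

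I expect the main obstacle to be precisely the first step: producing the $\alpha''$-long cover of dimension bounded by a constant $N$ independent of $\alpha''$. This is where the $\CAT(0)$-geometry of $X$ and the dynamics of the geodesic flow on $\FS$ enter essentially, and it requires careful control of the flow near the periodic orbits (which correspond to the subgroups in $\CVCYC$). Since this is established in~\cite[Thm.~4.3]{Bartels-Lueck(2023almost)} (building on~\cite{Bartels-Lueck(2012CAT(0)flow)}), I would invoke it directly; the remaining colouring argument is then purely combinatorial/topological and essentially formal. One should also note that finiteness of $\calv$ in the related Theorem~\ref{thm:FS-to-J} will be extracted afterwards from cocompactness of the $G$-action on $\FS$, but for the present proposition only the dimension bound and the disjointification matter.
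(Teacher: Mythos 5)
There is a genuine gap, and it sits exactly at the feature that distinguishes this proposition from the standard existence results for long thin covers. In Proposition~\ref{prop:dimension} the given cover $\calw$ is an \emph{arbitrary} $\alpha''$-long cover of $\FS$ by $G$-invariant open sets: no bound on its order/dimension is assumed. Your second step, however, starts ``from an $\alpha''$-long cover $\calw$ of dimension $\le N$'' and colours it; that is not the input you are given. Your first step only supplies the existence of \emph{some} $\alpha''$-long $G$-invariant cover $\calw_0$ of dimension $\le N$, with no relation to the given $\calw$, so colouring $\calw_0$ cannot deliver condition~(iii), which demands that every member of $\calu$ be contained in a member of the given $\calw$. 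Nor can you repair this by passing to the common refinement $\{W_0\cap W\}$: since $\calw$ may have unbounded multiplicity, the refinement loses the order bound $N+1$ and the colouring argument no longer yields $N+1$ disjoint families. The subordination to an arbitrary prescribed $\calw$ has to be built into the construction of the long thin cover from the outset; this is precisely how \cite[Prop.~6.6]{Bartels-Lueck(2023almost)} proceeds, by passing to the quotient $G\backslash\FS$ and adapting the strategy of \cite{Kasprowski-Rueping(2017cov)}, with the constraints on the members of $\calu$ reformulated in terms of the given cover $\calw$ rather than in terms of the group action.

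A secondary but real objection: even granting an input cover of order $\le N+1$, describing the disjointification that preserves $\alpha'$-longness as a ``standard Lebesgue-number/shrinking argument'' and ``essentially formal'' understates where the work is. The classical colouring lemma produces a refinement whose members can be small in every direction, including along flow lines; keeping an entire segment $\Phi_{[-\alpha',\alpha']}(c)$ inside a single coloured set while the colour classes stay pairwise disjoint is exactly the long-versus-thin tension that makes these constructions substantial (this is the content of \cite{Kasprowski-Rueping(2017cov)} and its predecessors, not a corollary of paracompactness). Finally, a small citation slip: \cite[Thm.~4.3]{Bartels-Lueck(2023almost)} is Theorem~\ref{thm:FS-to-J} of this paper, not an existence statement for long thin covers; the relevant reference for the present proposition is \cite[Prop.~6.6]{Bartels-Lueck(2023almost)}.
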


Proposition~\ref{prop:dimension} is~\cite[Prop.~6.6]{Bartels-Lueck(2023almost)}.
It is closely related to other existence
results for long and thin covers in the literature, although it is not
stated in exactly this form elsewhere.  To prove it one can pass to
the quotient $G \backslash \FS$ and adapt the strategy
from~\cite{Kasprowski-Rueping(2017cov)}.  The main difference is that
here our constrain on $\calu$ is formulated in terms of the given
cover $\calw$, while in~\cite{Kasprowski-Rueping(2017cov)} the
constrain on the members of $\calu$ are formulated in terms of the
group action.

\begin{proposition}[Local structure]\label{prop:local}
  For all $\alpha'' > 0$ there are $\beta > 0$ and
  $\calv \subseteq \CVCYC$ finite with the following property.  For
  all $\eta > 0$ and all $c_0 \in \FS$ there exist $U \subseteq \FS$
  open, $h \colon U \to G$, $V \in \calv$ and $\delta'' > 0$ such that
  \begin{enumerate}[label=(\alph*),leftmargin=*]
  \item\label{prop:local:(1)} for some neighborhood $U_0$ of the orbit
    $Gc_0$ we have $\Phi_{[-\alpha'',\alpha'']}(U_0) \subseteq U$;
  \item\label{prop:local:(2)} $U$ is $G$-invariant;
  \item\label{prop:local:(3)} for $c,c' \in U$ we have
    \begin{equation*}
      \fold{\FS}(c,c') < (\alpha'',\delta'')  \implies \fold{V}(h(c),h(c')) < (\beta,\eta);
    \end{equation*} 
  \item\label{prop:local:(4)} for $c \in U$, $g \in G$ we have;
    \begin{equation*}
      \fold{V}(h(gc),gh(c)) < (\beta,\eta).
    \end{equation*} 
  \end{enumerate}
\end{proposition}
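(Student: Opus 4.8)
The plan is to analyze the local structure of $\FS$ near a single orbit $Gc_0$ by building a flow box around the flow line of $c_0$ and then $G$-saturating it. Recall from Subsection~\ref{subsec:flow-space} that a point $c \in \FS$ is a generalized geodesic, and that the $G$-action on the extended Bruhat-Tits building $X$, hence on $\FS$, is proper and isometric, with $X$ simplicial and having finitely many $G$-orbits of simplices. First I would treat the generic case in which $c_0$ is a genuine (non-constant) parametrized geodesic. Pick a small transversal slice $W \ni c_0$ meeting each nearby flow line in one point and set $T := \Phi_{(-R,R)}(W)$ with $R$ slightly larger than $\alpha''$, so that $\Phi_{[-\alpha'',\alpha'']}$ of a neighbourhood of $Gc_0$ lies in $U := G\cdot T$; this $U$ is $G$-invariant and open, giving (1) and (2). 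On $T$ there is a ``coordinate'' map $h_0\colon T \to G$ with values in a small neighbourhood of $e$ (using that the pointwise stabilizer of $c_0$ is compact); extend it to $h\colon U \to G$ by choosing, for each $c \in U$, some $g_c \in G$ with $g_c^{-1}c \in T$ and setting $h(c) := g_c\, h_0(g_c^{-1}c)$. Making the choice of $g_c$ consistently, $h$ is essentially $G$-equivariant, which is what (4) needs.

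The subgroup $V$ is the ``isotropy of the flow line of $c_0$ up to a bounded shift'': it is generated by those $v \in G$ for which $v\,\Phi_s(c_0)$ lies close to $\Phi_{(-R,R)}(c_0)$ for some $s$. Each such $v$ either fixes the geodesic pointwise (and then lies in the compact stabilizer of $c_0$) or translates it by an amount $\le 2R$; hence $V$ is compact or surjects onto an infinite cyclic group with compact kernel, so $V \in \CVCYC$, and — crucially — $V$ is generated by elements of $d_G$-size bounded by a constant $\beta = \beta(\alpha'')$ depending only on $\alpha''$ (via the bound $R \sim \alpha''$ on the shift together with cocompactness). The ambiguity in the definition of $h$, i.e.\ replacing one valid $g_c$ by another, changes $h(c)$ by right multiplication by such a generator of $V$; the same happens when one compares $h(c)$ with $h(c')$ for $c' = \Phi_s(c)$, $|s| \le \alpha''$, perturbed by $\delta''$, where the discrepancy is a ``small'' element of $V$ of $d_G$-size at most $\beta$. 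Unwinding the definition of $\fold{V}$ from Subsection~\ref{subsec:foliated-distance-V}, and choosing $\delta''$ small enough using uniform continuity of $h_0$ and of the flow on bounded sets, this yields (3); and the near-equivariance of $h$ yields (4).

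The finiteness of $\calv$ comes from cocompactness. For flow lines $c_0$ whose period exceeds $2R$, or which are aperiodic, half-infinite or constant, no two $G$-translates of $T$ that meet differ by a translation of $c_0$'s line, the tube $T$ embeds, and $V$ may be taken to be the pointwise stabilizer of $c_0$, which lies in one of the finitely many conjugacy classes of simplex stabilizers; for flow lines of period $\le 2R$, $V$ contains a hyperbolic element of translation length $\le 2R$, and there are only finitely many conjugacy classes of such elements in $G$ since $X$ is simplicial and cocompact. In either case one fixes, once and for all depending only on $\alpha''$, representatives $c_1$ for these finitely many possibilities and lets $\calv$ be the corresponding finite set of subgroups; for a given $c_0 = g\,c_1$ one then uses $V := V_{c_1} \in \calv$, absorbing the conjugating element $g$ into the ``$G$-part'' $g_c$ of $h$ rather than into $V$. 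Since $\fold{V}$ is left-invariant this does not affect the estimates in (3) and (4), which only involve $h(c)^{-1}h(c')$ (resp. $h(gc)^{-1}\,g\,h(c)$).

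The main obstacle is twofold. First, handling generalized geodesics that are not bi-infinite parametrized lines: those that are eventually or entirely constant sit at the ``boundary'' of $\FS$, where the transversal-slice picture degenerates and the relevant $V$ is just a compact pointwise stabilizer, but the flow box must still be chosen compatibly with nearby genuine geodesics. Second, verifying the foliated estimates in (3) and (4) with constants independent of $\eta$ and $c_0$, i.e.\ checking that the only identifications forced on $h$ by $G$-saturating $T$ are by elements of $V$ of size $\le \beta$, never by a full period; this is exactly where one must keep $R$ comparable to $\alpha''$ rather than letting the flow box become flow-saturated, and where the dichotomy short-period versus long/no-period is used. I expect the bookkeeping for the degenerate (eventually constant) geodesics to be the most delicate part; the remainder is a fairly direct flow-box argument combined with the compactness coming from the simplicial structure of the Bruhat-Tits building.
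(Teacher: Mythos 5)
Your flow-box construction of $U$ and $h$, and the dichotomy between short-period and long/no-period geodesics, match the paper's local step (Proposition~\ref{prop:constr-h} and Remark~\ref{rem:constr-h-K}) in spirit. The genuine gap is in how you obtain the \emph{finite} family $\calv$, with $\beta$ uniform over all $c_0$ -- which is precisely the hard point of Proposition~\ref{prop:local}. Your counting arguments fail: the pointwise stabilizer of a generalized geodesic is an intersection of simplex stabilizers along the whole geodesic, not itself a simplex stabilizer, and such stabilizers form in general infinitely many conjugacy classes; and it is false that there are only finitely many conjugacy classes of hyperbolic elements of bounded translation length -- in $\SL_2(F)$ the elements $\mathrm{diag}(a,a^{-1})$ with $v(a)=1$ all translate the Bass--Serre tree by the same amount, yet two of them are conjugate only if the entries agree up to inversion, so there are uncountably many classes. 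More fundamentally, even the statement you implicitly rely on -- that the groups $V_c$ (stabilizers of flow lines up to translation) fall into finitely many conjugacy classes, or at least do not increase, up to conjugation, as $c$ varies near $c_0$ -- is exactly what the authors say they do \emph{not} know. What they prove instead is the weaker semicontinuity statement of Proposition~\ref{prop:good-FS_0}: on a carefully chosen compact $\FS_0$ with $G\cdot\FS_0=\FS$, every $c_0\in\FS_0$ has a neighborhood in which $V_c\subseteq V_{c_0}$ whenever $\tau_c\le\ell$; combined with the monotonicity of the $\fold{V}$-estimates under enlarging $V$ and a finite subcover of $\FS_0$, this yields $\calv$. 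The proof of that semicontinuity is the one place where the combinatorial structure of the building (apartments, the Coxeter complex) is used, and nothing in your proposal substitutes for it.

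A secondary issue: an element $v$ with $v\Phi_s(c_0)$ merely \emph{close} to the flow line of $c_0$ need not preserve that line at all, so it is neither in the pointwise stabilizer nor a bounded translation of the line; properness only bounds such $v$ in $G$. To obtain the estimates in (3) and (4) with arbitrarily small $\eta$ you must show that elements which almost carry $c_0$ to a flow-translate of itself become $\eta$-close to elements of $V_{c_0}$ once $\delta''$ is small; this is a limiting/compactness argument (the content of the proof of Proposition~\ref{prop:constr-h}, where the maps $h_n$ are shown to work for large $n$) that your sketch asserts but does not supply.
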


Proposition~\ref{prop:local} is~\cite[Prop.~6.8]{Bartels-Lueck(2023almost)}.
We will briefly discuss its proof  in
Subsection~\ref{subsec:local-structure}.

The map $f_1$ from Theorem~\ref{thm:FS-to-J} can locally be
constructed using Proposition~\ref{prop:local}.  Using the partition
of unity from Proposition~\ref{prop:partition} these can be patched
together to a map $\FS \to |\bfJ^N_\calv(G)|^\wedge$.  As noted
already in Subsection~\ref{subsec:comparison-discrete} this patching
procedure forces us to pass from orbits to products of orbits.  Some
care is needed to control the dimension of the image of theses maps;
this is where Proposition~\ref{prop:dimension} is needed.

%-----------------------------------------------------------------

\subsection{Local structure of the flow space}\label{subsec:local-structure}

For $c \in \FS(X)$ we set
\begin{eqnarray*}
  K_c & := & G_c = \{g \in G \mid gc=c \} 
             =   \{ g \in G \mid gc(t)=c(t) \; \text{for all} \; t \in \IR \}; \\
  V_c & := & \{ g \in G \mid \exists t \in \IR \, \text{such that} \, gc = \Phi_t(c) \}; \\ 
  \tau_c & := & \inf \{ t > 0 \mid \exists v \in V_c \setminus K_c,  \text{with} \; \Phi_t(c)=vc\}. 
\end{eqnarray*}
% \refstepcounter{theorem}
% \begin{itemize}%[label=(\thetheorem\alph*),leftmargin=*]
%	\item\label{K_c}
%   $K_c := \; G_c = \;\{g \in G \mid gc=c \} =\; \{ g \in G \mid
%   gc(t)=c(t) \; \text{for all} \; t \in \IR \}$;\vspace{1ex}
%	\item\label{V_c}
%   $V_c \; := \; \{ g \in G \mid \exists t \in \IR \, : \, gc =
%   \Phi_t(c) \}$;\vspace{1ex}
% \item\label{tau_c}
%   $\tau_c \; \; := \; \inf \{ t > 0 \mid \exists v \in V_c \setminus
%   K_c, \text{with} \; \Phi_t(c)=vc \}$.
% \end{itemize}
We use $\inf \emptyset = \infty$.  If $\tau_c < \infty$ then we say
that $c$ is \emph{periodic}.  We have $K_c \subseteq V_c$ as the flow
is $G$-equivariant.  For $\alpha > 0,\delta > 0$, $c \in \FS$ we set
\begin{equation}
  U^\fol_{\alpha,\delta}(c) := \{ c' \in \FS \mid \fold{\FS}(c,c') < (\alpha,\delta) \}. 
\label{U_upper_fol_(alpha,delta(c)}
\end{equation}
One may think of $U^\fol_{\alpha,\delta}(c)$ as an open ball around
$c$ with respect to $\fold{\FS}$.

\begin{proposition}\label{prop:constr-h} Let $\FS_0 \subseteq \FS$ be compact.  For all
  $\alpha > 0$ there is $\beta > 0$ such that the following is true:
  For all $\eta > 0$, $c_0 \in \FS_0$, there are $\delta > 0$ and a
  (not necessarily continuous) map
  $h \colon G \cdot U^{\fol}_{\alpha,\delta}(c_0) \to G$ satisfying
  \begin{enumerate}[label=(\alph*),leftmargin=*]
  \item\label{prop:constr-h:fol-to-fol} for
    $c,c' \in G \cdot U^{\fol}_{\alpha,\delta}(c_0)$ we have
    \begin{equation*}
      \fold{\FS}(c,c') < (\alpha,\delta) \quad \implies \quad \fold{V_{c_0}}(h(c),h(c')) < (\beta,\eta);
    \end{equation*}	
  \item\label{prop:constr-h:G} for $g \in G$,
    $c \in G \cdot U^{\fol}_{\alpha,\delta}(c_0)$ we have
    \begin{equation*}
      \fold{V_{c_0}}(h(gc),gh(c))< (\beta,\eta).
    \end{equation*}
  \end{enumerate}
\end{proposition}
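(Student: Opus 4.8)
The plan is to construct $h$ by using the transitivity of the $G$-action on the flow space together with the coarse structure of $V_{c_0}$, exactly along the lines sketched in Subsection~\ref{subsec:V-fol-and-FS}. First I would fix $\alpha>0$ and a compact $\FS_0$. For $c_0\in\FS_0$ set $K:=K_{c_0}$ and $V:=V_{c_0}$; since the $G$-action on $\FS$ is smooth and proper, $K$ is a compact open subgroup of $G$ (being the pointwise stabilizer of a compact set, cf.\ the assumption on $X$ in Section~\ref{sec:cals}). The orbit map $g\mapsto gc_0$ identifies $Gc_0$ with $G/K$, and under this identification the right $V/K$-action on $G/K$ matches the flow-induced action of $\IZ$ (or the trivial group) on the image of $c_0$ in the quotient. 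On the orbit $Gc_0$ one therefore has a tautological choice of lift: pick a set-theoretic section $s\colon G/K\to G$ and set $h(gc_0):=s(gK)$; the key point is that any two such lifts of $gc_0$ differ by an element of $K$, which has bounded $d_G$-diameter, so $h$ is well-defined up to bounded error. The definition of $\fold{V}$ from Subsection~\ref{subsec:foliated-distance-V} is designed precisely to absorb this ambiguity.

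The next step is to extend $h$ off the orbit. Given $c\in U^{\fol}_{\alpha,\delta}(c_0)$, by definition $\fold{\FS}(c_0,c)<(\alpha,\delta)$, so there is $t\in[-\alpha,\alpha]$ with $d_\FS(\Phi_t(c_0),c)<\delta$; I would then define $h(c)$ to be (a lift of) a nearby point of the orbit $Gc_0$, e.g.\ choosing $g\in G$ with $gc_0$ close to $c$ and $d_X(x_0,gx_0)$ small, and extend $G$-equivariantly (up to bounded error) to all of $G\cdot U^{\fol}_{\alpha,\delta}(c_0)$ by $h(gc):= g\cdot h(c)$ modified by a bounded correction to land on the right coset. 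The estimates in~\ref{prop:constr-h:fol-to-fol} and~\ref{prop:constr-h:G} then reduce to two ingredients: (i) uniform continuity of the orbit map $G\to\FS$, $g\mapsto gc_0$, and of its (multivalued) inverse on a neighborhood of $c_0$ — this gives that $\fold{\FS}$-closeness of $c,c'$ forces $d_G$-closeness of chosen preimages modulo $K$; and (ii) the fact that along the periodic direction the homomorphism $t\colon V\to\IZ$, $v\mapsto t_v$ is a coarse equivalence between $(V,d_G|_V)$ and $\IZ$, because $d_G|_V$ is proper and $V$-invariant and $K$ is compact. Feeding (i) and (ii) into the definition of $\fold{V}$ produces $\beta$ depending only on $\alpha$ (and $\FS_0$) while $\delta$ may depend on $\eta$ and $c_0$, as required. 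For the non-periodic case ($\tau_{c_0}=\infty$, i.e.\ $V=K$ compact) the statement degenerates and is immediate since $\fold{V}(g,g')\le(\beta,\eta)$ just means $g^{-1}g'\in V$ with $d_G(g,g')\le\beta$.

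The uniformity in $c_0\in\FS_0$ is where I expect the main obstacle to lie: a priori the constants produced by uniform continuity of the orbit map $G\to\FS$, $g\mapsto gc_0$, and by the coarse-equivalence constant for $t\colon V_{c_0}\to\IZ$, both depend on $c_0$, and one must see that $\beta$ can be chosen independently of $c_0$ (only $\delta$ is allowed to depend on $c_0$ and $\eta$). This requires a compactness argument over $\FS_0$ — covering $\FS_0$ by finitely many small $\fold{\FS}$-balls and using that on a controlled neighborhood the orbit maps are uniformly comparable, together with the fact that the family of stabilizers $V_{c_0}$, $c_0\in\FS_0$, falls into finitely many conjugacy types with uniformly controlled coarse geometry (this is essentially the content that feeds into the finite collection $\calv\subseteq\CVCYC$ in Proposition~\ref{prop:local}). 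Once uniformity in $c_0$ is established for $\beta$, assembling~\ref{prop:constr-h:fol-to-fol} and~\ref{prop:constr-h:G} is a routine bookkeeping exercise with the foliated triangle inequality from Lemma~\ref{lem:unif-continuity}~\ref{lem:unif-continuity:V}. I would then deduce Proposition~\ref{prop:local} from Proposition~\ref{prop:constr-h} by taking $U$ to be a $G$-invariant $\fol{\FS}$-neighborhood of $Gc_0$ slightly larger than $G\cdot U^{\fol}_{\alpha'',\delta''}(c_0)$ and flowing it to absorb the $[-\alpha'',\alpha'']$-thickening in~\ref{prop:local:(1)}.
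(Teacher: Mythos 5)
Your underlying construction is the same as the paper's: $h(c)$ is chosen so that $c \in h(c)\cdot U^{\fol}_{\alpha,\delta}(c_0)$, i.e.\ $h(c)$ approximately carries $c_0$ to $c$, with $\delta$ shrinking as needed. But two of your supporting claims are off. First, $K_{c_0}$ is compact but in general \emph{not} open: it is the pointwise stabilizer of the image of $c_0$, and for a bi-infinite geodesic that image is a non-compact subset of $X$, so the standing assumption (open stabilizers of \emph{compact} sets) does not apply. In the $\SL_2(F)$ example of Subsection~\ref{subsec:V-fol-and-FS}, $K_c$ is the diagonal subgroup of $\SL_2(\calo)$ — compact, not open. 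If $K_{c_0}$ were open, $G/K_{c_0}$ would be discrete and the entire foliated formalism of Remark~\ref{rem:why-foliated-control} would be unnecessary; only compactness of $K_{c_0}$ (and of $\{g \mid d_G(g,e)\le\beta\}$) is actually available. Second, the uniformity of $\beta$ in $c_0$ is much cheaper than you fear and does \emph{not} require sorting the stabilizers $V_{c_0}$ into finitely many conjugacy types with controlled coarse geometry — that finiteness is the content of Proposition~\ref{prop:good-FS_0}, which uses the combinatorial structure of the building and is needed only to pass from Proposition~\ref{prop:constr-h} to Proposition~\ref{prop:local}. Here $\beta$ comes from a single properness statement over the compact set $\FS_0$: there is $\beta>0$ such that $d_\FS(gc,c)<3\alpha$ for some $c\in\FS_0$ forces $d_G(g,e)<\beta$. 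The coarse equivalence $t\colon V_{c_0}\to\IZ$ is likewise only motivation for the definition of $\fold{V}$ and plays no role in the argument.

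The step you treat as automatic is the actual crux. Suppose $\fold{\FS}(c,c')<(\alpha,\delta)$ and set $g:=h(c)^{-1}h(c')$. The construction only tells you that $gc_0$ is within foliated distance roughly $(3\alpha,\delta')$ of $c_0$ for small $\delta'$, and properness then bounds $d_G(g,e)$ by $\beta$. To conclude $\fold{V_{c_0}}(h(c),h(c'))<(\beta,\eta)$ you must produce an honest element $v\in V_{c_0}$ with $d_G(e,v)\le\beta$ and $d_G(h(c)v,h(c'))<\eta$, i.e.\ upgrade \emph{approximate} stabilization of the flow line through $c_0$ to $\eta$-proximity to an \emph{actual} element of $V_{c_0}$. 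This is exactly what your phrase ``uniform continuity of the (multivalued) inverse of the orbit map'' asserts without proof; it is established by a compactness/limiting argument in the ball $\{g \mid d_G(g,e)\le\beta\}$ — if it failed for every $\delta=1/n$, a convergent subsequence of counterexamples would produce a limit $g_\infty$ with $g_\infty c_0=\Phi_t(c_0)$, $|t|\le 3\alpha$, hence $g_\infty\in V_{c_0}$, a contradiction — and it is precisely this step that forces $\delta$ to depend on $\eta$ and $c_0$. The same mechanism gives part~\ref{prop:constr-h:G}. With that argument supplied and the two claims above corrected, your proof matches the paper's.
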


\begin{proof}[Sketch of proof]
  Let $\alpha > 0$ be given.  Using compactness of $\FS_0$, it is not
  difficult to show the following: there is $\beta > 0$ such that for
  $g \in G$, $c \in \FS_0$ we have
  \begin{equation*}
    d_\FS(gc,c) < 3\alpha \quad \implies \quad d_G(g,e) < \beta.
  \end{equation*}    	
  Next let $\eta > 0$ and $c_0 \in \FS_0$ be given.  For $n \in \IN$
  choose $h_n \colon G \cdot U_{\alpha,1/n}^{\fol}(c_0) \to G$ such
  that $c \in h_n(c) \cdot U_{\alpha,1/n}^\fol(c_0)$ for all
  $c \in G \cdot U_{\alpha,\delta_n}^{\fol}(c_0)$.  It is not
  difficult to check that for all sufficiently large $n$ the map $h_n$
  satisfies~\ref{prop:constr-h:fol-to-fol} and~\ref{prop:constr-h:G}.
  For more details see~\cite[Prop.~9.2]{Bartels-Lueck(2023almost)}.
\end{proof}

\begin{remark}\label{rem:constr-h-K}
  In the assertion of Proposition~\ref{prop:constr-h} the estimates
  can be strengthened to use $\fold{K_{c_0}}$ in place of
  $\fold{V_{c_0}}$ provided $\tau_{c_0} > \ell$
  where $\ell$ is a constant only depending on $\alpha$.
\end{remark}

On its own Proposition~\ref{prop:constr-h} is not quite strong enough
to imply Proposition~\ref{prop:local}, because it is not quite clear
yet that we only need a finite set $\calv$ of subgroups.  To resolve
this one needs to understand how $V_{c_0}$ varies in $c_0$.  Ideally
we would like for $V_{c_0}$ not to increase in small neighborhoods of
$c_0$, at least up to conjugation.  While we do not know this, we have
the following result.
 
\begin{proposition}\label{prop:good-FS_0}
  There exists $\FS_0 \subseteq \FS$ compact such that
  \begin{enumerate}[label=(\alph*),leftmargin=*]
  \item\label{prop:good-FS_0:fund-domain} $G \cdot \FS_0 = \FS$;
  \item\label{prop:good-FS_0:V} for $\ell > 0$ and $c_0 \in \FS_0$
    there exists an open neighborhood $U$ of $c_0$ in $\FS_0$ such
    that for all $c \in U$ with $\tau_c \leq \ell$ we have
    $V_c \subseteq V_{c_0}$.
  \end{enumerate}
\end{proposition}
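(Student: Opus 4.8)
The plan is to construct $\FS_0$ as a compact ``fundamental set'' for the $G$-action on $\FS$ and then verify the semi-continuity property~\ref{prop:good-FS_0:V} of the isotropy-type data. For~\ref{prop:good-FS_0:fund-domain}, I would use that the $G$-action on the flow space $\FS = \FS(X)$ is cocompact: this follows because the $G$-action on $X$ is cocompact (the Bruhat--Tits building has a cocompact proper $G$-action) and the evaluation map $\FS \to X \times X$, $c \mapsto (c(0), c(\infty\text{-data}))$, together with the properness/cocompactness estimates of~\cite[Sec.~1]{Bartels-Lueck(2012CAT(0)flow)}, shows $G \backslash \FS$ is compact. Concretely, one takes a compact $K \subseteq X$ with $GK = X$ and sets $\FS_0 := \{ c \in \FS \mid c(0) \in K \}$, which is compact by the compactness criterion for $\FS$ (uniform convergence on compacta plus the fact that generalized geodesics through a compact set form a compact family), and clearly $G \cdot \FS_0 = \FS$ since any $c$ can be translated so that $c(0) \in K$.

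The substance is~\ref{prop:good-FS_0:V}. Fix $\ell > 0$ and $c_0 \in \FS_0$. The key point is to understand the subgroups $V_c$ for $c$ near $c_0$ with period $\tau_c \leq \ell$. If $c$ is $\ell$-periodic, then $V_c$ is generated (modulo the compact part $K_c$) by an element $v_c$ with $\Phi_{\tau_c}(c) = v_c c$, and $v_c$ moves $c(0)$ a bounded distance, namely $d_X(c(0), v_c c(0)) = \tau_c \leq \ell$; so $v_c$ lies in a fixed compact subset of $G$ (using properness of the $G$-action on $X$ and that $c(0) \in \FS_0$ stays in the fixed compact set $K$). I would then argue that, after shrinking to a small enough neighborhood $U$ of $c_0$ in $\FS_0$, any such $v_c$ (and any $g \in K_c$) already fixes $c_0$ or flows $c_0$ by the corresponding amount — i.e.\ lies in $V_{c_0}$. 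This uses: (i) the set of group elements that can occur is finite up to the discreteness of $G$'s relevant subgroups (the action is smooth, so stabilizers are open-compact and there are only finitely many ``candidate'' elements of bounded translation length modulo a small compact open subgroup fixing a neighborhood of $c_0$ in $X$); and (ii) a limiting/compactness argument: if the conclusion failed, there would be $c_n \to c_0$ in $\FS_0$ with $\tau_{c_n} \leq \ell$ and $g_n \in V_{c_n} \setminus V_{c_0}$; by compactness of the candidate set we may pass to a constant subsequence $g_n = g$, and then $g c_0 = \lim g c_n = \lim \Phi_{t_n}(c_n)$ for bounded $t_n$, so after a further subsequence $g c_0 = \Phi_t(c_0)$, forcing $g \in V_{c_0}$, a contradiction.

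The main obstacle I expect is step~(ii): controlling the translation parameters $t_n$ and ensuring the limit of $\Phi_{t_n}(c_n)$ is genuinely a flow-translate of $c_0$ rather than some degenerate generalized geodesic — one must use that $c_0$ itself is a genuine (bi-infinite) geodesic when $\tau_{c_0} < \infty$, or handle the case $\tau_{c_0} = \infty$ separately (where the neighborhood condition on $\tau_c \leq \ell$ is what keeps things rigid, since a nearby $\ell$-periodic geodesic must stay uniformly close to $c_0$ on a long interval and hence be essentially forced). A secondary technical point is making precise the finiteness of the candidate set of group elements: here I would invoke that $G$ is a td-group acting smoothly and properly on $X$, so the pointwise stabilizer $U$ of a small ball around $c_0(0)$ is compact open, and $\{ g \in G \mid d_X(c_0(0), g c_0(0)) \leq \ell \}$ is a compact set, hence a finite union of $U$-cosets; this is exactly the kind of estimate underlying Lemma~\ref{lem:small-compact-open} and the local-structure results in Subsection~\ref{subsec:local-structure}. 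Once~\ref{prop:good-FS_0:V} is in place, combining it with Proposition~\ref{prop:constr-h} and Remark~\ref{rem:constr-h-K} yields Proposition~\ref{prop:local}, as indicated in the text.
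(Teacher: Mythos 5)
The decisive issue is your choice of $\FS_0$ in part~\ref{prop:good-FS_0:fund-domain}: with $\FS_0 := \{c \in \FS \mid c(0) \in K\}$ for a compact fundamental set $K \subseteq X$, property~\ref{prop:good-FS_0:V} is simply false, and no argument can repair it without changing $\FS_0$. Concretely, let $c_0$ be the axis of a hyperbolic element $\gamma$ with translation length $\tau_{c_0} \leq \ell$, and let $g_n$ lie in the pointwise stabilizer of the segment $c_0([-n,n])$ (a compact open subgroup) but move $c_0(n+1)$ off the geodesic; such $g_n$ exist by thickness of the building. Then $c_n := g_n c_0$ satisfies $c_n(0)=c_0(0)$, so $c_n \in \FS_0$; $c_n \to c_0$ in $d_{\FS}$ since the two geodesics agree on $[-n,n]$; and $\tau_{c_n} = \tau_{c_0} \leq \ell$. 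But $V_{c_n} = g_n V_{c_0} g_n^{-1}$ contains $g_n\gamma g_n^{-1}$, whose axis is the image of $c_n \neq c_0$, so $g_n \gamma g_n^{-1} \notin V_{c_0}$. This is exactly why the proposition is nontrivial and why the cited proof (\cite[Prop.~A.7]{Bartels-Lueck(2023almost)}) builds $\FS_0$ out of geodesics lying in a single fixed apartment $A$ (using strong transitivity to translate every geodesic into $A$, and the Coxeter-complex structure of $A$ to control the groups $V_c$): inside one apartment the branching phenomenon above cannot occur. The neighborhood in~\ref{prop:good-FS_0:V} being taken \emph{in} $\FS_0$ rather than in $\FS$ is the whole point.

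There is a second, independent gap in your limiting argument for~\ref{prop:good-FS_0:V}. Compactness of the candidate set of generators lets you extract a \emph{convergent} subsequence $g_n \to h$, not a constant one, and covering the compact set by finitely many cosets $g_iU$ of a compact open subgroup $U$ does not help: elements of one coset $g_iU$ agree only on the (bounded) set that $U$ fixes pointwise, whereas membership in $V_{c_0}$ is a condition on the entire unbounded geodesic $c_0$. From $g_n \to h$ and $g_nc_n = \Phi_{t_n}(c_n)$ you do get $h \in V_{c_0}$ by continuity, but $V_{c_0}$ is in general not open (e.g.\ for $G=\SL_2(F)$ the kernel $K_{c_0}$ of $V_{c_0} \to \IZ$ is compact but not open), so $h \in V_{c_0}$ gives no information about the nearby elements $g_n$. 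Your part~\ref{prop:good-FS_0:fund-domain} and the reduction of $V_c$ to the compact set of generators $K_c \cup \{v_c\}$ are fine, but the core of~\ref{prop:good-FS_0:V} requires the apartment/Coxeter-complex input that the paper outsources to the companion paper, not a soft compactness argument.
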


Proposition~\ref{prop:good-FS_0} is~\cite[Prop.~A.7]{Bartels-Lueck(2023almost)}.
We will not discuss its proof  in
detail.  But we want to point out that this proof uses the
combinatorial structure of the building $X$.  Here we fix an apartment
$A$ for $X$ and use that the $G$-action translates any geodesic to a
geodesic in $A$.  To study the groups $V_{c_0}$ for $c_0 \in \FS(A)$,
one can then use the combinatorial structure of $A$ as a Coxeter
complex.  Here $\FS(A)$ is the flow space for $A$.  This is the only
point where the proof of Theorem~\ref{thm:X-to-J} uses the
combinatorial structure of $X$.

To prove Proposition~\ref{prop:local} one combines
Proposition~\ref{prop:constr-h}, Remark~\ref{rem:constr-h-K} and
Proposition~\ref{prop:good-FS_0}.  
This concludes the discussion of Proposition~\ref{prop:local}.

%%%%%%%%%%%%%%%%%%%%%%%%%%%%%%%%%%%%%%%%%%%%%%%%%%%%%%%%%%%%%%%%%%%%
%%%%%%%%%%%%%%%%%%%%%%%%%%% References  %%%%%%%%%%%%%%%%%%%%%%%%%%%%%%%%
%%%%%%%%%%%%%%%%%%%%%%%%%%%%%%%%%%%%%%%%%%%%%%%%%%%%%%%%%%%%%%%%%%%%

  %\Addcontentsline{toc<<}{section}{References} 
  %\bibliographystyle{abbrv}
  %\bibliography{dbdef,dbpub,dbpre}
  %\bibliography{dbdef,dbpub,dbpre,../bl_control-hecke/dbhecke}

\def\cprime{$'$} \def\polhk#1{\setbox0=\hbox{#1}{\ooalign{\hidewidth
  \lower1.5ex\hbox{`}\hidewidth\crcr\unhbox0}}}

  %\version{05.06.2023 (Wolfgang)}

\end{document}